\documentclass[11pt]{article}      %STOC requirement http://acm-stoc.org/stoc2023/cfp.html
\usepackage[margin=1in,bottom=1in]{geometry}  %STOC requirement http://acm-stoc.org/stoc2023/cfp.html
\usepackage{libertine}
\usepackage{amsthm}
\usepackage{amsmath}

\usepackage{amssymb}
\usepackage{stmaryrd}
\usepackage{mathrsfs} 
\usepackage{mathtools}
\usepackage{thm-restate}
\usepackage{hyperref}
\usepackage[capitalize,nameinlink]{cleveref}
\usepackage{url}
\usepackage[size=tiny]{todonotes}
\usepackage{mathrsfs} 
\usepackage{soul}
\usepackage{relsize}
\usepackage{enumerate}
\usepackage[all,defaultlines=3]{nowidow}
\usepackage[mathlines]{lineno}
\usepackage{multicol}
\usepackage{enumitem}
\usepackage{textpos}
\setlist[itemize]{topsep=4pt,itemsep=3pt,parsep=0pt} 
\setlist[enumerate]{topsep=4pt,itemsep=3pt,parsep=0pt} 
 %\linenumbers

\crefname{claim}{Claim}{Claims}
\crefname{figure}{Figure}{Figures}
\crefname{conjecture}{Conjecture}{Conjectures}
\renewcommand{\preceq}{\preccurlyeq}

\renewcommand{\npreceq}{\not\preccurlyeq}

\newtheorem{theorem}{Theorem}

\newtheorem{corollary}[theorem]{Corollary}
\newtheorem{conjecture}{Conjecture}

\newtheorem{lemma}[theorem]{Lemma}

\theoremstyle{definition}
\newtheorem{definition}{Definition}
\theoremstyle{plain}

\theoremstyle{definition}

\newtheorem*{example*}{Example}
\renewcommand{\cal}{\mathcal}

\hypersetup{
    colorlinks,
    linkcolor={violet!50!black},
    citecolor={blue!50!black},
    urlcolor={blue!80!black}
}

\def\phi{\varphi}
\newcommand{\N}{\mathbb{N}}

\newcommand{\Bb}{\mathscr{B}}
\newcommand{\Cc}{\mathscr{C}}
\newcommand{\Dd}{\mathscr{D}}
\newcommand{\Ee}{\mathscr{E}}

\newcommand{\Oof}{\mathcal{O}}

%command for the existential interpretation

%from prelims

\newcommand{\Ball}{\mathrm{Ball}}
\newcommand{\dist}{\mathrm{dist}}
\newcommand{\Ff}{\mathcal{F}}

\newcommand\wcol{{\rm wcol}}

\newcommand{\Pp}{\mathcal P}

\def\N{\mathbb N} %mathbb?
\def\epsilon{\varepsilon}
\def\eps{\varepsilon}
\newcommand{\Oh}{\Oof}
\renewcommand{\deg}{\mathsf{deg}}

\renewcommand{\leq}{\leqslant}
\renewcommand{\geq}{\geqslant}

\renewcommand{\setminus}{-}

\newcommand{\FO}{\mathsf{FO}}
\newcommand{\MSO}{\mathsf{MSO}}
\newcommand{\CMSO}{\mathsf{CMSO}}

\newcommand{\Graphs}{\mathsf{Graphs}}
\newcommand{\arity}{\mathsf{arity}}

\newcommand{\Af}{\mathbb{A}}
\newcommand{\adj}{\mathsf{adj}}

\newcommand{\tup}{\bar}

\tikzstyle{vertex}=[circle,draw=black,fill=white,minimum size=0.15cm,inner sep=0pt, very thick]

\begin{document}

\newcommand{\funding}{
This work is a part of project BOBR that received funding from the European Research Council (ERC) under the European Union’s Horizon 2020 research and innovation programme (grant agreement No. 948057).}

\title{Graph classes through the lens of logic\thanks{\funding}}
\date{}
%\date{\small{\em{dedicated to Jarik Ne\v{s}et\v{r}il on the occasion of TODO}}}
\author{
  Michał Pilipczuk\thanks{Institute of Informatics, University of Warsaw, Poland. E-mail: \href{michal.pilipczuk@mimuw.edu.pl}{michal.pilipczuk@mimuw.edu.pl}}
}
\maketitle

\begin{abstract}
 Graph transformations definable in logic can be described using the notion of {\em{transductions}}. By understanding transductions as a basic embedding mechanism, which captures the possibility of encoding one graph in another graph by means of logical formulas, we obtain a new perspective on the landscape of graph classes and of their properties. The aim of this survey is to give a comprehensive presentation of this angle on structural graph theory.

 We first give a logic-focused overview of classic graph-theoretic concepts, such as treedepth, shrubdepth, treewidth, cliquewidth, twin-width, bounded expansion, and nowhere denseness. Then, we present recent developments related to notions defined purely through transductions, such as monadic stability, monadic dependence, and classes of structurally sparse graphs.
\end{abstract}

\begin{textblock}{20}(-2.1,7.2)
 \includegraphics[width=60px]{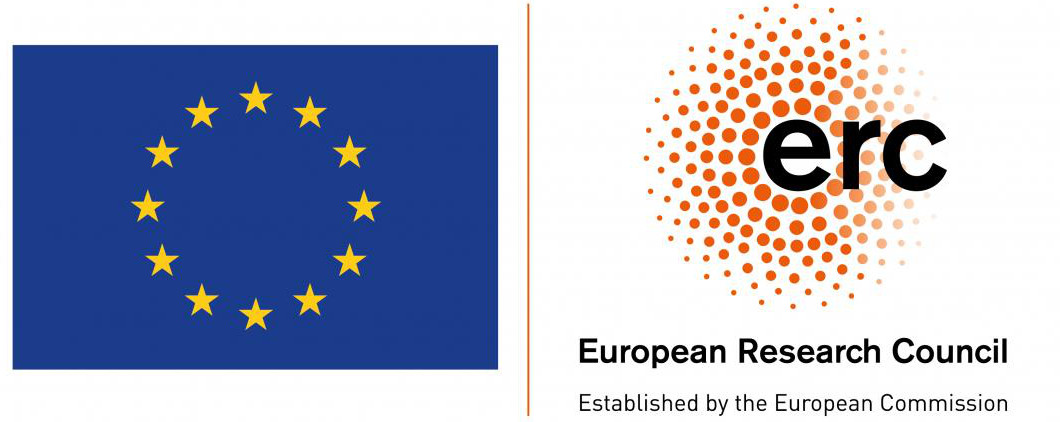}
\end{textblock}

\thispagestyle{empty}

\newpage

\tableofcontents
\thispagestyle{empty}

\newpage

\setcounter{page}{1}

\section{Introduction}\label{sec:introduction}

Arguably, the central theme of structural graph theory are duality theorems. Typically, they tie together concepts of structure with concepts of non-structure, by stating that every graph is either fundamentally structured, or contains an unstructured part.

\hyphenation{hie-rar-chi-cal}
\hyphenation{par-ti-cu-lar-ly}

Concepts of structure are often expressed through various forms of graph decompositions, which explain how to break a graph at hand into simpler pieces. Among those, decompositions in the form of {\em{trees}} are very common and useful, since they are capable of exposing a hidden hierarchical structure. However, weaker forms of decompositions can be applied to graphs that are fundamentally not tree-like; examples include {\em{covers}} or {\em{colorings}}. The general principle of using decompositions is of the local-to-global nature: understanding the simpler pieces into which the studied graph is decomposed allows us to get a grasp on its global structure. This local-to-global methodology is particularly applicable in the field of algorithm design, where the understanding takes the form of an efficient algorithm solving a computational problem. Namely, having understood the problem on the simpler pieces, we can piece together a solution for the whole graph, for instance using dynamic programming.

Concepts of non-structure typically take the form of concrete, complicated combinatorial objects that are embedded in the graph. Such {\em{obstructions}} are difficult to break using the adopted notion of decomposition, and therefore they witness the non-existence of decompositions with low values of relevant parameters. In the algorithmic context, obstructions can be used for constructing lower bound reductions.

With these intuitions in mind, a typical duality theorem in structural graph theory looks as follows:
\begin{center}
 {\em{If a graph $G$ does not contain any obstruction $\cal O$, then it admits a decomposition $\cal D$.}}
\end{center}
There are dozens of duality statements of this form present in the literature, differing by the considered kinds of decompositions and obstructions. The common first step in designing such a duality theorem is to fix a notion of {\em{embedding}}: in what way an obstruction can be contained in the studied graph? Having understood the considered notion of embedding and the type of obstructions that we speak about, it often follows what kind of decompositions should be dual to them. However, pinpointing this correspondence in precise terms is the truly interesting essence of structural graph theory.

Admittedly, the picture painted in the four paragraphs above may seem very vague and abstract. So let us make it more concrete by taking the theory of Graph Minors as an example. Here, the underlying notion of embedding is the {\em{minor order}}. Recall that a graph $H$ is a minor of a graph $G$ if one can map vertices of $H$ to disjoint connected subgraphs of $G$ so that the adjacency relation is preserved in the following sense: if vertices $u$ and $v$ are adjacent in $H$, then there is also an edge connecting the subgraphs of $G$ corresponding to $u$ and $v$. Thus, this is an embedding notion that is fundamentally of topological nature: one may imagine that every vertex of $H$ can be ``stretched'' to a connected subgraph of $G$.

It is therefore no surprise that adopting the minor order as the notion of embedding leads to a theory full of duality theorems involving topology. The most well-known is the Kuratowski-Wagner Theorem~\cite{Kuratowski30,Wagner37} that characterizes planar graphs as exactly those graphs that exclude $K_{3,3}$ and $K_5$ as minors. Here,  planarity --- the existence of an embedding in the plane without crossings --- could be liberally understood as a form of decomposition, while finding $K_{3,3}$ or $K_5$ as a minor is an obstruction to planarity. The theory of Graph Minors was developed by Robertson and Seymour in a monumental series of papers, culminating in the proof of {\em{Wagner's Conjecture}}~\cite{GM20}: there is no infinite anti-chain in the minor order on finite graphs. Along the way, Robertson and Seymour proved a number of fundamental duality statements. For instance, the {\em{Structure Theorem}}~\cite{GM16} is a generalization of Kuratowski-Wagner Theorem that explains the structure in graphs excluding a fixed graph $H$ as a minor. Namely, without going into technical details, every such graph $G$ can be decomposed into {\em{bags}} in a tree-like manner so that (i) the intersection of any two neighboring bags is of bounded size, and (ii) except for a bounded number of aberrations, every bag consists of a subgraph that can be embedded in a fixed surface. Another duality theorem in this spirit is the {\em{Grid Theorem}}~\cite{GM5}, which says that the decomposition above takes a very simple form when $H$ is a planar graph: simply, every bag is of constant size, implying that the whole graph has bounded {\em{treewidth}}.

The adopted notion of embedding reflects the topological viewpoint on graphs, roughly regarding them  as topological spaces consisting of points joined by segments. Therefore, the obtained theory is naturally applicable to algorithmic problems of topological character. For instance, Robertson and Seymour~\cite{GM13} gave an $\Oh_k(n^3)$-time\footnote{For a parameter $p$, the $\Oh_p(\cdot)$ notation hides factors that may depend on $p$. Also, whenever a graph is clear from the context, by $n$ we denote the number of its vertices.} algorithm for the {\em{Disjoint Paths}} problem: given a graph $G$ with specified {\em{terminal vertices}} $s_1,\ldots,s_k,t_1,\ldots,t_k$, decide whether there are vertex-disjoint paths $P_1,\ldots,P_k$ where $P_i$ connects $s_i$ with $t_i$. This is just one of very many examples of efficient algorithms that can be designed using the combinatorial advances of the Graph Minors project. Presenting an overview of those advances and applications would be worth a survey of its own. However, this is not the topic of this survey.

\medskip

Instead, let us point out that the topological viewpoint on graphs, exemplified by adopting the minor order as the basic embedding notion, makes sense in some real-life settings, while in many others it does~not. For instance, if the input graph models a transportation network, then stretching an edge into a path can be just described as introducing waypoints on a road connection, and thus is very sensible. On the other hand, if the graph is a social network, then the operation of replacing the friendship relation between two people by a path of friendships is rather absurd.

In this latter example, we rather view a graph as a {\em{database}}: it consists of a universe of objects that can be pairwise in relation or not. The first association with databases that comes to mind are {\em{query languages}}: formalisms for describing problems of selection of (tuples of) objects satisfying a certain property. What would be then a query language suitable for graphs? Theoretical computer science has already answered this question a long time ago, by considering various kinds of {\em{logic}} on graphs, and on other combinatorial structures. Namely, logic provides a robust, flexible, and formal language for expressing ~properties.

In this survey, we present an overview of an on-going multi-faceted project of understanding the landscape of structural graph theory from the point of view of logic. More precisely, we adopt {\em{First-Order transductions}} --- a notion of graph transformations definable in First-Order logic --- as the basic notion of embedding. Having done this, it is natural to investigate various decomposition and duality statements: What does it mean that some obstruction cannot be logically embedded in a graph? What kind of decompositions can be expected assuming lack of such logically-defined obstructions? What can we say about the structure of a graph assuming that it can be logically embedded into another, well-structured graph? Can these tools be used to design efficient algorithms for computational problems connected to logic? It turns out that asking these questions leads to a fascinating and still largely uncharted area of structural graph theory. This area brings together multiple branches of mathematics and computer science, like classic graph theory, algorithmics, discrete geometry, learning theory, and model theory (both finite and~infinite).

The goal of this survey is {\em{not}} to present a detailed description of all the corners of the area, {\em{nor}} to delve into technical discussions or involved proofs. We rather aim to touch many different topics, in order to highlight analogies, recurring themes, and concepts of a general nature. Therefore, while many works were unfortunately left out from the discussion for the sake of brevity and of simplicity of the narrative, we hope that this survey can serve as an invitation to various more specific subjects that the reader may explore on their own.

\paragraph{Overview of the content.} In \cref{sec:prelims}, we recall all the basic notions and definitions, and in particular we discuss the notion of transductions. Throughout this survey we mostly focus on First-Order logic~$\FO$, but Monadic Second-Order logic $\MSO$ and its variants will also be considered.

In \cref{sec:classic}, we present concepts from more classic areas of structural graph theory, and we discuss them from the adopted viewpoint of logic. The naming ``classic'' might be a bit misleading here, for we also describe rather recent developments, for instance the theory of {\em{Sparsity}} of Ne\v{s}et\v{r}il and Ossona de Mendez and the graph parameter {\em{twin-width}}.

Next, in \cref{sec:new} we discuss concepts of more logical nature, defined using transductions. We start with the heart of the theory: the notions
of {\em{monadic dependence}} and {\em{monadic stability}}. We describe how introducing these two notions organizes the whole landscape, present the recent advances on decomposition tools for monadically stable and monadically dependent graph classes, and mention algorithmic applications of these tools, particularly in the context of the model-checking problem for First-Order logic. Second, we discuss {\em{structurally sparse classes}} --- classes that can be transduced from classes of sparse graphs --- and give an overview of known structural results.

Finally, in \cref{sec:outlook} we outline three research directions that we personally find particularly interesting.

\cref{fig:main} depicts all the main concepts that we are going to discuss, and relations between them. The remainder of this survey can be regarded as a meticulous discussion of this figure.

\definecolor{tdcol}{RGB}{0,0,255}
\definecolor{pwcol}{RGB}{0,85,170}
\definecolor{twcol}{RGB}{0,170,85}
\definecolor{mfcol}{RGB}{0,255,0}
\definecolor{wwcol}{RGB}{85,170,0}
\definecolor{becol}{RGB}{170,85,0}
\definecolor{ndcol}{RGB}{255,0,0}

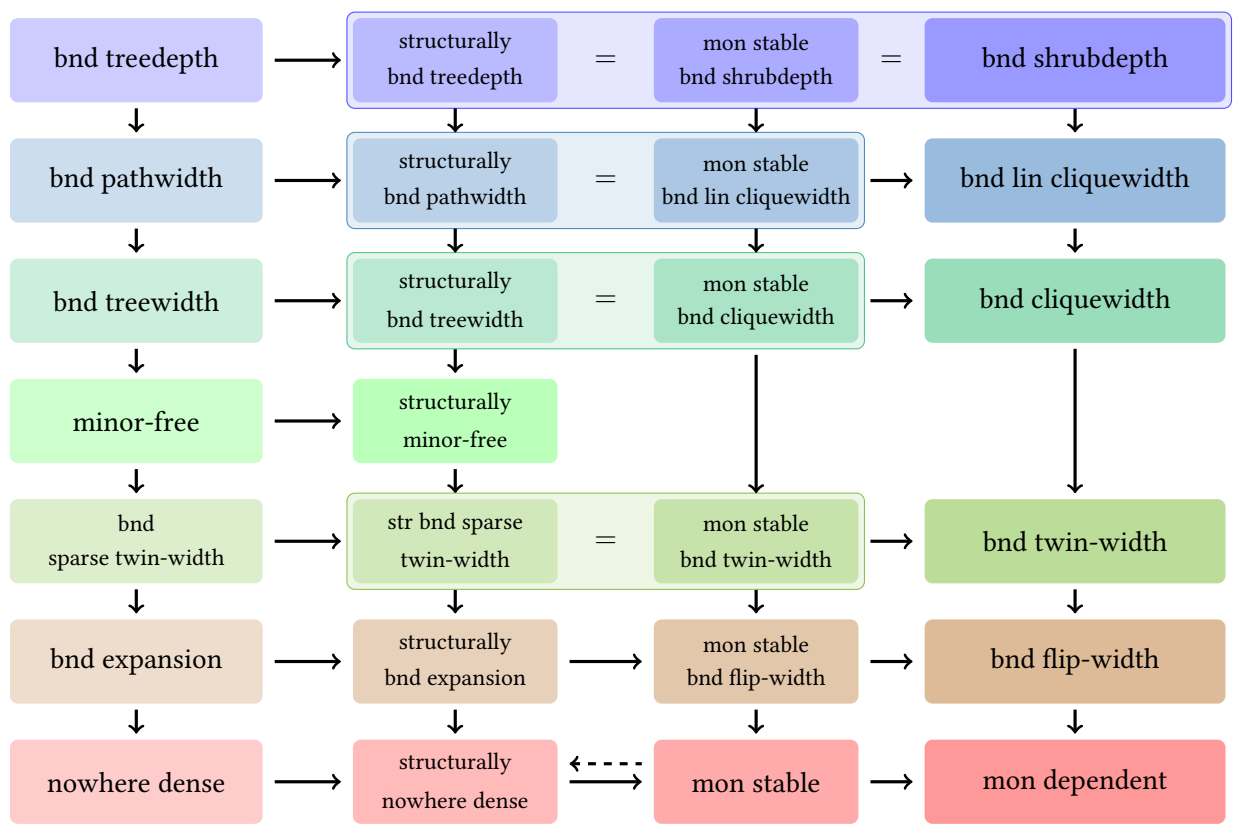
\begin{figure}[htpb]
\centering
\begin{tikzpicture}
\begin{scope}[scale=0.8]

 \begin{scope}[shift={(-7.8,-4)}]
 \draw[very thick,<-] (0,0.8) -- (0,1.2);
 \fill[twcol!20,rounded corners=3] (-2.1,-0.7) rectangle (2.1,0.7);
 \node at (0,0) {bnd treewidth};
 \end{scope}

 \begin{scope}[shift={(-7.8,-6)}]
 \draw[very thick,<-] (0,0.8) -- (0,1.2);
 \fill[mfcol!20,rounded corners=3] (-2.1,-0.7) rectangle (2.1,0.7);
 \node at (0,0) {minor-free};
 \end{scope}

 \begin{scope}[shift={(-7.8,0)}]
 \fill[tdcol!20,rounded corners=3] (-2.1,-0.7) rectangle (2.1,0.7);
 \node at (0,0) {bnd treedepth};
 \end{scope}

 \begin{scope}[shift={(-7.8,-2)}]
 \draw[very thick,<-] (0,0.8) -- (0,1.2);
 \fill[pwcol!20,rounded corners=3] (-2.1,-0.7) rectangle (2.1,0.7);
 \node at (0,0) {bnd pathwidth};
 \end{scope}

 \begin{scope}[shift={(-7.8,-10)}]
 \draw[very thick,<-] (0,0.8) -- (0,1.2);
 \fill[becol!20,rounded corners=3] (-2.1,-0.7) rectangle (2.1,0.7);
 \node at (0,0) {bnd expansion};
 \end{scope}

 \begin{scope}[shift={(-7.8,-12)}]
 \draw[very thick,<-] (0,0.8) -- (0,1.2);
 \fill[ndcol!20,rounded corners=3] (-2.1,-0.7) rectangle (2.1,0.7);
 \node at (0,0) {nowhere dense};
 \end{scope}
%
%  \begin{scope}[shift={(-7.8,-14)}]
%  \draw[very thick,<-] (0,0.8) -- (0,1.2);
%  \fill[black!5,rounded corners=3] (-2.1,-0.7) rectangle (2.1,0.7);
%  \node at (0,0) {weakly sparse};
%  \end{scope}

 \begin{scope}[shift={(2.5,-12)}]
 \draw[very thick,<-] (-1.9,0) -- (-3.1,0);
 \fill[ndcol!33,rounded corners=3] (-1.7,-0.7) rectangle (1.7,0.7);
 \node at (0,0) {mon stable};
 \end{scope}

%  \begin{scope}[shift={(2.5,-14)}]
%  \draw[very thick,<-] (0,0.8) -- (0,1.2);
%  \fill[black!15,rounded corners=3] (-1.7,-0.7) rectangle (1.7,0.7);
%  \node at (0,0) {edge-stable};
%  \end{scope}

 \begin{scope}[shift={(7.8,-12)}]
 \draw[very thick,<-] (-2.7,0) -- (-3.4,0);
 \fill[ndcol!40,rounded corners=3] (-2.5,-0.7) rectangle (2.5,0.7);
 \node at (0,0) {mon dependent};
 \end{scope}

 \begin{scope}[shift={(7.8,0)}]
% \draw[very thick,<-] (-2.7,0) -- (-13.1,0);
 \fill[tdcol!40,rounded corners=3] (-2.5,-0.7) rectangle (2.5,0.7);
 \node at (0,0) {bnd shrubdepth};
 \end{scope}

 \begin{scope}[shift={(7.8,-2)}]
 \draw[very thick,<-] (0,0.8) -- (0,1.2);
 \draw[very thick,<-] (-2.7,0) -- (-3.4,0);
 \fill[pwcol!40,rounded corners=3] (-2.5,-0.7) rectangle (2.5,0.7);
 \node at (0,0) {bnd lin cliquewidth};
 \end{scope}

 \begin{scope}[shift={(7.8,-4)}]
 \draw[very thick,<-] (0,0.8) -- (0,1.2);
 \draw[very thick,<-] (-2.7,0) -- (-3.4,0);
 \fill[twcol!40,rounded corners=3] (-2.5,-0.7) rectangle (2.5,0.7);
 \node at (0,0) {bnd cliquewidth};
 \end{scope}

 \begin{scope}[shift={(-7.8,-8)}]
 \draw[very thick,<-] (0,0.8) -- (0,1.2);
 \fill[wwcol!20,rounded corners=3] (-2.1,-0.7) rectangle (2.1,0.7);
 \node at (0,0.3) {\footnotesize{bnd}};
 \node at (0,-0.3) {\footnotesize{sparse twin-width}};
 \end{scope}

 \begin{scope}[shift={(7.8,-8)}]
 \draw[very thick,<-] (0,0.8) -- (0,2+1.2);
 \draw[very thick,<-] (-2.7,0) -- (-3.4,0);
 \fill[wwcol!40,rounded corners=3] (-2.5,-0.7) rectangle (2.5,0.7);
 \node at (0,0) {bnd twin-width};
 \end{scope}

 \begin{scope}[shift={(7.8,-10)}]
 \draw[very thick,<-] (0,0.8) -- (0,1.2);
 \draw[very thick,<-] (-2.7,0) -- (-3.4,0);
 \draw[very thick,<-] (0,-1.2) -- (0,-0.8);
 \fill[becol!40,rounded corners=3] (-2.5,-0.7) rectangle (2.5,0.7);
 \node at (0,0) {bnd flip-width};
 \end{scope}
%
%  \begin{scope}[shift={(2.5,0)}]
%  \draw[very thick,<-] (-1.9,0) -- (-3.1,0);
%  \draw[very thick,<-] (-1.9,0) -- (-7.7,0);
%  \fill[tdcol!33,rounded corners=3] (-1.7,-0.7) rectangle (1.7,0.7);
%  \node at (0,0.4) {\footnotesize{mon stable}};
%  \node at (0,-0.4) {\footnotesize{bnd shrubdepth}};
%  \end{scope}
%
%  \begin{scope}[shift={(2.5,-2)}]
%  \draw[very thick,<-] (0,0.8) -- (0,1.2);
%  \draw[very thick,<-] (-1.9,0) -- (-3.1,0);
%  \fill[pwcol!33,rounded corners=3] (-1.7,-0.7) rectangle (1.7,0.7);
%  \node at (0,0.4) {\footnotesize{mon stable}};
%  \node at (0,-0.4) {\footnotesize{bnd lin cliquewidth}};
%  \end{scope}
%
%  \begin{scope}[shift={(2.5,-4)}]
%  \draw[very thick,<-] (0,0.8) -- (0,1.2);
%  \draw[very thick,<-] (-1.9,0) -- (-3.1,0);
%  \fill[twcol!33,rounded corners=3] (-1.7,-0.7) rectangle (1.7,0.7);
%  \node at (0,0.2) {\footnotesize{mon stable}};
%  \node at (0,-0.2) {\footnotesize{bnd cliquewidth}};
%  \end{scope}
%
%  \begin{scope}[shift={(2.5,-8)}]
%  \draw[very thick,<-] (0,0.8) -- (0,2+1.2);
%  \draw[very thick,<-] (-1.9,0) -- (-3.1,0);
%  \fill[twcol!33,rounded corners=3] (-1.7,-0.7) rectangle (1.7,0.7);
%  \node at (0,0.4) {\footnotesize{mon stable}};
%  \node at (0,-0.4) {\footnotesize{bnd twin-width}};
%  \end{scope}

 \begin{scope}[shift={(2.5,-10)}]
 \draw[very thick,<-] (0,0.8) -- (0,1.2);
 \draw[very thick,<-] (0,-1.2) -- (0,-0.8);
 \draw[very thick,<-] (-1.9,0) -- (-3.1,0);
 \fill[becol!33,rounded corners=3] (-1.7,-0.7) rectangle (1.7,0.7);
 \node at (0,0.3) {\footnotesize{mon stable}};
 \node at (0,-0.3) {\footnotesize{bnd flip-width}};
 \end{scope}

%  \begin{scope}[shift={(-2.5,-0)}]
%  \draw[very thick,<-] (-1.9,0) -- (-3,0);
%  \fill[tdcol!27,rounded corners=3] (-1.7,-0.7) rectangle (1.7,0.7);
%  \node at (0,0.4) {\footnotesize{structurally}};
%  \node at (0,-0.4) {\footnotesize{bnd treedepth}};
%  \end{scope}
%
%  \begin{scope}[shift={(-2.5,-2)}]
%  \draw[very thick,<-] (0,0.8) -- (0,1.2);
%  \draw[very thick,<-] (-1.9,0) -- (-3,0);
%  \fill[pwcol!27,rounded corners=3] (-1.7,-0.7) rectangle (1.7,0.7);
%  \node at (0,0.4) {\footnotesize{structurally}};
%  \node at (0,-0.4) {\footnotesize{bnd pathwidth}};
%  \end{scope}
%
%  \begin{scope}[shift={(-2.5,-4)}]
%  \draw[very thick,<-] (0,0.8) -- (0,1.2);
%  \draw[very thick,<-] (-1.9,0) -- (-3,0);
%  \fill[twcol!27,rounded corners=3] (-1.7,-0.7) rectangle (1.7,0.7);
%  \node at (0,0.4) {\footnotesize{structurally}};
%  \node at (0,-0.4) {\footnotesize{bnd treewidth}};
%  \end{scope}

 \begin{scope}[shift={(-2.5,-6)}]
 \draw[very thick,<-] (0,0.8) -- (0,1.2);
 \draw[very thick,<-] (-1.9,0) -- (-3,0);
 \fill[mfcol!27,rounded corners=3] (-1.7,-0.7) rectangle (1.7,0.7);
 \node at (0,0.3) {\footnotesize{structurally}};
 \node at (0,-0.3) {\footnotesize{minor-free}};
 \end{scope}

 \begin{scope}[shift={(-2.5,-8)}]
 \draw[very thick,<-] (0,0.8) -- (0,1.2);
 \draw[very thick,<-] (-1.9,0) -- (-3,0);
 \fill[wwcol!27,rounded corners=3] (-1.7,-0.7) rectangle (1.7,0.7);
 \node at (0,0.3) {\footnotesize{str bnd sparse}};
 \node at (0,-0.3) {\footnotesize{twin-width}};
 \end{scope}

 \begin{scope}[shift={(-2.5,-10)}]
 \draw[very thick,<-] (0,0.8) -- (0,1.2);
 \draw[very thick,<-] (-1.9,0) -- (-3,0);
 \fill[becol!27,rounded corners=3] (-1.7,-0.7) rectangle (1.7,0.7);
 \node at (0,0.3) {\footnotesize{structurally}};
 \node at (0,-0.3) {\footnotesize{bnd expansion}};
 \end{scope}

 \begin{scope}[shift={(-2.5,-12)}]
 \draw[very thick,<-] (0,0.8) -- (0,1.2);
 \draw[very thick,<-] (-1.9,0) -- (-3,0);
 \fill[ndcol!27,rounded corners=3] (-1.7,-0.7) rectangle (1.7,0.7);
 \node at (0,0.3) {\footnotesize{structurally}};
 \node at (0,-0.3) {\footnotesize{nowhere dense}};
 \end{scope}

 \fill[tdcol!10,rounded corners=3] (-4.3,-0.8) rectangle (10.4,+0.8);
 \draw[tdcol!70,rounded corners=3] (-4.3,-0.8) rectangle (10.4,+0.8);

 \begin{scope}[shift={(-2.5,0)}]
 \draw[very thick,<-] (-1.9,0) -- (-3,0);
 \fill[tdcol!27,rounded corners=3] (-1.7,-0.7) rectangle (1.7,0.7);
 \node at (0,0.3) {\footnotesize{structurally}};
 \node at (0,-0.3) {\footnotesize{bnd treedepth}};
 \end{scope}

 \begin{scope}[shift={(2.5,0)}]
 \draw (-2.5,0) node {$=$};
 \fill[tdcol!33,rounded corners=3] (-1.7,-0.7) rectangle (1.7,0.7);
 \node at (0,0.3) {\footnotesize{mon stable}};
 \node at (0,-0.3) {\footnotesize{bnd shrubdepth}};
 \end{scope}

 \begin{scope}[shift={(7.8,0)}]
 \draw (-3.05,0) node {$=$};
 \fill[tdcol!40,rounded corners=3] (-2.5,-0.7) rectangle (2.5,0.7);
 \node at (0,0) {bnd shrubdepth};
 \end{scope}

 \fill[pwcol!10,rounded corners=3] (-4.3,-2-0.8) rectangle (4.3,-2+0.8);
 \draw[pwcol!70,rounded corners=3] (-4.3,-2-0.8) rectangle (4.3,-2+0.8);

 \begin{scope}[shift={(-2.5,-2)}]
 \draw[very thick,<-] (0,0.8) -- (0,1.2);
  \draw[very thick,<-] (-1.9,0) -- (-3,0);
 \fill[pwcol!27,rounded corners=3] (-1.7,-0.7) rectangle (1.7,0.7);
 \node at (0,0.3) {\footnotesize{structurally}};
 \node at (0,-0.3) {\footnotesize{bnd pathwidth}};
 \end{scope}

 \begin{scope}[shift={(2.5,-2)}]
 \draw[very thick,<-] (0,0.8) -- (0,1.2);
 \draw (-2.5,0) node {$=$};
 \fill[pwcol!33,rounded corners=3] (-1.7,-0.7) rectangle (1.7,0.7);
 \node at (0,0.3) {\footnotesize{mon stable}};
 \node at (0,-0.3) {\footnotesize{bnd lin cliquewidth}};
 \end{scope}

 \fill[twcol!10,rounded corners=3] (-4.3,-4-0.8) rectangle (4.3,-4+0.8);
 \draw[twcol!70,rounded corners=3] (-4.3,-4-0.8) rectangle (4.3,-4+0.8);

 \begin{scope}[shift={(-2.5,-4)}]
 \draw[very thick,<-] (0,0.8) -- (0,1.2);
 \draw[very thick,<-] (-1.9,0) -- (-3,0);
 \fill[twcol!27,rounded corners=3] (-1.7,-0.7) rectangle (1.7,0.7);
 \node at (0,0.3) {\footnotesize{structurally}};
 \node at (0,-0.3) {\footnotesize{bnd treewidth}};
 \end{scope}

 \begin{scope}[shift={(2.5,-4)}]
 \draw[very thick,<-] (0,0.8) -- (0,1.2);
 \draw (-2.5,0) node {$=$};
 \fill[twcol!33,rounded corners=3] (-1.7,-0.7) rectangle (1.7,0.7);
 \node at (0,0.3) {\footnotesize{mon stable}};
 \node at (0,-0.3) {\footnotesize{bnd cliquewidth}};
 \end{scope}

 \fill[wwcol!10,rounded corners=3] (-4.3,-8-0.8) rectangle (4.3,-8+0.8);
 \draw[wwcol!70,rounded corners=3] (-4.3,-8-0.8) rectangle (4.3,-8+0.8);

 \begin{scope}[shift={(-2.5,-8)}]
 \fill[wwcol!27,rounded corners=3] (-1.7,-0.7) rectangle (1.7,0.7);
 \node at (0,0.3) {\footnotesize{str bnd sparse}};
 \node at (0,-0.3) {\footnotesize{twin-width}};
 \end{scope}

 \begin{scope}[shift={(2.5,-8)}]
 \draw[very thick,<-] (0,0.8) -- (0,3.1);
 \draw (-2.5,0) node {$=$};
 \fill[wwcol!33,rounded corners=3] (-1.7,-0.7) rectangle (1.7,0.7);
 \node at (0,0.3) {\footnotesize{mon stable}};
 \node at (0,-0.3) {\footnotesize{bnd twin-width}};
 \end{scope}

 \draw[very thick, dashed,<-] (-0.6,-11.7) -- (0.6,-11.7);

% \draw[very thick, olive,rounded corners=5] (-12,-12.9) -- (4.75,-12.9) -- (4.75,-9) -- (10.4,-9) -- (10.4,2);
% \draw[very thick,dashed, olive,rounded corners=5] (4.75,-9) -- (4.75,-5) -- (10.4,-5);

\end{scope}
\end{tikzpicture}

\caption{Major properties of graph classes discussed in this survey. All properties in the left-most column are weakly sparse, all properties in the remaining three columns are $\FO$ ideals. Arrows represent implications between the properties.}\label{fig:main}
\end{figure}

\paragraph*{Acknowledgements.} The author would like to thank all his coauthors with whom he shared projects on topics related to this survey. Special thanks are due to Szymon Toru\'nczyk, for years of common work on Sparsity and on Density, and for thousands of hours of discussions.

\section{Preliminaries}\label{sec:prelims}

\subsection{Graph theory}

All graphs considered in this survey are finite, undirected, and simple (with no loops or parallel edges), unless otherwise stated.
We use standard graph-theoretic notation. In particular, for a graph $G$, by $V(G)$ and $E(G)$ we  denote the vertex set and the edge set of $G$, respectively. The {\em{distance}} (length of a shortest path) between vertices $u$ and $v$ in $G$ is denoted by $\dist^G(u,v)$, and the ball of radius $r$ around $u$ in $G$ is $\Ball^G_r[u]\coloneqq \{v\in V(G)~|~\dist^G(u,v)\leq r\}$. The superscript can be omitted if it is clear from the context.

\paragraph*{Classes and parameters.}
A {\em{graph class}} is just a set of graphs, typically infinite. Examples include {\em{planar graphs}} --- graphs embeddable in the plane without crossings --- or {\em{subcubic graphs}} --- graphs with maximum degree at most $3$. A graph class $\Cc$ is {\em{monotone}} if it is closed under taking subgraphs: if $G\in \Cc$ and $H$ is a subgraph of $G$, then also $H\in \Cc$. Similarly, $\Cc$ is {\em{hereditary}} if it is closed under taking induced~subgraphs.

As the reader will find out soon, within the considered theory it is graph classes, not individual graphs, that play the role of the most basic objects of study. Therefore, we will be also speaking about {\em{properties}} of graph classes, which are just sets of graph classes. Here, an example could be the property of having {\em{bounded degree}}: a graph class $\Cc$ has bounded degree if and only if there is a universal constant $c\in \N$ such that every graph in $\Cc$ has maximum degree at most $c$. Thus, the class of subcubic graphs has bounded degree, and so has also the class of graphs of maximum degree at most $10$, but not the class of planar~graphs.

More generally, we will often use the following construction to define properties of graph classes. A {\em{graph parameter}} is just a function $\pi\colon \Graphs\to \N$, where by $\Graphs$ we denote the class of all graphs. Then the property of having {\em{bounded $\pi$}} is defined as follows: $\Cc$ has bounded $\pi$ if there exists a constant $c$ such that $\pi(G)\leq c$ for every $G\in \Cc$. We will also denote
$$\pi(\Cc)\coloneqq\sup_{G\in \Cc}\pi(G),$$
and then $\Cc$ has bounded $\pi$ if and only if $\pi(\Cc)$ is finite.

An important instantiation of the principle described above is the definition of weakly sparse classes. We say that a graph class $\Cc$ is {\em{weakly sparse}} if there exists $t\in \N$ such that no graph in $\Cc$ contains the biclique $K_{t,t}$ as a subgraph. Equivalently, if $\omega^{\#}(G)$ denotes the largest $t$ such that $G$ contains $K_{t,t}$ as a subgraph, then $\Cc$ is weakly sparse if and only if $\Cc$ has bounded $\omega^{\#}$.

\paragraph*{Minors.}
At several points we will also refer to the minor order on graphs, which is the standard notion of an embedding for graphs that has a topological character. We say that a graph $H$ is a {\em{minor}} of a graph $G$ if there is a mapping $\eta$, called the {\em{minor model}}, that maps every vertex $u$ of $H$ to a connected subgraph $\eta(u)$ of $G$ so that subgraphs $\{\eta(u)\colon u\in V(H)\}$ are pairwise vertex-disjoint and the adjacency relation is preserved in the following sense: whenever $u$ and $v$ are adjacent in $H$, in $G$ there must be an edge with one endpoint in $\eta(u)$ and the other in $\eta(v)$. The subgraph $\eta(u)$ will be often called the {\em{branch set}} of $u$. An equivalent characterization is the following: $H$ is a minor of $G$ iff $H$ can be obtained from $G$ by a repeated use of vertex deletion, edge deletion, and edge contraction (the operation of taking two adjacent vertices $u$ and $v$, and replacing them with a new vertex, adjacent to all the former neighbors of $u$ or $v$).

A {\em{subdivision}} of a graph $G$ is a graph obtained from $G$ by replacing every edge with a path (of any length). We say that $G$ contains $H$ as a {\em{topological minor}} if $G$ contains a subdivision of $H$ as a subgraph. It can be easily seen that if $H$ is a topological minor of $G$, then $H$ is also a minor of $G$, but the converse implication does not hold: for instance, subcubic graphs contain all graphs as minors, but no subcubic graph contains $K_5$ as a topological minor. A {\em{$d$-subdivision}} of $G$ is the graph obtained from $G$ by replacing every edge of $G$ with a path of length $d+1$ (thus, having $d$ internal vertices).

\subsection{Logic on graphs}

\paragraph*{Relational structures.} In order to introduce logic on graphs, it will be convenient to ground the discussion in the more general framework of relational structures. A relational structure is nothing else than a set, called the {\em{universe}}, equipped with one or more {\em{relations}} --- sets of tuples of elements. To specify what relations are present in a structure, we use the notion of a signature: a {\em{signature}} $\Sigma$ is a set of {\em{predicates}} (relation names), where every predicate $R\in \Sigma$ comes with a prescribed integer $\arity(R)\in \N$. The meaning is that $R$ may speak about tuples of $\arity(R)$ elements. For instance, if $\arity(R)=2$ then $R$ is a {\em{binary predicate}} and signifies a relation between pairs of elements. And if $\arity(R)=1$, then $R$ is a {\em{unary predicate}} that applies to single elements, and thus $R$ just selects a subset of elements of the universe.
Given a signature $\Sigma$, a {\em{$\Sigma$-structure}} $\Af$ consists of a universe $U(\Af)$ and, for each predicate $R\in \Sigma$, its {\em{interpretation}} $R^{\Af}\subseteq U(\Af)^{\arity(R)}$, which is just a set of $\arity(R)$-tuples of elements. Note that the tuples are considered ordered: for instance, if $R$ is a binary predicate, then it may happen that $(u,v)\in R^{\Af}$ but $(v,u)\notin R^{\Af}$.

The formalism of relational structures can seem abstract at first glance, so let us substantiate it with an example: the encoding of graphs. To view a graph $G$ as a relational structure, we simply take the universe to be the vertex set, and equip it with one binary relation $\adj^G(\cdot,\cdot)$ that selects those pairs of vertices that are adjacent. Thus, the signature consists of one binary predicate $\adj$. Note that a priori, we could allow $\adj^G(\cdot,\cdot)$ to be not necessarily symmetric or irreflexive; this would be suitable for encoding directed graphs, even possibly with loops. Therefore, when speaking about undirected simple graphs, we will always assume that they are encoded as above with the relation $\adj^G(\cdot,\cdot)$ being symmetric and irreflexive. This will be called the {\em{adjacency encoding}} of a graph.

The point is that the formalism of relational structures allows us to robustly equip graphs with additional features. For example, we will commonly use the concept of {\em{colored graphs}}, which are just graphs equipped with several highlighted vertex subsets, called {\em{colors}}. To model colored graphs as relational structures, we can simply extend the signature with several unary  predicates, one for each color, and use those predicates to mark the relevant subsets. Note that maybe slightly counter-intuitively, the colors are not necessarily disjoint nor saturate the vertex set: one vertex can be of multiple colors at the same time, or even of no color at all.

Finally, with every structure $\Af$ we can naturally associate a graph, called the {\em{Gaifman graph}} of $\Af$. The vertex set of the Gaifman graph of $\Af$ is the universe $U(\Af)$, and two distinct elements $u,v\in U(\Af)$ are adjacent if and only if they appear together in some tuple in some relation in $\Af$. In other words, for every tuple of every relation present in $\Af$, we put a clique on the elements present in this tuple.

\paragraph*{First-Order logic.} We may now introduce the First-Order logic $\FO$ on relational structures; this logic applied to graphs is the main point of focus in this survey. Fix a signature $\Sigma$. We now describe the logic $\FO[\Sigma]$ that speaks about $\Sigma$-structures. In this logic, we have variables for single elements of the universe, and we can (i) test predicates on those variables, (ii) make new formulas from simpler formulas using negation, conjunction, and disjunction, and (iii) introduce new variables using existential and universal quantification. More formally, $\FO[\Sigma]$ consists of formulas defined inductively as follows:
\begin{itemize}[nosep]
 \item For every predicate $R\in \Sigma$, say of arity $k$, there is an {\em{atomic formula}} $R(x_1,\ldots,x_k)$, where $x_1,\ldots,x_k$ is any $k$-tuple of variables. Thus, if $\Sigma$ is the signature of graphs, we can test adjacency of two vertices. There is also an atomic formula $x=y$ that tests whether $x$ and $y$ are evaluated to the same element.
 \item If $\alpha$ and $\beta$ are formulas, then we can construct formulas $\neg \alpha$, $\alpha\wedge \beta$, and $\alpha \vee \beta$.
 \item If $\alpha$ is a formula and $y$ is a variable, then we can construct formulas $\exists_y\, \alpha$ and $\forall_y\, \alpha$.
\end{itemize}
Note that thus, a formula may use some variables that are not bound by any quantifier. We call those {\em{free variables}} and use the notation $\varphi(\tup x)$ to signify that $\tup x$ is the tuple of free variables of $\varphi$. Thus, $\varphi$ speaks about properties of $k$-tuples of elements of the universe, where $k$ is the length of $\tup x$. A formula without free variables is a {\em{sentence}}. Thus, a sentence just expresses some property of a $\Sigma$-structure.

While the {\em{syntax}} of $\FO[\Sigma]$ is explained above, the {\em{semantics}} --- how formulas are evaluated in structures --- is as expected. That is, if $\Af$ is a structure, then:
\begin{itemize}[nosep]
 \item for an atomic formula $\phi(x,y)=(x=y)$ and elements $u,v\in U(\Af)$, we have that $\phi(u,v)$ holds in~$\Af$ iff $u=v$;
 \item for an atomic formula $R(\tup x)$ and a tuple $\tup u\in U(\Af)^{\tup x}$, we have that $R(\tup u)$ holds in $\Af$ iff $\tup u\in R^\Af$;
 \item for a formula $\phi(\tup x)=\alpha(\tup x)\wedge \beta(\tup x)$ and a tuple $\tup u\in U(\Af)^{\tup x}$, we have that $\phi(\tup u)$ holds in $\Af$ iff both $\alpha(\tup u)$ and $\beta(\tup u)$ hold, and similarly for negation and disjunction; and
 \item for a formula $\phi(\tup x)=\exists_y\, \alpha(\tup x,y)$ and a tuple $\tup u\in U(\Af)^{\tup x}$, we have that $\phi(\tup u)$ holds in $\Af$ iff there is an element $v$ such that $\alpha(\tup u,v)$ holds in $\Af$, and similarly for universal quantification.
\end{itemize}
Here, we use the notation $\tup u\in U(\Af)^{\tup x}$: formally, we consider $\tup x$ to be a set of variables, and then $\tup u$ is an {\em{evaluation}} of those variables in $U(\Af)$, which is just a function from $\tup x$ to the elements of the universe of $\Af$. If $\varphi(\tup x)$ is a formula and $\tup u\in U(\Af)^{\tup x}$ is an evaluation of its free variables, then we write $\Af\models \varphi(\tup u)$ to express that $\varphi(\tup u)$ holds in $\Af$.

Let us make a few examples to make those abstract definitions more familiar for readers with less experience with logic. In all examples, we assume the signature of (the adjacency encoding of) graphs, consisting of one binary predicate $\adj$, except for the last example, where we also use colors. More generally, throughout this article, whenever we speak about $\FO$ or any other logic without specifying the signature, we always mean the signature of the adjacency encodings of graphs.

Here is a formula with three free variables that expresses that three vertices form a triangle:
\begin{align*}
\mathsf{triangle}(x,y,z) & = \adj(x,y)\wedge \adj(y,z)\wedge \adj(z,x).
\end{align*}
And here is a sentence that states that a graph is triangle-free:
\begin{align*}
\mathsf{triangleFree} & = \neg \exists_x\, \exists_y\, \exists_z\, \mathsf{triangle}(x,y,z).
\end{align*}
Next, here are formulas that express that the distance between a pair of vertices is at most $1$ and at most~$3$, respectively:
\begin{align*}
\mathsf{dist}_{\leq 1}(x,y) & = (x=y)\vee \adj(x,y);\\
\mathsf{dist}_{\leq 3}(x,y) & = \exists_s\, \exists_t\, \mathsf{dist}_{\leq 1}(x,s)\wedge \mathsf{dist}_{\leq 1}(s,t)\wedge \mathsf{dist}_{\leq 1}(t,y).
\end{align*}
Finally, here is a sentence that works on graphs with two colors --- red and blue, highlighted by unary predicates $\mathsf{Red}$ and $\mathsf{Blue}$, respectively --- which states that the red vertices distance-$3$ dominate the blue vertices. (That is, every blue vertex is at distance at most $3$ from some red vertex.)
\begin{align*}
 \mathsf{redDominatesBlue} & = \forall_y\, \mathsf{Blue}(y)\Rightarrow \left[\exists_x\, \mathsf{Red}(x)\wedge \mathsf{dist}_{\leq 3}(x,y)\right].
\end{align*}
Note that we used implication, which can be expressed using disjunction and negation.

\paragraph*{Monadic Second-Order logic.} We now turn our attention to a more expressive logic: Monadic-Second-Order logic $\MSO$. The idea is that we extend First-Order logic with the possibility of quantifying over {\em{subsets}} of elements, which effectively means introducing new unary predicates. More precisely, if $\Sigma$ is a signature, then $\MSO[\Sigma]$ extends $\FO[\Sigma]$ by the following constructs:
\begin{itemize}[nosep]
 \item there are also variables for sets of elements, called {\em{monadic}} and denoted by convention by capital letters $X,Y,Z,\ldots$;
 \item there are atomic formulas of the form $x\in X$, where $x$ is an element variable and $X$ is a monadic variable, that check membership; and
 \item we allow quantification over monadic variables as well: if $\alpha$ is a formula, then so are $\exists_X\, \alpha$ and $\forall_X\, \alpha$.
\end{itemize}
The semantics of these constructs are as expected.

$\MSO$ over the signature of the adjacency encoding of graphs is usually called $\MSO_1$ in the literature, and in effect this boils down to extending $\FO$ on graphs by the possibility of quantifying over subsets of vertices. The reader may have also come across the more expressive logic $\MSO_2$, where we additionally allow quantification over subsets of edges. This might be a bit confusing at first glance: there is only one $\MSO$ over relational structures, but two different $\MSO$s on graphs? The answer is actually quite simple: these are not two different $\MSO$s, but two different ways of encoding a graph as a relational structure. Namely, besides the adjacency encoding, we can also consider the {\em{incidence encoding}}: the universe consists of all the vertices {\em{and}} all the edges of the graph, there are two unary relations selecting the vertices and the edges, respectively, and one binary incidence relation $\mathsf{inc}(\cdot,\cdot)$ with the following meaning: for a vertex $u$ and an edge $e$, $\mathsf{inc}(u,e)$ holds iff $u$ is an endpoint of $e$. Now, considering $\MSO$ on such relational structures boils down to allowing quantification both over subsets of vertices and over subsets of edges in~graphs.

Let us look at some examples. Below is an $\MSO_1$ sentence that verifies whether a graph is $3$-colorable. It uses an auxiliary subformula that checks whether a subset of vertices is independent (that is, induces an edgeless~subgraph).
\begin{align*}
 \mathsf{ind}(X) & = \forall_x\, \forall_y\, (x\in X\wedge y\in X)\Rightarrow \neg \adj(x,y);\\
 \mathsf{3col} & = \exists_X\, \exists_Y\, \exists_Z\, \mathsf{ind}(X)\wedge \mathsf{ind}(Y)\wedge \mathsf{ind}(Z)\wedge \left[\,\forall_x\, x\in X\vee x\in Y\vee x\in Z\,\right].
\end{align*}
Next, we give an $\MSO_2$ sentence that verifies whether a graph is Hamiltonian (contains a Hamiltonian cycle). Here, we use the following characterization: a Hamiltonian cycle is a subset of edges $F$ that connects the whole vertex set and every vertex is incident to exactly two vertices of $F$. In quantification, we use shorthands $x\in V$, $f\in E$, $X\subseteq V$, and $F\subseteq E$ to signify whether the quantification ranges over vertices, edges, vertex subsets, or edge subsets. Also, $x\notin X$ is a shorthand for $\neg (x\in X)$, and $x\neq y$ for $\neg (x=y)$.
\begin{align*}
 \mathsf{deg2}(x,F) & = \exists_{f\in E}\, \exists_{f'\in E}\, \mathsf{inc}(x,f)\wedge \mathsf{inc}(x,f')\wedge (f\neq f') \wedge \left[\forall_{f''\in E}\,\mathsf{inc}(x,f'')\Rightarrow (f''=f\vee f''=f')\right];\\
 \mathsf{conn}(F) & = \forall_{X\subseteq V}\, \left[(\exists_{x\in V}\, x\in X) \wedge (\exists_{x\in V}\, x\notin X)\right]\Rightarrow\\
 & \qquad\qquad \quad\left[\exists_{x\in V}\,\exists_{y\in V}\,\exists_{f\in E}\ x\in X\wedge y\notin X \wedge \mathsf{inc}(x,f) \wedge\mathsf{inc}(y,f)\wedge f\in F\right];\\
 \mathsf{Ham} & = \exists_{F\subseteq E}\, \left[\mathsf{conn}(F)\wedge \forall_{x\in V}\,\mathsf{deg2}(x,F)\right].
\end{align*}
For readers familiar with methods of finite model theory, it is an easy exercise to use Ehrenfeucht-Fra\"isse games to prove that 3-colorability is not expressible in $\FO$ and Hamiltonicity is not expressible in~$\MSO_1$.

We will sometimes also speak about logic $\CMSO$, which extends $\MSO$ with modular counting by allowing atomic formulas of the form $|X|\equiv a\bmod m$, where $X$ is a monadic variable, $m$ is a positive integer, and $a\in \{0,1,\ldots,m-1\}$. Thus, we may now check that a certain set is of odd size, or its size is divisible by $3$, etc. This naturally gives rise to logic $\CMSO_1$ and $\CMSO_2$. As we will see, in some contexts it is $\CMSO$ that appears to be the most natural choice of logic, rather than $\MSO$.

\paragraph*{Model-checking.} Once we have understood what kind of logic on graphs we are going to consider, we can also discuss relevant computational problems. Among those, probably the most important is the model-checking problem, defined as follows.

\begin{definition}
 Let $\cal L$ be a logic on graphs. In the {\em{model-checking}} problem for $\cal L$, we are given a graph $G$ and a sentence $\varphi$ of $\cal L$, and the question is to decide whether $G\models \varphi$.
\end{definition}

Within this survey we will consider $\cal L\in \{\FO,\MSO_1,\MSO_2,\CMSO_1,\CMSO_2\}$, but of course the problem can be stated for other logics as well.

Since the problems of deciding whether the input graph is $3$-colorable or Hamiltonian are $\mathsf{NP}$-hard, the model-checking problem for $\MSO_1$ and $\MSO_2$ is $\mathsf{NP}$-hard already for fixed formulas of constant length. The situation is a bit different for model-checking for $\FO$, as it can be solved by brute-force in time $n^{\Oh(\|\phi\|)}$, where by $\|\phi\|$ we denote the length of the sentence $\phi$. Indeed, it suffices to process $\phi$ using a recursive algorithm, and for every quantifier just branch into $n$ subproblems, in each fixing a different evaluation of the quantified variable. From the point of view of parameterized complexity, this places the model-checking problem for $\FO$ parameterized by the sentence $\phi$ in complexity class $\mathsf{XP}$: it can be solved in polynomial time for every fixed value of the parameter. A natural question is whether the problem is actually {\em{fixed-parameter tractable}} (or equivalently, belongs to the class $\mathsf{FPT}$), that is, whether it can be solved in time\footnote{In the formal definition of fixed-parameter tractability, it is usually assumed that the dependence of the running time on the parameter is bounded by a {\em{computable}} function, which here would mean that the constant hidden in the $\Oh_{\phi}(\cdot)$ notation is computable from $\phi$. For simplicity, within this survey we brush such computability issues under the rug, as ultimately they play a secondary role.} $\Oh_{\phi}(n^c)$ for some universal constant $c$, independent of $\varphi$. It turns out that the model-checking problem for $\FO$ is complete for the class $\mathsf{AW}[\star]$, so it being solvable in fixed-parameter time would yield $\mathsf{FPT}=\mathsf{AW}[\star]$ and trigger a collapse of a large part of the hierarchy of parameterized complexity classes (comparable to the collapse $\mathsf{P}=\mathsf{PSPACE}$ in the classic complexity theory). See \cite[Section~8.6]{FlumG06} for a broader introduction to the topic.

The discussion above shows that the model-checking problem for all the logics considered is very hard on general graphs, at least from the perspective of parameterized algorithms. Therefore, we can consider its restriction to various graph classes --- that is, stipulate the input graph to come from a graph class $\Cc$ --- and ask how this affects the complexity. This leads us to the following central question.

\begin{quote}
 Characterize graph classes $\Cc$ for which the model-checking problem for $\FO$ restricted to $\Cc$ is fixed-parameter tractable. Also, give analogous characterizations for $\MSO_1$ and $\MSO_2$.
\end{quote}

It appears that for $\MSO_1$ and $\MSO_2$, the question above is for the most part understood. While, the complete answer for $\FO$ is still lacking, the recent investigations have lead to an exciting theory, which is still under development. Therefore, answering the question above should not really be regarded as the ultimate goal, but rather as an excuse for developing a new part of structural graph theory, from which a resolution will hopefully follow.

\subsection{Transductions}

Finally, we discuss {\em{transductions}}: the main embedding notion that will be of interest in this survey. We focus on the case of $\FO$ first, and then discuss how the notion can be adapted to other considered logics.

First, let us introduce a simpler notion of {\em{simple interpretations}}. Suppose $\phi(x,y)$ is a formula of $\FO$ on graphs with two free variables. Then for a graph $G$, we can apply $\phi$ to the whole $G$ and obtain a new graph $\phi(G)$ defined as follows:
\begin{itemize}[nosep]
 \item the vertex set of $\phi(G)$ is the same as that of $G$; and
 \item two distinct vertices $u,v\in V(G)$ are adjacent in $\phi(G)$ if and only if $G\models \phi(u,v)$ or $G\models \phi(v,u)$. (The disjunction is only to make the graph undirected.)
\end{itemize}
We say that $\phi(G)$ is {\em{interpreted}} in $G$ using $\phi$. Thus, a simple interpretation just changes the edge set, by placing the new edges where the formula $\phi$ tells so. We can easily generalize this notion to relational structures as follows:
\begin{itemize}[nosep]
 \item a simple interpretation from $\Sigma$-structures to $\Gamma$-structures consists of one formula $\varphi_R(\tup x)\in \FO[\Sigma]$ for each predicate $R\in \Gamma$, where $\tup x$ is a tuple of variables of length $\arity(R)$; and
 \item the tuples selected by $\varphi_R(\tup x)$ in the input $\Sigma$-structure form the relation $R$ in the output $\Gamma$-structure.
\end{itemize}

Let us see some examples. If $\phi(x,y)=\neg \adj(x,y)$, then $\phi(G)$ is the {\em{complement}} of $G$: the graph obtained from $G$ by swapping all the edges with non-edges. Further, if $\phi(x,y)=\mathsf{dist}_{\leq 2}(x,y)$, the formula checking whether the distance between $x$ and $y$ is at most $2$, then $\phi(G)$ is the {\em{square}} of $G$: the graph obtained from $G$ by making all pairs of vertices at distance at most $2$ adjacent.

The intuition is that the interpretation $\phi$ allows us to encode the graph $\phi(G)$ within $G$, which means that, in a sense, the ``logical space'' of $\phi(G)$ embeds into the ``logical space'' of $G$. This intuition is supported by the following simple lemma: $\FO$-definitions of properties of $\phi(G)$ can be translated back to $\FO$-definitions working on $G$.

\begin{lemma}[Backwards Translation Lemma]\label{lem:backwards}
 Let $\varphi(x,y)\in \FO$ be a formula and $\psi\in \FO$ a sentence. Then there is a sentence $\psi[\phi]\in \FO$ such that for every graph $G$,
 $$G\models \psi[\phi]\quad\textrm{if and only if}\quad  \phi(G)\models \psi.$$
\end{lemma}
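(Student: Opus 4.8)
The plan is to prove this by induction on the structure of the sentence $\psi$, defining the translation $\psi \mapsto \psi[\phi]$ syntactically and simultaneously extending it to formulas with free variables. Concretely, for a formula $\psi(\tup z)\in\FO$ we will define $\psi[\phi](\tup z)\in\FO$ (over the same free variables) so that for every graph $G$ and every evaluation $\tup u$ of $\tup z$ in $V(G) = V(\phi(G))$,
$$G\models \psi[\phi](\tup u)\quad\text{if and only if}\quad \phi(G)\models \psi(\tup u).$$
Specializing to sentences gives the statement. The key observation making this work is that $\phi(G)$ and $G$ share the same universe, so an evaluation of variables in one is literally an evaluation in the other, and quantifiers range over the same set.

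The induction proceeds as follows. For the atomic formula $x = y$, set $(x=y)[\phi] := (x=y)$; this is correct since equality does not depend on the edge set. For the atomic formula $\adj(x,y)$, set $\adj(x,y)[\phi] := \phi(x,y)\vee\phi(y,x)$; this is exactly the definition of adjacency in $\phi(G)$. For the Boolean connectives, push the translation through: $(\neg\alpha)[\phi] := \neg(\alpha[\phi])$, $(\alpha\wedge\beta)[\phi] := \alpha[\phi]\wedge\beta[\phi]$, and likewise for $\vee$; correctness is immediate from the semantics of the connectives and the induction hypothesis. For quantifiers, set $(\exists_y\,\alpha)[\phi] := \exists_y\,(\alpha[\phi])$ and $(\forall_y\,\alpha)[\phi] := \forall_y\,(\alpha[\phi])$; correctness follows because the quantifier ranges over $V(G) = V(\phi(G))$, so the witness (resp. counterexample) $v$ for $\exists_y$ (resp. $\forall_y$) in one structure is available in the other, and we apply the induction hypothesis to the evaluation extended by $y\mapsto v$.

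There is essentially no hard step here; the only point requiring a little care is bookkeeping of free variables so that the induction hypothesis is stated for formulas rather than only sentences — otherwise the quantifier case cannot be closed, since stripping a quantifier from a sentence produces a formula with a free variable. One should also note at the outset that $\phi(G)$ is well-defined as an (undirected, simple) graph for every $G$, so $\phi(G)\models\psi$ makes sense; this is guaranteed by the symmetrization $\phi(x,y)\vee\phi(y,x)$ in the construction of $\phi(G)$ and by the exclusion of the diagonal in the definition of simple interpretation. Finally, one remarks that the construction of $\psi[\phi]$ from $\psi$ and $\phi$ is purely syntactic and effective, which is what is needed for the algorithmic applications later in the survey; the size of $\psi[\phi]$ is linear in $\|\psi\|\cdot\|\phi\|$. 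The same argument works verbatim for simple interpretations between arbitrary relational structures, replacing the single formula $\phi(x,y)$ by the tuple $(\varphi_R)_{R\in\Gamma}$ and translating each atomic formula $R(\tup x)$ to $\varphi_R(\tup x)$; we will state the lemma in the graph case only, as that is all we use.
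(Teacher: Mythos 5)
Your proposal is correct and is essentially the paper's own proof: the paper simply replaces every occurrence of $\adj(x,y)$ in $\psi$ by $\phi(x,y)\vee\phi(y,x)$, and your structural induction is just the standard verification that this substitution has the claimed semantics. No substantive difference in approach.
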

\begin{proof}
 To obtain $\psi[\phi]$, it suffices to modify $\psi$ by replacing every occurrence of the predicate $\adj(x,y)$ with the formula $\phi(x,y)\vee \phi(y,x)$.
\end{proof}

\newcommand{\Tf}{{\mathsf T}}
\newcommand{\tleq}{\sqsubseteq}
\newcommand{\tgeq}{\sqsupseteq}

We now proceed to the notion that is of main interest to us: transductions.
A {\em{transduction}} $\Tf$ consists of a set of colors $C$ and a formula $\phi(x,y)$ of $\FO$ on $C$-colored graphs. (That is, the signature consists of the binary predicate $\adj$ and a unary predicate for every color of $C$.) Applying $\Tf$ to a graph $G$ yields the set $\Tf(G)$ consisting of all graphs that can be obtained from $G$ as follows:
\begin{itemize}[nosep]
 \item {\em{coloring:}} add the colors of $C$ to $G$ in an arbitrary way, thus obtaining a $C$-colored graph $G^+$;
 \item {\em{interpretation:}} apply a simple interpretation using formula $\varphi$ to $G^+$, thus obtaining an (uncolored) graph $\varphi(G^+)$; and
 \item {\em{restriction:}} output an arbitrary induced subgraph of $\varphi(G^+)$.
\end{itemize}
Thus, $\Tf$ can be regarded as a nondeterministic mechanism that takes $G$ on input, and gives any of the graphs of $\Tf(G)$ on output. It should not really be surprising that the notion is nondeterministic in nature. One graph may have multiple subgraphs, or induced subgraphs, or minors. So also it may yield multiple different outputs under a transduction, which is a mechanism of transforming one graph into another.

Now, transductions can be applied to classes of graphs, yielding new classes. More precisely, for a graph class $\Cc$ and a transduction $\Tf$, we define
$$\Tf(\Cc)\coloneqq \bigcup_{G\in \Cc} \Tf(G).$$
With this in mind, we can introduce the quasi-order defined by transductions.

\begin{definition}
 We say that a graph class $\Dd$ is {\em{transducible}} from $\Cc$ if there exists a transduction $\Tf$ such that
 $$\Dd\subseteq \Tf(\Cc).$$
 We also say that $\Cc$ {\em{transduces}} $\Dd$, and we denote this by $\Dd\tleq_\FO \Cc$.
\end{definition}

The right way to read this definition is the following: every graph from $\Dd$ can be encoded in a colored graph from $\Cc$ using a fixed $\FO$-definable encoding mechanism. Indeed, the Backwards Translation Lemma (\cref{lem:backwards}) allows us to translate $\FO$-definitions of properties of graphs from $\Dd$ to colored graphs from $\Cc$.
Thus, the transductions are not really an embedding notion for single graphs, but rather for {\em{graph classes}}. This fits well the premise that it is a graph class, and not a single graph, that is the main object of study.

Given the notation, it is natural to expect that $\tleq_\FO$ is a quasi-order, that is, a reflexive and transitive relation on classes of graphs. Reflexivity is trivial, while transitivity follows from the compositionality of transductions, which we argue next.

\begin{lemma}\label{lem:trans-composition}
 A composition of two transductions is again a transduction.
\end{lemma}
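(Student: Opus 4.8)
The plan is to prove compositionality by unwinding the definition of transduction into its three atomic operations --- coloring, simple interpretation, restriction --- and showing that the six operations arising from composing two transductions can be reordered and merged into one instance of each. Concretely, suppose $\Tf_1$ uses colors $C_1$ and formula $\phi_1$, and $\Tf_2$ uses colors $C_2$ and formula $\phi_2$. A graph in $\Tf_2(\Tf_1(G))$ is obtained by: coloring $G$ with $C_1$; interpreting with $\phi_1$; restricting to an induced subgraph $H$; coloring $H$ with $C_2$; interpreting with $\phi_2$; restricting again. I want to massage this pipeline into: color $G$ with $C_1\cup C_2$ (taking $C_1$ and $C_2$ disjoint by renaming); interpret once with a single formula $\phi$; restrict once.

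The key steps, in order, are as follows. First, \emph{pushing the middle restriction to the end}: a restriction selects an induced subgraph, i.e.\ it deletes a set of vertices; this can be encoded by an extra unary color $U$ (``surviving vertices'') guessed on $G$ at the very beginning, with the first interpretation relativized so that the predicate for $U$ is simply passed through, and the final restriction deleting exactly the complement of $U$. So the two restrictions can be fused into one at the end, at the cost of one extra color. Second, \emph{merging the two colorings}: since both colorings are nondeterministic guesses on vertex sets, and the vertex set is unchanged by a simple interpretation, guessing $C_1$ on $G$ and later guessing $C_2$ on $H$ is the same as guessing $C_1\cup C_2$ on $G$ up front; the $C_2$-predicates are just carried unchanged through the first interpretation (formally, we add to the first simple interpretation the trivial formulas $\phi_{R}(x)=R(x)$ for $R\in C_2$). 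Third, \emph{composing the two interpretations}: this is the Backwards Translation Lemma (\cref{lem:backwards}) in spirit --- we substitute into $\phi_2$, replacing each occurrence of $\adj(s,t)$ by $\phi_1(s,t)\vee\phi_1(t,s)$ and leaving the color predicates untouched --- yielding a single formula $\phi(x,y)$ over $(C_1\cup C_2\cup\{U\})$-colored graphs such that the composed pipeline equals ``color with $C_1\cup C_2\cup\{U\}$, interpret with $\phi$, restrict to $U$''. This is precisely a transduction, so the composition $\Tf_2\circ\Tf_1$ is a transduction.

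The main obstacle --- and the only place requiring genuine care --- is the interaction between the \emph{first restriction} and the \emph{second interpretation}: after restricting to $H$, the formula $\phi_2$ is evaluated in $H$, where quantifiers range only over $V(H)$, not over $V(G)$. When we move the restriction to the very end, $\phi$ must be evaluated in a graph on all of $V(G)$, yet must behave as if it were evaluated in the induced subgraph on $U$. The fix is \emph{relativization}: inside the substituted formula, every quantifier $\exists_z\,\alpha$ is replaced by $\exists_z\,(U(z)\wedge\alpha)$ and every $\forall_z\,\alpha$ by $\forall_z\,(U(z)\Rightarrow\alpha)$, and all free variables are assumed to satisfy $U$ (which is guaranteed since the final restriction deletes non-$U$ vertices). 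One checks by induction on the structure of $\phi_2$ that the relativized formula evaluated on the $U$-colored graph agrees with $\phi_2$ evaluated on $H$. The remaining steps are bookkeeping: verifying that disjointifying the color sets is harmless, and that both the ``guess a superset and delete the rest'' gadget and the trivial carry-through formulas do exactly what is claimed. I would present the argument at the level of the three reductions above, invoking the routine induction for relativization without spelling it out.
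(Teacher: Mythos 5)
Your proposal is correct and is essentially the paper's own argument: merge the two color sets, introduce one extra color (your $U$, the paper's $W$) marking the vertices surviving the first restriction, relativize the quantifiers of the second formula to that color, and substitute $\phi_1(x,y)\vee\phi_1(y,x)$ for every occurrence of $\adj$. You simply spell out the bookkeeping (disjointifying colors, fusing the restrictions, the relativization induction) that the paper leaves implicit.
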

\begin{proof}
 Let $(C,\phi)$ and $(D,\psi)$ be the two considered transductions; we may assume $C\cap D=\emptyset$. It suffices to take the transduction $(C\cup D\cup \{W\},\psi')$, where $W$ is a new color not belonging to $C\cup D$, and $\psi'$ is the formula obtained from $\psi$ by restricting every quantification to the vertices of color $W$ and replacing every occurrence of the predicate $\adj(x,y)$ with the formula $\phi(x,y)\vee \phi(y,x)$. Here, the meaning of color $W$ is that it selects the set of vertices to which the first transduction decides to trim the vertex set.
\end{proof}

Finally, having a quasi-order allows us to define {\em{ideals}}, that is, downward-closed sets.

\begin{definition}
 A property of graph classes $\Pi$ is called an {\em{$\FO$ ideal}} if it is closed under transductions: for every transduction $\Tf$ and graph class $\Cc\in \Pi$, we also have $\Tf(\Cc)\in \Pi$.
\end{definition}

We will discuss multiple concrete $\FO$ ideals throughout this survey. In fact, all concepts depicted in \cref{fig:main} except for the ones in the left-most column are $\FO$ ideals.

\medskip

Let us make an example to illustrate the abstract notions we have just introduced. The class of {\em{rook graphs}} is defined as follows: a rook graph has vertex set $\{1,\ldots,a\}\times \{1,\ldots,b\}$ for some $a,b\in \N$, and two distinct vertices $(i,j)$ and $(i',j')$ are adjacent iff $i=i'$ or $j=j'$. We prove the~following.

\begin{lemma}\label{lem:rook-not-dependent}
 The class of rook graphs transduces the class of all graphs.
\end{lemma}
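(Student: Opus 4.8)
The plan is to show that a rook graph on an $n\times n$ grid, once we are allowed to color it and pass to an induced subgraph, already "contains'' an arbitrary graph $H$ on $n$ vertices in a way that a single fixed formula can decode. The key observation is that a rook graph is essentially the union of two equivalence relations: two vertices are adjacent iff they agree in the first coordinate (``same row'') or in the second coordinate (``same column''), and these two relations are definable from each other and from adjacency once we mark a diagonal. Concretely, fix $H$ with $V(H)=\{1,\dots,n\}$; inside the $n\times n$ rook graph, the diagonal $D=\{(i,i):i\in[n]\}$ is a natural copy of $V(H)$. A vertex $(i,j)$ with $i\neq j$ lies in exactly one row as $(i,i)$ and exactly one column as $(j,j)$, so it canonically "names'' the ordered pair $(i,j)$. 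Thus the off-diagonal vertices give us a copy of all ordered pairs, and we can use a color to select, among them, exactly those pairs $(i,j)$ with $ij\in E(H)$.

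First I would set up the coloring: the transduction uses colors $\mathsf{Diag}$ and $\mathsf{Edge}$. Given $H$ on $n$ vertices, we start from the rook graph on $[n]\times[n]$, color the diagonal vertices $(i,i)$ with $\mathsf{Diag}$, and color a non-diagonal vertex $(i,j)$ with $\mathsf{Edge}$ iff $ij\in E(H)$. Next I would write the interpretation formula $\phi(x,y)$ that, run on this colored graph, produces (an isomorphic copy of) $H$ on the diagonal vertices, and then the restriction step throws away everything outside $\mathsf{Diag}$. To build $\phi$: in the rook graph, "$x$ and $y$ are in the same row'' is expressed by $x=y \vee (\adj(x,y) \wedge \text{``not in the same column''})$ — but to break the row/column symmetry cleanly it is easier to argue directly on the diagonal. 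A diagonal vertex $d=(i,i)$ and a non-diagonal vertex $z=(i,j)$: $z$ is adjacent to $d$ via the row of $d$, and the other diagonal vertex adjacent to $z$ via a column is $(j,j)$. So "there is an $\mathsf{Edge}$-colored $z$ adjacent to both $(i,i)$ and $(j,j)$'' pins down the pair; the only subtlety is that $z=(i,j)$ is adjacent to $(i,i)$ (same row) and to $(j,j)$ (same column), while a hypothetical $z'=(j,i)$ would witness the same pair in the other orientation — which is fine, since we colored $\mathsf{Edge}$ symmetrically, so $(i,j)\in\mathsf{Edge} \iff (j,i)\in\mathsf{Edge}$, matching $H$ being undirected. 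Hence set
\[
\phi(x,y) \;=\; \mathsf{Diag}(x)\wedge \mathsf{Diag}(y)\wedge \exists_z\,\big(\mathsf{Edge}(z)\wedge \adj(z,x)\wedge \adj(z,y)\big).
\]
One then checks that for diagonal $x=(i,i)$, $y=(j,j)$ with $i\neq j$, the witness $z=(i,j)$ satisfies the body iff $ij\in E(H)$, so $\phi(G^+)$ restricted to the diagonal is exactly $H$. Finally, since $n$ was arbitrary and every graph has some number of vertices $n$, every graph lies in $\Tf(\text{rook graphs})$, giving $\Graphs \tleq_\FO \{\text{rook graphs}\}$.

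The one point that genuinely needs care — and which I expect to be the main obstacle — is verifying that no spurious adjacencies are created on the diagonal: one must confirm that the only $z$'s adjacent to two distinct diagonal vertices $(i,i)$ and $(j,j)$ are $(i,j)$ and $(j,i)$ (not, say, some vertex sharing a row with one and a column with the other in an unintended way, nor a diagonal vertex itself), and also handle the degenerate case $x=y$ so we don't accidentally put loops. Both are straightforward from the explicit description of rook-graph adjacency — a vertex $(a,b)$ is adjacent to $(i,i)$ iff $a=i$ or $b=i$, and adjacent to $(j,j)$ iff $a=j$ or $b=j$, so with $i\neq j$ the only solutions are $\{a,b\}=\{i,j\}$ — but it is the step where the argument would break if the coloring or the formula were set up carelessly, so it deserves an explicit line in the write-up.
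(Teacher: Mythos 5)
Your construction is correct: in the $n\times n$ rook graph a vertex $(a,b)$ is adjacent to $(i,i)$ iff $a=i$ or $b=i$, so for $i\neq j$ the only common neighbours of $(i,i)$ and $(j,j)$ are $(i,j)$ and $(j,i)$, and since your $\mathsf{Edge}$-colouring is symmetric the formula $\phi(x,y)=\mathsf{Diag}(x)\wedge\mathsf{Diag}(y)\wedge\exists_z\,(\mathsf{Edge}(z)\wedge\adj(z,x)\wedge\adj(z,y))$ recovers exactly $E(H)$ on the diagonal; the worry about loops is moot because the interpretation step only defines adjacency between distinct vertices. Your route differs from the paper's in a useful way. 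The paper uses the same basic trick (guess the adjacency matrix with a colour, decode by ``common neighbour in the coloured set''), but applies it in an $(a+1)\times(b+1)$ rook graph with the top row and right-hand column as the two sides, thereby transducing only the class of \emph{bipartite} graphs; it then needs a second transduction (pass from a graph to its $1$-subdivision, take the square, delete subdivision vertices) and the compositionality lemma (\cref{lem:trans-composition}) to reach all graphs. You instead use the diagonal as the vertex set and the off-diagonal cells as the full adjacency matrix, which encodes an arbitrary graph in one step with two colours and no composition. Your argument is shorter and self-contained; the paper's detour has the expository benefit of exercising \cref{lem:trans-composition} and of isolating the bipartite-to-all-graphs step (subdivision plus squaring), which the survey reuses later, e.g.\ in the proof sketch of \cref{thm:tw-dependent}.
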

\begin{proof}
 We first argue that the class of all bipartite graphs can be transduced from the class of rook graphs. Take any bipartite graph $H$, say with sides $A$ and $B$ with $|A|=a$ and $|B|=b$. Consider the rook graph $G$ on vertex set $\{1,\ldots,a+1\}\times \{1,\ldots,b+1\}$. We transduce $H$ from $G$ as follows (see \cref{fig:rook}):
 \begin{itemize}[nosep]
  \item Highlight $A'=\{1,\ldots,a\}\times \{b+1\}$ and $B'=\{a+1\}\times \{1,\ldots,b\}$ using two colors, $A'$ and $B'$, respectively.
  \item Arbitrarily enumerate $A$ as $\{u_1,\ldots,u_a\}$ and $B$ as $\{v_1,\ldots,v_b\}$, and highlight the adjacency matrix of $H$ within $\{1,\ldots,a\}\times \{1,\ldots,b\}$ using a color $F$. That is, include a vertex $(i,j)$ in $F$ if and only if $u_iv_j$ is an edge in $H$.
  \item Apply interpretation with a formula $\phi(x,y)=A'(x)\wedge B'(y)\wedge \exists_z\,[\adj(x,z)\wedge \adj(y,z)\wedge F(z)]$. That is, put a new edge between vertices $x$ and $y$ iff $x$ is in $A'$, $y$ is in $B'$, and $x$ and $y$ have a common neighbor in $F$.
  \item Output the subgraph induced by $A'\cup B'$. It is isomorphic to $H$.
 \end{itemize}
 Clearly, the above mechanism is captured by a transduction that uses a set of three colors $C=\{A',B',F\}$ and formula $\phi$ for interpretation. The construction above shows that the image of this transduction on the class of rook graphs contains all bipartite graphs.

 \begin{figure}
  \centering
  \begin{tikzpicture}
   \node at (-4,0) {\includegraphics[scale=0.4]{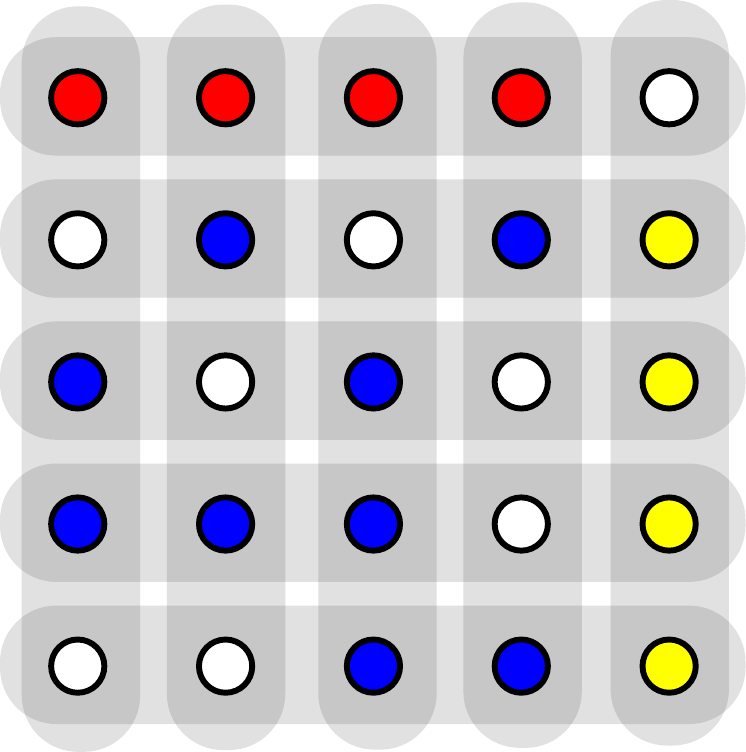}};
   \node at ( 4,0) {\includegraphics[scale=0.4]{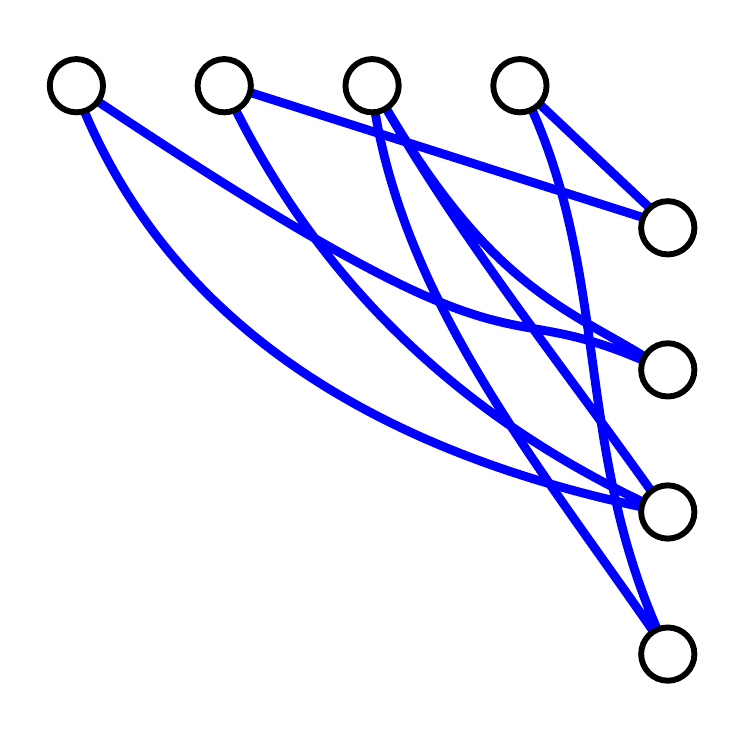}};
  \end{tikzpicture}
  \caption{Transduction from the proof of \cref{lem:rook-not-dependent}. Left: a rook graph with colors $A'$, $B'$, and $F$ depicted in red, yellow, and blue, respectively. Note that every row and every column is a clique, which is depicted using gray ovals. Right: the transduced bipartite graph.}\label{fig:rook}
 \end{figure}

 Now, the class of all graphs can be easily transduced from the class of bipartite graphs as follows: every graph $G$ can be transduced from its $1$-subdivision (which is bipartite) by taking the square and removing all the subdivision vertices. By \cref{lem:trans-composition}, the composition of the two transductions described above is again a transduction, and we are done.
\end{proof}

While the discussion above focuses on transducing graphs from graphs, we could easily extend the definitions to capture also transductions from $\Sigma$-structures to $\Gamma$-structures, for any signatures $\Sigma$ and $\Gamma$. Then the first coloring step would add some unary predicates $C$ to the structure in an arbitrary way, the second interpretation step would apply an interpretation with input signature $\Sigma\cup C$ to output signature~$\Gamma$, and the last restriction step would output any induced substructure. Also, we could replace $\FO$ with any other reasonable logic~$\cal L$, for instance $\MSO$ or $\CMSO$, thus obtaining the notion of $\cal L$-transductions that use formulas of $\cal L$ for interpretation. (We follow the convention that when we discuss transductions without specifying the logic, we mean $\FO$-transductions.) We could also talk about $\MSO_1$- or $\MSO_2$-transductions for graph classes, but in this context, for the sake of clarity, we will explicitly speak about $\MSO$-transductions that input or output the adjacency encodings or the incidence encodings of graphs. When the encoding is not specified, we always mean the adjacency encoding.

We conclude this section by making a few side remarks about transductions.
\begin{itemize}[nosep]
 \item The choice of transductions as the basic notion of embedding should not be regarded as the only correct one. For instance, the notion of logical intepretations is commonly used in finite model theory. They are deterministic and, in particular, do not include the coloring step. One could imagine constructing a different theory by adopting a different concept of logical encodability of structures in structures. However, it seems that the notion of transductions described above leads to a particularly elegant structure theory.
 \item Related to the above, readers with background in finite model theory could have been surprised that the last restriction step involves just taking an induced subgraph. In the standard definition of an interpretation, there would be another formula $\phi_{\mathsf{dom}}(x)$ with one free variable, and the restriction step would trim the universe to those vertices that satisfy $\phi_{\mathsf{dom}}$. In our setting we do not really need this additional formula thanks to the coloring step: essentially, one can always guess the set of elements to which the universe will be restricted using an additional color.
 \item Again, in finite model theory it is common to consider interpretations that interpret elements of the output structure in {\em{tuples}} of elements of the input structure, allowing a polynomial blow-up of the size of the universe. Interpretations used in our notion of transductions are one-dimensional, in the sense that elements of the output structure are interpreted in single elements of the input structure. We make this restriction, because allowing both two-dimensional interpretations and colorings at the same time would allow constructing a Cartesian product of a set followed by guessing the adjacency matrix of any graph in this product, similarly as in the proof \cref{lem:rook-not-dependent}. This would make the class of all graphs transducible from edgeless graphs, making the notion trivial. A different sensible choice would be to allow multi-dimensional interpretations and disallow colorings. We are not aware of work going in this direction on the grounds of structural graph theory.
 \item It is indeed a limitation of our definition of transductions that they can only remove vertices, and are not capable of creating new ones. Therefore, sometimes it is assumed that transductions can also perform {\em{copying}}, as an additional step before coloring. This works as follows: at the very beginning, we replace the input graph $G$ with the disjoint union of $c$ copies of $G$, for some constant $c$, and copies of the same original vertex are bound by a new binary predicate $\mathsf{copy}(\cdot,\cdot)$. This copied structure is then passed to the coloring step and further. Thus, transductions with copying can increase the size of the universe, but only by a constant multiplicative factor. In essence, a transduction with copying is almost the same as a transduction without copying that is provided on input the Cartesian product of the input graph $G$ with the complete graph $K_c$. If we study transducibility from classes satisfying some property $\Pi$, and $\Pi$ is closed under taking Cartesian products with constant-size complete graphs (that is, if $\Cc\in \Pi$, then also $\Cc\square K_c\coloneqq \{G\square K_c\colon G\in \Cc\}\in \Pi$ for every $c\in \N$), then the two notions of transducibility coincide. This will be the case in all the relevant statements presented in this survey, hence for simplicity we stick to transductions without copying.
\end{itemize}

\section{Classic concepts}\label{sec:classic}

In this section we present more standard notions of structure in graphs, which in large part originate from classic structural graph theory. However, we focus our discussion on model-theoretic properties. We describe the concepts starting from the most restrictive, which in \cref{fig:main} means starting from the top.

\newcommand{\td}{\mathsf{td}}
\newcommand{\SCd}{\mathsf{SCd}}
\newcommand{\lp}{\mathsf{lp}}

\subsection{Star-like graphs: treedepth and shrubdepth}

We start with parameters {\em{treedepth}} and {\em{shrubdepth}}, which, roughly speaking, measure star-likeness of a graph, or its resemblance to a bounded-depth tree. Treedepth is the more classic notion that is suited for the treatment of sparse graphs. The name {\em{treedepth}} was first used by Ne\v{s}et\v{r}il and Ossona de Mendez in~\cite{NesetrilM06}, the same notion actually appeared under different disguises well before that; see for instance the discussion and references in~\cite[Section 6.1]{sparsity} and~\cite{ReidlRVS14}. Shrubdepth is more general and captures star-likeness in dense graphs. It was introduced by Ganian, Hlin\v{e}n\'y, Ne\v{s}et\v{r}il, Obdr\v{z}\'alek, and Ossona de Mendez in~\cite{GanianHNOM19}\footnote{In fact, shrubdepth was first defined in an earlier conference paper by Ganian, Hlin\v{e}n\'y, Ne\v{s}et\v{r}il, Obdr\v{z}\'alek, Ossona de Mendez, and Ramadurai~\cite{GanianHNOMR12}, on which the journal article~\cite{GanianHNOM19} is largely based.}.

\subsubsection{Treedepth}\label{sec:treedepth}

The decomposition notion underlying treedepth is called an {\em{elimination forest}}, though other names, like simply {\em{treedepth decomposition}}, can be also found in the literature. In the following, a {\em{rooted forest}} is a forest where in every connected component we distinguish one root vertex; this naturally imposes an ancestor/descendant relation on the vertices. The {\em{depth}} of a rooted forest is the maximum number of vertices on a root-to-leaf path.

\begin{definition}
 An {\em{elimination forest}} of a graph $G$ is a rooted forest on the same vertex set as $G$, i.e. $V(F)=V(G)$, such that for every edge $uv$ of $G$, $u$ and $v$ are in the ancestor/descendant relation in $F$. The {\em{treedepth}} of $G$, denoted $\td(G)$, is the minimum depth of an elimination forest of $G$.
\end{definition}

See \cref{fig:td-shb}, left panel, for an example.

 \begin{figure}
  \centering
  \begin{tikzpicture}
   \node at (-4.5,0.45) {\includegraphics[scale=0.4]{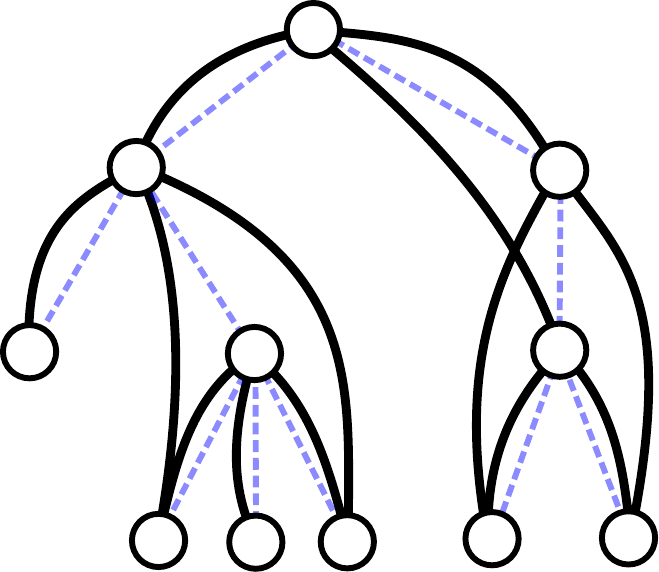}};
   \node at ( 4,0) {\includegraphics[scale=0.4]{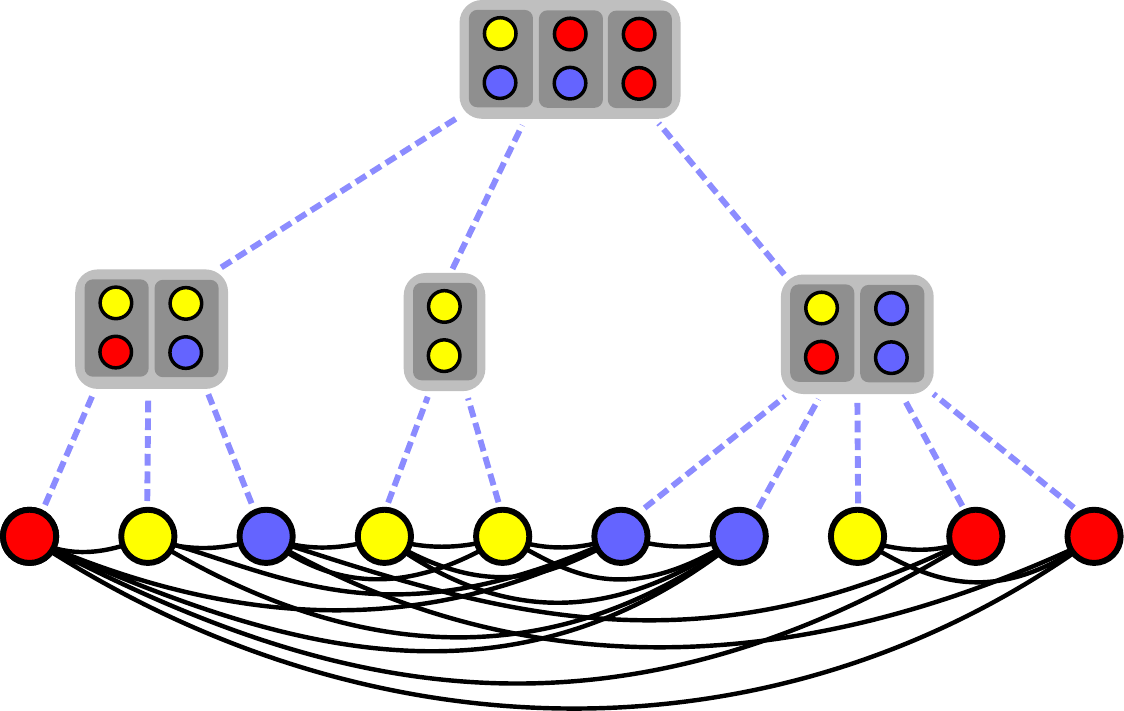}};
  \end{tikzpicture}
  \caption{Left: A graph (in black) together with its elimination tree of depth $4$ (light blue). Right: a graph (in black) together with its tree-model of depth $3$ that uses $3$ labels. The leaves coincide with the vertices of the graph, their labels are depicted with colors (yellow, red, or blue). The internal nodes are presented as boxes containing those pairs of labels that are mapped by the corresponding function $M_t$ to $1$.}\label{fig:td-shb}
 \end{figure}

Observe that in case $G$ is connected, every elimination forest of $G$ must in fact be a tree. Indeed, otherwise there could be no edges of $G$ between the different connected components of $F$. Therefore, in this case we can speak about {\em{elimination trees}}.

This observation leads to the following alternative, recursive definition of treedepth.

\begin{lemma}\label{lem:td-recursive}
 For a graph $G$, we have
 $$\td(G)=\begin{cases}1, &\textrm{if }G\textrm{ has one vertex;}\\ \max\, \{\,\td(C)\colon C\textrm{ is a component of }G\,\}, & \textrm{if }G\textrm{ is disconnected; and}\\ 1+\min_{u\in V(G)}\td(G-u),& \textrm{otherwise.}\end{cases}$$
\end{lemma}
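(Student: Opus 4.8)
The plan is to prove the three cases of \cref{lem:td-recursive} separately, in each case showing that an optimal elimination forest can be assumed to have a particular shape.

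\textbf{The single-vertex and disconnected cases.} If $G$ has exactly one vertex, then the only rooted forest on $V(G)$ is a single root, which trivially has depth $1$, so $\td(G)=1$. For the disconnected case, suppose $G$ has connected components $C_1,\ldots,C_k$. Given an elimination forest $F$ of $G$, its restriction to $V(C_i)$ is an elimination forest of $C_i$ (the ancestor/descendant condition is inherited), so $\td(C_i)\le \mathrm{depth}(F)$ for each $i$; taking the minimum over $F$ gives $\td(C_i)\le\td(G)$, hence $\max_i\td(C_i)\le\td(G)$. Conversely, given optimal elimination forests $F_i$ of the $C_i$, their disjoint union is a rooted forest on $V(G)$ that is an elimination forest of $G$ (there are no edges between components to worry about), of depth $\max_i\mathrm{depth}(F_i)=\max_i\td(C_i)$; hence $\td(G)\le\max_i\td(C_i)$.

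\textbf{The connected case.} Assume $G$ is connected with at least two vertices. As noted in the excerpt (right after the definition), any elimination forest of a connected graph is in fact a tree, so it has a single root $r$. If $F$ is an optimal elimination tree of $G$ with root $u$, then deleting $u$ from $F$ leaves a rooted forest on $V(G)-u$ which is an elimination forest of $G-u$ (every edge of $G-u$ is an edge of $G$, and its endpoints, both distinct from $u$, remain in ancestor/descendant relation in $F-u$); its depth is $\mathrm{depth}(F)-1$. Hence $\td(G-u)\le\td(G)-1$, giving $1+\min_{v}\td(G-v)\le\td(G)$. Conversely, pick $u$ achieving the minimum of $\td(G-u)$ and let $F'$ be an optimal elimination forest of $G-u$; form $F$ by adding $u$ as a new root whose children are the roots of all components of $F'$. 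Then $\mathrm{depth}(F)=1+\mathrm{depth}(F')=1+\td(G-u)$, and $F$ is an elimination tree of $G$: for any edge $xy$ of $G$, either one of $x,y$ equals $u$, in which case $u$ is an ancestor of everything and the condition holds, or neither does, in which case $xy$ is an edge of $G-u$ and $x,y$ are in ancestor/descendant relation in $F'$, a relation preserved in $F$. Hence $\td(G)\le 1+\td(G-u)=1+\min_v\td(G-v)$.

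\textbf{Main obstacle.} There is no real obstacle here; the lemma is essentially an unwinding of the definition. The only point requiring a little care is the observation --- already made in the excerpt --- that for a connected graph an elimination forest must be a single tree, which is what licenses ``peel off the root'' as the inductive step; and dually, in the disconnected case one must check that gluing together elimination forests of the components introduces no violations, which is immediate since there are no cross-component edges.
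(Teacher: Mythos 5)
Your proof is correct and follows essentially the same route as the paper: the disconnected case via taking disjoint unions of component forests (plus monotonicity of treedepth under restriction), and the connected case by peeling off the root of an optimal elimination tree in one direction and adding a new root above an elimination forest of $G-u$ in the other. The only cosmetic difference is that you make explicit the restriction argument where the paper simply invokes that treedepth does not increase when passing to a subgraph.
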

\begin{proof}
That $\td(G)=1$ when $G$ has one vertex is obvious. To see that the treedepth of a disconnected graph $G$ is the maximum among the treedepth of its connected components $C$, note that on one hand, an elimination forest of $G$ can be obtained by taking the disjoint union of optimum-depth elimination forests of the components~$C$, and on the other hand, it is easy to see that the treedepth of a subgraph of $G$ cannot be larger than the treedepth of $G$.

 Finally, suppose that $G$ is connected and has more than one vertex. Then if $F$ is an optimum-depth elimination tree of $G$ and $u$ is the root of $G$, then $F-u$ is an elimination forest of $G-u$ and the depth of $F-u$ is one smaller than the depth of $F$; this proves that $1+\min_{u\in V(G)} \td(G-u)\leq \td(G)$. On the other hand, if $u$ is any vertex of $G$ and $F'$ is an elimination forest of $G-u$, then an elimination tree of $G$ of depth one larger than that of $F'$ can be obtained by adding $u$ to $F'$ as the new root and making all the former roots into children of $u$. This proves that $1+\min_{u\in V(G)}\td(G-u)\geq \td(G)$.
\end{proof}

The alternation of $\min$ and $\max$ operators in \cref{lem:td-recursive} suggests a definition of treedepth through a game, which we will call the {\em{radius-$\infty$ Splitter Game}}. The game is played on a graph $G$, called the {\em{arena}}. There are two players: {\em{Splitter}} and {\em{Connector}}. The game proceeds in rounds, where each round is as~follows:
\begin{itemize}[nosep]
 \item First, Connector picks any connected component $C$ of the arena, and the game is restricted to this component. That is, the arena becomes $C$.
 \item Then, Splitter removes any vertex from the arena.
\end{itemize}
The game finishes with Splitter's win when the arena becomes empty. The goal of Splitter is to win the game as quickly as possible, and the goal of Connector is to prevent this for as long as possible.

A simple inductive argument based on \cref{lem:td-recursive} proves the following.

\begin{lemma}\label{lem:td-splitter}
 For a graph $G$, $\td(G)$ is the minimum number of rounds needed for Splitter to win the radius-$\infty$ Splitter game on $G$.
\end{lemma}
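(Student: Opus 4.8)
The plan is to prove both inequalities of \cref{lem:td-splitter} by induction, mirroring the recursive characterization of treedepth from \cref{lem:td-recursive}. Write $s(G)$ for the minimum number of rounds Splitter needs to win the radius-$\infty$ Splitter game on $G$; the goal is $s(G) = \td(G)$. Both quantities behave identically in the base and disconnected cases: if $G$ has one vertex, then Splitter wins in one round and $\td(G) = 1$; if $G$ is disconnected, then Connector's first move picks a component $C$, and clearly optimal play gives $s(G) = \max\{s(C) : C \text{ a component of } G\}$, which matches the recursive formula for $\td$. So the only real content is the connected case with at least two vertices, where we must show $s(G) = 1 + \min_{u \in V(G)} \td(G - u)$, assuming inductively that $s = \td$ on all graphs with fewer vertices.

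For the inequality $s(G) \le \td(G)$: take a vertex $u$ attaining the minimum in \cref{lem:td-recursive}, i.e. $\td(G) = 1 + \td(G-u)$. Splitter's strategy is to let Connector pick a component (which, since $G$ is connected, is all of $G$) and then remove $u$. The arena becomes $G - u$, and by induction Splitter can finish the game on each component of $G-u$ in at most $\td(G-u)$ further rounds; more precisely, whatever component Connector restricts to next, Splitter plays the optimal strategy there, needing at most $s(G-u) = \td(G-u)$ rounds. Hence Splitter wins $G$ in at most $1 + \td(G-u) = \td(G)$ rounds.

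For the reverse inequality $\td(G) \le s(G)$: consider Splitter's optimal first move. Connector is forced to pick the unique component $G$, and then Splitter removes some vertex $u$; from that point Splitter can finish within $s(G) - 1$ rounds no matter how Connector plays, which in particular means Splitter wins the game on every component of $G - u$ within $s(G)-1$ rounds, so $s(G - u) \le s(G) - 1$. By induction $\td(G - u) = s(G - u) \le s(G) - 1$, hence $\td(G) \le 1 + \td(G-u) \le s(G)$.

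I do not expect a genuine obstacle here; the one point to be careful about is the phrase ``Splitter can finish within $k$ rounds against any Connector play'' and how it interacts with the component structure after a vertex deletion. One should phrase the induction hypothesis as a statement about the game value (min over Splitter strategies, max over Connector strategies, of the number of rounds) rather than about a fixed line of play, so that the disconnected case and the ``restrict to the component Connector chose'' steps are handled uniformly. With that formulation, everything reduces to the routine observation that the game tree after Splitter's first deletion is exactly the game on $G - u$, and the recursion of \cref{lem:td-recursive} closes the argument.
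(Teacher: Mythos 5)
Your proof is correct and is exactly the argument the paper has in mind: the paper dispatches this lemma with the remark that it follows by a simple induction based on \cref{lem:td-recursive}, and your case analysis (single vertex, disconnected, connected with the two inequalities) is precisely that induction. No gaps; the care you take to phrase the induction hypothesis in terms of the game value is the right way to handle the component-restriction steps.
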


The ``radius-$\infty$'' prefix indicates that in this variant of the Splitter Game, Connector selects a connected component, or equivalently, a subgraph induced by vertices reachable from some selected vertex by a path of arbitrary length. As the reader might suspect, we will later consider also the radius-$r$ Splitter Game, where the moves of the Connector are more constrained: in every round, she picks a ball of radius $r$, and the arena gets restricted to the subgraph induced by this ball. It turns out that this variant of Splitter Game provides an extremely useful notion of a decomposition for general classes of sparse graphs (called {\em{nowhere dense}} classes, see \cref{sec:sparsity}). Hence, the game-theoretic view on treedepth exemplified by \cref{lem:td-splitter} is actually very consequential.

We now turn our attention to obstructions dual to low-depth elimination forests, which turn out to be very simple: they are just paths. More precisely, if by $\lp(G)$ we denote the number of vertices in the longest path that is contained in $G$ as a subgraph, then we have the following.

\begin{lemma}\label{lem:lp-td}
 For every graph $G$, we have $\log_2 \lp(G)\leq \td(G)\leq \lp(G)$.
\end{lemma}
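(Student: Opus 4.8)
The plan is to prove the two inequalities in $\log_2 \lp(G)\leq \td(G)\leq \lp(G)$ separately, both by induction using the recursive characterization from \cref{lem:td-recursive}.

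For the upper bound $\td(G)\leq \lp(G)$, I would argue by induction on $|V(G)|$ that every graph $G$ has an elimination forest of depth at most $\lp(G)$. If $G$ is a single vertex this is clear. If $G$ is disconnected, each component $C$ satisfies $\lp(C)\leq \lp(G)$, so we apply induction to each component and take the disjoint union of the resulting elimination forests; by \cref{lem:td-recursive} the depth is the maximum over components, which is at most $\lp(G)$. If $G$ is connected with more than one vertex, pick any vertex $u$; then $\lp(G-u)\leq \lp(G)$, but this alone is not enough. The key observation is that we should instead pick $u$ to lie on a longest path: choose $u$ to be an endpoint of a longest path $P$ in $G$. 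Then any path in $G-u$ has at most $\lp(G)-1$ vertices --- indeed, if $G-u$ contained a path $Q$ with $\lp(G)$ vertices, then $Q$ would already be a longest path of $G$, and... actually this needs a little care: removing an endpoint of one longest path need not shorten all longest paths. The cleaner route: pick $u$ to be an endpoint of a longest path; then by induction $G-u$ has an elimination forest of depth at most $\lp(G-u)$, and adding $u$ as a new root gives an elimination tree of $G$ of depth at most $\lp(G-u)+1\leq \lp(G)$, where the last inequality holds because $\lp(G-u)\leq \lp(G)-1$. To see $\lp(G-u)\le \lp(G)-1$ when $u$ is an endpoint of a longest path $P=u v_1 v_2\cdots v_{k-1}$: suppose $G-u$ has a path $Q$ on $k$ vertices. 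Then $Q$ is a longest path of $G$ too; but this does not immediately contradict anything. I think the honest fix is to not insist $u$ is a path endpoint but to use a different inductive statement, or to observe that one can always choose $u$ so that $\lp(G-u) < \lp(G)$ --- this is a known fact (every longest path must pass through... no). Let me reconsider in the writeup: the standard argument is that it suffices to delete one vertex from \emph{each} longest path, but since there may be many, one instead proves the statement $\td(G) \le \lp(G)$ by strong induction choosing $u$ on a longest path and noting that while $\lp(G-u)$ may equal $\lp(G)$, we can iterate; the clean formulation is an induction on $\lp(G)$ rather than on $|V(G)|$, repeatedly deleting endpoints of longest paths until the longest-path length drops. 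I would present it as: by induction on $\lp(G)$, and within a fixed value of $\lp(G)$, by induction on $|V(G)|$; delete an endpoint $u$ of a longest path, and either $\lp(G-u)<\lp(G)$ (outer induction applies, giving depth $\le \lp(G)-1+1$) or $\lp(G-u)=\lp(G)$ but $|V(G-u)|<|V(G)|$ (inner induction applies, giving depth $\le\lp(G)$), and in both cases adding $u$ as root yields depth $\le \lp(G)$.

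For the lower bound $\log_2\lp(G)\leq \td(G)$, equivalently $\lp(G)\leq 2^{\td(G)}$, I would again induct on $\td(G)$ using \cref{lem:td-recursive}, or more directly argue that a rooted tree of depth $d$ contains no path on more than $2^d$ vertices. Concretely: let $F$ be an optimal elimination forest of $G$, so $\td(G)=\mathrm{depth}(F)=d$. Any path $P$ in $G$ has the property that consecutive vertices are in ancestor/descendant relation in $F$. I claim a path in any graph whose edges all lie along ancestor/descendant pairs of a rooted forest of depth $d$ has at most $2^d$ vertices. Prove this by induction on $d$: the vertex $w$ of $P$ that is topmost in $F$ (closest to a root, i.e. of minimum depth in $F$; if several, take any) splits $P$ into at most two subpaths, and every other vertex of $P$ is a descendant of $w$ in $F$ --- this is because $P$ is connected through ancestor/descendant edges and $w$ is the highest, so the whole path lives in the subtree rooted at $w$. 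Removing $w$, each of the $\leq 2$ subpaths lives in a rooted forest of depth $\leq d-1$ (the subtrees hanging below $w$), so by induction each has $\leq 2^{d-1}$ vertices, giving $|P|\leq 2\cdot 2^{d-1}=2^d$. The base case $d=1$ is a single vertex, $\leq 2^1$ trivially (in fact $\le 1$).

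The main obstacle I anticipate is the delicate point in the upper bound: removing an arbitrary vertex, or even an endpoint of a single longest path, need not decrease $\lp$, so the naive induction "$\td(G)\le 1+\td(G-u)\le 1+\lp(G-u)\le 1+\lp(G)$" is off by one and circular. The resolution is the nested induction described above (primary on $\lp(G)$, secondary on the number of vertices), which makes the bookkeeping clean. The lower bound is more routine once the right intermediate claim --- "a path respecting a depth-$d$ forest's ancestor/descendant relation has $\leq 2^d$ vertices, split at the topmost vertex" --- is isolated.
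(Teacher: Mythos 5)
Your lower bound is fine in spirit but your upper bound has a genuine gap, and it is exactly the off-by-one you flagged yourself. In the case $\lp(G-u)=\lp(G)$, the inner induction (on $|V(G)|$) only yields an elimination forest of $G-u$ of depth at most $\lp(G-u)=\lp(G)$; adding $u$ as a new root then gives depth $\lp(G)+1$, not $\lp(G)$, so the sentence ``in both cases adding $u$ as root yields depth $\le \lp(G)$'' is false in the second case, and the nested induction does not close. This case genuinely occurs even when $u$ is an endpoint of a longest path: take a path $v_1v_2\cdots v_k$ and a pendant vertex $u$ adjacent to $v_2$; then $u$ ends a longest path, yet $\lp(G-u)=k=\lp(G)$. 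Nor can you escape by a cleverer single-vertex choice, since there exist connected graphs in which no vertex lies on \emph{all} longest paths, so no single deletion is guaranteed to decrease $\lp$. Two repairs work: either strengthen the induction hypothesis --- prove that for every vertex $u$ of a connected graph $G$ there is an elimination tree \emph{rooted at $u$} whose depth is at most the maximum number of vertices on a path of $G$ starting at $u$ (for each component $C$ of $G-u$, recurse rooting at a neighbor of $u$ in $C$ and hang the resulting trees under $u$; the ``starting at $u$'' clause absorbs the $+1$) --- or take the paper's route: any depth-first search forest is an elimination forest, and its root-to-leaf paths are paths of $G$, hence its depth is at most $\lp(G)$.

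For the left inequality your argument is a legitimate alternative to the paper's (which instead shows $\td$ of an $n$-vertex path is at least $\log_2(n+1)$ via a Connector strategy in the radius-$\infty$ Splitter Game, then uses monotonicity of treedepth under subgraphs): bounding directly the length of a path whose edges respect the ancestor/descendant relation of a depth-$d$ forest, splitting at the topmost vertex $w$, is correct, and your justification that all of $P$ lies in the subtree of $w$ is sound. But the bookkeeping forgets $w$ itself: with hypothesis $2^{d-1}$ per half the step gives $1+2\cdot 2^{d-1}=2^d+1$, so the induction as stated does not close. Prove the sharper bound $2^d-1$ instead; it satisfies $1+2(2^{d-1}-1)=2^d-1$ with base case $1$ at $d=1$, and yields $\lp(G)\le 2^{\td(G)}-1$, which is more than enough for $\log_2\lp(G)\le \td(G)$.
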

\begin{proof}
 The right inequality follows from the observation that every depth-first search forest of $G$ is also an elimination forest, and such a forest cannot have depth larger than $d$ in the absence of paths on more than $d$ vertices. For the left inequality, it suffices to observe that the treedepth of a path on $n$ vertices is at least $\log_2 (n+1)$. This can be proved by giving a strategy for Connector in the radius-$\infty$ Splitter Game on a path: once Splitter breaks the current path into two subpaths by deleting one vertex, always select the longer among the two subpaths.
\end{proof}

The proof of \cref{lem:lp-td} provides a very simple approximation algorithm for treedepth: just run depth-first search and output the obtained forest of the search; its depth is surely bounded by $2^{\td(G)}$. More generally, the treedepth of a graph $G$ can be computed exactly in fixed-parameter time $2^{\Oh(\td(G)^2)}\cdot n$~\cite{ReidlRVS14}. Obtaining a constant-factor approximation algorithm running in time $2^{o(\td(G)^2)}\cdot n^c$, for any constant $c\in \N$, remains a notorious open problem.

\subsubsection{Shrubdepth}\label{sec:shrubdepth}

We now move to shrubdepth and recall the definitions proposed by Ganian et al.~\cite{GanianHNOM19}.
The notion of decomposition is provided by {\em{tree-models}} explained below. Readers familiar with {\em{cliquewidth}} (see \cref{sec:cliquewidth}) should immediately recognize similarities with decompositions considered there, only here we stipulate tree-models to have bounded depth.

\begin{definition}\label{def:tree-model}
 A {\em{tree-model}} of a graph $G$ consists of a finite set of labels $\Lambda$, a labelling $\lambda\colon V(G)\to \Lambda$, a rooted tree $T$ whose leaf set is equal to the vertex set of $G$, and, for every non-leaf node $t$ of $T$, a symmetric function $M_t\colon \Lambda\times \Lambda\to \{0,1\}$ (that is, $M_t(\alpha,\beta)=M_t(\beta,\alpha)$ for all $\alpha,\beta\in \Lambda$). We require that for any two distinct vertices $u$ and $v$ of $G$, we have
 $$u\textrm{ and }v\textrm{ are adjacent}\qquad\textrm{if and only if}\qquad M_t(\lambda(u),\lambda(v))=1,$$
 where $t$ is the lowest common ancestor of $u$ and $v$ in $T$.
\end{definition}

See \cref{fig:td-shb}, right panel, for an example.

We remark that in~\cite{GanianHNOM19}, tree-models are defined slightly differently: in essence, it is required that all functions $M_t$ on each level of the tree are the same. It is not hard to prove that the two definitions are equivalent in the sense that one variant of a tree-model can be transformed into the other so that the number of labels increases to a function of the original number, but the depth is preserved exactly. This will have no influence on our further discussion, so we will stick to the formulation presented in \cref{def:tree-model}, as we find it more natural and closer to cliquewidth.

With tree-models understood, we can define shrubdepth.

\begin{definition}\label{def:shrubdepth}
 The {\em{shrubdepth}} of a graph class $\Cc$ is the least $d\in \N$ such that the following holds: there is $m\in \N$ such that every graph in $\Cc$ admits a tree-model of depth at most $d$ and using a label set of size at most $m$. If no such integer exists, then we say that $\Cc$ has {\em{unbounded shrubdepth}}, and otherwise it has {\em{bounded shrubdepth}}.
\end{definition}

A slightly confusing aspect of this definition is that shrubdepth is defined only for graph classes, and not for individual graphs. The reason for this choice will become clear in a moment. Nevertheless, it is instructive to think of tree-models as inherently biparametric objects: the two parameters governing the complexity of a tree-model are the depth and the number of labels used. In the definition of shrubdepth, the depth becomes the primary parameter of concern, while the number of labels serves a secondary role.

\paragraph*{SC-depth and Flipper Game.}
It is possible to define graph parameters that are applicable to single graphs, while being equivalent to shrubdepth in the following sense: they are bounded on the same classes of graphs. One such parameter is {\em{SC-depth}}, where {\em{SC}} stands for {\em{subset complementation}}. SC-depth was also introduced by Ganian et al. in~\cite{GanianHNOM19}.

 \begin{figure}
  \centering
  \begin{tikzpicture}
   \node at (-4,0) {\includegraphics[scale=0.4]{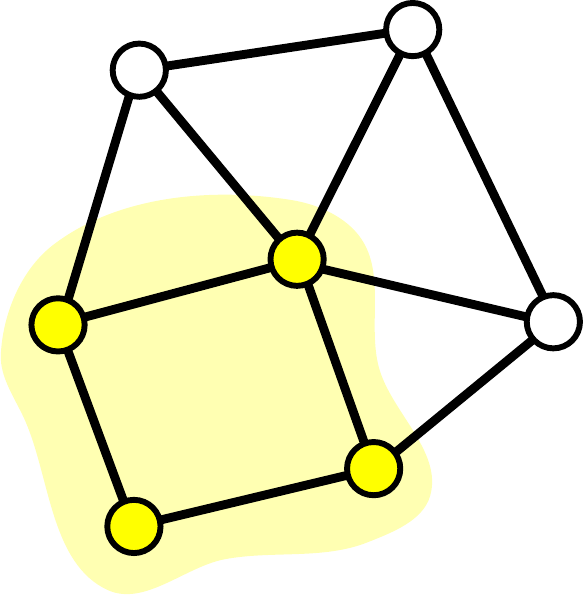}};
   \draw[very thick,black!50,dashed,->] (-1,0) -- (1,0);
   \node at ( 4,0) {\includegraphics[scale=0.4]{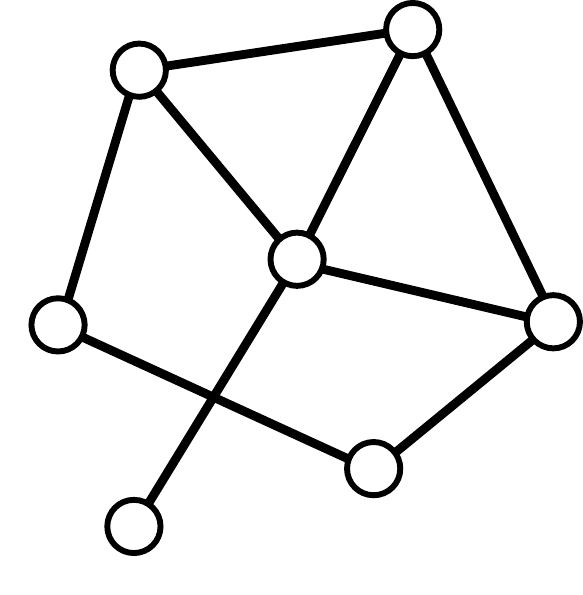}};
  \end{tikzpicture}
  \caption{Applying a flip on a set $A$, depicted in yellow.}\label{fig:flip}
 \end{figure}

First, we describe the operation of applying a {\em{flip}} to a graph (this operation is also called {\em{subset complementation}} or {\em{perturbation}}). Given a graph $G$ and a set of vertices $A$, applying a flip on $A$ in $G$ results in the graph $G\oplus A$ obtained from $G$ by inverting the adjacency relation within~$A$. That is, the vertex sets of $G$ and $G\oplus A$ are the same, and for every pair of distinct vertices $u$ and~$v$ we have the following:
\begin{itemize}[nosep]
 \item if $u,v\in A$, then $u$ and $v$ are adjacent in $G\oplus A$ iff they are {\em{not}} adjacent in $G$; and
 \item otherwise, $u$ and $v$ are adjacent in $G\oplus A$ iff they are adjacent in $G$.
\end{itemize}
With this definition, the SC-depth of a graph is defined through a recurrence similar to that of \cref{lem:td-recursive}.

\begin{definition}\label{def:SC-depth}
 For a graph $G$, the {\em{SC-depth}} of $G$, denoted by $\SCd(G)$, is defined as follows:
 $$\SCd(G)=\begin{cases}0, &\textrm{if }G\textrm{ has one vertex;}\\ \max\, \{\,\SCd(C)\colon C\textrm{ is a component of }G\,\}, & \textrm{if }G\textrm{ is disconnected; and}\\ 1+\min_{A\subseteq V(G)}\SCd(G\oplus A),& \textrm{otherwise.}\end{cases}$$
\end{definition}

Then we have the following.

\begin{theorem}[\cite{GanianHNOM19}]
 A graph class has bounded shrubdepth if and only if it has bounded SC-depth.
\end{theorem}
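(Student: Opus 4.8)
The plan is to prove both implications by converting between the two decomposition notions, tracking how the quantitative parameters (depth and number of labels/rounds) transform. Both directions are proved by induction following the recursive structure of SC-depth from \cref{def:SC-depth}, mirroring the treedepth recursion of \cref{lem:td-recursive}.

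\medskip

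\textbf{From bounded SC-depth to bounded shrubdepth.} Suppose every $G\in\Cc$ has $\SCd(G)\leq d$. I would show by induction on $d$ that any graph $G$ with $\SCd(G)\leq d$ admits a tree-model of depth at most $d+1$ using at most $2^d$ labels (or some similar explicit bound). In the base case $d=0$, $G$ is a single vertex and a trivial one-node tree-model works. For the inductive step, if $G$ is disconnected, take the tree-models of the components given by induction (all with depth $\leq d+1$ and $\leq 2^d$ labels) and join their roots under a fresh root node $t$, setting $M_t\equiv 0$; since two vertices in different components have $t$ as their lowest common ancestor and are non-adjacent, this is correct. If $G$ is connected, pick $A\subseteq V(G)$ with $\SCd(G\oplus A)\leq d-1$. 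Recursively take a tree-model $(\Lambda,\lambda,T,\{M_t\})$ of $G\oplus A$ of depth $\leq d$ with $\leq 2^{d-1}$ labels (applying the disconnected case to $G\oplus A$ as needed). Now I would \emph{double the label set} to $\Lambda\times\{0,1\}$, relabelling each vertex $v$ by $(\lambda(v),[v\in A])$, so that membership in $A$ is readable from the label; then at each internal node $t$ replace $M_t$ by the function that additionally flips the bit whenever both labels carry the ``$\in A$'' marker. Crucially, the lowest common ancestor of $u,v$ in the new tree is the same node as before, so this local modification exactly reintroduces the flip on $A$ globally, recovering $G$ from $G\oplus A$. The depth stays $\leq d$ for $G\oplus A$ so, allowing the extra root in the disconnected sub-case, we get depth $\leq d+1$ and $\leq 2^d$ labels for $G$. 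Since this bound is uniform over $\Cc$, $\Cc$ has bounded shrubdepth.

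\medskip

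\textbf{From bounded shrubdepth to bounded SC-depth.} Now suppose every $G\in\Cc$ has a tree-model of depth $\leq d$ using $\leq m$ labels. I would prove by induction on the depth $d$ that any such $G$ satisfies $\SCd(G)\leq f(d,m)$ for some function $f$. The key observation is that the root $t$ of the tree-model partitions $V(G)$ into label classes $V_\alpha=\lambda^{-1}(\alpha)$ (for $\alpha$ appearing below $t$), and the adjacency between any two distinct classes $V_\alpha,V_\beta$ is ``complete or empty'' as dictated by $M_t(\alpha,\beta)$ — this is because for $u\in V_\alpha$, $v\in V_\beta$ with $\alpha\neq\beta$, their lowest common ancestor must be $t$ itself (they lie in different root-subtrees, or at least the split occurs at the root for distinct label classes, once we note each subtree of $t$ carries vertices and we can refine if needed). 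I would perform a bounded number of flips — one on each $V_\alpha$ with $M_t(\alpha,\alpha)=1$, and then flips to clear out each pair $V_\alpha\cup V_\beta$ with $M_t(\alpha,\beta)=1$, using the standard fact that the ``complete bipartite between two disjoint sets'' pattern can be removed with $O(1)$ subset complementations (e.g. flip $V_\alpha$, flip $V_\beta$, flip $V_\alpha\cup V_\beta$, or similar). After these at most $\binom{m+1}{2}$ flips, $G$ becomes a disjoint union of graphs, each induced on one root-subtree of $t$, each of which has a tree-model of depth $\leq d-1$ with $\leq m$ labels. By induction each summand has SC-depth $\leq f(d-1,m)$, hence by the disconnected and flip clauses of \cref{def:SC-depth}, $\SCd(G)\leq \binom{m+1}{2}+f(d-1,m)=:f(d,m)$. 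Uniformity over $\Cc$ gives bounded SC-depth.

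\medskip

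\textbf{Main obstacle.} The delicate point in the second direction is the claim that distinct label classes at the root are ``homogeneously'' connected with their lowest common ancestor being exactly the root. A priori, a tree-model may have several children of the root whose vertices share labels, so two vertices with the same label could sit in different subtrees with lca $=t$, or in the same subtree with a deeper lca. The resolution is to work not with label classes at the root but with the \emph{children} of the root: $t$ has boundedly... no — the arity could be large. The clean fix is to first argue one may assume the tree has bounded branching, \emph{or} — better — to phrase the flips in terms of pairs of children is impossible; instead, one processes the tree using the fact that between the vertex sets of any two \emph{distinct} children $c_1,c_2$ of $t$, adjacency depends only on the labels (via $M_t$), so after flipping we must make the whole graph disconnect into the children's subtrees, and the number of flips needed is governed by $m$, not by the branching degree, precisely because flips are indexed by label-\emph{pairs} $(\alpha,\beta)$ rather than child-pairs. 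Making this accounting rigorous — in particular handling $M_t(\alpha,\alpha)=1$ which creates cliques spanning many children — is the technical heart, and is exactly where the equivalence of the two tree-model conventions mentioned after \cref{def:tree-model} (and the possibility of normalizing the tree) becomes useful. I would cite \cite{GanianHNOM19} for the normalization and present the flip-counting argument in the normalized setting.
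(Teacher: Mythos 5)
The paper does not actually prove this statement --- it is imported from \cite{GanianHNOM19} --- so your proposal can only be measured against the standard interconversion argument, which is indeed the route you take. Your first direction is essentially correct: the label-doubling trick (recording membership in $A$ in the label and XOR-ing each $M_t$ exactly when both arguments carry the $A$-marker) correctly re-introduces the flip without changing lowest common ancestors. The only blemish there is bookkeeping: in the disconnected case, joining component models of depth $d+1$ under a fresh root gives depth $d+2$, not $d+1$; this is harmless since only boundedness matters (the clean fix is to state the induction for connected graphs and add the root once at the end).

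The genuine gap is in the second direction, and it is not where your ``main obstacle'' paragraph locates it. Your flips (for each label pair $\{\alpha,\beta\}$ with $M_t(\alpha,\beta)=1$ flip $V_\alpha$, $V_\beta$, $V_\alpha\cup V_\beta$; for $M_t(\alpha,\alpha)=1$ flip $V_\alpha$ once) do delete every edge between distinct root-subtrees, with a number of flips depending only on $m$ and not on the branching degree --- so the $M_t(\alpha,\alpha)=1$ case and the lca worry are in fact unproblematic, and no normalization from \cite{GanianHNOM19} is needed. What your write-up does not justify is the next sentence: after these flips, the graph on the vertex set of a root-subtree is \emph{not} the induced subgraph of $G$ on that subtree, because the flip sets are unions of label classes and therefore also invert same-subtree pairs (the net effect on a same-subtree pair with labels $\alpha,\beta$ is an inversion exactly when $M_t(\alpha,\beta)=1$). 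So the assertion that each piece ``has a tree-model of depth $\leq d-1$ with $\leq m$ labels'' is unproven as stated, and the induction hypothesis does not apply to the graphs you actually have. The repair is short but has to be made: either observe that the post-flip graph on the subtree $T_i$ is represented by the tree-model $T_i$ with every function $M_s$ replaced by $M_s\oplus M_t$ (pointwise XOR), which keeps depth $d-1$ and $m$ labels and restores your induction verbatim; or, more crudely, prove that $|\SCd(H)-\SCd(H\oplus A)|\leq 1$ for all $H,A$ (easy induction on $|V(H)|$, using monotonicity of $\SCd$ under induced subgraphs) and absorb the $O(m^2)$ residual within-subtree flips into the recurrence $f(d,m)=f(d-1,m)+O(m^2)$. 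With either fix the argument closes; without one of them, the inductive step does not go through.
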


Observe that similarly to the characterization of \cref{lem:td-recursive}, \cref{def:SC-depth} can be understood through a game, which in this context we call the {\em{radius-$\infty$ Flipper Game}}. Now, the players are called {\em{Flipper}} and {\em{Keeper}}, and the game proceeds in round as follows:
\begin{itemize}[nosep]
 \item First, Keeper picks a connected component and the arena gets restricted to this component. (So Keeper's moves are exactly the same as Connector's in the Splitter Game.)
 \item Next, Flipper chooses a vertex subset $A$ and applies a flip on $A$.
\end{itemize}
The game finishes once the arena is restricted to a single vertex. The goals of the players are the same. The following statement is then obtained in exact analogy to \cref{lem:td-splitter}.

\begin{lemma}\label{lem:SC-flipper}
 For a graph $G$, $\SCd(G)$ is the minimum number of rounds needed for Flipper to win the radius-$\infty$ Flipper Game on $G$.
\end{lemma}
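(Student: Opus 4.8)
The plan is to prove both inequalities of \cref{lem:SC-flipper} by a straightforward induction, mirroring exactly the argument behind \cref{lem:td-splitter} and exploiting the structural parallel between \cref{def:SC-depth} and \cref{lem:td-recursive}. Write $\rho(G)$ for the minimum number of rounds Flipper needs to win the radius-$\infty$ Flipper Game on $G$, with the convention $\rho(G)=0$ when $G$ has exactly one vertex (the game is already over). We show $\rho(G)=\SCd(G)$ by induction on $|V(G)|$.

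First I would handle the easy cases. If $G$ has one vertex, both quantities are $0$. If $G$ is disconnected, note that in the first round Keeper may pick any component $C$, and thereafter the game is exactly the radius-$\infty$ Flipper Game on $C$; conversely Flipper cannot do anything useful until Keeper has committed to a component. Hence $\rho(G)=\max\{\rho(C)\colon C \text{ a component of }G\}$, which by induction equals $\max\{\SCd(C)\colon C\text{ a component of }G\}=\SCd(G)$. (Here there is a subtlety worth a sentence: Keeper's move is forced to pick \emph{some} component, so the ``$\max$'' is genuinely what both players agree on — Keeper maximizes, but Flipper has no say at this step.)

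The main case is $G$ connected with at least two vertices. For $\rho(G)\le \SCd(G)$: let $A\subseteq V(G)$ be a set attaining the minimum in \cref{def:SC-depth}, so $\SCd(G)=1+\SCd(G\oplus A)$. Flipper's strategy is to flip on $A$ in the first round. Since $G$ is connected, Keeper's preceding move (pick a component) is trivial — the arena stays $G$ — so after Flipper's move the arena is $G\oplus A$, on which, by induction, Flipper wins in $\SCd(G\oplus A)$ further rounds. Thus $\rho(G)\le 1+\SCd(G\oplus A)=\SCd(G)$. For the reverse inequality $\rho(G)\ge \SCd(G)$: consider any optimal Flipper strategy on $G$. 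Again Keeper's first move does nothing (the arena is $G$), and Flipper's first move flips some set $A$, producing the arena $G\oplus A$; the remainder is a play of the Flipper Game on $G\oplus A$ that Flipper wins in $\rho(G)-1$ rounds, so $\rho(G)-1\ge \rho(G\oplus A)=\SCd(G\oplus A)\ge \min_{A'\subseteq V(G)}\SCd(G\oplus A')$, giving $\rho(G)\ge 1+\min_{A'}\SCd(G\oplus A')=\SCd(G)$. Combining the three cases closes the induction.

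The only point requiring a little care — and the one I would state explicitly rather than wave at — is why Keeper's component-selection move is genuinely "trivial'' in the connected case, i.e.\ why the alternating $\min/\max$ recurrence of \cref{def:SC-depth} corresponds cleanly to the alternating-move game. This is precisely analogous to the observation after \cref{lem:td-recursive} that on a connected graph an elimination forest must be a tree; here, on a connected arena Keeper has exactly one legal choice, so the round reduces to Flipper's single flip, matching the "$1+\min_A$'' branch of the recurrence. Since there are no genuinely new ideas beyond \cref{lem:td-splitter}, I expect the write-up to be short; the induction is the whole content, and I would likely present it in a compressed form, pointing to the proof of \cref{lem:td-splitter} for the parallel structure.
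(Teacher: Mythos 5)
Your overall plan is the one the paper intends: the paper gives no written argument beyond saying the statement is obtained ``in exact analogy to \cref{lem:td-splitter}'', and your case analysis (single vertex / disconnected / connected) mirroring \cref{def:SC-depth}, together with the observation that Keeper's component-picking move is forced on a connected arena, is exactly the right skeleton. However, there is a genuine flaw in the induction as you set it up: you induct on $|V(G)|$, yet in the connected case (in both directions) you apply the induction hypothesis to $G\oplus A$, which has exactly the same vertex set as $G$. This is precisely where the analogy with \cref{lem:td-splitter} breaks down: a Splitter move deletes a vertex, so induction on the number of vertices is well-founded there, whereas a flip does not shrink the graph, and two consecutive flips cannot in general be merged into one, so the recurrence in \cref{def:SC-depth} genuinely produces chains of connected arenas all on the same vertex set. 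As written, the key step of your argument is circular; for the same reason one should read \cref{def:SC-depth} as ``the least value consistent with the recurrence'' (equivalently, the minimum depth of a flip-decomposition), not as a recursion grounded in vertex count.

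The fix is routine but must be made explicit: induct on the value rather than on the size of the graph. For instance, prove $\rho(G)\leq \SCd(G)$ by induction on the pair $(\SCd(G),|V(G)|)$ ordered lexicographically --- in the connected case the optimal $A$ satisfies $\SCd(G\oplus A)=\SCd(G)-1$, and in the disconnected case the components are strictly smaller graphs of no larger SC-depth --- and prove $\SCd(G)\leq \rho(G)$ by induction on $(\rho(G),|V(G)|)$, using that after Flipper's first move $A$ in an optimal strategy one has $\rho(G\oplus A)\leq \rho(G)-1$ (finiteness of $\rho$ follows from the first inequality, or from noting that Flipper can always isolate a vertex by flipping a closed neighborhood). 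Equivalently, prove the single statement ``Flipper wins on $G$ within $k$ rounds if and only if $\SCd(G)\leq k$'' by induction on $k$, with a secondary induction on $|V(G)|$ to handle the disconnected case. With this change of measure, the rest of your argument goes through verbatim.
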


Let us pause for a moment, because the reasoning presented above is a simple instantiation of a very important general principle, which seems to permeate the whole theory. In essence, the definitions of SC-depth and of the Flipper Game have been obtained by taking the definitions of treedepth and of the Splitter Game, and replacing the concept of {\em{deleting a vertex}} with the concept of {\em{applying a flip}}. We will see this idea applied repeatedly later on, for more complicated definitions: the flip operations seems to be the right analogue of vertex deletion in the context of dense graphs, and by following this analogy we can design structural concepts for dense graphs mirroring those working on sparse graphs.

We remark that sometimes one assumes a slightly more general notion of a flip, where we are given two vertex subsets $A$ and $B$ in a graph $G$, not necessarily disjoint, and the operation consists of inverting the adjacency relation in $A\times B$. This change is non-consequential for theory: these stronger flips can model the standard flips described above by taking $A=B$, and any stronger flip can be emulated using three standard flips: on $A$, on $B$, and on $A\cup B$.

Also, note that making a flip can be modelled by a transduction: there is a transduction that given $G$, outputs all graphs that can be obtained from $G$ by applying a flip. More generally, for every fixed $k\in \N$, there is a transduction $\Tf_k$ such that $\Tf_k(G)$ consists of all {\em{$k$-flips}} of $G$: graphs that can be obtained from $G$ by applying at most $k$ flips. This may explain why the flip operation is so relevant in the constructed structural theory based on transductions.

\paragraph*{Model-theoretic aspects.} Let us proceed with the discussion of the properties of shrubdepth. The following model-theoretic characterization was in fact the main motivation of Ganian et al. for the introduction of this notion: from the point of view of logic, classes of bounded shrubdepth are those encodable in bounded-depth trees. (Here, we see rooted trees as binary structures over the signature consising of the ancestor relation.)

\begin{theorem}[\cite{GanianHNOM19}]\label{thm:shrub-logic}
 Let $\Cc$ be a graph class. Then $\Cc$ has shrubdepth at most $d$ if and only if $\Cc$ can be transduced from the class of trees of depth at most $d$. In particular, $\Cc$ has bounded shrubdepth if and only if $\Cc$ can be transduced from a class of trees of bounded depth.
\end{theorem}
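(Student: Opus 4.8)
The statement is an equivalence, so I would prove the two implications separately, and the ``in particular'' part is then immediate. For the direction ``shrubdepth at most $d$ implies transducible from trees of depth at most $d$'': given $\Cc$ of shrubdepth $d$ with label bound $m$, I would construct a single transduction that recovers every $G\in\Cc$ from its tree-model $T$ (viewed as a rooted tree of depth $\le d$, i.e.\ an ancestor-relation structure). The transduction needs to (i) guess, via colors, the labelling $\lambda$ of the leaves by $\Lambda$ (this uses $|\Lambda|\le m$ unary predicates), and (ii) guess, again via colors, for each non-leaf node $t$ the function $M_t$; since there are only $2^{m^2}$ symmetric functions $\Lambda\times\Lambda\to\{0,1\}$, a bounded number of colors on the internal nodes suffices to encode which function is assigned to each $t$. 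Then the interpreting formula $\varphi(x,y)$ says: $x,y$ are leaves, and there is a node $t$ that is the lowest common ancestor of $x$ and $y$ (``$t$ is an ancestor of both and every common ancestor of $x,y$ is an ancestor of $t$'' --- $\FO$-expressible using the ancestor relation) such that the color pattern on $t$ and the label colors on $x,y$ certify $M_t(\lambda(x),\lambda(y))=1$. Finally the restriction step trims the universe to the leaves, giving exactly $G$. This shows $\Cc\tleq_\FO \{\text{trees of depth}\le d\}$.

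For the converse, ``transducible from trees of depth $\le d$ implies shrubdepth $\le d$'': suppose $\Cc\subseteq \Tf(\DD)$ where $\DD$ is a class of rooted trees of depth $\le d$ and $\Tf=(C,\varphi)$. Take $G\in\Cc$, so $G$ is an induced subgraph of $\varphi(T^+)$ for some $T\in\DD$ and some $C$-coloring $T^+$ of $T$. I would build a tree-model of $G$ out of $T$. The natural candidate uses (a sub-tree of) $T$ itself as the tree $T$ of the tree-model, with the leaves being the vertices of $G$; the label of a leaf should record enough of its local data --- its $C$-colors and its ``position type'' --- so that adjacency in $\varphi(T^+)$ between two leaves $u,v$ depends only on the pair of labels and on their lowest common ancestor. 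Here I must be careful: the quantifier-rank-$q$ formula $\varphi$ can look around the whole tree, not just at the two leaves, so the label of a leaf cannot just be its color. The standard remedy is a Feferman--Vaught / composition argument for trees of bounded depth: the $\FO$-type (up to the rank of $\varphi$) of the structure $T^+$ with two marked leaves $u,v$ is determined by (1) the types of the ``pieces'' of $T^+$ hanging off the root-to-$u$ path and the root-to-$v$ path, (2) the colors along these paths, and (3) the branching structure, in particular the lowest common ancestor of $u$ and $v$. Because the depth is at most $d$, these paths have length $\le d$, so there are only boundedly many possible ``path profiles,'' and one can push the relevant profile information down to the leaves as labels. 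The crucial point --- the one I expect to be the main obstacle --- is verifying that adjacency of $u,v$ in $\varphi(T^+)$ is a function of $(\lambda(u),\lambda(v))$ together with which node is $\mathrm{lca}(u,v)$, \emph{uniformly}; this is exactly where the bounded-depth composition lemma for $\FO$ over trees does the work, and where one has to set up the labels $\Lambda$ (boundedly many, depending on $d$, $|C|$, and the quantifier rank of $\varphi$) and the functions $M_t$ so that everything is consistent. One then also has to handle the restriction step (just throw away the non-vertex leaves --- fine, tree-models are unaffected by deleting leaves) and possibly the fact that $T$ itself need not be binary or that its leaves need not literally be the universe (cosmetic: restrict $T$ to the subtree spanned by the chosen leaves and their ancestors, keeping depth $\le d$).

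The part of the argument I would flag as genuinely non-routine is the bounded-depth $\FO$ composition lemma and the bookkeeping of labels in the converse direction; the forward direction is a more-or-less direct unwinding of the definitions of transduction and tree-model. I would structure the writeup as: (1) a short lemma stating the composition/Feferman--Vaught fact for $\FO$ over rooted forests of depth $\le d$, with a one-line pointer to standard references since the excerpt lets me assume such tools; (2) the easy forward construction; (3) the converse, using (1) to define $\Lambda$, $\lambda$, and $\{M_t\}$. The ``in particular'' sentence then follows by quantifying over $d$.
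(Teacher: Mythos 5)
The paper does not prove this theorem from scratch: it quotes it from Ganian et al.~\cite{GanianHNOM19}, and the surrounding remark only records that (i) the left-to-right direction is the easy decoding of a depth-$d$, bounded-label tree-model by an $\FO$ transduction --- exactly your forward construction --- and (ii) the right-to-left direction is proved there even for $\MSO$/$\CMSO$ transductions, a stronger statement that subsumes the $\FO$ case (and is connected to the $\FO$/$\MSO$ collapse of \cref{thm:shrub-FO-MSO}). Your proposal instead gives a self-contained, purely first-order argument for the hard direction, via a Feferman--Vaught/EF-composition lemma for $\FO$ over colored rooted trees of depth at most $d$: root-to-vertex ``path profiles'' (ancestor colors, threshold-counted multisets of subtree types, and marked types of the subtrees hanging at each level) are pushed into the labels, while everything depending only on the node $t$ is absorbed into $M_t$. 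This is a legitimate and standard route; it buys a direct $\FO$ proof without any $\MSO$ or automata machinery, but it only yields the $\FO$ statement, whereas the cited argument gives the stronger closure of bounded shrubdepth under $\MSO$/$\CMSO$ transductions.

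Two points in the converse need tightening, though neither is a conceptual obstacle. First, the vertices of $G$ are arbitrary elements of the universe of the tree, not necessarily leaves, and your repair (``restrict to the subtree spanned by the chosen leaves and their ancestors'') presupposes they are leaves. The correct fix is to take the subtree of $T$ spanned by $V(G)$ and, for each vertex of $G$ that is internal in it (i.e.\ has another vertex of $G$ strictly below it), attach a fresh pendant leaf representing that vertex; such a node has at most $d-1$ vertices on its root path, so the depth bound $d$ is preserved and lowest common ancestors are unaffected. Second, for the well-definedness of $M_t$ you should make the label of $u$ record, for every possible depth $j$ of the lowest common ancestor, the (bounded-rank, $u$-marked) type of the subtree rooted at $u$'s depth-$(j{+}1)$ ancestor; since $M_t$ may depend arbitrarily on $t$, it already ``knows'' the root-to-$t$ context and the multiset of child subtrees of $t$, and together with the two labels this determines the type of $(T^+,u,v)$, hence adjacency; symmetry of $M_t$ comes from interpreting with $\varphi(x,y)\vee\varphi(y,x)$. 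These are exactly the bookkeeping issues you flagged, and they can be discharged along the lines above.
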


We note that in fact, in~\cite{GanianHNOM19} Ganian et al. proved \cref{thm:shrub-logic} for $\MSO$ and $\CMSO$ transductions, which is a stronger result. Indeed, the right-to-left implication for $\mathsf{(C)MSO}$ transductions entails this implication for $\FO$ transductions as well, while the left-to-right implication is anyway easy, as decoding the graph from its tree-model of bounded depth and using a bounded number of labels can be easily done using an $\FO$ transduction. This is connected to the collapse of $\MSO$ to $\FO$ on classes of bounded shrubdepth, see \cref{thm:shrub-FO-MSO}.

The equivalence provided by \cref{thm:shrub-logic} together with compositionality of transductions (\cref{lem:trans-composition}) immediately implies the following.

\begin{theorem}\label{cor:shrub-ideal}
 For every $d\in \N$, classes of shrubdepth at most $d$ form an $\FO$ ideal. In particular, classes of bounded shrubdepth are an $\FO$ ideal as well.
\end{theorem}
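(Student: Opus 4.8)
The plan is to derive \cref{cor:shrub-ideal} as a direct consequence of two facts already established: the transduction characterization of bounded-shrubdepth classes (\cref{thm:shrub-logic}) and the compositionality of transductions (\cref{lem:trans-composition}), which yields transitivity of $\tleq_\FO$.

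\textbf{Setup.} Fix $d\in\N$, and let $\Pi_d$ be the property ``having shrubdepth at most $d$''. Let $\Tc_d$ denote the class of all (rooted) trees of depth at most $d$. Recall from \cref{thm:shrub-logic} that a class $\Cc$ lies in $\Pi_d$ precisely when $\Cc \tleq_\FO \Tc_d$, i.e. when $\Cc$ is transducible from $\Tc_d$.

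\textbf{Main argument.} Let $\Cc\in\Pi_d$ and let $\Tf$ be an arbitrary transduction; I must show $\Tf(\Cc)\in\Pi_d$, that is, $\Tf(\Cc)\tleq_\FO \Tc_d$. By \cref{thm:shrub-logic} applied to $\Cc$, there is a transduction $\Tf'$ with $\Cc\subseteq \Tf'(\Tc_d)$. Now, for any graph $H\in\Tf(\Cc)$ there is $G\in\Cc$ with $H\in\Tf(G)$, and for that $G$ there is $T\in\Tc_d$ with $G\in\Tf'(T)$; hence $H\in\Tf(\Tf'(T))$. In other words $\Tf(\Cc)\subseteq \Tf(\Tf'(\Tc_d)) = (\Tf\circ\Tf')(\Tc_d)$. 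By \cref{lem:trans-composition}, the composition $\Tf\circ\Tf'$ is again a transduction, so $\Tf(\Cc)\tleq_\FO\Tc_d$, and by \cref{thm:shrub-logic} (this time the converse direction) we conclude $\Tf(\Cc)\in\Pi_d$. This establishes that $\Pi_d$ is an $\FO$ ideal.

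\textbf{The ``in particular'' clause.} Bounded shrubdepth is the union $\bigcup_{d\in\N}\Pi_d$ over all $d$, so a class $\Cc$ has bounded shrubdepth iff $\Cc\in\Pi_d$ for some $d$. If such $\Cc$ has bounded shrubdepth witnessed by $d$, then by the first part $\Tf(\Cc)\in\Pi_d$ for every transduction $\Tf$, hence $\Tf(\Cc)$ has bounded shrubdepth. Thus the property of bounded shrubdepth is also closed under transductions, i.e. an $\FO$ ideal.

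\textbf{Expected obstacle.} There is essentially no obstacle here: the entire content has been offloaded onto \cref{thm:shrub-logic} and \cref{lem:trans-composition}. The only point requiring the tiniest care is making sure the composition is taken in the correct order ($\Tf$ after $\Tf'$, so that a tree is first transduced into a member of $\Cc$ and then into a member of $\Tf(\Cc)$) and that \cref{lem:trans-composition} is genuinely stated for compositions in this direction — which it is, since ``a composition of two transductions is a transduction'' is symmetric in how one phrases the order. One should also note in passing that the argument uses \cref{thm:shrub-logic} in both directions, so strictly it is the ``if and only if'' of that theorem that is invoked.
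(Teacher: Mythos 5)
Your proposal is correct and is exactly the paper's argument: the paper derives this statement immediately from \cref{thm:shrub-logic} combined with the compositionality of transductions (\cref{lem:trans-composition}), just as you do, using the characterization in both directions. Nothing further is needed.
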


\cref{cor:shrub-ideal} explains the choices made in the definition of shrubdepth. Namely, behind shrubdepth there is a hierarchy of strictly increasing $\FO$ ideals, and the depth parameter precisely pinpoints where are the borders between those ideals. Thus, highlighting the depth of a tree-model as the primary parameter is completely natural from the point of view of model-theoretic aspects of shrubdepth, even though it makes the combinatorial definition a bit more complicated.

Further, as proved by Gajarsk\'y and Hlin\v{e}n\'y~\cite{GajarskyH15}, on classes of bounded shrubdepth the expressive powers of $\FO$ and of $\MSO$ coincide, in the sense described below. This is essentially a consequence of this collapse occurring in the setting of bounded-depth trees.

\begin{theorem}[\cite{GajarskyH15}]\label{thm:shrub-FO-MSO}
 Let $\Cc$ be a class of bounded shrubdepth. Then for every sentence $\phi\in \MSO_1$, there exists a sentence $\phi\in \FO$ that is equivalent to $\phi$ on graphs from $\Cc$:
 $$G\models \phi\quad\textrm{if and only if}\quad G\models\phi',\qquad\textrm{for every }G\in \Cc.$$
\end{theorem}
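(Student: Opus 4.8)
The plan is to leverage \cref{thm:shrub-logic}, which tells us that a class $\Cc$ of bounded shrubdepth is transducible from a class $\TT$ of trees of bounded depth, say depth at most $d$. The heart of the matter is therefore to prove the $\FO$/$\MSO_1$ collapse on the class $\TT$ of bounded-depth trees, and then transfer this collapse along the transduction using the Backwards Translation Lemma (\cref{lem:backwards}), suitably generalized from simple interpretations to transductions.

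First I would establish the collapse on trees of depth at most $d$. Here the key observation is that a rooted tree of depth at most $d$ is, up to the behaviour of large ``identical'' subtrees, controlled by a bounded amount of information: by a pruning/bounding argument (in the spirit of the classical fact that $\MSO$ over words/trees has finitely many types), one shows that $\MSO_1$ on depth-$d$ trees has only finitely many inequivalent sentences of each quantifier rank, and more importantly that any fixed $\MSO_1$ sentence is equivalent over depth-$d$ trees to an $\FO$ sentence. One clean way to see this is via a composition/Feferman--Vaught style argument on the tree structure: a depth-$d$ tree is obtained by attaching depth-$(d-1)$ subtrees under a root, the $\MSO_1$-theory (up to a fixed rank) of the whole tree is determined by the \emph{multiset of theories} of the subtrees, and — crucially, because there are boundedly many such theories and counting them beyond a threshold is irrelevant — this multiset, capped at a constant threshold, is itself $\FO$-definable. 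Inducting on $d$ gives an $\FO$ sentence equivalent to the given $\MSO_1$ sentence on all depth-$d$ trees.

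Next I would upgrade the Backwards Translation Lemma to transductions. Given the transduction $\Tf=(C,\varphi)$ witnessing $\Cc \tleq_\FO \TT$ and an $\MSO_1$ sentence $\psi$ to be evaluated on $G\in\Cc$: writing $G$ as obtained from some $T\in\TT$ by coloring (with colors $C$), interpreting via $\varphi$, and restricting to an induced subgraph, one rewrites $\psi$ as an $\MSO_1$ sentence $\psi'$ over $C$-colored trees that (i) guards all quantifiers by a monadic variable naming the ``kept'' vertices (which, as in \cref{lem:trans-composition}, can itself be a guessed color), and (ii) replaces $\adj$ by $\varphi(x,y)\vee\varphi(y,x)$. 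Since $\varphi\in\FO$, this $\psi'$ is still $\MSO_1$ over the (bounded-depth, colored) tree; and bounded-depth colored trees still form a class on which the $\MSO_1$-to-$\FO$ collapse holds (adding boundedly many unary predicates does not affect the composition argument). So $\psi'$ is equivalent over $\TT$-with-colors to an $\FO$ sentence $\psi''$. Finally, pushing $\psi''$ back through the (now $\FO$!) transduction via the ordinary Backwards Translation Lemma and existentially quantifying away the guessed colors — which is permissible because we land in $\FO$ over the final class $\Cc$ only after the colors have been absorbed, exactly as transductions allow — yields the desired $\FO$ sentence $\phi'$ equivalent to $\phi=\psi$ on $\Cc$.

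The main obstacle, and the step deserving the most care, is the collapse on bounded-depth trees itself — specifically making precise that the ``capped multiset of subtree-types'' is $\FO$-definable and that this suffices to simulate an arbitrary $\MSO_1$ sentence. This is where boundedness of the depth is used essentially: without a depth bound the number of relevant subtree-types is unbounded and the argument collapses (indeed $\MSO_1 \ne \FO$ on all trees). A secondary subtlety is bookkeeping the colors through the composition: one must be careful that the guessed unary predicates introduced to emulate the restriction step and to handle the monadic quantifiers of $\psi$ do not survive into the final $\FO$ sentence, but this is handled exactly as in the proof of \cref{lem:trans-composition} and the remarks following the definition of transductions (one can always guess the restriction set using a color). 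I would not expect either the Feferman--Vaught composition or the color bookkeeping to pose any conceptual difficulty beyond standard finite-model-theory technique; the quantitative dependence of the number of labels on $d$ and on $\|\phi\|$ comes out of the induction but is not needed for the statement.
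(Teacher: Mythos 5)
There is a genuine gap, and it sits exactly at the step you dispatch most quickly: the final transfer from the tree side back to the graph side. The Backwards Translation Lemma (\cref{lem:backwards}) only translates sentences about the \emph{output} of an interpretation into sentences about the \emph{input}. Your chain correctly produces, from the given $\MSO_1$ sentence $\phi$, an $\MSO$ sentence $\psi'$ over colored bounded-depth trees, and (granting your composition argument) an equivalent $\FO$ sentence $\psi''$ over those colored trees, so that $G\models\phi$ iff $T^+\models\psi''$ whenever $G$ is produced from the colored tree $T^+$ by the transduction of \cref{thm:shrub-logic}. But the theorem demands a sentence $\phi'$ evaluated in $G$ itself, with no coloring and no access to the tree; there is no ``forward'' translation turning an input-side sentence $\psi''$ into an output-side one, and ``existentially quantifying away the guessed colors'' is monadic quantification over the tree, not an operation producing an $\FO$ sentence on $G$. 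The tree-model is not part of the structure $G$ and is not $\FO$-recoverable from it (its internal nodes do not even exist in the universe of $G$), so the proposal never yields $\phi'$. This is not a bookkeeping issue: the whole difficulty of the statement is relating the $\FO$ theory of $G$ \emph{as a graph} to its $\MSO_1$ theory, and that relation is exactly what the detour through trees fails to supply.

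The cited proof of Gajarsk\'y and Hlin\v{e}n\'y closes this gap with a kernelization/pruning argument carried out on tree-models but with both logics evaluated on the graph: if a node of a depth-$d$ tree-model has many pairwise isomorphic (label-preserving) subtrees, deleting the leaves of one such copy changes neither the $\MSO_1$ theory of $G$ up to quantifier rank $q$ nor its $\FO$ theory up to a suitable rank $q'$ (an Ehrenfeucht--Fra\"isse argument, with the threshold chosen in terms of $q,q',d,m$). Iterating, every $G\in\Cc$ is both $\MSO_1$-$q$-equivalent and $\FO$-$q'$-equivalent to a kernel of bounded size; taking $q'$ larger than this size, $\FO$-$q'$-equivalence of two graphs in $\Cc$ forces isomorphic kernels and hence the same $\MSO_1$-$q$-theory. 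Consequently the set of graphs in $\Cc$ satisfying $\phi$ is a union of finitely many $\FO$-rank-$q'$ types, and $\phi'$ can be taken to be the corresponding finite disjunction of type sentences. Your Feferman--Vaught collapse on bounded-depth trees is true and is indeed the moral reason the theorem holds (as the survey's remark suggests), but to repair your proof you must replace the ``push back through the transduction'' step by an argument of this kind, comparing $\FO$ and $\MSO_1$ equivalence directly on the graphs of $\Cc$.
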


See also a different take on \cref{thm:shrub-FO-MSO} by Chen and Flum~\cite{ChenF20}, and an earlier work of Elberfeld, Grohe, and Tantau on an analogous statement for classes of bounded treedepth~\cite{ElberfeldGT16}. As expected, there the collapse applies also to $\MSO_2$, not only $\MSO_1$. In essence, \cref{thm:shrub-FO-MSO} explains why in the context of classes of bounded shrubdepth, it is equivalent to speak about $\FO$- and $\MSO$-transductions.

\paragraph*{Relation to treedepth.} As should be expected, shrubdepth generalizes treedepth, and restricting attention to sparse graphs projects classes of bounded shrubdepth to classes of bounded treedepth.

\begin{theorem}\label{thm:td-shb}
 Every class of bounded treedepth also has bounded shrubdepth. Conversely, every weakly sparse class of bounded shrubdepth in fact has bounded treedepth.
\end{theorem}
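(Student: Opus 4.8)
The plan is to prove the two implications separately, exploiting the combinatorial characterizations established earlier rather than the logical one. For the forward direction, I would show that bounded treedepth implies bounded shrubdepth by converting an elimination forest directly into a tree-model. Given a graph $G$ with an elimination forest $F$ of depth $d$, first reduce to the connected case (so $F$ is an elimination tree); the general case follows by taking a disjoint union of tree-models with one extra root node whose matrix is identically $0$, which costs only $1$ in depth and nothing in the label count. For a connected $G$ with elimination tree $F$, the key observation is that $F$ itself is almost a tree-model: its leaf set is not $V(G)$, but every vertex of $G$ is a node of $F$. I would fix this in the standard way, by attaching to each node $u$ of $F$ a fresh leaf $\ell_u$ representing $u$ (so internal nodes of the new tree $T$ are the old nodes of $F$, and leaves are the $\ell_u$'s). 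The label of $\ell_u$ should record enough information to reconstruct adjacency: since any edge $uv$ of $G$ has $u,v$ on a common root-to-leaf path of $F$, and there are at most $d$ nodes on such a path, I would label $\ell_u$ by its depth in $F$ together with, for each ancestor depth $i<\mathrm{depth}(u)$, a bit indicating whether $u$ is adjacent in $G$ to the ancestor of $u$ at depth $i$. This is a label set of size bounded by a function of $d$ only (roughly $d\cdot 2^{d}$), and one checks that the matrix $M_t$ at the lowest common ancestor $t$ of $\ell_u,\ell_v$ can read off from the two labels whether $uv\in E(G)$: the deeper of the two vertices, say $v$, stores whether it is adjacent to its ancestor at the depth of $u$, and $t$ being the LCA means $u$ is exactly that ancestor. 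The depth of $T$ is $d+1$, so $G$ has a tree-model of depth $d+1$ with boundedly many labels, giving shrubdepth at most $d+1$ for the whole class. (One can tighten this, but the bound $d+1$ suffices for "bounded implies bounded.")

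For the converse, I would use the longest-path obstruction for treedepth, \cref{lem:lp-td}, which says $\td(G)\le \lp(G)$: it therefore suffices to show that a weakly sparse class of bounded shrubdepth has bounded $\lp$, i.e. excludes long paths. Suppose $\Cc$ has shrubdepth $\le d$ with label set size $\le m$, and excludes $K_{t,t}$ as a subgraph. I would argue by contradiction: if $\Cc$ contains graphs with arbitrarily long induced paths — here it is enough to handle paths as subgraphs, and in a weakly sparse class, a long path subgraph contains a long induced path after a Ramsey-type cleanup, or one can simply work with the path $P_n$ itself once we know $\Cc$ contains long paths — then $\Cc$ transduces a class with unbounded shrubdepth, contradicting \cref{cor:shrub-ideal}. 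Concretely: fix $n$ huge compared to $d,m$, and take a graph $H\in\Cc$ containing a path $P$ on $n$ vertices as a subgraph, with a tree-model $(T,\lambda,(M_t))$ of depth $\le d$ and $\le m$ labels. The point is that along the path $P=v_1v_2\cdots v_n$, consecutive vertices $v_i,v_{i+1}$ are adjacent, so $M_{t_i}(\lambda(v_i),\lambda(v_{i+1}))=1$ where $t_i$ is their LCA in $T$. A counting argument then produces a long "monotone" sub-structure: by Dilworth/Ramsey on the tree $T$ of depth $\le d$, a long subsequence of $P$ has its vertices occupying leaves that are pairwise "spread out" in $T$ in a controlled pattern, and since the labels take only $m$ values, infinitely many of these must look identical — forcing either a large biclique (contradicting weak sparsity) or else pinning down the structure so tightly that $P_n$ would itself have shrubdepth bounded independently of $n$, which is false since $\td(P_n)=\Theta(\log n)$ and treedepth lower-bounds shrubdepth's SC-depth-type parameters on paths.

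Let me restate the converse argument more cleanly, since that is where the real content lies. The cleanest route: it is enough to show \emph{the class of all paths does not have bounded shrubdepth}, because then no class of bounded shrubdepth can contain arbitrarily long paths as \emph{induced} subgraphs (shrubdepth passes to hereditary closures via the restriction step of transductions, \cref{cor:shrub-ideal}), and in a weakly sparse class, excluding long induced paths together with excluding $K_{t,t}$ and using a Ramsey argument on a hypothetical long path-subgraph forces a bound on $\lp$; then $\td \le \lp$ finishes it. And the class of all paths has unbounded shrubdepth because $\td(P_n)\ge \log_2(n+1)$ by \cref{lem:lp-td}, while \emph{bounded shrubdepth would force bounded treedepth on the weakly sparse subclass of paths} — wait, that is circular. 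The non-circular fact I should invoke is the direct one: a class of bounded shrubdepth has the property that some $\FO$ transduction produces it from bounded-depth trees (\cref{thm:shrub-logic}), and bounded-depth trees, hence anything transduced from them, cannot contain long paths as induced subgraphs — more robustly, one shows directly from \cref{def:shrubdepth} that a graph with a tree-model of depth $d$ and $m$ labels has treedepth bounded by a function $f(d,m)$ \emph{provided it is $K_{t,t}$-free}, e.g. by showing its longest path is $f(d,m,t)$-bounded via the counting argument above. So the structure of the converse proof is: (1) take a tree-model of depth $d$, $\le m$ labels, of a $K_{t,t}$-free graph $G$; (2) show any path in $G$ has length bounded in terms of $d,m,t$ — this is the heart — by an induction on the tree-model depth, splitting the path at the children of the root and using that only $m^2$ "adjacency types" across the root-split are available, together with $K_{t,t}$-freeness to bound how often a given type can repeat; (3) conclude $\lp(G)\le f(d,m,t)$, hence $\td(G)\le f(d,m,t)$ by \cref{lem:lp-td}.

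The main obstacle I expect is step (2) of the converse: getting from a bounded-depth, bounded-label tree-model plus $K_{t,t}$-freeness to a bound on the longest path. The subtlety is that a tree-model can encode dense structure cheaply (e.g. a clique has a depth-$1$, $1$-label tree-model), so weak sparsity is genuinely needed, and it has to be deployed at every level of the recursion to control the "interface" between the part of the graph under one child of the root and the parts under the other children. I would carry out this step by an induction where the inductive statement bounds not just the longest path but the longest path \emph{with prescribed endpoints' labels}, so that when gluing path-segments living under different children of the root, the bounded number of label-pairs plus a Kővári–Sós–Turán / $K_{t,t}$-free counting bound caps the number of segments that can be chained together. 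The forward direction, by contrast, I expect to be entirely routine once the leaf-gadget encoding of the elimination tree is set up correctly.
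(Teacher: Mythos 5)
Your proposal is correct in substance but takes a genuinely different route. The paper's proof is a citation-assembly: the first statement is quoted from~\cite{GanianHNOM19}, and the converse combines the Galvin--Rival--Sands theorem~\cite{GalvinRS82} (weakly sparse classes with arbitrarily long paths as subgraphs contain arbitrarily long \emph{induced} paths) with the facts that hereditary closure does not increase shrubdepth and that the class of paths has unbounded shrubdepth~\cite{GanianHNOM19}. You instead (a) build a tree-model directly from an elimination forest, and (b) prove by induction on the depth of a tree-model that a $K_{t,t}$-free graph with a depth-$d$, $m$-label tree-model has longest path bounded by some $f(d,m,t)$, then finish with $\td\leq\lp$ from \cref{lem:lp-td}. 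Step (b) is the real divergence and it does work: split a path at the root of the tree-model; each maximal segment inside one child is handled by induction (the child subtree is a depth-$(d-1)$ tree-model of the induced subgraph, and $K_{t,t}$-freeness is inherited), while each cross edge is governed by one of the at most $m^2$ label pairs $(\alpha,\beta)$ with $M_r(\alpha,\beta)=1$; a short biclique-counting step (if $t$ of the $\alpha$-endpoints sit in one child, or if the endpoints spread over many children, one extracts $K_{t,t}$ as a subgraph) caps the number of cross edges per label pair at $O(t^2)$, so the recursion closes with an explicit bound. This buys a self-contained argument with explicit bounds and avoids both external ingredients the paper leans on; the paper's route is much shorter. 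Your intermediate detour --- ``bounded-depth trees, hence anything transduced from them, cannot contain long paths as induced subgraphs'' --- is unjustified as stated (it is essentially the statement to be proved), but you discarded it yourself in favor of the direct induction, so it does no harm to the final plan.

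One slip in the forward direction needs fixing. With your labels (depth of $u$ in $F$ plus one adjacency bit per ancestor depth), the decoding rule ``the deeper label's bit at the shallower vertex's depth decides'' is wrong when the lowest common ancestor $t$ of $\ell_u$ and $\ell_v$ is a proper ancestor of \emph{both} $u$ and $v$: then $u$ and $v$ are incomparable in $F$, hence non-adjacent, yet the deeper vertex's bit at depth $\mathrm{depth}(u)$ refers to a different ancestor and may equal $1$, so your $M_t$ would create false edges; your justification ``$t$ being the LCA means $u$ is exactly that ancestor'' only holds when $u=t$. The fix is immediate because \cref{def:tree-model} allows $M_t$ to depend on $t$: let $M_t(\alpha,\beta)=1$ only if the smaller of the two recorded depths equals the depth of $t$ (which forces the shallower leaf to be the fresh leaf attached to $t$ itself), and in that case output the deeper label's bit at that depth. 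With this correction the construction is sound and gives depth $d+1$ with at most $d\cdot 2^{d}$ labels, as you claimed.
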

\begin{proof}
 The first statement is proved explicitly in~\cite{GanianHNOM19}. The second statement follows from the result of Galvin, Rival, and Sands~\cite{GalvinRS82} that weakly sparse classes containing arbitrarily long paths as subgraphs also contain arbitrarily long paths as induced subgraphs (see also~\cite{AtminasLR12,DuronER24,HunterMST24} for effective proofs), combined with the facts that taking the hereditary closure of a class does not increase the shrubdepth and that the class of paths has unbounded shrubdepth~\cite{GanianHNOM19}.
\end{proof}

The following model-theoretic characterization of treedepth is a consequence of combining \cref{thm:td-shb} with the characterization of \cref{thm:shrub-logic}.

\begin{theorem}\label{thm:td-model}
 A class of graphs $\Cc$ has bounded treedepth iff the incidence encodings of graphs from $\Cc$ can be transduced from a class of trees of bounded depth.
\end{theorem}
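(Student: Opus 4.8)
The plan is to reduce the statement to the two results already in hand: the identification of bounded shrubdepth with transducibility from bounded-depth trees (\cref{thm:shrub-logic}), and the sandwich between treedepth and shrubdepth on weakly sparse classes (\cref{thm:td-shb}). The bridge between the \emph{incidence} encoding of a graph $G$ and these graph-level statements will be the $1$-subdivision $H_G$ of $G$: note that the Gaifman graph of the incidence encoding $I_G$ is exactly $H_G$, and, conversely, the adjacency relation of $H_G$ is $\FO$-definable in $I_G$ (by $\mathsf{Vert}(x)\wedge\mathsf{Edge}(y)\wedge\mathsf{inc}(x,y)$, symmetrized). Two elementary facts about $H_G$ do all the structural work. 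First, $H_G$ contains no $K_{2,2}$ as a subgraph — two vertices of a simple graph lie on at most one common edge — so any class $\{H_G : G\in\Cc\}$ is weakly sparse. Second, the treedepths of $G$ and $H_G$ control each other: on one hand $\td(H_G)\le \td(G)+1$ (take an optimal elimination forest of $G$ and hang each subdivision vertex below the deeper of the two endpoints of the corresponding edge); on the other hand, a path on $p$ vertices in $G$ gives a path on $2p-1$ vertices in $H_G$, so $\lp(G)\le \lp(H_G)$, and then \cref{lem:lp-td} yields $\td(G)\le \lp(G)\le \lp(H_G)\le 2^{\td(H_G)}$. Hence $\Cc$ has bounded treedepth if and only if $\{H_G : G\in\Cc\}$ does.

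For the forward implication, assume $\Cc$ has bounded treedepth. Let $G^\circ$ be obtained from $H_G$ by attaching two pendant vertices (``flags'') to each original vertex of $G$; extending an optimal elimination forest of $G$ with the subdivision vertices and the flags placed below the appropriate original vertices shows $\td(G^\circ)\le\td(G)+1$, so $\{G^\circ : G\in\Cc\}$ has bounded treedepth. By \cref{thm:td-shb} this (uncolored) graph class has bounded shrubdepth, and by \cref{thm:shrub-logic} it is transducible from a class of trees of bounded depth. It remains to observe that $I_G$ is the image of $G^\circ$ under a fixed transduction, after which \cref{lem:trans-composition} finishes the argument. Indeed, in $G^\circ$ the degree-$1$ vertices are precisely the flags, the vertices having a degree-$1$ neighbour are precisely the original vertices of $G$, and all other vertices are precisely the subdivision vertices; hence $\mathsf{Vert}$ and $\mathsf{Edge}$ are $\FO$-definable in $G^\circ$, the incidence relation is $\mathsf{Vert}(x)\wedge\mathsf{Edge}(y)\wedge\adj(x,y)$, and the restriction step of the transduction simply discards the flags, returning a copy of $I_G$.

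For the converse, assume $\{I_G : G\in\Cc\}$ is transducible from a class of trees of bounded depth. Since the adjacency relation of $H_G$ is $\FO$-definable in $I_G$ with no elements discarded, composing transductions (\cref{lem:trans-composition}) shows that $\{H_G : G\in\Cc\}$ is also transducible from a class of bounded-depth trees; by \cref{thm:shrub-logic} it therefore has bounded shrubdepth. As this class is weakly sparse by the first observation, \cref{thm:td-shb} gives that it has bounded treedepth, and then the second observation yields that $\Cc$ has bounded treedepth, completing the proof.

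The step needing the most care is the forward direction's descent from \cref{thm:shrub-logic}, which is stated for uncolored graph classes, to the incidence structures $I_G$, which carry the unary predicates $\mathsf{Vert},\mathsf{Edge}$ and the oriented relation $\mathsf{inc}$: the flag gadget $G^\circ$ is exactly the device that lets one recover this extra data by an $\FO$ interpretation from an uncolored, bounded-treedepth graph. Everything else is bookkeeping — the $K_{2,2}$-freeness of subdivisions, which is what activates \cref{thm:td-shb}, and the routine treedepth/longest-path estimates relating $G$ and $H_G$ through \cref{lem:lp-td}.
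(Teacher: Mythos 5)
Your proof is correct and follows essentially the same route as the paper: pass to the $1$-subdivisions (the Gaifman graphs of the incidence encodings), combine their weak sparseness with \cref{thm:td-shb} and \cref{thm:shrub-logic}, and transfer treedepth bounds between $G$ and its subdivision via \cref{lem:trans-composition}. The only divergence is your flag gadget $G^\circ$ in the forward direction, which is valid but unnecessary: since transductions include a nondeterministic coloring step, one can transduce the $1$-subdivisions from bounded-depth trees and then simply guess two colors marking $\mathsf{Vert}$ and $\mathsf{Edge}$ and interpret $\mathsf{inc}$ via adjacency --- exactly what the paper does --- so your worry that \cref{thm:shrub-logic} concerns only uncolored graph classes poses no real obstacle.
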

\begin{proof}
 Let $\Cc'$ be the class of Gaifman graphs of the incidence encodings of graphs from $\Cc$. Note that $\Cc'$ consists just of $1$-subdivisions of graphs from $\Cc$.

 Suppose first that $\Cc$ has bounded treedepth. Then so does the class $\Cc'$ too, for applying a $1$-subdivision to a graph can increase its treedepth by at most $1$. By \cref{thm:td-shb}, $\Cc'$ has bounded shrubdepth, so it can be transduced from a class of trees of bounded depth, due to \cref{thm:shrub-logic}. Making a graph from $\Cc'$ into an incidence encoding of a graph from $\Cc$ requires just highlighting the vertex set and the edge set using a pair of colors, which is clearly a transduction.

 Suppose now that the incidence encodings of graphs from $\Cc$ can be transduced from a class of trees of bounded depth. By dropping the predicates selecting vertices and edges, this means that $\Cc'$ also can be transduced from a class of trees of bounded depth. So $\Cc'$ has bounded shrubdepth by \cref{thm:shrub-logic}. But since $\Cc'$ is weakly sparse as well (it excludes $K_{3,3}$ as a subgraph), by \cref{thm:td-shb} we conclude that $\Cc'$ in fact has bounded treedepth. As $\Cc'$ consists of $1$-subdivisions of graphs from $\Cc$, it is now easy to argue, for example by noting that treedepth is a minor-monotone parameter, that $\Cc$ has bounded treedepth as~well.
\end{proof}

\paragraph*{Obstructions.} As we have seen in \cref{lem:lp-td}, treedepth admits very simple obstructions: paths as subgraphs. It would be tempting to conjecture that similar obstructions for shrubdepth should be paths as induced subgraphs, but this is not the case. The following class of {\em{half-graphs}} contains no graphs with an induced path on $5$ vertices, and yet its shrubdepth is unbounded.

\begin{definition}\label{def:half-graph}
 A {\em{half-graph}} of order $n$ is a bipartite graph with sides $\{a_1,\ldots,a_n\}$ and $\{b_1,\ldots,b_n\}$ such that for all $i,j\in \{1,\ldots,n\}$,
 $$a_i\textrm{ and }b_j\textrm{ are adjacent}\quad \textrm{if and only if}\quad i\leq j.$$
\end{definition}

\begin{figure}
 \centering
 \begin{tikzpicture}

    \foreach \i in {1,2,3,4,5} {
        \node[vertex] (a\i) at (\i, 0.8) {};
        \node[above] at (a\i) {$a_\i$};
        \node[vertex] (b\i) at (\i,-0.8) {};
        \node[below] at (b\i) {$b_\i$};
    }
    \foreach \i/\j in {1/1,1/2,1/3,1/4,1/5,2/2,2/3,2/4,2/5,3/3,3/4,3/5,4/4,4/5,5/5}{
        \draw[thick] (a\i) -- (b\j);
    }

 \end{tikzpicture}
 \caption{A half-graph of order $5$.}
\end{figure}
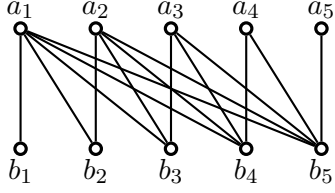

\begin{lemma}\label{lem:half-graphs-shb}
 The class of half-graphs has unbounded shrubdepth.
\end{lemma}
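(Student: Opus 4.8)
The plan is to show that the class of half-graphs $\FO$-transduces the class of all paths; combined with the fact that bounded shrubdepth is a transduction-closed property while paths form a class of unbounded shrubdepth, this immediately yields the claim.

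The first and main step is to design a transduction $\Tf$ with $\Tf(\mathcal H)\supseteq\{P_n : n\in\N\}$, where $\mathcal H$ is the class of half-graphs. The idea is that in the half-graph $H_n$ with sides $\{a_1,\dots,a_n\}$ and $\{b_1,\dots,b_n\}$, consecutive vertices on one side are ``near-twins'': writing $N(\cdot)$ for the open neighbourhood, we have $N(a_i)=\{b_i,b_{i+1},\dots,b_n\}$, so $N(a_i)\triangle N(a_{i+1})=\{b_i\}$ while $|N(a_i)\triangle N(a_j)|=|i-j|$ for all $i,j$, and symmetrically $|N(b_i)\triangle N(b_j)|=|i-j|$; moreover, for $a_i$ on one side and $b_j$ on the other one has $|N(a_i)\triangle N(b_j)|=(n-i+1)+j\ge 2$. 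So I would take $\Tf$ to use no colours and the interpretation formula
\[
\varphi(x,y) \;=\; \exists_z\,\psi(x,y,z)\ \wedge\ \forall_z\,\forall_{z'}\,\big[(\psi(x,y,z)\wedge\psi(x,y,z'))\Rightarrow z=z'\big],
\]
where $\psi(x,y,z)=\big(\adj(x,z)\wedge\neg\adj(y,z)\big)\vee\big(\neg\adj(x,z)\wedge\adj(y,z)\big)$, i.e.\ $\varphi(x,y)$ expresses that $|N(x)\triangle N(y)|=1$. By the computations above, $\varphi(H_n)$ is exactly the disjoint union of the path $a_1a_2\cdots a_n$ and the path $b_1b_2\cdots b_n$; restricting to one connected component (which is allowed, since the last step of a transduction outputs an arbitrary induced subgraph) yields $P_n$. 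Hence $\mathcal H$ transduces the class of all paths.

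To finish, suppose toward a contradiction that $\mathcal H$ has bounded shrubdepth, say shrubdepth at most $d$. Classes of shrubdepth at most $d$ form an $\FO$ ideal (\cref{cor:shrub-ideal}), so $\Tf(\mathcal H)$ has shrubdepth at most $d$; since shrubdepth can only decrease when passing to a subclass, the class of paths also has shrubdepth at most $d$. This contradicts the fact that the class of paths has unbounded shrubdepth~\cite{GanianHNOM19} --- alternatively, paths are weakly sparse and have unbounded treedepth by \cref{lem:lp-td}, so bounded shrubdepth would violate \cref{thm:td-shb}. This contradiction completes the proof.

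I do not expect a serious obstacle here: the only point requiring care is the verification that $\varphi(H_n)$ is the claimed disjoint union of two paths --- in particular that $\varphi$ never joins a vertex of one side to a vertex of the other --- but this is exactly the short case analysis on $|N(x)\triangle N(y)|$ indicated above, and everything else follows directly from the quoted results.
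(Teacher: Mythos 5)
Your proof is correct and follows the same route as the paper: transduce the class of paths from half-graphs, then combine the unbounded shrubdepth of paths with the fact that bounded shrubdepth is closed under transductions (\cref{cor:shrub-ideal}). The only difference is that you make explicit the transduction (via the ``symmetric difference of neighbourhoods has size one'' formula) that the paper dismisses as easy, and your verification of it is accurate.
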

\begin{proof}
 It is easy to transduce the class of paths from the class of half-graphs. As proved in~\cite{GanianHNOM19}, the class of paths has unbounded shrubdepth. Since classes of bounded shrubdepth are an $\FO$ ideal (\cref{cor:shrub-ideal}), it follows that the class of half-graphs must have unbounded shrubdepth as well.
\end{proof}

The proof of \cref{lem:half-graphs-shb} suggests that transducibility of long paths might be the real problem, and not necessarily containing them as induced subgraphs. The following duality result of Ossona de Mendez, Pilipczuk, and Siebertz~\cite{OssonaPS21} confirms this suspicion, and provides a characterization of shrubdepth through logical obstructions.

\begin{theorem}[\cite{OssonaPS21}]\label{thm:shrub-obstr}
 A class of graphs $\Cc$ has bounded shrubdepth if and only if the class of all paths is not transducible from $\Cc$.
\end{theorem}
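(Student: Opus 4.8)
The plan is to prove the two implications separately; one is essentially free from what has already been set up, and the other carries all the weight.

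\textbf{The easy direction.} Suppose $\Cc$ has bounded shrubdepth but, towards a contradiction, transduces the class of all paths, so that all paths lie in $\Tf(\Cc)$ for some transduction $\Tf$. Since bounded shrubdepth is an $\FO$ ideal (\cref{cor:shrub-ideal}), the class $\Tf(\Cc)$ has bounded shrubdepth; and since a subclass of a class of bounded shrubdepth has bounded shrubdepth too, the class of all paths would have bounded shrubdepth. This contradicts the fact, recalled in the proof of \cref{lem:half-graphs-shb}, that the class of all paths has unbounded shrubdepth. Hence $\Cc$ cannot transduce the class of all paths.

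\textbf{Setting up the hard direction.} For the converse, assume $\Cc$ has unbounded shrubdepth; we must produce a transduction witnessing that all paths are transducible from $\Cc$. It suffices to exhibit a \emph{single} transduction $\Tf_0$ such that $\Tf_0(\Cc)$ contains paths of unbounded length: the output set of any transduction is closed under taking induced subgraphs (this is the restriction step), and a long path contains every shorter path as an induced subgraph, so $\Tf_0(\Cc)$ would then already contain \emph{all} paths. We may also replace $\Cc$ by its hereditary closure, which does not change its shrubdepth (as used in the proof of \cref{thm:td-shb}); so from now on $\Cc$ is hereditary and has unbounded shrubdepth. One subcase is then immediate: if $\Cc$ is weakly sparse, the weakly sparse part of \cref{thm:td-shb} gives that $\Cc$ has unbounded treedepth, so by \cref{lem:lp-td} it contains arbitrarily long paths as subgraphs, and then by the Galvin--Rival--Sands theorem (cited in the proof of \cref{thm:td-shb}) it contains arbitrarily long paths as \emph{induced} subgraphs; here the transduction that merely outputs an induced subgraph already works.

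\textbf{Core of the hard direction.} The substance is the non-weakly-sparse case, where long paths must be manufactured using the coloring and flip capabilities of transductions rather than found inside the graph. The route I would take is through the Flipper-game characterization: unbounded shrubdepth is the same as unbounded $\SCd$ (by the cited theorem), so by \cref{lem:SC-flipper} Keeper can survive an arbitrary number of rounds of the radius-$\infty$ Flipper Game on suitable graphs of $\Cc$. From a long Keeper survival one should extract a ``deep'' witness to the absence of a shallow tree-model, equip it with an auxiliary linear order read off the recursion tree of the game, and then apply a Ramsey argument to the pairs of a large vertex subset of that witness. The aim is to arrive at one of a few canonical ordered patterns on $n$ vertices --- a long induced path, a half-graph, or the complement of one of these --- after performing a bounded number of flips and deleting all other vertices. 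Each such canonical pattern transduces arbitrarily long paths: for a half-graph this is exactly the argument in the proof of \cref{lem:half-graphs-shb}, for its complement one additionally composes with the complementation interpretation, and for an induced path there is nothing to do. Feeding this into $\Tf_0$ (the composition, via \cref{lem:trans-composition}, of the path-extracting transduction with the flip-and-restrict transduction) completes the argument.

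\textbf{Where the difficulty lies.} The delicate point is the extraction step. Unlike the Splitter Game underlying treedepth --- where the dichotomy between large depth and a long path is elementary --- the Flipper Game has moves of unbounded branching, since Flipper may flip any vertex subset. Turning ``Keeper survives $k$ rounds'' into a bounded-size, analyzable combinatorial object therefore requires a compactness or type-counting argument showing that, up to equivalence with respect to the remaining bounded number of rounds, only boundedly many flips are relevant. Equally delicate is certifying that the pattern produced by the Ramsey step genuinely transduces paths rather than being an innocuous bounded-shrubdepth pattern such as a family of bicliques; this is precisely where one must use that the witness arises from a \emph{failure} of bounded-depth tree-models, and not merely from the class being dense. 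I expect these two issues --- taming the branching of the Flipper Game and ruling out bounded-shrubdepth patterns in the Ramsey outcome --- to be the heart of the proof.
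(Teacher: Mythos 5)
Your easy direction is correct and is the standard argument: bounded shrubdepth is an $\FO$ ideal (\cref{cor:shrub-ideal}), and the class of paths has unbounded shrubdepth, so a class of bounded shrubdepth cannot transduce all paths. Your reduction of the hard direction to a single transduction producing arbitrarily long paths, and your treatment of the weakly sparse subcase via \cref{thm:td-shb}, \cref{lem:lp-td} and Galvin--Rival--Sands, are also fine. But the remaining case is the entire content of the theorem, and there you have only a plan, not a proof --- as you yourself acknowledge. Note that the survey does not reproduce the argument either; it cites~\cite{OssonaPS21}, and the substance of that paper is exactly the step you defer. Concretely, the claimed trichotomy (``from a long Keeper survival, after a bounded number of flips, deletions, and a Ramsey step, one obtains a long induced path, a half-graph, or a complement of one of these'') is not established and is essentially equivalent to the statement being proved. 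The Ramsey step you invoke presupposes a bounded palette of pair-types on the extracted vertex set, and that boundedness is precisely what has to be wrung out of the hypothesis of unbounded shrubdepth: Flipper's moves have unbounded branching, and the flips performed along a play accumulate, so ``Keeper survives $k$ rounds'' does not by itself yield a configuration that is a $k'$-flip of anything analyzable with $k'$ independent of the length of the play or of the target path.

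There is also a uniformity issue your outline never addresses: to conclude that $\Cc$ transduces \emph{all} paths you need a \emph{single} transduction $\Tf_0$, hence the number of flips, the number of colors, and the interpreting formula extracted from your Ramsey outcome must be bounded uniformly over the class, independently of the length $n$ of the path being produced. Producing, for each $n$, an $n$-dependent family of flips that reveals a path of length $n$ would not suffice, and nothing in the sketch rules this out. Finally, your worry about ``ruling out bounded-shrubdepth patterns in the Ramsey outcome'' is well placed but unresolved: distinguishing a genuine path/half-graph witness from benign dense patterns (e.g.\ blow-ups or biclique-like configurations, which bounded-shrubdepth classes may contain in abundance) is exactly where one must use the failure of bounded-depth tree-models, and no mechanism for doing so is given. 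As it stands, the proposal proves one implication and a special case of the other; the core of the hard direction --- the content of~\cite{OssonaPS21} --- is missing.
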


However, there is actually interesting combinatorics behind induced subgraph obstructions for shrubdepth. More precisely, Ganian et al.~\cite{GanianHNOM19} used the classic result of Ding~\cite{Ding92} to show that on every class of bounded shrubdepth, the induced subgraph order is a well quasi-order. (Recall that this means that there are no infinite antichains or descending chains.) From this we immediately get the following statement.

\begin{theorem}[\cite{GanianHNOM19}]\label{thm:shb-finite}
 For every hereditary class $\Cc$ of bounded shrubdepth, there is a finite set of graphs $\Ff$ such that for every graph $G$,
 $$G\in \Cc\quad\textrm{if and only if}\quad G\textrm{ contains no member of }\Ff\textrm{ as an induced subgraph.}$$
\end{theorem}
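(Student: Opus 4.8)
The plan is to combine the well-quasi-order statement for the induced subgraph order on classes of bounded shrubdepth with the fact that bounded shrubdepth is preserved under taking the hereditary closure. First I would recall the key combinatorial input cited just before the statement: by Ganian et al.~\cite{GanianHNOM19}, on every class of bounded shrubdepth the induced subgraph order is a well quasi-order, meaning there are no infinite antichains and no infinite strictly descending chains. The second ingredient, also used in the proof of \cref{thm:td-shb}, is that taking the hereditary closure of a class does not increase its shrubdepth; in particular, since $\Cc$ is assumed hereditary, it equals its own hereditary closure, so $\Cc$ itself has bounded shrubdepth and the well-quasi-order property applies to $(\Cc,\preceq)$, where $\preceq$ denotes the induced subgraph relation.

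The core of the argument is the standard equivalence between well-quasi-orderedness and characterizability by finitely many forbidden induced subgraphs. Let $\Ff$ be the set of graphs $H$ that are minimal — with respect to $\preceq$ — among all graphs \emph{not} in $\Cc$. I would then verify the two directions. If $G\in\Cc$, then since $\Cc$ is hereditary and every member of $\Ff$ lies outside $\Cc$, no member of $\Ff$ can be an induced subgraph of $G$. Conversely, if $G\notin\Cc$, then by definition of $\Ff$ there is some minimal non-member $H\preceq G$; to see that such a minimal element exists I would invoke the absence of infinite descending chains in the induced subgraph order restricted to the relevant graphs, noting that a descending chain of non-members of $\Cc$ could be continued indefinitely only if the order were not well-founded. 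Hence $G$ contains a member of $\Ff$ as an induced subgraph, which establishes the biconditional.

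It remains to argue that $\Ff$ is finite. This is exactly where the ``no infinite antichain'' half of the well-quasi-order property enters: the elements of $\Ff$ are pairwise $\preceq$-incomparable (by minimality, no member of $\Ff$ is a proper induced subgraph of another, and they are pairwise non-isomorphic), so $\Ff$ is an antichain. A subtlety to handle carefully is that $\Ff$ consists of graphs \emph{not} in $\Cc$, whereas the well-quasi-order property is asserted for $\Cc$ itself; so I cannot directly apply it to $\Ff$. The fix — and this is the main point requiring care — is to observe that every $H\in\Ff$ is an \emph{edge-minimal} or rather $\preceq$-minimal obstruction, so every proper induced subgraph of $H$ lies in $\Cc$; this gives that the family $\Ff$ lives ``just outside'' $\Cc$, and one shows $\Ff$ is finite by a Kruskal/Higman-style bootstrapping: the class $\Cc' \coloneqq \Cc \cup \Ff$ is still of bounded shrubdepth (each graph of $\Ff$ is obtained from a member of $\Cc$ by adding a single vertex, and shrubdepth of a class increases by a bounded amount under adding an apex vertex to each graph), hence wqo, hence its antichain $\Ff$ is finite. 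I expect this last bootstrapping step — controlling the shrubdepth of $\Cc\cup\Ff$, equivalently showing that obstructions cannot be ``arbitrarily wild'' — to be the part that needs the most attention; the rest is the routine translation between wqo and finite forbidden-substructure characterizations.
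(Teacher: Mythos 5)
Your proposal is correct and follows essentially the same route as the paper: take $\Ff$ to be the $\preceq$-minimal graphs outside $\Cc$, observe it is an antichain, and conclude finiteness from the wqo property of the induced subgraph order on a bounded-shrubdepth class containing $\Ff$ — namely the class of one-vertex extensions of graphs from $\Cc$ (your $\Cc\cup\Ff$, the paper's class of graphs that land in $\Cc$ after deleting one vertex), whose shrubdepth is indeed bounded. The only cosmetic slip is calling the added vertex an ``apex''; it has an arbitrary neighborhood, but the one-vertex-extension bound you need holds all the same (e.g.\ by doubling the label set of a tree-model and adding one level), which is exactly the unproved remark the paper also relies on.
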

\begin{proof}
 It suffices to takes $\Ff$ to be the set of minimal, with respect to the induced subgraph order, graphs that do not belong to $\Cc$. Then $\Ff$ is finite, because $\Ff$ is an antichain contained in the class $\Cc'$ defined as follows: a graph $G$ belongs to $\Cc'$ iff one can remove one vertex from $G$ to obtain a graph from $\Cc$. Note here that $\Cc'$ also has bounded shrubdepth.
\end{proof}

From this we immediately conclude that hereditary classes of bounded shrubdepth are $\FO$-definable.

\begin{corollary}
 For every hereditary class $\Cc$ of bounded shrubdepth there exists a sentence $\phi_\Cc\in \FO$ such that for every graph $G$,
 $$G\in \Cc\quad\textrm{if and only if}\quad G\models \phi_\Cc.$$
\end{corollary}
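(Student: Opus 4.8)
The plan is to derive this corollary directly from \cref{thm:shb-finite}, which already furnishes a finite set $\Ff$ of graphs such that membership in $\Cc$ is equivalent to containing no member of $\Ff$ as an induced subgraph. So it suffices to observe that, for any fixed finite set of graphs $\Ff$, the property of containing no member of $\Ff$ as an induced subgraph is expressible by a single $\FO$ sentence.

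Concretely, I would first recall how to write, for a single graph $H$ with vertex set $\{h_1,\dots,h_k\}$, an $\FO$ sentence $\phi_H$ asserting that $G$ does \emph{not} contain $H$ as an induced subgraph. One quantifies existentially over $k$ vertices $x_1,\dots,x_k$, demands that they are pairwise distinct (a conjunction of $x_i\neq x_j$), and demands that the induced adjacency pattern matches $H$ exactly: for each pair $i<j$, the conjunct is $\adj(x_i,x_j)$ if $h_ih_j\in E(H)$ and $\neg\adj(x_i,x_j)$ otherwise. Negating this existential statement gives $\phi_H$, a sentence of length $\Oh(k^2)$ that holds in $G$ precisely when $G$ has no induced copy of $H$. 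Then I would set
$$\phi_\Cc \coloneqq \bigwedge_{H\in \Ff} \phi_H,$$
which is a finite conjunction since $\Ff$ is finite, hence a legitimate $\FO$ sentence. By \cref{thm:shb-finite}, $G\in \Cc$ if and only if $G$ contains no member of $\Ff$ as an induced subgraph, which by construction holds if and only if $G\models \phi_H$ for every $H\in \Ff$, i.e.\ $G\models \phi_\Cc$.

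There is essentially no obstacle here: the only non-routine ingredient is \cref{thm:shb-finite}, which is assumed, and everything else is the standard and elementary observation that any fixed finite family of forbidden induced subgraphs is $\FO$-definable. If one wanted to be slightly more careful, one might note that the construction of $\phi_\Cc$ is even effective given $\Ff$, though the set $\Ff$ itself is produced non-constructively by \cref{thm:shb-finite} (this is the same computability caveat flagged earlier in the survey, and does not affect the truth of the statement). I would therefore keep the proof to just a few lines: cite \cref{thm:shb-finite} for the finite obstruction set, exhibit $\phi_H$ for a single $H$, and take the conjunction over $\Ff$.
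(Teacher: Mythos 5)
Your proposal is correct and follows essentially the same route as the paper: invoke \cref{thm:shb-finite} for the finite obstruction set $\Ff$, express exclusion of each $H\in\Ff$ as an induced subgraph by existentially quantifying its vertices and checking distinctness and the (non-)adjacency pattern, and take the finite conjunction. Your write-up merely spells out the standard construction in more detail than the paper does.
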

\begin{proof}
 The sentence $\phi_\Cc$ just expresses that the graph does not contain any member of $\Ff$ as an induced subgraph, where $\Ff$ is the finite set of obstructions provided by \cref{thm:shb-finite}. To check whether a graph $G$ contains some $H\in \Ff$ as an induced subgraph, just quantify the vertices of $H$ existentially, and verify their distinctness and suitable (non-)adjacencies.
\end{proof}

The shape of forbidden induced obstructions (members of $\Ff$) is not really well-understood, even for ``canonical'' classes of bounded shrubdepth such as graphs admitting tree-models of depth $d$ and with $m$ labels, for fixed $d,m\in \N$. The exception is the case of the class of graphs of treedepth at most $d$: it is known that all minimal obstructions for this class have size at most $d^{\Oh(d)}$~\cite{ChenCDFHNPPSWZ21}, but there exist obstructions with $2^d$ vertices~\cite{DvorakGT12}. Closing this asymptotic gap remains an interesting open problem.

%\paragraph*{Algorithmic aspects.} Gajarsk\'y and Kreutzer showed that the problem of testing whether a given graph $G$ admits a conn

\newcommand{\tw}{\mathsf{tw}}
\newcommand{\pw}{\mathsf{pw}}
\newcommand{\rw}{\mathsf{rw}}
\newcommand{\mw}{\mathsf{mw}}
\newcommand{\lmw}{\mathsf{lmw}}
\newcommand{\bag}{\mathsf{bag}}
\newcommand{\Leaves}{\mathsf{Leaves}}

\subsection{Path- and tree-like graphs: pathwidth, treewidth, and (linear) cliquewidth}\label{sec:treewidth}

We now proceed to describing path- and tree-like graphs, which in the sparse case are exemplified by the boundedness of parameters pathwidth and treewidth, and in the dense case are similarly defined through linear cliquewidth and cliquewidth, respectively. These notions have been extensively studied over the last four decades and there exists a large body of literature describing their algorithmic, combinatorial, and model-theoretic aspects. Therefore, we choose to restrict our description to a bare minimum that allows the reader to correctly place classes of bounded pathwidth, treewidth, and (linear) cliquewidth within the larger landscape.

\subsubsection{Treewidth and pathwidth}

Treewidth is probably the most well-known graph parameter measuring the structure of a graph. In its current form, it was introduced by Robertson and Seymour in~\cite{GM2}. The underlying notion of decomposition is called a {\em{tree decomposition}}.

\begin{definition}
 Let $G$ be a graph. A {\em{tree decomposition}} of $G$ consists of a tree $T$ and a {\em{bag function}} $\bag\colon V(T)\to 2^{V(G)}$ that maps every node of $T$ to a subset of vertices of $G$. We require the following:
 \begin{itemize}[nosep]
  \item For each vertex $u$ of $G$, the set of nodes $x$ of $T$ with $u\in \bag(x)$ is non-empty and connected in~$T$.
  \item For each edge $uv$ of $G$, there exists a node $x$ of $T$ such that $u,v\in \bag(x)$.
 \end{itemize}
 The {\em{width}} of $(T,\bag)$ is the maximum bag size minus $1$, that is, $\max_{x\in V(T)} |\bag(x)|-1$. The {\em{treewidth}} of~$G$, denoted $\tw(G)$, is the minimum width of a tree decomposition of $G$.
\end{definition}

See \cref{fig:Td} for an illustration.

\begin{figure}
 \centering
 \begin{tikzpicture}
\node at ( -4,0) {\includegraphics[scale=0.4]{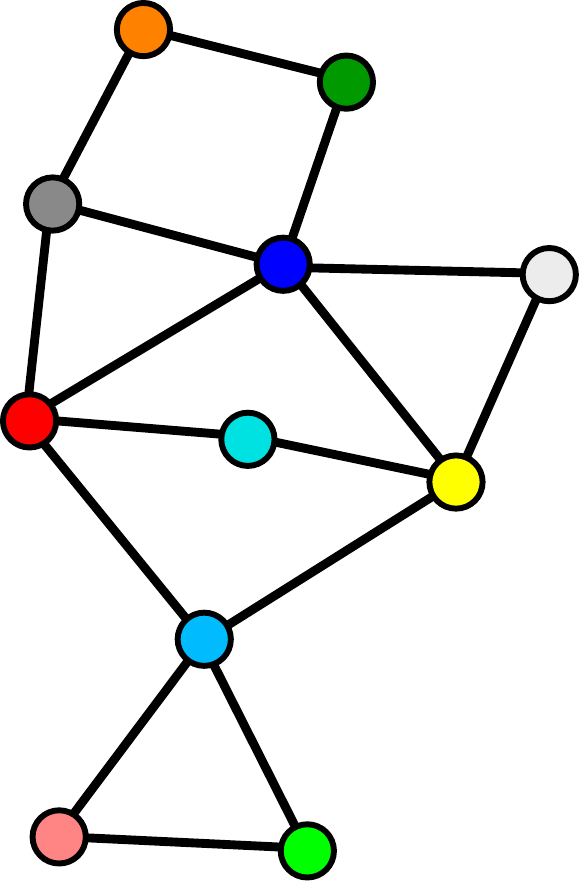}};
\node at ( 4,0) {\includegraphics[scale=0.4]{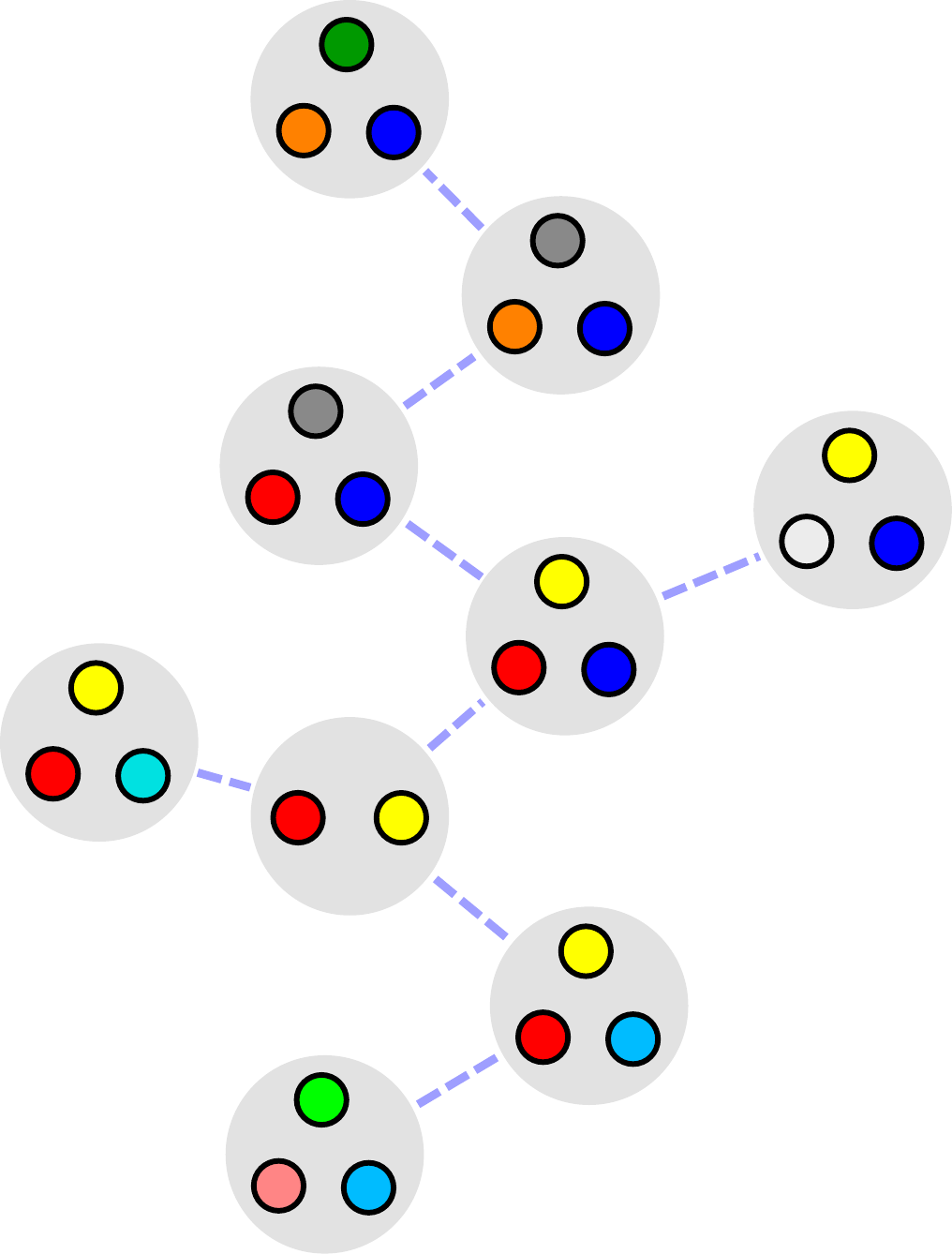}};
 \end{tikzpicture}
 \caption{A graph and its tree decomposition of width $2$.}\label{fig:Td}
\end{figure}

Over the years, tree decompositions turned out to be just the right abstraction for understanding what it means for a graph to be decomposed into pieces using vertex separators.
One convenient way to look at the definition is as follows. For every vertex $u$ of $G$, we ``stretch'' $u$ to a non-empty subtree $T_u$ of $T$ so that the adjacency relation is respected in the following sense: whenever $u$ and $v$ are adjacent, $T_u$ and $T_v$ have a non-empty intersection. Then the bag of a node $x\in V(T)$ consists of all vertices $u$ for which the subtree $T_u$ contains $x$. Thus, minimizing the width of a tree decomposition means minimizing the maximum number of subtrees $T_u$ that meet at a single node of $T$.

The notion of a {\em{path decomposition}} and of {\em{pathwidth}} is defined in exactly the same way, except we additionally require the tree $T$ underlying the decomposition to be a path. The pathwidth of a graph $G$ is denoted~$\pw(G)$. Pathwidth was introduced in~\cite{GM1}.

Clearly, we have $\tw(G)\leq \pw(G)$ for every graph $G$, because every path decomposition is also a tree decomposition. On the other hand, pathwidth is bounded by treedepth.

\begin{lemma}
 For every graph $G$, we have $\pw(G)\leq \td(G)-1$.
\end{lemma}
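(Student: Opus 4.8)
The plan is to exhibit a path decomposition of $G$ of width at most $\td(G)-1$ directly from an optimum-depth elimination forest. Let $F$ be an elimination forest of $G$ of depth $d=\td(G)$. Recall that, by the defining property of elimination forests, every edge of $G$ joins a vertex to one of its ancestors in $F$; in particular, for any vertex $u$, all neighbors of $u$ lie on the unique root-to-$u$ path in $F$ together with the descendants of $u$.

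The construction I would use is the classic one: order the leaves of $F$ (or rather, process the vertices) according to a depth-first traversal of $F$, and for each vertex $v$ create a bag consisting of $v$ together with all its ancestors in $F$. Since $F$ has depth $d$, each such bag has size at most $d$, hence width at most $d-1$. I would then arrange these bags along a path in the order in which the corresponding vertices are visited by the DFS. One then checks the two axioms of a path decomposition: (i) for every edge $uv$ of $G$, say $u$ is an ancestor of $v$, the bag created for $v$ contains both $u$ and $v$; (ii) for every vertex $u$, the set of bags containing $u$ forms a contiguous interval — this is precisely the interval of bags created for $u$ and for all descendants of $u$, which is contiguous because DFS visits a subtree consecutively, and $u$ belongs to exactly those bags since $u$ is an ancestor of (or equal to) $v$ precisely when $v$ is in the subtree rooted at $u$.

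To keep things clean I would first reduce to the connected case: if $G$ is disconnected, take optimum path decompositions of the components (each of width $\le\td(G)-1$ since $\td$ of a component is at most $\td(G)$) and concatenate them along a path; this only uses that treedepth of a subgraph does not exceed that of $G$, already observed in the proof of \cref{lem:td-recursive}. So assume $G$ connected, whence $F$ is a tree.

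The only genuinely delicate point is the verification of the connectivity (interval) axiom, i.e. that a vertex $u$ occupies a contiguous stretch of bags. This follows from the fact that in a DFS order the vertices of the subtree of $F$ rooted at $u$ form a consecutive block, combined with the observation that $u\in\bag(v)$ iff $v$ lies in that subtree. Everything else is bookkeeping. An even slicker alternative, which avoids writing out the DFS, is an induction mirroring \cref{lem:td-recursive}: a single-vertex graph has a path decomposition of width $0$; for connected $G$ with root $r$ of an optimum elimination tree, apply induction to each component of $G-r$ to get path decompositions of width $\le\td(G)-2$, concatenate them, and add $r$ to every bag, raising the width to $\le\td(G)-1$ while preserving both axioms (the components' decompositions are disjoint in vertex set, so contiguity is maintained, and every edge incident to $r$ now lies in a common bag). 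I would present this inductive version, as it is the shortest and needs no figure.
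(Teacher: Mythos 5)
Your proposal is correct, and the first construction you describe --- ordering the vertices by a pre-order (DFS) traversal of an optimum-depth elimination forest and giving each vertex a bag consisting of itself together with all its ancestors, then stringing these bags along a path in that order --- is exactly the proof given in the paper, including the same verification that each vertex occupies a contiguous interval of bags because a subtree is visited consecutively. The inductive variant you say you would present (add the root to every bag of the concatenated decompositions of the components of $G-r$) is just a repackaging of the same idea and is equally valid.
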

\begin{proof}
 Let $F$ be an elimination forest of $G$ of depth $\td(G)$. Let $u_1,\ldots,u_n$ be the ordering of vertices of $G$ according to the pre-order in $F$, where we assume that for every vertex, its children in $F$ are arbitrarily ordered (and the roots are arbitrarily ordered as well). For every vertex $u_i$, create a node $x_i$ with $\bag(x_i)$ consisting of $u_i$ and all its ancestors in $F$. It is now straightforward to verify that arranging the nodes $x_1,\ldots,x_n$ into a path in this order yields a path decomposition of $G$ of width $\td(G)-1$.
\end{proof}

Hence, every graph class of bounded treedepth also has bounded pathwidth, and every graph class of bounded pathwidth has also bounded treewidth.

The initial interest in treewidth and pathwidth comes from graph theory, particularly the Graph Minors project. The fundamental understanding of these parameters is delivered by two duality theorems below, which intuitively explain that
\begin{itemize}[nosep]
 \item each graph is either path-like (has bounded pathwidth) or branching (has a large tree minor); and
 \item each graph is either tree-like (has bounded treewidth) or two-dimensional (has a large grid~minor).
\end{itemize}
In the following statements, we say that a graph class $\Cc$ {\em{excludes}} some graph $H$ {\em{as a minor}} if every member of $\Cc$ excludes $H$ as a minor. Similarly for topological minors.

\begin{theorem}[\cite{GM1}]\label{thm:pw-duality}
 A graph class $\Cc$ has bounded pathwidth if and only if $\Cc$ excludes some tree as a minor.
\end{theorem}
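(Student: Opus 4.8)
The plan is to handle the two implications separately: the forward one (bounded pathwidth $\Rightarrow$ some tree is excluded) is elementary, whereas the backward one is the genuine content, originally due to Robertson and Seymour.

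\emph{Bounded pathwidth implies an excluded tree.} First I would record that $\pw$ is minor-monotone. Vertex and edge deletion clearly cannot increase the maximum bag size. For an edge contraction $uv\mapsto w$, replace in every bag the set $\{u,v\}$ by $w$: since the bags containing $u$ and those containing $v$ were each an interval and they overlap (some bag contains the edge $uv$), the bags containing $w$ again form an interval, all edges are covered, and no bag grows. Next, let $B_h$ denote the complete binary tree of height $h$; I would show $\pw(B_h)\geq \lfloor h/2\rfloor$ using the standard fact that if, for some vertex $v$, the graph $G-v$ has three components each of pathwidth $\geq k$, then $\pw(G)\geq k+1$ (the short argument: in a width-$<k{+}1$ path decomposition, for each such component some bag lies entirely inside it, and the interval of bags seeing $v$ cannot reach all three of these pairwise-separated components). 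Applying this with $v$ a child of the root gives $\pw(B_h)\geq \pw(B_{h-2})+1$, hence $\pw(B_h)\geq\lfloor h/2\rfloor$ by induction. Consequently, if $\Cc$ has pathwidth at most $k$, then no graph in $\Cc$ has $B_{2k+2}$ as a minor, so $\Cc$ excludes the tree $B_{2k+2}$.

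\emph{An excluded tree implies bounded pathwidth.} Suppose $\Cc$ excludes a fixed tree $T$ with $m=|V(T)|$ vertices. The first step is a reduction to complete binary trees: every tree is a minor of $B_h$ for $h$ large enough (embed $T$ greedily into a huge $B_h$, realising a degree-$d$ vertex of $T$ by contracting a small complete-binary subtree that has at least $d$ pendant neighbours; one can take $h=O(m\log m)$). Since excluding a minor is preserved under passing to a larger graph in the minor order, $\Cc$ also excludes $B_h$ for some $h=h(m)$. Thus it suffices to prove: there is a function $g$ such that every graph with no $B_h$-minor has pathwidth at most $g(h)$; equivalently, in contrapositive form, every graph of pathwidth at least $g(h)$ contains $B_h$ as a minor. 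I would prove this by induction on $h$ — this is exactly the Robertson--Seymour excluded-forest theorem (with the sharp bound $g\le|V(B_h)|-2$ of Bienstock, Robertson, Seymour, and Thomas).

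The inductive step is where the real work sits, and it is the main obstacle. Given $G$ with $\pw(G)$ very large, one wants to split off a bounded-size connected ``root'' branch set $C$ so that $G$ contains two vertex-disjoint subgraphs, each adjacent to $C$ and each still of pathwidth at least $g(h-1)$; contracting $C$ and applying the induction hypothesis inside the two pieces then yields two $B_{h-1}$-minors attached to a common new vertex, i.e.\ a $B_h$-minor. The crux is this ``branching lemma'': one must rule out that large pathwidth is produced purely by a long \emph{linear} chain of large separators (as in a fat path $P_n\square K_t$, which has arbitrarily many separators of size $t$ but pathwidth only $O(t)$), and instead extract genuine branching. This is the point where one invokes the obstruction characterisation of pathwidth — a path decomposition of width $<k$ exists unless a ``$k$-blockage'' is present — and uses the blockage to route the branch sets; I would either develop this machinery or, given the scope of a survey, simply cite~\cite{GM1}. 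Peeling a single leaf off $T$ at each step, instead of halving a binary tree, gives a slightly more economical induction that directly produces the $|V(T)|-2$ bound, but the obstacle one must overcome is the same.
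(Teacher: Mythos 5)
The paper itself gives no argument for \cref{thm:pw-duality}: the statement is quoted as a known result and attributed to~\cite{GM1}, so there is no in-paper proof to diverge from, and your write-up is strictly more detailed than the source text. Your forward direction is correct: pathwidth is minor-monotone by exactly the bag-surgery you describe, and $\pw(B_h)\geq \lfloor h/2\rfloor$ follows from your induction — with one small caveat: the auxiliary fact as you state it needs $v$ to have a neighbour in each of the three components of $G-v$ (otherwise an isolated vertex $v$ added to three disjoint graphs of pathwidth $k$ is a counterexample); in your application $v$ is a child of the root and does have such neighbours, so the argument goes through. For the backward direction you correctly reduce excluding an arbitrary tree to excluding a complete binary tree, and you correctly identify that all of the substance lies in the branching/blockage lemma of Robertson and Seymour (with the sharp $|V(F)|-2$ bound of Bienstock, Robertson, Seymour, and Thomas), which you do not prove but defer to~\cite{GM1} — which is precisely what the survey does for the whole theorem. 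So there is no gap relative to the paper; as a standalone proof the hard implication remains a citation rather than an argument, which you acknowledge explicitly and which is appropriate in this context.
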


\begin{theorem}[\cite{GM5}]\label{thm:tw-duality}
 A graph class $\Cc$ has bounded treewidth if and only if $\Cc$ excludes some grid as a minor.
\end{theorem}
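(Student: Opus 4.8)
The statement is the celebrated \emph{Excluded Grid Theorem} of Robertson and Seymour, and the plan is to prove the two implications separately. For the easy direction, suppose $\Cc$ has bounded treewidth, say $\tw(G)\le t$ for all $G\in\Cc$. I would use two standard facts: first, that treewidth is minor-monotone (each of vertex deletion, edge deletion, and edge contraction can only decrease the width of a tree decomposition, as is immediate to check); and second, that the treewidth of the $k\times k$ grid tends to infinity with $k$ — in fact it equals $k$, but for our purposes even the easy lower bound coming from the bramble of ``crosses'' (the unions of the $i$-th row with the $i$-th column) suffices. Hence there is a threshold $m(t)$ such that the $m(t)\times m(t)$ grid has treewidth $>t$, and therefore no $G\in\Cc$ can contain it as a minor. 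So $\Cc$ excludes a grid as a minor.

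The hard direction is the substance of the theorem, and I would prove it by contraposition in the following quantitative form: there is a function $f\colon\N\to\N$ such that every graph $G$ with $\tw(G)\ge f(k)$ contains the $k\times k$ grid as a minor. Granting this, if $\Cc$ excludes the $k\times k$ grid as a minor then every member of $\Cc$ has treewidth below $f(k)$, so $\Cc$ has bounded treewidth. I would organize the proof of this implication in three stages. Stage one (\emph{from treewidth to high connectivity}): using the duality between treewidth and brambles — namely $\tw(G)\ge k$ iff $G$ admits a bramble of order $\ge k+1$ — extract from a graph of large treewidth a ``well-linked'' core, i.e.\ a vertex set $W$ of size $\Omega(f(k))$ such that for every bipartition $W=A\cup B$ there are $\min(|A|,|B|)$ vertex-disjoint $A$--$B$ paths; a large bramble yields such a set by a routing/Menger-type argument, after possibly first passing to a bounded-degree minor of comparable treewidth.

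Stage two (\emph{from high connectivity to a grid-like structure}) is the technical heart: from the well-linked set $W$, build two transversal families of vertex-disjoint paths, ``horizontal'' and ``vertical'', that pairwise cross, thereby forming (a subdivision of, or a minor-model of) a large grid. One routes paths iteratively, each time using well-linkedness of $W$ to re-route around vertices already in use, and the bookkeeping needed to preserve enough routing budget through these iterations is precisely what determines the bound on $f$. Stage three (\emph{cleanup}): contract the subdivided edges and discard superfluous branch sets to obtain an honest $k\times k$ grid minor. Finally, transitivity is not needed here, but I would note that the qualitative statement is insensitive to the exact form of $f$, so I would not attempt to optimize it.

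The main obstacle is Stage two: extracting genuinely \emph{two-dimensional} structure — two crossing path families — from the merely one-dimensional connectivity guarantee provided by a bramble or a well-linked set. This is where every known proof expends most of its effort, and where the quantitative bounds differ (from the exponential bound in the original Robertson--Seymour proof, through the simplifications of Diestel--Gorbunov--Jensen--Thomassen and Robertson--Seymour--Thomas, to the polynomial bounds of Chekuri--Chuzhoy and Chuzhoy--Tan). In a survey-style account I would present Stage two only at the level of ideas, citing one of these works for the full argument.
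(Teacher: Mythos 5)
The paper does not prove this statement at all: it is quoted as the Grid Theorem of Robertson and Seymour and cited as~\cite{GM5}, so there is no internal argument to compare yours against. Judged on its own terms, your easy direction is complete and correct --- minor-monotonicity of treewidth plus the fact that the treewidth of the $k\times k$ grid grows with $k$ (your bramble of crosses gives the needed lower bound) immediately yields that a class of treewidth at most $t$ excludes any grid of treewidth exceeding $t$.

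For the hard direction, however, you have written a roadmap rather than a proof. Stage one (bramble duality, extraction of a well-linked set) and Stage three (cleanup) are routine, but Stage two --- producing two large families of disjoint, pairwise crossing paths from a well-linked set --- is, as you yourself say, the entire substance of the theorem, and you supply no argument for it beyond naming the works (Robertson--Seymour, Diestel--Gorbunov--Jensen--Thomassen, Robertson--Seymour--Thomas, Chekuri--Chuzhoy) where it is carried out. So as a self-contained proof there is a genuine gap at exactly the step you flagged; as a survey-level account with the heavy lifting delegated to the cited literature, your outline is accurate and matches the standard modern route, and is in that sense no less complete than the paper itself, which delegates everything to~\cite{GM5}. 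If you intend the write-up to stand as a proof, you must either reproduce one of the Stage-two arguments or state explicitly that you are invoking the quantitative excluded-grid theorem (the existence of $f$ with $\tw(G)\ge f(k)$ forcing a $k\times k$ grid minor) as a black box, from which the class-level statement follows in the two lines you give.
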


It is easy to see that excluding some tree as a minor is equivalent to excluding some subcubic tree (tree of maximum degree at most $3$) as a topological minor, hence in \cref{thm:pw-duality} we could be equivalently postulate that $\Cc$ excludes some subcubic tree as a topological minor. Similarly, excluding some grid as a minor is equivalent to excluding some wall (see \cref{fig:wall}) as a topological minor, hence in \cref{thm:tw-duality} we could equivalently speak about excluding walls as topological minors. Also, grids are planar graphs and it is not hard to prove that every planar graph is a minor of some grid. Therefore, in \cref{thm:tw-duality} we could equivalently write that $\Cc$ excludes some planar graph as a minor. Note that this actually gives a remarkable characterization of planar graphs: a graph is planar if and only if its exclusion as a minor implies a bound on the treewidth.

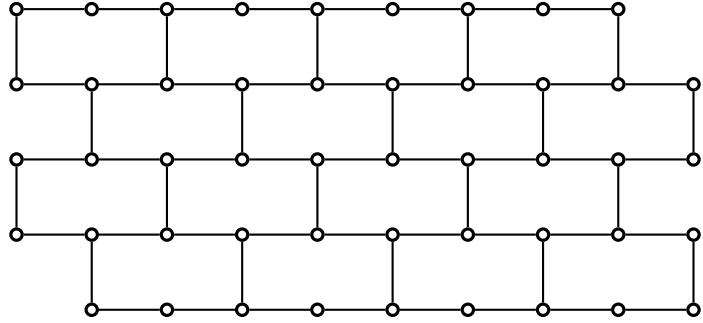
\begin{figure}
 \centering
 \begin{tikzpicture}

    \foreach \j in {2,3,4} {
        \foreach \i in {1,2,3,4,5,6,7,8,9,10} {
            \node[vertex] (a\i\j) at (\i,\j) {};
        }
    }
    \foreach \j in {1} {
        \foreach \i in {2,3,4,5,6,7,8,9,10} {
            \node[vertex] (a\i\j) at (\i,\j) {};
        }
    }
    \foreach \j in {5} {
        \foreach \i in {1,2,3,4,5,6,7,8,9} {
            \node[vertex] (a\i\j) at (\i,\j) {};
        }
    }

    \foreach \i in {2,4,6,8,10} {
        \foreach \j/\k in {1/2,3/4} {
            \draw[thick]  (a\i\j) -- (a\i\k);
        }
    }
    \foreach \i in {1,3,5,7,9} {
        \foreach \j/\k in {2/3,4/5} {
            \draw[thick]  (a\i\j) -- (a\i\k);
        }
    }
    \foreach \i/\k in {1/2,2/3,3/4,4/5,5/6,6/7,7/8,8/9,9/10} {
        \foreach \j in {2,3,4} {
            \draw[thick] (a\i\j) -- (a\k\j);
        }
    }
    \foreach \i/\k in {2/3,3/4,4/5,5/6,6/7,7/8,8/9,9/10} {
        \foreach \j in {1} {
            \draw[thick] (a\i\j) -- (a\k\j);
        }
    }
    \foreach \i/\k in {1/2,2/3,3/4,4/5,5/6,6/7,7/8,8/9} {
        \foreach \j in {5} {
            \draw[thick] (a\i\j) -- (a\k\j);
        }
    }

 \end{tikzpicture}
 \caption{A $4\times 4$ wall.}\label{fig:wall}
\end{figure}

When it comes to logical aspects, probably the most important property of treewidth is that its boundedness allows efficient model-checking of $\MSO_2$. This fact is known as {\em{Courcelle's Theorem}}.

\begin{theorem}[\cite{Courcelle90}]\label{thm:courcelle-tw}
 Let $\Cc$ be a graph class of bounded treewidth. Then there is an algorithm that given a graph $G\in \Cc$ and a sentence $\phi\in \MSO_2$, verifies whether $G\models \phi$ in time $\Oh_{\Cc,\phi}(n)$.
\end{theorem}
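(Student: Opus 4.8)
The plan is to reduce model-checking of $\MSO_2$ on $G$ to a bottom-up evaluation along a tree decomposition, by encoding the whole structure (vertices, edges, incidences, and the bag structure) into a labeled binary tree, and then exploiting the classical fact that $\MSO$ on trees is decidable via tree automata. First I would compute a tree decomposition $(T,\bag)$ of $G$ of width $k = \tw(\Cc)$; since $k$ is a constant for the class $\Cc$, this can be done in time $\Oh_{\Cc}(n)$ using Bodlaender's linear-time algorithm, and moreover the decomposition can be turned into a \emph{nice} tree decomposition (introduce/forget/join nodes, constant-size bags) with $\Oh_{\Cc}(n)$ nodes in linear time. I would then fix, once and for all, a suitable encoding of the incidence structure of $G$ together with $(T,\bag)$ as a single node-labeled tree $\widehat T$ over a \emph{finite} alphabet depending only on $k$: each node of $\widehat T$ carries the isomorphism type of its bag together with the adjacencies and incidences realized inside that bag, and pointers (within the finite label set) recording how vertices/edges in a child bag correspond to those in the parent bag.

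The key translation step is a \emph{syntactic} transformation of the input sentence $\phi\in\MSO_2$ into a sentence $\widehat\phi\in\MSO$ over $\widehat T$ such that $G\models\phi$ iff $\widehat T\models\widehat\phi$. Here one uses that a vertex of $G$ corresponds to the connected subtree of $T$ of bags containing it, and similarly for edges; hence first-order quantification over vertices/edges of $G$ becomes (guarded) quantification over nodes of $\widehat T$ (picking the topmost bag of the relevant subtree), and monadic quantification over vertex/edge sets of $G$ becomes monadic quantification over node sets of $\widehat T$, reading off membership from the finite labels. The atomic predicates $\mathsf{inc}(\cdot,\cdot)$ and equality translate into $\MSO$-checkable conditions on $\widehat T$ using the consistency information stored in the labels. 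This is essentially a transduction in the sense of \cref{sec:prelims} (indeed, one direction of an $\MSO$-transduction between $\widehat T$ and the incidence encoding of $G$), so one could also phrase this step as an instance of the Backwards Translation principle, generalized to $\MSO$-transductions; the correctness is a routine but somewhat tedious structural induction on $\phi$.

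Finally I would invoke the Thatcher--Wright / Doner theorem: every $\MSO$ sentence over finite labeled trees is equivalent to a finite bottom-up tree automaton, which can be evaluated on $\widehat T$ in time linear in $|\widehat T|$ (the automaton and its size depend only on $\widehat\phi$, hence only on $\phi$ and $k$). Composing the steps gives total running time $\Oh_{\Cc,\phi}(n)$. The main obstacle is the bookkeeping in the encoding and the translation: one must set up the finite label alphabet and the correspondence between vertices/edges of $G$ and subtrees of $T$ carefully enough that \emph{all} of $\MSO_2$'s quantifiers --- in particular quantification over \emph{edge} subsets, which is what distinguishes $\MSO_2$ from $\MSO_1$ --- become expressible over $\widehat T$; once the encoding is chosen correctly, the inductive translation and the automaton-evaluation step are standard. (One should also note that it suffices to handle connected $G$, as the general case follows by treating components independently, and that the computability of the constants in $\Oh_{\Cc,\phi}(\cdot)$ is the usual non-elementary blow-up from converting $\MSO$ to automata, which we do not dwell on.)
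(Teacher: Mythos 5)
Your proposal is correct and takes essentially the same route the paper sketches: compute a bounded-width tree decomposition, encode it as a labeled tree over a finite alphabet from which (the incidence encoding of) $G$ is $\MSO$-interpretable, pull the sentence back via a Backwards Translation argument, and evaluate the resulting $\MSO$ sentence on the tree by a tree automaton in linear time. The only cosmetic difference is that the paper phrases the encoding step through the transduction characterization of bounded treewidth (\cref{thm:tw-model}) rather than through an explicit nice-decomposition labeling, but the substance is identical.
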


We remark that while \cref{thm:courcelle-tw} is stated above for graphs only, the result actually works for any class $\Cc$ of relational structures, provided the Gaifman graphs of structures from $\Cc$ form a graph class of bounded treewidth. Also, this is only a simple formulation that applies to decision problems. There are multiple other variants of Courcelle's Theorem, notably the optimization variant of Arnborg, Lagergren, and Seese~\cite{ArnborgLS91}. Also, it has been investigated by Kreutzer and Tazari~\cite{KreutzerT10} that under relevant assumptions from parameterized complexity theory, the boundedness of treewidth is a necessary condition for statements like \cref{thm:courcelle-tw} to hold; see~\cite{KreutzerT10} for a precise technical meaning of this claim.

We postpone the discussion of the proof of \cref{thm:courcelle-tw} to the next section, where it will be explained together with an analogous statement for cliquewidth.

Let us point out one important aspect of \cref{thm:courcelle-tw}. In the original formulation, Courcelle assumed that the input graph $G$ is given together with a tree decomposition of $G$ of width $k$, where $k$ is the constant bound on the treewidth of graphs from $\Cc$. However, if the input consists solely of the graph $G$, how to compute such a tree decomposition efficiently? This is a highly non-trivial algorithmic problem that has been thoroughly studied over the last 40 years. Let us mention three notable results in this line of research, which on the technical level represent three different approaches to the problem:
\begin{itemize}[nosep]
 \item Robertson and Seymour~\cite{GM13} gave a $4$-approximation algorithm running in time $\Oh(8^k\cdot k^2\cdot n^2)$ (see~\cite[Section~7.6]{platypus} for an implementation with this complexity). More precisely, the algorithm either certifies that $\tw(G)>k$, or finds a tree decomposition of $G$ of width at most $4k+3$.
 \item Bodlaender~\cite{Bodlaender96} gave an exact fixed-parameter algorithm for treewidth with running time $2^{\Oh(k^3)}\cdot n$. That is, the algorithm either concludes that $\tw(G)>k$ or constructs a tree decomposition of width at most $k$.
 \item Recently, Korhonen~\cite{Korhonen21} gave a $2$-approximation algorithm with running time $2^{\Oh(k)}\cdot n$.
\end{itemize}
In particular, by using either the algorithm of Bodlaender or that of Korhonen, in the context of \cref{thm:courcelle-tw} we may assume the input graph is supplied with  a tree decomposition of bounded width. We invite the reader to the introductory sections of the recent works of Korhonen~\cite{Korhonen21} and of Korhonen and Lokshtanov~\cite{KorhonenL23} for more information on the literature on computing tree decompositions.

\subsubsection{Cliquewidth and linear cliquewidth}\label{sec:cliquewidth}

Cliquewidth and linear cliquewidth are parameters analogous to treewidth and pathwidth, but suited for measuring tree- and path-likeness of dense graphs. The term {\em{cliquewidth}} and the contemporary definition were coined by Courcelle, Makowsky, Olariu, and Rotics in~\cite{courcelle2000linear,CourcelleO00}, however the notion has deeper roots in the work on (hyper)graph grammars done in the 90s, particularly in the work of Courcelle, Engelfriet, and Rozenberg~\cite{CourcelleER93}. In fact, a definition very similar and functionally equivalent to cliquewidth, called {\em{NLC-width}}, was introduced earlier by Wanke~\cite{Wanke94}. Among the notions closely related to cliquewidth, we find the concept of {\em{modular-width}} proposed by Rao~\cite{Rao08} the most compelling. The reason is that it is combinatorially the simplest and nicely extracts the intuition while avoiding the need of introducing notation related to graph grammars or labelled graphs. Therefore, we adopt this definition as our notion of ``cliquewidth''.

First, we need a simple notion of {\em{diversity}} of a vertex subset. Let $G$ be a graph and $A$ be a subset of vertices. We define the equivalence relation $\sim_A$ on $A$ as follows: for $u,v\in A$, we have
 $$u\sim_A v\quad\textrm{if and only if}\quad N(u)\setminus A = N(v) \setminus A.$$
In other words, $u\sim_A v$ if and only if $u$ and $v$ have the same neighbors outside of $A$. The {\em{diversity}} of $A$ is the number of equivalence classes of $\sim_A$. Intuitively, if the diversity of $A$ is at most $k$, then among the vertices of $A$ there are only $k$ different ``behaviors'' with respect to the outside.

We may now proceed to the definitions of modular-width\footnote{Another parameter with the name {\em{modular-width}} was introduced independently by Gajarsk\'y, Lampis, and Ordyniak~\cite{GajarskyLO13}, but their variant is not functionally equivalent to cliquewidth. More precisely, the boundedness of the modular-width of Gajarsk\'y et al. implies the boundedness of cliquewidth, but not vice versa.} and of linear modular-width.

\begin{definition}
 Let $G$ be a graph. A {\em{laminar decomposition}}\footnote{The term {\em{laminar decomposition}} does not exist in the literature and was invented for the purpose of this survey.} of $G$ is just a rooted binary tree $T$ whose leaf set is equal to the vertex set of $G$. For a node $x$ of $T$, by $\Leaves^T(x)$ we denote the set of leaves of $T$ that are descendants of $x$. (If $x$ itself is a leaf, then $\Leaves^T(x)=\{x\}$.) The {\em{diversity}} of $T$ is the maximum diversity of $\Leaves^T(x)$ among the nodes $x$ of $T$. Similarly, if $\sigma=(u_1,\ldots,u_n)$ is an ordering of the vertices of $G$, then the {\em{diversity}} of $\sigma$ is the maximum diversity of a prefix $\{u_1,\ldots,u_i\}$ of $\sigma$.

 The {\em{modular-width}} of $G$, denoted $\mw(G)$, is the minimum diversity of a laminar decomposition of $G$. The {\em{linear modular-width}} of $G$, denoted $\lmw(G)$, is the minimum diversity of an ordering of vertices of $G$.
\end{definition}

It is not hard to see that in the definition of linear modular-width, instead of speaking about vertex orderings, we could equivalently consider laminar decompositions that are {\em{caterpillars}}: paths with single leaves attached to every node. Thus, the linear modular-width of a graph is never smaller than its modular-width. It is known that modular-width differs at most by a multiplicative factor of $2$ from cliquewidth and NLC-width~\cite[Theorem~7]{Rao08}, and a similar fact also applies to linear modular-width, linear cliquewidth, and linear NLC-width. Therefore, in this article we refrain from recalling the original definition of (linear) cliquewidth, and we rely on (linear) modular-width instead.  However, since ``cliquewidth'' is by now a very widespread term, in the context of graph classes we will still speak about classes of bounded/unbounded (linear) cliquewidth, noting that this is the same as classes of bounded/unbounded (linear) modular-width.

\paragraph*{Comparison with other parameters.}
The relation between pathwidth and treewidth on one side, and linear cliquewidth and cliquewidth on the other side, is as expected: the dense notions generalize the sparse notions, and restricting attention to weakly sparse classes projects the dense notions to the sparse notions. In \cref{thm:td-shb}, we have seen the same behavior for treedepth and shrubdepth.

\begin{theorem}[\cite{CourcelleO00}]\label{thm:pw-lcw}
 Every class of bounded pathwidth also has bounded linear cliquewidth. Conversely, every weakly sparse class of bounded linear cliquewidth in fact has bounded pathwidth.
\end{theorem}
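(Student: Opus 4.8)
The plan is to prove the two implications separately: for the first I give an explicit quantitative bound, and for the second I adapt a classical structural fact. Throughout, by the factor-$2$ equivalence recalled above, "bounded linear cliquewidth" may be read as "bounded linear modular-width", i.e.\ boundedness of $\lmw$; so it suffices to work with $\lmw$.

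\emph{From bounded pathwidth to bounded linear modular-width.} Let $G$ have pathwidth $k$, fix a path decomposition with bags $B_1,\dots,B_m$, each of size at most $k+1$, and let $\mathrm{first}(v)$ be the index of the leftmost bag containing $v$. Order the vertices as $\sigma=(u_1,\dots,u_n)$ by increasing $\mathrm{first}(v)$, breaking ties arbitrarily; I claim $\lmw(G)\le k+2$, which is enough. Consider a prefix $P=\{u_1,\dots,u_i\}$ and put $j=\mathrm{first}(u_i)-1$. Then $P$ contains every vertex $v$ with $\mathrm{first}(v)\le j$, and every vertex of $P$ has $\mathrm{first}(v)\le j+1$. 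Now if $v\in P$ has a neighbour $w\notin P$, then the edge $vw$ lies in some bag $B_\ell$; since $\mathrm{first}(w)\ge j+1$ we get $\ell\ge j+1$, and since the bags containing $v$ form an interval reaching both $\mathrm{first}(v)\le j+1$ and $\ell\ge j+1$, we conclude $v\in B_{j+1}$. Hence all vertices of $P$ with a neighbour outside $P$ lie in the single bag $B_{j+1}$, so there are at most $k+1$ of them; the number of $\sim_P$-classes is at most one (for the vertices with no neighbour outside $P$) plus the number of distinct external neighbourhoods realised by those $\le k+1$ vertices, hence at most $k+2$. Taking the supremum over prefixes and over $G\in\Cc$ proves this direction.

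\emph{From weakly sparse and bounded linear modular-width to bounded pathwidth.} Here the bound $\omega^{\#}(\Cc)\le t$ is used crucially, and the statement is the "linear" counterpart of the classical theorem of Courcelle and Olariu, and of Gurski and Wanke, that a $K_{t,t}$-subgraph-free graph of bounded cliquewidth has bounded treewidth; I would mirror its proof. Fix $G\in\Cc$ and a vertex ordering $\sigma$ of diversity at most $k$. For each prefix $A$ track the partition of $A$ into its $\le k$ classes under $\sim_A$; a short check shows these classes only merge and absorb new vertices as $A$ grows. For a class $C$ with common external neighbourhood $N_C$, every vertex of $C$ is adjacent to every vertex of $N_C$, so $C\cup N_C$ spans a $K_{|C|,|N_C|}$; by $K_{t,t}$-freeness, $\min(|C|,|N_C|)\le t-1$. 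Thus every class is either \emph{small} ($|C|\le t-1$) or \emph{big} (and then $|N_C|\le t-1$). One then builds a path decomposition whose $i$-th bag records, for each class of the $i$-th prefix, either the whole class (if small) or a bounded trace of it together with its bounded external neighbourhood (if big). Because the prefixes form a chain, this is a \emph{path} decomposition rather than a tree decomposition, and the bag sizes are bounded by a function of $k$ and $t$, so $\pw(\Cc)$ is bounded.

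\emph{Where the work is.} The delicate part — and the technical heart, inherited from the cliquewidth-to-treewidth argument — is the bookkeeping making these bags satisfy the path-decomposition axioms: one must ensure simultaneously that every edge is covered and that each vertex occupies a contiguous interval of bags, even though the vertices of a big class are compressed to a bounded trace. The subtle case is a vertex $v$ whose class undergoes a small-to-big transition: $v$ must be retained in the bags long enough for all of its still-unrealised edges (which, by uniformity of its class, all point into one common bounded external neighbourhood) to be covered, and the traces/representatives of big classes must be chosen coherently along the merges. A slicker-looking route — use the classical theorem to first get that $\Cc$ has bounded treewidth (hence excludes a grid minor, by \cref{thm:tw-duality}), and then argue via \cref{thm:pw-duality} that unbounded pathwidth would force arbitrarily large subdivided complete binary trees inside members of $\Cc$ — ultimately stumbles on the non-monotonicity of linear cliquewidth under taking subgraphs, so it still needs the $K_{t,t}$-free structure and is not genuinely shorter.
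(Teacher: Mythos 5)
Your first direction is complete and correct: ordering the vertices by the index of the leftmost bag containing them and observing that every prefix-vertex with a neighbour outside the prefix must lie in the single bag $B_{j+1}$ gives $\lmw(G)\le \pw(G)+2$, which is exactly the kind of argument the survey has in mind (the survey itself gives no proof of this theorem; it only remarks that it follows by the same kind of argumentation as \cref{thm:tw-cw} from the cited work of Courcelle and Olariu). Your second direction also follows the intended route, and the structural dichotomy you extract is the right one: in a prefix $A$, each $\sim_A$-class $C$ is complete to its common external neighbourhood $N_C$, so by $K_{t,t}$-freeness either $|C|\le t-1$ (small) or $|N_C|\le t-1$ (big).

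The gap is that for this second direction you stop exactly at what you yourself call ``where the work is'': you never define the bags precisely nor verify the two path-decomposition axioms, and that verification is the entire content of this implication, so as written it is a plan rather than a proof. The difficulty you flag (the small-to-big transition) is real but can be resolved, and it is worth recording how, since no ``trace'' of big classes is actually needed. Take as the $i$-th bag the union of: the vertex $u_i$, all vertices lying in small classes of $A_i$, the sets $N_C$ over all big classes $C$ of $A_i$ (these are vertices not yet inserted --- bags may contain them), and all vertices whose class turns big exactly at step $i$. Each bag has size at most roughly $3k(t-1)+2$. Contiguity holds because: once a not-yet-inserted vertex $w$ lies in $N_C$ for a big class $C$, it lies in $N_{C'}$ for the (still big) class $C'\supseteq C$ at every later step until its insertion, since classes only merge and $w$ remains a common external neighbour; and after insertion $w$ stays in the bags while its class is small and leaves for good one step after it turns big. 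Every edge $u_au_b$ with $a<b$ is covered either in bag $b$ (if the class of $u_a$ at time $b$ is small or turns big exactly then, since $u_b$ is in bag $b$), or in the bag at the first time $s<b$ when the class of $u_a$ became big, because then $u_b\in N(u_a)\setminus A_s=N_{C}$ is already present in that bag. Spelling this out (or invoking the Gurski--Wanke/Courcelle--Olariu argument and checking that prefixes yield a path rather than a tree) is what is missing from your write-up; your concluding remark that the detour through \cref{thm:tw-duality} and \cref{thm:pw-duality} is blocked by non-monotonicity of linear cliquewidth under subgraphs is a fair observation, though with the Ramsey-type results for weakly sparse classes quoted around \cref{thm:td-shb} that route can also be repaired.
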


\begin{theorem}[\cite{CourcelleO00}]\label{thm:tw-cw}
 Every class of bounded treewidth also has bounded cliquewidth. Conversely, every weakly sparse class of bounded cliquewidth in fact has bounded treewidth.
\end{theorem}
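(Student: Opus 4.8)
The plan is to prove the two implications separately, by translating between tree decompositions and laminar decompositions.

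\textbf{Bounded treewidth $\Rightarrow$ bounded cliquewidth.} First I would take $G$ with a tree decomposition $(T,\bag)$ of width $k$, root $T$ arbitrarily, and (after standard preprocessing) assume $T$ is binary. For a node $x$ write $G_{\downarrow x}$ for the subgraph induced by all vertices appearing in bags of the subtree rooted at $x$. The key point is a \emph{boundary lemma}: every $v\in V(G_{\downarrow x})$ with a neighbour outside $V(G_{\downarrow x})$ lies in $\bag(x)$, because the (connected) set of bags containing $v$ meets both the subtree of $x$ (as $v\in V(G_{\downarrow x})$) and its complement (via a bag shared with the outside neighbour), hence contains $x$. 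Consequently $V(G_{\downarrow x})$ splits into at most $(k+1)+1$ classes having equal external neighbourhood $N(v)\setminus V(G_{\downarrow x})$ — one per vertex of $\bag(x)\cap V(G_{\downarrow x})$, plus one for all vertices with empty external neighbourhood — so $V(G_{\downarrow x})$ has diversity at most $k+2$. To build a laminar decomposition I would replace each node $x$ by a binary ``caterpillar gadget'' that attaches, one by one, the vertices whose topmost bag is $x$ as leaves; the resulting binary tree $T'$ has leaf set $V(G)$, and every set $\Leaves^{T'}(x')$ differs from some clean set $V(G_{\downarrow z})$ only by removing at most $k+1$ vertices, all inside a single bag. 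Since removing one vertex from a set splits each of its behaviour classes in at most two, every $\Leaves^{T'}(x')$ has diversity bounded by a function of $k$ alone, giving $\mw(G)\le 2^{\Oh(\tw(G))}$.

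\textbf{Weakly sparse and bounded cliquewidth $\Rightarrow$ bounded treewidth.} Here I fix $t$ with $K_{t,t}\not\subseteq G$ and a laminar decomposition $T$ of $G$ of diversity $k$, and build from it a tree decomposition of width bounded in $k$ and $t$. For a node $x$ let $A_x=\Leaves^T(x)$, and let $\mathcal C_1,\dots,\mathcal C_m$ ($m\le k$) be the classes of vertices of $A_x$ with equal nonempty external neighbourhood, $\mathcal C_i$ having external neighbourhood $S_i$. Since a complete bipartite graph between $\mathcal C_i$ and $S_i$ is a subgraph of $G$, $K_{t,t}$-freeness forces $|\mathcal C_i|<t$ or $|S_i|<t$ for each $i$; I would put the smaller of $\mathcal C_i,S_i$ into $\bag(x)$ (together with the symmetric contributions obtained by viewing $A_x$ from the outside), which makes $|\bag(x)|=\Oh(kt)$. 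The edge-coverage axiom is then easy, since each edge of $G$ is ``decided'' at the lowest common ancestor of its endpoints, where at least one endpoint survives into the bag.

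The step I expect to be the main obstacle is verifying the \emph{connectedness} axiom of this tree decomposition: one must choose the ``representative side'' ($\mathcal C_i$ versus $S_i$) of each behaviour class coherently across the levels of $T$, so that for every vertex $v$ the nodes whose bag contains $v$ form a connected subtree. This consistency bookkeeping is the technical heart of the standard proof, and it is where one has to be careful (the forward implication, in contrast, is a fairly direct manipulation of tree decompositions). An alternative route to this direction is by contradiction via \cref{thm:tw-duality}: a weakly sparse class of unbounded treewidth contains arbitrarily large grid minors, and one would argue that a $K_{t,t}$-free graph carrying a large grid minor must have large cliquewidth, since the at most $k$ behaviour classes available at any cut of a low-diversity laminar decomposition cannot realise the crossing pattern of a large grid without producing a large complete bipartite subgraph; I find the explicit construction cleaner, as the grid-minor argument must additionally cope with branch sets being split across cuts. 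Either way, combined with the first part this proves the theorem, and the analogous statement \cref{thm:pw-lcw} for pathwidth and linear cliquewidth follows from the same argument with ``laminar decomposition'' replaced throughout by ``caterpillar'' / vertex ordering.
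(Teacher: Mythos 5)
The paper does not actually prove \cref{thm:tw-cw}; it is quoted with a citation to Courcelle and Olariu (the weakly sparse direction is essentially the Gurski--Wanke theorem), so your proposal has to be judged against the standard argument. Your first direction is fine: the boundary lemma is correct, the diversity of each $V(G_{\downarrow x})$ is at most $k+2$, and since each $\Leaves^{T'}(x')$ is obtained from such a set by deleting at most $k+1$ vertices of a single bag, the ``each deletion at most doubles the number of classes'' observation gives $\mw(G)\leq (k+2)\cdot 2^{k+1}$, i.e.\ $2^{\Oh(\tw(G))}$, which is all that boundedness requires.

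The second direction, however, is not a proof as it stands, and the gap is not only the connectivity axiom you flag. First, even the edge axiom fails with your argument: a tree decomposition needs \emph{both} endpoints of every edge in a \emph{common} bag, whereas your choice of ``the smaller of $\mathcal{C}_i$ and $S_i$'' only guarantees that, at each cut of $T$ separating $u$ from $v$, at least one of $u,v$ is in the corresponding bag --- it can perfectly well happen that along the whole leaf-to-leaf path the chosen side always contains $u$ and never $v$, so no bag contains both. Second, the connectivity axiom is a genuine missing idea, not just bookkeeping: the set of nodes $x$ at which a fixed vertex $v$ lies in the chosen small side can switch on and off along a root-to-leaf path (at one cut $v$'s class is the small side, at the next the opposite side is smaller), so no ``coherent choice of representative side'' is available without a new argument. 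Third, the ``symmetric contributions obtained by viewing $A_x$ from the outside'' do not cost $\Oh(kt)$: the outside vertices fall into up to $2^k$ classes by their neighbourhoods in $A_x$ (only the inside classes are bounded by the diversity $k$) --- though these contributions are in fact unnecessary, since the inside small sides already cover all crossing edges. A clean repair is to abandon the node-by-node bag construction: note that at every cut $(\Leaves^T(x),\,V(G)\setminus\Leaves^T(x))$ the union of the smaller sides is a vertex cover of the crossing edges of size at most $k(t-1)$; then, for any weight function on $V(G)$, a standard balancing argument on the tree $T$ yields an edge whose cut is balanced, so every vertex subset admits a balanced separator of size at most $k(t-1)$, and the classical separator-to-treewidth lemma gives $\tw(G)=\Oh(kt)$. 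This sidesteps the connectivity issue entirely. Your alternative via \cref{thm:tw-duality} runs into the problem you yourself note, made worse by the fact that cliquewidth is not subgraph-monotone, so a large grid (or wall) appearing as a minor or subgraph does not directly force large cliquewidth; I would not pursue that route.
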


We note that \cref{thm:pw-lcw} is not mentioned in~\cite{CourcelleO00}, but can be proved using a similar argumentation as \cref{thm:tw-cw}, which is proved in~\cite{CourcelleO00}.

Next, it should not be a surprise that classes of bounded linear cliquewidth generalize classes of bounded shrubdepth.

\begin{theorem}[\cite{GanianHNOM19}]
 Every class of bounded shrubdepth also has bounded linear cliquewidth.
\end{theorem}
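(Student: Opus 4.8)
The plan is to prove this directly at the level of individual graphs, using linear modular-width as the working incarnation of linear cliquewidth. Concretely: fix $d,m\in\N$ and a graph $G$ equipped with a tree-model $(\Lambda,\lambda,T,(M_t)_t)$ of depth at most $d$ using a label set $\Lambda$ with $|\Lambda|\le m$; I will exhibit a vertex ordering of $G$ of diversity at most $dm+1$. Since $d$ and $m$ can be chosen uniformly for a class of bounded shrubdepth, and bounded linear modular-width is the same property as bounded linear cliquewidth, this yields the statement.

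The ordering is the obvious one. Fix an arbitrary left-to-right ordering of the children of each non-leaf node of $T$, and list the leaves of $T$ — which are precisely the vertices of $G$ — in the resulting depth-first order $u_1,\dots,u_n$. It remains to bound the diversity of an arbitrary prefix $A=\{u_1,\dots,u_i\}$. Let $P=(t_0,\dots,t_\ell)$ be the root-to-$u_i$ path in $T$, so $t_\ell=u_i$ and $\ell\le d-1$. The elementary observation is that each subtree of $T$ hanging off $P$ — rooted at a child $c\ne t_{j+1}$ of some $t_j\in P$ — has all its leaves contained in $A$ if $c$ precedes $t_{j+1}$ among the children of $t_j$, and disjoint from $A$ otherwise; moreover these subtrees, together with $\{u_i\}$, partition $V(G)$. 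Accordingly, for $v\notin A$ with $v\ne u_i$ set $\mathrm{branch}(v)$ to be the largest $j$ with $v$ a descendant of $t_j$, and for $u\in A-\{u_i\}$ set $\mathrm{level}(u)$ to be the largest $j$ with $u$ a descendant of $t_j$; both quantities lie in $\{0,\dots,\ell-1\}$.

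A short case analysis — splitting on whether $v$ descends from $t_{\mathrm{level}(u)}$, and if so which child of that node it lies under — shows that the lowest common ancestor of $u$ and $v$ in $T$ equals $t_{\min(\mathrm{level}(u),\mathrm{branch}(v))}$. By the defining property of a tree-model, whether $v\in N(u)$ is therefore completely determined by the pair $(\mathrm{level}(u),\lambda(u))$, the pair $(\mathrm{branch}(v),\lambda(v))$, and the functions $M_{t_0},\dots,M_{t_{\ell-1}}$ — and the latter are fixed once the prefix $A$ is fixed. Hence any two $u,u'\in A-\{u_i\}$ with $(\mathrm{level}(u),\lambda(u))=(\mathrm{level}(u'),\lambda(u'))$ satisfy $N(u)-A=N(u')-A$, i.e.\ are $\sim_A$-equivalent. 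Since there are at most $\ell\le d-1$ possible levels and at most $m$ possible labels, $\sim_A$ has at most $(d-1)m+1$ classes, the extra one accounting for $u_i$ itself. This bounds the diversity of every prefix, hence of the ordering, by $dm+1$, so $\lmw(G)\le dm+1$.

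I expect the only genuine work to lie in the penultimate step: aligning the definitions of $\mathrm{level}$ and $\mathrm{branch}$ so that the identity $\mathrm{lca}(u,v)=t_{\min(\mathrm{level}(u),\mathrm{branch}(v))}$ holds exactly, and handling the degenerate cases (when $u=u_i$ or $v=u_i$, when $A$ is a singleton, and leaves sitting directly on $P$). Everything else is immediate, and no feature of $T$ beyond its bounded depth and bounded label set is used — in particular $T$ need not be binary, so one really does obtain a bona fide vertex ordering rather than only a laminar decomposition.
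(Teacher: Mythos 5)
Your proof is correct and follows essentially the same route as the paper: the paper's proof sketch also orders the vertices by a pre-order (DFS) traversal of the leaves of the tree-model and observes that every prefix has diversity $O(dm)$, which is exactly what your level/branch and lowest-common-ancestor analysis verifies in detail. The only difference is that you spell out the "easy to verify" step, arriving at the marginally looser but equally sufficient bound $(d-1)m+1$.
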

\begin{proof}[Proof sketch]
 Let $\Cc$ be the class in question and suppose $d,m\in \N$ are such that every graph in $\Cc$ has a tree-model of depth at most $d$ and using at most $m$ labels. Consider any $G\in \Cc$ and let $(\Lambda,\lambda,T,(M_t))$ be such a tree-model. Let $\sigma=(u_1,\ldots,u_n)$ be an ordering of the vertex set of $G$ according to a pre-order in~$T$, where in~$T$ we assume that the children of every node are ordered arbitrarily. (Recall here that the leaf set of $T$ coincides with the vertex set of $G$.) It is easy to verify that the diversity of $\sigma$ is bounded by~$dm$.
\end{proof}

Finally, we comment on the question of computing laminar decompositions of approximately optimum width. For this task, it is convenient to consider a yet another parameter {\em{rankwidth}}, introduced by Oum and Seymour~\cite{OumS06}. We refrain here from introducing rankwidth formally; the only property that we need is that it is functionally equivalent to modular-width as follows: for every graph $G$,
$$\rw(G)\leq \mw(G)\leq 2^{\rw(G)},$$
where $\rw(G)$ denotes the rankwidth of $G$. Usually, the inequality as above is stated for cliquewidth instead of modular-width, but the proof also works for modular-width and is even simpler. Also, in the proof one even shows that exactly the same decomposition for one width notion works also for the other width notion. Therefore, to approximately compute cliquewidth or modular-width, it is enough to approximate rankwidth. For this, Oum and Seymour~\cite{OumS06} gave a $2^{\Oh(k)}\cdot n^9\log n$-time algorithm, which either certifies that $\rw(G)>k$, or finds a rank decomposition of width at most $3k+1$. A line of improvements of the running time followed, culminating in the recent work of Korhonen and Sokołowski~\cite{Korhonen024}, who gave an exact algorithm with running time $\Oh_k(n^{1+o(1)})+\Oh(m)$, where $m$ is the edge count of the input graph. We invite the reader to the introductory section of~\cite{Korhonen024} for an overview of the relevant literature, but from our perspective, the bottom line is the~following.

\begin{theorem}[\cite{Korhonen024}]\label{thm:cw-apx}
 Let $\Cc$ be a class of graphs of bounded cliquewidth. Then given $G\in \Cc$, one can compute a laminar decomposition of $G$ of diversity bounded by a constant in time $\Oh_{\Cc}(n^{1+o(1)})+\Oh(m)$.
\end{theorem}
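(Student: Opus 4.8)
The plan is to derive the statement from the near-linear-time rankwidth algorithm of Korhonen and Soko\l owski recalled just above, exploiting the tight two-sided relation between rankwidth and modular-width.

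First I would pin down the constant. Since $\Cc$ has bounded cliquewidth and cliquewidth is functionally equivalent to modular-width, there is a constant $c\in\N$ with $\mw(G)\le c$ for all $G\in\Cc$; combining this with the inequality $\rw(G)\le\mw(G)$ recalled above yields $\rw(G)\le c$ for all $G\in\Cc$. Set $k\coloneqq c$, a value depending only on $\Cc$.

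Then, on input $G\in\Cc$, I would run the Korhonen--Soko\l owski algorithm~\cite{Korhonen024}. As $\rw(G)\le k$, it terminates within time $\Oh_k(n^{1+o(1)})+\Oh(m)=\Oh_{\Cc}(n^{1+o(1)})+\Oh(m)$ and outputs an optimal rank decomposition of $G$, in particular one of width at most $k$. Finally, I would reinterpret this rank decomposition as a laminar decomposition: a rank decomposition carries a subcubic tree whose leaves are the vertices of $G$, and subdividing one edge and rooting at the new node turns it into a rooted binary tree with leaf set $V(G)$, which is exactly a laminar decomposition. To bound its diversity I would appeal to the argument behind the inequality $\mw(G)\le 2^{\rw(G)}$ mentioned above, in which it is shown that one and the same decomposition tree serves for both width measures; thus a tree of rankwidth at most $k$, read as a laminar decomposition, has diversity at most $2^k$. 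Hence the procedure returns a laminar decomposition of $G$ of diversity at most $2^k$, a constant depending only on $\Cc$, within the claimed running time.

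The genuinely hard content is entirely inside the cited theorem of Korhonen and Soko\l owski --- the exact, near-linear-time computation of rankwidth, which is the culmination of a long line of work on dynamic programming over refinements of decompositions --- and I would treat it as a black box. Everything around it is routine: the choice of the constant from the class and the rank-to-laminar translation rest only on the already-stated functional equivalence of the width notions. The sole point worth checking carefully is that the diversity bound obtained is single-exponential in $k$ rather than worse, which is precisely the content of $\mw(G)\le 2^{\rw(G)}$.
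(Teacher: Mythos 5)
Your proposal is correct and matches the paper's intended argument: the statement is derived exactly as you describe, by noting $\rw(G)\le\mw(G)\le\Oh_\Cc(1)$, invoking the Korhonen--Soko\l{}owski algorithm as a black box, and using the fact (stated in the surrounding discussion) that the very same decomposition tree witnesses both $\rw$ and $\mw$, so the computed rank decomposition, rooted and read as a laminar decomposition, has diversity at most $2^{\rw(G)}$, a constant.
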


\paragraph*{Model-theoretic aspects.} First, cliquewidth enjoys the following analogue of \cref{thm:courcelle-tw} for $\MSO_1$, proved by Courcelle, Makowsky, and Rotics~\cite{courcelle2000linear}.

\begin{theorem}[\cite{courcelle2000linear}]\label{thm:courcelle-cw}
 Let $\Cc$ be a graph class of bounded cliquewidth. Then there is an algorithm that given a graph $G\in \Cc$ and a sentence $\phi\in \MSO_1$, verifies whether $G\models \phi$ in time $\Oh_{\Cc,\phi}(n^{1+o(1)})+\Oh(m)$.
\end{theorem}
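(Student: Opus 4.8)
The plan is to follow the blueprint of Courcelle's theorem: first obtain a decomposition witnessing bounded cliquewidth, then run a bottom-up dynamic programming over it that maintains, at every node, the $\MSO_1$-type of the part of the graph processed so far, relative to a bounded amount of ``interface'' information. All of this rests on the Feferman--Vaught-style compositionality of $\MSO$-types under the elementary cliquewidth operations.

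First I would apply \cref{thm:cw-apx} to $G$ to compute, in time $\Oh_{\Cc}(n^{1+o(1)})+\Oh(m)$, a laminar decomposition $(T,\cdot)$ of diversity bounded by a constant $k=k(\Cc)$. For a node $x$ of $T$ write $A_x=\Leaves^T(x)$; since the diversity of $A_x$ is at most $k$, its vertices fall into at most $k$ classes according to their neighborhoods in $V(G)\setminus A_x$, and we let $\mathbb{A}_x$ be the structure $G[A_x]$ enriched with $k$ unary predicates marking these classes. Two elementary observations, both following from the definition of $\sim_A$, are crucial. If $y$ is a child of $x$, then $V(G)\setminus A_x\subseteq V(G)\setminus A_y$, so each class of $\mathbb{A}_y$ is contained in a single class of $\mathbb{A}_x$; this yields a map $\sigma_y\colon[k]\to[k]$ readable from $G$. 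If $y,z$ are the two children of $x$, then $A_z\subseteq V(G)\setminus A_y$ and $A_y\subseteq V(G)\setminus A_z$, and one checks that adjacency between $A_y$ and $A_z$ is constant on pairs of classes, i.e. is described by a matrix $M\in\{0,1\}^{[k]\times[k]}$ readable from $G$. Thus $\mathbb{A}_x$ is obtained from $\mathbb{A}_y$ and $\mathbb{A}_z$ by a fixed operation: disjoint union, then adding all edges between the pairs of marked classes prescribed by $M$, then re-marking along $\sigma_y$ and $\sigma_z$ --- precisely a composition of the elementary operations ``disjoint union'', ``add all edges between label $i$ and label $j$'', and ``relabel''.

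Next comes the engine of the proof. Let $q$ be the quantifier rank of $\phi$. Over the signature with one binary predicate and $k$ unary predicates there are, up to logical equivalence, only finitely many $\MSO_1$-sentences of quantifier rank at most $q$; let $\mathrm{tp}(\mathbb{A})$ be the corresponding finite ``$q$-type'' of a structure $\mathbb{A}$. The Feferman--Vaught theorem for $\MSO$ (provable via Ehrenfeucht--Fra\"iss\'e games; see e.g. Makowsky's survey on algorithmic uses of Feferman--Vaught) states that for each of the elementary operations above, the $q$-type of the output depends only on the $q$-types of the inputs and on the finite auxiliary data. Hence there is a computable function $F_q$ with $\mathrm{tp}(\mathbb{A}_x)=F_q\big(\mathrm{tp}(\mathbb{A}_y),\mathrm{tp}(\mathbb{A}_z),M,\sigma_y,\sigma_z\big)$, while for a leaf $x$ the structure $\mathbb{A}_x$ is a single marked vertex whose type is immediate. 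The algorithm is then: precompute all the maps $\sigma_\bullet$ and matrices $M$ (processing each class via one representative, this costs $\Oh(m+n)$ in total), traverse $T$ bottom-up computing $\mathrm{tp}(\mathbb{A}_x)$ via $F_q$ in $\Oh_{\Cc,\phi}(1)$ per node, and at the root $r$ read off whether $G\models\phi$ from $\mathrm{tp}(\mathbb{A}_r)$ --- legitimate since $A_r=V(G)$, so $\mathbb{A}_r$ is just $G$ with a trivial marking. The total running time is $\Oh_{\Cc,\phi}(n)$ for the dynamic programming, plus $\Oh_{\Cc}(n^{1+o(1)})+\Oh(m)$ for \cref{thm:cw-apx}.

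The main obstacle is the compositionality statement above: it is classical --- the graph-theoretic face of the Feferman--Vaught theorem --- but it is the only nontrivial ingredient, everything else being bookkeeping. One can also sidestep an explicit composition argument entirely: from $(T,\cdot)$ one builds a bounded-width clique expression whose parse tree $P$ has treewidth $1$, and $G$ is recovered from $P$ by an $\MSO$-transduction (the edge relation is $\MSO$-definable over $P$, since the label carried by a vertex at any of its ancestors is $\MSO$-definable); combining a backwards-translation argument (an extension of \cref{lem:backwards} to transductions) with Courcelle's theorem for treewidth (\cref{thm:courcelle-tw}) applied to $P$ then finishes the proof, at the price of invoking the standard equivalence between laminar decompositions and clique expressions.
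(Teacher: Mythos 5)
Your first route is correct and is, in substance, the original Courcelle--Makowsky--Rotics argument that the theorem is cited from: turn the bounded-diversity laminar decomposition into a sequence of disjoint-union, add-edges-between-labels, and relabel operations (your structures $\mathbb{A}_x$ with the $\sim_{A_x}$-classes as labels do exactly this -- and your verification that the cross-adjacency between the two children is constant on pairs of classes is the right key observation), and propagate bounded-quantifier-rank $\MSO$-types bottom-up using Feferman--Vaught-style compositionality. The paper's own sketch takes a morally equivalent but differently packaged route: it invokes \cref{thm:cw-model} to see $G$ as $\MSO$-transduced from a colored tree built on the laminar decomposition computed by \cref{thm:cw-apx}, applies the $\MSO$ analogue of the Backwards Translation Lemma (\cref{lem:backwards}) to convert $\phi$ into a sentence about that colored tree, and then evaluates a tree automaton in linear time; your type-composition function $F_q$ is precisely the transition function of that automaton, and your closing paragraph (parse tree of a clique expression, transduction, then \cref{thm:courcelle-tw} or automata) is essentially the paper's argument. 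So both of your routes work; the first is self-contained modulo the classical compositionality theorem, the second buys the result directly from the transduction characterization.

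One caveat: your claim that the matrices $M$ and the maps $\sigma_y,\sigma_z$ can be extracted from the bare laminar decomposition in $\Oh(n+m)$ total time ``via one representative per class'' is asserted rather than argued, and a naive implementation is not linear. Deciding which child classes merge at the parent requires comparing external neighborhoods of representatives, and a single high-degree vertex can remain a representative at $\Omega(n)$ nodes of $T$, so the straightforward cost can be superlinear in $n+m$. This can be handled, and the $\Oh_{\Cc,\phi}(n^{1+o(1)})+\Oh(m)$ budget leaves slack, but it needs an argument; to be fair, the paper's sketch sidesteps exactly the same issue by treating the decomposition returned by \cref{thm:cw-apx} as if it came suitably annotated and only asserting that the downstream algorithm is linear in the size of the decomposition.
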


Again, the original proof presented in~\cite{courcelle2000linear} assumes that a suitable decomposition is provided on input; then the algorithm runs in time linear in the size of this decomposition. The statement above is derived by combining this with the algorithm of Korhonen and Sokołowski (\cref{thm:cw-apx}). Similarly as in the case of \cref{thm:courcelle-tw}, the proof of \cref{thm:courcelle-cw} is robust and allows various variants, such as an optimization variant; see \cite{courcelle2000linear} for details.

The underlying reason behind \cref{thm:courcelle-tw,thm:courcelle-cw} is the following model-theoretic characterization of classes of bounded treewidth or cliquewidth as those $\MSO$-transducible from the class of~trees, and of classes of bounded pathwidth or linear cliquewidth as those $\MSO$-transducible from the class of~paths.

\begin{theorem}\label{thm:cw-model}
 A class of graphs $\Cc$ has bounded cliquewidth if and only if $\Cc$ can be $\MSO$-transduced from the class of trees.
\end{theorem}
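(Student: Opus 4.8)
The plan is to establish the two implications separately. Throughout I would use that cliquewidth, modular-width, and rankwidth are functionally equivalent (as recalled in the excerpt), which lets me work with an ordinary clique-width $k$-expression: a rooted binary tree whose leaves are the vertices of $G$, whose internal nodes carry disjoint-union operations, relabellings $\rho_{i\to j}$, or edge-additions $\eta_{\{i,j\}}$ with $i,j\in\{1,\ldots,k\}$, and where $k$ is the constant bounding the cliquewidth of members of $\Cc$.

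For the direction from bounded cliquewidth to transducibility, I would transduce each $G\in\Cc$ from the parse tree $T$ of a $k$-expression for $G$. The coloring step marks the root of $T$, the leaves of $T$ (these become the vertices of the output), the initial label of each leaf, and the type of operation at each internal node --- only finitely many colors, as $k$ is fixed. From the root marker and the tree edges, the ancestor relation on $T$ is $\MSO$-definable. The crucial point is that the ``label carried by a leaf $x$ as seen at an ancestor $s$'' --- the label $x$ has in the graph produced by the subexpression rooted at $s$ --- is the image of $x$'s initial label under the composition, bottom-up, of the relabellings strictly between $x$ and $s$, which is exactly the output of a finite-state automaton with state set $\{1,\ldots,k\}$ reading the path from $x$ to $s$, hence $\MSO$-definable. (This is precisely where $\MSO$, rather than $\FO$, is needed: composing relabellings along a path is not $\FO$-expressible in general.) The interpretation formula $\varphi(x,y)$ then states that $x$ and $y$ are leaves and have a common ancestor $s$ which is an $\eta_{\{i,j\}}$-node with the label of $x$ at $s$ equal to $i$ and the label of $y$ at $s$ equal to $j$, for some $i,j$. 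Since a $k$-expression only ever adds edges and never removes them, $\varphi$ recovers the adjacency of $G$ on pairs of leaves; restricting to the leaves outputs exactly $G$. As $\varphi$ and the color set depend only on $k$, this is a single $\MSO$-transduction, so $\Cc$ is $\MSO$-transducible from the class of trees.

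For the converse I would show that bounded cliquewidth is closed under $\MSO$-transductions and that trees have bounded cliquewidth; since a tree has treewidth at most $1$, the latter is immediate from \cref{thm:tw-cw}, and applying the closure to the class of trees (noting that $\Cc$ is a subclass of the resulting image) then finishes the proof. To prove the closure I would decompose an $\MSO$-transduction into its three steps. First, restriction: removing from a $k$-expression the discarded leaves and cleaning up the operations yields a $k$-expression for the induced subgraph, so taking induced subgraphs does not increase cliquewidth. Second, coloring: adding $c$ unary colors to a graph of cliquewidth at most $k$ gives colored cliquewidth at most $k2^{c}$, by refining each label according to the color-pattern of its vertices and adapting each relabelling and edge-addition accordingly. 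Third, and most importantly, interpretation: if $H$ is a colored graph with a bounded-width expression and $\psi(x,y)$ is an $\MSO$ formula, then $\psi(H)$ has cliquewidth bounded in terms of $\psi$ and the width. This last step is a theorem of Courcelle (see the monograph of Courcelle and Engelfriet, and the work of Courcelle and Oum): its proof refines the labels of the expression for $H$ by the ``local $\MSO$-type relative to $\psi$'' of each subexpression --- a quantity taking boundedly many values by a Feferman--Vaught-style composition over the union operation and over the unary operations --- from which a bounded-width expression for $\psi(H)$ can be read off.

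I expect the first direction to be routine: it is $\MSO$ bookkeeping on a parse tree, very much in the spirit of the tree-model encoding behind \cref{thm:shrub-logic}. The genuine obstacle is the third step of the converse --- that $\MSO$-interpretations preserve bounded cliquewidth --- whose rigorous treatment requires developing the theory of $\MSO$-recognizability over clique-width expressions, which in a survey one would cite rather than reprove.
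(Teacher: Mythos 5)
Your proposal is correct and follows essentially the route the paper relies on: the survey does not reprove this theorem but cites \cite[Corollary~7.38]{CEbook}, whose argument is exactly your two-step plan --- decode the parse tree of a $k$-expression by a fixed $\MSO$ transduction (the composition of relabellings along a path being an automaton run, hence $\MSO$-definable), and conversely combine the bounded cliquewidth of trees (treewidth at most $1$ together with \cref{thm:tw-cw}) with the theorem that $\MSO$-transductions preserve bounded cliquewidth. The one ingredient you cite rather than prove --- preservation of bounded cliquewidth under $\MSO$ interpretations, via recognizability/Feferman--Vaught composition --- is precisely the nontrivial core that the survey likewise delegates to the literature, which is appropriate here.
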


\begin{theorem}\label{thm:lcw-model}
 A class of graphs $\Cc$ has bounded linear cliquewidth if and only if $\Cc$ can be $\MSO$-transduced from the class of paths.
\end{theorem}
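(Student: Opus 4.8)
The plan is to mirror the proof of \cref{thm:cw-model}, with the class of paths taking over the role played there by the class of trees, and with linear clique-width expressions --- equivalently, bounded-diversity vertex orderings --- taking over the role of clique-width expression trees. The whole argument rests on two observations: every path has linear modular-width at most $2$, so the class of paths has bounded linear cliquewidth; and a linear clique-width expression of a graph is essentially a word, hence essentially a colored path.

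For the implication ``bounded linear cliquewidth $\Rightarrow$ $\MSO$-transducible from paths'', fix $k$ so that every $G\in\Cc$ has a linear clique-width expression of width at most $k$. Such an expression is a word $w_G$ over the finite alphabet $\Gamma_k$ whose letters are the unary operations ``introduce a new vertex with label $i$'', ``relabel every label-$i$ vertex to label $j$'', and ``add all edges between label-$i$ and label-$j$ vertices'', for $i,j\in\{1,\dots,k\}$. I would encode $w_G$ by the path on $|w_G|$ vertices --- which lies in the class of paths regardless of its length, so no copying is needed --- coloring the $p$-th vertex by a unary predicate recording the $p$-th letter of $w_G$ and additionally marking the left endpoint, so that the linear order of positions becomes $\MSO$-definable on the path. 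The transduction then interprets adjacency of the output by an $\MSO$ formula $\phi(x,y)$ stating: ``$x$ and $y$ are introduction-letters, and there is a later position $z$ carrying an $\eta_{i,j}$-letter such that, just before $z$, one of $x,y$ carries label $i$ and the other carries label $j$''. The only non-routine piece is the subformula ``vertex $x$ carries label $i$ at position $z$'': since the label of a vertex evolves deterministically along the word, changing only at relabel-letters, this is expressed by existentially guessing a partition of the interval between $x$ and $z$ into the $k$ label classes and checking local consistency with the letters --- the standard simulation of a finite automaton in $\MSO$ over words. Restricting the output to the introduction-positions, which carry exactly the vertices of $G$, yields $G\in\Tf(P)$, and since this works for every $G\in\Cc$ with one fixed transduction, $\Cc$ is $\MSO$-transducible from the class of paths.

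For the converse, suppose $\Cc\subseteq\Tf(\mathcal P)$ where $\mathcal P$ is the class of paths and $\Tf=(C,\phi)$; let $q$ be the quantifier rank of $\phi$. Take any $H\in\Tf(P)$, so $H$ is an induced subgraph of $\phi(P^+)$ for some $C$-coloring $P^+$ of a path $P$; let $u_1,\dots,u_m$ be the vertices of $P$ in path order. I claim the ordering $u_1,\dots,u_m$ witnesses $\lmw(\phi(P^+))\le N$ for a constant $N=N(q,|C|)$. Cutting the path after $u_i$ splits $P^+$ into $P^+[u_1\dots u_i]$ and $P^+[u_{i+1}\dots u_m]$, joined only along the edge $u_iu_{i+1}$; by the Feferman--Vaught composition theorem, for $\ell\le i<j$ whether $P^+\models\phi(u_\ell,u_j)$ is determined by the pair consisting of the rank-$q$ $\MSO$-type of $(P^+[u_1\dots u_i],u_i,u_\ell)$ and the rank-$q$ $\MSO$-type of $(P^+[u_{i+1}\dots u_m],u_{i+1},u_j)$. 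Only $N$ values of the first type are possible, so if $u_\ell$ and $u_{\ell'}$ (both with index $\le i$) realize the same such type, then they have the same neighbourhood within $\{u_{i+1},\dots,u_m\}$ in $\phi(P^+)$, hence they are equivalent under $\sim_{\{u_1,\dots,u_i\}}$. Thus every prefix of the ordering has diversity at most $N$ in $\phi(P^+)$; since $H$ is an induced subgraph of $\phi(P^+)$ and restricting the ordering to $V(H)$ cannot raise prefix-diversity, $\lmw(H)\le N$. As $N$ depends only on $\Tf$, the class $\Cc$ has bounded linear cliquewidth. (Alternatively, one may invoke the general fact that boundedness of linear cliquewidth is preserved under $\MSO$-transductions, which together with $\lmw(P)\le 2$ for paths gives the same conclusion.)

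I expect the main obstacle to be the bookkeeping in the first implication: writing the decoding formula so that it faithfully tracks relabelings and, in particular, correctly accounts for edges created by $\eta$-operations applied after both endpoints have already been introduced --- together with the minor point that ``later'' and ``the interval between $x$ and $z$'' must be rendered in $\MSO$ on an undirected path, which is why the left endpoint is marked. The automaton simulation itself is routine once these invariants are fixed, and the converse is a clean application of the composition method.
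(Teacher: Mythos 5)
Your proposal is correct, and it follows the same overall route that the paper (via Courcelle--Engelfriet) has in mind: for the forward direction you encode a width-$k$ linear clique-width expression as a colored path and decode it with an $\MSO$ interpretation that simulates the label-evolution automaton by set-guessing, which is exactly the standard ``expressions are words, words are colored paths'' argument behind \cref{thm:lcw-model}. Where you genuinely deviate is the converse: the cited source derives it from the general theorem that bounded (linear) cliquewidth is preserved by $\MSO$-transductions, whereas you prove the needed special case directly, showing via Feferman--Vaught composition along the path order that any $\MSO$ interpretation of a colored path yields a vertex ordering of diversity bounded by the number of rank-$q$ types, and that passing to induced subgraphs cannot increase prefix diversity. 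This self-contained argument is a real plus in the context of this survey: your parenthetical alternative (invoke closure under $\MSO$-transductions plus $\lmw(P_n)\leq 2$) would be circular here, since \cref{thm:cw-lcw-ideals} is itself deduced from \cref{thm:cw-model,thm:lcw-model}, while your composition argument avoids that dependence (the closure fact is of course independently known, so the alternative is not wrong, just not free within the paper's logical structure). The bookkeeping you flag in the forward direction (ordering the path from a marked endpoint, requiring the edge-adding position to come after both introductions, checking local consistency of the guessed label classes) is routine and does not hide a gap.
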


\begin{theorem}\label{thm:tw-model}
 A class of graphs $\Cc$ has bounded treewidth if and only if the incidence encodings of graphs $\Cc$ can be $\MSO$-transduced from the class of trees.
\end{theorem}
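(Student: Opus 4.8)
The plan is to mirror, almost line for line, the proof of \cref{thm:td-model}, substituting ``treewidth'' for ``treedepth'', ``cliquewidth'' for ``shrubdepth'', ``class of trees'' for ``class of trees of bounded depth'', and invoking \cref{thm:cw-model} and \cref{thm:tw-cw} in place of \cref{thm:shrub-logic} and \cref{thm:td-shb}. As there, let $\Cc'$ be the class of Gaifman graphs of the incidence encodings of graphs from $\Cc$; concretely $\Cc'$ consists of the $1$-subdivisions of graphs from $\Cc$, since in an incidence encoding an edge-element is adjacent exactly to its two endpoint-vertices. The bookkeeping rests on two simple interpretations (hence transductions): from an incidence encoding to its Gaifman graph via $\phi_{\adj}(x,y)=\mathsf{inc}(x,y)\vee \mathsf{inc}(y,x)$, and, conversely, from a $1$-subdivision equipped with a unary predicate $O$ marking the original vertices back to the incidence encoding via $\mathsf{Vert}(x)\coloneqq O(x)$, $\mathsf{Edge}(x)\coloneqq \neg O(x)$, $\mathsf{inc}(x,y)\coloneqq \adj(x,y)\wedge O(x)\wedge \neg O(y)$.

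For the forward implication, assume $\tw(\Cc)\le k$. A tree decomposition of $G\in\Cc$ of width $k$ yields one of the $1$-subdivision of $G$ of width $\max(k,2)$, by attaching to a bag containing both endpoints of each edge $uv$ a new leaf bag $\{u,v,w_{uv}\}$, where $w_{uv}$ is the subdivision vertex of $uv$. Hence $\Cc'$ has bounded treewidth, so bounded cliquewidth by \cref{thm:tw-cw}, so it is $\MSO$-transducible from a class of trees by \cref{thm:cw-model}. Composing this transduction with the transduction that nondeterministically guesses a unary predicate $O$ and then applies the interpretation recovering the incidence encoding, and using \cref{lem:trans-composition} (in its version for arbitrary relational structures), we obtain an $\MSO$-transduction from a class of trees whose image contains the incidence encoding of every $G\in\Cc$ --- namely when $O$ is guessed to be the original vertex set $V(G)$ sitting inside the $1$-subdivision.

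For the backward implication, assume the incidence encodings of graphs from $\Cc$ are $\MSO$-transducible from a class of trees. Composing with $\phi_{\adj}(x,y)=\mathsf{inc}(x,y)\vee \mathsf{inc}(y,x)$ shows that $\Cc'$ is $\MSO$-transducible from a class of trees, hence $\Cc'$ has bounded cliquewidth by \cref{thm:cw-model}. But $\Cc'$ is weakly sparse, since every $1$-subdivision excludes $K_{3,3}$ as a subgraph, so by \cref{thm:tw-cw} the class $\Cc'$ in fact has bounded treewidth. Finally, each $G\in\Cc$ is a minor of its $1$-subdivision and treewidth is minor-monotone, so $\Cc$ has bounded treewidth, as required.

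All the real content is outsourced: the characterization of bounded cliquewidth in \cref{thm:cw-model} and the sparsification result \cref{thm:tw-cw}. I therefore expect no serious obstacle; the only points deserving a second look are that transductions are being applied to boundedly colored graphs and to structures over the incidence signature rather than to plain graphs --- legitimate since, as noted in \cref{sec:prelims}, the whole apparatus extends verbatim to relational structures --- and that the transduction must be allowed to (re)introduce the unary predicate $O$ distinguishing original vertices from subdivision vertices, which is exactly what the nondeterministic coloring step of a transduction is for, just as in the proof of \cref{thm:td-model}.
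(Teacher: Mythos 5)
Your proposal is correct and is essentially the paper's own proof: the paper explicitly derives \cref{thm:tw-model} "using the same argument as in the proof of \cref{thm:td-model}, except that we use \cref{thm:tw-cw} instead of \cref{thm:td-shb}," which is exactly the substitution you carry out, via the class $\Cc'$ of $1$-subdivisions, \cref{thm:cw-model}, weak sparseness of $\Cc'$, and minor-monotonicity of treewidth. Your extra details (the explicit bag $\{u,v,w_{uv}\}$ and the interpretation recovering the incidence encoding from a marked $1$-subdivision) are just fleshing out steps the paper leaves implicit.
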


\begin{theorem}\label{thm:pw-model}
 A class of graphs $\Cc$ has bounded pathwidth if and only if the incidence encodings of graphs $\Cc$ can be $\MSO$-transduced from the class of paths.
\end{theorem}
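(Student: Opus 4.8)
The plan is to follow the blueprint of the proof of \cref{thm:td-model} for treedepth, reducing the statement to the already-established characterization of bounded linear cliquewidth in \cref{thm:lcw-model} by passing through $1$-subdivisions. Write $\widehat{G}$ for the $1$-subdivision of a graph $G$, and let $\Cc'=\{\widehat{G}\colon G\in\Cc\}$; as in the proof of \cref{thm:td-model}, $\widehat{G}$ is exactly the Gaifman graph of the incidence encoding of $G$. Two elementary observations will be used. First, $\pw(G)\le\pw(\widehat{G})\le\pw(G)+1$: the left inequality holds because $G$ is a minor (in fact a contraction) of $\widehat{G}$ and pathwidth is minor-monotone, and the right inequality is the standard fact that subdividing every edge once increases pathwidth by at most one (insert, after the earliest bag of a nice path decomposition containing both endpoints of an edge, one extra bag for its subdivision vertex). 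Consequently $\Cc$ has bounded pathwidth if and only if $\Cc'$ does. Second, $\Cc'$ is weakly sparse: each $\widehat{G}$ is bipartite and contains no $C_4$, since two distinct edges of $G$ share at most one endpoint, so $\omega^{\#}(\widehat{G})\le 1$.

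For the forward implication, suppose $\Cc$ has bounded pathwidth. Then $\Cc'$ has bounded pathwidth, hence bounded linear cliquewidth by \cref{thm:pw-lcw}, hence $\Cc'$ can be $\MSO$-transduced from the class of paths by \cref{thm:lcw-model}. It remains to recover the incidence encoding of $G$ from the bare graph $\widehat{G}$ by one more $\MSO$-transduction. For this, take the transduction that introduces two colors $C_V,C_E$, guesses $C_V$ to be the set of original (branch) vertices of $\widehat{G}$ and $C_E$ to be the set of subdivision vertices, and outputs the structure over the incidence signature given by $\varphi_V(x)=C_V(x)$, $\varphi_E(x)=C_E(x)$, and $\varphi_{\mathsf{inc}}(x,y)=C_V(x)\wedge C_E(y)\wedge\adj(x,y)$. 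On the intended guess the output is precisely the incidence encoding of $G$, and other guesses merely add harmless extra structures to the image. Composing this with the transduction from \cref{thm:lcw-model} via \cref{lem:trans-composition} shows that the incidence encodings of graphs from $\Cc$ are $\MSO$-transducible from the class of paths. (Alternatively this implication can be proved by an explicit construction: given a nice path decomposition of $G$ of width $k$, colour the underlying path so as to record at each node which register of $\{1,\ldots,k+1\}$ an introduced vertex occupies and which pair of registers an introduced edge connects, then decode the incidence relation by an $\MSO$ formula that, using the linear order definable on a coloured path, locates the introduce-node of the relevant register.)

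For the backward implication, suppose the incidence encodings of graphs from $\Cc$ are $\MSO$-transducible from the class of paths. Applying afterwards the single-formula transduction that forgets the unary predicates $V$ and $E$ and defines $\varphi_{\adj}(x,y)=\mathsf{inc}(x,y)\vee\mathsf{inc}(y,x)$ turns the incidence encoding of $G$ into $\widehat{G}$, so by \cref{lem:trans-composition} the class $\Cc'$ is $\MSO$-transducible from the class of paths. By \cref{thm:lcw-model}, $\Cc'$ has bounded linear cliquewidth; being weakly sparse, $\Cc'$ then has bounded pathwidth by \cref{thm:pw-lcw}; and therefore $\Cc$ has bounded pathwidth, since $\pw(G)\le\pw(\widehat{G})$.

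I do not expect a genuinely hard step here: the proof is a bookkeeping argument chaining \cref{thm:pw-lcw}, \cref{thm:lcw-model}, and \cref{lem:trans-composition}, exactly as \cref{thm:td-model} is derived. The points that need a little care are the estimate $\pw(\widehat{G})\le\pw(G)+1$; the fact that in the forward direction the vertex/edge bipartition of $\widehat{G}$ is not $\MSO$-definable in general (e.g.\ when $G$ has degree-$2$ vertices), which is why the transduction must guess it via colours rather than define it; and the verification that $\Cc'$ is weakly sparse, without which \cref{thm:pw-lcw} would not apply. If one preferred to avoid \cref{thm:lcw-model}, the backward implication could instead be obtained from the duality \cref{thm:pw-duality}, by arguing that no transduction from paths can produce the incidence encodings of subdivisions of arbitrarily large subcubic trees; but routing through linear cliquewidth is shorter.
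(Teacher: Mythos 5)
Your proof is correct and follows essentially the same route as the paper: pass to the class of $1$-subdivisions (the Gaifman graphs of the incidence encodings), use \cref{thm:pw-lcw} together with \cref{thm:lcw-model}, and handle the vertex/edge marking with colors, exactly mirroring how \cref{thm:td-model} is derived. The extra details you supply (the pathwidth estimate for subdivisions, weak sparseness of the subdivided class) are the same ingredients the paper implicitly relies on.
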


\cref{thm:cw-model} is explicitly proved in the book of Courcelle and Engelfriet as a part of~\cite[Corollary~7.38]{CEbook}. The same argumentation can be also applied to argue \cref{thm:lcw-model}, see also~\cite[Theorem~7.47]{CEbook}. \cref{thm:tw-model,thm:pw-model} can be then derived from \cref{thm:cw-model} using the same argument as we did in the proof of \cref{thm:td-model}, except that we use \cref{thm:tw-cw,thm:pw-lcw} instead of \cref{thm:td-shb}.

From \cref{thm:cw-model,thm:lcw-model,lem:trans-composition} we immediately get the following.

\begin{theorem}\label{thm:cw-lcw-ideals}
 Classes of bounded cliquewidth as well as classes of bounded linear cliquewidth are both $\MSO$ and $\FO$ ideals.
\end{theorem}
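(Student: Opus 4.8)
The plan is to read the statement off the model-theoretic characterizations \cref{thm:cw-model,thm:lcw-model} together with compositionality of transductions, exactly in the spirit of how \cref{cor:shrub-ideal} was obtained from \cref{thm:shrub-logic}. Fix a class $\Cc$ of bounded cliquewidth and a transduction $\Tf$; the goal is to show that $\Tf(\Cc)$ also has bounded cliquewidth. By \cref{thm:cw-model} there is an $\MSO$-transduction $\mathsf{S}$ with $\Cc \subseteq \mathsf{S}(\mathcal{T})$, where $\mathcal{T}$ denotes the class of all trees. Then
$$\Tf(\Cc) \;\subseteq\; \Tf(\mathsf{S}(\mathcal{T})) \;=\; (\Tf\circ\mathsf{S})(\mathcal{T}),$$
where $\Tf\circ\mathsf{S}$ is the transduction obtained by first running $\mathsf{S}$ and then $\Tf$. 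So it suffices to argue that $\Tf\circ\mathsf{S}$ is again an $\MSO$-transduction; a second application of \cref{thm:cw-model} then yields that $\Tf(\Cc)$ has bounded cliquewidth. The case of linear cliquewidth is literally the same argument with \cref{thm:lcw-model} in place of \cref{thm:cw-model} and the class of all paths in place of $\mathcal{T}$.

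For the closure-under-composition step I would reuse the construction from the proof of \cref{lem:trans-composition}: take the disjoint union of the two color sets, add one fresh color $W$ marking the vertices that survive the restriction step of $\mathsf{S}$, relativize every first-order quantifier of $\Tf$'s formula to $W$ and every monadic quantifier to subsets of $W$, and replace every atom $\adj(x,y)$ occurring in $\Tf$'s formula by the adjacency formula produced by $\mathsf{S}$. This is a purely syntactic transformation that uses nothing specific to first-order logic, so it goes through verbatim for $\MSO$. Since every $\FO$-transduction is in particular an $\MSO$-transduction (every $\FO$ formula being an $\MSO$ formula), this argument covers both the case where $\Tf$ is an $\FO$-transduction and the case where it is an $\MSO$-transduction; in fact the $\MSO$-ideal statement formally implies the $\FO$-ideal statement, so one proof does both.

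The only wrinkle is that \cref{lem:trans-composition} is stated for compositions of $\FO$-transductions, whereas here I need the composition of two $\MSO$-transductions (or of an $\FO$- with an $\MSO$-transduction) to be an $\MSO$-transduction; but, as noted, this is the same proof, and I would simply remark that \cref{lem:trans-composition} holds for $\MSO$-transductions as well. A second, more minor point: one should check that admitting copying changes nothing here, which holds because classes of bounded cliquewidth and of bounded linear cliquewidth are closed under Cartesian products with constant-size complete graphs, so on these classes the copying and non-copying notions of transducibility coincide. Modulo these routine observations, the theorem follows; I do not anticipate any real obstacle.
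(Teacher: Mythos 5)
Your proposal is correct and follows essentially the same route as the paper, which derives \cref{thm:cw-lcw-ideals} directly from \cref{thm:cw-model,thm:lcw-model} together with compositionality of transductions (\cref{lem:trans-composition}), just as \cref{cor:shrub-ideal} is obtained from \cref{thm:shrub-logic}. Your extra remarks on extending \cref{lem:trans-composition} to $\MSO$-transductions and on copying are fine but not needed beyond what the paper already invokes.
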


With \cref{thm:cw-model,thm:tw-model} explained, we can go back to \cref{thm:courcelle-tw,thm:courcelle-cw} and sketch how they can be proved. Consider the cliquewidth case for concreteness, that is, \cref{thm:courcelle-cw}. Given a graph $G$ belonging to a class of bounded cliquewidth $\Cc$, we can compute a laminar decomposition $T$ of $G$ of constant diversity, using \cref{thm:cw-apx}. What the proof of \cref{thm:cw-model} tells us is that $G$ can be in fact $\MSO$-transduced from~$T$. This means that $G$ can be interpreted, using an $\MSO$ interpretation, in some coloring $T^+$ of $T$. Recall that our goal is to verify whether a given sentence $\phi\in \MSO_1$ is satisfied in $G$. We can use now the Backwards Translation Lemma (\cref{lem:backwards}, or rather its analogue for $\MSO$) to construct a sentence $\psi$ over the signature of $T^+$ such that $G\models \phi$ if and only if $T^+\models \psi$. Thus, we effectively reduced model-checking $\MSO_1$ on classes of bounded cliquewidth to model-checking $\MSO$ on classes of colored trees. The latter problem can be solved in linear time using classic tree automata constructions. Namely, we translate $\psi$ into a tree automaton $\cal A_\psi$ that is equivalent to $\psi$ in the sense that $\cal A_\psi$ accepts exactly those colored trees in which $\psi$ is satisfied. Then it suffices to run $\cal A_\psi$ on $T^+$. The reasoning for \cref{thm:courcelle-tw} is analogous.

\paragraph*{Obstructions.} It should be mentioned that there is an embedding notion, called {\em{vertex-minors}}, that in the context of cliquewidth (or rather more suitably, rankwidth) serves the role analogous to that of the minor order. In particular, Geelen, Kwon, McCarty, and Wollan~\cite{GeelenKMW23} proved an analogue of \cref{thm:tw-duality}: a class of graphs has bounded cliquewidth if and only if it excludes a (suitably defined) grid as a vertex-minor. The theory of vertex-minors is a fascinating topic with significant parallels to the theory of graph minors, see a recent survey of Kim and Oum~\cite{KimO24}. However, in this survey we focus on logical aspects, which at this point seem to be somewhat orthogonal to the ``philosophy'' of vertex-minors (we comment more on this in \cref{sec:outlook}). Hence, we choose not to explore here the connections with the theory of vertex-minors.

Regarding logical obstructions, the following fundamental duality result for cliquewidth was proved by Courcelle and Oum~\cite{CourcelleO07} using a Grid Theorem for binary matroids due to Geelen, Gerards, and Whittle~\cite{GeelenGW07}.

\begin{theorem}[\cite{CourcelleO07}]\label{thm:cw-dependent}
 A class of graphs $\Cc$ has bounded cliquewidth if and only if the class of all graphs cannot be $\CMSO$-transduced from $\Cc$.
\end{theorem}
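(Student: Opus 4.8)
The plan is to treat the two directions separately, the forward one being soft and the backward one carrying all the weight via the Geelen--Gerards--Whittle matroid machinery hinted at in the statement.

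For the easy direction, I would argue by contraposition: if $\Cc$ has bounded cliquewidth, then the class of all graphs is not $\CMSO$-transducible from $\Cc$. Here I would use that the transduction characterization of \cref{thm:cw-model} holds verbatim for $\CMSO$-transductions (this is in fact the form proved in~\cite{CEbook}): a class has bounded cliquewidth if and only if it is $\CMSO$-transducible from the class of trees. Combined with compositionality of $\CMSO$-transductions (the $\CMSO$-analogue of \cref{lem:trans-composition}), this shows that ``bounded cliquewidth'' is a $\CMSO$ ideal. Since the class of all graphs has unbounded cliquewidth --- it contains all square grids, whose cliquewidth grows with their side length --- it cannot be the $\CMSO$-image of a class of bounded cliquewidth. (One could instead try a complexity-collapse argument through the $\CMSO_1$-strengthening of Courcelle's theorem \cref{thm:courcelle-cw}, but the ideal argument is cleaner and sidesteps the subtleties caused by the nondeterminism of transductions.)

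For the hard direction I would assume $\Cc$ has unbounded cliquewidth and build a $\CMSO$-transduction of $\mathsf{Graphs}$ from $\Cc$, in three moves. \emph{Move 1 (to rankwidth).} By the functional equivalences recalled before \cref{thm:cw-apx} --- $\rw \le \mw \le 2^{\rw}$, and $\mw$ within a factor two of cliquewidth --- the class $\Cc$ has unbounded rankwidth. \emph{Move 2 (to binary matroids and the Grid Theorem).} Following Oum, associate to each graph $G$ the $\mathrm{GF}(2)$-representable matroid $M_G$ given by the block matrix formed from an identity block and the $\mathrm{GF}(2)$-adjacency matrix of $G$; its branchwidth differs from $\rw(G)$ by at most a constant. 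Thus $\{M_G : G\in\Cc\}$ is a class of $\mathrm{GF}(2)$-representable matroids of unbounded branchwidth, and since the $k\times k$ grid is planar, the Grid Theorem of Geelen, Gerards, and Whittle~\cite{GeelenGW07} yields that for every $k$ some $M_G$ with $G\in\Cc$ contains the cycle matroid of the $k\times k$ grid as a minor. \emph{Move 3 (back to a transduction).} Matroid minors of $M_G$ correspond, on the graph side, to pivot-minors (equivalently vertex-minors) of $G$: deleting an element is a vertex deletion possibly preceded by a pivot, and a pivot is a local complementation along an edge, which only re-chooses the basis of the representation. Hence the ``fundamental/bipartite'' graph $H_k$ representing the grid cycle matroid is a pivot-minor of some member of $\Cc$. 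Two things then remain: first, that the whole family $\{H_k\}_{k}$ is $\FO$-transducible from $\Cc$ by a \emph{single} transduction (a local complementation is a flip on a guessed neighbourhood, hence a transduction, and the pivot sequences witnessing the $H_k$ can be made uniform --- or, as in~\cite{CourcelleO07}, one works directly with $\CMSO$-transductions between matroids and their branch-decompositions, bypassing the graph-level pivot bookkeeping); second, that $\{H_k\}$ is itself ``universal'', i.e. it $\CMSO$-transduces all graphs. For the latter I would exploit that the grid inside the cycle matroid of the $k\times k$ grid, read through a branch-decomposition, exposes a large array of independent linkages on which an arbitrary adjacency matrix can be highlighted by colours and then decoded by a $\CMSO$-interpretation, in the same spirit as the rook-graph argument of \cref{lem:rook-not-dependent}. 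Composing with Move~3's transduction and invoking compositionality (\cref{lem:trans-composition}) finishes the construction.

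The main obstacle, and the reason $\CMSO$ rather than $\FO$ is the correct logic, lies entirely in the matroid detour: both the deep Grid Theorem step and --- especially --- the final universality step, where one must read the combinatorial structure of a matroid off a bounded-width branch-decomposition in a way that uniformly decodes \emph{every} graph. It is precisely this decoding that genuinely uses modular counting, so the argument does not downgrade to $\FO$-transductions (which is consistent with the fact that the $\FO$-analogue of this statement is the strictly larger class of monadically dependent classes discussed later in the survey).
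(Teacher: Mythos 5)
Your overall route is the same as the paper's (and the underlying Courcelle--Oum proof it cites): the easy direction via closure of bounded cliquewidth under $\CMSO$-transductions, and the hard direction via rankwidth, Oum's binary-matroid correspondence, the Geelen--Gerards--Whittle grid theorem for $\mathrm{GF}(2)$-representable matroids, the translation back through pivot-/vertex-minors, and a final grid-to-all-graphs decoding in the spirit of \cref{lem:rook-not-dependent}. The easy direction as you present it is fine.

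There is, however, a genuine problem in your Move~3 if it is taken at face value. You assert that the family $\{H_k\}$ of fundamental graphs is $\FO$-transducible from $\Cc$ by a single transduction, on the grounds that ``a local complementation is a flip'' and that ``the pivot sequences can be made uniform''. A \emph{single} local complementation is indeed a definable flip, but the witnessing sequences involve an unbounded number of pivots, and the adjacency of a pair $u,v$ after locally complementing at an unbounded set of vertices depends on parities (for instance, of the number of common neighbours inside the complemented set). This is exactly the point at which modular counting is used in~\cite{CourcelleO07}; it cannot be absorbed into boundedly many flips, and ``uniformity'' of the pivot sequences does not remove the parity dependence. Note also that if your $\FO$ (or even $\MSO$) claim were correct, then together with the $\MSO$-decoding of all graphs from grids it would establish the $\MSO$-transduction version of \cref{thm:cw-dependent}, which the paper explicitly records as an open problem. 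Relatedly, your localization of where counting is needed is inverted: the final universality step (from grids, or the grid-like graphs $H_k$, to all graphs) is plain $\MSO$, exactly as in \cref{lem:rook-not-dependent}; the counting is needed earlier, in simulating vertex-/pivot-minor extraction by transductions. Your fallback of working directly with $\CMSO$-transductions between matroids and branch-decompositions, as in~\cite{CourcelleO07}, is the correct move and is what actually carries the hard direction; with that fallback the plan matches the paper's argument, but the primary path through uniform pivot sequences as written does not work.
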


The left-to-right implication of \cref{thm:cw-dependent} follows from the fact that classes of bounded cliquewidth are even closed under $\CMSO$-transductions. For the difficult right-to-left implication, the main work lies is showing that if a class $\Cc$ has unbounded cliquewidth, then the class of all grids is $\CMSO$-transducible from $\Cc$. Indeed, then one can easily see that the class of all graphs is $\MSO$-transducible from the class of grids by an argument similar to the one we used in \cref{lem:rook-not-dependent}.

Note that in \cref{thm:cw-dependent}, the transduction uses the logic $\CMSO$ that is more expressive than $\MSO$ in that it allows modular counting. This is used crucially in the proof: the reasoning relies on connections with vertex-minors, and to express relevant properties it seems necessary to be able to check the parity of the cardinalities of sets (thus, in fact, only counting modulo $2$ is needed). Whether \cref{thm:cw-dependent} holds also for the weaker notion of $\MSO$-transductions remains a curious open problem.

An analogue of \cref{thm:cw-dependent} for linear cliquewidth would be the following statement, which to the best of our knowledge is at this point still open.

\begin{conjecture}\label{con:lcw-model}
 A class of graphs $\Cc$ has bounded linear cliquewidth if and only if the class of trees cannot be $\CMSO$-transduced from $\Cc$.
\end{conjecture}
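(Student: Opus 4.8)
The plan is to follow the blueprint of the Courcelle--Oum theorem (\cref{thm:cw-dependent}), with paths taking over the role of trees and a path-width obstruction theorem for binary matroids taking over the role of the Grid Theorem for binary matroids.

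\emph{Easy direction: bounded linear cliquewidth implies that the class of trees is not $\CMSO$-transducible.} Classes of bounded linear cliquewidth are closed under $\CMSO$-transductions --- this follows by the same reasoning as for cliquewidth (which underlies the left-to-right implication of \cref{thm:cw-dependent}), or from the $\CMSO$-version of \cref{thm:lcw-model} together with \cref{lem:trans-composition}. On the other hand, the class of all trees is weakly sparse and has unbounded pathwidth (it contains all complete binary trees), so by \cref{thm:pw-lcw} it has unbounded linear cliquewidth. Hence if the class of trees were $\CMSO$-transducible from a class $\Cc$ of bounded linear cliquewidth, it would itself have bounded linear cliquewidth, a contradiction.

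\emph{Hard direction: unbounded linear cliquewidth implies that the class of trees is $\CMSO$-transducible.} Fix $\Cc$ of unbounded linear cliquewidth. If $\Cc$ also has unbounded cliquewidth then by \cref{thm:cw-dependent} it $\CMSO$-transduces all graphs, hence all trees; so the interesting case is $\Cc$ of bounded cliquewidth. I would pass to the functionally equivalent parameter linear rankwidth, reduce to the bipartite case by a standard rankwidth-preserving maneuver, and invoke the classical correspondence that assigns to a bipartite graph $G$ a binary matroid $M_G$ whose path-width equals, up to constants, the linear rankwidth of $G$, and under which pivot-minors of $G$ correspond to matroid minors of $M_G$ --- the ``linear'' analogue of the vertex-minor / branch-width correspondence used by Courcelle and Oum. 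Then $\{M_G : G \in \Cc\}$ has unbounded path-width, and one would apply a path-width analogue of the Geelen--Gerards--Whittle Grid Theorem: a binary matroid of sufficiently large path-width contains, as a minor, a large member of an explicit family of ``path-like'' binary matroids. Translating this back gives a fixed family $\{O_k\}_{k\in\N}$ of ``linear obstruction'' graphs such that every $G \in \Cc$ of large linear rankwidth contains a large $O_k$ as a pivot-minor, the obstruction order being unbounded over $\Cc$. It then remains to verify that: (i) for a fixed family, ``extract a prescribed pivot-minor'' is a single $\CMSO$-transduction --- this is precisely where modular counting (in fact counting modulo $2$) is indispensable, as the cumulative effect of a sequence of pivots on the adjacency matrix is captured by an $\mathbb{F}_2$-linear-algebraic parity condition, exactly as in the proof of \cref{thm:cw-dependent} --- so $\{O_k\}_{k\in\N}$ is $\CMSO$-transducible from $\Cc$; and (ii) $\{O_k\}_{k\in\N}$ in turn $\CMSO$-transduces (indeed already $\MSO$-transduces) the class of all trees. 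Composing the two transductions with \cref{lem:trans-composition} completes the argument. As a sanity check, the family $\{O_k\}$ must have bounded cliquewidth, since unbounded linear cliquewidth does not force unbounded cliquewidth (the class of all trees is the standard example); morally $\{O_k\}$ should be the class of trees, or a close relative of it.

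\emph{Main obstacle.} The heart of the matter --- and the reason the statement is only a conjecture --- is the path-width Grid Theorem for binary matroids invoked above, equivalently a pivot-minor description of the graphs obstructing bounded linear rankwidth. For the branch-width case Courcelle and Oum could rely on the Geelen--Gerards--Whittle Grid Theorem; a path-like analogue of the strength required here does not seem to be available, and establishing it (even just the fragment needed) is where the real work lies. A secondary difficulty is to pin down the obstruction family $\{O_k\}$ precisely enough that it transduces exactly the class of trees, and not all graphs, which would be too strong; and to control the translation between the path-width of $M_G$ and the linear rankwidth of $G$ uniformly in the underlying model. Lastly, the conjecture as stated asks for $\CMSO$-transductions; an $\MSO$-version would additionally require the still-open $\MSO$-strengthening of \cref{thm:cw-dependent}.
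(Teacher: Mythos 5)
First, note that the paper does not prove this statement at all: \cref{con:lcw-model} is stated there explicitly as an open problem (``to the best of our knowledge \ldots still open''), so there is no proof of the paper to compare yours against. Your write-up should therefore be judged as a proof plan, and as such it contains a genuine gap that you yourself identify: the entire hard direction rests on a ``path-width Grid Theorem for binary matroids'', equivalently a pivot-/vertex-minor obstruction theorem for bounded linear rankwidth (large linear rankwidth forces a large member of an explicit path-like obstruction family, e.g.\ large complete binary trees, as a pivot- or vertex-minor). No such theorem is currently known; it is essentially the open core of the conjecture (and is closely related to the known open question of whether large linear rankwidth forces large complete binary trees as vertex-minors, settled only in special cases such as distance-hereditary graphs). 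So the plan reduces the conjecture to a statement that is at least as hard, rather than proving it; steps (i) and (ii) of your outline, while plausible by analogy with Courcelle--Oum, cannot be carried out before the obstruction family is even pinned down.

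What you do establish is the easy direction, and that part is fine: bounded linear cliquewidth is preserved by $\CMSO$-transductions (via the $\CMSO$-analogue of \cref{thm:lcw-model} and \cref{lem:trans-composition}, using that modular counting is automaton-recognizable on paths), while the class of trees is weakly sparse of unbounded pathwidth, hence of unbounded linear cliquewidth by \cref{thm:pw-lcw}. Also your reduction of the hard direction to the case of bounded cliquewidth via \cref{thm:cw-dependent}, and your sanity check that the obstruction family should be (a relative of) the class of trees rather than something transducing all graphs, are sensible and consistent with the paper's related \cref{conj:lcw-obst}. But as a whole this is a research programme mirroring the Courcelle--Oum strategy, not a proof, and the missing linear-width analogue of the Geelen--Gerards--Whittle theorem is precisely why the statement remains a conjecture.
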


Finally, an analogue of \cref{thm:cw-dependent} also holds for treewidth. As expected, one should simply replace the adjacency encoding with the incidence encoding.

\begin{theorem}\label{thm:tw-dependent}
 A class of graphs $\Cc$ has bounded treewidth if and only if the class of all graphs cannot be $\MSO$-transduced from the incidence encodings of the graphs from $\Cc$.
\end{theorem}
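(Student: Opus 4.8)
The plan is to derive both directions from the corresponding facts about cliquewidth, using the incidence encoding as a bridge. Write $\Cc^{\mathrm{inc}}$ for the class of incidence encodings of the graphs in $\Cc$. For the forward implication, assume $\Cc$ has bounded treewidth and suppose, towards a contradiction, that the class $\Graphs$ of all graphs is $\MSO$-transducible from $\Cc^{\mathrm{inc}}$. By \cref{thm:tw-model}, $\Cc^{\mathrm{inc}}$ is itself $\MSO$-transducible from the class of trees; composing this with the assumed transduction (the compositionality argument behind \cref{lem:trans-composition} applies equally to $\MSO$-transductions between relational structures), $\Graphs$ would be $\MSO$-transducible from the class of trees. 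But trees have treewidth $1$, hence bounded cliquewidth by \cref{thm:tw-cw}, and classes of bounded cliquewidth form an $\MSO$ ideal by \cref{thm:cw-lcw-ideals}; so $\Graphs$ would have bounded cliquewidth. This is false, since $\Graphs$ contains all grids, which are weakly sparse and of unbounded treewidth, hence of unbounded cliquewidth by the converse part of \cref{thm:tw-cw}. This contradiction proves the forward implication.

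For the backward implication, assume $\Cc$ has unbounded treewidth. By the Grid Theorem (\cref{thm:tw-duality}) in contrapositive form, for every $k$ there is a graph $G_k \in \Cc$ containing the $k \times k$ grid $\boxtimes_k$ as a minor. The crux is to design a single $\MSO$-transduction $\Tf$, acting on incidence encodings and producing (adjacency encodings of) graphs, such that $\boxtimes_k \in \Tf(G^{\mathrm{inc}})$ whenever $G$ contains $\boxtimes_k$ as a minor. Fix such a $G$ together with a minor model $\{\eta(v)\}_{v \in V(\boxtimes_k)}$. The colouring step of $\Tf$ uses three colours to mark, in $G^{\mathrm{inc}}$: a set $S$ consisting of one representative $r_v \in \eta(v)$ per branch set; an edge set $T$ that is the union of spanning trees of the branch sets; and an edge set $\mathrm{Conn}$ consisting, for each edge $uv$ of $\boxtimes_k$, of one chosen edge of $G$ with one endpoint in $\eta(u)$ and the other in $\eta(v)$. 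In the incidence encoding, $\MSO$ can express that two vertices of $G$ lie in the same connected component of the subgraph whose edge set is $T$; since $T$ is exactly a spanning forest of the branch sets, these components are precisely the sets $\eta(v)$, with every other vertex of $G$ forming a singleton component containing no representative. The interpretation step therefore defines an edge between $r$ and $r'$ iff some edge of $\mathrm{Conn}$ joins the $T$-component of $r$ to that of $r'$, and the restriction step keeps only the vertices in $S$. Because the branch sets are pairwise disjoint, the chosen edge for $uv$ joins the components of $r_u$ and $r_v$ and of no other pair, so the output is exactly $\boxtimes_k$.

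The feature of this construction worth emphasising — and the step I expect to be the main obstacle — is that the grid adjacency of the output must be read off from the marked data $(S,T,\mathrm{Conn})$ rather than from the genuine adjacency of $G$; this is precisely what lets the argument ignore the ``extra'' edges of $G$ running between non-adjacent branch sets, which a naive ``there is a $G$-edge between the two trees'' formula would wrongly include. One must also be careful that $T$ is declared to be a spanning forest of the branch sets (so that its components coincide with them), and that only a bounded number of colours is used, so that $\Tf$ is a legitimate $\MSO$-transduction; no copying is needed since the output has at most $|V(G)|$ vertices. Granting this, the class of all grids is $\MSO$-transducible from $\Cc^{\mathrm{inc}}$ via $\Tf$. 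Composing $\Tf$ with the $\MSO$-transduction of $\Graphs$ from the class of grids (which exists by a rook-graph-style argument as in \cref{lem:rook-not-dependent}, noted in the discussion after \cref{thm:cw-dependent}) and invoking \cref{lem:trans-composition} once more, we obtain that $\Graphs$ is $\MSO$-transducible from $\Cc^{\mathrm{inc}}$, completing the proof.
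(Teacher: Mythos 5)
Your proof is correct and follows essentially the same route as the paper: for unbounded treewidth, extract large grid minors via the Grid Theorem, mark spanning trees of branch sets and connector edges by colours in the incidence encoding, read off the grid adjacency with an $\MSO_2$ formula, and compose with the grids-to-all-graphs transduction; for bounded treewidth, conclude via the $\MSO$ ideal of bounded cliquewidth that all graphs cannot be obtained. The only cosmetic difference is that in the forward direction you route through \cref{thm:tw-model} and the class of trees, whereas the paper argues directly that the incidence encodings have bounded treewidth, hence bounded cliquewidth — the same ideal argument either way.
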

\begin{proof}[Proof sketch]
 On one hand, if $\Cc$ has bounded treewidth, then so does the class of $1$-subdivisions of graphs from $\Cc$, which are Gaifman graphs of the incidence encodings of graphs from $\Cc$. Hence, every class $\MSO$-transducible from those incidence encodings has bounded cliquewidth (\cref{thm:cw-lcw-ideals}). However, the class of grids has unbounded cliquewidth, because it is weakly sparse and has unbounded treewidth (\cref{thm:tw-cw}).

 On the other hand, if $\Cc$ has unbounded treewidth, then by \cref{thm:tw-duality}, graphs from $\Cc$ contain arbitrarily large grids as minors. It is then not hard to transduce the class of all grids from $\Cc$. Namely, given the incidence encoding of a graph $G$ that contains a minor model of a $k\times k$ grid, the transduction highlights the edges relevant for this minor model using two colors --- one for the spanning trees of branch sets and one for the edges connecting the branch sets of vertices adjacent in the grid --- and proceeds to encode the adjacency relation of the grid using an $\MSO_2$ formula. As we mentioned before, an argument similar to that used in the proof of \cref{lem:rook-not-dependent} shows that the class of all graphs can be $\MSO$-transduced from the class of grids. Composing the two transductions sketched above gives an $\MSO$-transduction that applied to incidence encodings of graphs from $\Cc$, yields all graphs.
\end{proof}

\subsection{Minor-free classes}\label{sec:minors}

We now proceed to the first concept of graphs that are not necessarily tree-like: minor-free graphs. This is a highly explored topic within structural graph theory, thanks to the monumental Graph Minors project developed by Robertson and Seymour, and the even larger body of works that were later built on top of~it. In this survey we only touch upon the tip of the iceberg and discuss the role of minor-free classes within the larger logically-motivated perspective.

Call a graph class $\Cc$ {\em{minor-free}} if there exists a graph $H$ such that every graph in $\Cc$ excludes $H$ as a minor. For instance, planar graphs form a minor-free class, for they exclude both $K_5$ and $K_{3,3}$ as minors~\cite{Kuratowski30,Wagner37}. Observe that since every graph is a subgraph of some complete graph $K_t$, a graph class is minor-free if and only if it excludes $K_t$ as a minor, for some $t\in \N$. Therefore, if we define the {\em{Hadwiger number}} of a graph $G$ as the largest $t$ such that $G$ contains $K_t$ as a minor, then $\Cc$ is minor-free if and only if $\Cc$ has bounded Hadwiger number.

Arguably, the most important fact about minor-free graphs is the Graph Minors Theorem, proved by Robertson and Seymour~\cite{GM20}, which states that the minor order is a well quasi-ordering on finite graphs. Recall that this means that in the minor order there are no infinite descending chains (which is trivial) and no infinite antichains (which is highly non-trivial). From this it directly follows that minor-closed classes (that is, classes closed under taking minors) can be characterized by finitely many forbidden minors. These are often called {\em{minor obstructions}}.

\begin{corollary}\label{cor:forb-minors}
 For every minor-closed class of graphs $\Cc$, there exists a finite set of graphs $\Ff$ such that for every graph $G$,
 $$G\in \Cc\quad\textrm{if and only if}\quad G\textrm{ does not contain any member of }\Ff\textrm{ as a minor.}$$
\end{corollary}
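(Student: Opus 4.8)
The plan is to obtain this as an immediate corollary of the Graph Minors Theorem cited above: the minor order on finite graphs is a well quasi-order, i.e.\ it has no infinite strictly descending chain (which is trivial, since passing to a proper minor strictly decreases $|V|+|E|$) and no infinite antichain (the deep content of \cite{GM20}).

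First I would identify the obstruction set. Since $\Cc$ is minor-closed, its complement $\overline{\Cc}$ within the class of all finite graphs is \emph{upward closed} in the minor order: if $G\in\overline{\Cc}$ and $G$ is a minor of $G'$, then $G'\in\overline{\Cc}$, as otherwise minor-closedness of $\Cc$ would place $G$ back into $\Cc$. Let $\Ff$ consist of the minor-minimal members of $\overline{\Cc}$---those graphs $H\notin\Cc$ all of whose proper minors belong to $\Cc$.

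Next I would verify the two required properties of $\Ff$. It is an antichain: if $H,H'\in\Ff$ and $H$ is a proper minor of $H'$, then minimality of $H'$ forces $H\in\Cc$, contradicting $H\in\overline{\Cc}$. Hence the Graph Minors Theorem immediately yields that $\Ff$ is finite. Then I would check the biconditional. The forward implication is immediate from minor-closedness: if $G\in\Cc$ contained some $H\in\Ff$ as a minor, then $H\in\Cc$, a contradiction. For the converse, suppose $G\notin\Cc$; the set of minors of $G$ lying in $\overline{\Cc}$ is nonempty (it contains $G$ itself) and finite (a finite graph has only finitely many minors up to isomorphism), so it has a minor-minimal element $H$. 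Every proper minor of $H$ is, by transitivity, a minor of $G$ strictly below $H$, hence lies in $\Cc$ by that minimality; so $H\in\Ff$, and $G$ has $H$ as a minor, as desired.

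There is no genuinely hard step once the Graph Minors Theorem is granted. The only points that require care are the direction in which $\overline{\Cc}$ is closed under the minor order, and the observation that it is precisely the \emph{finiteness} of $\Ff$---not the existence of minimal obstructions, which is elementary for each fixed finite graph---that invokes the no-infinite-antichain statement. I would also remark that effectiveness, i.e.\ actually computing $\Ff$ from a description of $\Cc$, is a separate and genuinely subtle issue that the corollary, as stated, does not address.
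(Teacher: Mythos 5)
Your proposal is correct and follows essentially the same route as the paper: take $\Ff$ to be the minor-minimal graphs not in $\Cc$, note that they form an antichain, and invoke the Graph Minors Theorem for finiteness. Your additional verification of the biconditional is a more detailed write-up of what the paper leaves implicit.
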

\begin{proof}
 It suffices to take as $\Ff$ the minor-minimal graphs that do not belong to $\Cc$. They form an antichain in the minor order, hence from the Graph Minors Theorem it follows that $\Ff$ is finite.
\end{proof}

Note that \cref{cor:forb-minors} does not provide any method for deriving the obstruction set $\Ff$ from the class~$\Cc$. For instance, if $\Cc$ is the class of planar graphs then $\Ff=\{K_5,K_{3,3}\}$, and a complete obstruction set is known for the class of graphs embeddable in the projective plane~\cite{Archdeacon81,GloverHW79}. However, the full list is actually unknown even for the class of graphs embeddable in the torus. There have been, however, studies of bounds on the maximum sizes of minor obstructions for specific minor-closed classes of interest. Examples include the classes of graphs of pathwidth at most $k$ and of treewidth at most $k$~\cite{Lagergren98}, as well as graphs embeddable in a fixed surface~\cite{Seymour93}.

Along the way to prove the Graph Minors Theorem, Robertson and Seymour introduced a large toolbox of results for minor-free classes of graphs. Among those, we would like to highlight the so-called {\em{Structure Theorem}}, which is a fundamental duality result explaining that graphs from minor-free classes can be pieced together in a tree-like manner from graphs that are close to being embeddable in a fixed surface.

\begin{theorem}[Structure Theorem for minor-free graphs, \cite{GM16}]\label{thm:structure-theorem}
 Let $\Cc$ be a minor-free class of graphs. Then there exists a surface $\Sigma$ and an integer $k\in \N$ such that every graph $G\in \Cc$ admits a tree decomposition $(T,\bag)$ with the following properties:
 \begin{itemize}[nosep]
  \item the adhesion of $(T,\bag)$ is at most $k$; and
  \item for every node $x$ of $T$, the torso of $\bag(x)$ in $G$ is $k$-almost embeddable in $\Sigma$.
 \end{itemize}
\end{theorem}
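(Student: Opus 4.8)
The plan is to follow the original argument of Robertson and Seymour~\cite{GM16}, which is a global induction anchored by a local analysis around a large grid. Fix a graph $H$ excluded as a minor by all members of $\Cc$, and set $t\coloneqq |V(H)|$, so that every $G\in\Cc$ excludes $K_t$ as a minor. The base case is the bounded-treewidth instance: if $\tw(\Cc)\leq w$ for some constant $w$, we are already done, taking $\Sigma$ to be the sphere and letting $(T,\bag)$ be any tree decomposition of width at most $w$; then every bag has size at most $w+1$, its torso is trivially $(w+1)$-almost embeddable in the sphere (no apices, no vortices), and the adhesion is at most $w+1$. By the Grid Theorem (\cref{thm:tw-duality}), the only remaining possibility is that graphs in $\Cc$ contain arbitrarily large grid minors, equivalently arbitrarily large walls. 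So from now on we may assume that a given $G\in\Cc$ contains a wall $W$ whose size is enormous compared to $t$.

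The heart of the proof is the \emph{local structure theorem around a wall}, conveniently phrased through \emph{tangles}. The wall $W$ induces a tangle $\mathcal T$ of large order in $G$, i.e.\ a consistent orientation of all low-order separations ``towards'' $W$, and one studies how the rest of $G$ attaches to $W$ relative to $\mathcal T$. The crucial dichotomy --- essentially the content of the \emph{Flat Wall Theorem} --- is that there is a set $A$ of \emph{apex vertices}, of size bounded in terms of $t$ only, such that after deleting $A$ a large subwall $W'$ of $W$ becomes \emph{flat}: the part of $G-A$ grasped by $W'$ embeds into a closed disk with $W'$ drawn in the standard way, except for a bounded number of \emph{vortices} --- regions attached along the boundaries of faces, each of bounded pathwidth when cut open. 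The reason apices and vortices cannot be dispensed with, and the reason their number, their width, and the genus of the ambient surface all stay bounded by functions of $t$, is precisely the exclusion of $K_t$: an unbounded number of handles, a vortex of unbounded pathwidth, or too many apex-like obstructions could be converted, via routing arguments on the grid, into a $K_t$ minor, a contradiction.

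With this local picture in hand, one carves out of $G$ a single ``well-embedded piece'': the flat region around $W'$, enlarged by its bounded apex set and its bounded-width vortices, becomes one bag of the sought tree decomposition, whose torso is $k$-almost embeddable in a fixed surface $\Sigma$ of bounded genus for an appropriate $k=k(t)$. This piece is separated from the remainder of $G$ by a separator of bounded order --- coming from the boundary cycle of the disk together with the apices --- which will be the adhesion between the new bag and the rest of the decomposition. One then recurses on the ``rest'': formally, on a modified graph obtained by deleting the interior of the flat region and turning the bounded separator into a clique. This modified graph still excludes some complete graph of bounded size as a minor, so a well-chosen induction potential (for instance $|V(G)|$ refined by the order of the governing tangle) strictly decreases and the induction hypothesis applies. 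Gluing the tree decompositions returned by the recursive calls along the bounded-order separators yields the desired $(T,\bag)$, with adhesion and almost-embeddability parameters controlled uniformly by $t$.

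I expect the genuinely hard part to be exactly this local analysis --- proving the Flat Wall Theorem and, within it, simultaneously bounding the number of apices, the number and pathwidth of the vortices, and the genus of $\Sigma$, all purely in terms of $t$. This rests on an intricate case analysis of how the tangle $\mathcal T$ interacts with separations of the wall, combined with routing arguments that transform any ``excess'' structure into a $K_t$ minor, contradicting $G\in\Cc$. A secondary but still substantial difficulty is the bookkeeping of the global recursion: one must verify that the modification used to recurse does not enlarge the excluded minor beyond a bounded size and that the adhesions produced across all recursive calls remain uniformly bounded, so that the final $k$ depends on $t$ alone. The remaining steps --- assembling the pieces into a tree decomposition and checking the two bullet-point conditions --- are comparatively routine once the wall analysis is in place.
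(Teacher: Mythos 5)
This theorem is not proved in the survey at all: it is imported verbatim from Robertson and Seymour's Graph Minors series~\cite{GM16}, so there is no in-paper argument to compare against. Your outline is a fair reconstruction of the strategy used in the literature --- a tangle of large order induced by a huge wall, the Flat Wall Theorem yielding a bounded apex set and boundedly many bounded-pathwidth vortices around a flat subwall, and a local-to-global assembly into a tree decomposition of bounded adhesion --- and you correctly locate the real difficulty in the local analysis. One caution on the global step: the actual induction in \cite{GM16} is not the naive ``delete the flat piece, turn the separator into a clique, recurse on vertex count'' scheme; adding clique edges and controlling the excluded minor of the modified graph is precisely where such a recursion is delicate, and Robertson--Seymour instead organize the argument around tangles (and an induction involving the excluded clique and the surface), with the tree decomposition extracted from the family of tangles. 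You flag this bookkeeping as a difficulty, which is right, but be aware that resolving it is a substantial part of the proof rather than routine verification; at the level of detail given, your sketch is an accurate summary of the known approach, not a self-contained proof.
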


\begin{figure}
 \centering
 \begin{tikzpicture}
\node at (0,0) {\includegraphics[scale=0.4]{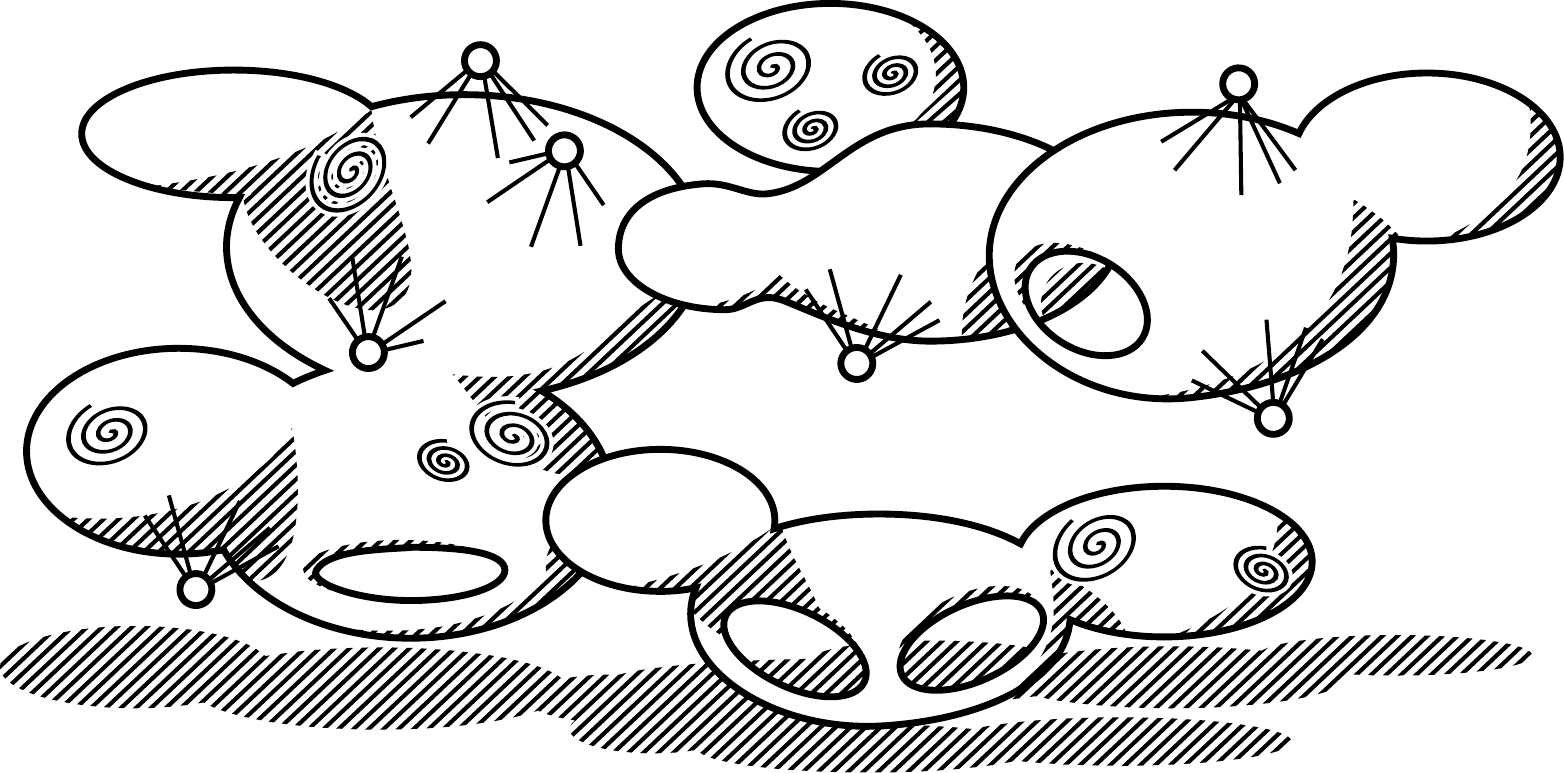}};
 \end{tikzpicture}
 \caption{A somewhat artistic take on the decomposition provided by the Structure Theorem for minor-free classes (\cref{thm:structure-theorem}). Figure by Felix Reidl. The author would like to thank Felix for allowing the usage of this figure in this survey.}\label{fig:minorFree}
\end{figure}

See \cref{fig:minorFree} for an illustration.
Let us explain the undefined terms:
\begin{itemize}[nosep]
 \item The {\em{adhesion}} of a tree decomposition $(T,\bag)$ is the maximum size of the intersection of any pair of adjacent bags, that is, $\max_{xy\in E(T)} |\bag(x)\cap \bag(y)|$. Thus, the tree decomposition provided by \cref{thm:structure-theorem} may have bags of arbitrarily large size, but the ``interface'' between any two adjacent bags is of constant size.
 \item The {\em{torso}} of $\bag(x)$ is the graph obtained from $G[\bag(x)]$ by turning the intersection $\bag(x)\cap \bag(y)$ into a clique, for every neighbor $y$ of $x$ in $T$. Note that by the previous point, these cliques are of size at most $k$ each.
 \item By a {\em{surface}} we mean a compact $2$-dimensional manifold without boundary. The notion of embedding is standard: vertices are mapped to distinct points, and edges are mapped to internally disjoint curves connecting corresponding endpoints.
 \item We refrain from formally explaining the concept of $k$-almost embeddability, due to its technicality. In a nutshell, a graph is $k$-almost embeddable in a surface $\Sigma$ if it is embeddable in $\Sigma$ except that (a) at most $k$ vertices, called {\em{apices}}, do not need to be embedded, and (b) in the embedding there might be some crossings, but they are confined to at most $k$ disks called {\em{vortices}}, each inducing a subgraph of pathwidth at most $k$.
\end{itemize}
We remark that the existence of a tree decomposition like in \cref{thm:structure-theorem} also witnesses that the graph in question excludes some fixed graph $H$ as a minor (where $H$ depends on $\Sigma$ and $k$), hence \cref{thm:structure-theorem} can be regarded as a duality theorem that characterizes minor-free classes through the existence of a suitable decomposition. We also refer the reader to the work of Diestel, Kawarabayashi, M\"uller, and Wollan~\cite{DiestelKMW12} for a more contemporary exposition of the Structure Theorem. %(The author finds this exposition the most approachable among the ones that can be found in the literature.)

The tree decomposition provided by \cref{thm:structure-theorem} is extremely useful in the context of algorithm design. Essentially, it allows lifting algorithmic results from the setting of surface-embedded graphs to the setting of general minor-free classes, provided that (a) one can efficiently deal with the features introduced by the ``almost'' aspect of the embedding, that is, with apices and vortices, and (b) the understanding of almost-embeddable torsos can be pieced together along the tree decomposition, using for instance dynamic programming. There are countless instantiations of this principle in the literature, but let us mention one that is particularly relevant for us: the algorithm for model-checking $\FO$ on minor-free classes, due to Flum and Grohe~\cite{FlumG01}.

\begin{theorem}[\cite{FlumG01}]\label{thm:FO-mc-minor-free}
For every minor-free class of graphs $\Cc$, there exists $c\in \N$ and an algorithm that given a graph $G\in \Cc$ and a sentence $\phi\in \FO$, decides whether $G\models \phi$ in time $\Oh_{\Cc,\phi}(n^c)$.
\end{theorem}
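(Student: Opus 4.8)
Following Flum and Grohe~\cite{FlumG01}, the plan combines Gaifman's Locality Theorem, the Structure Theorem (\cref{thm:structure-theorem}), and Courcelle's Theorem for bounded treewidth (\cref{thm:courcelle-tw}). By Gaifman's theorem, every $\FO$ sentence $\phi$ is equivalent to a Boolean combination of \emph{basic local sentences}, each asserting the existence of $k$ vertices pairwise at distance more than $2r$ and each satisfying a fixed \emph{$r$-local} formula $\psi(x)$ (one whose quantifiers are relativized to the radius-$r$ ball around $x$), where $k$ and $r$ depend only on $\phi$. It therefore suffices to decide, in polynomial time, whether a given (possibly decorated) graph contains $k$ such pairwise-$2r$-scattered satisfiers of a given $r$-local $\psi$; this reduction is robust under decorating the input with a bounded number of extra constants and unary predicates, a freedom we use below. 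If one only wanted the exponent of $n$ to depend on $\phi$, one could finish here by enumerating all $k$-tuples; securing a $\phi$-independent exponent is the source of most of the remaining work.

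\textbf{Step 2: Reduction to almost-embeddable pieces.} Apply \cref{thm:structure-theorem}, using that the promised tree decomposition $(T,\bag)$, of adhesion $\le k$ and with $k$-almost-embeddable torsos, can be computed in time $\Oh_\Cc(n^{\Oh(1)})$. Process $T$ from the leaves to the root, computing for every node $x$ the $\FO$-type up to quantifier rank $q:=\mathrm{qr}(\phi)$ of the subgraph $G_x$ induced below $x$, expanded with constants for the at most $k$ vertices shared with the parent. By the Feferman--Vaught / Ehrenfeucht--Fra\"iss\'e composition lemma, bounded adhesion makes this type a computable function of (i) the rank-$q$ type of $G[\bag(x)]$ --- a subgraph of the almost-embeddable torso at $x$, hence itself $k$-almost-embeddable --- with its $\Oh(1)$ adhesion vertices to the parent and the children designated as constants, and (ii) the types already computed for the children. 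As there are only finitely many rank-$q$ types, this bottom-up pass is efficient, and the whole problem reduces to one subroutine: $\FO$ model-checking with $\Oh(1)$ designated constants on a $k$-almost-embeddable graph.

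\textbf{Step 3: The almost-embeddable base case.} Here we strip away the features of almost-embeddability one at a time: the $\le k$ apices become $\Oh(1)$ constants plus $\Oh(1)$ unary predicates recording adjacency to each apex (with $\phi$ relativized accordingly), and each of the $\le k$ vortices, of pathwidth $\le k$, is removed by a dynamic program along its path decomposition that records finitely many ``states'' on the vertices it shares with the surface part. What remains is a graph embedded in the fixed surface $\Sigma$ decorated by $\Oh(1)$ colours and constants; such graphs form a class of \emph{bounded local treewidth}, every radius-$\rho$ ball having treewidth $\Oh_\Sigma(\rho)$, on which $\FO$ model-checking is fixed-parameter tractable with a $\phi$-independent exponent by the theorem of Frick and Grohe. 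Concretely one invokes Step~1 again: an $r$-local $\psi(v)$ is evaluated by running Courcelle's algorithm (\cref{thm:courcelle-tw}) on the induced ball $B_r[v]$, which has treewidth $\Oh(1)$, and the existence of $k$ pairwise-$2r$-scattered satisfiers is decided by greedily growing a maximal $2r$-scattered family of satisfiers and, when it is small, localizing the remaining question to a subgraph of bounded treewidth (here minor-freeness is used once more) on which \cref{thm:courcelle-tw} again applies. Since none of the exponents introduced along the way depend on $\phi$, the total running time is $\Oh_{\Cc,\phi}(n^c)$ for a universal~$c$.

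\textbf{Main obstacle.} The heavy part is Steps~2--3: one must engineer the dynamic programming over vortices so that it meshes with the surrounding surface part and with the subsequent local analysis, and must verify that the Ehrenfeucht--Fra\"iss\'e composition along the bounded-adhesion tree decomposition genuinely reduces global model-checking to these local bounded-treewidth atoms. A conceptually crucial point --- and the reason the detour through \cref{thm:structure-theorem} is unavoidable --- is that Gaifman locality together with the scattered-set trick becomes usable only \emph{after} one has descended into the surface-embedded pieces: in a general $K_t$-minor-free graph radius-$r$ balls need not have bounded treewidth (apex graphs), so the Frick--Grohe argument cannot be run directly on $G$. Finally, one relies on the nontrivial fact that the decomposition of \cref{thm:structure-theorem} can be computed in time polynomial in $n$ for every fixed $\Cc$.
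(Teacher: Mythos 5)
Your proposal is correct and follows essentially the same route as the paper's sketch: Gaifman locality, the Structure Theorem with dynamic programming over the bounded-adhesion tree decomposition, reduction to $k$-almost-embeddable pieces, and bounded local treewidth plus Courcelle's Theorem (\cref{thm:courcelle-tw}) for the surface part, exactly as in Flum--Grohe. The only cosmetic divergence is the vortex handling: you peel vortices off by a separate dynamic program along their path decompositions, whereas the argument cited in the paper uses the fact that, after deleting the apices, almost-embeddable graphs already have bounded local treewidth even with the vortices present, so no separate vortex pass is needed.
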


We remark that the original proof of \cref{thm:FO-mc-minor-free} did not provide any explicit bound on the constant $c$. Later developments that we will discuss in \cref{sec:sparsity} generalized the result of \cref{thm:FO-mc-minor-free} while providing a linear fixed-parameter algorithm, that is, one with $c=1$.

The proof of \cref{thm:FO-mc-minor-free}, due to Flum and Grohe~\cite{FlumG01}, relies on assembling tools established by Frick and Grohe~\cite{FrickG01} and by Grohe~\cite{Grohe03} with the decomposition provided by \cref{thm:FO-mc-minor-free}.
In summary, the whole approach can be described as follows:
\begin{itemize}[nosep]
 \item Use Gaifman's Locality Theorem~\cite{gaifman1982local} to reduce deciding whether the given graph $G$ satisfies the given sentence $\phi$ to understanding what $\FO$ sentences of bounded {\em{quantifier rank}} (i.e., maximum number of nested quantifiers) are satisfied in balls of bounded radius in $G$.
 \item Graphs embeddable in a fixed surface $\Sigma$ are known to have {\em{bounded local treewidth}}: the subgraph induced by any ball of radius $r$ has treewidth bounded by $\Oh_{\Sigma}(r)$. Therefore, balls of bounded radius in a surface-embedded graph can be understood using Courcelle's Theorem (\cref{thm:courcelle-tw}). This solves the case of graphs embeddable in a fixed surface.
 \item Lift the argument from embeddable graphs to almost embeddable graphs by a careful treatment of apices and vortices. In fact, one shows that after deletion of the apices, almost embeddable graphs have bounded local treewidth, even in the presence of vortices.
 \item Lift the argument from almost embeddable graphs to all graphs from the considered minor-free class $\Cc$ using dynamic programming on the decomposition provided by \cref{thm:FO-mc-minor-free}.
\end{itemize}

\subsection{Twin-width and sparse twin-width}\label{sec:twinwidth}

The notion of twin-width is relatively new, the parameter was proposed in 2020 by Bonnet, Kim, Thomass\'e, and Watrigant~\cite{BonnetKTW22}. However, the idea behind it is so simple, so fundamental, and fits the big picture so well that it is indeed surprising that it was not discovered well before. In fact, an important inspiration for Bonnet et al. was the work of Guillemot and Marx~\cite{GuillemotM14} on pattern-free permutations, which to a large extent already introduced basic decomposition concepts that eventually became twin-width.

The main point of twin-width is that it is a parameter that is suited for understanding the structure in dense graphs, while not being restricted to tree-like graphs like cliquewidth. Before its introduction, there was no robust parameter with this combination of features. Twin-width turned out to be particularly robust from the logical perspective, as classes of bounded twin-width form an $\FO$-ideal (\cref{thm:tww-ideal}). Being naturally tailored to dense graphs, twin-width also has a sparse counterpart, {\em{sparse twin-width}}, which appears to nicely fit into the hierarchy of properties of classes of sparse graphs.

\newcommand{\tww}{\mathsf{tww}}
\newcommand{\err}{\mathsf{err}}
\newcommand{\quo}{\mathsf{quo}}

\subsubsection{Twin-width}

The decomposition concept behind twin-width is called a {\em{contraction sequence}}. Intuitively, a contraction sequence is a procedure that iteratively ``folds'' the graph  while keeping a small amount of ``error'' or ``ambiguity'' along the way; this amount of error is the {\em{width}} of the contraction sequence. We need a few definitions to make this formal.

Let $G$ be a graph. Given two disjoint vertex subsets $A$ and $B$, we say that the pair $(A,B)$ is
\begin{itemize}[nosep]
 \item {\em{complete}} if every vertex of $A$ is adjacent to every vertex of $B$;
 \item {\em{anti-complete}} if every vertex of $A$ is non-adjacent to every vertex of $B$; and
 \item {\em{impure}} if it is neither complete nor anti-complete.
\end{itemize}
Next, a {\em{partition}} of $G$ is just a partition of the vertex set of $G$, that is, a family of non-empty, pairwise disjoint subsets of $V(G)$ that sums up to the whole $V(G)$. Elements of a partition will be called {\em{parts}}. For a partition $\Pp$, we define the {\em{error graph}} $\err(G,\Pp)$ as follows:
\begin{itemize}[nosep]
 \item the vertices of $\err(G,\Pp)$ are the parts of the partition $\Pp$; and
 \item two distinct parts $A,B\in \Pp$ are adjacent in $\err(G,\Pp)$ if they form an impure pair.
\end{itemize}
The intuition is that for pure (complete or anti-complete) pairs of parts, the relation between them is homogeneous and can be described by one bit: either there are all possible edges between the parts, or no edges at all. Thus, adjacency in the error graph signifies ambiguity of the adjacency relation between vertices of the parts.

With these basic definitions in place, we can introduce contraction sequences and twin-width.

\begin{definition}
A {\em{contraction sequence}} of a graph $G$ is a sequence of partitions $\Pp_n,\ldots,\Pp_1$ of $G$ such that:
\begin{itemize}[nosep]
 \item $\Pp_n$ is the discrete partition where every vertex is in its own singleton part;
 \item $\Pp_1$ is the partition in which there is only one part comprising of all the vertices; and
 \item for each $i\in\{n-1,n-2,\ldots,1\}$, $\Pp_i$ is obtained from $\Pp_{i+1}$ by taking some pair of parts and merging them into one.
\end{itemize}
The {\em{width}} of the contraction sequence $\Pp_n,\Pp_{n-1},\ldots,\Pp_1$ is the minimum integer $d$ such that every error graph $\err(G,\Pp_i)$, for $i\in \{n,\ldots,1\}$, has maximum degree at most $d$. The {\em{twin-width}} of $G$, denoted $\tww(G)$, is the minimum width of a contraction sequence of $G$.
\end{definition}

See \cref{fig:contraction} for an example of a contraction sequence. Note that in the definition, we chose to index the partitions in the reverse way: from $\Pp_n$ to $\Pp_1$. This is not only to have the nice property that $|\Pp_i|=i$, but also to indicate that in the proofs it is often useful to think about contraction sequences being reversed, as procedures that ``unfold'' the whole graph from a single vertex.

 \begin{figure}
  \centering
  \begin{tikzpicture}
   \node at (-6.5,2.5) {\includegraphics[scale=0.3]{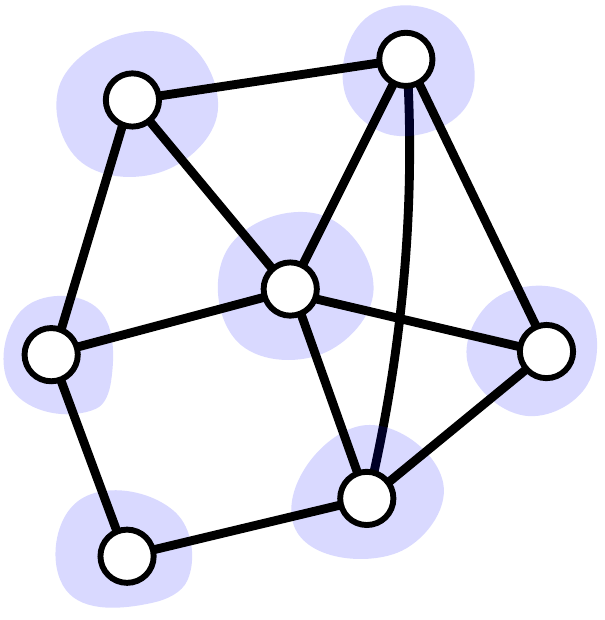}};
   \node at (-6.5,-2.5) {\includegraphics[scale=0.3]{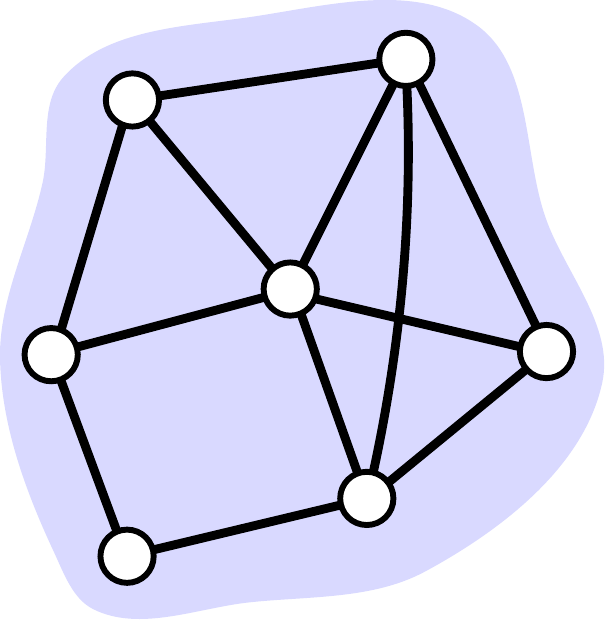}};
   \node at (-2,2.5) {\includegraphics[scale=0.3]{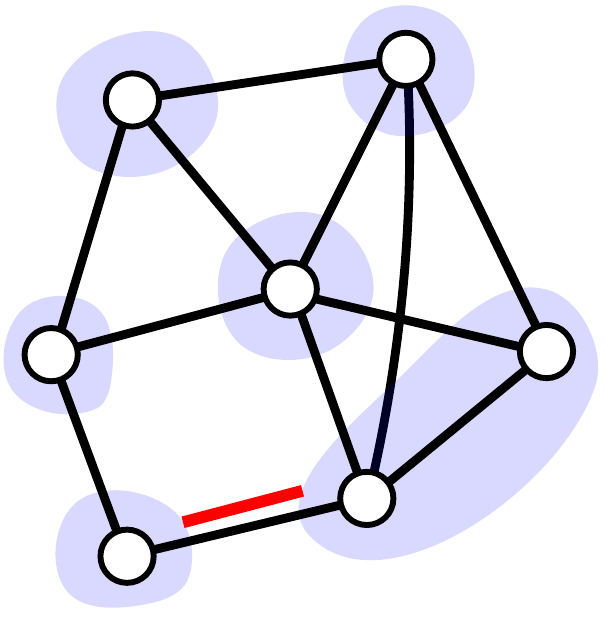}};
   \node at (-2,-2.5) {\includegraphics[scale=0.3]{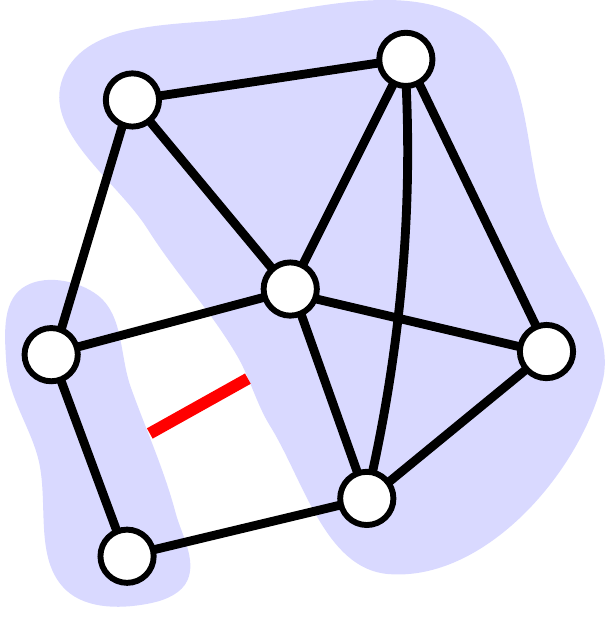}};
   \node at ( 2.5,2.5) {\includegraphics[scale=0.3]{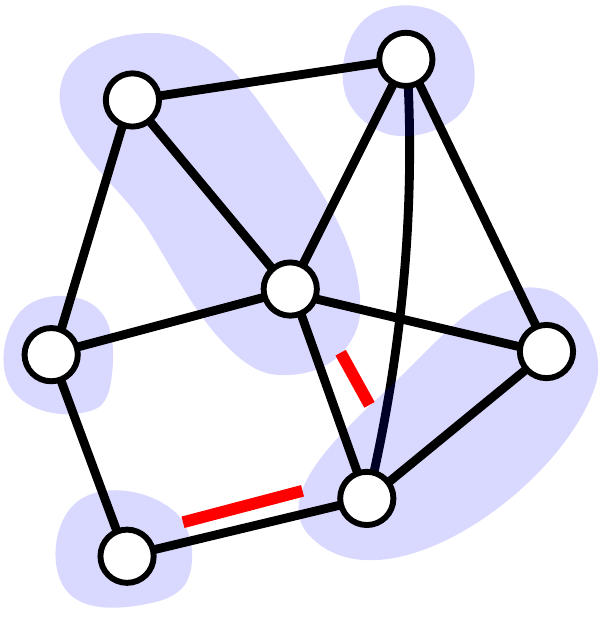}};
   \node at ( 2.5,-2.5) {\includegraphics[scale=0.3]{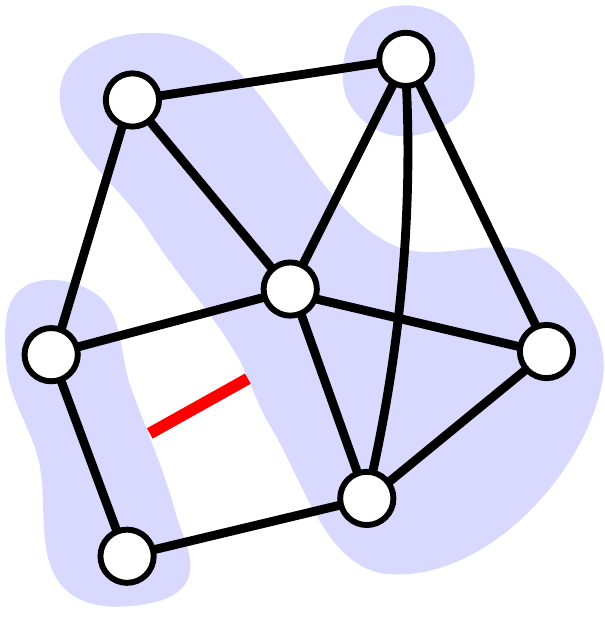}};
   \node at ( 6.5,0) {\includegraphics[scale=0.3]{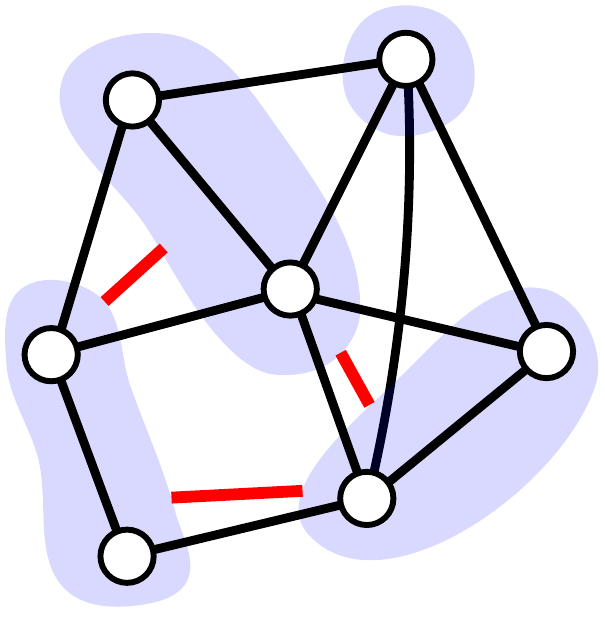}};

\draw[very thick,black!50,dashed,->] (-4.75,2.5) -- (-3.75,2.5);

\draw[very thick,black!50,dashed,<-] (-4.75,-2.5) -- (-3.75,-2.5);

\draw[very thick,black!50,dashed,->] (-4.75+4.5,2.5) -- (-3.75+4.5,2.5);

\draw[very thick,black!50,dashed,<-] (-4.75+4.5,-2.5) -- (-3.75+4.5,-2.5);

\draw[very thick,black!50,dashed,->] (4.25,1.75) -- (4.9,1.1);
\draw[very thick,black!50,dashed,<-] (4.25,-1.75) -- (4.9,-1.1);

  \end{tikzpicture}
  \caption{A contraction sequence of width $2$ of a graph. The parts of the consecutive partitions are depicted in blue, the edges of the error graph are depicted in red.}\label{fig:contraction}
 \end{figure}

\paragraph*{Relation to other notions.} On one hand, it is not hard to see that the boundedness of cliquewidth implies the boundedness of twin-width.

\begin{theorem}\label{thm:tww-mw}
 For every graph $G$, we have
 $$\tww(G)\leq 2\cdot \mw(G)-1.$$
 Consequently, every graph class of bounded cliquewidth also has bounded twin-width.
\end{theorem}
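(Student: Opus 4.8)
I would prove the inequality directly by building an explicit contraction sequence of $G$ of width at most $2k-1$, where $k\coloneqq\mw(G)$, and reading it off a laminar decomposition $T$ of $G$ of diversity $k$. The guiding principle is a compatibility invariant: at every moment the current partition $\Pp$ should admit an antichain $\Xx$ of nodes of $T$ covering all leaves such that each part of $\Pp$ is contained in $\Leaves^T(x)$ for some $x\in\Xx$, and the parts contained in a single $\Leaves^T(x)$ refine the partition of $\Leaves^T(x)$ into $\sim_{\Leaves^T(x)}$-classes. By the definition of diversity, the latter partition has at most $k$ classes, so a compatible partition never has more than $k$ parts inside one ``active'' leaf-set, except transiently. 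The final statement about classes of bounded cliquewidth then follows immediately, since cliquewidth and modular-width are functionally equivalent (as recalled earlier).

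For the mechanics of the sequence, I would first record the elementary refinement fact: if $x$ is an internal node with children $y_1,y_2$ and $A=\Leaves^T(x)$, $A_i=\Leaves^T(y_i)$, then $\sim_{A_i}$ refines $\sim_A$ restricted to $A_i$, because $N(u)\setminus A_i=N(v)\setminus A_i$ implies $N(u)\setminus A=N(v)\setminus A$ as $A_i\subseteq A$. Consequently every $\sim_A$-class is a union of some $\sim_{A_1}$-classes and some $\sim_{A_2}$-classes. So, starting from the ``child partition'' of $A$ into the (at most $2k$) classes of $\sim_{A_1}$ and $\sim_{A_2}$, one can reach the ``parent partition'' of $A$ into the (at most $k$) classes of $\sim_A$ by a sequence of pairwise merges, each merge combining two parts lying inside a common $\sim_A$-class. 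The contraction sequence is then: process the internal nodes of $T$ in any order in which a node comes after both of its children (e.g.\ postorder), and when processing $x$ perform exactly these merges inside $\Leaves^T(x)$. The initial discrete partition is compatible (with $\Xx=V(G)$, as each leaf's leaf-set is a singleton), and after the root is processed we are left with the one-part partition; one checks the invariant is maintained throughout, so this is a valid contraction sequence.

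The crux — and the step I expect to be the main obstacle — is the width bound, which rests on a \emph{purity lemma}: in any partition produced along the way, if parts $P,P'$ are not both contained in a single $\Leaves^T(x)$, then the pair $(P,P')$ is pure. To prove it, take nodes $x,x'$ with $P\subseteq\Leaves^T(x)$, $P'\subseteq\Leaves^T(x')$; since the relevant nodes of $\Xx$ form an antichain (and $x$, if it is the node currently being processed, is incomparable to the others), let $z$ be the lowest common ancestor of $x$ and $x'$ and let $w_1,w_2$ be its children with $\Leaves^T(x)\subseteq\Leaves^T(w_1)$, $\Leaves^T(x')\subseteq\Leaves^T(w_2)$; these two leaf-sets are disjoint. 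The invariant (together with the fact that every intermediate part created while processing a node lies inside a single $\sim$-class of that node's leaf-set, by the choice of merges) guarantees that all vertices of $P$ have the same neighborhood outside $\Leaves^T(x)$, hence the same neighbors in $P'$; symmetrically, all vertices of $P'$ have the same neighbors in $P$. A two-line case analysis then shows $(P,P')$ is complete or anti-complete.

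Given the purity lemma, a part $P\subseteq\Leaves^T(x)$ can be impure only with other parts inside $\Leaves^T(x)$: in a ``stable'' partition (between processing steps) there are at most $k$ such parts, so $P$ has degree at most $k-1$ in the error graph, while during the processing of $x$ there are at most $2k$ parts inside $\Leaves^T(x)$ (at most $k$ inherited from each child, this number only decreasing as merges proceed), giving degree at most $2k-1$; all parts under nodes other than $x$ are pure with $P$. Hence every error graph $\err(G,\Pp_i)$ has maximum degree at most $2k-1$, so $\tww(G)\le 2k-1=2\cdot\mw(G)-1$, and the boundedness transfer to classes of bounded cliquewidth follows.
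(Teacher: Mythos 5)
Your proposal is correct and follows essentially the same route as the paper's proof: a bottom-up sweep of the laminar decomposition maintaining a front/antichain whose parts are (refinements of) the $\sim_{\Leaves^T(x)}$-classes, with the purity observation that classes under incomparable nodes form pure pairs, and the $2k$-classes-during-a-merge count giving the $2\cdot\mw(G)-1$ bound. No substantive differences to report.
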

\begin{proof}[Proof sketch]
 Let $T$ be a laminar decomposition of $G$ of diversity $k\coloneqq \mw(G)$. Let a {\em{front}} in $T$ be a set of nodes $F$ such that every root-to-leaf path in $G$ contains exactly one element of $F$. For a front $F$, we can define a partition $\Pp_F$ of $G$ as follows: the parts of $\Pp_F$ are the equivalence classes of all the equivalence relations $\sim_{\Leaves^T(x)}$, for $x\in F$. (Recall here the notation from the definition of diversity of a vertex subset.) It is easy to see that if $F$ is a front, then in the error graph $\err(G,\Pp_F)$ all connected components are of size at most $k$, for equivalence classes of different relations $\sim_{\Leaves^T(x)}$ always form pure pairs. Therefore, $\err(G,\Pp_F)$ in particular has maximum degree at most $k-1$.

 We construct a contraction sequence of $G$ in a bottom-up manner. At each point we keep a front $F$ and the current partition in the contraction sequence is $\Pp_F$. Initially, $F$ is the set of all the leaves of $T$ (this corresponds to the discrete partition), and at the end $F$ consists of only the root of $T$ (this corresponds to the partition with only one part). In each step of the construction, we find a node $x$ that does not belong to $F$ but both its children $y$ and $z$ belong to $F$, and we replace $y$ and $z$ with $x$, thus obtaining a new front $F'$; this can be always done until $F$ consists only of the root. It is not hard to see that in the contraction sequence, this operation can be emulated by taking the at most $2k$ equivalence classes of $\sim_{\Leaves^T(y)}$ and $\sim_{\Leaves^T(z)}$, and merging some of them to eventually obtain the equivalence classes of $\sim_{\Leaves^T(x)}$. This can be done by merging classes in pairs in any order. Since there are at most $2k$ classes involved in this merging, the maximum degrees of the error graphs along the way never exceed $2k-1$.
\end{proof}

The proof of \cref{thm:tww-mw} actually shows that for graphs of bounded modular width (or equivalently, cliquewidth), we can construct a contraction sequence where even every connected component of every error graph is bounded in size. This has been observed by Bonnet, Kim, Reynald, and Thomass\'e in~\cite{BonnetKRT22}, and in fact this is an exact characterization of classes of bounded cliquewidth. Similarly, the boundedness of linear cliquewidth is equivalent to the possibility of constructing a contraction sequence where in every error graph, the total number of edges is always bounded~\cite{BonnetKRT22}.

On the other hand, it turns out that all minor-free classes actually have bounded twin-width.

\begin{theorem}[\cite{BonnetKTW22}]\label{thm:minor-free-tww}
 Every minor-free class of graphs has bounded twin-width.
\end{theorem}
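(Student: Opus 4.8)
The plan is to derive the statement from the Structure Theorem for minor-free graphs (\cref{thm:structure-theorem}). Fix a minor-free class $\Cc$, and let $\Sigma$ and $k\in\N$ be such that every $G\in\Cc$ admits a tree decomposition $(T,\bag)$ of adhesion at most $k$ in which every torso is $k$-almost embeddable in $\Sigma$. The argument then splits into three largely independent modular claims, each asserting that a certain construction keeps twin-width bounded: (i) every graph embeddable in the fixed surface $\Sigma$ has twin-width bounded by a constant depending only on $\Sigma$; (ii) if $H$ has twin-width at most $d$, then adding at most $k$ apices and at most $k$ vortices, each inducing a subgraph of pathwidth at most $k$, produces a graph of twin-width bounded in terms of $d$ and $k$; and (iii) if a graph $G$ admits a tree decomposition of adhesion at most $k$ in which every torso has twin-width at most $d$, then $\tww(G)$ is bounded in terms of $d$ and $k$. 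Combining (i) and (ii) bounds the twin-width of every torso in the decomposition of a graph $G\in\Cc$, and then (iii) transfers the bound to $G$. Since all bounds depend only on $\Sigma$ and $k$, which depend only on $\Cc$, this yields a uniform bound on $\tww$ over $\Cc$.

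For (i), the surface case, which I regard as the combinatorial heart of the argument, one convenient route is via graph product structure: every planar graph is a subgraph of $H\boxtimes P$ for some graph $H$ of bounded treewidth and a path $P$, and every graph of bounded Euler genus is a subgraph of $H\boxtimes P\boxtimes K_\ell$ for bounded-treewidth $H$, a path $P$, and bounded $\ell$. Crucially, one does not invoke any (false) subgraph-monotonicity of twin-width; instead one proves directly that every subgraph of $H\boxtimes P\boxtimes K_\ell$ with $\tw(H)$ and $\ell$ bounded has bounded twin-width, by building a contraction sequence explicitly: the path coordinate provides a linear sweep, within each fibre the bounded treewidth of $H$ (hence bounded twin-width, via \cref{thm:tw-cw,thm:tww-mw}) supplies a contraction sequence, the $K_\ell$-coordinate only multiplies part counts by a constant, and the three ingredients can be interleaved while keeping all error graphs of bounded degree. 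Alternatively, one can argue directly on a BFS layering of a surface-embedded graph, exploiting that within any bounded window of consecutive layers the graph has bounded treewidth, and stitching the resulting local contraction sequences into one global bounded-width sequence along the layering.

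For (ii), the apices are handled by refining a width-$d$ contraction sequence of $H$ according to the at most $2^k$ possible adjacency patterns towards the apex set: every part splits into at most $2^k$ sub-parts, each homogeneous towards every apex, so each apex forms a pure pair with every sub-part and contributes only at most $k$ edges (to the other apices) to each error graph, while the old error degree grows by at most a factor $2^k$; each merge of the original sequence is simulated by the corresponding sub-part merges performed in an arbitrary order. The vortices, being attached to the embedded graph along a linear order of their boundary vertices and inducing subgraphs of pathwidth at most $k$ (hence of bounded twin-width, again via \cref{thm:tw-cw,thm:tww-mw}), are absorbed by interleaving their contraction sequences with that of the host along the boundary order, in the spirit of a bounded-width composition. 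For (iii), one processes the tree decomposition from the leaves towards an arbitrary root, maintaining a contraction sequence for the already-processed part while always keeping the vertices of the current adhesion set as singleton parts; the key structural fact is that in a clique-sum along a set of size at most $k$ the two sides are anti-complete away from that set, so the ``blobs'' produced by distinct branches form pure pairs, and the total error degree stays bounded in terms of $d$ and $k$. Carrying out this bookkeeping cleanly — in particular, tracking the interaction of the running contraction sequence with the possibly large bags and their at most $k$-sized adhesions — is where most of the technical effort lies; together with the surface case of (i), this composition step is the main obstacle.
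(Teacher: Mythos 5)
The survey states \cref{thm:minor-free-tww} only by citation, and the proof in~\cite{BonnetKTW22} does not use the Graph Minors Structure Theorem at all. It goes through the ordering characterization of \cref{thm:tww-mixed}: one shows that every $K_t$-minor-free graph admits an explicitly constructed vertex ordering whose adjacency matrix has no large mixed minor --- roughly, a large mixed grid with respect to a suitably chosen ordering can be converted into a large clique minor --- and then the Marcus--Tardos-based direction of \cref{thm:tww-mixed} immediately gives a bound on twin-width depending only on $t$. So your proposal is a genuinely different route, and a much heavier one: it invokes \cref{thm:structure-theorem}, whereas the original argument is a few pages of elementary combinatorics on top of the mixed-minor machinery that is needed anyway.

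Your route is viable in principle --- later works deriving explicit twin-width bounds for minor-closed classes proceed in essentially this spirit, usually via the graph product structure theorem, which packages the surface, vortex and apex parts into subgraphs of products $H \boxtimes P \boxtimes K_\ell$ plus apices, glued along clique-sums --- but as written it is a programme rather than a proof, because its two hardest ingredients are only gestured at. The vortex step in (ii) is not a routine interleaving: the contraction sequence of the embedded part near the boundary cycle has to be synchronized with the path decomposition of the vortex, and nothing in your sketch controls the red degree of vortex parts towards partially contracted boundary parts. In (iii), the final homogeneous ``blobs'' are indeed harmless (as you note, they are pure towards everything outside their adhesion), but the intermediate stages are the real issue: a torso vertex can lie in unboundedly many adhesion sets, so the child sequences must be constructed recursively so that they become homogeneous to their adhesion early, and scheduled so that at every moment each adhesion vertex is impure towards only boundedly many not-yet-homogenized parts; this is precisely the bookkeeping you defer, and it is a standalone theorem comparable in weight to known composition lemmas. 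If you want to carry the plan out, the cleanest organization is to replace (i)+(ii) by the product structure theorem and prove (iii) as a separate clique-sum lemma; otherwise the original ordering-based argument is substantially shorter and self-contained.
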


Thus, the theory of twin-width also applies to classes of graphs that are not necessarily tree-like, for instance to planar graphs. In fact, it is known that every planar graph has twin-width at most $8$~\cite{HlinenyJ23}, and there are planar graphs of twin-width at least $7$~\cite{KralL23}.

However, it turns out that not all classes of sparse graphs have bounded twin-width.

\begin{theorem}[\cite{BonnetGKTW21}]\label{thm:subcubic-tww}
 The class of subcubic graphs has unbounded twin-width.
\end{theorem}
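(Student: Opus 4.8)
The statement to prove is that the class of all subcubic graphs (maximum degree at most $3$) has unbounded twin-width. I would prove this by a counting/dimension argument: there are many subcubic graphs, but bounded twin-width forces a strong compressibility that caps the number of graphs on $n$ vertices. Concretely, I would show that a graph class $\Cc$ of twin-width at most $d$ contains at most $2^{\Oh_d(n)}$ (in fact at most $n! \cdot 2^{\Oh_d(n)}$, or with some care $c_d^n n!$) labelled graphs on $n$ vertices, whereas the number of labelled subcubic graphs on $n$ vertices grows faster than $c^n n!$ for every fixed $c$ — indeed it is $n^{\Theta(n)}$, since already $3$-regular graphs on $n$ vertices number roughly $(n/e^{3/2})^{n/2}\cdot 2^{\Oh(n)}$ by standard configuration-model asymptotics. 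The mismatch of growth rates is the contradiction.

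\textbf{Step 1: the small-cell / decomposition lemma for bounded twin-width.} Fix a graph $G$ on $n$ vertices with a contraction sequence $\Pp_n,\dots,\Pp_1$ of width at most $d$. The key combinatorial fact (due to Bonnet, Kim, Thomass\'e, and Watrigant) is that one can reconstruct $G$ from a bounded amount of data per contraction step: each step merges two parts, and the only ambiguous adjacencies are those incident to bounded-degree vertices of the error graphs, so the ``new information'' revealed in each of the $n-1$ steps is $\Oh_d(1)$ bits, plus the choice of which pair of parts is merged. I would make this precise by encoding the reversed sequence (an ``unfolding'' from a single vertex) as: at step $i$, name the part being split (one of at most $n$ choices), and record, for the $\Oh(d)$ parts that form impure pairs with the two resulting parts, which of the finitely many ``split patterns'' occurs. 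This yields an injective encoding of $G$ (with a chosen vertex ordering) into a string over an alphabet of size $\Oh_d(1)$ of length $\Oh_d(n)$, together with a permutation-like choice at each step.

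\textbf{Step 2: the counting bound.} From Step 1, the number of labelled graphs on vertex set $[n]$ and twin-width at most $d$ is at most $n! \cdot c_d^{\,n}$ for some constant $c_d$ depending only on $d$: the $n!$ absorbs all the ``which part to split'' choices over the whole sequence, and $c_d^n$ absorbs the bounded-pattern data. (Alternatively I would cite the known bound that classes of bounded twin-width have \emph{at most $c_d^n n!$} graphs on $n$ labelled vertices — a stated consequence of the small-cell property — and avoid re-deriving the encoding in full detail.) This is the step I expect to be the main obstacle if I insist on a self-contained proof, since getting the encoding genuinely injective and the head count exactly $n! \cdot c_d^n$ requires care about how merges interact with the error graph; the clean way out is to invoke the known enumeration bound for bounded-twin-width classes.

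\textbf{Step 3: lower-bounding subcubic graphs and concluding.} Finally I would lower-bound the number of labelled cubic (or subcubic) graphs on $[n]$. Using the configuration model, the number of $3$-regular labelled graphs on $n$ vertices (with $n$ even) is $\displaystyle \frac{(3n)!}{(3n/2)!\,2^{3n/2}\,(3!)^{n}} \cdot e^{-2} (1+o(1))$, which by Stirling is $n^{(3/2+o(1))n}$ — super-exponential, in particular eventually exceeding $n!\,c^n = n^{(1+o(1))n}\cdot c^n$ for every fixed $c$. Hence for $d$ fixed, the class of graphs of twin-width at most $d$ cannot contain all subcubic graphs; since this holds for every $d$, the class of subcubic graphs has unbounded twin-width. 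The only mildly delicate point is aligning ``labelled'' counts on both sides, but both bounds are naturally phrased for labelled graphs, so no extra work is needed; I would remark that one could instead run the argument with unlabelled graphs by dividing both sides by at most $n!$, which does not affect the comparison of growth rates.
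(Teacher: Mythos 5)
Your proposal is correct and follows essentially the same route as the paper: the survey attributes the result to the counting argument of Bonnet et al., namely that classes of bounded twin-width contain at most $2^{\Oh(n)}\cdot n!$ labelled graphs on $n$ vertices while the number of labelled subcubic graphs is $\Omega((n!)^{3/2})$, which is exactly the comparison you make (your encoding sketch in Steps 1--2 is the substance of their enumeration bound, and your configuration-model estimate matches the $\Omega((n!)^{3/2})$ lower bound).
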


The known proof of \cref{thm:subcubic-tww}, proposed by Bonnet, Geniet, Kim, Thomass\'e, and Watrigant~\cite{BonnetGKTW21}, is indirect and relies on asymptotic counting. Namely, in~\cite{BonnetGKTW21} Bonnet et al. proved that if a class of graphs $\Cc$ has bounded twin-width, then the number of distinct graphs with vertex set $\{1,\ldots,n\}$ in $\Cc$ is bounded by $2^{\Oh(n)}\cdot n!$. However, the number of distinct subcubic graphs with vertex set $\{1,\ldots,n\}$ is $\Omega((n!)^{\frac{3}{2}})$. Constructing an explicit family of subcubic graphs of unbounded twin-width remains a curious open problem. This question can be regarded as an excuse for developing tools for proving lower bounds on the twin-width of~graphs.

\paragraph*{Ordered graphs and matrices.} A key element of the theory of twin-width is the connection with adjacency matrices. More precisely, it turns out that a graph has bounded twin-width if and only if its vertex set can be ordered so that in this ordering, the adjacency matrix avoids certain complicated patterns. We need a few more definitions to express this connection formally.

A {\em{vertex ordering}} of a graph $G$ is just a linear order $\leq$ on the vertex set of $G$. Then, an {\em{ordered graph}} is a pair $(G,\leq)$ where $G$ is a graph and $\leq$ is its vertex ordering. Call a set of vertices $A$ {\em{convex}} in $\leq$ if $u\leq v\leq w$ and $u,w\in A$ entails also $v\in A$. Then a {\em{division}} of $(G,\leq)$ is a partition of $V(G)$ into parts that are convex in $\leq$.

For an ordered graph $(G,\leq)$, we can consider its {\em{adjacency matrix}}, which is just a $\{0,1\}$-matrix $M$ indexed by vertices of $G$ in the order $\leq$: in the cell $M[u,v]$ we put $1$ if vertices $u$ and $v$ are adjacent, and $0$ otherwise. For two convex sets $A$ and $B$, the {\em{zone}} induced by $A$ and $B$ is the submatrix $M[A,B]$ consisting of cells that lie in rows corresponding to $A$ and columns corresponding to $B$. We call a zone {\em{mixed}} if it contains at least two unequal rows and at least two unequal columns. Equivalently, a zone is non-mixed if either all its rows are equal or all its columns are equal.

\newcommand{\mx}{\mathsf{mix}}
\newcommand{\grid}{\mathsf{grid}}

With these definitions in place, we can describe what adjacency matrices we consider complicated: those that contain large {\em{mixed minors}}.

\begin{definition}
 Let $(G,\leq)$ be an ordered graph and $M$ be its adjacency matrix. A {\em{mixed minor}} of order $k$ in $M$ is a pair of divisions $(\cal R,\cal C)$ of $(G,\leq)$ such that
 \begin{itemize}[nosep]
  \item $|\cal R|=|\cal C|=k$, and
  \item for every pair of parts $A\in \cal R$ and $B\in \cal C$, the zone $M[A,B]$ is mixed.
 \end{itemize}
 We define the {\em{mixed minor number}}, $\mx(G,\leq)$, to be the largest order of a mixed minor in $M$.
\end{definition}

In the definition above, it is instructive to think of $\cal R$ and of $\cal C$ as  divisions of the row set of $M$ and of the column set of $M$, respectively. Under this interpretation, a mixed minor in $M$ is a partition of $M$ into a $k\times k$ grid of zones, each of them being mixed.

The following result provides the fundamental connection between twin-width and mixed minors. In essence, it says that the boundedness of twin-width is equivalent to the possibility of ordering the graph so that the adjacency matrix avoids large mixed minors.

\begin{theorem}\label{thm:tww-mixed}
 Let $G$ be a graph.
 \begin{itemize}[nosep]
  \item If $\tww(G)\leq d$, then there exists a vertex ordering $\leq$ of $G$ such that $\mx(G,\leq)\leq \Oh(d)$.
  \item If there exists a vertex ordering $\leq$ of $G$ such that $\mx(G,\leq)\leq k$, then $\tww(G)\leq 2^{2^{\Oh(k)}}$.
 \end{itemize}
\end{theorem}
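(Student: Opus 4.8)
The plan is to prove the two bullets separately; the first is a bookkeeping argument turning a contraction sequence into a vertex order, whereas the second requires building a contraction sequence from the order and is the genuinely combinatorial part.

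For the first bullet, fix a contraction sequence $\Pp_n,\ldots,\Pp_1$ of $G$ of width at most $d$. The parts occurring across all the $\Pp_i$'s form a laminar family, equivalently a rooted binary tree $Y$ with leaf set $V(G)$ --- the \emph{contraction tree} --- where each internal node records a merge and has as children the two parts that were merged. I would take $\le$ to be any order obtained by a depth-first traversal of $Y$; then every part appearing in the contraction sequence is convex in $\le$. To bound $\mx(G,\le)$ I argue by contraposition: given a mixed minor $(\mathcal R,\mathcal C)$ of order $k$ with $\mathcal R$ and $\mathcal C$ divisions of $(G,\le)$ into convex sets, I trace the contraction sequence from the discrete partition $\Pp_n$ towards the trivial one, noting at which step each of the $k-1$ boundaries of $\mathcal C$ (and, in a symmetric count, of $\mathcal R$) is first absorbed into a single part; since each step merges only two parts and each pair $(R_a,C_b)$ induces a mixed --- hence impure --- zone, a pigeonhole over the steps yields a stage $\Pp_i$ and a part of $\Pp_i$ that forms an impure pair with $\Omega(k)$ other parts of $\Pp_i$. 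Then $\err(G,\Pp_i)$ has a vertex of degree $\Omega(k)$, so $d=\Omega(k)$, i.e.\ $\mx(G,\le)=\Oh(d)$.

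For the second bullet, fix $\le$ with $\mx(G,\le)\le k$ and let $M$ be the adjacency matrix of $(G,\le)$. The goal is a contraction sequence of $G$ in which every partition consists of intervals of $\le$ and each step merges two consecutive intervals; such \emph{order-respecting} sequences are exactly the sequences of coarsenings of $M$ obtained by merging neighbouring blocks of rows/columns (over an alphabet that grows to record ``mixedness'' of merged cells), and any such sequence of width $D$ is in particular a contraction sequence of width $D$. The engine is a matrix lemma, and this is where the hypothesis $\mx(G,\le)\le k$ is spent: there is $f(k)=2^{\Oh(k)}$ such that in any interval partition of $M$ in which no two consecutive intervals can be merged without creating a new mixed zone somewhere, each interval is impure with at most $f(k)$ others --- because an interval with more impure neighbours, suitably refined on both sides, would display a $k\times k$ grid of mixed zones, i.e.\ a mixed minor of order $k$ (a density/pigeonhole argument in the spirit of Marcus--Tardos). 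Starting from the discrete partition and repeatedly performing any ``safe'' merge of two consecutive intervals, one keeps a partition into intervals of bounded red degree, and iterating level by level --- with a bounded blow-up absorbed at each level --- produces an order-respecting contraction sequence of width $2^{2^{\Oh(k)}}$, hence $\tww(G)\le 2^{2^{\Oh(k)}}$.

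The main obstacle is the matrix lemma underlying the second bullet: controlling how many red (impure) edges appear when a block partition of a mixed-minor-free matrix is coarsened, and scheduling the merges so that the red degree never exceeds a fixed bound. This is also where the doubly-exponential dependence enters --- one exponential from converting ``many mixed zones'' into ``a large mixed grid'', a second from the recursive organization of the merges --- and the first bullet, by contrast, is only bookkeeping once the contraction tree is linearized. The asymmetry of the bounds (linear one way, doubly-exponential the other) is genuine, and tightening the $2^{2^{\Oh(k)}}$ estimate remains open.
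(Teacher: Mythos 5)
Your overall route is the same as the one the survey sketches (and defers to Bonnet--Kim--Thomass\'e--Watrigant for): for the first bullet, linearize the contraction tree and take the induced leaf order; for the second, greedily merge consecutive intervals and use a Marcus--Tardos-type counting to show that a stuck division exposes a $k$-mixed minor, with a doubly exponential loss. For the second bullet your plan is consistent with the paper's, though the counting is usually run globally (in a $k$-mixed-free matrix, every division has at most $c_k$ times as many mixed zones as parts, $c_k=2^{\Oh(k\log k)}$) rather than as your per-part ``stuck'' lemma; and the step you yourself flag as the main obstacle --- that merging two consecutive intervals can create mixed zones for \emph{other} parts, so the red degree is not automatically preserved --- is precisely the delicate scheduling argument that constitutes the proof, so your proposal leaves the hard part open, which is acceptable for a plan but should be said plainly.

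The genuine gap is in your counting for the first bullet. Recording when each of the $k-1$ boundaries of $\mathcal{R}$ and $\mathcal{C}$ is first straddled and then ``pigeonholing over the steps'' does not produce one partition $\Pp_i$ with a high-degree part in $\err(G,\Pp_i)$: impure pairs created at different steps are later swallowed by merges and need not coexist at any single time, and a part straddling the boundary between $C_b$ and $C_{b+1}$ is not forced to be impure with anything, since mixedness of $M[R_a,C_b]$ only provides two vertices \emph{inside} $R_a$ (resp.\ $C_b$) distinguished by a vertex of the other side. The standard repair: every part ever occurring is convex in your order, each $R_a$ and $C_b$ has at least two vertices, so there is a first partition $\Pp_i$ in which some part $P$ entirely contains some $R_a$ (or $C_b$). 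By minimality, every other part is an interval containing no part of either division, hence meets at most two parts of $\mathcal{C}$, and $P$, being a union of two such intervals, meets at most four. For each of the at least $k-4$ parts $C_b$ disjoint from $P$, mixedness of $M[R_a,C_b]$ yields $u,u'\in R_a\subseteq P$ and $v\in C_b$ with exactly one of $uv,u'v$ an edge, so $P$ is impure with the part containing $v$; since each other part meets at most two such $C_b$'s, $P$ has at least $(k-4)/2$ distinct impure neighbours in $\err(G,\Pp_i)$, giving $k\le 2d+\Oh(1)$. With this substitution your first bullet matches the intended argument; as written, the pigeonhole step would fail.
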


The proof of the first point of \cref{thm:tww-mixed} is not very hard: one can imagine a contraction sequence witnessing $\tww(G)\leq d$ as a tree $T$ describing the structure of consecutive merges, and then for the vertex ordering $\leq$ one can take the pre-order of $T$ restricted to the leaves ({\em{aka}}, vertices of $G$). The second point is the interesting one. The proof relies on a deep result of Marcus and Tardos~\cite{MarcusT04} which says the following: if an $n\times n$ $\{0,1\}$-matrix contains at least $c(k)\cdot n$ entries $1$, where $c(k)$ is a constant depending only on $k$, then there is a pair of divisions $(\cal R,\cal C)$ as in the definition of a mixed minor, just with every zone containing at least one entry $1$. With this result, in the proof of the second point of \cref{thm:tww-mixed} one applies a greedy procedure that constructs a contraction sequence. Marcus-Tardos Theorem is used to argue that if the procedure gets stuck, then it is because it uncovered a large mixed minor.

In a later work, Bonnet, Giocanti, Ossona de Mendez, Simon, Thomass\'e, and Toru\'nczyk~\cite{BonnetGMSTT24} showed that there is nothing special about the condition ``at least two different rows and at least two different columns'' in the definition of a mixed zone. One could equivalently replace it with condition ``at least $k$ different rows and at least $k$ different columns'', and \cref{thm:tww-mixed} would still hold (subject to changes in the asymptotics of the bounds).

\paragraph*{Model-theoretic aspects.} Finally, we arrive at the key element of the discussion: the properties of twin-width related to logic. First, as proved by Bonnet et al.~\cite{BonnetKTW22} already in the first article on twin-width, $\FO$ model-checking can be done efficiently on graphs provided with contraction sequences of constant width.

\begin{theorem}[\cite{BonnetKTW22}]\label{thm:tww-mc}
 There is an algorithm that given a graph $G$, a contraction sequence of $G$ of width at most~$d$, and a sentence $\phi\in \FO$, decides whether $G\models \phi$ in time $\Oh_{\phi,d}(n)$.
\end{theorem}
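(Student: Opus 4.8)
The plan is to reduce $\FO$ model-checking on $G$ to the computation of a bounded amount of \emph{local} information at every vertex, and to extract this information incrementally from the contraction sequence, using that along the sequence every part forms an impure pair with at most $d$ other parts (this is in essence the route of Bonnet et al.). Concretely, I would first invoke Gaifman's Locality Theorem~\cite{gaifman1982local} to rewrite $\phi$ as a Boolean combination of basic local sentences, each asserting the existence of $\ell$ vertices, pairwise at distance more than $2r$ in $G$, each satisfying a fixed $r$-local formula of bounded quantifier rank, where $r$, $\ell$ and the formulas depend only on $\phi$. It then suffices to (a)~compute, for every vertex $v$, its \emph{$r$-local type}, i.e.\ the set of such local formulas it satisfies, and (b)~for each type $\tau$, decide whether $G$ contains $\ell$ vertices of type $\tau$ that are pairwise at distance more than $2r$.

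Given (a), part (b) is handled by the standard scattered-set argument of Frick and Grohe~\cite{FrickG01}, using the precomputed types together with the contraction sequence to locate the scattered vertices in linear time; so the crux is (a). To attack (a) I would view the contraction sequence $\Pp_n,\ldots,\Pp_1$ as a sequence of trigraphs --- parts joined by \emph{black} edges for complete pairs, by \emph{red} edges for impure pairs, and by nothing for anti-complete pairs --- so that the width bound says exactly that every part has at most $d$ red neighbours throughout. The governing intuition is that a complete or anti-complete pair of parts carries only one bit of information about the adjacency between them; hence the only genuinely informative local structure that a vertex $v$ can reach is the one obtained by following red edges, and since each part has at most $d$ red neighbours, the trigraph on the parts within red-distance $r$ of $v$'s part has size bounded by a function of $r$ and $d$.

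Building on this, I would run a dynamic programming along the contraction sequence that maintains, for each current part $P$, a bounded-size summary --- morally the bounded trigraph on the parts within red-distance $r$ of $P$, decorated with enough extra data --- from which, once all parts have become singletons, one can read off the isomorphism type of the $r$-ball around each vertex, and hence its $r$-local type. Reading the sequence from $\Pp_1$ towards $\Pp_n$, each step splits one part into two and one updates the affected summaries; if each such update touches only a bounded number of summaries and costs bounded time, the total running time is $\Oh_{r,d}(1)\cdot n = \Oh_{\phi,d}(n)$, as $r$ depends only on $\phi$.

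The hard part will be making these summaries precise and self-maintaining, and the crux of the difficulty is precisely the red edges: a red edge between parts $P'$ and $P''$ records only that the bipartite graph between them is neither empty nor complete, not which pairs are adjacent, so a summary assembled naively from the two children's summaries would lose that adjacency information when $P'$ and $P''$ are merged. The resolution must be that the summary retains enough of the actual trigraph to disambiguate such pairs, and the entire point of the bounded red degree is that ``enough'' is boundedly much --- this is, in this setting, the analogue of the role the Marcus--Tardos Theorem~\cite{MarcusT04} plays elsewhere in the theory of twin-width, namely bounding how much irregularity can accumulate. The other delicate point is locality of the updates: one has to show that a single contraction affects only a bounded-radius red neighbourhood of the affected part, so that the amortised cost stays constant and the overall algorithm runs in linear time.
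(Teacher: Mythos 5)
There is a genuine gap, and it sits at the heart of your plan: Gaifman locality gives you no leverage on classes of bounded twin-width, and the invariant of your dynamic programming does not connect to it. Bounded twin-width classes contain dense graphs of tiny diameter (cliques have twin-width $0$, and many diameter-$2$ dense graphs have small twin-width), so the radius-$r$ ball around a vertex in $G$ can be all of $G$; an ``$r$-local formula'' is then just an arbitrary formula of bounded quantifier rank, and your step (a) is the original model-checking problem --- the Gaifman shell has reduced nothing. Worse, the summary you propose to maintain, the trigraph on the parts within red-distance $r$ of a part, has no bearing on distance-$r$ balls in $G$: a complete (black) pair of parts puts their vertices at distance $1$ in $G$ while the parts may be at arbitrary red-distance, so neither the size nor the isomorphism type of $\Ball^G_r[v]$ is controlled by the red-distance-$r$ neighbourhood (and it certainly cannot be ``read off'' from a bounded summary, since that ball may be the whole graph). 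At the end of your processing direction, at the discrete partition $\Pp_n$, the error graph is empty, so the summaries you have maintained are trivial exactly when you want to harvest them. The Frick--Grohe scattered-set step suffers similarly: it explores balls in a sparse graph, whereas here even materializing the distance-$\leq 2r$ relation can cost quadratic time, while the algorithm must run in $\Oh_{\phi,d}(n)$ and hence cannot even afford to read the (possibly dense) edge set --- it must work from the contraction sequence alone.

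The actual argument of Bonnet et al.\ does not invoke Gaifman's theorem at all. One processes the sequence in the merge direction, from $\Pp_n$ to $\Pp_1$, and for each part $A\in\Pp_i$ maintains not the geometry of balls in $G$ but satisfaction information: which bounded-quantifier-rank formulas (with parameters ranging over $A$ and nearby parts) can be witnessed inside the union of the parts at bounded distance from $A$ in the error graph $\err(G,\Pp_i)$. This is encoded by the combinatorial ``shuffles''/morphism trees of \cite{BonnetKTW22}, reinterpreted in \cite{GajarskyPPT22} as suitably defined local $\FO$ types; the locality that matters is locality in the red graph, precisely because pure pairs carry only one bit, and the bound $d$ on red degrees makes this information of bounded size and updatable in constant time at each merge. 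At $\Pp_1$ the information attached to the unique part determines whether $G\models\phi$. Your instinct that red edges are the only source of ambiguity is the right one, but it must be used to localize types relative to the red graph of the current partition, not to reconstruct Gaifman balls of $G$. (Also, Marcus--Tardos plays no role in this theorem; in the twin-width theory it enters through the mixed-minor/ordering characterization, \cref{thm:tww-mixed}, not through model-checking.)
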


Note that \cref{thm:tww-mc} assumes that the input graph is supplied with a contraction sequence witnessing a bound on the twin-width. Unfortunately, currently it is unknown whether the twin-width of a graph can be approximated efficiently. An algorithmic statement that would match \cref{thm:tww-mc} would be the following: there is a universal constant $c\in \N$ and an algorithm that given a graph $G$ of twin-width $d$, computes in time $\Oh_d(n^c)$ a contraction sequence of $G$ of width $\Oh_d(1)$. If such a fixed-parameter approximation algorithm existed, then combining it with \cref{thm:tww-mc} would prove that the $\FO$ model-checking problem is fixed-parameter tractable when parameterized by the twin-width and the size of the input sentence. At this point, it is even unknown whether an $\mathsf{XP}$ approximation algorithm exists for this task (that is, one where $c$ may depend on~$d$).

The algorithm of \cref{thm:tww-mc} applies a method of iterative aggregation of information. We process the contraction sequence in order and at each point, say when considering partition $\Pp_i$, we store information about the behavior of every part $A\in \Pp_i$ with respect to parts that are close in the error graph $\err(G,\Pp_i)$. It turns out that provided this ``information about the behavior'' is defined right, it can be updated upon every consecutive merge in the contraction sequence. At the last, trivial partition we obtain the relevant information about the whole graph, from which the satisfaction of the sentence in question can be extracted. The original proof of \cite{BonnetKTW22} devised a combinatorial notion of {\em{shuffles}} to describe updating the relevant information. In a later work, Gajarsk\'y, Pilipczuk, Przybyszewski, and Toru\'nczyk~\cite{GajarskyPPT22} reinterpreted this approach using a more model-theoretic notion of (suitably defined) $\FO$~types.

The same iterative aggregation approach underlying the proof of \cref{thm:tww-mc} can be used to argue that applying a fixed $\FO$ transduction to a graph of bounded twin-width again yields a graph of bounded twin-width. In other words, we have the following.

\begin{theorem}[\cite{BonnetKTW22}]\label{thm:tww-ideal}
 Classes of bounded twin-width are an $\FO$ ideal.
\end{theorem}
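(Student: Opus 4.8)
The plan is to prove that classes of bounded twin-width form an $\FO$ ideal by combining two ingredients already developed in the excerpt: the mixed-minor characterization of twin-width (\cref{thm:tww-mixed}), which reduces the problem to ordered graphs and matrices, and the robustness of the Marcus--Tardos machinery under logical transformations. Concretely, let $\Tf$ be a transduction with colour set $C$ and interpreting formula $\phi(x,y)$, and let $\Cc$ be a class of bounded twin-width. We must show $\Tf(\Cc)$ has bounded twin-width. The first step is to reduce to simple interpretations without restriction: the restriction step (taking an induced subgraph) cannot increase twin-width, since restricting a contraction sequence to a subset of vertices (contracting the discarded ones away first, or simply ignoring them) only shrinks the error graphs; and the colouring step is harmless because a bound on twin-width of a graph lifts to a bound on twin-width of any colouring of it (colours are unary predicates, which add nothing to impurity of pairs of parts — or, more carefully, one works throughout with coloured graphs and coloured contraction sequences, where the width also counts parts that are ``mixed'' with respect to a colour, and observes this changes constants only by a factor depending on $|C|$). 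So it suffices to show: if $G$ has bounded twin-width and $H = \phi(G^+)$ for a fixed formula $\phi$ and some $C$-colouring $G^+$ of $G$, then $H$ has bounded twin-width.

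The core of the argument is then the following. By \cref{thm:tww-mixed}, fix a vertex ordering $\leq$ of $G$ with $\mx(G,\leq) \leq k = \Oh_d(1)$, and also record the colour classes. Use the \emph{same} ordering $\leq$ on $V(H) = V(G)$, and let $M$ be the adjacency matrix of $(H,\leq)$. The plan is to show $\mx(H,\leq)$ is bounded by a function of $k$ and $\phi$ alone; then the second bullet of \cref{thm:tww-mixed} gives a bound on $\tww(H)$, completing the proof. To bound $\mx(H,\leq)$, suppose towards a contradiction that $M$ contains a huge mixed minor, i.e.\ a $t \times t$ grid of zones each of which is mixed in $H$ (has two distinct rows and two distinct columns). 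A mixed zone in $H$, unpacked through the definition of $\phi$, certifies the existence of a bounded-size configuration of vertices in $G$ (the free variables of $\phi$ plus the quantified ones) whose $\leq$-positions and mutual adjacencies/colours cannot be ``homogeneous''. The key combinatorial step is a Ramsey-type / Marcus--Tardos-type argument: from a $t \times t$ grid of $H$-mixed zones, after coarsening (merging consecutive blocks of rows and of columns in bounded groups) and applying Marcus--Tardos to an auxiliary $0/1$-matrix that marks where the relevant $\phi$-witnesses occur, one extracts a $k' \times k'$ sub-grid, with $k'$ growing with $t$, that is \emph{mixed in $G$} — contradicting $\mx(G,\leq) \leq k$ once $t$ (hence $k'$) exceeds $k$. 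The heart is that a fixed $\FO$ formula can only ``see'' bounded neighbourhoods in the Gaifman sense is \emph{not} what one uses here — rather, one uses that $\phi$ has bounded quantifier rank, so whether $H$-adjacency holds between $u$ and $v$ is determined by the $\FO$-type (of that rank) of a bounded tuple over $G^+$; Gaifman locality together with the fact that each $G$-mixed-zone-free ordering already controls all these types is what drives the extraction. Alternatively, and more in the spirit of the cited follow-up \cite{GajarskyPPT22,BonnetGMSTT24}, one runs the iterative-aggregation algorithm of \cref{thm:tww-mc}: process a contraction sequence of $G$ of width $\leq d$, but maintain at each part not just plain adjacency but the full bounded-rank $\FO$-type profile needed to decode $H$-adjacency; a counting/compactness argument shows only boundedly many such profiles arise near any part in the $H$-error graph, so the \emph{same} contraction sequence, read as a contraction sequence for $H$, has bounded width.

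I would carry out the steps in this order: (1) reduce to the case of a simple interpretation applied to a coloured graph, handling colouring and restriction as cheap operations that do not blow up twin-width; (2) invoke \cref{thm:tww-mixed} to pass to the ordered/matrix world for the \emph{input} graph $G$; (3) transport the ordering verbatim to $H$ and set up the auxiliary marking matrix recording $\phi$-witness configurations; (4) apply the Marcus--Tardos theorem (in the strengthened ``$k$ distinct rows and columns'' form of \cite{BonnetGMSTT24}) plus a bounded coarsening to show that a large mixed minor in $H$ forces a large mixed minor in $G$; (5) conclude $\mx(H,\leq)=\Oh_{\phi,d}(1)$ and apply \cref{thm:tww-mixed} again, now in the reverse direction, to bound $\tww(H)$, and hence $\tww(\Tf(\Cc))$.

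The main obstacle, and the step that requires genuine care, is step (4): faithfully translating ``mixed in $H$'' back to ``mixed in $G$''. The difficulty is that $H$-adjacency of $u,v$ depends not only on $u,v$ but on the existence of quantified witnesses $z_1,\dots,z_q$ that may lie anywhere in the $\leq$-order of $G$ — so a zone that is $H$-mixed need not correspond to any single $G$-zone being mixed; the ``mixing'' of $H$ may be spread out across many distant parts of $G$. Overcoming this is exactly where one needs the Marcus--Tardos extraction applied to a cleverly chosen binary matrix (marking, for each pair of coarse row/column blocks, whether a witnessing tuple with the right type-pattern exists there), so that a large grid of $H$-mixed zones yields a large grid of \emph{blocks containing witness patterns}, from which — using that each $G$-zone outside the controlled $k\times k$ grid is non-mixed, hence has a constant row or column and thus a uniform $\FO$-type behaviour — one reads off a forbidden $G$-mixed minor. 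Getting the quantifiers of $\phi$ and the grouping of blocks to interact correctly, with explicit (if astronomical, tower-type) bounds on the final twin-width in terms of $d$ and $\|\phi\|$, is the technical crux; everything else is bookkeeping.
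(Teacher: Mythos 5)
The paper does not argue this theorem through the mixed-minor characterization at all: it obtains closure under transductions by the same iterative aggregation of local information along a contraction sequence that underlies \cref{thm:tww-mc} (shuffles in the original work, local $\FO$-types in the later reinterpretation). Your primary route --- transport the order witnessing $\mx(G,\leq)\leq k$ to $H=\phi(G^+)$ and bound $\mx(H,\leq)$ by a Marcus--Tardos extraction --- is a different plan, and its decisive step, your step (4), is not actually an argument. The assertion that a non-mixed zone of $G$ ``has a constant row or column and thus a uniform $\FO$-type behaviour'' conflates homogeneity of the atomic (adjacency/colour) data with equality of bounded-quantifier-rank types: whether $\phi(u,v)$ holds depends on quantified witnesses scattered anywhere in the structure, and two vertices with identical rows inside a zone can have completely different $\phi$-neighbourhoods. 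Converting ``no large mixed minor in $(G,\leq)$'' into control over these types is precisely the content of the shuffle/type machinery behind \cref{thm:tww-mc}; the target transfer statement $\mx(H,\leq)\leq f(k,\phi)$ is indeed true, but the known ways to prove it go through that machinery (or the heavier ordered-structure theory of \cite{BonnetGMSTT24}), and your auxiliary witness-marking matrix is never specified concretely enough to see why a grid of $H$-mixed zones forces a grid of $G$-mixed zones. So, as written, the main route has a genuine gap exactly at its self-identified crux.

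The fallback you mention in one sentence is the paper's actual approach, but your formulation of it is also not correct: the same contraction sequence of $G$, ``read as a contraction sequence for $H$'', does not in general have bounded width. For instance, let $G$ be a perfect matching $a_1b_1,\ldots,a_nb_n$ with the $a_i$ coloured red and $\phi(x,y)=\mathsf{Red}(x)\wedge\mathsf{Red}(y)$; the width-$0$ sequence that first contracts each pair $\{a_i,b_i\}$ yields, in $\phi(G^+)$, partitions in which every pair of parts is impure, so the error degree is $n-1$, even though $\phi(G^+)$ itself has twin-width $0$. What the aggregation argument actually delivers is weaker but sufficient: one maintains bounded-radius local type information along the sequence for $G$ and merges parts of $H$ only when their types coincide; since there are boundedly many types, a suitably refined (delayed) coarsening of the original sequence has bounded width for $H$. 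Your reductions for the colouring and restriction steps are fine; fix the fallback in this type-refined form and you recover the paper's proof, whereas the mixed-minor route would require the missing transfer lemma to be proved rather than invoked.
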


Finally, it appears that the notion of twin-width explains very well the model-theoretic aspects of ordered graphs; this topic was explored by Bonnet, Giocanti, Ossona de Mendez, Simon, Thomass\'e, and Toru\'nczyk in~\cite{BonnetGMSTT24}. Namely, we can understand ordered graphs as relational structures with two binary relations: the adjacency relation and the order. The notion of twin-width can be very easily generalized to binary relational structures (that is, with all relations of arity at most $2$): simply, a pair $A,B$ of disjoint subsets of the universe is pure if and only if the relations holding between every $a\in A$ and every $b\in B$ are the same. Thus, we may speak about the twin-width of different binary structures, such as directed graphs, permutations (treated as pairs of linear orders), and ordered graphs. In the context of ordered graphs, Bonnet et al.~\cite{BonnetGMSTT24} proved the following duality result, which says that the boundedness of twin-width exactly captures not being equivalent to all graphs.

\begin{theorem}[\cite{BonnetGMSTT24}]\label{thm:tww-dependent}
 A class of ordered graphs $\Cc$ has bounded twin-width if and only if the class of all graphs cannot be transduced from $\Cc$.
\end{theorem}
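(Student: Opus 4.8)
The plan is to prove the two implications separately; the left-to-right direction is short and the converse carries the weight. \emph{If $\Cc$ has bounded twin-width, then the class of all graphs cannot be transduced from $\Cc$.} The iterative-aggregation argument behind Theorem~\ref{thm:tww-ideal} goes through verbatim for arbitrary binary relational structures, so ordered graphs of bounded twin-width also form an $\FO$ ideal; hence every class of graphs transducible from $\Cc$ has bounded twin-width. Since the class of all graphs contains the subcubic graphs, which have unbounded twin-width by Theorem~\ref{thm:subcubic-tww}, it has unbounded twin-width and therefore cannot be transduced from $\Cc$.

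\emph{If $\Cc$ has unbounded twin-width, then the class of all graphs can be transduced from $\Cc$.} The first step is to pass to a combinatorial witness. I would invoke the ordered refinement of Theorem~\ref{thm:tww-mixed}: the Marcus--Tardos-based construction in the proof of its second item can be carried out keeping every part convex in $\leq$, and for convex partitions the order relation never contributes to the error graph, so the twin-width of an ordered graph $(G,\leq)$ viewed as a binary structure is functionally equivalent to $\mx(G,\leq)$. Hence $\Cc$ contains ordered graphs whose adjacency matrices have arbitrarily large mixed minors. The second and main step is to transduce the class of all graphs from ordered graphs carrying large mixed minors. Fix a mixed minor $(\mathcal R,\mathcal C)$ of order $k$ in the adjacency matrix $M$ of some $(G,\leq)\in\Cc$. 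Every zone $M[A,B]$ with $A\in\mathcal R$, $B\in\mathcal C$ is mixed, and a short argument shows that each mixed zone contains a $2\times 2$ submatrix, spanned by two rows of $A$ and two columns of $B$, of one of finitely many non-constant types. Applying a product Ramsey theorem to the $k\times k$ array of zones, I would pass to a sub-mixed-minor of order $k'\to\infty$ that is homogeneous: all zones have the same type, and the witnessing rows and columns behave uniformly across the array. From such a homogeneous mixed minor one reads off a transduction onto the class of all bipartite graphs, in the spirit of Lemma~\ref{lem:rook-not-dependent}: the row-parts play the role of one side, the column-parts of the other, colors select zone by zone which witnessing rows and columns are retained, and a single formula decodes the prescribed adjacency from the local $2\times 2$ pattern together with the order. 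Composing with the standard transduction turning all bipartite graphs into all graphs (again as in Lemma~\ref{lem:rook-not-dependent}) finishes the proof.

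The step I expect to be the main obstacle is the homogenization. A single mixed zone carries only a bounded amount of information, and its mixedness is witnessed by rows and columns chosen independently in each zone, so a priori these witnesses do not line up into a grid that one fixed transduction can decode uniformly. Taming this requires a canonical choice of witnesses inside every zone together with a product-Ramsey argument over the array of zones, and one must additionally verify that the homogeneous pattern obtained is genuinely rich --- that it is not one of the structurally trivial, bounded-twin-width configurations that also survive Ramsey cleaning. A secondary, more bureaucratic point is establishing the ordered refinement of Theorem~\ref{thm:tww-mixed} invoked in the first step.
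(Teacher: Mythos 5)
Your left-to-right direction is fine and matches the standard argument: twin-width of binary structures is preserved by $\FO$ transductions (the generalization of \cref{thm:tww-ideal}), and the class of all graphs has unbounded twin-width by \cref{thm:subcubic-tww}. Your reduction of the converse to ``arbitrarily large mixed minors in the given order'' is also sound, since for convex parts the order relation never creates impure pairs, so the twin-width of the ordered structure is functionally equivalent to $\mx(G,\leq)$.

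The genuine gap is exactly where you suspect it, and it is not a technicality that Ramsey cleaning will absorb: it is the entire content of the theorem. A mixed minor gives, for each zone $M[A,B]$, a $2\times 2$ non-constant witness chosen \emph{inside that zone}; the two witnessing rows in the part $A$ depend on which column-part $B$ you are looking at, and vice versa. A transduction has a fixed formula and a bounded number of colors, so to decode an arbitrary target graph on the parts it must work with a \emph{bounded} selection of representatives per part that functions simultaneously against all other parts; per-zone witnesses do not globalize this way, and a product-Ramsey pass over the $k\times k$ array of zones homogenizes the \emph{types} of the witnesses but does not make the witnesses themselves coherent across a row or column of the array. Worse, even a fully homogeneous array of mixed $2\times 2$ patterns carries no freedom: every zone is mixed in the same way, so nothing in it can be switched on or off to encode the adjacency of an arbitrary graph, unlike the rook-graph construction of \cref{lem:rook-not-dependent} where the product structure supplies an independently colorable cell for every pair. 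In the cited work of Bonnet, Giocanti, Ossona de Mendez, Simon, Thomass\'e, and Toru\'nczyk, this step is handled by a much heavier structural analysis: one shows that a hereditary class of ordered matrices of unbounded twin-width (equivalently, unbounded grid rank) must contain one of finitely many explicit \emph{universal} families of ordered patterns (built around rank divisions rather than mere mixedness, e.g.\ patterns encoding all permutations), and separately that each such family transduces all graphs. Your sketch would need to reconstruct that dichotomy --- canonical high-rank configurations versus bounded twin-width --- which is precisely the part the proposal leaves open; note also that the survey itself does not prove this theorem but imports it from that paper.
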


Note here that from the class of all graphs one can transduce the class of all ordered graphs, and in fact the class of all $\Sigma$-structures, for any finite signature $\Sigma$. This means that a class of ordered graphs $\Cc$ is either restricted by the boundedness of twin-width, and can transduce only classes of bounded twin-width, or is ``all powerful'' and can transduce any class of structures.

The work of Bonnet et al.~\cite{BonnetGMSTT24} includes many other interesting results about the twin-width of ordered graphs. These include a fixed-parameter algorithm to approximate the twin-width of an ordered graph, and a combinatorial characterization of classes of ordered graphs of bounded twin-width via exclusion of certain forbidden patterns in their adjacency matrices. We invite the reader to~\cite{BonnetGMSTT24} for more details.

\subsubsection{Sparse twin-width}

Finally, we also discuss the sparse counterpart of twin-width. The easiest way to define it is to use the general principle: restrict attention to weakly sparse classes.

\begin{definition}
 A class of graphs $\Cc$ has {\em{bounded sparse twin-width}} if it has bounded twin-width and is weakly sparse.
\end{definition}

Note that maybe a bit counter-intuitively, we did not define any graph parameter ``sparse twin-width'' whose boundedness might be in question. Though, a parameter that would suit the definition above could be $\max(\tww(G),\omega^{\#}(G))$, where recall that $\omega^{\#}(G)$ is the largest $t\in \N$ such that $G$ contains $K_{t,t}$ as a subgraph. Also, note that every minor-free class $\Cc$ is both weakly sparse and, by \cref{thm:minor-free-tww}, has bounded twin-width, hence $\Cc$ also has bounded sparse twin-width.

Classes of bounded sparse twin-width were introduced and investigated by Bonnet et al. in~\cite{BonnetGKTW21}. They proved that there is an elegant characterization of those classes through adjacency matrices, analogous to that of \cref{thm:tww-mixed}. The only element that needs to be replaced is the definition of a mixed minor. The idea is that instead of considering mixed zones, we will consider simply {\em{non-empty zones}}, that is, zones that contain at least one entry $1$. Then a {\em{grid minor}} of order $k$ in the adjacency matrix $M$ of an ordered graph $(G,\leq)$ is a pair of divisions $(\cal R, \cal C)$ such that $|\cal R|=|\cal C|=k$ and for every $A\in \cal R$ and $B\in \cal C$, the zone $M[A,B]$ is non-empty; and the {\em{grid minor number}} of $(G,\leq)$, denoted $\grid(G,\leq)$, is the largest order of a grid minor in~$M$. With these definitions, the said characterization reads a follows.

\begin{theorem}[\cite{BonnetGKTW21}]\label{thm:sparse-tww}
 A class of graphs $\Cc$ has bounded sparse twin-width if and only if there exists an integer $k\in \N$ such that with every graph $G\in \Cc$ one can associate a vertex ordering $\leq_G$ satisfying $\grid(G,\leq_G)\leq k$.
\end{theorem}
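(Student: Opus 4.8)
The plan is to treat the two implications separately; the implication from a grid bound to bounded sparse twin-width is easy, and all the real work is in the converse. For the easy direction, suppose every $G\in\Cc$ carries an ordering $\leq_G$ with $\grid(G,\leq_G)\leq k$. A mixed zone is automatically non-empty (two unequal rows differ in some column, which then carries a $1$), so every mixed minor is a grid minor on the same divisions; hence $\mx(G,\leq_G)\leq\grid(G,\leq_G)\leq k$, and \cref{thm:tww-mixed} gives $\tww(G)\leq 2^{2^{\Oh(k)}}$, so $\Cc$ has bounded twin-width. For weak sparsity I would check that $\grid$ is monotone under subgraphs — given $H\subseteq G$, restrict $\leq_G$ to $V(H)$ and enlarge each convex part of a grid minor of $H$ to a convex part of $V(G)$ by absorbing the intermediate vertices; non-empty zones stay non-empty since $H$-edges are $G$-edges — and then recall that the adjacency matrix of $K_{t,t}$ has $2t^2$ ones on $2t$ vertices, so by the grid form of the Marcus--Tardos theorem~\cite{MarcusT04} it contains, in every ordering, a grid minor of order tending to infinity with $t$. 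A uniform bound $\grid\leq k$ thus forbids $K_{t,t}$ as a subgraph for large $t$.

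\paragraph*{The converse.} Assume $\Cc$ is $K_{t,t}$-free with $\tww(G)\leq d$ throughout, and for each $G$ take the ordering $\leq_G$ from \cref{thm:tww-mixed}, so $\mx(G,\leq_G)\leq c$ with $c=\Oh(d)$. It remains to prove the purely combinatorial claim: every $K_{t,t}$-free $(G,\leq)$ with $\mx(G,\leq)\leq c$ has $\grid(G,\leq)\leq k(c,t)$. I would argue by contradiction, fixing a grid minor $(\cal R,\cal C)$ of order $N$, with $N$ huge in terms of $c$ and $t$, so that all $N^2$ zones are non-empty. The first step is to reduce to the case where every part of $\cal R$ and of $\cal C$ has at least $t$ vertices: merging consecutive parts never empties a zone, and a short counting argument — a long consecutive run of parts each of size $<t$ holds many vertices, hence re-groups into many parts of size $\geq t$ — shows one can do so while retaining $\Omega(N/t^2)$ parts on each side; call this number $N_2$. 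Next I classify each zone $M[A,B]$ with $A\in\cal R$, $B\in\cal C$: it is mixed, or it is not mixed, i.e.\ all its rows are equal or all its columns are equal; in the first non-mixed case every vertex of $A$ has one and the same trace $W\subseteq B$, which is non-empty and, since $A$ is completely joined to $W$ with $|A|\geq t$, satisfies $|W|<t$ by $K_{t,t}$-freeness, and symmetrically in the second non-mixed case there is a set $V\subseteq A$ with $|V|<t$ completely joined to $B$.

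\paragraph*{Finishing, and the main obstacle.} A double count now gives the contradiction. If more than $N_2^2/5$ zones are mixed, the grid form of Marcus--Tardos~\cite{MarcusT04} coarsens $(\cal R,\cal C)$ into a $(c{+}1)\times(c{+}1)$ array of super-zones each containing a mixed zone; since merging parts of a division keeps a zone mixed once one of its sub-zones is, this is a mixed minor of order $c+1$, contradicting $\mx(G,\leq)\leq c$. Hence at most $N_2^2/5$ zones are mixed. For the rest: in a fixed row the ``all rows equal'' zones have pairwise disjoint traces $W$ (the parts of $\cal C$ are disjoint), each non-empty and all completely joined to the $t$-set $A$, so there are fewer than $t$ of them — otherwise the union of the $W$'s is a $t$-set forming a $K_{t,t}$ with $A$ — and symmetrically fewer than $t$ ``all columns equal'' zones per column; thus the non-mixed zones number fewer than $2N_2 t$. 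Then $N_2^2\leq N_2^2/5+2N_2 t<N_2^2$ as soon as $N_2\geq 10t$, a contradiction, and choosing $N$ large enough lets us take $k(c,t)$ of order $t^3+t^2c^2$. The hard part is exactly this combinatorial claim — specifically, pairing the fact that merging parts of a division preserves mixedness (so the mixed zones can genuinely be assembled into a large mixed minor) with the observation that every non-mixed non-empty zone between two large parts hides a complete bipartite subgraph having a side of size at least $t$, which is the sole point where weak sparsity is invoked; the Marcus--Tardos theorem and \cref{thm:tww-mixed} are used as black boxes.
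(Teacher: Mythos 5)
The survey states \cref{thm:sparse-tww} only with a citation to~\cite{BonnetGKTW21} and gives no proof, so there is nothing in the text to compare line by line; judged on its own, your argument is correct and follows the route one would expect (and that the original paper essentially takes): get an ordering with $\mx(G,\leq)\leq \Oh(d)$ from \cref{thm:tww-mixed}, and show that under $K_{t,t}$-freeness a bounded mixed-minor number forces a bounded grid-minor number, using Marcus--Tardos~\cite{MarcusT04} to assemble many mixed zones into a large mixed minor and the biclique hidden in every non-mixed non-empty zone to bound the remaining zones per row and per column. Two small points deserve attention. First, $\cal R$ and $\cal C$ are both divisions of $V(G)$, so a part $A\in\cal R$ and a part $B\in\cal C$ need not be disjoint; your $K_{t,t}$ extraction tacitly assumes the trace $W\subseteq B$ is disjoint from $A$. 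This is true, but say why: if all rows of $M[A,B]$ equal the indicator of $W$, then any $v\in A\cap B$ has a $0$ on the diagonal, so $v\notin W$, hence $W\cap A=\emptyset$ (and symmetrically for the all-columns-equal case); alternatively one can discard the at most $|\cal R|+|\cal C|$ zones with $A\cap B\neq\emptyset$ in the count. Second, the closing quantitative claim that one may take $k(c,t)$ of order $t^3+t^2c^2$ is not justified by your argument: the threshold you need from Marcus--Tardos is $c_{MT}(c+1)$, which is far from polynomial in $c$, so the bound should read something like $k=\Oh\bigl(t\cdot\max(t,\,c_{MT}(c+1))\bigr)$; this is immaterial for the theorem as stated, which only asserts the existence of some $k$ depending on the class. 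The easy direction (mixed zones are non-empty, so $\mx\leq\grid$, plus lifting a Marcus--Tardos grid from the $2t\times 2t$ submatrix of a $K_{t,t}$ to the whole ordering) is fine.
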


%We are not aware of any notion of a contraction sequence (or of a way to measure the width of a contraction sequence) that would be suited to the notion of bounded sparse twin-width.

% Here, the idea is to modify the notion of the error graph so that the complete pairs also count. More precisely, if $G$ is a graph and $\Pp$ is a partition of $G$, then we the {\em{quotient graph}} $\quo(G,\Pp)$ be the graph on the vertex set $\Pp$ such that distinct parts $A,B\in \Pp$ are adjacent if and only if there is at least one edge with one endpoint in $A$ and second in $B$.

\subsection{Sparsity: bounded expansion and nowhere denseness}\label{sec:sparsity}

We arrive at the most general concepts of well-structured sparse graphs considered in this survey: notions of classes of bounded expansion and of nowhere dense classes. These notions were first introduced and studied by Ne\v{s}et\v{r}il and Ossona de Mendez in a series of articles~\cite{NPoM-be1,NPoM-be2,NPoM-be3,NPoM-nd-uqw,NPoM-nd}. Their vast potential was very quickly recognized: multiple different authors joined in the investigation of the subject, and the area has experienced a tremendous growth throughout the last 15 years. It became collectively known as the field of {\em{Sparsity}}.

We again touch upon only the most important topics in Sparsity, focusing on aspects connected with logic, and particularly with the model-checking problem for $\FO$. However, we still make a swift run through all the vital characterizations of classes of bounded expansion and of nowhere dense classes. The reason is two-fold:
\begin{itemize}[nosep]
 \item These characterizations uncover different facets of the theory, showing that the concepts of bounded expansion and nowhere denseness are of fundamental nature. Each characterization brings along a technique, which can be applied both in the combinatorial and in the algorithmic~context.
 \item In \cref{sec:new}, we will discuss multiple concepts of well-structuredness in dense graphs of model-theoretic origin, most importantly {\em{monadic stability}} and {\em{monadic dependence}}. It turns out that the combinatorial characterizations of these dense notions are often direct lifts of the characterizations of sparse notions from the world of Sparsity. Thus, a good understanding of the tools of Sparsity brings important intuition to the context of well-structured dense graphs.
\end{itemize}
For a broader and deeper overview of the theory of Sparsity, we refer an interested reader to the book of Ne\v{s}et\v{r}il and Ossona de Mendez~\cite{sparsity}, or to more contemporary and more compact lecture notes of Pilipczuk, Pilipczuk, and Siebertz~\cite{sparsityNotes}.

\subsubsection{Definitions and intuitions}

The motivation behind the notions of bounded expansion and of nowhere denseness can be in fact traced back to the study of First-Order logic, $\FO$. Namely, recall that in \cref{thm:FO-mc-minor-free} we have seen that the model-checking problem for $\FO$ can be solved in fixed-parameter time on every minor-free class of graphs. But could minor-freeness be really the delimiting line for the complexity of this problem? $\FO$ is famously {\em{local}}: it cannot really discover long connections between vertices. For instance, it is a standard exercise in the methods of model theory such as compactness or Ehrenfeucht-Fra\"isse games to show that there is no $\FO$ formula $\varphi(x,y)$ that would check whether $x$ and $y$ are in the same connected component of the graph. Even distinguishing whether the distance between $x$ and $y$ is $2^q+1$ or $2^q+2$ cannot be done by an $\FO$ formula of quantifier rank at most $q$ (where the quantifier rank is the maximum number of nested quantifiers). More generally, the powerful Gaifman's Locality Theorem~\cite{gaifman1982local} (whose formal statement we omit due to its technicality) explains that verifying the satisfaction of any $\FO$ sentence $\phi$, say of quantifier rank $q$, on a graph $G$ can be reduced to understanding what $\FO$-definable properties are satisfied in balls of radius $2^{\Oh(q)}$ in $G$. The bottom line is: even if in $G$ there was a minor model of a large clique, but whose branch sets are of huge diameter, an $\FO$ sentence $\varphi$ of bounded quantifier rank would be unable to detect the presence of such a model.

This discussion leads to the following hopeful idea: maybe, the tractability of $\FO$ model-checking would be still maintained even if we excluded the existence of only {\em{local}} minor models? For this, we need to understand what it means for a minor model to be local. This understanding is delivered through the following definition; see \cref{fig:shallow} for an illustration.

\begin{definition}
 We say that a graph $H$ is a {\em{depth-$d$ minor}} of a graph $G$ if there exists a minor model $\eta$ of $H$ in $G$ such that for every vertex $u\in V(H)$, the branch set $\eta(u)$ has radius at most $d$.
\end{definition}

 \begin{figure}
  \centering
  \begin{tikzpicture}
   \node at (0,0) {\includegraphics[scale=0.4]{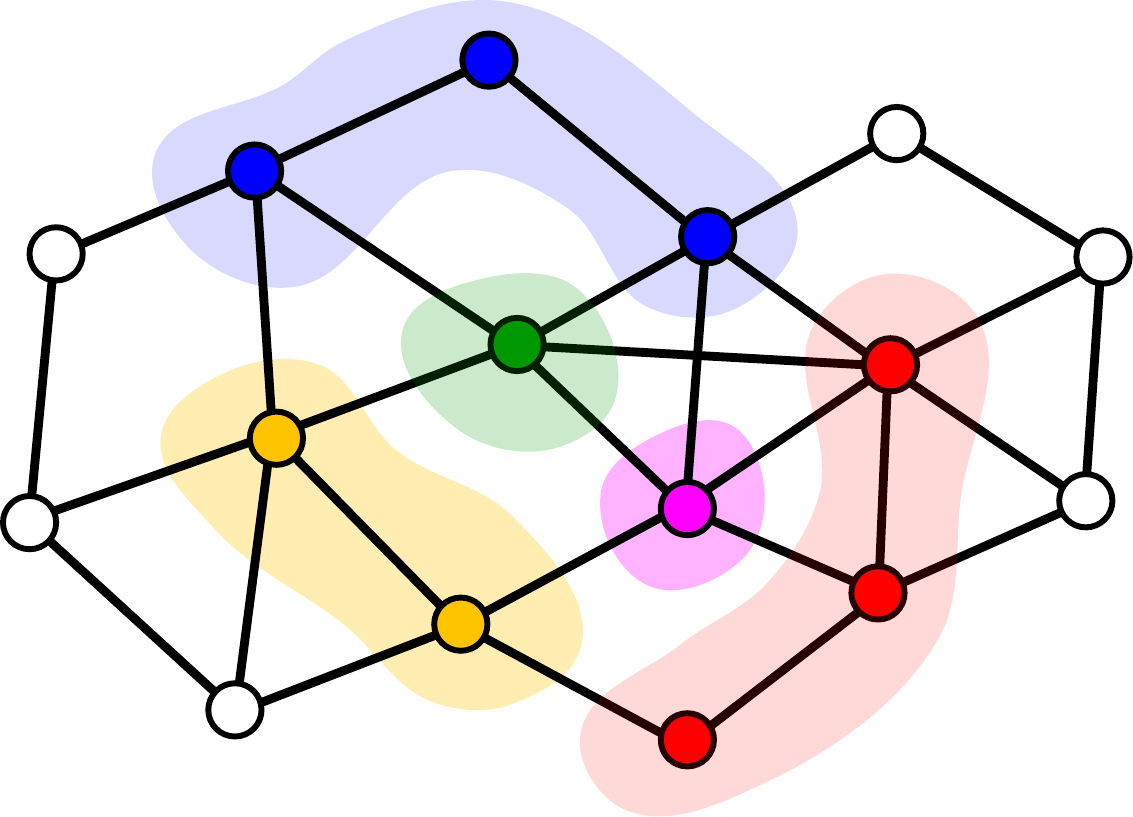}};

  \end{tikzpicture}
  \caption{A depth-$1$ minor model of $K_5$.}\label{fig:shallow}
 \end{figure}

Note that depth-$0$ minors are just subgraphs, while depth-$\infty$ minors, where we only require that the branch sets are connected, are just minors. Thus, the notion of bounded-depth minors serves as an interpolation between the subgraph order and the minor order.

The idea is to consider classes of graphs that exclude complicated bounded-depth minors. There are two natural ways to understand the term ``complicated'': either we exclude dense graphs, or we exclude complete graphs. These two ways lead to the definitions of bounded expansion and of nowhere denseness, respectively. Recall here that for a graph parameter $\pi$ and a graph class $\Cc$, we denote $\pi(\Cc)\coloneqq \sup_{G\in \Cc} \pi(G)$.

\newcommand{\avg}{\mathsf{avg}}

\begin{definition}\label{def:be}
 For a graph $G$, the {\em{average degree}} of $G$ is the quantity $$\avg(G)\coloneqq \frac{\sum_{u\in V(G)}\deg(u)}{|V(G)|}=2\cdot \frac{|E(G)|}{|V(G)|},$$ where $\deg(u)$ denotes the degree of vertex $u$. Then for a graph $G$ and an integer $d\in \N$, we define the {\em{depth-$d$ average degree}} as the maximum average degree among the depth-$d$ minors of $G$:
 $$\nabla_d(G)\coloneqq \max\left\{\,\avg(H)\colon H\textrm{ is a depth-}d\textrm{ minor of }G\,\right\}.$$
 We say that a graph class $\Cc$ has {\em{bounded expansion}} if $\nabla_d(\Cc)$ is finite, for every $d\in \N$.
\end{definition}

\begin{definition}\label{def:nd}
 For a graph $G$ and an integer $d\in \N$, we define the {\em{depth-$d$ clique number}} of $G$ as the maximum order of a complete graph that is contained in $G$ as a depth-$d$ minor:
 $$\omega_d(G)\coloneqq \max\left\{\,t\colon K_t\textrm{ is a depth-}d\textrm{ minor of }G\,\right\}.$$
 We say that a graph class $\Cc$ is {\em{nowhere dense}} if $\omega_d(\Cc)$ is finite, for every $d\in \N$.
\end{definition}

Thus, bounded expansion and nowhere denseness are not defined by the boundedness of one parameter, as was the case for all the properties of graph classes considered in the previous sections, but rather by simultaneous boundedness of a family of parameters: $\nabla_0,\nabla_1,\nabla_2,\ldots$ for bounded expansion, and $\omega_0,\omega_1,\omega_2,\ldots$ for nowhere denseness. It is instructive to ``unpack'' what this actually means:
\begin{itemize}[nosep]
 \item A graph class $\Cc$ has bounded expansion if and only if there is function $c\colon \N\to \N$ such that for every graph $G\in \Cc$ and a depth-$d$ minor $H$ of $G$, we have $\avg(H)\leq c(d)$.
 \item A graph class $\Cc$ is nowhere dense if and only if there is a function $t\colon \N\to \N$ such that for every graph $G\in \Cc$ and $d\in \N$, $G$ does not contain $K_{t(d)+1}$ as a depth-$d$ minor.
\end{itemize}
Thus, our definitions of sparsity are naturally gradated by the depth. Namely, when we look at larger and larger depth $d$, we allow the existence of denser and denser graphs, or larger and larger cliques, as depth-$d$ minors; this is the meaning of functions $c(d)$ and $t(d)$, respectively. However, for every fixed $d\in \N$, there is a constant upper bound on the average degree, respectively the clique number, of graphs that can be derived as depth-$d$ minors of graphs from the class in question.

Clearly, for every graph $G$ and $d\in \N$, we have $\omega_d(G)\leq \nabla_d(G)+1$, because a complete graph on $t$ vertices has average degree $t-1$. Hence, every class of bounded expansion is also nowhere dense. It turns out that these two concepts are actually different: there exist classes that are nowhere dense but have unbounded expansion. Admittedly, they are a bit artificial; below is an example. Recall here that the {\em{girth}} of a graph $G$ is the length of the shortest cycle in $G$.

\begin{theorem}[{\cite[Example 5.1]{sparsity}}]\label{thm:nd-be-example}
 Let $\Cc$ be the class of all graphs $G$ such that the girth of $G$ is larger than the maximum degree of $G$. Then $\Cc$ is nowhere dense, but does not have bounded~expansion.
\end{theorem}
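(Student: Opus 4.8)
The plan is to establish the two assertions separately: for nowhere denseness I will reduce to the bounded-degree case via a girth bound, and for the failure of bounded expansion I will invoke a classical existence result for high-girth regular graphs.

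\emph{Nowhere denseness.} Fix $d\in\N$; the goal is a uniform upper bound on $\omega_d(G)$ over all $G\in\Cc$. First I would show that a shallow clique minor forces a short cycle: if $K_3$ is a depth-$d$ minor of $G$, take three pairwise-adjacent branch sets, each of radius at most $d$; inside each branch set join the (at most two) endpoints of the connecting edges by a path of length at most $2d$, and then close these three paths up using the three connecting edges. This is a closed walk of length at most $6d+3$ that uses each edge at most once (the branch sets are pairwise disjoint and the three connecting edges are distinct), so it contains a cycle of length at most $6d+3$. Since every $G\in\Cc$ satisfies $\mathrm{girth}(G)>\Delta(G)$, we get $\Delta(G)\le 6d+2=:D$. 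Second, I would apply the routine bounded-degree estimate: a connected subgraph of radius at most $d$ in a graph of maximum degree at most $D$ has at most $(D+1)^{d+1}$ vertices, hence every depth-$d$ minor of $G$ has maximum degree at most $D(D+1)^{d+1}$, and therefore $\omega_d(G)\le D(D+1)^{d+1}+1$. As the remaining case $\omega_d(G)\le 2$ is trivial, this gives $\omega_d(\Cc)\le D(D+1)^{d+1}+1<\infty$ for every $d$, so $\Cc$ is nowhere dense.

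\emph{Failure of bounded expansion.} Here I would use the Erd\H{o}s--Sachs theorem: for every integer $k\ge 2$ there exists a $k$-regular graph $G_k$ of girth at least $k+1$. Then $\Delta(G_k)=k<k+1\le\mathrm{girth}(G_k)$, so $G_k\in\Cc$; and since $\nabla_0$ is the maximum average degree taken over subgraphs and $G_k$ is one of its own subgraphs, $\nabla_0(G_k)\ge\avg(G_k)=k$. Hence $\nabla_0(\Cc)=\sup_k k=\infty$, so already the parameter $\nabla_0$ is unbounded on $\Cc$, and $\Cc$ does not have bounded expansion.

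The step I expect to need the most care is the cycle extraction in the nowhere-dense part: one should verify that the closed walk of length at most $6d+3$ genuinely contains a cycle — it cannot collapse to a trivial back-and-forth because each of its edges is used exactly once — and handle the degenerate subcase in which the two connecting-edge endpoints inside some branch set coincide (then the corresponding internal path has length $0$). Both points are straightforward but deserve to be written out. Everything else is bookkeeping, and the failure of bounded expansion is immediate once arbitrarily-high-girth regular graphs are granted.
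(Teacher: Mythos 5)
Your proposal is correct. Note that the survey itself gives no proof of this statement — it only cites Example~5.1 of the Sparsity book of Ne\v{s}et\v{r}il and Ossona de Mendez — and your two ingredients are exactly the standard argument: a depth-$d$ triangle minor yields an edge-disjoint closed walk of length at most $6d+3$ and hence a short cycle, forcing $\Delta(G)\le 6d+2$ and so a uniform bound on $\omega_d$ via the bounded-degree ball count, while Erd\H{o}s--Sachs-type $k$-regular graphs of girth greater than $k$ lie in $\Cc$ and already make $\nabla_0$ unbounded. Both delicate points you flag (cycle extraction from the edge-simple closed walk, and the degenerate length-$0$ internal paths) are handled correctly as written.
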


Having understood the relation between bounded expansion and nowhere denseness, let us compare these concepts with notions that we have already seen before.
First, it is known that every graph excluding $K_t$ as a minor has average degree bounded by $\Oh(t\sqrt{\log t})$~\cite{Kostochka84}. Since every minor of a $K_t$-minor-free graph is also $K_t$-minor-free, independent of the depth of the minor model, we infer that $\nabla_d(G)\leq \Oh(t\sqrt{\log t})$ for every $d\in \N$, provided $G$ excludes $K_t$ as a minor. In other words, the parameters $\nabla_d(G)$ are universally bounded by a constant independent of $d$. From this we conclude that every minor-free class of graphs has bounded~expansion.

In fact, a stronger statement is true: even every class of bounded sparse twin-width has bounded expansion. This was first proved by Bonnet et al.~\cite{BonnetGKTW21} using an indirect reasoning involving stability of twin-width under transductions (\cref{thm:tww-ideal}). Later, Dreier, Gajarsk\'y, Jiang, Ossona de Mendez, and Raymond~\cite{DreierGJMR22} gave an elegant direct argument (see also the correction~\cite{DreierGJMR24}).

\begin{theorem}[\cite{BonnetGKTW21,DreierGJMR22,DreierGJMR24}]
 Every class of bounded sparse twin-width has bounded expansion.
\end{theorem}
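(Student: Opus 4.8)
The plan is to prove that $\nabla_d(\Cc)$ is finite for every $d\in\N$, which by \cref{def:be} is exactly the statement. By \cref{thm:sparse-tww} there is an integer $k$ such that every $G\in\Cc$ admits a vertex ordering $\leq_G$ with $\grid(G,\leq_G)\leq k$, and since $\Cc$ is weakly sparse there is $t$ such that no graph in $\Cc$ contains $K_{t,t}$ as a subgraph. The only external tool I would invoke is the Marcus--Tardos theorem~\cite{MarcusT04} (recalled in the discussion after \cref{thm:tww-mixed}): there is a function $\mu\colon\N\to\N$ so that any $n\times n$ $\{0,1\}$-matrix with more than $\mu(\ell)\cdot n$ ones admits divisions of its rows and of its columns into $\ell$ intervals each with every resulting zone nonempty.

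The case $d=0$ is easy. For any subgraph $H$ of any $G\in\Cc$, the adjacency matrix of the ordered graph $(H,\leq_G\!\restriction_{V(H)})$ is entrywise dominated by a submatrix of the adjacency matrix of $(G,\leq_G)$, so every grid minor of $(H,\leq_G\!\restriction_{V(H)})$ lifts to one of $(G,\leq_G)$; hence $\grid(H,\leq_G\!\restriction_{V(H)})\leq k$, and Marcus--Tardos bounds the number of ones, giving $\avg(H)=O_k(1)$. Thus $\nabla_0(\Cc)$ is finite; in particular every $G\in\Cc$ has bounded degeneracy, and therefore admits a vertex ordering (a degeneracy ordering) with bounded weak $1$-coloring number.

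The heart of the proof is $d\geq1$, and this is where weak sparsity is indispensable: a class of long subdivisions of cliques is weakly sparse and uniformly sparse, and also carries grid-minor-free vertex orderings, yet its shallow minors include arbitrarily large cliques — so no bound derived only from $\nabla_0$, or only from the grid-minor-free ordering, can suffice. The natural route is to bound the generalized (weak) $r$-coloring numbers of graphs in $\Cc$ by a function of $k,t,r$, for every $r$, since this property characterizes bounded expansion (see~\cite[Chapter~7]{sparsity}). One would argue by induction on $r$: for $G\in\Cc$ one builds a vertex ordering refining the degeneracy ordering from the $d=0$ step (not $\leq_G$ itself, which need not control reachability) and bounds, for each vertex $v$, the set $\wreach_r$ of vertices weakly $r$-reachable from $v$. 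If this set were large, one collects the witnessing short paths, classifies them according to the $\leq_G$-pattern of their internal vertices and the adjacency profiles of their endpoints, and applies Marcus--Tardos to a suitably derived matrix to extract either a grid minor of order exceeding $k$ in $(G,\leq_G)$ — contradicting \cref{thm:sparse-tww} — or a $K_{t,t}$ subgraph in $G$ — contradicting weak sparsity. Weak sparsity enters precisely to bound multiplicities, preventing many internally disjoint short paths from carrying the same adjacency pattern, which is exactly the obstruction exhibited by long subdivisions of cliques.

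The step I expect to be the main obstacle is this last one: turning ``many short paths weakly reaching a vertex'' into ``a large grid or a large biclique''. A single edge occupies one cell of the adjacency matrix, but a path of length $r$ touches $r-1$ intermediate positions that can be spread arbitrarily along $\leq_G$, so the reduction to Marcus--Tardos must be carried out in a depth-indexed fashion that is insensitive to this spreading — packaging the combinatorics of connected short paths into a two-dimensional grid-like pattern, and then using $K_{t,t}$-freeness to keep the ``width'' of each such pattern bounded, is the genuinely delicate part of the argument.
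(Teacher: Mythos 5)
Your $d=0$ step is sound: the adjacency matrix of a subgraph $H$, ordered by the restriction of $\leq_G$ to $V(H)$, has all its $1$-entries among those of $(G,\leq_G)$, and any divisions witnessing a grid minor of the restricted matrix extend to divisions of $V(G)$ witnessing one of the same order in $(G,\leq_G)$; so Marcus--Tardos bounds $\nabla_0(\Cc)$ in terms of $k$. (As a side remark, weak sparsity is not an independent ingredient here: a $K_{t,t}$ subgraph already forces $\grid(G,\leq)\geq t$ under \emph{every} vertex ordering, so the bound $\grid(G,\leq_G)\leq k$ of \cref{thm:sparse-tww} by itself gives $K_{k+1,k+1}$-freeness.) The problem is $d\geq 1$: there you have written a plan, not a proof, and by your own admission the only substantial step is left open --- namely, converting ``a vertex with a large weak $r$-reachability set'' (equivalently, a dense depth-$d$ minor) into either a grid minor of order exceeding $k$ in $(G,\leq_G)$ or a $K_{t,t}$ subgraph. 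That implication \emph{is} the theorem. Nothing in the sketch (induction on $r$, refining a degeneracy ordering, classifying paths by the $\leq_G$-pattern of their internal vertices) specifies how paths whose interior vertices are scattered arbitrarily in the order get organized into the two divisions of a grid minor, nor how $K_{t,t}$-freeness controls multiplicities; this is exactly the delicate argument of Dreier, Gajarsk\'y, Jiang, Ossona de Mendez, and Raymond (which even required a published correction), and it is missing here. Note also that the survey itself contains no proof of this statement: it only cites the indirect argument of Bonnet et al.\ via closure of bounded twin-width under transductions (\cref{thm:tww-ideal}) and the direct argument just mentioned; your proposal is closest in spirit to the latter but does not carry out its core.

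A secondary issue is the example you invoke to motivate the difficulty. A class that is weakly sparse, admits orderings with bounded grid minor number, \emph{and} has arbitrarily large cliques as bounded-depth minors cannot exist --- it would contradict the very theorem being proved. Concretely, for a fixed subdivision length $d$ the $d$-subdivisions of cliques have unbounded twin-width (hence no grid-minor-free orderings), while for subdivision length growing like $\log n$ the depth-$d$ minors, for fixed $d$, contain no large cliques. The correct (and sufficient) motivation is simply that bounded $\nabla_0$ alone does not imply bounded expansion (e.g.\ $1$-subdivisions of cliques), which is why $d\geq 1$ requires a genuinely new argument --- the one the proposal does not supply.
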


However, the concept of bounded expansion is strictly more general than that of bounded sparse twin-width. Recall that the class of subcubic graphs does not have bounded sparse twin-width (\cref{thm:subcubic-tww}). Yet, it is not hard to see that for every $\Delta\in \N$, the class of graphs of maximum degree $\Delta$ has bounded expansion. Indeed, every depth-$d$ minor of such a graph has maximum degree at most $\Delta\cdot (\Delta-1)^{d-1}$, hence also average degree bounded by this quantity. More generally, every topological-minor-free class of graphs also has bounded expansion. This follows from the results of Dvo\v{r}\'ak~\cite{dvorak-thesis} relating parameters $\nabla_d$ with their topological-minor variants, combined with the bounds on the average degree in topological-minor-free classes~\cite{BollobasT98,KomlosS94}; see also~\cite[Corollary 4.1]{sparsity}.

Having placed bounded expansion and nowhere denseness within the big picture, we may now proceed to describing the most important combinatorial characterizations of these concepts.

\newcommand{\WReach}{\mathsf{WReach}}

\subsubsection{Characterizations: bounded expansion}\label{sec:char-be}

\paragraph*{Generalized coloring numbers.} Generalized coloring numbers provide a decompositional viewpoint on classes of bounded expansion (and on nowhere dense classes, see \cref{{thm:quant-nd}}), which is extremely useful when working with those concepts on the technical level. They were introduced by Kierstead and Young in~\cite{KiersteadY03}, but their applicability in the context of classes of bounded expansion was observed by Zhu~\cite{Zhu09}.

Usually one considers three variants of generalized coloring numbers: {\em{weak coloring number}}, {\em{strong coloring number}}, and {\em{admissibility}}. All these are functionally equivalent, in the sense of being bounded by a function of each other. For the sake of brevity, we discuss here only the weak coloring number, which appears to be the most useful as a~tool; a broader discussion can be found in~\cite{sparsity,sparsityNotes}.
Roughly speaking, the idea is that every graph from a fixed class of bounded expansion admits a vertex ordering that controls short connections between vertices through constant-size local separators, called {\em{weak reachability sets}}.  The formal definitions are below, see also \cref{fig:WReach} for an illustration.

\begin{definition}
Consider an ordered graph $(G,\leq)$ and a distance parameter $d\in \N$. For two vertices $u\leq v$ of $G$, we say that $u$ is {\em{weakly $d$-reachable}} from $v$ if there is a path $P$ with endpoints $u$ and $v$ and of length at most $d$, such that every vertex $w$ traversed by $P$ satisfies $u\leq w$. (In other words, $P$ is disallowed to travel through vertices smaller in $\leq$ than the lower endpoint $u$.) We define the {\em{weak $d$-reachability set}} of~$v$, denoted $\WReach_d^{G,\leq}[v]$, as the set of all vertices $u\leq v$ that are weakly $d$-reachable from $v$. Then the {\em{weak $d$-coloring number}} of the ordered graph $(G,\leq)$ is the maximum size of a weak $d$-reachability set:
$$\wcol_d(G,\leq)\coloneqq \max_{v\in V(G)} \left|\WReach_d^{G,\leq}[v]\right|.$$
Finally, the {\em{weak $d$-coloring number}} of a graph $G$ is the minimum weak $d$-coloring number that can be obtained by equipping $G$ with a vertex ordering:
$$\wcol_d(G)\coloneqq \min\{\,\wcol_d(G,\leq)\colon \leq\textrm{ is a vertex ordering of }G\,\}.$$
\end{definition}

 \begin{figure}
  \centering
  \begin{tikzpicture}
   \node at (0,0) {\includegraphics[scale=0.4]{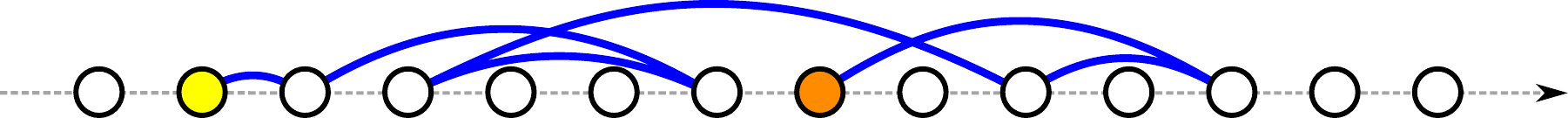}};
   %\node at (5.9,0.25) {$\leq$};

  \end{tikzpicture}
  \caption{The blue path witnesses that the yellow vertex is weakly $6$-reachable from the orange vertex. Vertices to the left are smaller in $\leq$.}\label{fig:WReach}
 \end{figure}

To see why a vertex ordering $\leq$ of a graph $G$ with a small weak $d$-coloring number can be used to control short connections $G$, consider the following observation: for any pair of vertices $u,v$, the set $S_{u,v}\coloneqq \WReach^{G,\leq}_d[u]\cap \WReach^{G,\leq}_d[v]$ intersects all paths of length at most $d$ that connect $u$ and $v$. Indeed, for every such path $P$, the $\leq$-smallest vertex of $P$ belongs to $S_{u,v}$. And the size of $S_{u,v}$ is bounded by $\wcol_d(G,\leq)$. Most of the applications of orderings with low weak coloring numbers are based on (often quite involved) variations of this observation.

Weak coloring numbers can be regarded as a generalization of the notion of {\em{degeneracy}} of a graph to higher distances, where the degeneracy of a graph $G$ is equal to $\wcol_1(G)-1$ (i.e., for every vertex we measure the number of neighbors smaller in the ordering). It is known that the degeneracy is tightly connected to the maximum average degree among subgraphs (see \cite[Chapter~1]{sparsityNotes}), hence one might suspect that the weak coloring numbers are tightly connected to the parameters~$\nabla_d$. And indeed, Zhu~\cite{Zhu09} proved that the boundedness of all the parameters $\nabla_d$ is equivalent to the boundedness of all the parameters $\wcol_d$. This yields the following characterization of the notion of bounded expansion.

\begin{theorem}[\cite{Zhu09}]\label{thm:wcol-be}
 A graph class $\Cc$ has bounded expansion if and only if $\wcol_d(\Cc)$ is finite for every~$d\in \N$.
\end{theorem}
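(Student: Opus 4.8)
The statement is an equivalence, and the two implications have quite different character; I would treat them separately. The direction that bounded weak coloring numbers imply bounded expansion is the short one: the plan is to prove the clean bound $\nabla_d(G)\le 2\,\wcol_{4d+1}(G)$ for every graph $G$ and every $d\in\N$, which settles this implication at once. Fix a vertex ordering $\leq$ of $G$ with $\wcol_{4d+1}(G,\leq)=\wcol_{4d+1}(G)$ and let $H$ be any depth-$d$ minor of $G$, witnessed by a model $\eta$. For each $u\in V(H)$ root a BFS tree $T_u$ of the connected subgraph $G[\eta(u)]$ at $r_u\coloneqq\min_{\leq}\eta(u)$; since $\eta(u)$ has radius at most $d$ and hence diameter at most $2d$, the tree $T_u$ has depth at most $2d$. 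Then any two $H$-adjacent vertices $u,v$ are joined in $G$ by a path of length at most $4d+1$: descend $T_u$ to an endpoint of a connecting edge, cross it, ascend $T_v$. Next I would show that $H$ is $\wcol_{4d+1}(G)$-degenerate; since every subgraph of $H$ is again a depth-$d$ minor of $G$ (restrict $\eta$), it suffices to produce one low-degree vertex of $H$. Take $u_0\in V(H)$ with $r_{u_0}$ largest in $\leq$ among all the $r_u$. For each $H$-neighbour $v$ of $u_0$ the path above runs from $r_{u_0}$ to $r_v$, has length at most $4d+1$, and traverses only vertices $\geq r_v$ — those in $\eta(u_0)$ are $\geq r_{u_0}>r_v$, those in $\eta(v)$ are $\geq r_v$ — so $r_v\in\WReach_{4d+1}^{G,\leq}[r_{u_0}]$. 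Distinct branch sets have distinct minima, whence $\deg_H(u_0)\le\bigl|\WReach_{4d+1}^{G,\leq}[r_{u_0}]\bigr|\le\wcol_{4d+1}(G)$. Degeneracy then gives $\avg(H)\le 2\,\wcol_{4d+1}(G)$, hence $\nabla_d(G)\le 2\,\wcol_{4d+1}(G)$, as wanted.

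For the converse, that bounded expansion implies bounded weak coloring numbers, I would first pass to the admissibility numbers via the standard functional equivalence $\adm_d(G)\le\wcol_d(G)\le\adm_d(G)^{\Oh(d)}$; the left inequality is immediate (the small endpoint of each path of an optimal admissibility fan at a vertex $v$ lies in $\WReach_d[v]$, so such a fan has at most $\wcol_d(G,\leq)$ paths), and the right one is an induction on $d$ that I would cite. It then suffices to bound $\adm_d(G)$ by a function of $\nabla_0(G),\dots,\nabla_{\Oh(d)}(G)$. The idea is to build a good ordering greedily from the top: keeping an already-chosen ``large'' set $S$ and its complement $R$, at each step choose $u\in R$ minimizing the maximum number of paths of length at most $d$ from $u$ to $R\setminus\{u\}$ that are pairwise disjoint except at $u$ and have all internal vertices in $S$, and then move $u$ into $S$. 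By construction $\adm_d(G,\leq,u)$ is precisely the value that was minimized when $u$ was removed, so everything reduces to showing that at every step \emph{some} $u\in R$ realizes this minimum at a value $t$ depending only on the $\nabla_i(G)$.

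The core of the proof, and the step I expect to be the main obstacle, is exactly this last claim: a shallow-minor extraction lemma. If, on the contrary, every $u\in R$ admitted more than $t$ disjoint short paths into $R$ of the above kind, then the auxiliary graph on $R$ in which $u\sim u'$ whenever they are joined by a length-$\le d$ path through $S$ would have minimum degree exceeding $t$; one must then distill this dense ``system of short paths'' into a genuine depth-$\Oh(d)$ minor of $G$ whose average degree grows with $t$, contradicting the assumed bound on $\nabla_{\Oh(d)}(G)$. Performing this distillation — pruning the heavily overlapping realizing paths down to an almost-disjoint subfamily whose contraction is an honest bounded-depth minor — is where the real combinatorial work lies; it is in essence the transitive--fraternal augmentation mechanism of Ne\v{s}et\v{r}il and Ossona de Mendez, and it accounts for the (tower-type) blow-up in the resulting bound on $\wcol_d$. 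Everything else — the degeneracy estimate above, the translation between $\wcol$ and $\adm$, and the bookkeeping for the greedy order — is routine once this lemma is in hand.
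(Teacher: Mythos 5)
The survey itself does not prove \cref{thm:wcol-be} --- it is quoted from Zhu --- so there is no in-paper argument to compare against; judged on its own, your proposal splits cleanly. The direction you work out in full, $\wcol$ bounded $\Rightarrow$ bounded expansion, is correct and is the standard argument: the bound $\nabla_d(G)\le 2\,\wcol_{4d+1}(G)$ via BFS trees of depth $\le 2d$ in the branch sets, the observation that the connecting path from $r_{u_0}$ to $r_v$ only visits vertices $\ge r_v$, and the passage from ``every depth-$d$ minor has a vertex of degree $\le \wcol_{4d+1}(G)$'' to degeneracy and hence to average degree, all check out (the fact that $r_v\in\WReach_{4d+1}^{G,\leq}[r_{u_0}]$ and that distinct branch sets have distinct minima is exactly what is needed).

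The converse direction, however, has a genuine gap: the one step you defer --- showing that if the greedy order gets stuck, i.e.\ every $u\in R$ has more than $t$ short fan paths into $R$ through $S$, then $G$ contains a depth-$\Oh(d)$ minor of average degree growing with $t$ --- is not a technical afterthought but the entire substance of the theorem in this direction. The difficulty is concrete: fan paths emanating from different vertices of $R$ may reuse internal vertices of $S$ arbitrarily, so the auxiliary dense graph on $R$ is a priori only a \emph{congested} shallow topological minor, and contracting its realizing paths is not legitimate; the known arguments (Dvo\v{r}\'ak's bound of $\adm_d$ polynomially in the grads $\nabla_{\Oh(d)}$) need a counting/pruning argument to extract an almost-disjoint subsystem, and without it the proof does not go through. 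Two attributions are also off and worth fixing: this extraction is not the transitive--fraternal augmentation mechanism (that is Ne\v{s}et\v{r}il and Ossona de Mendez's separate route, e.g.\ to low treedepth colourings), and Zhu's original proof builds the ordering directly from the grads rather than via admissibility; moreover the resulting bounds are polynomial in $\nabla_{\Oh(d)}(G)$ and (for $\wcol_d$ via $\wcol_d\le\adm_d^{\Oh(d)}$) exponential in $d$, not tower-type. So as it stands you have a complete proof of one implication and, for the other, a correct reduction to a lemma you would need to either prove or cite precisely (Zhu, or Dvo\v{r}\'ak's admissibility bound).
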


We remark that the proof of \cref{thm:wcol-be} can be made effective: there is an $\Oh_{\Cc,d}(n)$-time algorithm that given $G\in \Cc$ and $d\in \N$, outputs a vertex ordering $\leq$ of $G$ such that $\wcol_d(G,\leq)\leq \Oh_{\Cc,d}(1)$~\cite{Dvorak13}.

Let us immediately give one concrete application of \cref{thm:wcol-be} that will be of relevance later:  construction of sparse {\em{neighborhood covers}} in classes of bounded expansion. Intuitively, a neighborhood cover of a graph is a robust system of subgraphs that covers every local neighborhood; the notion comes from the area of distributed algorithms.

\begin{definition}
 Let $G$ be a graph and $d\in \N$. A {\em{distance-$d$ neighborhood cover}} in $G$ of {\em{radius~$r$}} and {\em{overlap~$p$}} is a family $\Ff$ of vertex subsets of $G$ with the following properties:
 \begin{itemize}[nosep]
  \item For every vertex $u\in V(G)$, there exists $F\in \Ff$ such that $\Ball^G_d[u]\subseteq F$.
  \item For every set $F\in \Ff$, the induced subgraph $G[F]$ is connected and has radius at most $r$.
  \item Every vertex $v\in V(G)$ belongs to at most $p$ elements of $\Ff$.
 \end{itemize}
\end{definition}

The parameters measuring the quality of a neighborhood cover are the radius $r$ and the overlap $p$. It turns out that in classes of bounded expansion, we can easily construct distance-$d$ neighborhood covers of radius $2d$ and constant overlap. The following construction is due to Grohe, Kreutzer, Rabinovich, Siebertz, and Stavropoulous~\cite{GroheKRSS18}.

\begin{theorem}[\cite{GroheKRSS18}]\label{thm:nei-cov-be}
 Let $\Cc$ be a class of bounded expansion and $d\in \N$. Then every graph $G\in \Cc$ admits a distance-$d$ neighborhood cover of radius $2d$ and overlap $\Oh_{\Cc,d}(1)$.
\end{theorem}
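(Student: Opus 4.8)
The key observation is that a vertex ordering $\leq$ of $G$ with bounded weak $2d$-coloring number directly yields the desired neighborhood cover, with the radius-$2d$ balls playing the role of the cover sets after we re-root them appropriately. Concretely, fix $d\in\N$; by \cref{thm:wcol-be} (in its effective form~\cite{Dvorak13}) there is an ordering $\leq$ of $G$ with $\wcol_{2d}(G,\leq)\le k$ for some $k=\Oh_{\Cc,d}(1)$. For each vertex $v\in V(G)$, let $F_v\coloneqq \Ball^G_{2d}[v]$ be the candidate cover set; these clearly satisfy the first two properties (every $u$ has $\Ball^G_d[u]\subseteq \Ball^G_{2d}[u]=F_u$, and each $F_v$ induces a connected subgraph of radius at most $2d$). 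The entire content of the proof is thus to reduce the \emph{overlap} from the trivial bound $n$ down to a constant, and this is where the weak coloring number enters.

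The plan for bounding the overlap is to \emph{not} take all balls $F_v$, but only a cleverly chosen sub-family indexed by the ordering. First I would record the basic consequence of low weak coloring number used in the excerpt: for any two vertices $u,v$ with $\dist^G(u,v)\le 2d$, the $\leq$-smallest vertex on a shortest $u$--$v$ path lies in $\WReach^{G,\leq}_{2d}[u]\cap\WReach^{G,\leq}_{2d}[v]$; in particular every vertex within distance $d$ of $u$ ``sees'' $u$ through the small set $\WReach^{G,\leq}_{2d}[u]$. The construction: for each vertex $w$, define $B_w\coloneqq\{\,u\in V(G) : w\in\WReach^{G,\leq}_{2d}[u]\,\}$ — informally, the set of vertices for which $w$ is a weak-reachability ``anchor'' — and take as the cover the family $\Ff=\{\,\Ball^G_{2d}[w]\cap (\text{appropriate connected region around }w)\,\}$; but a cleaner route is to define, for each $w$, the set $F_w\coloneqq\bigcup_{u\in B_w}\Ball^G_{d}[u]$ and argue (i) each $\Ball^G_d[u]$ is contained in $F_w$ for $w$ the $\leq$-smallest vertex of $\Ball^G_d[u]$, which lies in $\WReach^{G,\leq}_{2d}[u]$ so $u\in B_w$; (ii) $F_w$ has radius at most $2d$ because every $u\in B_w$ is within distance $2d$ of $w$ (as $w$ is weakly $2d$-reachable from $u$), so $\Ball^G_d[u]\subseteq\Ball^G_{3d}[w]$ — which forces me to instead use radius $3d$, or to sharpen using that the witnessing path from $u$ to $w$ has length $\le 2d$ and stays $\ge u$, giving $\dist(w,\text{anything in }\Ball^G_d[u])\le 2d+d$; so in fact the natural radius one obtains this way is $3d$, and to hit exactly $2d$ one must be more careful — e.g. replace $\Ball^G_d[u]$ by the part of it weakly reachable through $w$, or quote the more refined argument of~\cite{GroheKRSS18}.

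For the overlap bound — the heart of the matter — I would fix a vertex $v$ and ask: for how many $w$ does $v\in F_w$ hold? If $v\in F_w$ then $v\in\Ball^G_d[u]$ for some $u\in B_w$, i.e.\ $\dist^G(v,u)\le d$ and $w\in\WReach^{G,\leq}_{2d}[u]$. Now $\dist^G(v,u)\le d\le 2d$, so by the reduction above $w$ is weakly $2d$-reachable not just from $u$ but one can route through $v$: more precisely $w\in\WReach^{G,\leq}_{2d}[u]$ together with $\dist^G(u,v)\le d$ yields $w\in\WReach^{G,\leq}_{3d}[v]$ (concatenate the two paths). Hence the number of possible $w$ is at most $|\WReach^{G,\leq}_{3d}[v]|\le\wcol_{3d}(G,\leq)$, which by \cref{thm:wcol-be} is $\Oh_{\Cc,d}(1)$. (One must set up distance parameters so the relevant weak coloring number — be it $2d$, $3d$, or $4d$ — is the one controlled; since the excerpt only needs \emph{some} constant overlap, any fixed multiple of $d$ works.) The main obstacle I anticipate is precisely this bookkeeping of distance parameters: making the cover sets have radius exactly $2d$ (rather than $3d$) while keeping the overlap argument valid requires either using the weak-reachability \emph{witness paths} to define the cover sets (not plain balls), or an extra reconnection step to restore connectivity of $G[F_w]$ after trimming — routine in spirit but fiddly, and it is exactly the technical content that~\cite{GroheKRSS18} supplies. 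Everything else (effective computation of the ordering, the connectivity and covering properties) is immediate from \cref{thm:wcol-be} and its effective version~\cite{Dvorak13}.
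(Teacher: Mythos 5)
There is a genuine gap, and it sits exactly where you located the ``heart of the matter'': the overlap bound. The implication you use --- that $w\in\WReach^{G,\leq}_{2d}[u]$ together with $\dist^G(u,v)\le d$ yields $w\in\WReach^{G,\leq}_{3d}[v]$ by concatenating paths --- is false, because the $v$--$u$ portion of the concatenated walk is an arbitrary shortest path and may pass through vertices that are $\leq$-smaller than $w$, so the walk does not witness weak reachability of $w$ from $v$. This is not just a fixable slip in the argument: the construction $F_w\coloneqq\bigcup_{u\in B_w}\Ball^G_d[u]$ itself has unbounded overlap. Take $G$ a star with center $c$, $d=1$, and the ordering placing $c$ first (so $\wcol_{2}(G,\leq)=2$). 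Then for every leaf $u_i$ we get $B_{u_i}=\{u_i\}$ and hence $F_{u_i}=\Ball^G_1[u_i]=\{u_i,c\}$, so the center $c$ lies in $n$ distinct sets of your family. Together with the radius inflation to $3d$ that you already flagged, this means the proposal does not establish the theorem.

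The fix is that you already had the correct cover in your hands and then wrapped it in balls: the paper's proof takes the inverse weak reachability sets themselves, $\Ff\coloneqq\{B_w\colon w\in V(G)\}$ where $B_w=\{v\colon w\in\WReach^{G,\leq}_{2d}[v]\}$, with $\leq$ an ordering achieving $\wcol_{2d}(G,\leq)=\Oh_{\Cc,d}(1)$. Radius: every $v\in B_w$ has a witnessing path to $w$ of length at most $2d$ all of whose vertices are $\geq w$; every suffix of that path witnesses that its starting vertex is also in $B_w$, so the path stays inside $B_w$ and $G[B_w]$ is connected of radius at most $2d$ (centered at $w$). Covering: for any vertex $x$, let $w$ be the $\leq$-minimum of $\Ball^G_d[x]$; for $v\in\Ball^G_d[x]$, concatenating a shortest $v$--$x$ path with a shortest $x$--$w$ path gives a walk of length at most $2d$ whose vertices all lie in $\Ball^G_d[x]$ and are therefore all $\geq w$ --- the minimality of $w$ \emph{inside the ball} is precisely what rescues the ordering constraint that broke your concatenation --- so $\Ball^G_d[x]\subseteq B_w$. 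Overlap: $v\in B_w$ if and only if $w\in\WReach^{G,\leq}_{2d}[v]$, so $v$ lies in at most $\wcol_{2d}(G,\leq)$ members of $\Ff$. This gives radius exactly $2d$ and constant overlap with no extra reconnection or trimming step.
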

\begin{proof}[Proof sketch]
 Let $\leq$ be a vertex ordering of $G$ with $\wcol_{2d}(G,\leq)\leq \wcol_{2d}(\Cc)=\Oh_{\Cc,d}(1)$. Then it suffices to construct a neighborhood cover $\Ff$ by taking the inverse weak reachability sets:
 $$\Ff\coloneqq \{\{v\in V(G)~|~u\in \WReach_{2d}^{G,\leq}[v]\}\colon u\in V(G)\}.$$
 It is not hard to verify that $\Ff$ defined in this way is a distance-$d$ neighborhood cover of radius $2d$ and overlap $\wcol_{2d}(G,\leq)$.
\end{proof}

\paragraph*{Low treedepth covers.} We now describe the next characterization, through so-called {\em{low treedepth covers}}. It turns out that graphs from classes of bounded expansion can be robustly covered by graphs of low treedepth so that every constant-size set of vertices is entirely covered by a single element of the cover. This resembles, and is inspired by, the classic Baker's technique from the area of approximation algorithms.

\begin{theorem}[\cite{NPoM-be1}]\label{thm:low-treedepth-covers-be}
 A class of graphs $\Cc$ has bounded expansion if and only if the following condition holds: there is a function $K\colon \N\to \N$ and, for every graph $G\in \Cc$ and $p\in \N$, a family $\Ff$ consisting of at most $K(p)$ vertex subsets of $G$ such that:
 \begin{itemize}[nosep]
  \item for every $A\in \Ff$, the induced subgraph $G[A]$ has treedepth at most $p$; and
  \item for every subset of vertices $X\subseteq V(G)$ with $|X|\leq p$, there exists $A\in \Ff$ such that $X\subseteq A$.
 \end{itemize}
\end{theorem}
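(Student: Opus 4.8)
The plan is to prove the two implications separately, treating the ``if'' direction (covers exist $\Rightarrow$ bounded expansion) as the routine one and the ``only if'' direction (bounded expansion $\Rightarrow$ covers exist) as carrying the real content; both will be routed through the characterization by weak coloring numbers (\cref{thm:wcol-be}).

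For the forward implication, suppose $\Cc$ has bounded expansion and fix $p\in\N$. By \cref{thm:wcol-be} there is a constant $c$ such that every $G\in\Cc$ admits a vertex ordering $\le$ with $\wcol_{2^p}(G,\le)\le c$. The first step is to convert such an ordering into a \emph{$p$-centered coloring} of $G$ (a vertex coloring in which every connected subgraph either contains a uniquely-colored vertex or uses more than $p$ colors) using at most $N=N(p)$ colors, with $N(p)$ bounded in terms of $c$ and $p$ only; this is the standard bridge lemma between generalized coloring numbers and $p$-centered colorings. The second step is bookkeeping: it is classical that in a $p$-centered coloring the union of any $i\le p$ color classes induces a subgraph of treedepth at most $i$ (induction on $i$: a connected subgraph of the union uses $\le i\le p$ colors, hence has a uniquely-colored vertex; deleting it drops the color count to $i-1$ and recursing builds an elimination forest of depth $\le i$). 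Hence, taking $\Ff$ to be the family of all unions of at most $p$ of the $N$ color classes yields at most $K(p):=(N(p)+1)^{p}$ sets, each inducing treedepth $\le p$, and every $X\subseteq V(G)$ with $|X|\le p$ is contained in the union of the (at most $p$) color classes that meet $X$.

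For the backward implication, assume the covers exist with function $K$, and note that the cover property is preserved under taking subgraphs: intersect every part with the vertex set of the subgraph and use that treedepth is subgraph-monotone. The case $p=2$ already gives that $G$ and all its subgraphs have at most $K(2)\cdot|V(\cdot)|$ edges — a graph of treedepth $\le 2$ is a star forest, hence has fewer than $|V|$ edges, and every edge of $G$ lies inside one part — so $\nabla_0(\Cc)\le 2K(2)$. To bound $\nabla_d(\Cc)$ for $d\ge 1$, the key point is that low treedepth covers are inherited by \emph{shallow topological minors}: if $H$ is a depth-$d$ topological minor of some $G'\subseteq G$, realize each edge of $H$ by an internally disjoint path with at most $2d$ internal vertices in $G'$; then for any $X\subseteq V(H)$ with $|X|\le q$, the union of the branch images of $X$ together with the paths realizing the $\le\binom q2$ edges inside $X$ has size at most $q':=q+\binom q2(2d+1)$, so it lies inside a single treedepth-$\le q'$ part $A$ of $G'$, and $H[X]$ — being a topological minor of $G'[A]$ — has treedepth $\le q'$. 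Thus the class of depth-$d$ topological minors of members of $\Cc$ again admits low treedepth covers, and by the $p=2$ bound this class has bounded average degree, i.e.\ the topological-grad parameter satisfies $\widetilde\nabla_d(\Cc)<\infty$ for every $d$. By the standard equivalence between bounded expansion and boundedness of all $\widetilde\nabla_d$ (Dvořák's results, mentioned after \cref{thm:nd-be-example}), we conclude that $\Cc$ has bounded expansion.

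The main obstacle is the bridge lemma in the forward direction — producing a $p$-centered coloring with boundedly many colors out of an ordering of small weak $2^{p}$-coloring number. This is where the genuinely combinatorial work lies; it can be carried out via the cited analysis of generalized coloring numbers, or, as in the original argument of Ne\v{s}et\v{r}il and Ossona de Mendez~\cite{NPoM-be1}, by building the low treedepth colorings directly through a sequence of transitive fraternal augmentations of $G$. The remaining ingredients — the treedepth bound for unions of color classes, subgraph inheritance of covers, and the topological-minor inheritance argument — are routine.
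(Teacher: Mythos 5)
Your forward direction is, in essence, the route the survey itself sketches: Zhu's argument turns an ordering of bounded $\wcol_{2^{p-1}}$ into a low treedepth coloring by greedy coloring along the order, and your detour through $p$-centered colorings is the same $\wcol$-based argument with a standard intermediate notion. Deferring that bridge lemma to the literature is consistent with how the paper treats the statement (it is cited to Ne\v{s}et\v{r}il and Ossona de Mendez, with Zhu's construction only sketched), and the bookkeeping around it (treedepth of unions of $\le p$ color classes, the count $K(p)\le (N(p)+1)^p$, the covering property) is correct.

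The backward direction, which the survey does not prove, has a genuine gap. What your argument actually establishes is: for every depth-$d$ topological minor $H$ of $G'\subseteq G$ and every $X\subseteq V(H)$ with $|X|\le q$, the set $X$ together with the realizing paths fits into one part $A$, so $\td(H[X])\le\td(G'[A])\le q'$. From this you jump to ``the class of depth-$d$ topological minors again admits low treedepth covers'' and then invoke your $p=2$ edge bound. That jump is unsupported: the property ``every $q$-element subset induces treedepth at most $q'$'' is strictly weaker than having a cover with boundedly many parts — it is satisfied by complete graphs (every $q$-subset of $K_n$ induces treedepth $q\le q'$), so it cannot by itself yield any sparsity, and sparsity of $H$ is exactly what you are trying to prove. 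Nor can you build the needed cover of $H$ by restricting parts of the cover of $G'$: for $B:=A\cap V(H)$, the induced subgraph $H[B]$ may contain edges whose realizing paths leave $A$, so it need not be a topological minor of $G'[A]$ and its treedepth is uncontrolled (take $H=K_n$, $G'$ its $1$-subdivision, and a part $A$ consisting of all principal vertices: $G'[A]$ is edgeless while $H[B]=K_n$). The repair is small but changes the object you count: take a treedepth-$(2d+2)$ cover of $G'$; each edge of $H$ together with its realizing path has at most $2d+2$ vertices, hence lies in some part $A$; the edges of $H$ realized entirely inside $A$ form a (non-induced) subgraph $H_A$ of $H$ that \emph{is} a topological minor of $G'[A]$, so $\td(H_A)\le 2d+2$ and $|E(H_A)|\le (2d+1)|V(H)|$; summing over the at most $K(2d+2)$ parts gives $|E(H)|\le K(2d+2)(2d+1)|V(H)|$. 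This bounds the topological grads, and Dvo\v{r}\'ak's equivalence then finishes the implication as you intended.
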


A family $\Ff$ satisfying the two conditions described in \cref{thm:low-treedepth-covers-be} is called a {\em{treedepth-$p$ cover}} of~$G$.

We remark that in most of the literature, \cref{thm:low-treedepth-covers-be} is formulated so that it speaks about {\em{low treedepth colorings}}, defined as follows. A treedepth-$p$ coloring of a graph $G$ is a coloring $\lambda$ such that any collection of $i\leq p$ colors induces a graph of treedepth at most $i$; then bounded expansion classes can be characterized by the existence of treedepth-$p$ colorings using at most $M(p)$ colors, for some function $M\colon \N\to \N$. As observed in~\cite{GajarskyKNMPST20}, this definition is essentially equivalent to the notion of covers provided by \cref{thm:low-treedepth-covers-be}: on one hand, a treedepth-$p$ coloring with $M$ colors yields a treedepth-$p$ cover with $\binom{M}{p}$ elements by taking all collections of $p$ colors, and on the other hand, a treedepth-$p$ cover of size $K$ yields\footnote{The argument does not recover the fine bounds on the treedepth of subgraphs induced by $i<p$ colors, but they are usually immaterial in applications.} a treedepth-$p$ coloring with $2^K$ colors by assigning every intersection of the sets of the cover or their complements a different color. We choose to state \cref{thm:low-treedepth-covers-be} in terms of covers, as we feel that this better expresses the character of this notion in the applications.

\cref{thm:low-treedepth-covers-be} was originally proved by Ne\v{s}et\v{r}il and Ossona de Mendez~\cite{NPoM-be1} using {\em{transitive fraternal augmentations}}, a notion similar in spirit to weak coloring numbers. However, an elegant proof that uses weak coloring numbers directly was also found by Zhu~\cite{Zhu09}. His construction is actually very simple. Given a graph $G$ belonging to a bounded expansion class $\Cc$, and $p\in\N$, we first find a vertex ordering $\leq$ of $G$ such that $\wcol_{2^{p-1}}(G,\leq)\leq \wcol_{2^{p-1}}(\Cc)=\Oh_{\Cc,p}(1)$. Then, we apply a greedy coloring procedure that processes the vertices of $G$ in the order $\leq$ and colors them using $\wcol_{2^{p-1}}(G,\leq)$ colors so that every vertex $u$ receives a color different from the other vertices of $\WReach_{2^{p-1}}^{G,\leq}[u]$. It turns out that the coloring of vertices of $G$ constructed in this way is a treedepth-$p$ coloring.

Low treedepth covers are a very elegant tool that allows breaking a problem into first solving the bounded-treedepth case, and then lifting the understanding to the bounded expansion case using covers. Consider, for instance, the {\em{Subgraph Isomorphism}} problem: Given graphs $G$ and $H$, we would like to decide whether $H$ is a subgraph of $G$. We are interested in the parameterized complexity of the problem where $p=|V(H)|$ is the parameter; thus we think of finding a small pattern graph $H$ in a large host graph~$G$. First, if the graph $G$ has treedepth at most $d$, then the problem can be solved in time $\Oh_{p,d}(n)$ using standard dynamic programming techniques. Next, if we want to solve the problem on a graph $G$ that belongs to a fixed class of bounded expansion $\Cc$, then we can first compute a treedepth-$p$ cover $\Ff$ of $G$ with $|\Ff|=\Oh_{\Cc,p}(1)$ (this can be done in time $\Oh_{\Cc,p}(n)$), and then, for each $F\in \Ff$, test whether $G[F]$ contains $H$ as a subgraph using the aforementioned dynamic programming approach. The properties of the cover $\Ff$ ensure us that if $H$ is a subgraph of $G$, then it is also a subgraph of $G[F]$ for some $F\in \Ff$. All in all, this gives an $\Oh_{\Cc,p}(n)$-time fixed-parameter algorithm for the Subgraph Isomorphism problem on any class of bounded expansion $\Cc$.

The methodology sketched above can be lifted to solve also the model-checking problem for $\FO$ on bounded expansion classes. We expand on this in \cref{sec:sparsity-mc}.

\newcommand{\Ss}{\cal S}

\paragraph*{Neighborhood complexity.} Finally, we mention one more property of classes of bounded expansion, this time expressed in terms of the complexity of set systems of neighborhoods that can be defined in them. We need a few definitions.

Let $G$ be a graph, $A$ be a subset of vertices of $G$, and $d\in \N$ be a distance parameter. The {\em{set system of distance-$d$ neighborhoods in $A$}}, denoted $\Ss^G_d(A)$, is the family of subsets of $A$ constructed by including, for every vertex $u$ of $G$, the {\em{distance-$d$ neighborhood}} of $u$ in $A$: the subset of $A$ consisting of all the vertices that are at distance at most $d$ from $u$. Formally,
$$\Ss^G_d(A)\coloneqq \{\,\Ball_d^G[u]\cap A \colon u\in V(G)\,\}.$$
Note that if two or more vertices have the same distance-$d$ neighborhood in $A$, we include this neighborhood in $\Ss^G_d(A)$ only once. In other words, we are interested in the set system of all possible ``traces'' that a radius-$d$ ball can leave on $A$.

Obviously, we always have $|\Ss^G_d(A)|\leq 2^{|A|}$, because there are that many subsets of $A$. This bound clearly can be met in general graphs. It turns out that in classes of bounded expansion, there is a surprisingly strong estimate on the size of $\Ss^G_d(A)$: we have even a {\em{linear bound}}. The following result was proved by Reidl, S\'anchez Villaamil, and Stavropoulous~\cite{ReidlVS19}.

\begin{theorem}[\cite{ReidlVS19}]\label{thm:neicmp-be}
 For every class of bounded expansion $\Cc$, distance parameter $d\in \N$, graph $G\in \Cc$, and subset of vertices $A\subseteq V(G)$, we have
 $$|\Ss^G_d(A)|\leq \Oh_{\Cc,d}(|A|).$$
\end{theorem}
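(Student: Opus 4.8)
The plan is to derive this from the weak-coloring-number characterization of bounded expansion (\cref{thm:wcol-be}). First I would fix an ordering $\le$ of $V(G)$ minimizing the weak $2d$-coloring number, so that $\wcol_{2d}(G,\le)\le c$ where $c\coloneqq\wcol_{2d}(\Cc)=\Oh_{\Cc,d}(1)$. The first, purely structural, step is a ``factoring through weakly reachable vertices'' observation: given $u\in V(G)$ and $a\in A$ with $\dist^G(u,a)\le d$, take a shortest $u$--$a$ path $P$ and let $w$ be its $\le$-minimum vertex; splitting $P$ at $w$ yields a $u$--$w$ walk and a $w$--$a$ walk, each of length at most $d$ and using only vertices $\ge w$, so $w\in\WReach_d^{G,\le}[u]\cup\{u\}$ and $w\in\WReach_d^{G,\le}[a]\cup\{a\}$. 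Writing $F^{j}_w\coloneqq\{a\in A\colon \text{some }w\text{--}a\text{ walk of length}\le j\text{ avoids vertices }<w\}$, and letting $i(u,w)$ be the least length of a $u$--$w$ walk avoiding vertices $<w$ (finite and $\le d$ exactly when $w\in\WReach_d^{G,\le}[u]\cup\{u\}$), concatenation of walks gives the identity
$$\Ball_d^G[u]\cap A=\bigcup_{w\in\WReach_d^{G,\le}[u]\cup\{u\}}F^{\,d-i(u,w)}_{w}.$$
Hence each trace is a union of at most $c+1$ of the ``one-sided balls'' $F^{j}_w$, and $F^{j}_w$ can be non-empty only if $w$ lies in $\bigcup_{a\in A}\bigl(\WReach_d^{G,\le}[a]\cup\{a\}\bigr)$, a set $\mathcal W$ of size at most $(c+1)|A|$.

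The second step, which is the combinatorial core, is to count distinct traces. Recording the full ``profile'' $\{(w,d-i(u,w))\colon w\in\WReach_d^{G,\le}[u]\cup\{u\}\}\subseteq\mathcal W\times\{0,\dots,d\}$ only yields a polynomial bound $|A|^{\Oh(c)}$, so one has to do better. The plan is to show that the trace is in fact determined by a \emph{bounded} amount of data attached to a single, canonically chosen witness — for instance the $\le$-minimum vertex $w^\star$ of $\WReach_d^{G,\le}[u]\cup\{u\}$ — together with $\Oh_{\Cc,d}(1)$ further bits describing the ``local picture'' at $w^\star$, i.e.\ which of the finitely many length-$\le d$ reachability configurations among vertices $\ge w^\star$ are realized from $u$. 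Since $w^\star$ ranges over $\mathcal W$ and $|\mathcal W|=\Oh_{\Cc,d}(|A|)$, this gives $|\Ss^G_d(A)|=\Oh_{\Cc,d}(|A|)$. The justification of the ``local picture at $w^\star$ determines the trace'' claim is where the sparsity of $\Cc$ must be used beyond the single bound on $\wcol_{2d}$: for two vertices $u,u'$ sharing $w^\star$ and the same local configuration, one argues that the walks witnessing membership in the right-hand side of the identity above can be rerouted through a common small separator around $w^\star$, producing the same union of sets $F^{j}_w$.

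I expect this last point to be the main obstacle, and the difficulty is structural rather than calculational: one must rule out the naive polynomial blow-up that comes from independently recording the up to $c+1$ witnesses of a trace, and instead charge each trace, up to bounded multiplicity, to a single incidence between $A$ and a sparse auxiliary structure (morally, to an edge of a graph of bounded degeneracy on a vertex set of size $\Oh_{\Cc,d}(|A|)$). An alternative — possibly cleaner — route would be to go through low-treedepth covers (\cref{thm:low-treedepth-covers-be}): first prove the linear bound for graphs of bounded treedepth by induction on an elimination forest, where the trace $\Ball_d^G[u]\cap A$ is controlled by the at most $d$ relevant ancestors of $u$ together with a bounded-size set attached to each, and then assemble the global bound from a treedepth-$p$ cover with $p$ chosen as a function of $d$. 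That reduction, however, runs into essentially the same issue — bounding how the pieces $\Ball_d^G[u]\cap F$ over the covering sets $F$ recombine into the true ball — so I would expect the bulk of the work to lie there regardless of which route is taken.
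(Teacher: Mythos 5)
Your opening reduction is correct, and it is the standard first move of the weak-coloring-number route to this theorem: with $c=\wcol_{2d}(\Cc)$, the identity $\Ball_d^G[u]\cap A=\bigcup_{w\in\WReach_d^{G,\le}[u]\cup\{u\}}F_w^{\,d-i(u,w)}$ does hold, each trace is a union of at most $c+1$ one-sided balls, and only roots in a set $\mathcal W$ with $|\mathcal W|\le (c+1)|A|$ can contribute. But, as you note, this alone yields only $|A|^{\Oh(c)}$, and the step that is supposed to bring you down to $\Oh_{\Cc,d}(|A|)$ --- that the trace is determined by the $\le$-minimum witness $w^\star$ of $\WReach_d^{G,\le}[u]\cup\{u\}$ plus $\Oh_{\Cc,d}(1)$ bits of ``local picture'' --- is not only unproved, it is false as stated. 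Take a star whose center $z$ is placed first in the ordering, $d=1$, and $A$ the set of leaves: every leaf $u$ has $\WReach_1^{G,\le}[u]=\{z,u\}$, so $w^\star=z$ for all of them, yet the traces $\Ball_1^G[u]\cap A=\{u\}$ are pairwise distinct, so no bounded amount of data attached to $z$ can distinguish these $|A|$ traces. (The example of course satisfies the theorem --- there are only about $|A|$ traces --- it refutes only your determinacy claim; note also that $\wcol_r$ is $2$ here, so nothing in your hypotheses excludes it.) Consequently the ``rerouting through a common small separator around $w^\star$'' sketch cannot work as a charging scheme: any correct accounting must distribute an unbounded total amount of information over $A$ rather than attach $\Oh(1)$ bits to a single canonical witness.

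This missing step is exactly where the content of \cref{thm:neicmp-be} lies. The survey does not prove the theorem; it cites \cite{ReidlVS19}, and both proofs given there spend essentially all of their effort on the passage you leave open: one works with centered (low treedepth) colorings, the other is an intricate counting argument over weak reachability sets which, as remarked in the discussion around \cref{thm:quant-nd}, produces constants exponential in the weak coloring numbers (this is why \cite{EickmeyerGKKPRS17} needed additional VC-dimension tools in the nowhere dense setting). Your plan is an attempt at the second proof, but everything after the union identity --- the bookkeeping that avoids recording up to $c+1$ independent witnesses per trace --- is absent, and your fallback via \cref{thm:low-treedepth-covers-be} runs into the recombination problem you yourself identify: a radius-$d$ ball need not be contained in a single cover element, and distances inside an induced subgraph $G[F]$ can exceed distances in $G$, so per-element traces do not simply assemble into $\Ss_d^G(A)$. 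As it stands, the proposal is a correct preliminary reduction together with an unproven (and, in its stated form, refutable) core, not a proof.
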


Reidl et al. provide two proofs of \cref{thm:neicmp-be}. One relies on low treedepth colorings\footnote{More precisely, centered colorings, which is a closely related notion.}, and the other is an intricate reasoning involving weak coloring~numbers.

A typical way to obtain upper bounds like the one in \cref{thm:neicmp-be} would be to bound the {\em{VC dimension}} of the set system $\Ss^G_d(A)$ by some constant $c=\Oh_{\Cc,d}(1)$, and then conclude from the Sauer-Shelah Lemma~\cite{sauer1972density,shelah1972combinatorial} that $|\Ss^G_d(A)|\leq |A|^c$. This indeed can be done (and the boundedness of the VC dimension of $\Ss^G_d(A)$ follows from \cref{thm:neicmp-be}), but the bound of \cref{thm:neicmp-be} is stronger: it is linear instead of just polynomial. Thus, from the point of view of the theory of VC dimension, set systems of balls in bounded expansion classes have asymptotic growth much lower than what the standard Sauer-Shelah Lemma would~indicate.

We note that the property expressed in \cref{thm:neicmp-be}, called {\em{linear neighborhood complexity}}, is {\em{not}}\footnote{It is for monotone classes, see~\cite{ReidlVS19}.} a characterization of bounded expansion. In fact, the property of having (almost) linear neighborhood complexity is enjoyed by many ideals of classes of dense graphs, including classes of bounded twin-width~\cite{BonnetFLP24,BonnetKRTW22,Przybyszewski23} and monadically stable classes~\cite{DreierEMMPT24}, which we will discuss in \cref{sec:forbidding}. In general, the set system viewpoint introduced by \cref{thm:neicmp-be} appears to be very important in the constructed theory, as it brings along many deep techniques from discrete geometry, particularly the theory of VC dimension.

\subsubsection{Characterizations: nowhere denseness}\label{sec:char-nd}

We now proceed to nowhere dense graph classes. Recall that these are defined through the boundedness of the parameters $\omega_d$, measuring the largest size of a complete graph contained as a depth-$d$ minor. Note that at first glance, it is not at all clear why such classes should consist of sparse graphs. This is in fact a non-trivial statement, proved by Dvo\v{r}\'ak~\cite{dvorak-thesis}.

\begin{theorem}[\cite{dvorak-thesis}]\label{thm:nd-density}
 Let $\Cc$ be a nowhere dense class of graphs and $\eps>0$ be a positive real. Then for every graph $G\in \Cc$, we have
 $$|E(G)|\leq \Oh_{\Cc,\eps}(|V(G)|^{1+\eps}).$$
\end{theorem}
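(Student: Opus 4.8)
The plan is to derive the near-linear edge bound from the definition of nowhere denseness by a bootstrapping argument: if a graph from $\Cc$ were too dense, then it would already contain a moderately dense graph as a shallow minor of bounded depth, and iterating this shallow-minor-taking operation would eventually produce a large complete graph as a depth-$d$ minor for some fixed $d$, contradicting $\omega_d(\Cc)<\infty$. Concretely, fix $\eps>0$ and suppose towards a contradiction that there is a sequence of graphs $G_n\in\Cc$ with $|E(G_n)|/|V(G_n)|^{1+\eps}\to\infty$. We want to turn the density exponent $1+\eps$ into the production of a clique of unbounded size at a \emph{bounded} depth.

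The key mechanism is a ``shallow minor amplification'' lemma: there is a constant $\delta=\delta(\eps)>0$ such that every graph $H$ with $|E(H)|\geq |V(H)|^{1+\eps}$ and sufficiently many vertices contains, as a depth-$1$ minor, a graph $H'$ with $|V(H')|$ still polynomially large (say $|V(H')|\geq |V(H)|^{\gamma}$ for some $\gamma>0$) and with edge density exponent bounded below by $1+\eps'$ for some $\eps'>\eps$ that does not depend on $H$. The standard way to obtain such a statement is via the probabilistic method or a counting argument on contractions of short stars/matchings: contracting a carefully chosen sparse set of edges (or a random matching of a large induced subgraph) increases the \emph{ratio} $|E|/|V|$ superlinearly while only shrinking the vertex count polynomially. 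One can alternatively route this through the already-available machinery: $|E(H)|\gg |V(H)|$ forces a subgraph of large minimum degree, and repeated contraction of a breadth-first-search structure of depth $1$ boosts the average degree multiplicatively. Applying this amplification $k$ times composes into a depth-$k$ minor (compositions of depth-$1$ minors are depth-$k$ minors, up to constants), so after $k=k(\eps)$ rounds the edge density exponent exceeds $2$, which by Turán-type reasoning (a graph on $m$ vertices with $\geq m^2/4$ edges, or simply more than $(1-1/t)m^2/2$ edges, contains $K_{t}$) yields a $K_t$ for $t\to\infty$ as $n\to\infty$. Since $k$ is a fixed constant depending only on $\eps$, this gives arbitrarily large cliques as depth-$k$ minors of graphs in $\Cc$, i.e. $\omega_k(\Cc)=\infty$, contradicting nowhere denseness. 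Quantifying $n_0$ in terms of $\Cc$ and $\eps$ throughout gives exactly the stated bound $|E(G)|\leq\Oh_{\Cc,\eps}(|V(G)|^{1+\eps})$.

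The main obstacle I expect is the amplification lemma itself: making precise that a single depth-$1$ contraction step can be chosen to \emph{increase} the density exponent by a definite amount while keeping the vertex set polynomially large. The naive contraction of a spanning structure typically divides the vertex count by a large factor (the degree), which is fine, but one must verify that the new edge count does not collapse correspondingly — this is where one needs either a clever deterministic selection (e.g. contract along a maximal set of vertex-disjoint stars centered at high-degree vertices, so the number of ``super-edges'' between the new super-vertices remains a constant fraction of the original edge count) or a second-moment argument showing that a random matching leaves many edges between the contracted blobs. A cleaner route, if one is willing to use it, is to cite the known tight relationship between the parameters $\nabla_d$ and the ``edge density'' hierarchy together with Dvořák's results relating $\nabla_d$ to its topological-minor variant (already referenced in the excerpt), and then observe that bounded $\omega_d$ for all $d$ forces $\nabla_d$ to grow at most polylogarithmically in a way incompatible with a polynomial-in-$\eps$ density excess; but the self-contained probabilistic amplification is the more transparent proof. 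Either way, the heart of the argument is converting ``density exponent $>1$'' into ``unbounded clique at bounded depth'' by a constant number of amplification rounds, and the bookkeeping of how the polynomial factors compose is the part that needs genuine care.
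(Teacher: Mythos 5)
The survey does not actually prove this theorem --- it is quoted from Dvo\v{r}\'ak's thesis --- so there is no in-paper argument to compare against line by line; but your plan (iterated density amplification through bounded-depth minors, ending with arbitrarily large cliques at some fixed depth, contradicting $\omega_d(\Cc)<\infty$) is indeed the classical route behind the cited result. The problem is that the proposal defers exactly the part that constitutes the theorem. The ``shallow minor amplification'' lemma you postulate --- every $n$-vertex graph with $n^{1+\eps}$ edges has a depth-$1$ minor, still of polynomial size, whose density exponent is larger by a uniform amount --- is precisely Dvo\v{r}\'ak's key lemma, and none of the strategies you gesture at is obviously sound: contracting a random matching or a family of disjoint stars centered at high-degree vertices shrinks the vertex count by a factor of the degree, but a constant fraction of the edges may well be concentrated inside the contracted stars or between only a few pairs of them, so the ``super-edge'' count can collapse. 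The actual proof has to split into cases (roughly: either the edges between stars are spread out, giving the amplified exponent, or they are concentrated, which yields large bicliques directly), and no unconditional amplification statement of the form you assume can hold --- iterating it would force a density exponent above $2$, which is impossible for simple graphs. You flag this lemma as the main obstacle, which is honest, but it is the heart of the proof rather than a technicality.

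The endgame is also wrong as written. Since $|E(H)|\leq\binom{|V(H)|}{2}$, the density exponent can never exceed (or even reach) $2$, so you can never invoke Tur\'an's theorem: to force $K_t$ as a subgraph you need at least $(1-1/t)\,m^2/2$ edges on $m$ vertices, i.e.\ truly quadratic density with constant tending to $1/2$, whereas bounds of the form $m^{2-\delta}$ give a vanishing constant and hence nothing from Tur\'an as $m\to\infty$. The standard finish is different: once some bounded-depth minor has density exponent above $2-1/t$, the K\H{o}v\'ari--S\'os--Tur\'an theorem yields a $K_{t,t}$ subgraph, and contracting a perfect matching of that $K_{t,t}$ produces $K_t$ as a depth-$1$ minor; together with the composition bound for shallow minors (a depth-$d$ minor of a depth-$h$ minor is a depth-$(2hd+h+d)$ minor, as recorded in the survey) this gives $\omega_{d_0}(\Cc)=\infty$ for a fixed $d_0=d_0(\eps)$ and the desired contradiction. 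Your contradiction setup itself is fine --- the negation of the bound does produce arbitrarily large graphs in $\Cc$ with exponent at least $1+\eps$ --- but with the amplification lemma unproved and the Tur\'an step unusable, the argument as it stands does not go through.
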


Thus, the number of edges is almost linear in the number of vertices. This means that the average degree is bounded {\em{subpolynomially}}: by $\Oh_{\Cc,\eps}(|V(G)|^{\eps})$ for any fixed $\eps>0$, but not necessarily by a constant. In fact, there are nowhere dense classes where the average degree is unbounded; the class discussed in \cref{thm:nd-be-example} is one of them.

Note that if for a class $\Cc$ and $h\in \N$, by $\mathsf{Minors}_h(\Cc)$ we denote the class of all depth-$h$ minors of graphs from $\Cc$, then $\Cc$ being nowhere dense entails that $\mathsf{Minors}_h(\Cc)$ is nowhere dense as well. Indeed, for every $d\in \N$ we have $\omega_d(\mathsf{Minors}_h(\Cc))\leq \omega_{2hd+h+d}(\Cc)$, for one can easily see that a depth-$d$ minor of a depth-$h$ minor is a depth-$(2hd+h+d)$ minor. So from \cref{thm:nd-density} we can immediately derive the following.

\begin{corollary}\label{thm:nd-grads}
 Let $\Cc$ be a nowhere dense class of graphs, $h\in \N$ be a distance parameter, and $\eps>0$ be a positive real. Then for every graph $G\in \Cc$, we have
 $$\nabla_h(G)\leq \Oh_{\Cc,h,\eps}(|V(G)|^{\eps}).$$
\end{corollary}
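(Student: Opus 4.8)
The plan is to derive \cref{thm:nd-grads} as a direct consequence of \cref{thm:nd-density} via the observation, already sketched in the paragraph preceding the statement, that depth-bounded minors of a nowhere dense class again form a nowhere dense class. First I would fix a nowhere dense class $\Cc$, a distance parameter $h\in\N$, and $\eps>0$. Let $\Cc_h\coloneqq \mathsf{Minors}_h(\Cc)$ be the class of all depth-$h$ minors of graphs from $\Cc$. The key preliminary claim is that $\Cc_h$ is nowhere dense. To see this, note that a depth-$d$ minor of a depth-$h$ minor of $G$ is a depth-$(2hd+h+d)$ minor of $G$: if $\eta$ is a minor model of radius $\le h$ realizing $H'$ in $G$, and $\eta'$ is a minor model of radius $\le d$ realizing $H$ in $H'$, then composing them (taking, for each vertex $u$ of $H$, the union of the branch sets $\eta(w)$ over $w$ in the radius-$d$ ball $\eta'(u)$, together with connecting paths) yields a connected subgraph of $G$ of radius at most $d\cdot(2h+1)+h = 2hd+h+d$. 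Hence $\omega_d(\Cc_h)\le \omega_{2hd+h+d}(\Cc)<\infty$ for every $d\in\N$, so $\Cc_h$ is nowhere dense.

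Next I would apply \cref{thm:nd-density} to the class $\Cc_h$, with error parameter $\eps$. This gives that every graph $H\in\Cc_h$ satisfies $|E(H)|\le \Oh_{\Cc_h,\eps}(|V(H)|^{1+\eps})$, where the hidden constant depends only on $\Cc_h$ (equivalently, on $\Cc$ and $h$) and on $\eps$. Consequently $\avg(H) = 2|E(H)|/|V(H)| \le \Oh_{\Cc,h,\eps}(|V(H)|^{\eps})$ for every $H\in\Cc_h$. Now take any $G\in\Cc$ and let $H$ be a depth-$h$ minor of $G$ maximizing $\avg(H)$, so $\nabla_h(G)=\avg(H)$. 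Since $H$ is obtained from $G$ by a minor operation, we have $|V(H)|\le |V(G)|$, and therefore
$$\nabla_h(G) = \avg(H) \le \Oh_{\Cc,h,\eps}\big(|V(H)|^{\eps}\big) \le \Oh_{\Cc,h,\eps}\big(|V(G)|^{\eps}\big),$$
which is exactly the claimed bound.

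I do not anticipate a genuine obstacle here — the statement is essentially a repackaging of \cref{thm:nd-density} under closure of nowhere denseness under taking bounded-depth minors, which the surrounding text has already flagged. The only point requiring a modicum of care is the bookkeeping of radii in the composition of minor models (verifying the inequality $\omega_d(\mathsf{Minors}_h(\Cc))\le\omega_{2hd+h+d}(\Cc)$), and the trivial but necessary observation that minor operations do not increase the vertex count, so that the bound in terms of $|V(H)|$ can be replaced by a bound in terms of $|V(G)|$. Everything else is a direct substitution into \cref{thm:nd-density}.
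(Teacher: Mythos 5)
Your proposal is correct and follows essentially the same route as the paper: the paragraph preceding the corollary observes that $\mathsf{Minors}_h(\Cc)$ is again nowhere dense (via $\omega_d(\mathsf{Minors}_h(\Cc))\leq \omega_{2hd+h+d}(\Cc)$) and then applies \cref{thm:nd-density} to that class, which is exactly your argument, including the radius bookkeeping $d(2h+1)+h=2hd+h+d$ and the trivial observation that minors do not increase the vertex count.
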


The bound on the average degrees in shallow minors provided by \cref{thm:nd-grads} can be used to lift all the characterizations and properties mentioned in \cref{sec:char-be} from the setting of bounded expansion to the setting of nowhere denseness. The statement below summarizes the counterparts of \cref{thm:wcol-be,thm:nei-cov-be,thm:low-treedepth-covers-be,thm:neicmp-be} for nowhere dense classes.

\begin{theorem}\label{thm:quant-nd}
 Let $\Cc$ be a nowhere dense class of graphs, $d,p\in \N$ be parameters, and $\eps>0$ be a positive real. Then for every graph $G\in \Cc$, we have the following:
 \begin{itemize}[nosep]
 \item $\wcol_d(G)\leq \Oh_{\Cc,d,\eps}(|V(G)|^\eps)$.
 \item $G$ has a distance-$d$ neighborhood cover of radius $2d$ and overlap $\Oh_{\Cc,d,\eps}(|V(G)|^\eps)$.
 \item $G$ has a treedepth-$p$ cover of size $\Oh_{\Cc,p,\eps}(|V(G)|^\eps)$.
 \item For every $A\subseteq V(G)$, we have $|\Ss^G_d(A)|\leq \Oh_{\Cc,d,\eps}(|A|^{1+\eps})$.
 \end{itemize}
\end{theorem}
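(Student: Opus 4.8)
The plan is to derive each of the four bullets as a \emph{subpolynomial refinement} of its bounded-expansion counterpart, using \cref{thm:nd-grads} as the only extra ingredient. The key observation is that the proofs of \cref{thm:wcol-be,thm:nei-cov-be,thm:low-treedepth-covers-be,thm:neicmp-be} are quantitative: they bound the quantity of interest by a \emph{polynomial} function -- with exponent depending only on $d$, respectively $p$ -- of a bounded number of shallow-minor densities $\nabla_{h}(G)$, where $h$ is itself bounded in terms of $d$ (resp. $p$). For a nowhere dense class $\Cc$, \cref{thm:nd-grads} gives $\nabla_h(G)\le \Oh_{\Cc,h,\delta}(|V(G)|^{\delta})$ for every $\delta>0$; substituting this into such a bound turns it into $\Oh_{\Cc,\cdot,\delta}(|V(G)|^{C\delta})$ for a constant $C$ depending only on $d$ (resp. $p$), and choosing $\delta\coloneqq \eps/C$ yields the desired estimate. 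Thus the whole theorem reduces to revisiting the four bounded-expansion arguments and checking that the dependence on the grads is indeed polynomial and that the ``size parameter'' appearing in each statement is the right one.

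Concretely, for the first bullet one invokes the bound behind \cref{thm:wcol-be} (Zhu), which for fixed $d$ bounds $\wcol_d(G)$ by a polynomial in $\nabla_{\lceil d/2\rceil}(G)$; combined with \cref{thm:nd-grads} this gives $\wcol_d(G)\le \Oh_{\Cc,d,\eps}(|V(G)|^{\eps})$, realized by some ordering $\leq$ of $G$ with $\wcol_d(G,\leq)\le \Oh_{\Cc,d,\eps}(|V(G)|^{\eps})$. The second bullet is then immediate: the construction in the proof of \cref{thm:nei-cov-be} turns any ordering $\leq$ into a distance-$d$ neighborhood cover of radius $2d$ and overlap exactly $\wcol_{2d}(G,\leq)$, so the radius stays $2d$ and the overlap inherits the just-obtained bound (applied with $2d$). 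For the third bullet I would not construct the cover directly but first build a treedepth-$p$ \emph{coloring}: taking an ordering $\leq$ with $\wcol_{2^{p-1}}(G,\leq)=:M\le \Oh_{\Cc,p,\eps}(|V(G)|^{\eps})$ and running Zhu's greedy procedure yields a treedepth-$p$ coloring using $M$ colors, and then taking all $\binom{M}{p}\le M^{p}$ unions of $p$ color classes gives a treedepth-$p$ cover of size at most $M^{p}=\Oh_{\Cc,p,\eps}(|V(G)|^{p\eps})$; rescaling $\eps$ finishes this bullet.

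The fourth bullet, on neighborhood complexity, is the one requiring genuine care, and I expect it to be the main obstacle. Re-running the proof of \cref{thm:neicmp-be} gives a bound of the shape $|\Ss^G_d(A)|\le P(\nabla_{h}(G))\cdot |A|$ with $P$ polynomial and $h=\Oh(d)$; but substituting $\nabla_h(G)\le \Oh_{\Cc,d,\delta}(|V(G)|^{\delta})$ produces $\Oh_{\Cc,d,\delta}(|V(G)|^{C\delta})\cdot |A|$, which is \emph{not} of the form $\Oh(|A|^{1+\eps})$ when $A$ is much smaller than $V(G)$. The remedy is a preliminary reduction controlling the size of the ambient graph relative to $A$: dichotomize on whether $|V(G)|\le |A|^{1+\eps/2}$. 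In the first case the substitution already gives $|\Ss^G_d(A)|\le \Oh(|A|^{(1+\eps/2)C\delta})\cdot|A|=\Oh(|A|^{1+\eps})$ after choosing $\delta$ small. In the second case one passes to $H\coloneqq G[\Ball^G_d[A]]$, noting that every vertex with nonempty distance-$d$ trace on $A$ already lies in $\Ball^G_d[A]$, and that a shortest path of length at most $d$ between two vertices of $A$ stays inside $\Ball^G_d[A]$, so the traces are unchanged and $|\Ss^G_d(A)|\le |\Ss^H_d(A)|+1$; inside $H$ the set $A$ is distance-$d$ dominating, which is exactly the setting in which the number of distinct distance-$d$ traces can be bounded directly -- for instance through an ordering of $H$ with small weak $2d$-coloring number (first bullet, applied in the hereditary closure of $\Cc$, which is again nowhere dense) together with a counting argument on how a radius-$2d$ ball can trace on $A$. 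Assembling the two cases yields the stated almost-linear bound. The recurring point -- and the reason the theorem holds -- is that subpolynomial control on the grads suffices to port the entire bounded-expansion toolkit at the price of an $n^{\eps}$ (resp. $|A|^{\eps}$) slack, \emph{provided} the underlying quantitative estimates are polynomial and one is careful to replace ``$n$'' by the correct size parameter in each statement.
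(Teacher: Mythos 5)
Your handling of the first three bullets is essentially the paper's argument: inspect the bounded-expansion proofs behind \cref{thm:wcol-be,thm:nei-cov-be,thm:low-treedepth-covers-be}, check that the relevant quantities are polynomial in the grads $\nabla_h(G)$ (with degree depending on $d$, resp.\ $p$), substitute the $\Oh_{\Cc,h,\delta}(|V(G)|^{\delta})$ bound of \cref{thm:nd-grads}, and rescale; your detour through treedepth-$p$ colorings and $\binom{M}{p}\le M^p$ unions of color classes for the third bullet is exactly the intended route.

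The gap is in the fourth bullet, and it is precisely the point you flag as needing ``genuine care'' but then resolve with an unfounded assumption. You posit that re-running the proof of \cref{thm:neicmp-be} yields $|\Ss^G_d(A)|\le P(\nabla_h(G))\cdot|A|$ with $P$ \emph{polynomial}. It does not: the argument of Reidl, S\'anchez Villaamil, and Stavropoulos produces factors that are \emph{exponential} in the weak coloring numbers of $G$ (equivalently, exponential in a polynomial of the grads). Under nowhere denseness, where one only has $\wcol_{2d}(G)\le \Oh_{\Cc,d,\delta}(n^{\delta})$, such a factor becomes of order $2^{\Oh(n^{\delta})}$, and neither rescaling $\delta$, nor your dichotomy on whether $|V(G)|\le|A|^{1+\eps/2}$, nor the (otherwise sound) localization to $G[\Ball^G_d[A]]$ rescues the bound: even in the favourable case $|V(G)|\le|A|^{1+\eps/2}$ the substitution gives a quantity superpolynomial in $|A|$. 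The unspecified ``counting argument on how a radius-$2d$ ball can trace on $A$'' via an ordering with small $\wcol_{2d}$ is exactly where the exponential blow-up sits, so your sketch circles back to the obstacle rather than overcoming it. Closing this gap needs a genuinely new ingredient; in the paper the fourth bullet is attributed to Eickmeyer et al.~\cite{EickmeyerGKKPRS17}, who bring in tools from the theory of VC dimension to reduce the exponential dependence on the weak coloring numbers to a polynomial one, after which the substitute-and-rescale strategy (together with the correct size parameter $|A|$) does go through.
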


To prove the first three points above, it suffices to inspect the proofs of \cref{thm:wcol-be,thm:nei-cov-be,thm:low-treedepth-covers-be} and verify that the relevant quantities depend polynomially on the parameters $\nabla_h(G)$. Indeed, then the $\Oh_{\Cc,h,\eps}(|V(G)|^\eps)$ bound on $\nabla_h(G)$, provided by \cref{thm:nd-grads}, implies a bound of the same form on the quantity in question, after rescaling $\eps$. The proof of the last point --- of the neighborhood complexity, proposed by Eickmeyer, Giannopoulou, Kreutzer, Kwon, Pilipczuk, Rabinovich, and Siebertz~\cite{EickmeyerGKKPRS17} --- is more complicated. The reason is that the proof of Reidl et al.~\cite{ReidlVS19} produces factors exponential in parameters $\nabla_h(G)$ (more precisely, in the weak coloring numbers of $G$). In~\cite{EickmeyerGKKPRS17}, tools from the theory of VC dimension were used to reduce this exponential dependence to polynomial.

All in all, as witnessed by \cref{thm:quant-nd}, \cref{thm:nd-grads} can be used to obtain properties of nowhere dense classes of quantitative nature, typically involving the not-so-elegant term $\Oh_{\Cc,d,\eps}(|V(G)|^{\eps})$. However, the original definition is more of a qualitative nature, as it speaks about universal boundedness of  parameters~$\omega_d$, and not about bounds of the form $\Oh_{\Cc,d,\eps}(|V(G)|^{\eps})$. We now present two other characterizations of nowhere denseness of quantitative nature: through {\em{flatness}} ({\em{aka}} {\em{uniform quasi-wideness}}) and through {\em{Splitter Game}}. These characterizations are of immense importance for the~theory.

\paragraph*{Flatness.} The intuition expressed by flatness is the following: in a huge sparse graph there are many vertices that are pairwise far from each other. This is not necessarily exactly true: consider a star. However, it turns out that this  intuition can be turned into an actual characterization of nowhere dense classes, provided we allow deletion of a bounded number of vertices. Formally, we have the following~definition.

\begin{definition}\label{def:flatness}
 A graph class $\Cc$ is {\em{flat}} if for every $d\in \N$, there exists a constant $s_d\in \N$ and a function $N_d\colon \N\to \N$ such that the following condition holds. For every graph $G\in \Cc$ and a vertex subset $A\subseteq V(G)$ with $|A|>N_d(m)$ for some $m\in \N$, there exist $S\subseteq V(G)$ with $|S|\leq s_d$ and $I\subseteq A\setminus S$ with $|I|>m$ such that the vertices of $I$ are pairwise at distance larger than $d$ in the graph $G-S$.
\end{definition}

Observe that the condition is actually somewhat stronger than the intuition phrased above: we can find a set $I$ consisting of many pairwise far vertices not only in the whole graph $G$, but even within any specified subset $A$, provided it is large enough. Note also that the bound $s_d$ on the number of vertices allowed to be deleted does {\em{not}} depend on $m$, the requested lower bound on the size of $I$.

Flatness in the exact form stated in \cref{def:flatness} was first used by Ne\v{s}et\v{r}il and Ossona de Mendez in~\cite{NPoM-nd-uqw}, but the notion has its roots in the earlier work of Atserias, Dawar, Grohe and Kolaitis~\cite{AtseriasDG08,AtseriasDK06,Dawar07,Dawar10,Dawar24} on preservation theorems; see also the discussion in~\cite{DawarK09}. In fact, one can argue that the notion was described in the context of infinite graphs by Podewski and Ziegler~\cite{podewski1978stable} already in 1978.
We remark that Ne\v{s}et\v{r}il and Ossona de Mendez~\cite{NPoM-nd-uqw}, as well as a vast majority of the existing literature on Sparsity, use the term {\em{uniform quasi-wideness}} instead of flatness. In this survey, we follow the recent trend of using {\em{flatness}} instead, because otherwise naming suitable generalizations to dense graphs, discussed in \cref{sec:forbidding}, would become cumbersome.

At first glance, the nature of flatness seems to be very different than that of nowhere denseness. It is therefore surprising that the notions actually coincide, as proved by Ne\v{s}et\v{r}il and Ossona de Mendez~\cite{NPoM-nd-uqw}.

\begin{theorem}[\cite{NPoM-nd-uqw}]\label{thm:flatness-nd}
 A graph class is nowhere dense if and only if it is flat.
\end{theorem}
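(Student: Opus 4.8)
The proof splits into the two implications, and the two halves are of rather different character: the ``flat $\Rightarrow$ nowhere dense'' direction is the easy, purely combinatorial one, while ``nowhere dense $\Rightarrow$ flat'' carries all the weight. I would begin with the easy contrapositive. Suppose $\Cc$ is \emph{not} nowhere dense, so there is a radius $d$ such that $\omega_d(\Cc)=\infty$, i.e.\ $\Cc$ contains, as depth-$d$ minors, arbitrarily large cliques $K_t$. Fix such a $G\in\Cc$ with a depth-$d$ minor model of $K_t$, and let $A$ be a set of representative vertices, one from each branch set. Then any two vertices of $A$ are joined by a path of length at most $2d+1$ through the branch sets, so after deleting \emph{any} bounded set $S$ of vertices, the at most $|S|$ branch sets hit by $S$ can be discarded and the remaining $t-|S|$ representatives are still pairwise at distance $\le 2d+1$ in $G-S$. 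Hence for the distance parameter $d'\coloneqq 2d+1$ there is no bound $s_{d'}$, no function $N_{d'}$, and no way to extract a large distance-$(d'{+}1)$ scattered subset: flatness fails. (One must be mildly careful to phrase this with the specific distance parameter for which flatness is tested; rescaling $d$ vs.\ $2d+1$ is routine.)

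The substantive direction is ``nowhere dense $\Rightarrow$ flat''. Here the plan is to use the quantitative machinery already assembled in the excerpt, most importantly the bound on weak coloring numbers in nowhere dense classes, \cref{thm:quant-nd}, and the resulting \emph{neighborhood covers} of radius $2d$ and subpolynomial overlap. The strategy, going back to Ne\v set\v ril and Ossona de Mendez, is an iterative ``peeling'' argument that produces the deletion set $S$ one vertex (or one bounded chunk) at a time. Given $G\in\Cc$, a radius $d$, a large set $A$, and a target $m$: take a distance-$d$ neighborhood cover $\Ff$ of $G$ with overlap $p=\Oh_{\Cc,d,\eps}(n^\eps)$ (\cref{thm:quant-nd}). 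A counting/pigeonhole step shows that one cover element $F\in\Ff$ contains a still-large chunk $A'\subseteq A\cap F$ (of size roughly $|A\cap F|$, and summing the chunk sizes over $\Ff$ one loses only the overlap factor). On $G[F]$, which has radius $2d$, one now analyzes the \emph{profile} of each vertex of $A'$ with respect to a carefully chosen small ``core''; two vertices of $A'$ with the same profile and far apart can be separated from the rest of $A'$ after deleting the core, and a Ramsey-type / sunflower-type argument bounds how many profiles can occur. Each round either outputs a large distance-$d$ scattered subset $I$, or identifies a bounded set of vertices to add to $S$ and recurses on a graph that is ``simpler'' in a suitable potential (e.g.\ the radius of the ambient ball, or a shallow-minor complexity parameter, decreases). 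After boundedly many rounds — the bound $s_d$ depending only on $d$ and on the nowhere-dense parameters of $\Cc$, crucially \emph{not} on $m$ — the process terminates with the desired $S$ and $I$; the function $N_d(m)$ is what is accumulated through the pigeonhole losses over these boundedly many rounds.

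The main obstacle, and the place where the proof is genuinely delicate, is controlling the recursion so that the deletion budget $s_d$ stays independent of $m$ while the threshold $N_d(m)$ stays finite. Naively, each peeling step could delete a number of vertices growing with the size of $A$, which would ruin flatness; the key insight needed is that the ``core'' one deletes in each round can be taken of size bounded purely in terms of $d$ and the weak coloring numbers (equivalently, the shallow-minor densities) of $\Cc$, using that weak reachability sets have bounded size so that separators controlling distance-$d$ connections inside $A$ can be found of size $\wcol_{2d}(\Cc[F])$-bounded. Getting this requires combining the ``every short path through $u$ and $v$ meets $\WReach_{2d}[u]\cap\WReach_{2d}[v]$'' observation with a Ramsey argument on the intersection patterns, and then checking that the number of rounds is itself bounded. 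The subpolynomial (rather than constant) nature of the nowhere-dense bounds in \cref{thm:quant-nd} forces one to be careful with how the $\eps$'s compose across rounds, but since the number of rounds is bounded one can fix $\eps$ small enough at the outset and absorb everything; this is exactly the place where \cref{thm:nd-grads} / \cref{thm:quant-nd}, rather than the bounded-expansion versions, must be invoked. Once the round count and the per-round deletion size are both pinned down independently of $m$, assembling $s_d$ and $N_d$ and reading off \cref{def:flatness} is bookkeeping.
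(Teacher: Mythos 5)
Your ``flat $\Rightarrow$ nowhere dense'' direction is correct and is essentially the standard argument: representatives of the branch sets of a large depth-$d$ clique minor stay pairwise at distance at most $2d+1$ after any bounded deletion, so flatness fails for the distance parameter $2d+1$. The problem is in the hard direction. The proof the survey refers to (Ne\v{s}et\v{r}il--Ossona de Mendez) proceeds by induction on the distance parameter $d$, applying Ramsey's Theorem at each step; the engine that produces a deletion set $S$ of \emph{constant} size $s_d$ is the observation that a single vertex lying within distance $d$ of many members of an already scattered set can be contracted together with the connecting short paths into a large clique (or large biclique) appearing as a shallow minor, which nowhere denseness forbids --- hence only boundedly many such ``hub'' vertices exist, and these are exactly the vertices one deletes. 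Your plan replaces this mechanism by quantitative machinery whose guarantees are too weak for the required constant: in a nowhere dense class the weak coloring numbers $\wcol_{2d}$ and the overlap of the neighborhood covers of \cref{thm:quant-nd} are only $\Oh_{\Cc,d,\eps}(n^{\eps})$, not constants (they are genuinely unbounded, e.g.\ on the class of \cref{thm:nd-be-example}), so a ``core'' whose size is controlled by $\wcol_{2d}(G[F])$ or by the cover overlap is not a set of size at most $s_d$ in the sense of \cref{def:flatness}.

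This matters because the definition allows the threshold $N_d(m)$ to be an arbitrary function of $m$, but the deletion budget $s_d$ must be a constant depending only on $d$ and $\Cc$ --- independent of the graph as well as of $m$. Subpolynomial losses can be absorbed into $N_d$, not into $s_d$, so ``fixing $\eps$ small at the outset and absorbing everything'' does not rescue the argument; the $n^{\eps}$ factors land precisely in the quantity that must be constant. (This is also why the statement cannot be obtained by simply importing the bounded-expansion toolkit: there $\wcol_d(\Cc)$ is finite, here it is not.) In addition, the termination of your peeling recursion is not pinned down --- ``a suitable potential (e.g.\ the radius of the ambient ball, or a shallow-minor complexity parameter) decreases'' is not an argument, whereas in the actual proof the recursion is an induction on $d$ itself, with Ramsey's Theorem used at each level to homogenize adjacency/short-path patterns, and with the bound $s_d$ extracted from the parameters $\omega_r(\Cc)$ via the hub-contraction observation above. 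As it stands, your sketch would at best yield flatness for classes with uniformly bounded weak coloring numbers, i.e.\ bounded expansion classes, not for all nowhere dense classes.
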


The proof of \cref{thm:flatness-nd} proceeds by induction on the distance parameter $d$ and applies Ramsey's Theorem at each step. In fact, flatness itself can be regarded a property of Ramseyan nature. In applications, it is typically used to find large better-structured objects within huge worse-structured objects.

\paragraph*{Splitter Game.} Finally, we describe the game characterization of nowhere denseness. This is provided by the bounded-radius variant of the game for treedepth that we discussed in \cref{sec:treedepth}. Concretely, recall that the game is played on a graph, called the {\em{arena}}, and there are two players: Splitter and Connector. This time, there will be also a distance parameter $d\in \N$; so we will speak about the {\em{radius-$d$ Splitter Game}}. In every round, first Connector selects any vertex $u$ and the arena gets shrunk to the subgraph induced by the radius-$d$ ball around $u$. Then Splitter removes any vertex of his choice from the arena. The game finishes with Splitter's win when the arena gets empty, the goal of Splitter is to terminate the game in as few rounds as possible, and Connector's goal is the opposite.

Thus, in this bounded-radius variant, in every round Connector is forced to ``localize'' the arena to a ball of bounded radius. It turns out that with this amendment, the game exactly characterizes nowhere dense classes, as proved by Grohe, Kreutzer, and Siebertz~\cite{GroheKS17}.

\begin{theorem}[\cite{GroheKS17}]\label{thm:splitter-game}
A graph class $\Cc$ is nowhere dense if and only if for every $d\in \N$ there exists $k\in \N$ such that for every graph $G\in \Cc$, Splitter can win the radius-$d$ Splitter Game on $G$ within at most $k$ rounds.
\end{theorem}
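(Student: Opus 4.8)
The two directions are of very different character. The implication ``the game bound holds $\Rightarrow$ $\Cc$ is nowhere dense'' should be easy and follows from the locality of shallow clique minors; the converse ``$\Cc$ nowhere dense $\Rightarrow$ the game bound holds'' is the substantial one, and I would derive it from flatness (\cref{thm:flatness-nd}).

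For the easy direction I would argue the contrapositive. If $\Cc$ is not nowhere dense, fix $r\in\N$ witnessing this, so that $K_t$ is a depth-$r$ minor of some $G\in\Cc$ for every $t$. Set $d\coloneqq 4r+2$. On such a $G$ I would give Connector the following strategy for the radius-$d$ game: maintain the invariant that the current arena contains $K_{t'}$ as a depth-$r$ minor for some $t'$, starting from $t'=t$. Given a minor model $\eta$ of $K_{t'}$ inside the arena, each branch set has radius at most $r$ and any two branch sets are joined by an edge, so all of $\bigcup_u\eta(u)$ lies within distance $4r+1<d$ of an arbitrarily chosen vertex $x$ of some branch set; Connector picks $\Ball_d[x]$, which keeps $\eta$ alive. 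Whatever vertex Splitter then deletes lies in at most one branch set, so discarding that branch set yields a depth-$r$ minor model of $K_{t'-1}$ in the new arena. Hence Connector survives at least $t-1$ rounds, and since $t$ is unbounded, no uniform round bound $k$ can exist for this $d$.

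For the hard direction I would fix $d$ and use that $\Cc$, being nowhere dense, is flat (\cref{thm:flatness-nd}); I would also use that any class obtained from $\Cc$ by deleting a bounded number of vertices per graph is still nowhere dense, hence still flat, with the flatness parameters shifting only by a function of the deletion budget. The goal is to describe a Splitter strategy and bound its length by a nested induction (on the number of remaining rounds, or equivalently on a well-founded ``rank'' of the arena). The key use of flatness: when Connector localizes the arena to $B=\Ball_d[u]$, the centre $u$ distance-$d$-dominates $B$; if $B$ were ``too wide'', flatness applied at radius about $2d$ would, after deleting a bounded set of $s_{2d}$ vertices, expose a large set of vertices of $B$ pairwise at distance $>2d$, and a large dominated scattered set of this kind forces a large shallow clique minor in $G$ --- contradicting nowhere denseness. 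So Connector is effectively forced to keep playing inside balls that are ``narrow''; I would set up a potential measuring this narrowness together with the remaining deletion budget, show that each Splitter move strictly decreases it, and conclude that after a number of rounds bounded in terms of $d$ and the flatness parameters of $\Cc$ the arena becomes empty. Making ``narrow'' precise, choosing the potential, and controlling how the flatness parameters $s_{2d},N_{2d}$ compound through the recursion are exactly where the work lies.

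The main obstacle is this hard direction, and within it the step that extracts a genuine \emph{shallow clique minor} (rather than, say, a long path) from ``Connector can keep playing'': flatness --- equivalently, uniform quasi-wideness --- is indispensable here, since the naive Splitter heuristic of always deleting the centre of Connector's ball only rules out long paths, and long paths do occur in nowhere dense classes. The easy direction should be a routine verification once the distance bookkeeping is done.
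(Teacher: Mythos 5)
Your easy direction is correct and complete in outline: maintaining a depth-$r$ minor model of a clique as an invariant, centering Connector's ball inside a branch set (the distance bound $2r+1+2r=4r+1<d$ checks out, with distances measured inside the arena, which is all you need), and sacrificing at most one branch set per Splitter move is exactly the right argument, and it matches the standard reasoning. For the hard direction you also pick the same route the paper attributes to Grohe, Kreutzer, and Siebertz: a concrete Splitter strategy whose termination is argued via flatness (\cref{thm:flatness-nd}). So in spirit the plan is aligned with the cited proof.

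However, the hard direction as written has a genuine gap, and you say so yourself: the potential function is never defined, and the claim that ``each Splitter move strictly decreases it'' is precisely the content of the theorem, not something that follows from the setup. Moreover, the one concrete step you do sketch is not right as stated. If Connector plays $B=\Ball_d[u]$, then applying flatness at radius about $2d$ inside $B$ produces a small set $S$ and a set $I\subseteq B$ that is $2d$-scattered \emph{in $G-S$}; but such a set, all of whose elements are dominated at distance $d$ by the single vertex $u$ in $G$, does not ``force a large shallow clique minor.'' The short $u$-paths to the elements of $I$ only give a star of paths (and at most one of them can avoid $S$, the rest being funneled through the bounded set $S$), which is nowhere near pairwise, internally disjoint connections between branch sets; so no contradiction with nowhere denseness is obtained this way, and the purported dichotomy ``narrow vs.\ too wide'' has no proof behind it. The actual argument has to integrate flatness into the \emph{strategy} itself --- Splitter's deletions over several rounds play the role of the deletion set $S$, and the analysis must show that a well-chosen, explicitly described sequence of deletions makes a quantity tied to the flatness parameters (not an unspecified ``narrowness'') decrease across nested balls, with the parameter bookkeeping for $s_{2d}$ and $N_{2d}$ carried through the induction. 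Until that strategy and its progress measure are spelled out, the implication from nowhere denseness to the uniform round bound is not established.
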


The proof of \cref{thm:splitter-game} proposes a simple, concrete strategy for Splitter, and uses the characterization through flatness (\cref{thm:flatness-nd}) to argue that when deployed on a graph from a nowhere dense class, this strategy will lead to a win within a bounded number of rounds. In particular, every next move of Splitter in this strategy can be computed in linear time.

On the combinatorial level, the game tree of the radius-$d$ Splitter Game played on a graph $G$ provides a kind of a hierarchical decomposition of $G$ into smaller and smaller ``vicinities'', where the depth of the decomposition is bounded by a constant --- the length of the game. We will make this observation a bit more precise in \cref{sec:sparsity-mc} when discussing the algorithm for model-checking $\FO$ on nowhere dense~classes.

\subsubsection{Model-checking}\label{sec:sparsity-mc}

It turns out that the set of tools introduced for classes of bounded expansion and for nowhere dense classes can be robustly applied to the algorithmic treatment of the $\FO$ model-checking problem. This was investigated for bounded expansion classes by Dvo\v{r}\'ak, Kr\'al', and Thomas in~\cite{DvorakKT13}, and for nowhere dense classes by Grohe, Kreutzer, and Siebertz in~\cite{GroheKS17}. We now discuss these two results in order, as the approaches differ significantly and both have their merits.

\paragraph*{Classes of bounded expansion.}
The result of Dvo\v{r}\'ak et al. is the following.

\begin{theorem}[\cite{DvorakKT13}]\label{thm:be-mc}
 For every class of bounded expansion $\Cc$ there is an algorithm that given an $\FO$ sentence~$\varphi$ and a graph $G\in \Cc$, decides whether $G\models \varphi$ in time $\Oh_{\Cc,\varphi}(n)$.
\end{theorem}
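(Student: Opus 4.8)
The plan is to establish the statement by a quantifier-elimination argument powered by the decomposition tools for bounded expansion classes --- low treedepth covers (\cref{thm:low-treedepth-covers-be}) and weak coloring numbers (\cref{thm:wcol-be}) --- proving the following strengthening by induction on the quantifier rank $q$ of the formula. For every $\FO$ formula $\varphi(\bar x)$ of quantifier rank at most $q$ there is a number $c=c(\Cc,\varphi)\in\N$, a set of at most $c$ fresh unary and binary predicates, and a \emph{quantifier-free} formula $\widehat\varphi(\bar x)$ over the enriched signature, such that: given $G\in\Cc$ one can compute in time $\Oh_{\Cc,\varphi}(n)$ interpretations of these predicates in $G$ --- yielding a structure $G^+$ --- so that (i) the Gaifman graphs of all such $G^+$ again form a class of bounded expansion, and (ii) $G\models\varphi(\bar a)$ if and only if $G^+\models\widehat\varphi(\bar a)$ for every tuple $\bar a$. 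Granting this, the theorem is immediate: write the input sentence as $\exists x\,\varphi'(x)$, apply the statement to $\varphi'(x)$, and note that a quantifier-free formula in a single free variable is a Boolean combination of unary predicates, so $\exists x\,\widehat{\varphi'}(x)$ is decided by one linear scan of $G^+$.

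The base case $q=0$ is trivial, and conjunction, disjunction and negation are handled by the obvious Boolean combinations of the enriched predicates; the content is the elimination of a leading existential quantifier from a quantifier-free $\psi(\bar x,y)$ over an already-enriched signature. The first step is to trivialize \emph{bounded-distance predicates}: using the effective form of \cref{thm:wcol-be} (a linear-time algorithm producing, for any fixed $d$, a vertex ordering of bounded weak $d$-coloring number) together with iterated transitive fraternal augmentations up to a radius determined by $q$, one enriches $G$ so that every relation ``$\dist_G(u,v)\le d$'' relevant to $\psi$ becomes quantifier-free, referring only to the bounded-size augmented out-neighbourhoods; crucially these augmentations preserve bounded expansion. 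Once distances are quantifier-free, the truth of $\psi(\bar a,y)$ depends only on the atomic type of $(\bar a,y)$ in $G^+$, so a witness $y$ is classified by which augmentation relations link it, in either direction, to each $a_i$ and by its unary type. For each $a_i$ there are only boundedly many $y$ reached by an \emph{out}-edge of $a_i$, and these are enumerated directly; the $y$'s linked to the $a_i$'s only by \emph{in}-edges (of which there may be linearly many) are absorbed into fresh unary predicates on vertices: for every vertex $a$ and every relation-pattern-plus-unary-type one records, up to a fixed threshold, how many such $y$ exist, which is computable in total time $\Oh(\sum_R|R|)=\Oh_{\Cc,\varphi}(n)$ by a single pass over each augmentation relation $R$. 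Witnesses atomically isolated from $\bar a$ are handled the same way via a global counting predicate, and packaging all cases yields the required quantifier-free $\widehat\varphi$.

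Two points require care, and the second is the genuine obstacle. First, one must verify \emph{inductively} that every enriched class still has bounded expansion --- otherwise \cref{thm:wcol-be,thm:low-treedepth-covers-be} stop applying and the recursion breaks; this follows from the closure of bounded expansion under transitive fraternal augmentations and under the addition of boundedly many predicates, but the bookkeeping must be maintained throughout the induction. Second, and more seriously, the whole chain of augmentations must be produced in \emph{truly linear} total time: one cannot materialize the solution set of an intermediate formula $\psi(\bar x,y)$, which may have size $\Theta(n^{|\bar x|+1})$, so the argument must be organized purely symbolically, threading only the bounded collection of new predicates through the recursion. Making boundedness of expansion and genuine linearity of the running time coexist at every inductive step is the crux of the Dvo\v{r}\'ak--Kr\'al'--Thomas proof, and I expect it to be where essentially all the real work lies.

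For completeness I note an alternative route that also fits the tools above: after a Gaifman localization of the sentence (reduced to deciding $r$-local formulas at each vertex and then existence of scattered sets of such vertices), one can recurse on the bounded-length play of the radius-$r$ Splitter Game of \cref{thm:splitter-game} --- to evaluate an $r$-local formula around $v$, delete Splitter's chosen vertex and recurse inside the resulting ball, strictly decreasing the Splitter-rank. The recursion depth is bounded and, since $\nabla_d(\Cc)$ is finite for every $d$, each level costs linear time. This is the Grohe--Kreutzer--Siebertz argument specialized to bounded expansion (in the merely nowhere dense case the constants become subpolynomial and one obtains $\Oh_{\Cc,\varphi}(n^{1+\eps})$ instead); either route suffices for the statement.
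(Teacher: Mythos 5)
Your main route --- quantifier elimination by induction on quantifier rank, with the graph enriched along weak-coloring orderings and transitive fraternal augmentations --- is exactly the approach the survey sketches for \cref{thm:be-mc} (and your fallback via Gaifman locality and the Splitter Game is the \cref{thm:nd-mc} route). But the inductive invariant you formulate has a genuine flaw: you ask for a \emph{quantifier-free} formula over a purely \emph{relational} enrichment (boundedly many unary and binary predicates, computable in linear time, with Gaifman graphs still of bounded expansion). A quantifier-free relational formula can only inspect the atomic type of the tuple $\bar a$ itself, and that is too weak. Consider $\varphi(x_1,x_2)=\exists y\,\bigl(\adj(x_1,y)\wedge\adj(x_2,y)\bigr)$ on the class of disjoint unions of stars (treedepth $2$, so certainly bounded expansion): with boundedly many unary predicates, pigeonhole gives same-star leaf pairs and cross-star leaf pairs with identical unary data, so they must be separated by added binary atoms; but any binary predicate doing this must contain quadratically many leaf pairs, which can neither be materialized in time $\Oh_{\Cc,\varphi}(n)$ nor keep the Gaifman graphs of the enriched structures weakly sparse, let alone of bounded expansion. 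This is precisely why \cref{thm:quantifier-elimination} is stated over \emph{pointer structures}: with unary function symbols one simply writes $f(x_1)=f(x_2)$ for the pointer to the star center, and quantifier-free formulas may follow bounded pointer chains instead of needing dense binary relations. Your phrase that the boundedly many out-neighbours of each $a_i$ are ``enumerated directly'' silently assumes exactly this functional access, which your declared signature does not provide; the invariant must be repaired either by adding unary functions (pointers along the low-out-degree augmentation arcs) or by allowing quantification relativized to bounded out-neighbourhoods.

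The same repair is needed in your witness analysis. You split witnesses $y$ into those reachable by an out-arc from some $a_i$ (boundedly many) and those attached only by in-arcs, absorbing the latter into per-vertex threshold counts. But a single witness may be attached by in-arcs to \emph{two or more} of the $a_i$ simultaneously (the common-neighbour example with the orientation reversed), and separate unary counts at $a_1$ and at $a_2$ cannot decide whether one $y$ serves both. Handling this is the actual role of \emph{fraternal} augmentation, which in your write-up is invoked only to trivialize distance predicates: fraternality guarantees that if such a $y$ exists, then $a_1$ and $a_2$ are joined by an augmentation arc, and the relevant threshold counts are stored as labels on that sparse, bounded-out-degree arc (equivalently, on a pointer of its tail), not as vertex predicates. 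With these two corrections --- a functional enrichment in the spirit of \cref{thm:quantifier-elimination}, and arc-indexed rather than vertex-indexed counting for multiply-attached witnesses --- your plan coincides with the Dvo\v{r}\'ak--Kr\'al'--Thomas argument, and the crux you correctly identify (preserving bounded expansion and genuine linearity through the recursion) is then managed by keeping all new information on unary predicates and on the augmentation pointers only.
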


A complexity remark is necessary here. The constant depending on $\Cc$ and $\varphi$ hidden in the $\Oh_{\Cc,\varphi}(\cdot)$ notation above ultimately depends on the parameter $\nabla_d(\Cc)$, for some $d$ depending (in a computable fashion) on $\phi$. Therefore, to adhere to the strict definition of fixed-parameter tractability and have this constant depend in a computable way on $\varphi$, we need to assume that the function $d\mapsto \nabla_d(\Cc)$ can be upper-bounded by a computable function. We call classes with this property of {\em{effectively bounded expansion}}, and fixed-parameter tractability in the strict sense can be stated only for those classes. The same remark applies also to the algorithms for nowhere dense classes (\cref{thm:nd-mc}) and for monadically stable classes (\cref{thm:ms-mc}).

The original proof of \cref{thm:be-mc} of Dvo\v{r}\'ak et al. relies on {\em{transitive fraternal augmentations}}, a notion similar in spirit to weak coloring numbers. Later, the approach was reworked using low treedepth covers to make it more modular, firstly by Grohe and Kreutzer~\cite{grohe2011methods} and then by Pilipczuk, Siebertz, and Toru\'nczyk~\cite{PilipczukST18}. See also the works of Kazana and Segoufin~\cite{KazanaS19} and of Toru\'nczyk~\cite{Torunczyk20} that focus on generalizations relevant for problems with database motivations. In our description here, we follow the presentation from~\cite[Chapter 3]{sparsityNotes},  which largely relies on~\cite{PilipczukST18}.

In all the works above, the main idea is to apply {\em{quantifier elimination}}: starting with the graph $G$ and the sentence $\varphi$, we iteratively enrich $G$ with more information, and at the same time we simplify $\phi$ by removing consecutive quantifiers. Eventually $\phi$ becomes very simple and its satisfaction can be tested directly. The enrichment of $G$ with more information can be regarded as a process of spreading information about the existence of witnesses for certain formulas. This information spread is performed along orientations of transitive fraternal augmentations in~\cite{DvorakKT13,KazanaS19}, and along elimination forests of subgraphs induced by elements of low treedepth covers in~\cite{grohe2011methods,PilipczukST18,Torunczyk20}.

The advantage of this approach is that it actually provides a much stronger result than just an algorithm for model-checking. Namely, quantifier elimination can be applied to any formula, even with free variables, to reduce it to an equivalent formula of a simple form, at the cost of adding more information to the graph. We now explain the outcome of quantifier elimination using the terminology from~\cite[Chapter 3]{sparsityNotes}.

The output of quantifier elimination will have the form of a so-called {\em{pointer structure}}, which is a notion that is simple, but does not exactly fit within the framework of relational structures. Namely, the signature $\Sigma$ of a pointer structure consists of some unary predicates and some {\em{unary function symbols}}. A pointer structure $\Af$ consists of a universe $U(\Af)$, a unary relation $R^\Af\subseteq U(\Af)$ for each unary predicate $R\in \Sigma$, and a function $f^{\Af}\colon U(\Af)\to U(\Af)$ for each function symbol $f\in \Sigma$. It is useful to think of those functions as pointers, hence the name. Then in $\FO$ on pointer structures, we allow applying functions to variables. For instance, we can write atomic formulas of the form $R(f(x))$ or $f(g(x))=g(f(y))$. Note that in this way, even if a formula $\varphi(\tup x)$ is quantifier-free (i.e., does not contain any quantifiers), it may speak not only about the relations between the elements of $\tup x$, but also between their images under application of functions. The {\em{Gaifman graph}} of a pointer structure is defined naturally, by putting an edge between $u$ and $f^{\Af}(u)$ for every element $u$ and function symbol $f$.

We can now state the quantifier elimination result in full~generality.

\begin{theorem}\label{thm:quantifier-elimination}
 Let $\Cc$ be a class of bounded expansion and $\varphi(\tup x)$ be an $\FO$ formula. Then there is a formula $\widehat{\varphi}(\tup x)$ and an $\Oh_{\Cc,\phi}(n)$-time algorithm that given a graph $G\in \Cc$, outputs a pointer structure $\widehat{G}$ such that the following conditions hold:
 \begin{itemize}[nosep]
  \item The universe of $\widehat{G}$ is the vertex set of $G$ and the Gaifman graph of $\widehat{G}$ is a subgraph of $G$. That is, for every vertex $u$ and function $f$ present in $\widehat{G}$, either $u=f(u)$ or $u$ and $f(u)$ are adjacent in~$G$.
  \item $\widehat{\varphi}(\tup x)$ is a quantifier-free $\FO$ formula over the signature of $\widehat{G}$.
  \item $\phi(\tup x)$ selects in $G$ exactly the same tuples as $\widehat{\phi}(\tup x)$ in $\widehat{G}$. That is, we have
  $$G\models \phi(\tup u)\quad\textrm{if and only if}\quad \widehat{G}\models \widehat{\phi}(\tup u)\qquad \textrm{for all }\tup u\in V(G)^{\tup x}.$$
 \end{itemize}
\end{theorem}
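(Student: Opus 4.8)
The plan is to prove \cref{thm:quantifier-elimination} by induction on the quantifier rank of $\varphi$, with the inductive step isolated as a \emph{single quantifier-elimination lemma}: if $\Dd$ is a class of pointer structures whose Gaifman graphs form a bounded-expansion class, and $\exists y\,\theta(\bar x,y)$ is a formula in which $\theta$ is quantifier-free over the signature of $\Dd$, then in time $\Oh_{\Dd,\theta}(n)$ one can enrich each $\str D\in\Dd$ by boundedly many unary predicates and unary function symbols so that the Gaifman graph of the enriched structure $\str D'$ is a subgraph of that of $\str D$, and produce a quantifier-free $\theta'(\bar x)$ equivalent to $\exists y\,\theta(\bar x,y)$ on all of $\Dd$. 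Since the Gaifman graph of $\str D'$ stays a subgraph of $G$, the bounded-expansion hypothesis is preserved and the lemma can be applied again to the next quantifier; Boolean connectives are handled by taking disjoint unions of the signatures and enrichments, and $\forall$ by $\neg\exists\neg$ together with closure of quantifier-free formulas under negation. First I would dispose of the base case, $\varphi$ quantifier-free over $\{\adj\}$: the atoms $x_i=x_j$ are already legal in pointer-structure logic, so only $\adj(x_i,x_j)$ must be re-expressed. For this I would compute in linear time a treedepth-$p$ cover $\Ff$ of $G$ with $p=|\bar x|$ (\cref{thm:low-treedepth-covers-be}, together with a low weak coloring number ordering), and for each $A\in\Ff$ a depth-first-search forest $F_A$ of $G[A]$ -- which is an elimination forest, has depth at most $2^{p}$ by \cref{lem:lp-td}, and has all of its parent--child pairs adjacent in $G$. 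I would then add a unary predicate marking each $A$, a function $\mathrm{par}^A$ sending each $v\in A$ to its $F_A$-parent (with $v\mapsto v$ when $v$ is a root of $F_A$ or lies outside $A$), and for each $k\le 2^p$ a unary predicate recording whether $v$ is adjacent in $G$ to its $k$-th $F_A$-ancestor. As every pair $x_i,x_j$ lies in a common piece, and in a depth-first forest all edges join ancestor--descendant pairs, $\adj(x_i,x_j)$ becomes a quantifier-free disjunction, over pieces $A$ and depths $k\le 2^p$, of conjunctions of equalities between the terms $(\mathrm{par}^A)^{k}(\cdot)$ and the newly added unary tests; and every added pointer runs along an edge of $G$.

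For the quantifier-elimination lemma, the crucial observation is that, because $\theta$ is quantifier-free over a signature with only unary predicates and unary functions, $\theta(\bar u,v)$ depends only on the isomorphism type -- equalities and unary-predicate memberships -- of the \emph{forward closure} $\mathrm{fc}(\bar u)\cup\mathrm{fc}(v)$, where $\mathrm{fc}(w)$ is the set of elements reachable from $w$ by iterating the functions of $\Dd$ at most $\mathrm{depth}(\theta)$ times; as there are boundedly many function symbols, $\mathrm{fc}(w)$ has bounded size, and since every function points along a $G$-edge, $\mathrm{fc}(w)$ lies in a ball of bounded radius around $w$ in $G$. I would split witnesses $v$ into \emph{remote} ones, whose forward closure is disjoint from $\mathrm{fc}(\bar u)$, and \emph{local} ones. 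On remote witnesses the type of $\mathrm{fc}(\bar u)\cup\mathrm{fc}(v)$ factors, so $\theta$ restricted to them is equivalent to a disjunction $\bigvee_i\bigl(\alpha_i(\bar x)\wedge\beta_i(y)\bigr)$ of quantifier-free formulas; whether some $\beta_i$ has a remote solution is, up to a bounded correction for the boundedly many $v$ whose closure could meet $\mathrm{fc}(\bar u)$, a single Boolean fact of $\str D$, which I would precompute in one linear pass and make readable by a quantifier-free formula through a unary predicate made either empty or equal to the whole universe. For local witnesses, the fact that $v$ or one of the boundedly many terms $t(v)$ equals a specific element $w\in\mathrm{fc}(\bar u)$ puts $v$ within a ball of bounded radius around $\bar u$; here I would again use the treedepth-$p$ cover, now with $p$ large enough to hold $\bar x$ together with the forward closures involved, so that $v$ and $\bar x$ lie in a common piece $A$, and the relevant information about $v$'s position relative to $\mathrm{fc}(\bar x)\cap A$ in the depth-first forest $F_A$ is a bounded amount of data. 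For each of the boundedly many such admissible positions I would add function symbols realizing a canonical, edge-by-edge walk from the appropriate element of $\mathrm{fc}(\bar x)$ to a representative witness (every step along a $G$-edge), so that $\exists y\,(v\text{ local}\wedge\theta)$ collapses to a quantifier-free disjunction over positions.

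Assembling everything, one processes the subformulas of $\varphi$ from the innermost outwards, applying the lemma at each $\exists$ to the current enriched structure (still of bounded expansion, since its Gaifman graph is a subgraph of $G$); $\widehat\varphi(\bar x)$ is the resulting formula and $\widehat G$ is $G$ together with the union of all enrichments added along the way. Every step adds only boundedly many predicates and function symbols, all pointing along edges of $G$, so the Gaifman graph of $\widehat G$ remains a subgraph of $G$; and every ingredient -- the weak coloring number ordering, the treedepth-$p$ cover, the depth-first forests of the bounded-treedepth pieces, the single pass for the global Boolean facts -- runs in time $\Oh_{\Cc,\varphi}(n)$, hence so does the whole algorithm.

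I expect the main obstacle to be the local-witness case of the quantifier-elimination lemma. The difficulty is that in a general graph the data distinguishing a good witness $v$ from a bad one, relative to $\bar x$, may a priori involve unboundedly many vertices in the common vicinity of $\bar x$; one must show that, once the ambient piece $G[A]$ has bounded treedepth and the forward closures are bounded, this data is in fact bounded, and -- the delicate point -- that it can be exposed to a quantifier-free formula using only function symbols that point along edges of $G$, which forces a careful choice of the depth-first forest (so that parents are adjacent and ancestors are reachable by bounded compositions) and of the canonical witness-locating walks. Getting the bookkeeping right, in particular the interplay between the remote/local split, the covering argument, and the need to re-inflate $p$ to accommodate forward closures, is where the real work lies; by contrast, the recursion over the formula and the treatment of Boolean connectives are routine once the invariant ``Gaifman graph $\subseteq G$'' is maintained.
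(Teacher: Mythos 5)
Your overall architecture --- induction on the quantifier structure, a single-quantifier elimination lemma, low treedepth covers (\cref{thm:low-treedepth-covers-be}) with DFS/elimination forests of the pieces, parent pointers running along edges, and the invariant that the Gaifman graph stays a subgraph of $G$ --- is the same route as the proofs the survey follows (\cite{DvorakKT13,grohe2011methods,PilipczukST18} and the presentation in the lecture notes), and your base case is essentially correct. The gap sits inside the single-quantifier lemma, and not only in the local half. For the remote case you claim that only boundedly many $v$ have $\mathrm{fc}(v)$ meeting $\mathrm{fc}(\tup u)$, so that ``$\beta_i$ has a remote witness'' is a global Boolean fact of the structure up to a bounded correction. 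This is false: unary functions in a pointer structure have unbounded in-degree --- indeed the very parent functions you introduce send every vertex of a piece towards the same few roots --- so the reverse closure $\{v\colon w\in \mathrm{fc}(v)\}$ of a single element $w$ can be almost the entire universe. Consequently $\beta_i$ can have arbitrarily many witnesses all of which are non-remote from $\mathrm{fc}(\tup u)$ (take $\beta_i(v)$ to say ``$f(v)$ is the unique blue vertex'' where that blue vertex happens to lie in $\mathrm{fc}(\tup u)$), and the remote existence question genuinely depends on $\tup u$; it cannot be exposed by one universe-wide predicate. What makes it decidable in the actual arguments is that the analysis is tied to the forest: the functions are ancestor pointers in bounded-depth elimination forests, so $w\in\mathrm{fc}(v)$ forces $v$ into the subtree of $w$, and one precomputes \emph{capped counts} of candidates of each profile per node, which the quantifier-free formula can look up at the boundedly many elements of $\mathrm{fc}(\tup u)$.

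The local half has a symmetric problem: a single canonical representative witness per ``admissible position'', chosen at preprocessing time and reached by fixed pointer walks, cannot certify $\exists y$, because whether a candidate is a valid witness depends on $\tup u$ through equalities and inequalities with elements of $\mathrm{fc}(\tup u)$. Your preselected representative may coincide with one of these elements while other, equally good witnesses exist, and since the functions must be defined before $\tup u$ is known you cannot re-route them. The standard remedy is again multiplicity, not canonicity: for each node/profile store $\min(\mathrm{count},N)$ with $N$ roughly $|\mathrm{fc}(\tup u)|+1$ via unary predicates (or $N$ distinct representative pointers), and argue that a candidate with the correct profile can only fail by coinciding with one of the at most $|\mathrm{fc}(\tup u)|$ forbidden elements. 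So the lemma is true and provable along the forest-indexed counting lines, but as written both halves of your witness analysis break at exactly the point where the $\tup u$-dependence of ``being a witness'' has to be defeated by stored counts rather than by a remote/local dichotomy and canonical representatives.
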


For readers with an algorithmic background, it may be instructive to think of the pointer structure output by the algorithm of \cref{thm:quantifier-elimination} as of a {\em{data structure}} for answering $\phi$-queries on the input graph~$G$. Namely, after computing $\widehat{G}$ (which is done as preprocessing in fixed-parameter linear time), every query of the form ``Given $\tup u\in V(G)^{\tup x}$, does $G\models \varphi(\tup u)$?'' can be reduced to deciding whether $\widehat{G}\models \widehat{\phi}(\tup u)$, which can be done in constant time: just follow the pointers and check relations.

\paragraph*{Nowhere dense classes.} The approach behind the proofs of \cref{thm:be-mc,thm:quantifier-elimination} seems to fundamentally fail in the nowhere dense case; or at least it is entirely unclear how to make it work. The reason is that once the relevant parameters (outdegrees in appropriate orientations, weak coloring numbers, sizes of low treedepth covers, etc.) are bounded by $\Oh_{\Cc,\phi,\eps}(n^\eps)$ instead of by a constant, the bounds during the quantifier elimination start to explode very quickly, yielding super-exponential functions already after a few steps. Therefore, a different approach is needed. This approach was proposed by Grohe, Kreutzer, and Siebertz~\cite{GroheKS17}, who proved the following result.

\begin{theorem}[\cite{GroheKS17}]\label{thm:nd-mc}
 For every nowhere dense class of graphs $\Cc$ there is an algorithm that given an $\FO$ sentence $\phi$ and a graph $G\in \Cc$, decides whether $G\models \phi$. The running time can be bounded by $\Oh_{\Cc,\phi,\eps}(n^{1+\eps})$, for any fixed~$\eps>0$.
\end{theorem}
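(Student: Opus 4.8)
The plan is to prove \cref{thm:nd-mc} by a recursion that simultaneously shrinks the formula and the graph, using four of the tools developed above: Gaifman's Locality Theorem to localize the sentence, the near-linear neighbourhood covers of \cref{thm:quant-nd} to localize the evaluation, the Splitter Game (\cref{thm:splitter-game}) to recurse inside balls of bounded radius, and flatness (\cref{thm:flatness-nd}) to handle scattered-existence queries. A preliminary observation keeping the recursion within reach is that all \emph{augmented} variants of graphs from $\Cc$ --- graphs carrying a bounded number of extra unary predicates marking vertex subsets and a bounded number of named constants --- again form a nowhere dense class, since marking does not change the Gaifman graph; so every tool above may be reused on the structures produced along the recursion. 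The first step is to put $\phi$ into Gaifman normal form: $\phi$ becomes a Boolean combination of \emph{basic local sentences}, each asserting that there are $k$ vertices, pairwise at distance greater than a fixed $2r$, each satisfying a fixed $r$-local formula $\psi(x)$. It therefore suffices to (a) compute, for a given $r$-local $\psi$, enough information about $\psi(G) \coloneqq \{v : G \models \psi(v)\}$, and (b) decide from it whether $\psi(G)$ contains $k$ pairwise $(>2r)$-distant vertices.

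For (a), I would compute a distance-$r$ neighbourhood cover $\Ff$ of radius $2r$ and overlap $\Oh_{\Cc,r,\eps}(n^{\eps})$ via \cref{thm:quant-nd}. For every $v$ there is $F \in \Ff$ with $\Ball^G_r[v] \subseteq F$, and since $\psi$ is $r$-local, $G \models \psi(v)$ if and only if $G[F] \models \psi(v)$; moreover $\sum_{F \in \Ff} |F| = \Oh_{\Cc,r,\eps}(n^{1+\eps})$. This replaces the evaluation of $\psi$ on $G$ by its evaluation on the pieces $G[F]$, each of radius at most $2r$. Here the Splitter Game takes over: on a graph of radius at most $2r$ the Connector's opening move can be taken to be the whole graph, so a winning Splitter strategy supplies a vertex $w$ such that the radius-$2r$ Splitter Game on $G[F]-w$ is won strictly faster. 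I would name $w$ by a constant, precompute the classes ``at distance exactly $i$ from $w$'' for $0 \le i \le 2r$ as new unary predicates, rewrite $\psi$ into an equivalent formula of the same quantifier rank over the augmented signature of $G[F]-w$, and recurse --- the Splitter-length parameter of $G[F]-w$ being strictly smaller than that of $G[F]$. Since by \cref{thm:splitter-game} the Splitter Game on $\Cc$ for radius $2r$ ends within a bounded number of rounds, this recursion has bounded depth, with the empty graph as the trivial base case.

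For (b), I would use flatness with a threshold $m$ chosen large in terms of $k$ and the flatness constant $s_r$. If $|\psi(G)| \le N_r(m)$, then $\psi(G)$ has size bounded in terms of $\Cc$ and $\phi$ only, and a greedy scan settles the question. Otherwise \cref{thm:flatness-nd} yields $S \subseteq V(G)$ with $|S| \le s_r$ and $I \subseteq \psi(G) \setminus S$, $|I| > m$, pairwise at distance $>2r$ in $G-S$; marking $S$ (again an admissible augmentation) and recursing on a formula that additionally certifies being far from $S$ reduces the query to a strictly smaller instance on a still nowhere dense class. Finally, the whole recursion has depth bounded by a function of $\phi$ and $\Cc$, and each level multiplies the instance size and the running time by $\Oh_{\Cc,\eps'}(n^{\eps'})$; choosing $\eps'$ equal to $\eps$ divided by twice this depth makes all these factors collapse to $\Oh_{\Cc,\phi,\eps}(n^{\eps})$, yielding the stated bound $\Oh_{\Cc,\phi,\eps}(n^{1+\eps})$.

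I expect the main obstacle to be exactly this bookkeeping of interlocking recursions: one must design the augmented structures and rewritten formulas so that along \emph{every} branch some parameter --- the Splitter-game length in case (a), the flatness/quantifier measure in case (b) --- strictly decreases, while the radius $r$, the number of marks, and the recursion depth all stay bounded, and one must track the $n^{\eps}$ factors so that they accumulate to $n^{o(1)}$ rather than $n^{\Omega(1)}$. It is instructive that the bounded-expansion analogue \cref{thm:be-mc} sidesteps all of this: there the neighbourhood covers, weak colouring numbers, and low-treedepth covers have \emph{constant} size, so a direct quantifier-elimination argument (\cref{thm:quantifier-elimination}) goes through, whereas here those parameters are only $n^{o(1)}$, which is precisely why the cover-and-Splitter recursion is needed instead.
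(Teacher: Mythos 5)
Your step (a) --- localize via the distance-$r$ neighborhood covers of \cref{thm:quant-nd}, then recurse inside each cluster by deleting Splitter's vertex, encoding its distance classes by fresh unary predicates, and using \cref{thm:splitter-game} to bound the recursion depth --- is exactly the engine of the paper's proof; the paper merely packages it as one precomputed game tree (Connector's moves restricted to cover elements, giving size $\Oh_{\Cc,\phi,\eps}(n^{1+\eps})$) followed by a bottom-up aggregation of local $\FO$-information, rather than as your top-down per-formula recursion. The $\eps$-bookkeeping you describe is also the right one.

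The genuine gap is in step (b), the evaluation of basic local sentences via flatness. Flatness (\cref{thm:flatness-nd}) produces a set $I$ that is pairwise far \emph{in $G-S$}, and since $G-S$ is a subgraph of $G$, distances only increase under the deletion: being at distance greater than $2r$ in $G-S$ says nothing about distances in $G$. Consequently, a huge set $\psi(G)$ does not settle the question --- take a star with $\psi(G)$ the set of leaves: flatness applies with $S$ the center, yet no two leaves are at distance greater than $2$ in $G$, so the answer in your ``large'' case can still be negative. The proposed recursion ``on a formula certifying being far from $S$'' does not repair this: a scattered set may contain (up to $|S|$) vertices close to $S$, so the restricted query is not equivalent to the original one, and no well-founded measure is identified that decreases along this branch. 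The standard correct handling, and the one used in the actual proof, needs no flatness at this point: greedily extract a maximal family of pairwise $(>2r)$-distant vertices of $\psi(G)$; if it reaches size $k$, accept, and otherwise $\psi(G)\subseteq \bigcup_{w\in W}\Ball^G_{2r}[w]$ for some set $W$ of fewer than $k$ vertices, so the whole basic local sentence localizes to a union of boundedly many balls of radius $\Oh(r)$, on which you recurse exactly as in your step (a) (one Splitter move per ball, distances to the centers annotated by unary predicates). With step (b) replaced by this greedy-localization argument, your outline matches the paper's proof.
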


The approach of Grohe et al. fundamentally relies on the characterization of nowhere dense classes through the Splitter Game (\cref{thm:splitter-game}). In fact, this characterization was devised with the proof of \cref{thm:nd-mc} in mind. Let $d\in \N$ be a parameter depending on the input sentence $\phi$ (roughly, $d$ is exponential in the quantifier rank of $\phi$) and consider the radius-$d$ Splitter Game played on the input graph $G$. We construct the {\em{game tree}} $T$ assuming the fixed strategy of Splitter provided by \cref{thm:splitter-game}. That is, nodes of $T$ correspond to positions in the game just before the next move of Splitter or Connector, and:
\begin{itemize}[nosep]
 \item every node corresponding to a Splitter's position ({\em{Splitter's node}}) has exactly one child, in which the move suggested by the strategy is executed; and
 \item every node corresponding to a Connector's position ({\em{Connector's node}}) has as many children as the number of vertices of the current arena, and in each child a different move of Connector is executed.
\end{itemize}
Note that $T$ has depth bounded by $k$ --- the constant upper bound on the length of the game --- but the branching at each Connector's node can be as high as $n$. Therefore, if we perform the construction just as above, $T$ may have as many as $n^k$ nodes and it is too large to be useful in any fixed-parameter algorithm.

The idea is to make $T$ significantly smaller by restricting the moves of Connector to playing, instead of radius-$d$ balls, elements of the distance-$d$ neighborhood cover with radius $2d$ and overlap $\Oh_{\Cc,d,\eps}(n^\eps)$ provided by \cref{thm:quant-nd}, second point. Observe that with such a restriction, Connector has still as much freedom as she has in the standard radius-$d$ game, because for every radius-$d$ ball she can play an element of the cover containing this ball. However, Splitter can still win within a bounded number of rounds, by playing the strategy for the radius-$2d$ game. The bound on the overlap of the neighborhood cover can be used to argue that the whole game tree $T$ has now size $\Oh_{\Cc,d,\eps}(n^{1+\eps})$, and it can be efficiently~computed.

Having computed $T$, the algorithm of \cref{thm:nd-mc} applies a bottom-up procedure that aggregates relevant information about larger and larger parts of the graph (arenas corresponding to the nodes); this can be viewed as a sort of dynamic programming. Namely, one uses locality of First-Order logic to argue that the information about arena $G_x$ at a node $x$ can be compiled from the information about local neighborhoods in $G_x$, which are exactly the arenas in the children of $x$. Once the information about the root of $T$ is computed, this is sufficient to answer whether $G\models \phi$.

This concludes the sketch of the proof of \cref{thm:nd-mc}. We note that the reasoning applies directly to the model-checking problem, and in particular does not provide any form of quantifier elimination. Obtaining a suitable analogoue of \cref{thm:quantifier-elimination} for nowhere dense classes remains open.

Finally, we note that as observed by Dvo\v{r}\'ak et al.~\cite{DvorakKT13}, no further generalization of \cref{thm:nd-mc} should be expected on monotone (i.e., subgraph-closed) classes.

\begin{theorem}[\cite{DvorakKT13}]\label{thm:nd-limit}
 Let $\Cc$ be a class of graphs that is monotone and not nowhere dense. Then the model-checking problem for $\FO$ on $\Cc$ is as hard as on general graphs, that is, $\mathsf{AW}[\star]$-hard.
\end{theorem}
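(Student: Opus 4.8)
\emph{Overall strategy.} The statement is a hardness result, so the plan is to exhibit an fpt-reduction from the $\FO$ model-checking problem on the class of \emph{all} graphs --- which is $\mathsf{AW}[\star]$-complete, as recalled in \cref{sec:prelims} --- to the $\FO$ model-checking problem restricted to $\Cc$. Concretely, given an instance $(G,\phi)$ with $G$ a graph and $\phi\in\FO$, I would produce in time polynomial in $|V(G)|$ a graph $H\in\Cc$ and a sentence $\psi\in\FO$ with $\|\psi\|=\Oh_\Cc(\|\phi\|)$ such that $G\models\phi$ if and only if $H\models\psi$. A polynomial-time reduction of this shape (mapping the parameter $\|\phi\|$ to a parameter bounded in terms of $\|\phi\|$) is in particular an fpt-reduction, so this yields the claimed $\mathsf{AW}[\star]$-hardness.

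\emph{The structural input.} The only place where the hypothesis on $\Cc$ enters is the following classical fact from the theory of Sparsity (see~\cite{sparsity}): if $\Cc$ is monotone and not nowhere dense, then there is a fixed $r\in\N$ such that $\Cc$ contains the $r$-subdivision of \emph{every} graph. I would recall the idea rather than reprove it: not being nowhere dense means that $\omega_d$ is unbounded on $\Cc$ for some fixed $d$, so $\Cc$ contains graphs having $K_t$ as a depth-$d$ minor for every $t$; from such a minor model one extracts, as a subgraph, a subdivision of a (slightly smaller but still unbounded) clique in which every branch edge is subdivided a bounded number of times; a Ramsey/pigeonhole argument over the finitely many possible per-edge subdivision lengths then gives, for every $t$, a member of $\Cc$ containing the exact $r$-subdivision of $K_t$ for one fixed value $r$; finally, since every $n$-vertex graph is a subgraph of $K_n$, its $r$-subdivision is a subgraph of $K_n^{(r)}$, so monotonicity of $\Cc$ transfers this to all graphs.

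\emph{The reduction.} Fix such an $r$. The naive choice $H=G^{(r)}$ fails, because in $G^{(r)}$ the vertices coming from $G$ cannot be told apart from the subdivision vertices in $\FO$: subdivision vertices all have degree $2$, while $G$ may itself have vertices of degree at most $2$. I would repair this by marking the original vertices with a small rigid gadget \emph{before} subdividing: form $G^+$ from $G$ by attaching to each $v\in V(G)$ a private copy of a fixed small graph $\Gamma$ (for instance a pendant triangle, or a pendant path of length coprime to $r+1$), chosen so that after $r$-subdivision the local configuration around an original vertex is one that provably cannot occur anywhere else in the $r$-subdivision of any graph. Then set $H:=(G^+)^{(r)}$, which lies in $\Cc$ by the structural fact, and obtain $\psi$ from $\phi$ by (i) relativizing every quantifier to the $\FO$-definable predicate ``$x$ is an original vertex'' (expressing the presence of the gadget attached to $x$), and (ii) replacing each atom $\adj(x,y)$ by the $\FO$ formula asserting that there is a path of length exactly $r+1$ between $x$ and $y$ all of whose internal vertices have degree $2$, which for original $x,y$ holds precisely when $xy\in E(G)$. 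A routine induction on the structure of $\phi$ then gives $G\models\phi\iff H\models\psi$; $\|\psi\|$ depends only on $\|\phi\|$ and $r$; and $H$ is computable in polynomial time; so the reduction is as required.

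\emph{Main obstacle.} The technical heart of the argument is the gadget design in the third step. Since $\Cc$ is only assumed monotone (closed under subgraphs), vertex colors are unavailable, so the marking must be accomplished by a subgraph gadget; at the same time the subdivided image of the gadget must be \emph{un-fakeable} --- it must not be simulable by two overlapping gadgets, by the gadget of a high-degree vertex of $G$, or by some unfortunate short cycle already present inside $G^{(r)}$ --- and the degenerate cases of isolated and low-degree vertices of $G$ need separate handling. Verifying that a concrete choice of $\Gamma$ (together with the threshold lengths used in the $\FO$ formulas) rules out all such false positives is a careful but routine case analysis; everything else in the proof is bookkeeping.
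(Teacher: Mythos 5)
Your proposal is correct and follows essentially the standard argument of the cited work~\cite{DvorakKT13} (the survey states \cref{thm:nd-limit} without reproducing a proof): one first shows that a monotone, somewhere dense class contains, for a single fixed $r$, the exact $r$-subdivision of every graph (shallow clique minors $\to$ shallow topological clique minors $\to$ Ramsey/pigeonhole to uniformize the subdivision length, then monotonicity), and then gives an fpt-reduction from $\FO$ model-checking on all graphs by interpreting $G$ inside a subdivided supergraph lying in $\Cc$, relativizing quantifiers to the definable principal vertices and decoding adjacency via paths of length exactly $r+1$. The points you flag as the technical heart --- recognizing principal vertices without colors and the isolated/low-degree degenerate cases --- are indeed the only delicate spots, and they are routine to discharge (e.g., a pendant-triangle gadget works for all non-isolated vertices, and isolated vertices can be handled by a Feferman--Vaught-style adjustment of the sentence), so there is no genuine gap.
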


\section{New concepts}\label{sec:new}

In \cref{sec:classic} we have discussed a wide range of different concepts of structure in graphs. These concepts were predominantly motivated by graph-theoretic considerations. However, it turned out that many of them also possess good model-theoretic properties, witnessed by the existence of efficient algorithms for the model-checking problem, or by closure properties with respect to transductions. In particular, we have already discovered four $\FO$ ideals:
\begin{itemize}[nosep]
 \item classes of bounded shrubdepth (\cref{cor:shrub-ideal});
 \item classes of bounded linear cliquewidth (\cref{thm:cw-lcw-ideals});
 \item classes of bounded cliquewidth (\cref{thm:cw-lcw-ideals}); and
 \item classes of bounded twin-width (\cref{thm:tww-ideal}).
\end{itemize}
If our goal is to construct a robust mathematical theory for describing the ``First-Order complexity'' of graphs, arguably one should ground such a theory in notions that are fundamentally of logical nature. In particular, the landscape presented so far has a gaping hole in that we have not seen suitable dense analogues of the notions of bounded expansion and of nowhere denseness. Based on purely graph-theoretic considerations, it is rather unclear how such notions should be constructed.

So the proposition for a research programme is to:
\begin{itemize}[nosep]
 \item adopt the notion of $\FO$ transductions as the basic embedding notion;
 \item define relevant properties of graph classes using the transduction order, and in particular propose $\FO$ ideals that should correspond to classes of bounded expansion and to nowhere dense classes;~and
 \item understand these $\FO$ ideals on the grounds of graph theory, and develop decomposition tools for working with them.
\end{itemize}
In this section we describe the recent attempts to make first steps within this programme.

As transductions give a quasi-order on graph classes, there are two natural ways to construct  $\FO$~ideals:
\begin{itemize}[nosep]
 \item {\em{Obstructions:}} If $\Dd$ is a graph class, then the set of all graph classes that do not transduce $\Dd$ is an $\FO$~ideal.
 \item {\em{Closure:}} If $\Pi$ is any set of graph classes, then the set comprising all graph classes that are transducible from any class belonging to $\Pi$ is an $\FO$ ideal.
\end{itemize}
For the latter point, if we interpret $\Pi$ as a property of graph classes, then classes transducible from classes satisfying $\Pi$ are called {\em{structurally $\Pi$}}. A typical example of this construction is closing a property of classes of sparse graphs under transductions; so we may speak, for instance, about classes of structurally bounded treewidth, structurally bounded expansion classes, structurally nowhere dense classes, etc. Moreover, note that the intersection of any set of $\FO$ ideals is again an $\FO$ ideal.

We divide our further discussion into two veins: ideals defined through obstructions, and ideals defined through closure. However, the two viewpoints will often interleave.

\subsection{Ideals defined through obstructions}\label{sec:forbidding}

The most obvious choice for an ideal defined by obstructions is to forbid transducibility of the class of all~graphs.

\begin{definition}
 A graph class $\Cc$ is {\em{monadically dependent}}\footnote{The term {\em{monadically NIP classes}} is also often used in the literature, where NIP stands for {\em{Not the Independence Property}}. Monadic dependence and monadic NIP are synonyms.} if the class of all graphs cannot be transduced from $\Cc$.
\end{definition}

To break this definition further, $\Cc$ is monadically dependent if for every fixed transduction $\Tf$, $\Tf(\Cc)$ does not contain all graphs. In other, more intuitive words, there is no fixed $\FO$-definable mechanism that allows the encoding of all graphs in colored graphs from $\Cc$. Note here that more and more complicated transductions $\Tf$ may yield larger and larger sets $\Tf(\Cc)$, but the requirement is that none of them encompasses all graphs. On high level, this is similar to the gradation-by-distance character of the definition of nowhere denseness (\cref{def:nd}). As we will see, this seems not to be a coincidence.

Thus, monadic dependence is the weakest possible restriction that can be made in the theory of transductions. Indeed, every $\FO$ ideal that is not equal to all the graph classes must be contained in the ideal of monadically dependent classes. For instance, classes of bounded twin-width are monadically dependent.

In fact, we have already come across the notion of monadic dependence in disguise several times:
\begin{itemize}[nosep]
 \item \cref{lem:rook-not-dependent} proves that the class of rook graphs is not monadically dependent. Observe that thus, the reasoning may serve as a model-theoretic proof that rook graphs do not have bounded twin-width.
 \item Naturally, the notion of monadic dependence can be defined for logics other than $\FO$. Then \cref{thm:cw-dependent} is equivalent to saying that a class of graphs $\Cc$ is monadically dependent with respect to the $\CMSO$ logic if and only if $\Cc$ has bounded cliquewidth. Brushing aside the slight fuzziness of the notion of $\MSO_2$ transductions, \cref{thm:tw-model} can be interpreted as saying that a class of graphs $\Cc$ is monadically dependent with respect to the $\MSO_2$ logic if and only if $\Cc$ has bounded treewidth. This seems to be the most compelling formal explanation of the intuition that the boundedness of cliquewidth delimits the region of tractability of $\CMSO$, and the boundedness of treewidth delimits the region of tractability of $\MSO_2$.
 \item \cref{thm:tww-dependent} states that a class $\Cc$ of ordered graphs is monadically dependent if and only if $\Cc$ has bounded twin-width.
\end{itemize}
Thus, we already have a robust combinatorial understanding of monadically dependent classes with respect to logics $\CMSO$ or $\MSO_2$, or of ordered graphs; this understanding is delivered through suitable duality theorems providing decompositions for those classes (tree decompositions, laminar decompositions, contraction sequences, etc.). The structural understanding of graph classes that are monadically dependent with respect to $\FO$ is still quite incomplete, but we will present the recent advances in this direction in \cref{sec:mon-dependent}.

\medskip

Next, it turns out that a very important $\FO$ ideal is obtained by forbidding transducibility of the class of {\em{half-graphs}} (\cref{def:half-graph}).

\begin{definition}
A graph class $\Cc$ is {\em{monadically stable}} if the class of all half-graphs cannot be transduced from $\Cc$.
\end{definition}

Obviously, every monadically stable graph class is also monadically dependent, but not vice-versa, as witnessed by the class of half-graphs themselves. Also, as half-graphs have bounded linear cliquewidth, the ideal of monadically stable classes does not contain the ideal of classes of bounded linear cliquewidth. But it does contain the ideal of classes of bounded shrubdepth, thanks to \cref{lem:half-graphs-shb}. As we will see later, monadically stable classes form an ideal that is in some sense orthogonal to the hierarchy of ideals defined by shrubdepth, linear cliquewidth, cliquewidth, and twin-width.

The reason  why forbidding transducibility of half-graphs turns out to be a fundamental property is that half-graphs are combinatorial encodings of linear orders. More formally, the classes of half-graphs and of linear orders (treated as binary structures) can be transduced from each other, so monadic stability can be equivalently defined through postulating non-transducibility of the class of linear orders. Thus, the intuition is that graphs from a monadically stable class are fundamentally ``orderless'': one cannot $\FO$-define in them any sizeable linear order, even if coloring vertices is allowed.

Finally, let us mention that as proved by Ne\v{s}et\v{r}il, Ossona de Mendez, Pilipczuk, Rabinovich, and Siebertz~\cite{NesetrilMPRS21}, within the context of monadically dependent classes, monadic stability can be characterized purely combinatorially, by exclusion of half-graphs as {\em{semi-induced subgraphs}}. Here, for a graph $G$ and a bipartite graph $H$, we say that $G$ contains $H$ as a {\em{semi-induced subgraph}} if there exist disjoint vertex subsets $A,B\subseteq V(G)$ such that the subgraph of $G$ consisting of $A$, $B$, and all the edges with one endpoint in $A$ and second in $B$, is isomorphic to $H$. Then, Ne\v{s}et\v{r}il et al. proved the~following using arguments inspired by the work of Baldwin and Shelah~\cite{baldwin1985second}.

\begin{theorem}[\cite{NesetrilMPRS21}]\label{thm:ms-es}
 Let $\Cc$ be a monadically dependent class of graphs. Then $\Cc$ is monadically stable if and only if $\Cc$ excludes some half-graph as a semi-induced subgraph.
\end{theorem}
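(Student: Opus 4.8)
The plan is to prove the two directions separately, with the nontrivial content residing entirely in the ``only if'' direction. The ``if'' direction is the easy one: if $\Cc$ is monadically stable, then the class of half-graphs is not transducible from $\Cc$, and in particular $\Cc$ cannot contain arbitrarily large half-graphs as semi-induced subgraphs — for if it did, a transduction that guesses the bipartition $(A,B)$ using two colors and keeps only the edges between $A$ and $B$ (i.e.\ uses the formula $\phi(x,y)=\mathsf{A}(x)\wedge\mathsf{B}(y)$, symmetrized) would output all half-graphs. Hence every monadically stable class excludes \emph{some} fixed half-graph as a semi-induced subgraph. Note that this direction does not even use the hypothesis that $\Cc$ is monadically dependent.

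The substantive direction is the converse: assuming $\Cc$ is monadically dependent and that $\Cc$ excludes the half-graph $H_k$ of some fixed order $k$ as a semi-induced subgraph, we must show $\Cc$ is monadically stable, i.e.\ that $\Cc$ does not transduce the class of all half-graphs. I would argue by contraposition: suppose $\Tf$ is a transduction with $\Tf(\Cc)$ containing half-graphs of every order. The goal is to conclude that $\Cc$ itself already contains arbitrarily large semi-induced half-graphs (contradicting the exclusion hypothesis). The key tool is that transductions \emph{cannot create order from orderlessness} once monadic dependence pins down the combinatorial complexity: since $\Cc$ is monadically dependent, $\Tf(\Cc)$ is again monadically dependent (monadic dependence is an $\FO$ ideal by definition), so $\Tf(\Cc)$ is a monadically dependent class that is \emph{not} monadically stable. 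It therefore suffices to prove the statement for a \emph{single} class, namely to show: if $\Dd$ is monadically dependent and contains arbitrarily large semi-induced half-graphs, then so does any class it is transduced from — equivalently, to show that the instability witnessed \emph{after} the transduction can be pulled back through the colors and the interpretation formula to an instability witness \emph{before}.

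The mechanism for this pullback is a Ramsey-type stability-transfer argument in the spirit of Baldwin--Shelah~\cite{baldwin1985second}, which is exactly the reference the statement points to. Concretely: take a large semi-induced half-graph with sides $\{a_1,\dots,a_n\},\{b_1,\dots,b_n\}$ inside some $\Tf(G^+)$ where $G\in\Cc$. Each $a_i,b_j$ is (the image of) a single vertex of $G$, and the edge relation between them is governed by $G^+\models\varphi(a_i,b_j)\vee\varphi(b_j,a_i)$, where $G^+$ is $G$ with a bounded number of extra unary colors. Now the point is that the formula $\psi(x,y):=\varphi(x,y)\vee\varphi(y,x)$ has bounded quantifier rank and the coloring is bounded, so the \emph{colored} graph $G^+$ admits a $\psi$-definable bipartite pattern that is a half-graph of order $n$ on the vertex set $\{a_i\}\cup\{b_j\}$. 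Using a Ramsey argument on the index set $\{1,\dots,n\}$ — partitioning pairs $(i,j)$ according to the $\mathsf{atp}$ (atomic/$\FO$-type of bounded rank) of $(a_i,b_j)$ in $G^+$ and extracting a large monochromatic sub-half-graph — and then repeatedly applying the fact that in a monadically dependent (indeed NIP) setting a $\psi$-definable half-graph of large order forces an \emph{actual} semi-induced half-graph of unbounded order in $G^+$ (this is the Baldwin--Shelah-type lemma: a stable formula over a monadic expansion stays stable, so instability of $\psi$ over arbitrary colorings of $\Cc$ forces genuine instability of the edge relation over $\Cc$). Finally, since only a bounded number of colors is involved, one cleans up by restricting to a large color class on each side, obtaining a semi-induced half-graph of unbounded order in $G$ itself, i.e.\ in $\Cc$ — the desired contradiction.

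The main obstacle, and the step I would expect to occupy most of the technical work, is precisely this last transfer: upgrading a \emph{formula}-definable half-graph pattern in a colored member of $\Cc$ to a \emph{semi-induced} (i.e.\ edge-relation-definable) half-graph in an uncolored member of $\Cc$. This is where monadic dependence is used essentially and cannot be omitted — without it, a class could be ``orderless at the level of the edge relation'' yet support an $\FO$-definable order (e.g.\ via some auxiliary definable structure), so the exclusion of semi-induced half-graphs would not imply monadic stability. The correct formalization goes through the notion that monadic dependence of $\Cc$ is equivalent to: \emph{every} formula $\chi(\bar x;\bar y)$ is, over all colorings of all members of $\Cc$, either stable or ``wild''; combined with the hypothesis that the edge relation itself is stable (no large semi-induced half-graphs), a compositionality/composition-of-types argument shows that every formula of bounded rank is stable over colorings of $\Cc$, hence in particular $\psi$ above is stable there, contradicting the assumed large $\psi$-half-graph. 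Assembling this chain carefully — keeping track of how Ramsey bounds, the number of colors, and the quantifier rank of $\varphi$ interact — is the heart of the proof; the rest is bookkeeping using \cref{lem:backwards} and \cref{lem:trans-composition}.
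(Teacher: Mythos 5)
Your ``if'' direction is essentially right (modulo a slip: the interpretation formula must keep the adjacency test, i.e.\ $\varphi(x,y)=\adj(x,y)\wedge \mathsf{A}(x)\wedge\mathsf{B}(y)$ symmetrized; as written you would output complete bipartite graphs), and indeed it needs no dependence hypothesis. The problem is the converse, which is where all the content of the theorem lives --- note that the survey itself only cites \cite{NesetrilMPRS21} for it and gives no proof. You correctly pull the transduced half-graphs back through the interpretation to a formula $\psi(x,y)=\varphi(x,y)\vee\varphi(y,x)$ that defines arbitrarily long half-graph patterns in colored graphs $G^+$ with $G\in\Cc$, but then you invoke, as a known fact, that under monadic dependence a $\psi$-definable half-graph pattern of unbounded order forces actual semi-induced half-graphs of unbounded order. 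That ``fact'' is not citable background: it is exactly the contrapositive of the hard direction you are supposed to prove (``not monadically stable $+$ monadically dependent $\Rightarrow$ not edge-stable''), so the argument is circular. Worse, the principle you offer in support of it --- ``a stable formula over a monadic expansion stays stable'' --- is false as a general statement: if it were true, edge-stability alone would imply monadic stability, which you yourself note is wrong. Concretely, the class of $1$-subdivisions of all graphs excludes the half-graph of order $3$ as a semi-induced subgraph (a semi-induced copy would contain a $4$-cycle, but these graphs have girth at least $6$), yet already the colorless formula $\exists z\,(\adj(x,z)\wedge\adj(z,y))$ defines all graphs on the branch vertices, so this class is not even monadically dependent. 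Hence monadic dependence must enter the transfer step in an essential, quantitative way, and your sketch never says how it does.

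What is missing is precisely the Baldwin--Shelah-style transfer that Ne\v{s}et\v{r}il, Ossona de Mendez, Pilipczuk, Rabinovich and Siebertz carry out: a dichotomy showing that if some formula $\psi(x,y)$ has unbounded ladder index over colorings of graphs from $\Cc$, then either the edge relation itself realizes arbitrarily large semi-induced half-graphs in $\Cc$, or one can assemble the colorings and the definable pattern into a transduction of the class of all graphs from $\Cc$ (so $\Cc$ is monadically independent). Proving this dichotomy is the bulk of the work; it proceeds by an induction over the structure/quantifier rank of $\psi$, uses Ramsey-type extraction of indiscernible-like sequences to normalize the long definable ladders (your Ramsey step on atomic types is a sensible first move, but it only cleans up the ladder --- it does not convert definable instability into edge instability), and in the ``bad'' branch explicitly encodes arbitrary bipartite graphs, which is where the monadic dependence hypothesis is finally contradicted. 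As it stands, your proposal establishes the easy implication and reduces the hard implication to an assertion equivalent to itself, so the proof is incomplete.
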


\paragraph*{Relation to nowhere denseness.} Monadic dependence and monadic stability are concepts that were first studied in model theory, particularly within the stability theory developed by Shelah~\cite{shelah1990classification}. See also the books of Pillay~\cite{pillay} and of Tent and Ziegler~\cite{tentZiegler} for an introduction to the area. There, typically one considers dependence or stability of a single infinite model (in our understanding, an infinite graph), rather than of classes of finite models, as we do in our theory. While there are ways of formally translating results between the finite and the infinite setting (for instance, compactness, ultrafilters, or Łoś' Theorem), one can also try to understand how certain techniques work on infinite models, and then try to emulate those techniques as purely combinatorial arguments in classes of finite models. Recent developments on monadically stable and monadically dependent classes of graphs feature both these types of interactions.

Related to the above, the adjective ``monadic'' in monadic stability and monadic dependence signifies that we forbid interpretability of obstructions after adding arbitrary unary predicates to the structure. In standard dependence or stability, one would consider interpretability of obstructions in the structure alone (but also allow multi-dimensional interpretations). The theory of Shelah explains the role of stability in model theory. The particular notion of monadic stability was studied by Baldwin and Shelah in~\cite{baldwin1985second}, and this work provides a wealth of inspiration for our theory.

However, so far monadic stability and monadic dependence are just two abstract definitions, borrowed from model theory. The surprising link between those notions and the concepts discussed previously is delivered by the following result.

\begin{theorem}[\cite{adler2014interpreting,Dvorak18,podewski1978stable}]\label{thm:nd-monst}
 Every nowhere dense class is monadically stable. Moreover, every monadically dependent class that is weakly sparse, is actually nowhere dense.
\end{theorem}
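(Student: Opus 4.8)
For the first statement, the plan is to argue by contradiction, showing that a nowhere dense class $\Cc$ cannot transduce the class of half-graphs. Suppose it does, via $\Tf = (C,\varphi)$, and write $\psi(x,y):=\varphi(x,y)\vee\varphi(y,x)$; unpacking the definition, for every $n$ there are $G_n\in\Cc$, a $C$-coloring $G_n^+$, and distinct vertices $a_1,\dots,a_n,b_1,\dots,b_n$ with $G_n^+\models\psi(a_i,b_j)$ iff $i\le j$. The point is that $\psi$ is \emph{local}, so whether $\psi(a,b)$ holds can only see the colored $t_0$-neighborhoods of $a$ and of $b$ (where $t_0$ is the Gaifman locality radius of $\psi$, depending only on its quantifier rank), whereas a huge half-graph forces the \emph{strict} order $\le$, which boundedly many neighborhood types cannot encode. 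First I would use flatness (\cref{thm:flatness-nd}, the crucial consequence of nowhere denseness here) to delete a bounded set $S$ from $G_n$ and extract a large index set $R\subseteq[n]$ for which both $\{a_i:i\in R\}$ and $\{b_i:i\in R\}$ are $(2t_0)$-scattered in $G_n-S$ and, moreover, $\dist_{G_n-S}(a_i,b_j)>2t_0$ for all $i\ne j$ in $R$. Then I would absorb $S$ into the coloring (a standard maneuver: hard-code the at most $|S|$ quantifiers that could land in $S$, and mark each vertex by its adjacency pattern to $S$ using $2^{|S|}$ new colors), obtaining an equivalent formula $\psi'$ of the same quantifier rank over a boundedly colored copy of $G_n-S$ — this is exactly where \emph{monadic} stability, not just stability, comes out. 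By Gaifman's Locality Theorem~\cite{gaifman1982local} applied to $\psi'$, for $u,v$ at distance $>2t_0$ in $G_n-S$ the truth of $\psi'(u,v)$ is a function $g$ of the local types of $u$ and of $v$ alone, and there are only $M=O(1)$ such types. Pigeonholing $R$ along $i\mapsto(\mathrm{tp}(a_i),\mathrm{tp}(b_i))$ yields $R^*\subseteq R$ of size $\ge|R|/M^2\ge 2$ on which these types are constant $(\tau_a,\tau_b)$; but then for $i<j$ in $R^*$ we get $g(\tau_a,\tau_b)=[i\le j]=1$, and for $i>j$ in $R^*$ we get $g(\tau_a,\tau_b)=0$, a contradiction. (This is essentially the argument of Adler and Adler~\cite{adler2014interpreting}; see also \cite{podewski1978stable}.)

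The hard part of the first statement is the extraction step, since flatness does not respect the partition into $a$'s and $b$'s. My plan there is: apply \cref{thm:flatness-nd} first to $\{a_i\}_{i\in[n]}$ with distance parameter $d:=4t_0+1$, getting $S_1$ and a large $R_1$ with the $a$'s $d$-scattered in $G_n-S_1$; then apply it again, inside the induced subgraph $G_n-S_1$ (still a member of a nowhere dense, hence flat, class — the hereditary closure of $\Cc$), to $\{b_i:i\in R_1\}$, getting $S_2$ and $R_2\subseteq R_1$ with the $b$'s $d$-scattered in $G_n-S_1-S_2$; set $S:=S_1\cup S_2$. Now any ball of radius $2t_0$ in $G_n-S$ contains at most one of the $d$-scattered $a$'s and at most one of the $d$-scattered $b$'s (because $4t_0<d$), so the auxiliary graph on $R_2$ joining $i,j$ whenever $a_i$ is within distance $2t_0$ of $b_j$ or $a_j$ within $2t_0$ of $b_i$ has maximum degree $\le 2$, hence a large independent set $R\subseteq R_2$ — and this $R$ is what feeds the argument above. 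Everything goes through once $n$ is large enough that the successive losses ($N_d(\cdot)$ from flatness, the factor $3$ for the independent set, the factor $M^2$ for the type pigeonhole) still leave $|R^*|\ge 2$.

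For the second statement I would prove the contrapositive: a weakly sparse class $\Cc$ that is not nowhere dense transduces the class of all graphs. Failure of nowhere denseness gives a $d$ with $\omega_d(\Cc)=\infty$, i.e.\ arbitrarily large cliques arising as depth-$d$ minors of members of $\Cc$. The key input — and the step I expect to be the real obstacle — is that, because $\Cc$ is $K_{s,s}$-subgraph-free, this shallow-minor abundance can be cleaned up into \emph{induced} subdivisions: there is a single integer $p$ such that $\Cc$ contains the $p$-subdivision of $K_t$ as an induced subgraph for arbitrarily large $t$. This is extracted from Dvořák's results relating shallow minors, shallow topological minors, and induced subdivisions~\cite{Dvorak18}, together with a Ramsey argument to make the subdivision length uniform; weak sparsity is essential here, as it is precisely what forbids the ``shortcut'' edges that would otherwise obstruct first-order recovery of a clique (and without it the implication would be vacuous, since $\Graphs$ itself is somewhere dense). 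Given this, a single transduction suffices: for a graph $H$ on $m$ vertices, take $t\ge m$ and $G\in\Cc$ containing an induced $p$-subdivision of $K_t$ on a vertex set $W$; with a bounded number of colors mark $W$, the $m$ chosen branch vertices (a color $U$), and the subdivision paths corresponding to edges of $H$ (a color $P$); then interpret the first-order formula $\phi(x,y)$ asserting ``$x,y\in U$ and $x,y$ are joined by a path of length exactly $p+1$ whose internal vertices all lie in $P$'' (definable just as the distance formulas in \cref{sec:prelims}), and restrict to $U$. Because the subdivision is induced there are no accidental short connections, so the interpreted graph on $U$ is exactly $H$; as $\phi$ and the palette of colors are fixed, independently of $H$, this is one transduction, giving $\Graphs\tleq_{\FO}\Cc$ and hence that $\Cc$ is not monadically dependent. (This last move is the same ``interpret a pattern inside a subdivided clique'' idea as in the final step of the proof of \cref{lem:rook-not-dependent}.)
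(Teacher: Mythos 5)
Your proposal is correct and follows essentially the route the paper itself points to (it cites rather than proves this theorem): the first claim via flatness (\cref{thm:flatness-nd}) combined with Gaifman locality~\cite{gaifman1982local} and a pigeonhole on local types, which is precisely the Podewski--Ziegler/Adler--Adler argument~\cite{podewski1978stable,adler2014interpreting}, and the second via Dvo\v{r}\'ak's extraction of induced bounded-length subdivisions of large cliques from weakly sparse somewhere dense classes~\cite{Dvorak18}, uniformized by Ramsey and fed into the standard clique-subdivision transduction as in \cref{lem:rook-not-dependent}. The only loose end is trivial bookkeeping in your double application of flatness: some of the $b_i$ may lie in $S_1$ and some of the $a_i$ in $S_2$, so you must also discard those at most $2s_d$ indices before forming the independent set $R$ --- a loss easily absorbed by taking $n$ larger.
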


\cref{thm:nd-monst} was proved in the infinite setting by Podewski and Ziegler~\cite{podewski1978stable} as early as in 1978, so roughly 30 years before the introduction of the notion of nowhere denseness by Ne\v{s}et\v{r}il and Ossona de Mendez~\cite{NPoM-nd}. In fact, Podewski and Ziegler call (infinite) nowhere dense graphs {\em{superflat}}. Also, for their proof they introduce a notion called {\em{property $(\star)$}} that in our terms is equivalent to flatness (\cref{def:flatness}). They proved the equivalence of property $(\star)$ and superflatness, and derived \cref{thm:nd-monst} from there. The applicability of the work of Podewski and Ziegler to the setting of nowhere dense classes of (finite) graphs was observed by Adler and Adler~\cite{adler2014interpreting}, who proved \cref{thm:nd-monst} in the form above, except the collapse of monadic dependence to nowhere denseness was stated only for monotone classes. That the collapse occurs also for weakly sparse classes follows from a later work of Dvo\v{r}\'ak~\cite{Dvorak18}. We also note that the analogue of \cref{thm:nd-monst} for relational structures was proved recently by Braunfeld, Dawar, Eleftheriadis, and Papadopoulos~\cite{BraunfeldDEP23}.

Intuitively, \cref{thm:nd-monst} shows that nowhere denseness is a ``shadow'' of the more general notions of monadic stability and monadic dependence, obtained by restricting attention to weakly sparse classes. Therefore, one may suspect that many combinatorial and algorithmic results that hold for nowhere dense classes, can be in fact lifted to monadically stable or monadically dependent classes. In particular, this leads to the following conjecture.

\begin{conjecture}\label{conj:main}
 The $\FO$ model-checking problem is fixed-parameter tractable on every monadically dependent class of graphs. More precisely, for every monadically dependent class $\Cc$ there is a constant $c\in \N$ and an algorithm that, given a graph $G\in \Cc$ and an $\FO$ sentence $\varphi$, decides whether $G\models \varphi$ in time $\Oh_{\Cc,\phi}(n^c)$.
\end{conjecture}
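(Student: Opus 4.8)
The plan is to build a dense counterpart of the proof of \cref{thm:nd-mc}, following the principle repeatedly invoked in this survey: replace the operation of \emph{deleting a vertex} by the operation of \emph{applying a flip}. Concretely, one would develop three ingredients mirroring the three pillars of the nowhere dense case — a Ramsey-type ``flatness'' statement, a localization game, and sparse neighborhood covers — each adapted so that a bounded number of flips plays the role of a bounded-size deletion set. The end product should be a bottom-up dynamic programming procedure on the game tree of a suitable \emph{radius-$d$ Flipper Game}, exactly as in \cref{thm:nd-mc}, except that the ``arena'' carried at each node is an induced subgraph together with a bounded number of recorded flips. (The quantifier-elimination route of \cref{thm:quantifier-elimination} seems hopeless here for the same reason it fails in the nowhere dense case — the relevant parameters are not uniformly bounded — and is further obstructed by density.)

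The combinatorial heart is a \emph{flip-breakability} theorem, playing the role of \cref{thm:flatness-nd}: for every $d$ there should be a constant $k_d$ and a function $N_d$ such that for every $G$ in the class and every set $A$ with $|A|>N_d(m)$, one can apply at most $k_d$ flips to $G$, obtaining $G'$, and then find $A_1,A_2\subseteq A$ of size $>m$ each with $A_1$ and $A_2$ at distance $>d$ in $G'$. (In the monadically stable case — ``orderless'' classes — one expects, as in the sparse setting, to recover a single large $d$-scattered set rather than two far-apart sets; the weaker two-set conclusion is what survives in the presence of order-like behaviour, cf.\ \cref{thm:ms-es}.) I expect this to be provable by induction on $d$ driven by Ramsey's Theorem, just as \cref{thm:flatness-nd} is, and conversely to characterize monadic dependence, paralleling \cref{thm:nd-monst}.

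Granting flip-breakability, one hands Flipper an explicit, efficiently computable strategy winning the radius-$d$ Flipper Game (the bounded-radius analogue of \cref{lem:SC-flipper}) in $\Oh_{\Cc,d}(1)$ rounds, and then, imitating \cref{thm:quant-nd} and the proof of \cref{thm:nd-mc}, one replaces Keeper's ball-moves by moves into sets of a distance-$d$ neighborhood cover of radius $2d$ and constant overlap, so that the game tree has size $\Oh_{\Cc,d}(n)$ and is computable in that time. With $d$ taken exponential in the quantifier rank of the input sentence $\varphi$, Gaifman locality reduces deciding $G\models\varphi$ to aggregating bounded-size ``local information'' bottom-up along the tree: the information at a node is compiled from that of its children because a child's arena is a local neighborhood in the parent's arena. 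Since every arena is an induced subgraph of $G$ modified by a bounded number of flips, and a flip is captured by a transduction, the Backwards Translation Lemma (\cref{lem:backwards}) keeps the whole argument inside ordinary $\FO$.

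The main obstacle, I expect, is twofold. Combinatorially, flip-breakability is genuinely harder than \cref{thm:flatness-nd}: flips do not preserve the monotone behaviour of distances that the sparse induction exploits, so the Ramsey argument must be run while controlling how ``local separation'' degrades under the flips accumulated at earlier steps, and the non-stable case forces a detour through order-like structure that a single flip per round cannot eliminate. Algorithmically, the ``we need the decomposition on input'' phenomenon (compare \cref{thm:tww-mc} and the discussion around it) reappears: Flipper's strategy must be not merely existential but computable in (nearly) linear time, which requires effective versions of all of the above — an effective flip-breakability theorem and an effective neighborhood-cover construction in particular. A successful resolution would presumably settle the monadically stable case as a cleaner special case and recover \cref{thm:nd-mc} via \cref{thm:nd-monst}.
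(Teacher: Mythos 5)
The statement you are addressing is not proved in the paper: it is stated as \cref{conj:main} and explicitly reported as open, with only the monadically stable special case resolved (\cref{thm:ms-mc}). Your text is therefore a research plan rather than a proof, and its central step fails. You propose to hand Flipper an explicit strategy winning a radius-$d$ Flipper Game in $\Oh_{\Cc,d}(1)$ rounds on every monadically dependent class, and to build the game tree from that strategy. But by \cref{thm:flipper-game-monstable}, bounded-length winnability of the radius-$d$ Flipper Game characterizes monadic \emph{stability}, not monadic dependence: the class of half-graphs (\cref{def:half-graph}) is monadically dependent (it even has bounded linear cliquewidth), yet, not being monadically stable, for some radius $d$ Keeper survives arbitrarily many rounds on it. So the localization game you rely on does not exist for the classes covered by \cref{conj:main}; the order-like structure that defeats the game is precisely the new difficulty of the conjecture, not a technical ``detour''. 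Your anticipated flip-breakability statement is in fact a theorem (\cref{def:flip-breakability}, \cref{thm:md-fb}), but, as the paper notes, no global decomposition, game characterization, or model-checking consequence is currently known to follow from it; indeed even for the very special monadically dependent case of bounded twin-width, model-checking is only known to be fixed-parameter tractable when a contraction sequence is supplied (\cref{thm:tww-mc}), which your plan does not address.

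Two further ingredients you assume are likewise not available beyond the stable case. The sparse neighborhood covers come, in the stable setting, from almost-linear neighborhood complexity (\cref{thm:nei-comp-ms}) via Welzl orderings (\cref{thm:welzl,thm:rose-construction}), and that complexity bound uses stability (branching index); no analogue is known for general monadically dependent classes, and constant overlap, as you posit, is not even known in the stable case. Moreover, the ``two far-apart sets'' conclusion of flip-breakability is genuinely weaker than flip-flatness: it does not supply the scattered sets on which the Gaifman-locality bottom-up aggregation of \cref{thm:nd-mc} is based, so even granting a small game tree the dynamic-programming step does not transfer. In summary, your proposal essentially reconstructs the known proof of \cref{thm:ms-mc} for monadically stable classes (and correctly distinguishes flip-flatness from flip-breakability, matching \cref{thm:flip-flatness,thm:md-fb}), but it does not close, or even isolate a plausible route around, the gap that keeps \cref{conj:main} open.
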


In \cref{conj:main}, we allow the constant $c$ to depend on the class $\Cc$. However, one could envision, and indeed expect, a stronger statement where $c$ is a universal constant, independent of $\Cc$.

\cref{conj:main} has emerged naturally in various forms after the dissemination of the work of Adler and Adler~\cite{adler2014interpreting}. The two earliest concrete mentions (with somewhat different formulations) are due to Toru\'nczyk~\cite{Warwick16} and to Gajarsk\'y, Hlin\v{e}n\'y, Obdr\v{z}\'alek, Lokshtanov, and Ramanujan~\cite{GajarskyHOLR20}.

At the point of writing this survey, \cref{conj:main} remains open and presents itself as the main algorithmic goal of the theory under development. In particular, it was recently confirmed for monadically stable classes (\cref{thm:ms-mc}). In the next two sections, we present a variety of structural tools for monadically stable and monadically dependent classes that were developed recently. These were admittedly in large part motivated by the work on \cref{conj:main}, but they also present a combinatorial understanding of those classes that is interesting on its~own.

\subsubsection{Monadic stability}\label{sec:mon-stable}

\paragraph*{Characterizations.}
We start by presenting the toolbox for monadically stable classes, which arguably already now provides a comprehensive combinatorial description. First, recall that in \cref{sec:char-nd} we have discussed two qualitative characterizations of nowhere denseness: through flatness and through the Splitter Game. It turns out that both these concepts can be lifted to characterizations of monadically stable classes, and the key to this is the principle that we have already seen in \cref{sec:treedepth} in the context of treedepth and shrubdepth: replace the operation of vertex deletion with the operation of applying a flip.

Recall that a {\em{$k$-flip}} of a graph $G$ is any graph $G'$ that can be obtained from $G$ by applying at most $k$ flip operations, that is, operations of replacing the adjacency relation on a vertex subset with its complement. Then the lift of the notion of flatness reads as follows.

\begin{definition}\label{def:flip-flatness}
 A graph class $\Cc$ is {\em{flip-flat}} if for every $d\in \N$, there exists a constant $k_d\in \N$ and a function $N_d\colon \N\to \N$ such that the following condition holds. For every graph $G\in \Cc$ and a vertex subset $A\subseteq V(G)$ with $|A|>N_d(m)$ for some $m\in \N$, there exist a $k_d$-flip $G'$ of $G$ and a subset $I\subseteq A$ with $|I|>m$ such that the vertices of $I$ are pairwise at distance larger than $d$ in $G'$.
\end{definition}

Note that flip-flatness is a purely combinatorial notion, there is no logic involved in the definition. Yet, as proved by Dreier, M\"ahlmann, Siebertz, and Toru\'nczyk~\cite{DreierMST23}, this notion exactly characterizes monadically stable classes.

\begin{theorem}[\cite{DreierMST23}]\label{thm:flip-flatness}
 A graph class is monadically stable if and only if it is flip-flat.
\end{theorem}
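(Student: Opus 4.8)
The plan is to prove the two implications of \cref{thm:flip-flatness} by transferring the corresponding characterization of nowhere denseness through flatness (\cref{thm:flatness-nd}) along the sparse/dense correspondence provided by \cref{thm:nd-monst}, together with closure properties of monadic stability. For the easier direction --- flip-flat implies monadically stable --- I would argue by contraposition. Suppose $\Cc$ is not monadically stable, so by definition the class of all half-graphs is transducible from $\Cc$. Using \cref{thm:ms-es} is not quite what we want here since that presupposes monadic dependence; instead I would directly exhibit, inside graphs of $\Cc$ (after a bounded-size coloring), arbitrarily large half-graphs as semi-induced subgraphs, together with the observation that a transduction is, up to a bounded coloring and taking induced subgraphs, a $k$-flip for some fixed $k$ composed with a simple interpretation. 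The point is that in a large semi-induced half-graph, taking any $k$-flip leaves some ``staircase'' structure that still contains long induced paths, hence two vertices $a_i, b_j$ from the $A$- and $B$-side at bounded distance; by a Ramsey/pigeonhole argument on which flip-parts $a_i$ and $b_j$ land in, one finds, for every $m$, a set $A$ (the $a_i$'s) of size $>N_d(m)$ in which \emph{no} $k$-flip can pull apart more than a bounded number of vertices to pairwise distance $>d$. This contradicts flip-flatness. The bookkeeping of how flips interact with the half-graph order, and reducing a general transduction witnessing non-stability to this clean picture, is the fiddly part of this direction.

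For the harder direction --- monadically stable implies flip-flat --- the natural route is to mimic the proof of \cref{thm:flatness-nd}, i.e. induction on the distance parameter $d$ with Ramsey's theorem at each step, but replacing the bounded set $S$ of \emph{deleted} vertices by a bounded number $k_d$ of \emph{flips}. Concretely, I would set up the induction so that the statement at level $d$ says: there are $k_d$ and $N_d$ such that in any $G\in\Cc$ and any large $A$, after a $k_d$-flip the set $A$ contains a large subset that is ``$d$-scattered''. The base case $d=1$ is essentially the statement that in a monadically stable class one cannot have, after bounded flips, an unbounded biclique or large dense neighbourhood pattern on $A$ --- this is where the model-theoretic input of stability (no half-graphs, hence by standard stability-theoretic arguments bounded ``ladder index'', hence bounded VC-type complexity of neighbourhoods) is used to colour $A$ with boundedly many colours so that each colour class has a homogeneous (complete or empty) neighbourhood on a large sub-part, and a single flip erases the complete part. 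For the inductive step, given the $d$-scattered witness, one looks at distance-$(d{+}1)$ interactions; the obstacle to two vertices being far is a short path through an intermediate vertex, and one uses a Ramsey argument to regularize which intermediate ``type'' is responsible, then a bounded number of additional flips to kill those connections, reducing to the level-$d$ statement on a smaller-radius graph. One also needs that monadic stability is preserved under the operations performed (bounded colorings, bounded flips, taking the graph on a ball of bounded radius) --- the first two are transductions so this is automatic, and the third is a simple interpretation composed with restriction.

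The genuine technical heart, and the step I expect to be the main obstacle, is the inductive step's \emph{localization}: in the sparse proof of \cref{thm:flatness-nd} deleting a small separator $S$ genuinely disconnects short paths, but a $k$-flip does not ``cut'' anything --- it can only turn the adjacency between two large homogeneous sets from ``all edges'' to ``no edges'' (or vice versa). So one must show that in a monadically stable class the connectivity patterns realized by short paths between vertices of $A$ are, after boundedly many flips, \emph{homogeneous enough} that the remaining short connections can be routed through only a bounded number of ``channels'', each of which is itself a homogeneous pair that a flip can neutralize. This is exactly the kind of statement that in \cite{DreierMST23} is handled via a careful analysis of \emph{flip-connectivity} and an associated ``game'' or iterated-flip argument; I would either invoke such a local-separation lemma as a black box or prove it by a Ramsey argument on the set system $\Ss^G_d(A)$ whose bounded VC dimension (inherited from stability, cf. the discussion around \cref{thm:neicmp-be}) is the key finiteness ingredient. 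Once the localization lemma is in hand, assembling the induction and extracting the explicit $k_d, N_d$ is routine.

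Finally, for completeness I would note that the equivalence immediately upgrades several earlier statements: \cref{thm:nd-monst} drops out as the weakly sparse special case (a flip on a weakly sparse graph is, up to bounded-size corrections, a deletion of a bounded set), recovering flatness $=$ nowhere denseness from flip-flatness $=$ monadic stability, which is a useful sanity check on the asymptotics of the bounds $k_d$ and $N_d$ produced by the argument.
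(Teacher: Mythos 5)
There is a genuine gap, and it sits exactly where you flag it. The survey does not reproduce the proof of \cref{thm:flip-flatness}; it only records that the argument of \cite{DreierMST23} follows the inductive template of \cref{thm:flatness-nd} but is ``far more involved and relies on tools borrowed from model theory, in particular the concept of indiscernible sequences''. Your hard direction follows the same inductive template, but the step that carries all the weight --- showing that, in a monadically stable class, the short connections between elements of $A$ can be made homogeneous by boundedly many flips so that a bounded number of further flips ``cuts'' them --- is precisely what you defer, either to a black box from \cite{DreierMST23} or to a hoped-for Ramsey argument on the set system $\Ss^G_d(A)$ with bounded VC dimension. That finiteness ingredient is not known to suffice: bounded VC dimension (indeed, almost linear neighborhood complexity) also holds in monadically dependent but unstable settings such as bounded twin-width, where flip-flatness fails, so an argument resting only on it cannot work. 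The actual proof extracts an indiscernible sequence from $A$ and uses a structural property of indiscernibles in monadically stable classes (each outside vertex interacts non-uniformly with only boundedly many sequence elements), and the flips are then defined from types over a bounded parameter set; nothing in your outline supplies a substitute for this mechanism, so the theorem's core remains unproven in your proposal.

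A secondary issue is in the easier direction: your reduction of an arbitrary transduction to ``a bounded coloring, a $k$-flip, and an induced subgraph'' is false --- interpretation formulas can do much more than flips (e.g.\ take squares or long-distance powers), so you cannot assume the half-graphs produced by a witnessing transduction sit semi-induced, up to flips, inside graphs of $\Cc$. This direction is repairable (for instance by showing directly that flip-flatness is inherited under transductions, or by using the pattern characterizations of non-monadically-stable classes in the spirit of \cref{thm:patterns-ms}), but as written the step does not go through. Your closing sanity check --- that the weakly sparse case recovers \cref{thm:flatness-nd} via \cref{thm:nd-monst} --- is fine, but it does not compensate for the missing localization lemma in the main direction.
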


On high level, the proof of \cref{thm:flip-flatness} follows the inductive template of the proof of \cref{thm:flatness-nd}. Yet, it is far more involved and relies on tools borrowed from model theory, in particular the concept of {\em{indiscernible sequences}}.

Next, we have already seen how the Splitter Game can be lifted: in \cref{sec:shrubdepth} we have discussed the radius-$\infty$ Flipper Game that exactly characterizes the graph parameter SC-depth, which in turn is functionally equivalent to shrubdepth, in the sense of being bounded on the same graph classes. Then we can consider the radius-$d$ variant of the Flipper Game with the following rules:
\begin{itemize}[nosep]
 \item In every round, first Keeper restricts the arena to the subgraph induced by a radius-$d$ ball around a vertex of her choice, and the Flipper applies a flip of his choice.
 \item The game ends with Flipper's victory once the arena becomes a single vertex.
 \item Flipper's goal is to win in as few rounds as possible, and Keeper's goal is to avoid losing for as long as possible.
\end{itemize}
Again, Gajarsk\'y, Ohlmann, M\"ahlmann, McCarty, Pilipczuk, Przybyszewski, Siebertz, Soko\l{}owski, and Toru\'nczyk~\cite{GajarskyMMOPPSS23} proved that this purely combinatorial game can be used to characterize monadic stability.

\begin{theorem}[\cite{GajarskyMMOPPSS23}]\label{thm:flipper-game-monstable}
 A graph class $\Cc$ is monadically stable if and only if for every $d\in \N$ there exists $k\in \N$ such that for every graph $G\in \Cc$, Flipper can win the radius-$d$ Flipper Game on $G$ within at most $k$ rounds.
\end{theorem}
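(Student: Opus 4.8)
The plan is to prove both implications by pivoting through \emph{flip-flatness} (\cref{def:flip-flatness}), which by \cref{thm:flip-flatness} is already known to be equivalent to monadic stability. Write $(\star)$ for the Flipper-game condition in the statement: for every $d\in\N$ there is $k\in\N$ so that Flipper wins the radius-$d$ Flipper Game on every $G\in\Cc$ within $k$ rounds. It then suffices to show that $(\star)$ holds for $\Cc$ if and only if $\Cc$ is flip-flat. This mirrors exactly the situation of \cref{thm:splitter-game}, where the Splitter Game is tied to flatness; the whole point is to carry out that development with the operation ``delete a vertex'' systematically replaced by ``apply a flip'', as in the passage from treedepth to SC-depth in \cref{sec:shrubdepth}.

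For the direction ``flip-flat $\Rightarrow(\star)$'', I would follow the template of the Grohe--Kreutzer--Siebertz argument that a flat class admits a fast Splitter strategy. To each position of the game one attaches a complexity measure (a rank) which, roughly, counts the number of distinct behaviours of bounded-size tuples inside bounded-radius balls of the current arena; one shows that this rank is bounded by a constant depending only on $\Cc$ and $d$. The key move is: when Keeper localises the arena to a ball $\Ball_d[v]$, Flipper replies with a single flip produced by flip-flatness applied to a carefully chosen ``candidate'' set $A$ inside that ball, after which either the rank of the new arena strictly decreases, or the arena is already small enough to finish off. Since the rank can drop only boundedly often, the game ends in a bounded number of rounds. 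I expect this to be the \textbf{main obstacle}: the candidate set $A$ must be chosen so that the flip delivered by flip-flatness remains useful even after Keeper re-localises to a smaller ball, which in the honest argument forces one to reason with indiscernible sequences and local types rather than with distances alone; in addition one has to juggle radii, since a path of length $\le d$ between two vertices of a radius-$d$ ball may leave that ball, so the construction must be run with a ball radius equal to a constant multiple of $d$ and rescaled at the end.

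For the converse, ``$(\star)\Rightarrow$ flip-flat'', I would argue directly by using $(\star)$ to extract scattered sets, in the spirit of the scattered-set extractions that underlie the theory of flatness. Fix $d$, let $k$ be the bound given by $(\star)$ for radius $4d$, and let $N_d(\cdot)$ be a sufficiently fast-growing (tower-type) function. Given $G\in\Cc$ and $A\subseteq V(G)$ with $|A|>N_d(m)$, simulate the radius-$4d$ Flipper Game in which Flipper uses his winning strategy; since this strategy wins against every Keeper, the play lasts at most $k$ rounds, so at most $k$ flips are applied in total and the final arena is a single vertex. A Ramsey-flavoured bookkeeping argument along the play then shows that at some round either the current arena already has the property that every $4d$-ball meets the surviving part of $A$ in boundedly many vertices --- in which case a greedy selection yields a large $I\subseteq A$ pairwise at distance $>d$ in the flipped graph $G'$ obtained by applying the at most $k$ flips used so far (this is where the inflated radius $4d$ is spent, to pass from ``far in the arena'' to ``far in $G'$''), witnessing flip-flatness with $k_d=k$ --- or the surviving part of $A$ is still large when the arena has collapsed to one vertex, which is impossible for $N_d(m)$ large enough. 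Carrying out this bookkeeping precisely is routine but lengthy, and is the only real content of this direction. Combining the two directions with \cref{thm:flip-flatness} yields the theorem.
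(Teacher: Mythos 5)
Your overall plan --- proving the game condition equivalent to flip-flatness and then invoking \cref{thm:flip-flatness} --- is legitimate in principle, but be aware that this survey does not prove \cref{thm:flipper-game-monstable} at all: it cites \cite{GajarskyMMOPPSS23}, where the hard direction is obtained either by a non-constructive model-theoretic argument (indiscernibles, rocket patterns) or by an explicit strategy construction, and neither route is a black-box reduction to flip-flatness. Judged on its own terms, your proposal has a concrete gap in the direction you call routine, namely $(\star)\Rightarrow$ flip-flatness. The set $I$ you extract is pairwise far \emph{in the final arena}, but the arena is an induced subgraph obtained by repeatedly intersecting with Keeper's balls, whereas the graph $G'$ of \cref{def:flip-breakability}-style statements, here \cref{def:flip-flatness}, must be a bounded flip of all of $G$. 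Short $G'$-paths between elements of $I$ can pass through vertices Keeper discarded in earlier rounds; the game's flips are subsets of the then-current arenas and never touch those vertices, so their adjacencies to $I$ in $G'$ are exactly as in $G$, and inflating the radius from $d$ to $4d$ does not help. Concretely, take $d=2$, let $G$ be a spider with centre $c$ and $n$ legs of length $4d$ ending in $u_1,\dots,u_n$, plus a hub $a$ adjacent to every $u_i$, and let $A=\{u_1,\dots,u_n\}$; this class is transducible from trees, hence monadically stable, and Flipper wins quickly. If Keeper's first move is the radius-$4d$ ball around $c$ (a legal move keeping all of $A$), then $a$ is discarded before any flip is made, every winning continuation by Flipper flips only inside the remaining spider, and your bookkeeping eventually outputs many $u_i$ that are pairwise far in the arena yet pairwise at distance $2\leq d$ in $G'$, since all edges $au_i$ survive. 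Repairing this requires choosing additional flips governed by the neighbourhoods (types) of vertices outside the arena, which is precisely the non-trivial content of the flatness-type arguments of \cite{DreierMST23}.

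The converse direction, flip-flatness $\Rightarrow(\star)$, is where the actual weight of the theorem lies, and your sketch defers it entirely. The naive lift of the argument behind \cref{thm:splitter-game} does not go through: flip-flatness scatters only a large subset $I$ of the chosen set, so after Flipper spends several rounds installing the prescribed $k_d$ flips (while Keeper keeps re-localising in partially flipped graphs), the fact that the new arena meets $I$ in at most one vertex bounds nothing by itself, and the flips tailored to one arena need not remain meaningful after re-localisation; identifying the right progress measure and candidate sets is exactly the open obligation, not a routine adaptation. You acknowledge that indiscernible sequences and local types would be needed, but that acknowledgement is the proof, not a discharge of it --- as the survey notes, \cite{GajarskyMMOPPSS23} required either substantial model-theoretic machinery or an intricate direct strategy for this step. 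As it stands, one direction of your argument is incorrect and the other is a statement of intent.
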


For the difficult direction of \cref{thm:flipper-game-monstable} (from monadic stability to the existence of a strategy), Gajarsk\'y et al. actually gave two proofs. The first one is non-constructive, relies on model-theoretic methods, and has the advantage of providing certain combinatorial obstructions to monadic stability, called {\em{rocket patterns}}. The second one is more direct, and in particular it provides a concrete strategy for Flipper, in which every next move can be computed in time $\Oh_{\Cc,d}(n^2)$.

\paragraph*{Model-checking.}
Recall that Splitter Game was the key decompositional tool used by Grohe et al.~\cite{GroheKS17} to prove fixed-parameter tractability of $\FO$ model-checking on nowhere dense classes (\cref{thm:nd-mc}). However, one more ingredient was needed to trim the game tree: sparse neighborhood covers. It turns out that these also exist in monadically stable classes, as proved by Dreier, Eleftheriadis, M\"ahlmann, McCarty, Pilipczuk, and Toru\'nczyk~\cite{DreierEMMPT24} in the statement below. However, we need to slightly relax the requirements: we say that a neighborhood cover $\Ff$ of a graph $G$ has {\em{weak radius $r$}} if for every $F\in \Ff$ there exists $u\in V(G)$ such that $F\subseteq \Ball_r^G[u]$. In other, informal words, the distances witnessing the locality of $F$ are measured in the whole graph $G$ instead of the induced subgraph $G[F]$, and in particular we do not even require $G[F]$ to be connected.

\begin{theorem}[\cite{DreierEMMPT24}]\label{thm:nei-cov-ms}
Let $\Cc$ be a monadically stable class of graphs, $d\in \N$ be a distance parameter, and $\eps>0$ be a positive real. Then every graph $G\in \Cc$ admits a neighborhood cover of weak radius $2d$ and overlap $\Oh_{\Cc,d,\eps}(n^\eps)$, where $n=|V(G)|$. Moreover, such a neighborhood cover can be computed in time $\Oh_{\Cc,d,\eps}(n^{4+\eps})$.
\end{theorem}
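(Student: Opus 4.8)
The plan is to mimic the construction behind the sparse analogue (\cref{thm:nei-cov-be}, and its nowhere dense quantitative version in \cref{thm:quant-nd}), following the recurring principle that the dense counterpart of \emph{deleting vertices} is \emph{applying flips}. In the sparse proof one fixes a vertex ordering $\le$ with $\wcol_{2d}(G,\le)$ small and lets the cover consist of the inverse weak $2d$-reachability sets; the covering property is then witnessed by the $\le$-smallest vertex of each $d$-ball, every member lies inside a radius-$2d$ ball, and the overlap equals $\wcol_{2d}(G,\le)$, which is $\Oh_{\Cc,d,\eps}(n^\eps)$ on a nowhere dense class. No such ordering exists in a dense class, so the engine has to be replaced, and the natural candidate is \textbf{flip-flatness} (\cref{thm:flip-flatness}): on a monadically stable class, for every distance $d'$ there is a constant $k_{d'}$ and a function $N_{d'}$ such that every sufficiently large vertex subset $A$ of $G\in\Cc$ contains, after applying at most $k_{d'}$ flips to $G$, a subset that is $d'$-scattered in the resulting graph and still almost as large as $A$.

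I would organize the argument around the following reduction. Call a set $D$ a \emph{net} if every vertex of $G$ is within $G$-distance $d$ of some element of $D$; a maximal $d$-scattered set is a net. Given a net $D$, the family $\{\Ball^G_{2d}[c]: c\in D\}$ already covers every ball $\Ball^G_d[u]$, and every member lies inside a radius-$2d$ ball of $G$ (so the required weak radius $2d$ is realised in $G$ itself); what is missing is control of the overlap, which equals $\max_v|\{c\in D : \dist_G(v,c)\le 2d\}|$. This overlap would be $1$ if $D$ were $(4d{+}1)$-scattered, but a net cannot be that scattered. The fix is to write $D=D_1\cup\dots\cup D_t$ with each $D_i$ being $(4d{+}1)$-scattered in $G$ — equivalently, a proper $t$-colouring of the graph $G^{4d}[D]$ whose edges join elements of $D$ at $G$-distance at most $4d$ — and to take $\Ff=\bigcup_i\{\Ball^G_{2d}[c]:c\in D_i\}$, whose overlap is then at most $t$. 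So everything reduces to bounding the chromatic number, and in fact the degeneracy, of $G^{4d}[D]$ by $\Oh_{\Cc,d,\eps}(n^\eps)$: this is precisely the ``sparsity of a shallow-minor-like object'' statement that on a nowhere dense class follows from \cref{thm:nd-grads}, and on a monadically stable class one instead iterates flip-flatness — every subset of $D$, after a bounded number of flips, contains a large $(4d{+}1)$-scattered subset — and a charging/colouring argument, together with the quantitative behaviour of the functions $N_{d'}$, converts this into the desired bound. (An alternative packaging, closer in spirit to the weak-reachability proof, is to build $\Ff$ by iteratively extracting maximal flip-scattered ``centre sets'' from the not-yet-covered part and recursing on a small residual set; the bookkeeping is the same, and the constants in the auxiliary scatteredness are calibrated so the final weak radius comes out exactly $2d$.)

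For the algorithmic claim I would use that the proofs of flip-flatness (\cref{thm:flip-flatness}) and of the Flipper game characterisation (\cref{thm:flipper-game-monstable}) are constructive: each application of flip-flatness, i.e.\ producing the $k_{d'}$ flips and the scattered subset, runs in polynomial time; a maximal $d$-scattered net is computed by $\Oh(n)$ rounds of BFS at cost $\Oh(n^3)$ in dense graphs; and running the colouring/extraction step $\Oh_{\Cc,d,\eps}(n^\eps)$ times brings the total to $\Oh_{\Cc,d,\eps}(n^{4+\eps})$.

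The hardest part, I expect, is the \emph{non-locality of flips}. Flip-flatness only ever hands us a set that is scattered in some $k$-flip $G'$ of $G$, and a $k$-flip can contract distances as easily as it stretches them, so a ball in $G'$ need not be a ball in $G$ and a graph such as $G^{4d}[D]$ formed inside $G'$ says nothing directly about $G$. The proof must therefore carry the accumulated flips through the entire construction — which stays bounded, since a $k$-flip is a single transduction $\Tf_k$ and transductions compose (\cref{lem:trans-composition}) — reason throughout in the flipped graph, and translate the resulting cover back to a cover of $G$ of weak radius $2d$ only at the end, arguing that the bounded number of flipped ``classes'' can be absorbed into the radius with only a constant-factor loss that is pre-compensated in the choice of auxiliary distances. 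Making this translation work while simultaneously keeping the total number of flips bounded independently of $n$ across all $\Oh(n^\eps)$ stages of the construction is the delicate technical core.
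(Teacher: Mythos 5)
There is a genuine gap, and it sits exactly at the step you call the reduction to ``bounding the chromatic number of $G^{4d}[D]$''. That graph is \emph{not} a shallow-minor-like object, and its chromatic number is not small even on the tamest sparse classes: take the star $K_{1,n}$ and $d=1$. The set $D$ of all leaves is a maximal $1$-scattered set (a net in your sense), yet every two leaves are at distance $2$ through the hub, so $G^{2}[D]$ is the clique $K_n$ and the overlap of your family $\{\Ball^G_{2}[c]:c\in D\}$ at the center is $n$. The reason \cref{thm:nd-grads} does not apply is that realizing the distance-$\le 4d$ connections among $D$ as a minor would require vertex-disjoint branch sets, while arbitrarily many pairs of centers may route through one and the same hub vertex; this is precisely why the sparse proof you are mimicking (\cref{thm:nei-cov-be}, \cref{thm:quant-nd}) does \emph{not} take balls around a scattered net but uses inverse weak $2d$-reachability sets, which handle hubs by making the hub's set large rather than by multiplying sets around it. So the claimed base case of your reduction is false already for nowhere dense classes, and no colouring of $D$ can repair it, because the overlap of the fixed family of balls around $D$ is itself $\Theta(n)$. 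The second, related gap is the flip translation: flip-flatness (\cref{thm:flip-flatness}) only yields scatteredness in a $k$-flip $G'$, while both the covering condition and the weak radius $2d$ in the statement are measured in $G$; a single flip can create or destroy distances arbitrarily in both directions, so there is no ``constant-factor absorption of flips into the radius'' — this is not a delicate technicality to be postponed but the point at which the plan has no mechanism at all.

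For contrast, the paper's proof takes a different route that sidesteps scattered sets entirely: first almost linear neighborhood complexity for monadically stable classes (\cref{thm:nei-comp-ms}), then Welzl's theorem on orderings with low crossing number applied to the set system of $d$-balls (\cref{thm:welzl}, giving \cref{thm:welzl-ms}), and finally a greedy partition of the Welzl order into maximal ``compact'' intervals whose $d$-neighborhoods form the cover (\cref{thm:rose-construction}); the crossing-number bound, not a scatteredness or colouring argument, is what controls the overlap, and hub vertices cause no trouble because the cover's members are sets $\Ball^G_d[I]$ over intervals $I$ rather than balls around single centers. If you want to salvage a flip-flatness-based construction, you would in effect need an analogue of weak coloring numbers for monadically stable classes, which the paper explicitly flags as an open problem.
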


Recall that in the nowhere dense case, the existence of sparse neighborhood covers was an easy consequence of the bounds on weak coloring numbers (see \cref{thm:nei-cov-be} and the discussion around \cref{thm:quant-nd}). This approach is currently not available in the monadically stable case, due to the lack of a robust analogue of weak coloring numbers; developing such an analogue is a notorious open problem. Therefore, in their proof of \cref{thm:nei-cov-ms}, Dreier et al. took a very different route. We briefly explain this route now, as it brings another set of interesting tools to the picture.

The first step is to prove that monadically stable classes enjoy almost linear neighborhood complexity.

\begin{theorem}[\cite{DreierEMMPT24}]\label{thm:nei-comp-ms}
  For every monadically stable graph class $\Cc$, distance parameter $d\in \N$, positive real $\eps>0$, graph $G\in \Cc$, and a subset of vertices $A\subseteq V(G)$, we have
 $$|\Ss^G_d(A)|\leq \Oh_{\Cc,\eps,d}(|A|^{1+\eps}).$$
\end{theorem}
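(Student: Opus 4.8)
The plan is to deduce the bound on $|\Ss^G_d(A)|$ from two ingredients: first, a bound on the \emph{VC density} of the relevant set system, which gives a polynomial bound, and second, a bootstrapping argument that upgrades the exponent from some unknown polynomial to $1+\eps$. For the first ingredient, I would fix the formula $\delta_{\le d}(x,y)$ expressing $\dist(x,y)\le d$, which is an $\FO$ formula. The set system $\Ss^G_d(A)$ is exactly the system of traces on $A$ of the sets $\{v : G\models \delta_{\le d}(u,v)\}$ as $u$ ranges over $V(G)$. Since $\Cc$ is monadically stable, it is in particular monadically dependent, hence dependent, so the formula $\delta_{\le d}$ is a dependent formula on $\Cc$; this yields a finite bound on the VC dimension of $\Ss^G_d(A)$ depending only on $\Cc$ and $d$, and the Sauer--Shelah Lemma gives $|\Ss^G_d(A)| \le \Oh_{\Cc,d}(|A|^{c})$ for some constant $c=c(\Cc,d)$. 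This is the easy polynomial bound; the real content is removing the gap between $c$ and $1+\eps$.

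For the bootstrapping I would exploit the flip-flatness characterization (\cref{thm:flip-flatness}) together with a self-improvement scheme. The idea: take a large set $A$ and apply flip-flatness to extract, inside $A$, a subset $I$ that is $2d$-independent in a $k_d$-flip $G'$ of $G$, with $|I|$ still a polynomial fraction of $|A|$ (precisely, $|I| > m$ whenever $|A| > N_d(m)$, which gives $|I| \gtrsim |A|^{\gamma}$ for some $\gamma>0$ depending on the growth of $N_d$). In $G'$, the radius-$d$ balls around distinct vertices of $I$ are pairwise disjoint, which severely constrains how many distinct distance-$d$ traces on $I$ can occur; combined with the fact that a $k_d$-flip only perturbs adjacency inside a bounded partition, one controls $|\Ss^{G}_{d}(A)|$ in terms of $|\Ss^{G}_{d}(A\setminus I)|$ plus a contribution from $I$ that is essentially linear. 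Iterating this peeling — each round removing a polynomial fraction and paying only a near-linear additive cost — and summing a geometric-type series yields the bound $\Oh_{\Cc,d,\eps}(|A|^{1+\eps})$, where $\eps$ is governed by how many iterations are needed, i.e. by $\gamma$ and $c$.

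I expect the main obstacle to be making the recursion on $A$ genuinely \emph{balanced} and \emph{compatible with flips at the same time}. Flip-flatness produces the independent set $I$ only after passing to a flip $G'$, and the distance-$d$ neighborhoods in $G'$ are not the same as in $G$; one must argue that bounding traces in $G'$ suffices, using that flips form a transduction $\Tf_{k_d}$ (as noted in the excerpt, $\Tf_k(G)$ is the class of $k$-flips) and that monadic stability is a transduction-closed property, so $G'$ lies in a fixed monadically stable class as well, and — crucially — the distance-$d$ relation in $G$ is $\FO$-definable in $G'$ with a bounded number of extra unary predicates (the flip classes). This lets one transfer a trace bound from $G'$ back to $G$ at the cost of a bounded multiplicative constant, at the price of enlarging $d$ to some $d'=d'(\Cc,d)$ in the formula. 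The bookkeeping needed to keep $d$, the flip number $k_d$, and the partition size all under control across the iteration, while still extracting a polynomially large independent set each time, is where the argument becomes delicate; I would model the clean version on the corresponding improvement of \cref{thm:neicmp-be} from VC-dimension-type polynomial bounds to linear bounds in the nowhere dense case (\cref{thm:quant-nd}, last point, following Eickmeyer et al.), adapting it by systematically substituting vertex deletions with flips.
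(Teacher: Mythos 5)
Your first ingredient (monadic stability $\Rightarrow$ dependence $\Rightarrow$ bounded VC dimension of the distance-$d$ formula, then Sauer--Shelah) is fine, but as you say it is not where the content lies, and the bootstrapping step has two genuine gaps. The first is the trace accounting. Splitting $A=I\cup(A\setminus I)$ only gives the multiplicative bound $|\Ss^G_d(A)|\leq |\Ss^G_d(I)|\cdot|\Ss^G_d(A\setminus I)|$, since a trace on $A$ is determined by its pair of restrictions; your recursion ``$|\Ss^G_d(A\setminus I)|$ plus an essentially linear contribution from $I$'' is exactly the hard content and is never established. Worse, the near-linear count of traces on $I$ itself is not available: flip-flatness scatters $I$ in a $k_d$-flip $G'$, so radius-$d$ balls \emph{of $G'$} meet $I$ in at most one vertex, but the set system in the statement consists of $G$-balls. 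In the sparse argument of Eickmeyer et al.\ that you want to imitate, after deleting the bounded set $S$ any $G$-ball meets the scattered set $I$ in at most one ``private'' vertex and otherwise interacts with $I$ only through $S$; since $|S|\leq s_d$ is a constant, the number of interaction patterns through $S$ is constant (roughly $(d+2)^{s_d}$), which is what makes the count on $I$ linear. A $k$-flip offers no analogous bounded vertex interface: $G$-distances and $G'$-distances are not mediated by any constant-size set, so ``transfer a trace bound from $G'$ back to $G$ at the cost of a bounded multiplicative constant'' has no justification. The FO-definability of $G$-distance in $G'$ with flip predicates only says the $G$-ball system is again definable in a monadically stable structure, which re-yields the Sauer--Shelah polynomial bound you started from, not a linear one.

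The second gap is quantitative: \cref{thm:flip-flatness} provides an arbitrary function $N_d$ (the known proof goes through indiscernibles and Ramsey-type arguments), so there is no guarantee that $|I|\gtrsim|A|^{\gamma}$. Without a polynomial version of flip-flatness --- which is not what \cref{thm:flip-flatness} asserts and which you do not otherwise supply --- the peeling cannot even be iterated the required number of times; note that in the nowhere dense setting polynomial uniform quasi-wideness is itself a nontrivial theorem that Eickmeyer et al.\ rely on. For comparison, the cited proof takes a different route: it first reduces to $d=1$ by passing to the $d$-th power of the graph (a transduction, so monadic stability is preserved), and then reduces the $d=1$ case to the nowhere dense bound of \cref{thm:quant-nd} using sampling methods from discrete geometry together with the branching-index characterization of stability --- precisely because the naive ``replace vertex deletions by flips'' adaptation of the sparse proof breaks at the interface step described above.
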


Note that in the proof of \cref{thm:nei-comp-ms} it suffices to consider the case $d=1$, for the operation of taking the $d$th power of a graph is a transduction, hence the class of $d$th powers of graphs from a monadically stable class $\Cc$ is again monadically stable. The proof of \cref{thm:nei-comp-ms} in \cite{DreierEMMPT24} makes this assumption, and essentially provides a reduction to the nowhere dense case (\cref{thm:quant-nd}, last point). This reduction relies both on sampling methods from discrete geometry, and on a model-theoretic characterization of stability through the notion of {\em{branching index}}.

The next step is to apply a result of Welzl~\cite{Welzl89} on orderings with low {\em{crossing number}}. To introduce this result, we need a few definitions. Consider a set system $\Ss=(U,\Ff)$, where $U$ is a universe and $\Ff$ is a family of subsets of $U$. The set system {\em{dual}} to $\Ss$ is the set sytem $\Ss^\star=(\Ff,U^\star)$, where the universe is the family $\Ff$ and for every $u\in U$ we introduce $u^\star\coloneqq \{F\in \Ff~|~u\in F\}\subseteq \Ff$; then $U^\star\coloneqq \{u^\star\colon u\in U\}$.
Next, for a subset $X\subseteq U$, by $\Ss|_X$ we denote the set system $(X,\Ff_X \coloneqq \{F\cap X\colon F\in \Ff\})$. Then for a function $\pi\colon \N\to \N$, we say that $\Ss$ has {\em{growth}} $\pi$ if $|\Ff_X|\leq \pi(|X|)$, for all $X\subseteq U$. Observe that \cref{thm:nei-comp-ms} implies that, in the notation from the statement, the set system $(V(G),\{\Ball_d^G[u]\colon u\in V(G)\})$ has growth\footnote{More formally, has growth $\pi(t)$ for some function $\pi(t)\in \Oh_{\Cc,\eps,d}(t^{1+\eps})$.} $\Oh_{\Cc,\eps,d}(t^{1+\eps})$, and also note that this set system is self-dual.

Next, we define the crossing number of an ordering of the universe with respect to a set system. Intuitively, an ordering with a low crossing number provides a very basic decomposition for a set system, in which every set can be decomposed into a small number of intervals.

\begin{definition}
Let $\Ss=(\Ff,U)$ be a set system and let $\leq$ be an ordering of $U$. The {\em{crossing number}} of $\leq$ with respect to $\Ff$ is the least $k\in \N$ such that for every set $F\in \Ff$, there are at most $k$ pairs $u,v$ of elements of $U$ such that $u$ and $v$ are consecutive in $\leq$ and exactly one of them belongs to $F$. (Note that this implies that $F$ can be written as the union of $\lceil k/2\rceil$ sets that are convex in $\leq$.)
\end{definition}

With these definitions, the result of Welzl reads as follows. Intuitively, it says that having low growth allows one to construct orderings with low crossing number, where almost linear growth implies subpolynomial crossing number.

\begin{theorem}[\cite{Welzl89}]\label{thm:welzl}
 Let $\Ss=(U,\Ff)$ be a set system and suppose $\Ss^\star$ has growth $\Oh(t^{d})$, for some real $d>1$. Then there is an ordering $\leq$ of $U$ whose crossing number with respect to $\Ff$ is bounded by $\Oh(|U|^{1-1/d}\log |U|)$. Moreover, such an ordering can be computed in time $\Oh((|U|+|\Ff|)^{3+d})$.
\end{theorem}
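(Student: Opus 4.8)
The plan is to produce the ordering of $U$ by a divide-and-conquer / merging strategy driven by a weight-and-halving argument of the classical $\varepsilon$-net / test-set flavour, which is the standard route to bounds of the form $|U|^{1-1/d}\log|U|$. First I would set up the bookkeeping: for each set $F\in\Ff$ and each ordering $\leq$ of $U$, call a consecutive pair $u,v$ of $\leq$ a \emph{crossing pair for $F$} if exactly one of $u,v$ lies in $F$; the crossing number of $\leq$ with respect to $\Ff$ is the maximum over $F\in\Ff$ of the number of crossing pairs. The key quantitative input is the growth bound on the dual system $\Ss^\star$: since $\Ss^\star$ has growth $\Oh(t^d)$, for every finite multiset $\Gg$ of $\Ff$ the number of distinct ``traces'' $\{F\cap\{F'\in\Ff : \dots\}\}$ — equivalently, the number of ways the elements of $U$ distinguish a subcollection of $\Gg$ — is $\Oh(|\Gg|^d)$. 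This is exactly the hypothesis that lets a random small sample of $U$ behave like a test set for $\Ff$.

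The core of the argument is the following recursive construction. Maintain a current ordered partition of $U$ into consecutive \emph{buckets}; start with a single bucket. In one \emph{round}, refine every bucket $B$ independently: pick, by an averaging argument over all bisections of $B$, a partition of $B$ into two halves $B_0,B_1$ (of size as equal as possible) such that the number of sets $F\in\Ff$ whose trace on $B$ is ``split'' — i.e.\ $F$ meets both $B_0$ and $B_1$ and is not one of the two trivial traces — is as small as possible; then replace $B$ by $B_0$ followed by $B_1$. After $\Oh(\log|U|)$ rounds every bucket is a singleton and we have obtained a linear order. The analysis tracks, for each $F\in\Ff$, the quantity $c_i(F)=$ number of buckets after round $i$ that $F$ \emph{splits} (meets but does not contain). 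A weight function $w_i=\sum_{F\in\Ff}2^{-(\text{something})}$ or, more directly, a potential counting the total number of splits, is shown to grow controllably: using the dual growth bound, one argues that at each round the total number of new splits introduced is at most $\Oh((\text{current number of buckets})^{1/d})$ times a polylog factor, because the number of distinguishable ways the sets can be split across the halving is only polynomially bounded in the number of buckets. Summing the geometric-like series over the $\Oh(\log|U|)$ rounds yields that the final crossing number, which equals $\max_F\sum_i(\text{splits of }F\text{ created at round }i)$, is $\Oh(|U|^{1-1/d}\log|U|)$ after the standard optimisation of the two error terms.

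I expect the main obstacle to be making the halving step \emph{simultaneously good for all sets} and converting the global count of splits into a bound on the \emph{worst-case} set $F$; this is precisely where the reweighting (multiplicative-weights) idea is needed rather than a naive averaging: one assigns each $F$ a weight that is multiplied up whenever $F$ is split, chooses the bisection minimising the total weight of newly-split sets, and then shows no single weight can grow too large, which bounds each $c(F)$ individually. The growth hypothesis on $\Ss^\star$ enters exactly here, to bound the number of ``effectively different'' sets one must hedge against at each bucket, keeping the per-round cost subpolynomial. Finally, for the algorithmic claim, each round processes each bucket by enumerating the relevant traces — at most $\Oh((|U|+|\Ff|)^{2})$ work per bucket to find a near-optimal bisection, $\Oh(\log|U|)$ rounds — which one packages into the stated $\Oh((|U|+|\Ff|)^{3+d})$ bound after accounting for recomputing traces of the (at most $\Oh(t^d)$ many) relevant sets; I would keep this running-time accounting deliberately coarse, as it is routine once the combinatorial construction is fixed.
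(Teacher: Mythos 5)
The survey does not prove \cref{thm:welzl} at all --- it is imported as a black box from Welzl's paper --- so the only question is whether your argument would stand on its own, and I don't think it does. The standard proof is an iterative reweighting construction of a spanning tree/path: its engine is a packing ("short pair") lemma of Haussler type, stating that if the dual system has growth $\Oh(t^d)$, then for \emph{any} weighting of $\Ff$ and any $m$ elements there exist two elements $u,v$ separated by sets of total weight only $\Oh(W\, m^{-1/d})$; one repeatedly picks such a pair, doubles the weight of the separating sets, and finally converts the resulting low-crossing spanning tree into an ordering by a DFS traversal (losing a factor $2$). Your proposal replaces this primitive by a balanced-halving step, and the justification you give for that step --- "the number of distinguishable ways the sets can be split across the halving is only polynomially bounded" --- is a non sequitur: the growth bound controls the number of \emph{distinct traces}, not the number (or weight) of sets realizing them, so it does not bound how many sets a bisection splits. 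In fact the quantitative claim you rely on is false as stated: take $U=\{1,\dots,n\}$ and $\Ff$ the chain of initial segments $\{1,\dots,k\}$, whose dual system has growth $\Oh(t)$; \emph{every} balanced bisection of $U$ splits a constant fraction of all sets, so no per-node bound of the form "$\Oh((\#\text{buckets})^{1/d})$ new splits" (or, in the weighted version, split weight at most $W\cdot m^{-1/d}$ up to polylogs) can hold. Any halving-based proof would therefore have to amortize the split weight across the levels of the recursion, and the lemma that would make that amortization go through is exactly the missing hard content; nothing in your sketch supplies it, and the natural way to prove such a statement is again via the packing lemma you never invoke.

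Two smaller points. Your identification "crossing number of $F$ $=$ number of splits of $F$" is not literally true --- a set can be split at a node without separating that node's boundary pair, and vice versa --- though it is repairable: an easy induction over the bisection tree shows the number of maximal runs of $F$ in the final order is at most $1$ plus the number of nodes at which $F$ is split, so the crossing number is at most twice the split count plus two, and tracking splits is legitimate once this is said. Also, for the multiplicative-weights bookkeeping to yield a per-set bound you need $\log$ of the initial total weight to be affordable; here one should first note that by the (primal) consequence of the growth hypothesis one may assume $|\Ff|\leq \Oh(|U|^{d})$ after identifying sets with equal traces, so $\log W_0=\Oh(d\log|U|)$ --- a step worth making explicit. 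But the decisive gap remains the unproved (and, in the form stated, false) halving lemma; without it, or without switching to the pair-selection argument of Welzl's actual proof, the construction does not establish the $\Oh(|U|^{1-1/d}\log|U|)$ bound.
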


Thus, by combining \cref{thm:nei-comp-ms} with \cref{thm:welzl} we immediately obtain the following.

\begin{theorem}[\cite{DreierEMMPT24}]\label{thm:welzl-ms}
  For every monadically stable graph class $\Cc$, distance parameter $d\in \N$, positive real $\eps>0$, and a graph $G\in \Cc$, there exists an vertex ordering $\leq$ of $G$ such that every radius-$d$ ball $\Ball^G_d[u]$, $u\in V(G)$, can be written as the union of $\Oh_{\Cc,\eps,d}(n^\eps)$ sets that are convex in $\leq$, where $n=|V(G)|$. Moreover, such a vertex ordering can be computed in time $\Oh_{\Cc,d,\eps}(n^{4+\eps})$.
\end{theorem}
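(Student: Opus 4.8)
The plan is to derive \cref{thm:welzl-ms} by directly composing the two ingredients that precede it, namely the almost linear neighborhood complexity of monadically stable classes (\cref{thm:nei-comp-ms}) and Welzl's theorem on orderings with low crossing number (\cref{thm:welzl}). First I would fix a monadically stable class $\Cc$, a distance parameter $d\in\N$, a real $\eps>0$, and a graph $G\in\Cc$ on $n$ vertices. Consider the set system $\Ss=(U,\Ff)$ with $U=V(G)$ and $\Ff=\{\Ball^G_d[u]\colon u\in V(G)\}$. The crucial observation is that this set system is \emph{self-dual} up to a harmless identification: the point $u^\star=\{F\in\Ff\mid u\in F\}$ is exactly $\{\Ball^G_d[v]\mid v\in\Ball^G_d[u]\}$, since $u\in\Ball^G_d[v]$ iff $v\in\Ball^G_d[u]$ by symmetry of distance. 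Hence the dual $\Ss^\star$ has the same growth function as $\Ss$ itself, and by \cref{thm:nei-comp-ms}, $|\Ff_X|=|\Ss^G_d(X)|\leq \Oh_{\Cc,\eps',d}(|X|^{1+\eps'})$ for every $X\subseteq U$ and every $\eps'>0$; so $\Ss$ (and $\Ss^\star$) has growth $\Oh(t^{1+\eps'})$.

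Next I would invoke \cref{thm:welzl} with the exponent $D\coloneqq 1+\eps'>1$: since $\Ss^\star$ has growth $\Oh(t^{D})$, there is an ordering $\leq$ of $U=V(G)$ whose crossing number with respect to $\Ff$ is bounded by $\Oh(n^{1-1/D}\log n)$. The elementary numeric step is that $1-\frac{1}{1+\eps'}=\frac{\eps'}{1+\eps'}<\eps'$, so $n^{1-1/D}\log n\leq \Oh(n^{\eps})$ provided we choose $\eps'$ small enough relative to $\eps$ (e.g. $\eps'<\eps$ absorbs the logarithmic factor for $n$ large, and small $n$ are handled by the constant). Now the parenthetical remark in \cref{def:crossing number} does the rest: an ordering with crossing number at most $k$ with respect to $\Ff$ writes every $F\in\Ff$, in particular every ball $\Ball^G_d[u]$, as a union of $\lceil k/2\rceil$ sets that are convex in $\leq$. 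Taking $k=\Oh(n^{1-1/D}\log n)=\Oh_{\Cc,\eps,d}(n^{\eps})$ gives the claimed bound on the number of convex pieces.

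For the algorithmic part I would track the running times through the same composition. By \cref{thm:nei-comp-ms}'s proof (or simply by computing all the balls $\Ball^G_d[u]$ directly by BFS and deduplicating), the set system $\Ss$ with $|U|=n$ and $|\Ff|\leq n$ can be produced in time polynomial in $n$, certainly within $\Oh_{\Cc,d}(n^{O(1)})$; one then feeds $\Ss^\star$ (of the same size parameters) to the algorithm of \cref{thm:welzl}, which runs in time $\Oh((|U|+|\Ff|)^{3+D})=\Oh(n^{3+D})$. Since $D=1+\eps'$ can be taken arbitrarily close to $1$, this is $\Oh_{\Cc,d,\eps}(n^{4+\eps})$, matching the stated bound; I would just remark that the cost of constructing $\Ss$ itself ($\Oh(n^2)$ per ball at worst, $\Oh(n^3)$ total, or better) is absorbed into this.

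The argument is essentially a clean two-line reduction, so there is no deep obstacle; the only points that require a bit of care are (i) verifying the self-duality of the ball set system, which is where symmetry of the distance function is used, and (ii) the bookkeeping on exponents---choosing $\eps'$ as a function of $\eps$ so that both the crossing-number exponent $\frac{\eps'}{1+\eps'}$ \emph{and} the runtime exponent $4+\eps'$ land below $\eps$ and $4+\eps$ respectively. The genuinely hard content---that monadically stable classes have almost linear neighborhood complexity---is already packaged in \cref{thm:nei-comp-ms}, so here I would simply cite it; likewise Welzl's theorem is taken as a black box. I expect the write-up to be a short paragraph, essentially: ``apply \cref{thm:nei-comp-ms}, observe self-duality, apply \cref{thm:welzl}, and simplify the exponents.''
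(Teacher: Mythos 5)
Your proposal is correct and is essentially the paper's own derivation: the paper likewise observes that the set system of radius-$d$ balls is self-dual, applies \cref{thm:nei-comp-ms} to get growth $\Oh_{\Cc,\eps,d}(t^{1+\eps'})$, and feeds this into \cref{thm:welzl} to obtain the $\Oh_{\Cc,\eps,d}(n^\eps)$ crossing-number bound and the $\Oh_{\Cc,d,\eps}(n^{4+\eps})$ running time. Your exponent bookkeeping and the remark that a crossing number of $k$ yields a decomposition into $\lceil k/2\rceil$ convex pieces are exactly the steps the paper relies on.
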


The final touch is a simple greedy procedure that constructs a neighborhood cover based on a vertex ordering with low crossing number with respect to the family of balls. This procedure is described by the following statement.

\begin{theorem}[\cite{DreierEMMPT24}]\label{thm:rose-construction}
 Let $G$ be a graph, $d\in \N$ be a distance parameter, and $\leq$ be a vertex ordering of $G$ such that the crossing number of $\leq$ with respect to the family of radius-$d$ balls $\{\Ball^G_d[u]\colon u\in V(G)\}$ is at most~$k$. Then $G$ admits a distance-$d$ neighborhood of weak radius $2d$ and overlap $k+1$, which moreover can be computed from $G$ and $\leq$ in time $\Oh(n^3)$.
\end{theorem}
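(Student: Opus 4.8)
The plan is to produce the cover by a single greedy left-to-right sweep along the ordering $\leq$, building each set of the cover as the union of the radius-$d$ balls around the vertices of a carefully chosen block of consecutive vertices. As preprocessing I would first run BFS from every vertex to compute $\Ball^G_d[u]$ for all $u\in V(G)$ (this is where the $\Oh(n^3)$ bound comes from), enumerate $V(G)=\{v_1,\dots,v_n\}$ according to $\leq$, and record for each $u$ the maximal intervals of this enumeration into which $\Ball^G_d[u]$ splits. Since the crossing number of $\leq$ with respect to the family $\{\Ball^G_d[u]\colon u\in V(G)\}$ is at most $k$, every such ball is in particular a union of at most $\lceil k/2\rceil$ convex sets; I will use the latter fact only in the overlap analysis.

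The sweep maintains a pointer $p$ to the least index not yet covered, starting at $p=1$. At each step I consider all pairs $(c,M)$, where $M$ is a maximal interval of $\Ball^G_d[c]$ containing $v_p$, pick one whose interval $M_\ell$ reaches furthest to the right, set $I_\ell$ to be the block from $v_p$ up to the right endpoint of $M_\ell$, add $F_\ell\coloneqq\bigcup_{u\in I_\ell}\Ball^G_d[u]$ to the cover, advance $p$ past $I_\ell$, and iterate until $p>n$. Since $v_p$ together with the maximal interval of $\Ball^G_d[v_p]$ through $v_p$ is always an eligible choice, the pointer strictly increases, the blocks $I_1,I_2,\dots$ partition $V(G)$ into consecutive intervals, and $I_\ell\subseteq M_\ell\subseteq\Ball^G_d[c_\ell]$. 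Two of the three required properties are then immediate: every $u$ lies in some block $I_\ell$, so $\Ball^G_d[u]\subseteq F_\ell$; and every $u\in I_\ell$ has $\dist^G(u,c_\ell)\le d$, so $F_\ell\subseteq\Ball^G_{2d}[c_\ell]$, giving weak radius $2d$.

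The crux is the overlap bound, and this is the step I expect to be the main obstacle. Fix $w$ and write $\Ball^G_d[w]=K_1\cup\dots\cup K_m$ with each $K_j$ convex and $m\le\lceil k/2\rceil$. Since $w\in F_\ell$ exactly when the block $I_\ell$ meets $\Ball^G_d[w]$, it suffices to show that each $K_j$ is met by at most two blocks, whence $w$ belongs to at most $2m\le 2\lceil k/2\rceil\le k+1$ sets of the cover. The point is that the maximal interval $M$ of $\Ball^G_d[w]$ containing $K_j$ is an eligible option at every pointer position inside $K_j$ and reaches at least as far right as $K_j$ does; therefore the first block chosen while the pointer sits inside $K_j$ already covers the rest of $K_j$, and the only other block that can touch $K_j$ is the one that enters it from a pointer position strictly to the left of $K_j$. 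Making this argument precise — in particular checking that a greedy choice made only with respect to maximal intervals of balls still clears every convex piece of $\Ball^G_d[w]$ in one step, so that no convex piece is split across three blocks — is the delicate part. The remaining bookkeeping, namely that the sweep runs within $\Oh(n^3)$ time (e.g.\ after precomputing, for each position, the largest right endpoint among all ball-intervals covering it), is routine.
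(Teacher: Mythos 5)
Your construction is exactly the paper's: choosing, at the current pointer, the eligible ball-interval reaching furthest right is the same as taking the longest compact interval (one contained in a single ball $\Ball^G_d[c]$) that is a prefix of the not-yet-processed suffix, and your verification of the covering property and of weak radius $2d$ matches the paper's. Also, the specific worry you flag is not an issue: if the pointer sits at a position $p$ inside a maximal run $M$ of $\Ball^G_d[w]$, then the pair $(w,M)$ is itself eligible, so the greedily chosen block extends at least to the right end of $M$; hence at most one block starts inside $M$, at most one block enters $M$ from the left, and no run is split across three blocks.

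The genuine gap is in the overlap arithmetic. The premise that every ball is a union of at most $\lceil k/2\rceil$ convex sets is false in general: a ball with at most $k$ crossing pairs can have up to $\lfloor k/2\rfloor+1$ maximal runs, because a run containing the $\leq$-minimum (or the $\leq$-maximum) contributes only one crossing, and a ball always contains its own center, so such configurations do occur. With $m=\lfloor k/2\rfloor+1$ runs, your bound ``at most $2$ blocks per run'' only yields $2m=k+2$ when $k$ is even, falling short of the claimed $k+1$. To close this, you need one more observation. Either note that a run beginning at the $\leq$-minimum is met by only one block (no block enters it from the left) and redo the case analysis relating the number of runs to the number of crossings; or, more robustly, charge blocks to crossings injectively: if a block $I_\ell$ meeting $B=\Ball^G_d[w]$ starts at a vertex outside $B$, then $I_\ell$ contains, internally, a consecutive pair $(u,v)$ with $u\notin B$, $v\in B$ (a left crossing of $B$ inside $I_\ell$); if $I_\ell$ starts at a vertex of $B$, then by maximality $I_\ell$ covers the rest of that run and can be charged to that run's right crossing, except for at most one block whose run ends at the $\leq$-maximum. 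Distinct blocks get distinct crossings (left crossings lie inside disjoint blocks, right crossings belong to distinct runs, and a pair cannot be both a left and a right crossing), so the overlap is at most $k+1$. The $\Oh(n^3)$ bookkeeping is indeed routine as you say.
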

\begin{proof}[Proof sketch]
 A set $I\subseteq V(G)$ that is convex in $\leq$ shall be called an {\em{interval}}. An interval $I$ is {\em{compact}} if there exists $u\in V(G)$ such that $I\subseteq \Ball^G_d[u]$.

 The idea is to greedily partition $V(G)$ into compact intervals along the ordering $\leq$. More precisely, we construct a partition $\cal I$ of $V(G)$ into compact intervals as follows:
 \begin{itemize}[nosep]
  \item Start with $\cal I\coloneqq \emptyset$ and $B\coloneqq V(G)$. $B$ will always be a suffix of $\leq$.
  \item As long as $B$ is not empty, let $I$ be the longest compact interval that is a prefix of $B$. Remove $I$ from $B$ and add $I$ to $\cal I$.
 \end{itemize}
 Now, let $\Ff\coloneqq \{\Ball_d^G[I]\colon I\in \cal I\}$, where we denote $\Ball_d^G[I]\coloneqq \bigcup_{u\in I}\Ball_d^G[u]$. That $\Ff$ is a distance-$d$ neighborhood cover of weak radius $2d$ follows directly from the construction. Moreover, the bound on the crossing number of $\leq$ with respect to the family of radius-$d$ balls, together with the maximality of the intervals $I$ extracted in the construction of $\cal I$, can be used to argue that the overlap of $\Ff$ is bounded by~$k+1$. We leave the easy details as an exercise for the reader.
\end{proof}

Now, combining \cref{thm:welzl-ms} with \cref{thm:rose-construction} gives distance-$d$ neighborhood covers of weak radius $2d$ and overlap $\Oh_{\Cc,d,\eps}(n^\eps)$, computable in time $\Oh_{\Cc,d,\eps}(n^{4+\eps})$, for any monadically stable class $\Cc$ and parameters~$d,\eps$. This proves \cref{thm:nei-cov-ms}.

Now that both \cref{thm:flipper-game-monstable,thm:nei-cov-ms} are established, we have suitable analogues of all the tools that were needed in the nowhere dense case to prove \cref{thm:nd-mc}. And indeed Dreier, M\"ahlmann, and Siebertz~\cite{DreierMS23} showed that the reasoning from the proof of \cref{thm:nd-mc} can be suitably lifted to the setting of the Flipper Game, which completes the proof of fixed-parameter tractability of $\FO$ model-checking on monadically stable classes.

\begin{theorem}[\cite{DreierEMMPT24,DreierMS23,GajarskyMMOPPSS23}]\label{thm:ms-mc}
 For every monadically stable class of graphs $\Cc$ there is an algorithm that given an $\FO$ sentence $\phi$ and a graph $G\in \Cc$, decides whether $G\models \phi$. The running time can be bounded by $\Oh_{\Cc,\eps}(n^{6+\eps})$, for any fixed~$\eps>0$.
\end{theorem}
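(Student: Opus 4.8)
The strategy is to assemble the two structural ingredients isolated above --- the Flipper Game characterization of monadic stability (\cref{thm:flipper-game-monstable}) and the existence of sparse neighborhood covers (\cref{thm:nei-cov-ms}) --- and then replay, essentially verbatim, the argument of Grohe, Kreutzer, and Siebertz for nowhere dense classes (\cref{thm:nd-mc}, sketched in \cref{sec:sparsity-mc}), with the radius-$d$ Splitter Game replaced by the radius-$d$ Flipper Game. First I would make a harmless normalization: replace $\Cc$ by the hereditary closure of $\bigcup_k\{\,G'\colon G'\textrm{ is a }k\textrm{-flip of some }G\in \Cc\,\}$. This only enlarges the class, keeps it monadically stable (monadic stability is an $\FO$ ideal and is preserved under hereditary closure), and guarantees that every structure arising as an ``arena'' in the course of the argument still lies in $\Cc$. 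Then, given the input sentence $\phi$ of quantifier rank $q$, Gaifman's Locality Theorem~\cite{gaifman1982local} reduces deciding $G\models\phi$ to understanding which rank-$\le q$ sentences hold in balls of radius at most $r\coloneqq 2^{\Oh(q)}$ in $G$, plus a bounded amount of counting information; set $d\coloneqq r$ and let $k=k(\Cc,d)$ and the strategy $\Tc$ be as guaranteed by \cref{thm:flipper-game-monstable} for the radius-$2d$ Flipper Game (each move of $\Tc$ computable in time $\Oh_{\Cc,d}(n^2)$).

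Next I would build a \emph{trimmed game tree} $T$, following GKS. Nodes of $T$ record positions in a modified radius-$d$ Flipper Game on $G$ in which, on Keeper's turn, Keeper is forced to restrict the arena not to an arbitrary radius-$d$ ball but to one of the sets of a distance-$d$ neighborhood cover of the current arena of weak radius $2d$ and overlap $\Oh_{\Cc,d,\eps}(n^\eps)$, supplied by \cref{thm:nei-cov-ms}; Flipper answers by $\Tc$. Keeper loses no power, since any radius-$d$ ball she might want to localize to is contained in a cover set of weak radius $2d$; and by a routine monotonicity argument Flipper still finishes within $k$ rounds, playing the radius-$2d$ strategy. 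Hence $T$ has depth $k$, and the overlap bound forces the total size of all arenas at any fixed level of $T$ to be $\Oh_{\Cc,d,\eps}(n^{1+\eps})$, so after rescaling $\eps$ the whole tree has $\Oh_{\Cc,\eps}(n^{1+\eps})$ nodes and can be computed within the time budget by invoking \cref{thm:nei-cov-ms} and the move-oracle for $\Tc$ at each node. Since the arenas are $j$-flips of induced subgraphs of $G$ rather than induced subgraphs, I would carry along, at each node, the $\le k$ vertex subsets defining the accumulated flips as extra colors, so that the ``true'' graph on an arena is the image of the colored arena under a fixed $\FO$ interpretation.

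Then I would run a bottom-up dynamic programming over $T$. For each node $x$ with colored, flipped arena $G_x$, compute a finite invariant --- the $\FO$ $q$-type of $G_x$ in the relevant encoding, the ranks staying uniformly bounded along the recursion. Locality of $\FO$, applied now \emph{inside} $G_x$, lets one compile this invariant from the invariants of the children of $x$ (which are exactly the local neighborhoods into which Keeper may localize) plus $\Oh_{\Cc,d}(1)$ bits of combinatorial bookkeeping, including the counting data from Gaifman's theorem; translating $\phi$-type questions from the flipped graph back to the colored arena is handled by the Backwards Translation Lemma (\cref{lem:backwards}) in its colored-$\FO$ form. The invariant at the root of $T$ decides whether $G\models\phi$. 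A routine accounting of the $\Oh_{\Cc,\eps}(n^{1+\eps})$ nodes, the $\Oh_{\Cc,\eps}(n^{4+\eps})$-time cover computations, the $\Oh_{\Cc,d}(n^2)$-time strategy moves, and the type manipulations yields the stated bound $\Oh_{\Cc,\eps}(n^{6+\eps})$; as usual, to obtain fixed-parameter tractability in the strict sense one assumes $\Cc$ is \emph{effectively} monadically stable, so that $d\mapsto k(\Cc,d)$ is bounded by a computable function.

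The genuinely delicate step is the last one: verifying that the GKS locality-based recursion survives the passage from vertex deletions to flips. A flip is a global operation --- a single flip can collapse or explode distances --- so it is not a priori clear that the hierarchical decomposition recorded by the Flipper-Game tree exposes the graph ``locally enough'' for an $\FO$-type recursion. The resolution is precisely the design of the radius-$d$ Flipper Game together with the weak-radius covers: after Flipper's move the arena of the next round is an honest radius-$d$ ball \emph{of the flipped graph}, the recursion still bottoms out at single vertices within $k$ steps, and the flip-colors plus Backwards Translation transport $\FO$-types across each flip without raising the quantifier rank beyond a bound depending only on $\phi$ and $k$. Making this precise --- in particular controlling how the flips accumulate along root-to-leaf paths and how the bounded counting information is threaded through the tree --- is the technical heart of the argument, carried out by Dreier, M\"ahlmann, and Siebertz~\cite{DreierMS23}, building on \cref{thm:flipper-game-monstable,thm:nei-cov-ms}.
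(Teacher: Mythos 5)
Your proposal follows essentially the same route as the paper: it assembles the Flipper Game characterization (\cref{thm:flipper-game-monstable}) and the sparse neighborhood covers (\cref{thm:nei-cov-ms}) and then lifts the Grohe--Kreutzer--Siebertz game-tree recursion from \cref{thm:nd-mc} to the flip setting, which is exactly the assembly the survey attributes to \cite{DreierEMMPT24,DreierMS23,GajarskyMMOPPSS23}, including the caveat about effectively bounded parameters. The added detail on trimming the game tree with the covers, carrying flip sets as colors, and invoking locality plus Backwards Translation is consistent with that cited argument, so no discrepancy with the paper's treatment.
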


Note that we chose to attribute \cref{thm:ms-mc} to all the three articles~\cite{DreierEMMPT24,DreierMS23,GajarskyMMOPPSS23}, as all of them contributed with major tools needed in the proof. The result was reported in the last article~\cite{DreierEMMPT24}, because chronologically, sparse neighborhood covers were the last piece of the puzzle missing. The earlier wrap-up provided by Dreier, M\"ahlmann, and Siebertz in~\cite{DreierMS23} assumed the existence of sparse neighborhood covers, and actually showed a method for efficiently approximating them (thereby reducing the need of finding them to proving their existence). This method is interesting on its own and relies on rounding a linear relaxation of the natural IP formulation of the problem; see~\cite{DreierMS23} for details.

\medskip

The {\em{Welzl orders}}, that is, orderings with low crossing number provided by \cref{thm:welzl}, served as a crucial ingredient of the proof of \cref{thm:nei-cov-ms}. Their role in the whole theory seems to be far more significant, but is not yet fully understood at this point. Nevertheless, the combination of \cref{thm:welzl,thm:rose-construction} provides a very transparent reduction of the task of (algorithmically) finding sparse neighborhood covers to proving almost linear neighborhood complexity. Note that this reduction can be also applied in the monadically dependent case.

\subsubsection{Monadic dependence}\label{sec:mon-dependent}

Compared to the monadically stable classes discussed in \cref{sec:mon-stable}, monadically dependent classes are much more poorly understood. However, the recent work of Dreier, M\"ahlmann, and Toru\'nczyk~\cite{DreierMT24} sheds some light on their structural properties. In this section we briefly describe their findings.

\medskip

On the positive side, Dreier et al. managed to characterize monadically dependent classes as those the enjoy the property of {\em{flip-breakability}}, defined as follows.

\begin{definition}\label{def:flip-breakability}
 A graph class $\Cc$ is {\em{flip-breakable}} if for every $d\in \N$, there exists a constant $k_d\in \N$ and a function $N_d\colon \N\to \N$ such that the following condition holds. For every graph $G\in \Cc$ and a vertex subset $A\subseteq V(G)$ with $|A|>N_d(m)$ for some $m\in \N$, there exist a $k_d$-flip $G'$ of $G$ and two subsets $B,C\subseteq A$ with $|B|>m$ and $|C|>m$, such that for every $b\in B$ and $c\in C$, the distance between $b$ and $c$ in $G'$ is larger than $d$.
\end{definition}

\begin{theorem}[\cite{DreierMT24}]\label{thm:md-fb}
 A graph class is monadically dependent if and only if it is flip-breakable.
\end{theorem}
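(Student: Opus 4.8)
The plan is to prove the two implications separately, and the easy direction first. Suppose $\Cc$ is flip-breakable but not monadically dependent; I want a contradiction. Non-monadic-dependence gives a transduction $\Tf$ with $\Tf(\Cc)$ containing all graphs. The key observation is that the witness configurations forbidden by flip-breakability are transduction-robust in the following precise sense: if $\Dd\tleq_{\FO}\Cc$ then flip-breakability of $\Cc$ transfers to flip-breakability of $\Dd$ (one pulls the $A$-set back through the transduction, applies the definition to the pre-image inside a colored member of $\Cc$, and pushes the resulting $k_d$-flip and the two far-apart sets $B,C$ forward — here one uses that a $\FO$ transduction of bounded quantifier rank cannot shrink distances too drastically, only up to a multiplicative constant depending on the transduction, so ``distance $>d$ in $G'$'' becomes ``distance $>d'$ in the output'' for a suitably adjusted $d'$, and flips in the source can be absorbed into flips in the target). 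Hence flip-breakability would be inherited by the class of all graphs. But the class of all graphs is patently \emph{not} flip-breakable: taking $A=V(K_n)$ for large $n$, after any bounded number $k$ of flips of $K_n$ one still has that every two vertex sets of size $>m$ contain a pair at distance $\leq 2$ (a $k$-flip of a complete graph has bounded diameter once $n$ is large, since the flip partition has at most $2^k$ cells and two cells are either complete or anticomplete, so one of them being large forces short connections). This contradiction proves the ``$\Rightarrow$'' of the theorem is the easy one: flip-breakable implies monadically dependent.

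For the hard direction — monadically dependent implies flip-breakable — I would follow the inductive template that proved the sparse analogue, Theorem~\ref{thm:flatness-nd}, and its flip-lift for the stable case, Theorem~\ref{thm:flip-flatness}, but now working with the weaker hypothesis of dependence rather than stability, so the conclusion weakens from one large far-apart set (flip-flatness) to two mutually far-apart sets (flip-breakability). The induction is on the radius parameter $d$. The base case $d=1$ amounts to a Ramsey-type statement: in a large vertex set $A$ inside $G\in\Cc$, after a bounded number of flips one can find two large subsets $B,C$ with no edges between them; this is essentially a consequence of the bounded-VC-dimension/neighborhood-complexity phenomenon available in monadically dependent classes, combined with a Ramsey argument to homogenize the bipartite trace of $A$ on itself and then a single flip to kill the chosen bipartite block. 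The inductive step is the crux: given the statement for radius $d$, one wants it for radius, say, $2d+1$ or $3d$. The idea, as in the sparse proofs, is to apply the radius-$d$ conclusion repeatedly to build a long ``ladder'' of far-apart sets and then argue that \emph{either} one already finds the desired radius-$(2d{+}1)$ pair, \emph{or} the failure to do so lets one $\FO$-interpret an arbitrarily large, arbitrarily complicated graph (concretely, one encodes the independence property or some universal pattern) inside a colored member of $\Cc$, contradicting monadic dependence. Turning that dichotomy into an actual argument is where model-theoretic machinery — indiscernible sequences, the combinatorial characterization of dependence via shattering, and a careful bookkeeping of how many flips accumulate per inductive level (which must stay bounded, independent of $m$, but may grow with $d$) — must be deployed.

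The main obstacle I expect is precisely this flip-count control across the induction combined with extracting the forbidden pattern: in the stable case one has the extra leverage of forbidding half-graphs (linear orders), which gives indiscernible sequences that are genuinely ``orderless'' and lets one collapse long ladders cleanly; in the merely dependent case one must instead run the argument against the independence property, which is more delicate because the obstructions (like the rocket patterns for stability, or analogous configurations for dependence) are harder to recognize and to rule out by a bounded-size interpretation. A secondary difficulty is that, unlike the nowhere-dense setting, there is currently no robust weak-coloring-number surrogate in the dense world, so the whole argument has to be carried out purely through flips and types rather than through a low-complexity ordering; one must be careful that each application of the inductive hypothesis, each Ramsey homogenization, and each type-counting step contributes only a bounded number of new flips and a bounded increase in the function $N_d(\cdot)$. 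Once the inductive step is in place, the theorem follows by unwinding the recursion on $d$, with $k_d$ and $N_d$ defined by the resulting recurrences.
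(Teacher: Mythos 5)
Your proposal has genuine gaps in both directions; note also that the survey itself states this theorem without proof (it is imported from \cite{DreierMT24}), so what follows measures your plan against that argument and against correctness on its own terms. In your ``easy'' direction the two pillars both fail as written. First, your witness that the class of all graphs is not flip-breakable is wrong: a single flip on the entire vertex set of $K_n$ yields the edgeless graph, in which any two disjoint subsets of $A$ are at infinite distance, so the class of complete graphs is flip-breakable with $k_d=1$ and $N_d(m)=2m$ (as it must be, since complete graphs are monadically dependent); your parenthetical claim that a $k$-flip of $K_n$ has bounded diameter is exactly what the flip destroys. A correct witness needs adjacency between all pairs of large subsets of $A$ that survives \emph{every} bounded flip --- quasirandom graphs, or the crossing and comparability-grid families of \cref{thm:patterns-md}, not cliques. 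Second, your justification that flip-breakability is transduction-closed rests on the claim that an $\FO$ transduction shrinks distances by at most a multiplicative constant; this is false. Complementation, or a formula making all vertices of one color pairwise adjacent, creates edges between vertices at arbitrary distance in the source, so ``distance $>d$ in $G'$'' does not push forward in the way you describe. Taming this requires the local/long-distance decomposition of transductions (the kind of machinery behind \cref{thm:str-bnd-deg}), and a self-contained closure lemma for flip-breakability is not an off-the-shelf triviality; in \cite{DreierMT24} the converse implication is instead obtained by extracting, from any monadically independent class, the explicit flip-robust pattern families of \cref{thm:patterns-md} as induced subgraphs of bounded flips and verifying directly that these defeat flip-breakability.

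For the hard direction (monadic dependence implies flip-breakability), what you give is a plan, not a proof. The base case $d=1$ is indeed unproblematic --- bipartite Ramsey alone produces a homogeneous pair which one (bipartite, hence three standard) flip makes anticomplete, no dependence needed --- but the entire content of the theorem sits in the inductive step you defer: how the failure to find the two far-apart sets at the larger radius is converted into a fixed transduction of all graphs, with the flip budget $k_d$ independent of $m$, is precisely what must be constructed, and ``model-theoretic machinery must be deployed'' is not an argument. Moreover, modelling the induction on the flip-flatness proof of \cref{thm:flip-flatness} underestimates how the argument changes: in the dependent setting the actual proof is organized around a different device (insulators, an ordered/orderless case distinction, and Ramsey arguments engineered to output the four pattern families of \cref{thm:patterns-md}), because the asymmetric conclusion (two mutually far sets rather than one scattered set) and the absence of indiscernible-based collapsing alter the structure of the recursion, not just its bookkeeping. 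As it stands, neither implication is established.
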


Flip-breakability can be understood as a certain ``balanced separator lemma'' for monadically dependent classes, with the following caveats:
\begin{itemize}[nosep]
 \item the separator takes the form of applying a $k_d$-flip to the given graph $G$; and
 \item the separator destroys only short cross-connections between two sizeable subsets, $B$ and $C$.
\end{itemize}
In this vein, it is instructive to compare flip-breakability to flip-flatness (\cref{def:flip-flatness}). In flip-breakability, the separator (flip) destroys short paths connecting a vertex of $B$ and a vertex of $C$, for two sizeable subsets $B$ and $C$ of $A$; so a biclique of connections is destroyed. In flip-flatness, we find a sizeable subset $I$ of $A$ such that the separator (flip) destroys all the short paths between all the pairs of vertices from $I$; so a clique of connections is destroyed. The latter is a stronger property, and indeed monadic stability is more restrictive than monadic dependence.

While the characterization through flip-breakability of \cref{thm:md-fb} might suggest the existence of global decompositions for graphs from monadically dependent classes, no such decompositions have been described so far. We also note that one can consider also a definition of {\em{breakability}} that is analogous to flip-breakability, just with flips replaced with vertex deletions. Indeed, Dreier et al. proved that, as expected, this notion of breakability exactly characterizes nowhere dense classes.

On the negative side, Dreier et al. proved that classes that are {\em{monadically independent}} (i.e., not monadically dependent) can be characterized by the existence of certain combinatorial obstructions in the form of induced subgraphs of $k$-flips, for a constant $k$. More precisely, for any distance parameter $d\in \N$, they considered four families of {\em{patterns}}:
\begin{itemize}[nosep]
 \item {\em{star $d$-crossings}};
 \item {\em{clique $d$-crossings}};
 \item {\em{half-graph $d$-crossings}}; and
 \item {\em{comparability grids}}.
\end{itemize}
A star $d$-crossing of order $n$ is just the $d$-subdivision of the biclique $K_{n,n}$. Clique $d$-crossings and half-graph $d$-crossings are minor variations of such subdivisions, obtained by altering the adjacencies near the principal vertices by introducing either cliques or half-graphs; see \cite{DreierMT24} for details. Comparability grids are a special family that can be seen as a product of two half-graphs: the comparability grid of order $n$ has vertex set $\{1,\ldots,n\}\times \{1,\ldots,n\}$, and two distinct vertices $(a,b)$ and $(a',b')$ are adjacent if and only if $(a-a')(b-b')\geq 0$.

Assuming $d\geq 1$, each of the families of patterns described above forms a sequence of larger and larger grid-like structures from which the class of all graphs can be transduced, similarly as it was done in the proof of \cref{lem:rook-not-dependent} for rook graphs. Therefore, if a class $\Cc$ transduces any of these families of patterns, then $\Cc$ is automatically monadically independent. However, it turns out that any monadically independent class in fact contains one of those families in a very simple way: as induced subgraphs under a $k$-flip, for a constant~$k$.

\begin{theorem}[\cite{DreierMT24}]\label{thm:patterns-md}
 Let $\Cc$ be a monadically independent graph class. Then at least one of the following assertions holds:
 \begin{itemize}[nosep]
  \item For some $k,d\geq 1$, every star $d$-crossing is an induced subgraph of a $k$-flip of a graph from $\Cc$.
  \item For some $k,d\geq 1$, every clique $d$-crossing is an induced subgraph of a $k$-flip of a graph from $\Cc$.
  \item For some $k,d\geq 1$, every half-graph $d$-crossing is an induced subgraph of a $k$-flip of a graph from $\Cc$.
  \item For some $k\in \N$, every comparability grid is an induced subgraph of a $k$-flip of a graph from $\Cc$.
 \end{itemize}
\end{theorem}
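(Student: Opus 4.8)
The plan is to run an \emph{interpretation-elimination} argument: analyse a fixed transduction witnessing monadic independence through Gaifman locality, and then extract one of the four patterns by an iterated Ramsey argument, in the spirit of the proofs of \cref{thm:flatness-nd} and \cref{thm:flip-flatness}. Since $\Cc$ is monadically independent it transduces the class of all graphs, so (composing, as in \cref{lem:rook-not-dependent}) there is a single transduction $\Tf=(C,\phi(x,y))$ with $|C|=\ell$ colours and $\phi\in\FO$ of quantifier rank $q$ whose image on $\Cc$ contains, for every $N$, an $N$-vertex Ramsey graph $H_N$ (a graph with no homogeneous set of size $\Oh(\log N)$); any such graph suffices because its restriction to any sufficiently large vertex subset is still non-trivial. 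By Gaifman's Locality Theorem~\cite{gaifman1982local}, on $C$-coloured graphs the formula $\phi(x,y)\vee\phi(y,x)$ is equivalent to a Boolean combination of basic local sentences together with distance tests $\dist(x,y)\le r$ and \emph{single-variable local formulas} $\theta(x)$ that inspect only the $C$-coloured radius-$r$ ball around $x$, where $r=2^{\Oh(q)}$.

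First I would realise $H_N$ inside $\phi(G_N^+)$ on a vertex subset $X_N$, where $G_N\in\Cc$ and $G_N^+$ is a $C$-colouring of $G_N$; by the restriction step we may take $X_N$ to be exactly the interpreted vertices. The key reduction is to colour each pair $u,v\in X_N$ by its \emph{interaction type}: the isomorphism type of the $C$-coloured subgraph of $G_N^+$ induced by $\Ball_r[u]\cup\Ball_r[v]$ together with a shortest $u$--$v$ path, recorded up to some bound $R=R(q,\ell)$ (with ``no path'' recorded when $u,v$ are farther apart). This palette is finite, depending only on $\ell$ and $q$. Applying Ramsey's theorem to this colouring, and then pigeonholing on the single-vertex radius-$r$ ball type, yields a subset $Y_N\subseteq X_N$ of size growing with $N$ on which all pairs share one interaction type and all vertices share one local type; on such a set the adjacency of $u,v$ in $H_N$ is a fixed function of the capped distance $\dist_{G_N}(u,v)$ and of the now-constant local data. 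Since $Y_N$ is large, $H_N[Y_N]$ is still non-homogeneous, so this function is non-constant. By an infinite pigeonhole we may assume that one single regime occurs for infinitely many $N$.

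The combinatorial heart is to classify the homogeneous regimes compatible with a non-homogeneous output, and to show there are exactly four. If the vertices of $Y_N$ were pairwise at distance larger than $r$ in $G_N$, the adjacency on $Y_N$ would be constant — a contradiction; hence the pattern vertices are linked by short paths through $V(G_N)\setminus Y_N$. Inspecting the few ways a capped linking path, combined with the local formulas $\theta$ acting near the endpoints, can dictate adjacency, one obtains precisely: clean subdivided connections with no interference near the principal vertices, giving a \emph{star $d$-crossing}; additional dense, respectively order-like, structure created by $\theta$ near the principal vertices, giving a \emph{clique $d$-crossing}, respectively a \emph{half-graph $d$-crossing}; and the connection being governed by two interleaved half-graph orders, producing a \emph{comparability grid}. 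Finally, on $Y_N$ the radius-$r$ balls are pairwise disjoint, so each of the $\Oh_\ell(1)$ colour classes of $C$ and each local formula $\theta$ restricts, on the relevant vertex set, to a block-structured set whose effect on adjacency can be reproduced by $\Oh_\ell(1)$ applications of the flip operation; together with promoting the bounded-length linking paths to genuine subdivision vertices, this exhibits the chosen $N$-vertex pattern as an induced subgraph of a $k$-flip of $G_N\in\Cc$ for a constant $k=\Oh_\Cc(1)$ independent of $N$.

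I expect two steps to be the main obstacles. The first is organising the Ramsey extraction so that the finite interaction type genuinely controls the output adjacency even though the linking paths wander outside $X_N$ — this is essentially the technology underlying \cref{thm:flip-flatness} and \cref{thm:md-fb}, and keeping it quantitatively clean (uniform $r$, uniform palette, homogeneity surviving the later pigeonholes) is delicate. The second, and genuinely new, ingredient is the dichotomy ``homogeneous non-trivial regime $\Rightarrow$ one of exactly these four shapes'': one must rule out any fifth regime by showing that every other homogeneous interaction type is either trivial or reducible, via a further transduction, to one of the four listed families. The last subtlety is that the conclusion must be stated with a bounded number of \emph{flips} rather than a bounded-size colouring; this is exactly why the four pattern families of \cite{DreierMT24} are designed to be flip-robust, and establishing that robustness on the nose is where their precise definitions are used.
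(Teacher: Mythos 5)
You are reviewing a result that this survey only states and cites from~\cite{DreierMT24}; there is no proof in the paper itself, so your sketch has to stand on its own, and as written it does not. The central Ramsey step is self-defeating. By Gaifman, once the basic local sentences of the coloured graph $G_N^+$ are fixed, the truth of $\phi(u,v)\vee\phi(v,u)$ is determined exactly by the data you put into your ``interaction type'' (the capped distance together with the bounded-rank local type of $\Ball_r[u]\cup\Ball_r[v]$). Hence on any pair-monochromatic set $Y_N$ the transduced adjacency is \emph{constant}, so $H_N[Y_N]$ is automatically homogeneous, and the sentence ``Since $Y_N$ is large, $H_N[Y_N]$ is still non-homogeneous, so this function is non-constant'' contradicts the homogenization you just performed; quantitatively, Ramsey's theorem only yields $|Y_N|=\Oh(\log N)$, which does not exceed the homogeneity threshold of your Ramsey graphs in any case. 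The entire difficulty is that one must \emph{not} homogenize the data that carries the encoding; deciding what structure to leave free, and tracking how that structure lives among ambient vertices of $G_N$ (not among the interpreted vertices of $X_N$), is where the real proof begins, not where it ends. (As a side point, Gaifman normal form does not reduce $\phi(x,y)$ to distance tests plus single-variable local formulas; that decomposition is only valid for pairs at distance greater than $2r$ — your interaction-type repair concedes this but then collapses as above.)

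Beyond that, the two steps you label as ``main obstacles'' are not obstacles to be smoothed over later: they \emph{are} the theorem. (i) The classification that every surviving non-trivial regime can be converted into one of exactly the four families, and (ii) the upgrade from logical definability to an \emph{induced} occurrence of arbitrarily large patterns in a $k$-flip of a graph from $\Cc$ with $k$ independent of the pattern size — which requires locating principal and subdivision vertices among the ambient vertices of $G_N$ and controlling \emph{all} adjacencies inside the chosen vertex set using only $k$ flips — are both asserted, not proved, in your last paragraph. For comparison, the argument of Dreier, M\"ahlmann, and Toru\'nczyk does not run a Gaifman analysis of one interpretation applied to Ramsey graphs; it goes through the flip-breakability characterization (\cref{thm:md-fb}): from the failure of flip-breakability they build, by an intricate Ramsey-type induction on auxiliary ordered structures (their insulators and prepatterns, with an orderless-versus-ordered case split), canonical configurations of unbounded size, which are then cleaned into the four pattern families as induced subgraphs of bounded flips, and this combinatorial containment is in turn what certifies monadic independence. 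So your text should be read as a programme; as a proof it has a broken Ramsey extraction and leaves the theorem's core content unestablished.
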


Using the fact that the patterns found by \cref{thm:patterns-md} are induced subgraphs of $k$-flips, Dreier et al. were able to derive a complexity lower bound analogous to \cref{thm:md-limit}: as far as hereditary graph classes are concerned, no tractability of $\FO$ model-checking should be expected beyond monadic dependence.

\begin{theorem}[\cite{DreierMT24}]\label{thm:md-limit}
 Let $\Cc$ be a class of graphs that is hereditary and monadically independent. Then the model-checking problem for $\FO$ on $\Cc$ is as hard as on general graphs.
\end{theorem}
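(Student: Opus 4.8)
The plan is to follow the template of the proof of \cref{thm:nd-limit}, with \cref{thm:patterns-md} playing the role that ``$\Cc$ contains all shallow subdivisions'' played in the monotone case. Since $\Cc$ is monadically independent, \cref{thm:patterns-md} hands us constants $k,d\geq 1$ and one fixed family $\Pp$ of patterns — star $d$-crossings, clique $d$-crossings, half-graph $d$-crossings, or (with $d$ irrelevant) comparability grids — such that every member of $\Pp$ is an induced subgraph of a $k$-flip of some graph from $\Cc$. The first step is to record that each of these families, as soon as $d\geq 1$, transduces the class of all graphs; this is exactly why transducing such a family certifies monadic independence, and the construction is the same adjacency-matrix encoding as in \cref{lem:rook-not-dependent} (for star $d$-crossings, one colours the two principal sides and the subdivision vertices and writes ``$x$ and $y$ are joined by a colour-$F$ path of length $d{+}1$'', then passes from bipartite graphs to all graphs as in \cref{lem:rook-not-dependent}; the other families are handled analogously). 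The key point is that this transduction $\Tf$ with $\Graphs\subseteq \Tf(\Pp)$ is \emph{effective}: from a graph $H$ on $N$ vertices one computes in time polynomial in $N$ the pattern $P\in\Pp$ of the appropriate order together with the colouring $P^{+}$ witnessing $H\in\Tf(P)$.

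The second step pushes the source of the encoding from $\Pp$ into $\Cc$, using the ``induced subgraph of a $k$-flip'' phrasing. If $P=G'[S]$ where $G'$ is a $k$-flip of $G\in\Cc$ and $S\subseteq V(G)$, then $G[S]\in\Cc$ since $\Cc$ is hereditary, and $P$ is obtained from $G[S]$ by flipping on the $S$-restrictions $B_1,\dots,B_k$ of the $k$ flip-sets; marking these sets by $k$ unary colours exhibits $P$ as the image of a \emph{simple interpretation} applied to the coloured graph $G[S]$, the interpreting formula being quantifier-free (it flips $\adj(x,y)$ exactly when $x$ and $y$ lie together in an odd number of the $k$ marked flip-sets). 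Composing this with $\Tf$ via \cref{lem:trans-composition} yields a single transduction $\mathsf{S}$, depending only on $\Cc$, together with the data that every graph $H$ lies in $\mathsf{S}(G_H)$ for some $G_H\in\Cc$ whose size is polynomial in $|V(H)|$. The reduction is then standard: given $(H,\varphi)$, produce $G_H\in\Cc$ and the witnessing coloured structure $G_H^{+}$ (with a bounded number of unary colours, one marking the retained induced subgraph), and apply the Backwards Translation Lemma (\cref{lem:backwards}, in its transduction form, plus relativizing quantifiers to the ``retained'' colour) to obtain a sentence $\psi$ with $G_H^{+}\models\psi \iff H\models\varphi$. A hypothetical fixed-parameter algorithm for $\FO$-model-checking on $\Cc$ would then decide $H\models\varphi$ in fixed-parameter time — colours are a harmless convenience, being just unary predicates on a member of $\Cc$ — and this would give $\mathsf{FPT}=\mathsf{AW}[\star]$ through the $\mathsf{AW}[\star]$-hardness of general $\FO$-model-checking discussed in \cref{sec:prelims}.

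The step I expect to require the most care — and the reason the bound $k$ on the number of flips is indispensable — is making the reduction effective, i.e. actually producing the graph $G_H\in\Cc$ realizing the pattern. \cref{thm:patterns-md} asserts only that such a $G_H$ exists, and for an abstract hereditary class one cannot in general compute large members (contrast the monotone case, where the subdivisions used in \cref{thm:nd-limit} are explicit and belong to $\Cc$). This is dealt with either by stating the lower bound non-uniformly, with the sequence $(G_H)_H$ as advice, or by assuming that $\Cc$ comes with a suitable effective presentation; the essential content that \cref{thm:patterns-md} supplies is that \emph{once} a member of $\Cc$ carrying the pattern is available, the bounded flip and the bounded colouring keep the whole encoding first-order, so an algorithm for $\Cc$ transfers mechanically to all graphs. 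The remaining ingredients — writing out the explicit transductions from the four pattern families and the quantifier-relativization bookkeeping in \cref{lem:backwards} — are routine and parallel \cref{lem:rook-not-dependent} and \cref{thm:nd-limit}.
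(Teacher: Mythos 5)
Your skeleton --- extract a pattern family via \cref{thm:patterns-md}, transduce all graphs from it as in \cref{lem:rook-not-dependent}, pull the encoding back into $\Cc$ through the $k$-flip and hereditariness, and finish with backwards translation --- is the right route (the survey only cites the result; the proof is in \cite{DreierMT24}). But two steps do not hold up as written. The most serious is the claim that ``colours are a harmless convenience.'' The theorem is about model-checking on $\Cc$, whose instances are plain graphs from $\Cc$; your reduction outputs a vertex-coloured graph $G_H^{+}$ and a sentence over the enriched signature, i.e.\ it proves hardness of the \emph{coloured} problem on $\Cc$, which is a genuinely weaker statement. Unary predicates add $\Theta(n)$ bits of structure, and there is no generic argument that tractability of uncoloured $\FO$ model-checking on a hereditary class survives the addition of colours. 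In the actual proof the information your colour $F$ carries (which subdivision paths of the crossing represent edges of $H$) is encoded not by a predicate but by \emph{deleting} the unused paths --- an induced subgraph, hence still inside $\Cc$ by hereditariness --- and the remaining structural data (the two principal sides, the subdivision layers, and the flip classes) must be shown to be $\FO$-recoverable in the flipped, trimmed pattern \emph{without} predicates. That decoding step is the technical heart of the hardness proof; hereditariness is exploited there, not merely to pass from $G'[S]$ to $G[S]$.

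The second gap is your treatment of effectiveness. You correctly observe that \cref{thm:patterns-md}, as stated, only asserts existence of the host graph and of the $k$ flip sets, but you resolve this by per-input advice or by an added effectiveness assumption on $\Cc$, which again yields only a weakened (non-uniform or conditional) version of \cref{thm:md-limit}, whereas the theorem is unconditional. The missing idea is a normalization of the unknown flips: inside a sufficiently large pattern one pigeonholes/Ramseys the at most $2^{k}$ flip classes against the canonical parts of the pattern (principal vertices, subdivision layers), and, using hereditariness once more, passes to a large sub-pattern on which the flip is uniform, described by a bounded amount of data $\tau$ depending only on $k$ and $d$ (this is essentially why the pattern families of \cref{thm:patterns-md} already come in the star/clique/half-graph variants). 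Once $\tau$ is fixed, the graph ``pattern of order $m$ flipped according to $\tau$, with the paths for non-edges of $H$ removed'' is explicitly computable and lies in $\Cc$ together with all its induced subgraphs, so the reduction only needs the constants $k,d,\tau$ hard-coded and becomes a genuine fpt reduction producing uncoloured members of $\Cc$. Without this normalization and without eliminating the colours, your argument does not reach the statement as given.
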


Finally, we remark that in their study of monadically stable classes, Dreier et al.~\cite{DreierEMMPT24} obtained a similar characterization for those classes. Not surprisingly, the class of half-graphs enters the scene as one of the families of forbidden patterns. Then half-graph $d$-crossings and comparability grids are no longer necessary, as they contain arbitrarily large induced half-graphs.

\begin{theorem}[\cite{DreierEMMPT24}]\label{thm:patterns-ms}
 Let $\Cc$ be a graph class that is not monadically stable. Then one of the following assertions~holds:
 \begin{itemize}[nosep]
  \item For some $k,d\geq 1$, every star $d$-crossing is an induced subgraph of a $k$-flip of a graph from $\Cc$.
  \item For some $k,d\geq 1$, every clique $d$-crossing is an induced subgraph of a $k$-flip of a graph from $\Cc$.
  \item For some $k\in \N$, every half-graph is an induced subgraph of a $k$-flip of a graph from $\Cc$.
 \end{itemize}
\end{theorem}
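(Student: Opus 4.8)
The plan is to reduce this statement to the analogous characterization of monadically \emph{independent} classes, \cref{thm:patterns-md}, together with the semi-induced-subgraph description of monadic stability, \cref{thm:ms-es}, splitting into two cases according to whether $\Cc$ is monadically dependent.

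First I would treat the case where $\Cc$ is not monadically dependent, i.e.\ monadically independent. Then \cref{thm:patterns-md} applies and yields one of its four outcomes. The first two — that every star $d$-crossing, respectively every clique $d$-crossing, is an induced subgraph of a $k$-flip of a graph from $\Cc$ — are verbatim the first two assertions we want, so there is nothing to do. For the remaining two outcomes, concerning half-graph $d$-crossings and comparability grids, I would invoke the fact recorded in the discussion preceding this statement: both half-graph $d$-crossings and comparability grids contain arbitrarily large induced half-graphs. Since ``being an induced subgraph of a $k$-flip of a graph from $\Cc$'' is transitive under taking induced subgraphs (an induced subgraph of an induced subgraph is an induced subgraph, and the ambient $k$-flip is unchanged), it follows immediately that every half-graph is an induced subgraph of a $k$-flip of a graph from $\Cc$, which is exactly the third assertion. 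If one prefers the variant in which a half-graph appears inside these patterns only after a bounded number of additional flips, one composes those extra flips — restricted to the vertex set of the pattern — with the ambient $k$-flip, which changes the flip count only by an additive constant.

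Next I would treat the case where $\Cc$ is monadically dependent but not monadically stable. By \cref{thm:ms-es}, $\Cc$ does not exclude any half-graph as a semi-induced subgraph, so for every $n$ there is a graph $G_n\in\Cc$ containing a half-graph of order $n$ as a semi-induced subgraph, witnessed by vertex subsets $\{a_1,\ldots,a_n\}$ and $\{b_1,\ldots,b_n\}$ with $a_i$ adjacent to $b_j$ iff $i\le j$, but with arbitrary internal adjacencies inside $\{a_i\}$ and inside $\{b_j\}$. I would clean up those internal adjacencies by a Ramsey argument on the common index set: colour each pair $\{i,i'\}\subseteq\{1,\ldots,n\}$ by whether $a_ia_{i'}$ is an edge and whether $b_ib_{i'}$ is an edge (four colours), and extract a monochromatic index set $S$ whose size tends to infinity with $n$. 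Then $\{a_i:i\in S\}$ and $\{b_i:i\in S\}$ are each homogeneous, each a clique or an independent set. Applying a flip to whichever of these two sets is a clique — at most two flips, neither of which touches any $a_ib_j$ pair — produces a graph $G_n'$ in which the subgraph induced by $\{a_i,b_i:i\in S\}$ is precisely a half-graph of order $|S|$. Since a half-graph of order $|S|$ contains a half-graph of order $m$ as an induced subgraph whenever $m\le|S|$, choosing $n$ large enough for each fixed $m$ shows that every half-graph is an induced subgraph of a $2$-flip of a graph from $\Cc$, which is again the third assertion, with $k=2$. As every class that is not monadically stable falls into one of these two cases, this completes the proof.

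The main obstacle here is conceptual rather than computational: it lies in being permitted to use \cref{thm:patterns-md}, which is the genuinely hard ingredient and is itself proved in \cite{DreierMT24} through a substantial argument (flip-breakability together with a delicate analysis of indiscernible sequences). Granting that input, everything else is routine: the Ramsey cleanup in the monadically dependent case, and the bookkeeping that ``induced subgraph of a bounded flip'' is closed under passing to induced subgraphs and under applying further boundedly many flips. It is also worth remarking that the converse implications — each pattern family being present forces non-monadic-stability, since the class of all graphs (or at least all half-graphs) can be transduced from the family, exactly as in the proof of \cref{lem:rook-not-dependent} — constitute the easy direction and are already sketched in the surrounding text.
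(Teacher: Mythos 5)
Your derivation is correct as a deduction from the other results stated in this survey, but it is not the route taken in the original source: the survey itself gives no proof of \cref{thm:patterns-ms} (it is quoted from~\cite{DreierEMMPT24}), and in that paper the characterization is established directly, by a substantial self-contained argument about monadically unstable classes, not by passing through the monadic-dependence dichotomy. Your two cases are fine: when $\Cc$ is monadically independent, \cref{thm:patterns-md} applies, its first two outcomes are literally the desired ones, and the half-graph-crossing and comparability-grid outcomes collapse into the third bullet because those patterns contain arbitrarily large induced half-graphs (for comparability grids one can see this concretely by taking two antidiagonals at large vertical offset, each of which is independent, with the cross-adjacency exactly $i\geq j$), and ``induced subgraph of a $k$-flip'' is preserved under passing to further induced subgraphs and under finitely many extra flips supported inside the induced set; when $\Cc$ is monadically dependent but unstable, \cref{thm:ms-es} gives arbitrarily large semi-induced half-graphs, and your Ramsey cleanup plus at most two flips (one per side, neither touching the cross-pairs) yields genuine induced half-graphs, so $k=2$ suffices there. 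The trade-off to be aware of is logical economy versus independence: your argument is short, but it rests on \cref{thm:patterns-md}, which is the later and strictly harder theorem of~\cite{DreierMT24}, whose proof in turn builds on the machinery developed for the monadically stable case in~\cite{DreierEMMPT24}; so while your reduction is perfectly valid relative to the survey's stated results, it could not serve as an independent proof of the original theorem without risking circularity, whereas the original direct argument is what makes both statements available in the first place.
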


\subsection{Ideals defined through closure}\label{sec:closure}

We now proceed to the other side of the coin: analyzing the structure in $\FO$ ideals defined by closing graph properties under transductions. More formally, recall that if $\Pi$ is a property of graph classes, then we say that a graph class $\Cc$ is {\em{structurally $\Pi$}} if $\Cc$ can be transduced from a class enjoying $\Pi$. Thus, classes that are structurally $\Pi$ form an $\FO$ ideal, and it is natural to ask for an understanding of the structure of graphs that are structurally $\Pi$. We will typically be interested in $\Pi$ ranging over classic properties of classes of sparse graphs, such as classes of bounded treewidth, bounded expansion classes, or nowhere dense classes.

\subsubsection{Structurally sparse classes}\label{sec:structurally-sparse}

The chronologically first result in this direction was delivered by Gajarsk\'y, Hlin\v{e}n\'y, Obdr\v{z}\'alek, Lokshtanov, and Ramanujan~\cite{GajarskyHOLR20}, who characterized classes of structurally bounded maximum degree. It turns out that these are just classes of bounded maximum degree obfuscated by a bounded number of flips.

\begin{theorem}[\cite{GajarskyHOLR20}]\label{thm:str-bnd-deg}
 A class of graphs $\Cc$ has structurally bounded maximum degree if and only if there exists $k\in \N$ and a class $\Dd$ of bounded maximum degree such that every graph in $\Cc$ is a $k$-flip of a graph in $\Dd$.
\end{theorem}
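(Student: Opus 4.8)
The plan is to prove the two implications separately; the backward one is immediate, so the effort goes into the forward one. For the backward direction: if every graph of $\Cc$ is a $k$-flip of a graph from a class $\Dd$ of bounded maximum degree, then $\Cc \subseteq \Tf_k(\Dd)$, where $\Tf_k$ is the transduction producing all $k$-flips (discussed in \cref{sec:shrubdepth}); hence $\Cc \tleq_\FO \Dd$ and $\Cc$ is structurally of bounded maximum degree by definition. For the forward direction I would start from a transduction $\Tf = (C,\phi(x,y))$ and a class $\Dd_0$ of maximum degree at most $\Delta$ with $\Cc \subseteq \Tf(\Dd_0)$ (if $\Tf$ uses copying, first replace $\Dd_0$ by $\Dd_0 \square K_c$, still of bounded degree). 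Fixing $G \in \Cc$, there are $H_0 \in \Dd_0$, a $C$-coloring $H_0^+$ of $H_0$, and $W \subseteq V(H_0)$ with $G$ equal to the subgraph of $\phi(H_0^+)$ induced by $W$; thus $uv \in E(G) \iff H_0^+ \models \phi(u,v) \vee \phi(v,u)$ for $u,v \in W$.

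The engine of the argument is the locality of $\FO$. By Gaifman's Locality Theorem, $\phi(x,y)$ is equivalent to a Boolean combination of basic local sentences and of $t$-local formulas (all quantification confined to the radius-$t$ ball around $\{x,y\}$) for some $t = t(\phi)$. In the fixed structure $H_0^+$ the basic local sentences are constants, so whether $uv \in E(G)$ depends only on the $t$-local formulas evaluated on the pointed colored structure $(N^t_{H_0^+}(\{u,v\}),u,v)$. This is where bounded degree enters: each ball $N^t_{H_0}(u)$ has fewer than $(t+1)\Delta^t$ vertices, so the pointed $C$-colored structure $(N^t_{H_0^+}(u),u)$ --- the \emph{type} of $u$ --- has one of at most $s$ isomorphism types, with $s$ depending only on $\phi$ and $\Delta$. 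I would then partition $W$ into the classes $A_1,\dots,A_{s'}$ (with $s' \le s$) of vertices of equal type.

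Next I would establish the key homogeneity claim: for each pair $(i,j)$, either all pairs $u \in A_i$, $v \in A_j$ with $\dist_{H_0}(u,v) > 2t$ are edges of $G$, or all such pairs are non-edges. Indeed, if $u,u' \in A_i$ and $v$ is at distance more than $2t$ from both, the radius-$t$ balls around $u$ and around $v$ are disjoint (likewise around $u'$ and $v$), so $N^t_{H_0^+}(\{u,v\}) = N^t_{H_0^+}(u) \sqcup N^t_{H_0^+}(v)$ and similarly for $u'$; since $u$ and $u'$ have the same type these pointed structures are isomorphic, whence $H_0^+ \models \phi(u,v) \iff H_0^+ \models \phi(u',v)$ and likewise with the arguments swapped, so $uv \in E(G) \iff u'v \in E(G)$. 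This yields a well-defined symmetric set $S$ of index pairs recording which alternative holds (with $(i,j) \notin S$ when no such far pair exists).

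Finally I would undo the ``far'' edges by flipping: for every $(i,j) \in S$ apply the flip inverting adjacency on $A_i \times A_j$ (a strong flip, realizable by three ordinary flips; an ordinary flip on $A_i$ when $i=j$). These flips act on pairwise disjoint sets of vertex pairs, so at most $k := 3\binom{s+1}{2}$ ordinary flips are used, with $k$ depending only on $\phi$ and $\Delta$; call the result $H$. Every pair $u \in A_i$, $v \in A_j$ with $\dist_{H_0}(u,v) > 2t$ is a non-edge of $H$ (it was an edge of $G$ exactly when $(i,j) \in S$, in which case it was flipped), so each vertex $u$ can be $H$-adjacent only to the fewer than $(2t+1)\Delta^{2t}$ vertices within distance $2t$ of $u$ in $H_0$; thus $H$ has maximum degree at most $\Delta' := (2t+1)\Delta^{2t}$. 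Since flips are involutions, $G$ is a $k$-flip of $H$, and taking $\Dd$ to be the class of all graphs of maximum degree at most $\Delta'$, with $k$ and $\Delta'$ uniform over $\Cc$, finishes the proof. The hard part is the locality bookkeeping --- invoking the Gaifman normal form for a two-variable formula over $C$-colored graphs and making the disjoint-union argument precise enough that $S$ is genuinely well-defined and symmetric; everything else is routine counting.
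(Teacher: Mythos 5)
Your proof is correct and follows essentially the same route that the paper attributes to~\cite{GajarskyHOLR20} and sketches right after \cref{thm:str-bnd-deg}: decompose the transduction, via Gaifman locality, into a local part (which on a bounded-degree source can only create edges between vertices at bounded distance, hence keeps degrees bounded) and a long-distance part whose effect depends only on local types and is therefore realized by boundedly many flips between type classes. The only slip is an off-by-one in the locality radius: when $\dist_{H_0}(u,v)>2t$ the radius-$t$ balls around $u$ and $v$ are vertex-disjoint but can still be joined by an edge when the distance is exactly $2t+1$, so the disjoint-union/homogeneity argument should require distance greater than $2t+1$ (with the degree bound becoming the size of a radius-$(2t+1)$ ball), which affects nothing else in the argument.
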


The proof of \cref{thm:str-bnd-deg} is not too difficult, but it introduced important technical methods for working with transductions, connected to locality of First-Order logic. More precisely, the idea of the proof (even though it is not that explicit in \cite{GajarskyHOLR20}) is to show that every transduction can be decomposed into a {\em{local part}} that can introduce new edges only between vertices that are close to each other in the graph, and a {\em{long-distance part}} that essentially can just apply some flips between vertices that are far from each other. This methodology turned out to be important in later works, see for instance~\cite{BonnetDGKMST22,BraunfeldNOS22transductions,GajarskyGK22}.

Next, a significant amount of work has been devoted to understanding structurally bounded expansion classes. Recall that in \cref{thm:low-treedepth-covers-be}, classes of bounded expansion were characterized as those that admit low treedepth covers. Gajarsk\'y, Kreutzer, Ne\v{s}et\v{r}il, Ossona de Mendez, Pilipczuk, Siebertz, and Toru\'nczyk~\cite{GajarskyKNMPST20} proved that classes of structurally bounded expansion can be characterized by the existence of {\em{low shrubdepth covers}} in the following sense.

\begin{theorem}[\cite{GajarskyKNMPST20}]\label{thm:low-shrubdepth-covers-sbe}
 A class of graphs $\Cc$ has structurally bounded expansion if and only if the following condition holds: for every $p\in \N$, there is a constant $K_p\in \N$ and a graph class $\Dd_p$ of bounded shrubdepth such that for every graph $G\in \Cc$, there is a family $\Ff$ of subsets of $V(G)$ with $|\Ff|\leq K_p$ such that:
 \begin{itemize}[nosep]
  \item for every $F\in \Ff$, we have $G[F]\in \Dd_p$; and
  \item for every subset of vertices $X\subseteq V(G)$ with $|X|\leq p$, there exists $F\in \Ff$ such that $X\subseteq F$.
 \end{itemize}
\end{theorem}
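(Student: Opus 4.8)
The plan is to prove both implications by reducing them to the characterization of bounded expansion through low treedepth covers (\cref{thm:low-treedepth-covers-be}) combined with the equivalence ``bounded shrubdepth = transducible from bounded-depth trees'' (\cref{thm:shrub-logic}), and the fact that taking the hereditary closure does not increase shrubdepth (used in \cref{thm:td-shb}). For the easier direction, suppose $\Cc$ satisfies the stated covering condition. Fix $p$ and the associated class $\Dd_p$ of bounded shrubdepth. By \cref{thm:shrub-logic}, $\Dd_p$ is transducible from a class of bounded-depth trees, which in particular has bounded expansion (a class of bounded-depth trees has bounded treedepth, hence bounded expansion). I want to show $\Cc$ is transducible from some bounded expansion class. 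The idea is the standard ``color the cover'' trick: given $G\in \Cc$ with its cover $\Ff=\{F_1,\ldots,F_{K_p}\}$, one can produce an auxiliary sparse structure by replacing each $G[F_i]\in \Dd_p$ with a ``decoder tree'' $T_i$ witnessing its membership (after taking the hereditary closure, which preserves bounded shrubdepth), taking the disjoint union of the $T_i$ over a fixed number $K_p$ of coordinates, and adding a bounded number of unary predicates recording, for each vertex of $G$, which sets $F_i$ contain it and which leaf of $T_i$ it corresponds to. Since $p$ is not fixed (it ranges over all of $\N$), one has to be slightly careful: the argument should produce, for the \emph{ambient transduction}, a single sparse class, so one takes $p$ large enough relative to the quantifier rank needed and invokes the fact that structurally bounded expansion is an $\FO$ ideal generated by a \emph{single} transduction. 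The cleanest route is: argue first that $\Cc$ is transducible from a disjoint union over a bounded number of copies of a bounded-treedepth class (for a suitable $p$), and then observe this class has bounded expansion.

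For the harder (forward) direction, assume $\Cc$ is structurally bounded expansion, say $\Cc\subseteq \Tf(\Dd)$ for a transduction $\Tf$ and a bounded expansion class $\Dd$. Fix $p\in \N$. The key step is to ``pull back'' the low treedepth cover of a graph $H\in\Dd$ to a low shrubdepth cover of the transduced graph $G\in\Tf(H)$. Concretely, use \cref{thm:low-treedepth-covers-be} to obtain, for a suitable parameter $q=q(p,\Tf)$, a treedepth-$q$ cover $\Ff_H=\{A_1,\ldots,A_{K(q)}\}$ of $H$. One then wants to define the cover of $G$ essentially as $\{G[A_j \cap V(G)] : j\}$, where $V(G)\subseteq V(H)$ is the image of $G$ under the restriction step of $\Tf$. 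The point is that for each $j$, the induced subgraph $G[A_j\cap V(G)]$ is obtained by a transduction from $H[A_j]$, which has bounded treedepth; since classes of bounded treedepth are transducible from bounded-depth trees (a consequence of \cref{thm:td-shb} or directly \cref{thm:td-model}) and bounded shrubdepth is an $\FO$ ideal (\cref{cor:shrub-ideal}), the class $\Dd_p\coloneqq$ the hereditary closure of $\{G[A_j\cap V(G)] : H\in\Dd,\ j\}$ has bounded shrubdepth. The covering property is inherited from $\Ff_H$: for $X\subseteq V(G)$ with $|X|\le p$, locality of $\Tf$ lets one find a slightly larger set $X'\subseteq V(H)$ of size at most $q$ (the vertices of $X$ together with short witnessing paths, of which there are boundedly many per vertex) with the property that the induced structure of $\Tf$ on $X'$ already determines the induced subgraph $G[X]$; then any $A_j\supseteq X'$ gives $F\coloneqq A_j\cap V(G)\supseteq X$ with $G[F]\in\Dd_p$.

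The main obstacle is making precise the ``locality of the transduction'' step and the choice of $q$: one must show that a size-$p$ set of vertices of $G$ has its induced subgraph in $G$ controlled by a \emph{bounded-size} set of vertices of $H$, uniformly over $H\in\Dd$. For a general transduction there is no such bound --- a single edge of $\varphi(H^+)$ may be witnessed by a long formula involving quantifiers ranging over all of $H$ --- so the honest route is to invoke the quantifier-elimination / pointer-structure machinery for bounded expansion (\cref{thm:quantifier-elimination}): applying it to the edge-defining formula $\varphi(x,y)$ of $\Tf$ over the class $\Dd$ gives a quantifier-free formula $\widehat\varphi$ over a pointer structure $\widehat H$ whose Gaifman graph is a subgraph of $H$, so the adjacency of any two vertices of $G$ in $G$ depends only on a bounded-size ``pointer-closure'' of those two vertices in $H$. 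Taking $X'$ to be the pointer-closure of $X$ (of size $O_{\Cc,p}(1)$) and then $q=|X'|$-bound makes the argument go through; one also needs that pointer-closures of small sets interact well with the treedepth cover, which holds because adding a bounded number of unary functions to $H$ keeps the class of bounded expansion, so one may apply \cref{thm:low-treedepth-covers-be} to $\widehat H$ rather than $H$. I expect the coloring-step bookkeeping (handling the nondeterministic coloring of $\Tf$, which only adds a bounded number of unary predicates and hence does not affect shrubdepth or the cover) to be routine, and the genuine content to be exactly this locality reduction, which is the technical heart of~\cite{GajarskyKNMPST20}.
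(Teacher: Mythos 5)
A preliminary remark: this survey states the theorem as a cited result of~\cite{GajarskyKNMPST20} and contains no proof of it, so I am judging your proposal against correctness and against the known argument; your overall plan for the forward direction --- quantifier elimination into pointer structures (\cref{thm:quantifier-elimination}) and then lifting low treedepth covers (\cref{thm:low-treedepth-covers-be}) to low shrubdepth covers --- is indeed the right route. But the execution has a genuine gap exactly at the step you flag: the claim that $G[A_j\cap V(G)]$ is a transduction of $\widehat H[A_j]$ is false for your choice $F\coloneqq A_j\cap V(G)$. After quantifier elimination, adjacency of $u,v\in A_j\cap V(G)$ is a Boolean combination of atoms about term values $t(u),s(v)$, and these values may lie \emph{outside} $A_j$, so they are invisible in $\widehat H[A_j]$. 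Concretely, suppose $\widehat\varphi(x,y)$ is $g(f(x))=f(y)\vee g(f(y))=f(x)$ and $H$ is a caterpillar: a $g$-path $w_1,\dots,w_n$ with pendant vertices $u_i$ and $f(u_i)=w_i$. Then $\{u_1,\dots,u_n\}$ is an independent set of $H$, so it can perfectly well be a single piece $A_j$ of a treedepth-$q$ cover, yet $G[A_j\cap V(G)]$ is an $n$-vertex path; since paths have unbounded shrubdepth, the class $\Dd_p$ you define need not have bounded shrubdepth. Your pointer-closure of the small set $X$ only rescues the covering condition, which was never the issue (any $A_j\supseteq X$ works since $|X|\le p\le q$); it does nothing for the first bullet. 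The repair is to shrink the pieces, not to enlarge $X$: take $q$ roughly $p$ times the number of subterms of $\widehat\varphi$, and set $F_j\coloneqq\{v\in V(G): \text{all term values at } v \text{ lie in } A_j\}$. Then $G[F_j]$ is quantifier-freely interpreted inside $\widehat H[A_j]$, whose Gaifman graph has treedepth at most $q$; such structures are transducible from bounded-depth forests (color each element with the atomic type it forms with its at most $q$ ancestors in an elimination forest), so \cref{thm:shrub-logic} and \cref{lem:trans-composition} give a bounded-shrubdepth class, and the covering property holds because the term closure of a $p$-element set has at most $q$ elements. Making the functions produced by quantifier elimination compatible with the covers in this sense is precisely the technical heart of~\cite{GajarskyKNMPST20}, so it cannot be waved through.

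The backward direction is also not as routine as you suggest. Encoding $G$ via decoder trees for the pieces $G[F_i]$ is fine in principle, but the crucial assertion that the resulting host class ``has bounded expansion'' is exactly what has to be proved, and it is not automatic: a union of a bounded number of bounded-depth forests sharing vertices can already fail bounded expansion badly (the $1$-subdivision of $K_n$ is the union of two star forests of depth $2$, and that class is not even nowhere dense at depth $1$). So using only the $p=2$ cover to define the encoding is not enough; one must invoke the covers for \emph{all} $p$ to show that the constructed sparse representations themselves admit low treedepth covers, and only then conclude bounded expansion via \cref{thm:low-treedepth-covers-be}. Your suggestion to take disjoint copies of the trees creates an additional problem: a non-copying transduction cannot re-identify several copies of the same vertex unless the copies are linked inside the host, and those links re-enter the sparsity analysis. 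Finally, the remark about ``taking $p$ large enough relative to the quantifier rank'' does not address either issue; the quantifier rank of the decoding formula is harmless, while the density of the host class is the real obstacle.
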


A family $\Ff$ satisfying the two conditions above will be called a {\em{$p$-cover}} of $G$ {\em{captured}} by $\Dd_p$.

Further structural characterizations of structurally bounded expansion classes were proposed by Dreier in~\cite{Dreier23} (through {\em{lacon-}}, {\em{shrub-}}, and {\em{parity-decompositions}}), and by Dreier, Gajarsk\'y, Kiefer, Pilipczuk, and Toru\'nczyk in~\cite{DreierGKPT22} (through {\em{bushes}}). Roughly speaking, all  those characterizations have a similar spirit: they show that if $\Cc$ is a class of structurally bounded expansion, then for every graph $G\in \Cc$ one can find a sparse graph $H_G$ such that (i) $G$ can be transduced from $H_G$ using a concrete, very simple transduction, and (ii) the class $\{H_G\colon G\in \Cc\}$ has bounded expansion. Actually, without going into technical details, the characterization of \cref{thm:low-shrubdepth-covers-sbe} can be also understood in this way. Thus, all the results mentioned above have both a {\em{decompositional}} character --- $H_G$ may serve as a sparse skeleton of a decomposition in which $G$ can be encoded --- and a {\em{normalization}} character --- they show that if $\Cc$ can be transduced from a class of bounded expansion, then $\Cc$ can be transduced from a very specific class of bounded expansion using a very specific~transduction.

Due to the technical character of the characterizations discussed above, in this survey we refrain from giving their precise description. An interested reader is invited to the works~\cite{Dreier23,DreierGKPT22}.

Finally, inspired by the characterization of structurally bounded expansion classes through bushes, Dreier et al.~\cite{DreierGKPT22} proved a decomposition result for structurally nowhere dense classes that involves a related notion of {\em{quasi-bushes}}. This result turned out to be somewhat important in the later works, hence we present quasi-bushes in full formality.

\begin{definition}
 A {\em{quasi-bush}} $B$ consists of
 \begin{itemize}[nosep]
   \item a rooted tree $T$;
   \item a finite set of labels $\Lambda$;
   \item a labeling $\lambda\colon \mathsf{Leaves}(T)\to \Lambda$, where $\mathsf{Leaves}(T)$ is the set of leaves of $T$;
   \item a set $D$ of arcs called {\em{pointers}}, each with tail in a leaf of $T$ and head in an internal vertex of $T$; and
   \item a labeling $\lambda^D\colon D\to 2^\Lambda$.
 \end{itemize}
 We require that for each leaf $u\in \mathsf{Leaves}(T)$, there is a pointer $(u,r)$, where $r$ is the root of $T$. The {\em{depth}} of $B$ is the depth of $T$.

 A quasi-bush $B$ {\em{represents}} the graph $G(B)$ defined as follows:
 \begin{itemize}[nosep]
  \item The vertex set of $G(B)$ is $\mathsf{Leaves}(T)$.
  \item For two distinct $u,v\in \mathsf{Leaves}(T)$, let $x$ be the deepest (furthest from the root) ancestor of $u$ such that $(v,x)\in D$, and $y$ be the deepest ancestor of $v$ such that $(u,y)\in D$. Then $u$ and $v$ are adjacent in $G(B)$ if and only if $\lambda(u)\in \lambda^D(v,x)$ and $\lambda(v)\in \lambda^D(u,y)$.
 \end{itemize}
\end{definition}

The reader may think of a quasi-bush as of a structure similar to a tree-model (which we have seen in the context of shrubdepth, \cref{def:tree-model}), except that besides the bounded-depth tree there is also a net of pointers that provide additional information across different branches.

It is not hard to see that as long as a quasi-bush $B$ has constant depth, the graph $G(B)$ represented by $B$ can be transduced from $B$ (where $B$ is suitably represented as a relational structure). Indeed, the mechanism of finding the lowest ancestor of one given vertex that has a point from another given vertex, and reading the label of this pointer, can be clearly expressed in $\FO$.

We need one more definition. We say that a graph class $\Cc$ is {\em{almost nowhere dense}} if it satisfies the conclusion of \cref{thm:nd-grads}; that is,
 $$\nabla_h(G)\leq \Oh_{\Cc,h,\eps}(|V(G)|^{\eps})\qquad\textrm{for every }G\in \Cc, h\in \N, \eps>0.$$
Thus, almost nowhere dense classes satisfy the qualitative properties described in \cref{thm:quant-nd} (except the last point, the neighborhood complexity), but not necessarily the quantitative properties such as flatness or short termination of the Splitter Game.

With all these definitions, we can finally state the result of Dreier et al.

\begin{theorem}[\cite{DreierGKPT22}]\label{thm:snd-qbushes}
 Let $\Cc$ be a structurally nowhere dense class of graphs. Then there exists a class of quasi-bushes $\Bb$ such that
 \begin{itemize}[nosep]
  \item the quasi-bushes from $\Bb$ have bounded depth (there is $h$ such that each $B\in \Bb$ has depth at most~$h$);
  \item the class of Gaifman graphs of the quasi-bushes from $\Bb$ is almost nowhere dense; and
  \item for every $G\in \Cc$, there is a quasi-bush $B\in \Bb$ such that $G=G(B)$.
 \end{itemize}
\end{theorem}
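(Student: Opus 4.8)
The plan is to unpack the hypothesis into a transduction over a nowhere dense class, make the transducing formula \emph{local} via Gaifman's theorem, and then read the tree off the Splitter game. \textbf{Setup and locality.} Since $\Cc$ is structurally nowhere dense, fix a nowhere dense class $\Dd$ and a transduction $\Tf=(C,\varphi)$ with $\Cc\subseteq\Tf(\Dd)$. For $G\in\Cc$ we obtain a $C$-colored graph $H^+$ with underlying graph $H\in\Dd$ such that $G$ is an induced subgraph of $\varphi(H^+)$; absorbing the restriction step into one extra unary colour (the discarded vertices are simply made isolated, which is harmless) we may assume $V(G)=V(H)$, so it suffices to encode the relation $E(\varphi(H^+))$. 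Applying Gaifman's Locality Theorem to $\varphi(x,y)$ and observing that over the fixed structure $H^+$ the basic local sentences of the normal form are merely true or false, we may assume $\varphi(x,y)$ is a Boolean combination of $r$-local formulas around $\{x,y\}$, for some constant $r=r(\varphi)$. Classify pairs as \emph{close} if $\dist^H(u,v)\le 2r$ and \emph{far} otherwise. For far pairs $\Ball^H_r[u]$ and $\Ball^H_r[v]$ are disjoint, and a Feferman--Vaught composition argument shows that $uv\in E(\varphi(H^+))$ is a function of the pair of bounded-rank local types $(\tp(u),\tp(v))$; there are boundedly many such types $\tau_1,\dots,\tau_N$, and far-adjacency is governed by a single fixed symmetric matrix $B\in\{0,1\}^{N\times N}$. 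For close pairs, $uv\in E(\varphi(H^+))$ depends only on the colored isomorphism type of $\Ball^H_r[\{u,v\}]$.

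\textbf{The tree and the sparsity bound.} Put $d=2r$. Since $\Dd$ is nowhere dense, \cref{thm:splitter-game} gives a constant $k$ in which Splitter wins the radius-$d$ game on every $H\in\Dd$, and \cref{thm:quant-nd} gives, for any fixed $\eps>0$, a distance-$d$ neighbourhood cover of $H$ of radius $2d$ and overlap $\Oh_{\Cc,d,\eps}(n^\eps)$, where $n=|V(H)|$. As in the proof of \cref{thm:nd-mc}, build the game tree $T$ in which Connector may only play cover elements --- this does not help Keeper, since every radius-$d$ ball lies in some cover element, whereas Splitter still wins quickly by executing the radius-$2d$ strategy --- and turn each vertex deleted by Splitter into a leaf hanging at that node, fixing one canonical branch per vertex so that $\mathsf{Leaves}(T)=V(H)$. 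Then $T$ has depth at most $k$, and each internal node $z$ carries an arena $H_z\subseteq H$ of radius $\le 2d$ in $H$. The point of restricting Connector to the cover, together with the bounded radius of arenas, is that every pointer we add below is either the mandatory one to the root or points to an ancestor lying in a bounded-radius arena; hence in the Gaifman graph of the resulting quasi-bush every vertex has boundedly many ``local'' neighbours and otherwise only neighbours inherited from the $\Oh(n^\eps)$ arenas containing it, so a direct estimate gives $\nabla_h\le\Oh_{\Cc,h,\eps}(n^\eps)$ for all $h$, i.e.\ almost nowhere denseness.

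\textbf{Labels, pointers, verification.} Take $\Lambda=\{\tau_1,\dots,\tau_N\}$ enlarged by a bounded amount of auxiliary data, set $\lambda(v)=\tp(v)$, and give the mandatory pointer $(v,\mathrm{root})$ the label $\lambda^D(v,\mathrm{root})=\{\tau:B[\tau,\tp(v)]=1\}$. For a far pair $u,v$ the only head among $v$'s pointers that is an ancestor of $u$ is the root, so the quasi-bush semantics evaluates to exactly $B[\tp(u),\tp(v)]$, using symmetry of $B$. For a close pair $u,v$ we have $v\in\Ball^H_d[u]\subseteq F$ for some cover element $F$, so $u$ and $v$ share an ancestor $z_F$ whose arena contains $\Ball^H_r[\{u,v\}]$; recursing inside $H_{z_F}$ --- a fresh instance of the same problem, now with a residual local formula of strictly smaller radius --- we place, level by level along the branches of $u$ and $v$, pointers whose labels strip off one homogeneous (flip-like) layer of the adjacency at a time, until after boundedly many rounds the residual radius reaches $0$ and the edge is recorded directly; crucially, the colored type of $\Ball^H_r[\{u,v\}]$, which is all that decides a close edge, is then recoverable from the bounded-depth data advertised separately by $u$ and by $v$ and read at their deepest common advertised ancestor. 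Finally one verifies $G(B)=\varphi(H^+){\restriction}\,V(G)=G$ by running the semantics through the close and far cases (pairs not captured at any deep common ancestor default to the root, where $B$ gives the correct answer), and reads off the three conclusions: bounded depth (the tree $T$), almost nowhere denseness of the Gaifman graphs (the sparsity estimate), and exact representation $G=G(B)$.

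\textbf{Main obstacle.} The genuine difficulty is the close part of the last step. The quasi-bush adjacency rule is very rigid --- each endpoint advertises only a single label plus, at certain ancestors, a subset of labels --- so proving that this suffices to reconstruct an \emph{arbitrary} bounded-radius colored adjacency pattern requires an induction on the Splitter-game depth in which each round peels off exactly one homogeneous layer of the relation and hands a strictly smaller-radius problem to a localized subgraph; carrying out this induction while simultaneously keeping the pointer net sparse enough that the Gaifman graph remains almost nowhere dense is the crux, and it is precisely the nowhere-dense analogue of the bush construction used for classes of structurally bounded expansion.
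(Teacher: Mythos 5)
Your high-level architecture — unpack the transduction over a nowhere dense class $\Dd$, use Gaifman locality to split the interpreted adjacency into a \emph{far} part governed by a fixed symmetric matrix over boundedly many local types and a \emph{close} part depending on the colored type of $\Ball^H_r[\{u,v\}]$, and build the bounded-depth tree from the Splitter game (\cref{thm:splitter-game}) with Connector restricted to the sparse neighborhood covers of \cref{thm:quant-nd} — is indeed the right skeleton and is in the spirit of the construction in the cited work. But the statement's actual content is precisely the step you defer to the ``main obstacle'' paragraph: showing that the rigid quasi-bush semantics (a single label per leaf, a subset of labels per pointer, adjacency read off at the \emph{deepest pointed-to ancestors} of the two leaves) can faithfully encode the close part of the adjacency, while the pointer set stays sparse. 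Saying that each Splitter round ``peels off one homogeneous (flip-like) layer'' is not an argument: a Splitter move deletes a vertex, it does not homogenize anything, and you formulate no inductive invariant relating the residual local formula, the labels, and the pointers placed so far. The proposal therefore stops exactly where the proof begins.

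Several concrete points in the sketch also break as written. First, with one canonical branch per vertex, a close pair $u,v$ need not share an ancestor $z_F$ whose arena contains $\Ball^H_r[\{u,v\}]$: $u$ and $v$ may be routed through incomparable cover elements, so ``recursing inside $H_{z_F}$'' has no node to recurse in; the quasi-bush mechanism must instead bridge distinct branches via pointers, and that bridging is exactly what is not constructed. Second, the far case is not clean: arenas accumulated over $k$ rounds have diameter $\Theta(dk)$ in $H$, so a pair at distance between $2r$ and $\Theta(dk)$ is ``far'' in your classification, yet $v$ may well carry pointers (placed for the close machinery of other vertices) to non-root ancestors of $u$; the adjacency is then read at such a deeper node, whose label set was not designed to agree with the matrix $B$, so correctness of the far case requires the deeper labels to refine the root default — again part of the missing construction. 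Third, almost nowhere denseness of the Gaifman graphs does not follow from a per-leaf pointer count of $\Oh_{\Cc,\eps}(n^\eps)$: that bounds $\nabla_0$, but bounding $\nabla_h$ for $h\geq 1$ requires relating bounded-depth minors of the quasi-bush's Gaifman graph to shallow minors of $H$ augmented by a bounded-depth tree, which needs an actual argument. Finally, a small fix: making the discarded vertices isolated does not yield $G(B)=G$ (the vertex sets differ); instead delete the corresponding leaves at the end, which is harmless since the adjacency of the remaining leaves depends only on their own labels and pointers.
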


Dreier et al. used this result to show that structurally nowhere dense classes admit low shrubdepth covers of size $\Oh_{\Cc,p,\eps}(n^\eps)$. This gives the expected generalization of the characterization of \cref{thm:low-shrubdepth-covers-sbe} to the setting of structurally nowhere dense classes.

\begin{theorem}[\cite{DreierGKPT22}]\label{thm:snd-low-shb-covers}
 Let $\Cc$ be a structurally nowhere dense class of graphs. Then for every $p\in \N$, there is a graph class $\Dd_p$ of bounded shrubdepth such that for every graph $G\in \Cc$ and $\eps>0$, $G$ admits a $p$-cover governed by $\Dd_p$ of size $\Oh_{\Cc,p,\eps}(|V(G)|^\eps)$.
\end{theorem}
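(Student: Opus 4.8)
The plan is to feed the quasi-bush decomposition of \cref{thm:snd-qbushes} into a \emph{quantitative} version of the low treedepth cover theorem, and then to ``localize'' the resulting quasi-bush along each cover element. Fix a structurally nowhere dense class $\Cc$ and $p\in\N$. First I would apply \cref{thm:snd-qbushes} to obtain a class $\Bb$ of quasi-bushes of depth at most some fixed $h$, with the property that the class of Gaifman graphs of members of $\Bb$ is almost nowhere dense, and for every $G\in\Cc$ a quasi-bush $B_G\in\Bb$ with $G=G(B_G)$; write $T_G$ for its underlying tree, and note that the parent--child edges of $T_G$ form a depth-$h$ subgraph of $\mathrm{Gaif}(B_G)$.

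Next I would record the following quantitative strengthening, which is obtained exactly like the third point of \cref{thm:quant-nd}: inspecting the proof of \cref{thm:low-treedepth-covers-be} (via weak coloring numbers, as in Zhu's argument) shows that the size of a treedepth-$q$ cover of a graph $G$ depends polynomially on the grads $\nabla_j(G)$ for $j\le 2^{q-1}$; hence \emph{any} almost nowhere dense class admits, for every $q\in\N$ and $\eps>0$, treedepth-$q$ covers of size $\Oh_{\cdot,q,\eps}(n^\eps)$. Applying this to $\mathrm{Gaif}(B_G)$ with $q\coloneqq p\,(h+1)$ produces a cover $\Ff_0$ of $\mathrm{Gaif}(B_G)$ of size $\Oh_{\Cc,p,\eps}(n^\eps)$ whose every element induces a subgraph of treedepth at most $q$, and which covers every $q$-subset of $V(\mathrm{Gaif}(B_G))$.

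The core of the argument is a \emph{localization lemma} for quasi-bushes. For $A\in\Ff_0$, let $\hat A$ be the closure of $A$ in $T_G$ under taking ancestors (and under the heads of the pointers emanating from leaves of $A$ that land on such ancestors), and let $B_A$ be the quasi-bush obtained by restricting $T_G$, the labeling, and the pointers of $B_G$ to $\hat A$; its depth is still at most $h$. One then wants three claims: (i) $G(B_A)$ is exactly the subgraph of $G=G(B_G)$ induced on $F_A\coloneqq\hat A\cap\mathsf{Leaves}(T_G)$, which holds because adjacency between two leaves of $T_G$ in $G(B_G)$ is determined solely by their ancestors and the pointers among those ancestors; (ii) $\mathrm{Gaif}(B_A)$ has treedepth bounded by a function of $q$ and $h$; and (iii) every $X\subseteq \mathsf{Leaves}(T_G)$ with $|X|\le p$ satisfies $X\subseteq F_A$ for some $A\in\Ff_0$, since the ancestor-closure of such an $X$ has at most $p(h+1)=q$ elements and is therefore contained in a cover element. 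Claim (iii) is routine and (i) is a direct unwinding of the definition of $G(B)$; the hard part will be (ii), where one must exploit that $T_G$ has \emph{bounded} depth $h$ and that the pointers only matter through ancestors, in order to show that closing a bounded-treedepth vertex set under the ancestor/pointer structure keeps the Gaifman graph of the restricted quasi-bush of bounded treedepth (rather than merely bounded depth). This mirrors the ``bush'' localization underlying \cref{thm:low-shrubdepth-covers-sbe} in the bounded expansion case.

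Finally I would assemble the conclusion. Each $G(B_A)$ is transducible from $B_A$ because $B_A$ has bounded depth (as noted in the discussion following \cref{thm:snd-qbushes}), and $B_A$ — a relational structure over the fixed finite signature of depth-$h$ quasi-bushes with label set $\Lambda$, whose Gaifman graph has treedepth bounded by some $q'=q'(q,h)$ — is itself transducible from a tree of depth at most $q'$, namely an elimination forest of its Gaifman graph with the (boundedly many) relational and pointer data encoded by colors. Composing, $G(B_A)$ is transducible from a class of trees of depth at most $q'$, so by \cref{thm:shrub-logic} it has shrubdepth at most $q'$; since $h$ and $q'$ do not depend on $G$ or $\eps$, the class $\Dd_p\coloneqq\{\,G(B_A): G\in\Cc,\ A\in\Ff_0(G)\,\}$ has bounded shrubdepth. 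For each $G\in\Cc$ the family $\{F_A : A\in\Ff_0(G)\}$ has size $|\Ff_0(G)|=\Oh_{\Cc,p,\eps}(|V(G)|^\eps)$, satisfies $G[F_A]=G(B_A)\in\Dd_p$ by (i)--(ii), and covers every $p$-subset of $V(G)$ by (iii); thus it is a $p$-cover of $G$ governed by $\Dd_p$ of the required size, completing the proof.
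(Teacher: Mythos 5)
Your overall strategy --- feeding the quasi-bush decomposition of \cref{thm:snd-qbushes} into a quantitative low-treedepth cover theorem for almost nowhere dense classes and then localizing the quasi-bush along each cover element --- is the intended derivation, and your claims (i), (iii) and the final assembly step (a bounded-depth quasi-bush whose Gaifman graph has bounded treedepth is transducible from bounded-depth trees, hence represents a graph of bounded shrubdepth) are sound. The genuine gap is exactly the step you defer, claim (ii), and as you have set it up it is not merely hard but cannot be derived from the stated hypotheses. The only sparsity available for $\mathrm{Gaif}(B_G)$ is \emph{almost} nowhere denseness, i.e.\ subpolynomial grads; unlike nowhere denseness this does not force weak sparseness, so $\mathrm{Gaif}(B_G)$ may contain bicliques of size growing with $n$ (say $\log n$) formed by pointer edges between leaves $u_1,\dots,u_t$ and their parents $x_1,\dots,x_t$. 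Since leaves are pairwise non-adjacent in the Gaifman graph, a treedepth-$q$ cover element $A$ may well contain all the $u_i$ and none of the $x_j$ (inducing an edgeless subgraph), and your ancestor-closure $\hat A$ then pulls in all the $x_j$, so $\mathrm{Gaif}(B_G)[\hat A]$ contains a $K_{t,t}$-like subgraph of treedepth roughly $t$, unbounded in $n$; with suitable pointer labels the represented graph $G(B_A)$ on these leaves can even be an arbitrary graph on $t$ vertices. Hence there is no bound $f(q,h)$ on the treedepth of $\mathrm{Gaif}(B_A)$, and the class $\Dd_p$ you define need not have bounded shrubdepth.

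The repair is to close in the opposite direction: shrink the set of kept leaves rather than enlarge $A$. Define $F_A$ to be the set of leaves $u\in A$ \emph{all of whose ancestors in $T_G$ already lie in $A$}, and let $B_A$ be the restriction of the quasi-bush to the ancestor closure of $F_A$, which is now a subset of $A$. Then $\mathrm{Gaif}(B_A)$ is a subgraph of $\mathrm{Gaif}(B_G)[A]$, so its treedepth is at most $q$; your claim (i) still holds, since adjacency between two leaves of $F_A$ is decided by their ancestors and the pointers to those ancestors, all of which lie in $A$; and the covering property survives because for any $p$-subset $X$ of $V(G)$ the set $X$ together with all its ancestors has at most $p(h+1)=q$ elements, hence lies in some cover element $A$, whence $X\subseteq F_A$. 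With this modification your final transduction argument goes through verbatim and yields the theorem.
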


\subsubsection{Sparsification conjecture}

In \cref{sec:structurally-sparse}, we have discussed that in many classes of graphs that are {\em{structurally sparse}} --- can be transduced from classes of well-behaved sparse graphs --- one can find and describe the structure through various decomposition statements. Another question is the following: How do those classes fit into the large picture? How do they relate to the $\FO$ ideals that were discussed before, such as monadically stable classes or classes of bounded twin-width or cliquewidth?

To make the discussion more concrete, let us consider the $\FO$ ideal of structurally nowhere dense graph classes. Observe that since every nowhere dense class is monadically stable (\cref{thm:nd-monst}), also every structurally nowhere dense class is monadically stable, for monadic stability is closed under applying transductions. Hence, structurally nowhere dense classes are a subset of monadically stable classes. Could those notions actually coincide? This is the question asked in the following {\em{Sparsification Conjecture}}.

\begin{conjecture}[Sparsification Conjecture]\label{conj:sparsification}
 Every monadically stable graph class is structurally nowhere~dense.
\end{conjecture}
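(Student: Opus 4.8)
The Sparsification Conjecture (\cref{conj:sparsification}) is, to the best of my knowledge, still open, so what follows is a plan for an approach rather than a claim of a complete argument. The strategy is to reverse-engineer a sparse ``skeleton'' from a monadically stable class using the structural tools developed for such classes, and then verify that this skeleton is nowhere dense and that the original class can be transduced back from it. The starting point is \cref{thm:flipper-game-monstable}: every graph $G$ in a monadically stable class $\Cc$ admits, for every radius $d$, a Flipper-win within a bounded number $k=k(d)$ of rounds. The plan is to fix $d$ large (depending on the target quantifier rank) and to use the \emph{game tree} of the radius-$d$ Flipper Game, played according to a fixed Flipper strategy (the constructive one of \cref{thm:flipper-game-monstable}, whose moves are computable), as a bounded-depth hierarchical decomposition of $G$ into ``vicinities'' — exactly in analogy with how the Splitter Game tree is used in the nowhere dense model-checking algorithm (\cref{thm:nd-mc}) and how quasi-bushes are extracted in \cref{thm:snd-qbushes}. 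Each node of this tree records a bounded sequence of flips and a ball; the leaves are single vertices.

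\textbf{First} I would turn this game tree into a quasi-bush-like relational structure $B_G$: the tree $T$ is the game tree, the leaves are $V(G)$, and the pointers/labels encode, for each pair of vertices $u,v$, at which level of the game tree their adjacency is ``resolved'' (i.e., made homogeneous by the accumulated flips before $u$ and $v$ are separated into different vicinities) together with the relevant flip-side information. Since the flips at each round are governed by a bounded amount of data and the depth is bounded, this encoding is $\FO$-definable in the reverse direction: $G = G(B_G)$ is transducible from $B_G$, just as in the remark following the definition of quasi-bushes. \textbf{Next}, the crux is to show that the class $\Bb = \{B_G : G \in \Cc\}$ — more precisely, the class of Gaifman graphs of these structures — is nowhere dense, or at least almost nowhere dense (in which case \cref{conj:sparsification} would follow in a slightly weakened ``structurally almost nowhere dense'' form, and one would then need a separate densification/cleanup step). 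For this I would use flip-flatness (\cref{thm:flip-flatness}): a deep shallow-clique minor in the Gaifman graph of $B_G$ would have to be ``read off'' through the pointer structure, and one would argue that a $K_t$-minor at depth $h$ in $\Bb$ pulls back, via the bounded-depth bounded-flip decoding, to a configuration in $G$ that flip-flatness forbids once $t$ is large — because flip-flatness says large sets in $G$ can be made scattered by a bounded flip, which contradicts the presence of arbitrarily large locally-dense patterns surviving in the skeleton. The neighborhood-complexity bound \cref{thm:nei-comp-ms} and the Welzl-order machinery (\cref{thm:welzl-ms,thm:rose-construction}) may also be needed here to control how many distinct ``resolution patterns'' can occur, which is what ultimately bounds the grads $\nabla_h$ of the skeleton.

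\textbf{The main obstacle} I expect is precisely this second step: controlling the density of the extracted skeleton. The Flipper Game guarantees \emph{termination} in few rounds but gives very little a priori control over how ``wide'' each round is — a single flip can reorganize the entire adjacency structure, and there is no known robust analogue of weak coloring numbers for monadically stable classes (as the survey explicitly notes in the discussion around \cref{thm:nei-cov-ms}). So the difficulty is the absence of a quantitative sparsity certificate that can be manufactured directly from the game; one has to bootstrap it from flip-flatness and neighborhood complexity, and it is not clear these suffice to rule out, say, a skeleton containing $1$-subdivisions of dense graphs. A secondary obstacle is the direction of the transduction: one must ensure the decoding transduction is uniform across the whole class (a single $\Tf$ with $G \in \Tf(B_G)$ for all $G \in \Cc$), which forces the depth $d$, the flip-bound $k(d)$, and the label set to be fixed before seeing $G$ — this is fine since $k(d)$ is uniform by \cref{thm:flipper-game-monstable}, but it constrains how much cleanup one can do. If the skeleton comes out only almost nowhere dense rather than nowhere dense, one would additionally need to show that ``structurally almost nowhere dense'' collapses to ``structurally nowhere dense,'' which is itself a natural but unresolved-looking sub-question. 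I would regard a positive resolution of the density-of-skeleton step as essentially equivalent in difficulty to the full conjecture.
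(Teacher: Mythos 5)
The statement you were asked about is a conjecture; the paper offers no proof of it, and you correctly recognize that it is open. Your plan --- extract a bounded-depth quasi-bush-like skeleton from the radius-$d$ Flipper Game of \cref{thm:flipper-game-monstable} (together with the neighborhood-cover machinery of \cref{thm:nei-cov-ms}), decode $G$ back by a fixed transduction, and try to bound the density of the skeleton --- is essentially the route already taken in the partial result the paper reports as \cref{thm:ms-qbushes}, which produces quasi-bushes whose Gaifman graphs are only \emph{almost} nowhere dense; you correctly identify that upgrading ``almost nowhere dense'' to ``nowhere dense'' is precisely the remaining open step, so your proposal reproduces the state of the art rather than closing the gap.
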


Thus, on a more intuitive level, \cref{conj:sparsification} postulates that graphs from any monadically stable class $\Cc$ are just ``sparse graphs in disguise'' in the following sense: For any graph $G\in \Cc$ one can find a ``skeleton'' graph $H_G$ so that $G$ can be encoded in $H_G$ using a fixed $\FO$-definable mechanism (formally, $G$ can be transduced from $H_G$) and graphs $H_G$ are sparse (formally, the class $\{H_G\colon G\in \Cc\}$ is nowhere dense).

Similarly as \cref{conj:main}, also \cref{conj:sparsification} had circulated in various forms in the community before being codified in published literature. The first concrete mention known to the author is due to Ossona de Mendez~\cite{POM21}.

Very recently, major progress towards \cref{conj:sparsification} was made by Braunfeld, Ne\v{s}et\v{r}il, Ossona de Mendez, and Siebertz~\cite{BraunfeldNOS22decompositions}. Namely, they used the characterization of monadically stable classes through the Flipper Game~\cite{GajarskyMMOPPSS23} together with a structural understanding of the sparse neighborhood covers developed in~\cite{DreierEMMPT24} to prove that monadically stable classes admit almost nowhere dense quasi-bushes. This generalizes \cref{thm:snd-qbushes} from the structurally nowhere dense case to the monadically stable case.

\begin{theorem}[\cite{BraunfeldNOS22decompositions}]\label{thm:ms-qbushes}
 Let $\Cc$ be a monadically stable class of graphs. Then there exists a class of quasi-bushes $\Bb$ such that
 \begin{itemize}[nosep]
  \item the quasi-bushes from $\Bb$ have bounded depth;
  \item the class of Gaifman graphs of the quasi-bushes from $\Bb$ is almost nowhere dense; and
  \item for every $G\in \Cc$, there is a quasi-bush $B\in \Bb$ such that $G=G(B)$.
 \end{itemize}
\end{theorem}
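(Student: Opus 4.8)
The plan is to argue directly, without going through the (still open) Sparsification Conjecture (\cref{conj:sparsification}): instead of producing a nowhere dense ``skeleton'' for $G$, one builds a bounded‑depth recursive decomposition of $G$ by simulating the Flipper Game, feeding in the sparse neighborhood covers available in monadically stable classes. Concretely, fix the distance parameter $d=1$ (adjacency is a distance‑$1$ notion) and use \cref{thm:flipper-game-monstable} to obtain a constant $h=h(\Cc)$ and a strategy $\sigma$ with which Flipper wins the radius‑$1$ Flipper Game on every $G\in\Cc$ within $h$ rounds. The key closure observation is that the class $\Cc'$ of all graphs obtained from a member of $\Cc$ by alternately taking induced subgraphs and applying flips, at most $h$ times, is again monadically stable, since each such step is a transduction and monadic stability is a transduction ideal; hence \cref{thm:nei-comp-ms}, \cref{thm:welzl-ms}, and \cref{thm:rose-construction} apply uniformly to every graph that can arise as an arena in a play of the game.

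Next I would build the decomposition tree $T$ top‑down. The root carries the arena $G$ and the active set $A=V(G)$. At a node $x$ with arena $G_x\in\Cc'$ and active set $A_x$: if $|A_x|=1$ make $x$ a leaf labelled by the unique vertex; otherwise apply \cref{thm:welzl-ms} to $G_x$ to get a vertex ordering of crossing number $\Oh_{\Cc,\eps}(|A_x|^{\eps})$ with respect to radius‑$1$ balls, feed it into \cref{thm:rose-construction} to obtain a partition $\cal I_x$ of $A_x$ into pieces, each piece $I$ contained in a radius‑$1$ ball $\Ball^{G_x}_1[u_I]$, together with a neighborhood cover of overlap $\Oh_{\Cc,\eps}(|A_x|^{\eps})$. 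For each $I\in\cal I_x$ create a child $x_I$: let Keeper play the legal radius‑$1$ move $\Ball^{G_x}_1[u_I]$, let Flipper respond according to $\sigma$, set the arena $G_{x_I}$ to be the resulting flipped ball, and set $A_{x_I}\coloneqq I$. Since $\sigma$ wins within $h$ rounds against any Keeper play, $T$ has depth at most $h$; since the active sets at each level partition $V(G)$ and all pieces are non‑empty, $|V(T)|=\Oh(n)$ and $\mathsf{Leaves}(T)$ is in natural bijection with $V(G)$. This is the exact analogue of the structurally nowhere dense construction underlying \cref{thm:snd-qbushes}, with \cref{thm:nei-cov-ms} (via \cref{thm:welzl-ms}+\cref{thm:rose-construction}) replacing the weak‑coloring‑number covers available there.

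To turn $T$ into a quasi‑bush $B$ one declares its tree to be $T$, its leaves labelled by $V(G)$, and, for each vertex $u$, installs a pointer from $u$ to every internal node $x$ with $u\in V(G_x)$ (in particular to the root, as required). The pointer labels $\lambda^D(u,x)$ and the vertex labels $\lambda(u)$ record only bounded information: which of the at most $h$ flips on the root‑to‑$x$ path have $u$ in their flip set, together with the trace of $u$'s adjacency inside the arena $G_x$ needed to resolve edges; there are finitely many such values precisely because $h$ is a constant. One then verifies that for adjacent $u,v$ the edge is ``resolved'' at an appropriate common arena (a radius‑$1$ ball containing both), that flips applied strictly below that node do not affect the pair, and that the quasi‑bush adjacency rule — testing $\lambda(u)\in\lambda^D(v,x)$ and $\lambda(v)\in\lambda^D(u,y)$ for the relevant nodes $x,y$ — recovers exactly $\adj^G(u,v)$ after correcting for the boundedly many recorded flips. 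This yields $G=G(B)$ with $\mathrm{depth}(B)\le h$, so $\Bb\coloneqq\{B:G\in\Cc\}$ has bounded depth.

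The remaining, and I expect hardest, point is that the class of Gaifman graphs of members of $\Bb$ is almost nowhere dense. Its Gaifman graph is $T$ (a depth‑$\le h$ tree on $\Oh(n)$ vertices) together with the pointer edges; each leaf belongs to only $\Oh_{\Cc,\eps}(n^{\eps})$ arenas, because at each level this is governed by the $\Oh_{\Cc,\eps}(|A|^{\eps})$ overlap bound of the neighborhood cover, and iterating over the $\le h$ levels gives $\Oh_{\Cc,\eps}(n^{h\eps})$, which is $\Oh_{\Cc,\eps}(n^{\eps})$ after rescaling $\eps$ — so every leaf has $\Oh_{\Cc,\eps}(n^{\eps})$ pointers. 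The subtle part is that pointer edges can run ``sideways'' (a leaf may lie in a ball belonging to a sibling subtree), so bounded overlap alone is not enough to exclude dense shallow minors — one must also exploit the low‑crossing‑number structure of \cref{thm:welzl-ms} (each ball is a union of few convex pieces) propagated across the $h$ levels, and the presence of the flips. I would carry this out by exhibiting, for the Gaifman graph, a vertex ordering realizing small weak $r$‑coloring numbers, $\wcol_r(\mathrm{Gaifman}(B))\le\Oh_{\Cc,r,\eps}(n^{\eps})$ — descending $T$ level by level and using that from any vertex only its $\le h$ ``ancestors'' through the nested balls, and their $\Oh_{\Cc,\eps}(n^{\eps})$ pointer‑neighbours, are weakly reachable — and then deriving the bound on $\nabla_r$ from $\wcol_r$ exactly as \cref{thm:quant-nd} is obtained from \cref{thm:nd-grads}, tracking the $n^{\eps}$ factors. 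Everything downstream — the bijection of leaves with vertices, the bounded flip history, and this density bound — ultimately rests on $h$ being a constant, which is where \cref{thm:flipper-game-monstable} (and hence monadic stability) enters essentially; restricting the same construction to weakly sparse classes recovers \cref{thm:snd-qbushes} via \cref{thm:nd-monst}.
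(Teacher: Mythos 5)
Your high-level route is the same one the survey attributes to the cited work: characterize monadic stability via the Flipper Game (\cref{thm:flipper-game-monstable}), trim Keeper's moves using the sparse neighborhood covers obtained from \cref{thm:nei-comp-ms}, \cref{thm:welzl-ms} and \cref{thm:rose-construction}, and read a bounded-depth quasi-bush off the resulting game tree, generalizing \cref{thm:snd-qbushes}. However, two steps are genuinely incomplete. First, the radius-$1$ setup does not support your adjacency-recovery argument. If $u\in I\subseteq \Ball^{G_x}_1[u_I]$ and the child arena is the flipped ball $\Ball^{G_x}_1[u_I]$, then a neighbour $v$ of $u$ in the current arena may lie outside that ball (it is only guaranteed to be within distance $2$ of $u_I$); so $u$ and $v$ can separate while still adjacent in the arena, and then $\adj^G(u,v)$ is \emph{not} determined by the parity of their joint memberships in the at most $h$ flip sets along the path, so no bounded label at the ``resolution node'' can encode it. What you need is that every assigned vertex keeps its entire arena-neighbourhood inside its child's arena, i.e., children must be built on distance-$1$ cover elements of weak radius $2$, with Keeper playing radius-$2$ balls — exactly the radius-$2d$/distance-$d$ trimming used for \cref{thm:nd-mc} and \cref{thm:ms-mc}. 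With that correction the finite label set (flip-membership patterns, of size $2^{\Oh(h)}$) plausibly suffices, but as written the claim $G=G(B)$ fails.

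Second, the assertion that actually carries the theorem's weight — that the Gaifman graphs of the quasi-bushes form an almost nowhere dense class — is only sketched. The per-level overlap bound $\Oh_{\Cc,\eps}(n^{\eps})$ controls the number of pointers issued by each leaf, but, as you note yourself, internal nodes can receive $\Theta(n)$ pointers running sideways across the tree, and bounded overlap alone does not exclude dense shallow minors. Your proposed remedy — an ordering of the Gaifman graph witnessing $\wcol_r\le\Oh_{\Cc,r,\eps}(n^{\eps})$ obtained by descending the tree, then converting to bounds on $\nabla_r$ — is stated but not carried out, and it is not clear that the convexity structure of the Welzl orders survives the flips and composes across the $h$ levels in the way you would need. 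This is precisely the point at which the actual proof requires a finer structural understanding of the covers of \cref{thm:nei-cov-ms} beyond their existence and overlap. So at present your proposal gives the bounded depth and, after the radius fix, the representation $G=G(B)$, but not the second bullet of the statement.
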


Recall that decoding a graph from a quasi-bush representing it can be done by an $\FO$ transduction. Therefore, \cref{thm:ms-qbushes} falls short of confirming \cref{conj:sparsification} by providing a class of quasi-bushes that is only {\em{almost}} nowhere dense, instead of being actually nowhere dense. Strengthening \cref{thm:ms-qbushes} by making (the Gaifman graphs of) $\Bb$ nowhere dense would imply \cref{conj:sparsification}.

Let us note that similarly to \cref{thm:snd-qbushes}, it can be derived from \cref{thm:ms-qbushes} that monadically stable classes admit low shrubdepth covers of a subpolynomial size; this generalizes \cref{thm:snd-low-shb-covers}.

\begin{theorem}[\cite{BraunfeldNOS22decompositions}]\label{thm:ms-low-shb-covers}
 Let $\Cc$ be a monadically stable class of graphs. Then for every $p\in \N$, there is a graph class $\Dd_p$ of bounded shrubdepth such that for every graph $G\in \Cc$ and $\eps>0$, $G$ admits a $p$-cover governed by $\Dd_p$ of size $\Oh_{\Cc,p,\eps}(|V(G)|^\eps)$.
\end{theorem}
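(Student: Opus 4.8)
The plan is to derive \cref{thm:ms-low-shb-covers} from \cref{thm:ms-qbushes} in exactly the same way that \cref{thm:snd-low-shb-covers} was derived from \cref{thm:snd-qbushes}. So the first step is to take a monadically stable class $\Cc$ and invoke \cref{thm:ms-qbushes} to obtain a class of quasi-bushes $\Bb$ of bounded depth, say depth at most $h$, whose Gaifman graphs form an almost nowhere dense class, and such that every $G\in \Cc$ is represented as $G=G(B)$ for some $B\in \Bb$.

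Fix $G\in \Cc$, a representing quasi-bush $B\in \Bb$ with underlying tree $T$, label set $\Lambda$, and $\eps>0$. The core of the argument is to build, from $B$, a $p$-cover $\Ff$ of $G$ of size $\Oh_{\Cc,p,\eps}(|V(G)|^\eps)$ in which every $G[F]$ lies in a class $\Dd_p$ of bounded shrubdepth depending only on $\Cc$ and $p$. The idea is that each part $F$ of the cover should be chosen as a set of leaves of $T$ that is ``local'' in $T$ in the sense that all of them, together with the relevant pointer-heads they consult, are confined to a subtree rooted near some node of $T$; since the Gaifman graph of $B$ is almost nowhere dense, such a localization can be arranged with subpolynomial overlap using the Splitter-game/neighborhood-cover technology available for almost nowhere dense classes (the qualitative part of \cref{thm:quant-nd}, which applies to almost nowhere dense classes by definition). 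Concretely, I would apply a distance-$d$ neighborhood cover of the Gaifman graph of $B$, for $d$ chosen as a function of $p$ and $h$, obtaining a family of vertex subsets of $B$ with overlap $\Oh_{\Cc,p,\eps}(|V(B)|^\eps)$; restricting each cover element to its leaves (and intersecting with $\mathsf{Leaves}(T)$) gives the candidate parts of $\Ff$. For the covering property, any $X\subseteq V(G)$ with $|X|\le p$ consists of $p$ leaves of $T$; the pairwise lowest common ancestors and the pointer-heads that these $p$ leaves mutually consult all lie within a bounded-radius ball (bounded in terms of $p$ and $h$) around any one of them, so they are all captured by a single cover element provided $d$ is large enough. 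For the shrubdepth bound, I would argue that the sub-quasi-bush of $B$ induced on such a local part has bounded depth (inherited from $T$) and, crucially, only ``sees'' a bounded-size local chunk of the pointer structure — so $G[F]$ is represented by a quasi-bush of bounded depth whose pointers reach only into a bounded-depth tree, and such a graph has bounded shrubdepth (essentially one reads off a tree-model in the sense of \cref{def:tree-model}). This pins down $\Dd_p$ as depending only on $h$, $|\Lambda|$, and $p$.

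The main obstacle I expect is the shrubdepth bound on the induced subgraphs $G[F]$: a part $F$ localized to a ball in the Gaifman graph of $B$ still consists of leaves that each carry a pointer to the global root $r$ of $T$, and the head of that pointer is an internal node far from $F$ in $T$. One must check that the labels $\lambda^D(u,r)$ contribute only a bounded amount of additional ``behavior,'' so that they can be absorbed into the label set of a bounded-depth tree-model for $G[F]$ without blowing up the depth — only the number of labels, which is allowed by \cref{def:shrubdepth}. This is precisely the kind of bookkeeping carried out in \cite{DreierGKPT22} for the structurally nowhere dense case, and I would follow that template: decompose the adjacency rule of the quasi-bush into a ``short-range'' part governed by lowest common ancestors inside the localized subtree, and a ``global'' part governed by the root pointers, and observe that on a set $F$ of bounded Gaifman-diameter the global part is constant enough (it depends only on the labels $\lambda(u)$ and the finitely many label-sets $\lambda^D(u,r)$) to be encoded by the top node of the tree-model. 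Once this is in place, combining the bounded number of cover elements with the bounded-shrubdepth bound on each $G[F]$ yields the statement, with $K_p$ and $\Dd_p$ depending only on $\Cc$ and $p$, and the size bound $\Oh_{\Cc,p,\eps}(|V(G)|^\eps)$ following from the overlap bound of the almost nowhere dense neighborhood cover together with the fact that $|V(B)|$ is polynomially (indeed essentially linearly, after accounting for internal nodes of a bounded-depth tree) related to $|V(G)|$.

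One auxiliary point to verify, which is routine but worth flagging, is that $|V(B)|=\Oh(|V(G)|)$ up to factors controlled by $h$: since $T$ has depth at most $h$ and its leaves are exactly $V(G)$, and since in a tree every internal node has at least one leaf descendant, the number of internal nodes is at most $h\cdot|V(G)|$, so the Gaifman graph of $B$ has $\Oh_h(|V(G)|)$ vertices and the almost-nowhere-dense bounds transfer from $|V(B)|^\eps$ to $|V(G)|^\eps$ after rescaling $\eps$. With that, the derivation is complete and mirrors \cref{thm:snd-low-shb-covers} verbatim, the only new input being \cref{thm:ms-qbushes} in place of \cref{thm:snd-qbushes}.
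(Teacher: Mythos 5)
Your top-level plan — invoke \cref{thm:ms-qbushes} and then repeat, verbatim, the derivation of \cref{thm:snd-low-shb-covers} from \cref{thm:snd-qbushes} — is exactly what the paper intends. However, the way you fill in that derivation has a genuine gap at the localization step. You propose to localize using distance-$d$ neighborhood covers of the Gaifman graph of the quasi-bush $B$. But by definition every leaf $u$ of $B$ carries a pointer $(u,r)$ to the root $r$, so in the Gaifman graph of $B$ the root is adjacent to \emph{every} leaf, and any two leaves are at distance at most $2$. Consequently, for any $d\geq 2$ and any leaf $u$, the ball $\Ball_d[u]$ already contains all of $\mathsf{Leaves}(T)=V(G)$; a distance-$d$ neighborhood cover is then satisfied by essentially trivial elements, and the element that the covering property forces you to use for a set $X$ of $p$ leaves contains \emph{all} leaves. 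Restricting cover elements to leaves therefore yields $F=V(G)$, so $G[F]=G$ and no shrubdepth bound can follow — the construction collapses before the issue you flag (the root pointers complicating the tree-model of $G[F]$) even arises. Distance-based localization simply carries no information in a quasi-bush, precisely because of the mandatory root pointers.

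The derivation in \cite{DreierGKPT22} (which \cref{thm:ms-low-shb-covers} is meant to mimic with \cref{thm:ms-qbushes} as input) uses the \emph{low treedepth covers} available for almost nowhere dense classes — the third bullet of \cref{thm:quant-nd}, whose proof, as the paper notes, depends only polynomially on the parameters $\nabla_h$ and hence applies to almost nowhere dense classes — applied to the Gaifman graphs of the quasi-bushes, rather than neighborhood covers. The point is that treedepth-$q$ covers capture \emph{every} set of at most $q$ vertices irrespective of distances: for $p$ leaves, the set consisting of these leaves together with all of their ancestors in $T$ has size at most $p(h+1)$ (where $h$ bounds the depth), and it automatically contains every pointer head consulted when evaluating adjacencies among those leaves; so with $q=p(h+1)$ each such configuration lies inside a single part. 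The remaining work — showing that the leaves of each part induce in $G$ a graph belonging to a fixed class $\Dd_p$ of bounded shrubdepth, by exhibiting it as a transduction of a bounded-treedepth piece of the quasi-bush (this is where the bookkeeping with pointer labels, including the root pointers, is actually carried out) — is the nontrivial content you would need to import from \cite{DreierGKPT22}. Your auxiliary observation that $|V(B)|=\Oh_h(|V(G)|)$, so the $n^\eps$ bounds transfer after rescaling $\eps$, is correct and is indeed needed; but as written, the localization mechanism at the heart of your argument does not work and must be replaced by treedepth covers.
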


Sparsification Conjecture, as stated in \cref{conj:sparsification}, concerns only structurally nowhere dense classes. However, similar questions can be asked also about more restrictive properties of classes of sparse graphs. Consider, for instance, classes of {\em{structurally bounded treewidth}}, that is, classes transducible from classes of bounded treewidth. On one hand, boundedness of treewidth implies monadic stability, hence classes of structurally bounded treewidth are monadically stable. On the other hand, boundedness of treewidth implies boundedness of cliquewidth (\cref{thm:tw-cw}) and classes of bounded cliquewidth form an $\FO$ ideal (\cref{thm:cw-lcw-ideals}), hence classes of structurally bounded treewidth have bounded cliquewidth. But are those two constraints exhaustive, that is, does every monadically stable class of bounded cliquewidth actually have structurally bounded treewidth? Same question can be asked about other properties of classes of sparse graphs, such as classes of bounded treedepth, pathwidth, or sparse twin-width.

The answer for treedepth is trivial: classes of bounded shrubdepth are monadically stable (\cref{lem:half-graphs-shb}) and can be $\FO$-transduced from trees of bounded height (\cref{thm:shrub-logic}), hence the property of having structurally bounded treedepth coincides with the property of having bounded shrubdepth, which in turn implies monadic stability. Somewhat surprisingly, for pathwidth, treewidth, and sparse twin-width, the answer also turns out to be positive, but in a highly non-trivial way. The following results were proven in order by Ne\v{s}et\v{r}il, Ossona de Mendez, Rabinovich and Siebertz~\cite{NesetrilMRS21}, by Ne\v{s}et\v{r}il, Ossona de Mendez, Pilipczuk, Rabinovich, and Siebertz~\cite{NesetrilMPRS21}, and by Gajarsk\'y, Pilipczuk, and Toru\'nczyk~\cite{GajarskyPT22}.

\begin{theorem}[\cite{NesetrilMRS21}]\label{thm:spars-lcw}
 A class of graphs has structurally bounded pathwidth if and only if it is edge-stable and has bounded linear cliquewidth.
\end{theorem}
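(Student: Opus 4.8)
\medskip
\noindent\textbf{Proof proposal.}
The forward implication should follow just by chaining the closure properties established above. Suppose $\Cc$ has structurally bounded pathwidth, say $\Cc\subseteq\Tf(\Dd)$ for a transduction $\Tf$ and a class $\Dd$ of bounded pathwidth. Every class of bounded pathwidth has bounded linear cliquewidth (\cref{thm:pw-lcw}), and this property is an $\FO$ ideal (\cref{thm:cw-lcw-ideals}), so $\Cc$ has bounded linear cliquewidth. Moreover every class of bounded pathwidth is minor-free (it excludes some tree as a minor, hence some complete graph), hence has bounded expansion, hence is nowhere dense, hence monadically stable (\cref{thm:nd-monst}); as monadic stability is an $\FO$ ideal, $\Cc$ is monadically stable. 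Finally a monadically stable class is edge-stable, i.e.\ excludes some half-graph (\cref{def:half-graph}) as a semi-induced subgraph, since otherwise the transduction that colours two vertex subsets $A,B$, keeps only the $A$--$B$ edges, and restricts to $A\cup B$ would yield all half-graphs. So the plan here is just to record these implications.

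For the converse, I would first note that classes of bounded linear cliquewidth form a proper $\FO$ ideal (it excludes, e.g., the weakly sparse class of all trees, which has unbounded pathwidth and hence, by \cref{thm:pw-lcw}, unbounded linear cliquewidth), so every such class is monadically dependent; then \cref{thm:ms-es} applies and shows that inside $\Cc$ edge-stability coincides with monadic stability. Thus it suffices to prove that every monadically stable class of bounded linear cliquewidth has structurally bounded pathwidth. The plan is to construct, uniformly over $G\in\Cc$, a bounded-pathwidth graph $H_G$ and a single transduction $\Tf$ with $G\in\Tf(H_G)$, so that $\Cc\subseteq\Tf(\Dd)$ for $\Dd=\{H_G:G\in\Cc\}$. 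Fix $k$ so that every $G\in\Cc$ has a vertex ordering $\sigma_G=(v_1,\dots,v_n)$ of diversity at most $k$, and read $\sigma_G$ as a left-to-right scan in which, passing from the cut after $v_i$ to the cut after $v_{i+1}$, the at most $k$ equivalence classes (``behaviours towards the future'') of the current prefix split and merge in a controlled way, and one records to which of the old classes the incoming vertex $v_{i+1}$ is complete and to which anti-complete. The candidate skeleton $H_G$ is the backbone path $v_1v_2\cdots v_n$ decorated at each $v_i$ with this bounded transition data, together with whatever bounded-depth auxiliary structure is needed to locate, for each $v_i$ and each later cut, the class then containing $v_i$; everything should be arranged so that the resulting structure still has bounded pathwidth.

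Two points then have to be settled, and they carry the content. First, I would need to show that a \emph{bounded} amount of data per cut really suffices: that the adjacencies inside the type-classes and the bookkeeping tracking classes across cuts do not blow up, and in particular that the auxiliary structure has bounded depth and no long-range pointers (the latter would wreck the pathwidth bound). This is where monadic stability is indispensable: without it the evolution of the type-classes along $\sigma_G$ can itself encode a half-graph, and then no bounded-pathwidth skeleton can reconstruct $G$ --- which is exactly why half-graphs, despite having bounded linear cliquewidth, fail to be structurally bounded pathwidth. I expect the right tool to be a Ramsey-style argument in the spirit of the proof of \cref{thm:flatness-nd}: monadic stability forbids the order property, so passing to an indiscernible subsequence of cuts should force the type-evolution to be essentially ``non-crossing'' and eventually periodic, which both bounds the genuinely relevant information and makes ``the type-class of $v_i$ at cut $j$'' first-order definable from the decorated path rather than needing an unbounded trace. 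Second, given such a skeleton, I would write down the transduction $\Tf$ that reads $H_G$ and recovers each adjacency $v_iv_j$ (say $i<j$) as a first-order condition on the local data near $v_i$ and $v_j$ together with the recorded adjacency between $v_j$ and the now-definable class containing $v_i$ just before $v_j$ is inserted; this second part I expect to be routine once the first is done.

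The hard part will be that first step: proving that edge-stability, equivalently monadic stability in this setting, is precisely the hypothesis that tames a bounded-diversity linear decomposition enough to be replaced by a bounded-pathwidth skeleton with first-order-recoverable adjacency. Concretely, the crux is to show that along any bounded-diversity vertex ordering of a monadically stable graph the type-classes evolve in a way that becomes, after bounded preprocessing, laminar and periodic --- so that tracing a vertex through the cuts is bounded-depth navigation rather than an unbounded walk --- and it is here that a careful Ramsey/indiscernibility analysis, rather than any purely combinatorial counting, will be required.
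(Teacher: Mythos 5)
Your forward implication is correct and is essentially the intended easy direction: bounded pathwidth implies bounded linear cliquewidth (\cref{thm:pw-lcw}) and nowhere denseness, hence monadic stability (\cref{thm:nd-monst}), and both properties are preserved by transductions, so any class with structurally bounded pathwidth has bounded linear cliquewidth and is edge-stable. Your reduction of the converse to ``every monadically stable class of bounded linear cliquewidth has structurally bounded pathwidth'' via \cref{thm:ms-es} is also fine.

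The converse itself, however, is where the whole content of the theorem lies, and your proposal does not prove it: the step you flag as ``the hard part'' is precisely the theorem. Concretely, you never establish that ``the class containing $v_i$ at cut $j$'' can be made $\FO$-definable from a bounded-pathwidth skeleton, and the tool you suggest (extracting an indiscernible subsequence of cuts to force laminar, eventually periodic type-evolution) is neither shown to apply nor sufficient in the form stated. Two concrete obstacles: first, Ramsey-style extraction only controls a subsequence of cuts, while the skeleton must encode \emph{every} vertex of $G$, so the vertices and cuts discarded in the extraction are lost; second, even granted periodicity, $\FO$ over a decorated path is local and cannot perform the unbounded left-to-right tracing that ``follow the class of $v_i$ through all later cuts'' requires, so the ``bounded-depth auxiliary structure'' you posit is exactly the object that has to be constructed, not an afterthought. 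This is where the actual proof of Ne\v{s}et\v{r}il, Ossona de Mendez, Rabinovich, and Siebertz (as sketched in this survey) differs: it applies Simon's Factorization Theorem to the bounded-diversity ordering to obtain a hierarchical factorization of \emph{bounded depth} whose breaks are well-behaved, and then builds the required transductions by induction on the depth of this hierarchy, with edge-stability used to control the homogeneous (idempotent) pieces, where half-graphs would otherwise arise. Your instinct that a Ramseyan tool combined with stability is the engine is right, but the missing idea is this global bounded-depth factorization of the whole order; without it the plan stalls exactly at the point you identify.
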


\begin{theorem}[\cite{NesetrilMPRS21}]\label{thm:spars-cw}
 A class of graphs has structurally bounded treewidth if and only if it is edge-stable and has bounded cliquewidth.
\end{theorem}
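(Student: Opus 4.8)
The two implications are of very different difficulty. The forward direction is immediate: if $\mathcal C$ has structurally bounded treewidth, then $\mathcal C\subseteq\mathsf{T}(\mathcal D)$ for some transduction $\mathsf{T}$ and some class $\mathcal D$ of bounded treewidth. Bounded treewidth implies bounded cliquewidth (\cref{thm:tw-cw}), and classes of bounded cliquewidth form an $\FO$ ideal (\cref{thm:cw-lcw-ideals}), so $\mathcal C$ has bounded cliquewidth. Moreover treewidth is minor-monotone, so a class of bounded treewidth excludes a fixed clique as a depth-$d$ minor for every $d$, hence is nowhere dense and therefore monadically stable (\cref{thm:nd-monst}); monadic stability is preserved by transductions, so $\mathcal C$ is monadically stable, and in particular edge-stable.

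For the converse, fix an edge-stable class $\mathcal C$ of bounded cliquewidth. Replacing $\mathcal C$ by its hereditary closure is harmless — cliquewidth is induced-subgraph-monotone, edge-stability passes to the hereditary closure, and producing a transduction for the closure suffices — so assume $\mathcal C$ is hereditary. For each $G\in\mathcal C$ fix a decomposition witnessing bounded cliquewidth; by \cref{thm:cw-model} this means $G$ is $\MSO$-transducible from a rooted tree $T$ via a fixed mechanism, the obstruction to turning this into an $\FO$-transduction being that decoding a single edge of $G$ requires following a root path of $T$ of unbounded length in order to resolve how labels were relabeled along it.

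The plan is to show that edge-stability collapses this $\MSO$-decoding to an $\FO$-decoding at the cost of annotating $T$ with a bounded amount of extra structure that stays sparse. In the terminology of \cref{thm:ms-qbushes}, I would prove that every $G\in\mathcal C$ is represented by a bounded-depth quasi-bush $B$ whose Gaifman graph has \emph{bounded treewidth} (rather than merely being almost nowhere dense, as in \cref{thm:ms-qbushes}, or nowhere dense, as one would want for \cref{conj:sparsification}). Since $G=G(B)$ is $\FO$-transducible from $B$, taking $\mathcal D$ to be the class of these quasi-bushes — a class of structures of bounded treewidth — then witnesses that $\mathcal C$ has structurally bounded treewidth. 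Constructing $B$ amounts to unfolding the clique-width expression of $G$ along its tree $T$; recording, via the pointers of $B$, for each leaf $u$ and each ancestor $t$ the label of $u$ as seen at $t$, but only where this genuinely influences adjacencies; and arguing that edge-stability bounds the number of such ``active'' pointers so that the resulting pointer net, added on top of the tree $T$, keeps treewidth bounded in terms of the cliquewidth bound $k$ and the edge-stability parameter.

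The heart of the matter — and the step I expect to be the main obstacle — is this last bound on the active pointers. It should be proved by a Ramsey-type extraction in the spirit of the proofs of \cref{thm:flatness-nd} and \cref{thm:spars-lcw}: if along some root-to-leaf path of $T$ too many relabelings were ``active'', one would compress an indiscernible subsequence of the corresponding nodes to exhibit an arbitrarily large half-graph as a semi-induced subgraph of $G$, contradicting edge-stability. The linear case \cref{thm:spars-lcw} can in fact be fed in along the ``spine'' of the tree decomposition one is building, to handle the path-like interaction, while the branching is easy to control because at each node of $T$ only $k$ label classes ever interact with the outside. The delicate point is getting the quantifiers right — ensuring that the failure of the desired bound yields a genuine half-graph pattern rather than a large ordered configuration that subsequent relabelings could destroy, and that the bounds on the pointer net and hence on the final treewidth depend only on $k$ and the edge-stability parameter, not on $|V(G)|$. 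Everything outside this lemma is routine bookkeeping with transductions and with the definition of quasi-bushes.
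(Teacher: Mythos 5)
Your forward direction is fine. The converse, however, has a genuine gap: the lemma you yourself flag as ``the main obstacle'' --- that edge-stability bounds the active pointers so that the quasi-bush's pointer net, added to $T$, has treewidth bounded in terms of the cliquewidth $k$ and the stability parameter --- is not an auxiliary step but essentially the entire theorem, and nothing in your sketch establishes it. Note moreover that your description of the pointers (``for each leaf $u$ and each ancestor $t$, the label of $u$ as seen at $t$'') misplaces the difficulty. Pointers from a leaf to its own ancestors live inside root-to-leaf paths, and a bounded-depth tree carrying only such annotations automatically has Gaifman graph of treewidth at most its depth (take root-paths as bags); but such annotations cannot resolve adjacency by an $\FO$ formula, precisely because deciding whether $uv\in E(G)$ requires composing the unboundedly many relabelings along the path joining $u$ and $v$. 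In a quasi-bush the pointers go from a leaf $v$ to ancestors of the \emph{other} leaf $u$, i.e.\ across branches, and it is the sparsity/treewidth of this cross-branch pointer net that is the crux; your Ramsey-type extraction is phrased in terms of ``too many active relabelings along a root-to-leaf path'' and does not engage with it. (Even granting the lemma, you would still need the routine but non-vacuous step of turning structures with bounded-treewidth Gaifman graphs and directed, set-labelled pointers into a transduction from a class of bounded-treewidth \emph{graphs}.)

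The argument of~\cite{NesetrilMPRS21} does something genuinely stronger than ``feeding in'' \cref{thm:spars-lcw} along spines: it applies the tree version of Simon's Factorization Theorem, due to Colcombet~\cite{Colcombet07}, to the laminar decomposition of bounded diversity, obtaining a bounded-depth hierarchical factorization into well-behaved pieces, and then constructs the encoding and decoding transductions by induction on the depth of that hierarchy. The branching is exactly what makes this ``far more technical'' than the pathwidth case: cross-branch interactions must be controlled simultaneously at every node while keeping the recursion depth bounded, and edge-stability is used inside the induction (roughly in the half-graph-extraction spirit you gesture at, but in a precisely quantified form), not as a single global pigeonhole on one path. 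If you want a shorter legitimate route, the survey's footnote points out that the statement can alternatively be derived from the twin-width analogue \cref{thm:spars-tww} combined with \cref{thm:tww-mw,thm:cw-lcw-ideals,thm:tw-cw}; as it stands, your proposal is a plausible research plan whose central claim remains unproved.
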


\begin{theorem}[\cite{GajarskyPT22}]\label{thm:spars-tww}
 A class of graphs has structurally bounded sparse twin-width if and only if it is edge-stable and has bounded twin-width.
\end{theorem}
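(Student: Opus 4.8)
The plan is to prove both directions of the equivalence. The easy direction is "only if". Suppose $\Cc$ has structurally bounded sparse twin-width, i.e.\ $\Cc\tleq_\FO \Dd$ for some class $\Dd$ of bounded sparse twin-width. Then $\Dd$ is weakly sparse and has bounded twin-width. Since bounded sparse twin-width implies bounded expansion (by \cref{thm:nd-grads}'s sparse analogue, or directly the cited result), $\Dd$ is nowhere dense, hence monadically stable (\cref{thm:nd-monst}); as monadic stability is an $\FO$ ideal, $\Cc$ is monadically stable, and in particular edge-stable. Moreover bounded twin-width is an $\FO$ ideal (\cref{thm:tww-ideal}), so $\Cc$ has bounded twin-width. (Here I am using that $\Dd$ closed under copying does not change twin-width, as discussed in the preliminaries; the statement is robust to this.)

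The hard direction is "if": assume $\Cc$ is edge-stable and has bounded twin-width, and build a nowhere dense class $\Dd$ together with an $\FO$ transduction recovering every $G\in\Cc$ from some member of $\Dd$. First I would extract the combinatorial skeleton of a bounded-twin-width witness. By the matrix characterization (\cref{thm:tww-mixed}) and the contraction-sequence definition, every $G\in\Cc$ admits a vertex ordering $\leq_G$ whose adjacency matrix avoids a bounded-order mixed minor, equivalently a contraction sequence of bounded width; encode this as a rooted tree $T_G$ of contractions with the leaves being $V(G)$ and with bounded "red-degree" bookkeeping at every node. The key point of edge-stability is that it rules out large half-graph patterns as semi-induced subgraphs, and (via \cref{thm:ms-es} / the $K_{t,t}$-freeness consequences) this forces the impure pairs and the homogeneous blocks recorded along the contraction sequence to be "sparse" rather than "ordered-dense": the error graphs stay bounded-degree *and* the recorded adjacency patterns between a part and its bounded-size red-neighbourhood are $K_{t,t}$-free. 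I would then define $\Dd$ as the class of Gaifman graphs of these annotated contraction trees (a quasi-bush–like structure in the sense of \cref{def:flip-breakability}'s neighbourhood, but here with the full sparse-twin-width skeleton), and argue two things: (i) $G$ is $\FO$-transducible from its annotated skeleton — this is routine, since reading off adjacency from a bounded-width contraction sequence is a local computation expressible by following a bounded number of pointers, exactly as with quasi-bushes; and (ii) the class of these skeletons is nowhere dense — this is where edge-stability is used in earnest, to upgrade "bounded twin-width" (which only gives bounded red-degree, not sparsity) to an actual $\nabla_d$ bound on the skeletons. Concretely, a shallow minor of a skeleton can be pushed down to a shallow minor of $G$ together with bounded extra structure from the contraction tree; if $\Dd$ were not nowhere dense one could then transduce all of $\Dd$, hence all bipartite graphs, hence a large half-graph as a semi-induced subgraph of some $G\in\Cc$, contradicting edge-stability. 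Since nowhere dense implies bounded expansion, and since any weakly sparse class of bounded expansion has bounded sparse twin-width once we additionally ensure $\Dd$ is weakly sparse (which the $K_{t,t}$-freeness above gives), $\Cc$ is structurally bounded sparse twin-width.

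The main obstacle, as in \cref{thm:spars-cw,thm:spars-lcw}, is step (ii): turning the *model-theoretic* hypothesis (edge-stability) into a *quantitative sparsity* statement about a concrete combinatorial object (the contraction-tree skeleton). The natural route is to run a Ramsey / indiscernible-sequence argument on the homogeneous blocks and impure pairs tracked by the contraction sequence — essentially the dense analogue of flatness — to show that an ordered-dense or half-graph-like pattern inside a skeleton would have to already be semi-induced in $G$ after a bounded perturbation. One must be careful that the transduction recovering $G$ from the skeleton does not itself smuggle in order (this is why mixed-minor-freeness of the *ordered* matrix, not just bounded red-degree, is the right hypothesis to carry), and that the copying/flip slack inherent in transductions is absorbed by the "edge-stable" (rather than merely "stable") assumption. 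I would expect the cleanest write-up to first establish the $K_{t,t}$-freeness of the skeleton from edge-stability, then deduce nowhere denseness of the skeleton by a separate flatness-style argument, and finally assemble the two transductions; the bookkeeping to make the skeleton itself belong to a *fixed* nowhere dense class (uniform in $G\in\Cc$) is the delicate part.
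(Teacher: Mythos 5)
The easy (``only if'') direction of your plan is fine. The genuine gap is in how you assemble the hard direction: your sparsification target for the skeleton class is \emph{nowhere denseness} (plus weak sparsity), but the theorem asks for transducibility from a class of bounded \emph{sparse twin-width}, i.e.\ bounded twin-width \emph{and} weakly sparse. The bridge you invoke --- ``nowhere dense implies bounded expansion, and any weakly sparse class of bounded expansion has bounded sparse twin-width'' --- is false in both halves, and the survey itself supplies the counterexamples: there are nowhere dense classes that do not have bounded expansion (\cref{thm:nd-be-example}), and the class of subcubic graphs is weakly sparse with bounded expansion yet has unbounded twin-width (\cref{thm:subcubic-tww}), hence unbounded sparse twin-width. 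So even if your steps (i) and (ii) went through verbatim, you would only have shown that $\Cc$ is structurally nowhere dense, which is strictly weaker than the statement to be proved; what must actually be established is that the skeletons form a class that has bounded twin-width and is weakly sparse.

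The mechanism you propose for step (ii) is also not sound as written: a class that fails to be nowhere dense need not transduce all bipartite graphs (that implication is available for monotone classes, but your class of skeletons is not monotone), and transducing half-graphs from the skeleton class would only contradict edge-stability of $\Cc$ if the skeletons were themselves obtained from $\Cc$ by a transduction --- which your construction does not arrange, and which is delicate, since in the actual result the skeleton $H_G$ is transduced from the \emph{ordered} graph $(G,\leq)$ of \cref{thm:tww-mixed}, not from $G$ alone. For comparison, the proof in~\cite{GajarskyPT22} does work with a contraction sequence, but its engine is an induction on the largest order of a half-graph occurring as a semi-induced subgraph of graphs in $\Cc$ (using that edge-stability plus monadic dependence, via \cref{thm:ms-es}, is the right strengthening), and its outcome is exactly the stronger package described after the theorem: a graph $H_G$ transducible from $(G,\leq)$, from which $G$ can be transduced back, with $\{H_G\colon G\in\Cc\}$ of bounded \emph{sparse twin-width}. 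Your write-up would need to be reorganized around that target rather than around nowhere denseness.
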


Recall here that a class $\Cc$ is {\em{edge-stable}} if $\Cc$ excludes some half-graph as a semi-induced subgraph. Thus, in general edge-stability is a weaker assumption than monadic stability, but the notions coincide under the assumption of monadic dependence (\cref{thm:ms-es}). Hence, the appearance of this weaker assumption in \cref{thm:spars-lcw,thm:spars-cw,thm:spars-tww} should not come as a surprise.

We remark that in fact, the proofs of \cref{thm:spars-lcw,thm:spars-cw,thm:spars-tww} provide a significantly stronger assertion. Taking \cref{thm:spars-tww} as an example, the argument shows the following: For any edge-stable class $\Cc$ of bounded twin-width, with every graph $G\in \Cc$ one can associate a graph $H_G$ so that
\begin{itemize}[nosep]
 \item $H_G$ can be transduced from $(G,\leq)$, where $\leq$ is a vertex ordering witnessing a bound on the twin-width of $G$ in the sense of \cref{thm:tww-mixed};
 \item $G$ can be transduced back from $H_G$; and
 \item the class $\{H_G\colon G\in \Cc\}$ has bounded sparse twin-width.
\end{itemize}
Thus, one may think of $H_G$ as of a sparse ``decomposition'' of $G$ that on one hand can be computed from $(G,\leq)$ by means of a transduction, and on the other hand, encodes $G$ so that $G$ can be decoded from $H_G$ by means of a transduction. Similar statements can be extracted from the proofs of \cref{thm:spars-lcw,thm:spars-cw}\footnote{In fact, such statements also follow from the statement for twin-width discussed here. This is because classes of bounded (linear) cliquewidth form $\FO$ ideals (\cref{thm:cw-lcw-ideals}), hence assuming $\Cc$ has bounded (linear) cliquewidth, the class $\{H_G\colon G\in \Cc\}$ will have bounded pathwidth/treewidth.}.

The proof of \cref{thm:spars-lcw} relies on {\em{Simon's Factorization Theorem}}~\cite{Kufleitner08,Simon90}: a Ramseyan tool originating in the theory of formal languages. This tool allows one to break a given linear laminar decomposition of bounded diversity into smaller and smaller subdecompositions in a hierarchical way so that (i) the hierarchy has a bounded depth, and (ii) each break in the hierarchy is in some sense very well-behaved. Then suitable transductions are constructed by an induction on the depth of the hierarchy. The proof of \cref{thm:spars-cw} follows a similar path, but uses the tree version of Simon's Factorization, due to Colcombet~\cite{Colcombet07}, and is far more technical. Finally, the proof of \cref{thm:spars-tww} is quite different, and relies on an induction on (roughly speaking) the largest order of a half-graph that can be found as a semi-induced subgraph, combined with a delicate analysis of a contraction sequence witnessing the bound on twin-width. Thus, this argument shows that tools related to Simon's Factorization are not really necessary for obtaining results of the form of \cref{thm:spars-lcw,thm:spars-cw,thm:spars-tww}.

\section{Outlook}\label{sec:outlook}

Within the discussion presented in \cref{sec:classic,sec:new} we mentioned a number of concrete open problems. Particularly, \cref{conj:main,conj:sparsification} are two questions that serve as the motivation for a large part of research in the area. In this concluding section, we would like to highlight three particular directions that were not explicitly mentioned before, but deserve a closer look.

\paragraph*{Obstructions for (linear) cliquewidth.} Recall that through \cref{thm:shrub-obstr}, Ossona de Mendez, Pilipczuk, and Siebertz~\cite{OssonaPS21} gave an obstruction characterization of classes of bounded shrubdepth: these are exactly classes that do not transduce the class of all paths. Observe that this implies the following:
\begin{quote}
Classes of bounded shrubdepth are the largest $\FO$ ideal $\Pi$ such that every weakly sparse class belonging to $\Pi$ has bounded treedepth.
\end{quote}
This formulation is remarkable for the following reason: it enables us to recover the notion of having bounded shrubdepth purely from the notion of having bounded treedepth, thus turning a sparse notion (treedepth) into an analogous dense notion (shrubdepth).

In~\cite{GajarskyPT22}, Gajarsk\'y, Pilipczuk, and Toru\'nczyk conjectured that the same recipe can be applied to recover linear cliquewidth from pathwidth, and cliquewidth from treewidth. That is, they hypothesized that classes of bounded cliquewidth are the largest $\FO$ ideal whose intersection with weakly sparse classes are classes of bounded treewidth; and the same for linear cliquewidth and pathwidth. As noted in~\cite{GajarskyPT22}, these hypotheses are equivalent to the following conjectures about obstruction characterizations of classes of bounded (linear) cliquewidth.

\begin{conjecture}[\cite{GajarskyPT22}]\label{conj:lcw-obst}
 A class of graphs $\Cc$ has unbounded linear cliquewidth if and only if $\Cc$ transduces a class $\Dd$ that contains some subdivision of every binary tree.
\end{conjecture}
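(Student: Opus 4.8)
The plan is to dispatch the easy implication directly and reduce the hard one to a single ``linear Grid Theorem''--type statement, which I expect to be the real difficulty.

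\emph{The implication ``$\Cc$ transduces such a $\Dd$ $\Rightarrow$ $\Cc$ has unbounded linear cliquewidth''.} Suppose $\Cc$ transduces a class $\Dd$ containing, for every binary tree $T$, a subdivision of $T$, and suppose towards a contradiction that $\Cc$ has bounded linear cliquewidth. Since classes of bounded linear cliquewidth form an $\FO$ ideal (\cref{thm:cw-lcw-ideals}), $\Dd$ also has bounded linear cliquewidth. On the other hand, $\Dd$ is a class of forests, hence weakly sparse (forests contain no $K_{2,2}$), and it has unbounded pathwidth, because subdivisions of complete binary trees of unbounded height have unbounded pathwidth: pathwidth is minor-monotone, and a tree is a minor of each of its subdivisions. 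This contradicts \cref{thm:pw-lcw}, which asserts that a weakly sparse class of bounded linear cliquewidth has bounded pathwidth. Hence $\Cc$ has unbounded linear cliquewidth. (Note this argument is a direct lift of the sparse duality \cref{thm:pw-duality}, with \cref{thm:pw-lcw} providing the sparse/dense transfer and \cref{thm:cw-lcw-ideals} the closure under transductions.)

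\emph{The implication ``unbounded linear cliquewidth $\Rightarrow$ $\Cc$ transduces such a $\Dd$''.} Assume $\Cc$ has unbounded linear cliquewidth. First I would replace $\Cc$ by its hereditary closure: the hereditary closure is transducible from $\Cc$ (mark the desired induced subgraph with a color and restrict), so transducing the target class from it suffices, and it still has unbounded linear cliquewidth since it contains $\Cc$. If $\Cc$ is monadically independent, then $\Cc$ transduces the class of all graphs, which already contains every binary tree (as its own trivial subdivision), and we are done; this case is consistent with the hypothesis because bounded linear cliquewidth implies bounded cliquewidth, and the $\FO$ ideal of bounded-cliquewidth classes does not contain the class of all graphs, hence is contained in the monadically dependent classes. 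The only substantial case is therefore that $\Cc$ is monadically dependent with unbounded linear cliquewidth, and here I would aim for the following core statement: \emph{every monadically dependent class of unbounded linear cliquewidth transduces a weakly sparse class $\Dd_0$ of unbounded pathwidth}. Granting it, one finishes as follows: by \cref{thm:pw-duality} every complete binary tree $B$ is a minor of some member $G$ of $\Dd_0$; since $B$ is subcubic, a minor model in $G$ can be converted into a topological-minor model (a connected branch set that must reach at most three other branch sets admits three disjoint paths to a common vertex), so $G$ contains a subdivision of $B$ as a subgraph. The subgraph-versus-induced-subgraph gap is then closed by a standard maneuver in the spirit of the $1$-subdivision trick from \cref{lem:rook-not-dependent}: pass to an appropriate subdivision of $G$ (using copying if needed), mark the edges of the subdivided tree via colors on the subdivision vertices, reconstruct exactly those edges by an interpretation, and restrict; this yields a class $\Dd \tleq_\FO \Cc$ containing a subdivision of every binary tree. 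Within the core statement one would naturally split according to edge-stability: under monadic dependence this coincides with monadic stability (\cref{thm:ms-es}), and in the edge-unstable case $\Cc$ transduces all half-graphs, which must be combined with unbounded linear cliquewidth to again force the sparse obstruction.

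\emph{The main obstacle.} The difficulty is concentrated entirely in the core statement: converting ``unbounded linear cliquewidth'' into a \emph{transducible, weakly sparse} obstruction of unbounded pathwidth. This is exactly the $\FO$-flavoured, ``linear'' counterpart of the rankwidth Grid Theorem that underlies \cref{thm:cw-dependent}, and it is essentially equivalent to the hard direction of \cref{con:lcw-model}. No such ``linear Grid Theorem'' is currently available; in particular, the Simon-factorization machinery powering the sparsification results \cref{thm:spars-lcw,thm:spars-cw} requires a linear laminar decomposition of \emph{bounded} diversity, which is precisely what is absent here. A plausible route would be to set up a ``linear'' Flipper-type game --- where Keeper is forced to restrict to intervals of a vertex ordering rather than to balls --- prove that unbounded linear cliquewidth is equivalent to the nonexistence of short winning strategies for Flipper, and then read off from a deep Keeper strategy a branching family of ``local'' pieces encoding arbitrarily large subdivided binary trees, in analogy with how long Connector strategies in the radius-$\infty$ Splitter game encode long paths behind \cref{thm:shrub-obstr}. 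Making this precise, and in particular arranging that the extracted obstructions are genuinely \emph{tree-like} (rather than merely path-like, as is sufficient for shrubdepth), is where I expect the real work to lie.
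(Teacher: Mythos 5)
This statement is an open conjecture in the paper (attributed to~\cite{GajarskyPT22}); the survey gives no proof of it, and explicitly records that a positive resolution would already imply \cref{con:lcw-model} and strengthen \cref{thm:cw-dependent}. So there is no proof to compare against, and your proposal does not close the gap: it establishes only the right-to-left implication. That part is correct and matches the reasoning the paper alludes to when it says the conjecture is equivalent to ``bounded linear cliquewidth is the largest $\FO$ ideal whose weakly sparse members have bounded pathwidth'': after restricting $\Dd$ to the subdivisions themselves (harmless, since any subclass of a transduced class is transduced), you get a weakly sparse class of unbounded pathwidth, contradicting \cref{thm:cw-lcw-ideals} together with \cref{thm:pw-lcw}.

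For the left-to-right implication, your ``core statement'' --- every monadically dependent class of unbounded linear cliquewidth transduces a weakly sparse class of unbounded pathwidth --- is not a reduction of the problem but essentially a reformulation of it (it is the missing ``linear grid theorem'' you yourself identify, and no such result is known; the $\CMSO$ machinery behind \cref{thm:cw-dependent} has no linear counterpart, cf.\ the openness of \cref{con:lcw-model}). Moreover, even granting the core statement, your finishing step has a concrete flaw: ``pass to an appropriate subdivision of $G$ (using copying if needed)'' is not available, because transductions with copying enlarge the universe only by a constant factor while a weakly sparse graph may have superlinearly many edges; and extracting the subdivided tree as a \emph{subgraph} is not the same as obtaining it as a member of a transduced class, since the induced subgraph on its vertex set may carry chords and unary colors cannot in general select an arbitrary edge subset. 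Closing that subgraph-versus-induced gap would itself require a nontrivial combinatorial statement in the spirit of the Galvin--Rival--Sands argument used for \cref{thm:td-shb} and \cref{thm:shrub-obstr}, not a routine variant of the trick in \cref{lem:rook-not-dependent}. In short: the easy direction stands, the hard direction remains exactly as open as in the paper, and you are candid about where the difficulty lies.
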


\begin{conjecture}[\cite{GajarskyPT22}]\label{conj:cw-obst}
 A class of graphs $\Cc$ has unbounded cliquewidth if and only if $\Cc$ transduces a class $\Dd$ that contains some subdivision of every wall.
\end{conjecture}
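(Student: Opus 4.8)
The backward implication is the straightforward one, and I would dispatch it first. Suppose $\Cc$ transduces a class $\Dd$ containing a subdivision of every wall, and assume towards a contradiction that $\Cc$ has bounded cliquewidth. Since classes of bounded cliquewidth form an $\FO$ ideal (\cref{thm:cw-lcw-ideals}), $\Dd$ has bounded cliquewidth, hence so does the subclass $\Dd'\subseteq\Dd$ consisting only of wall subdivisions. But every member of $\Dd'$ is subcubic, so $\Dd'$ is weakly sparse ($K_{4,4}$ cannot occur as a subgraph), and $\Dd'$ has unbounded treewidth, because each wall is a minor of its subdivisions and treewidth is minor-monotone (walls themselves having unbounded treewidth by \cref{thm:tw-duality}). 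This contradicts \cref{thm:tw-cw}, so $\Cc$ has unbounded cliquewidth. The same two-line argument settles the easy direction of \cref{conj:lcw-obst}, with binary trees and pathwidth in place of walls and treewidth.

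The substance of the conjecture is the forward implication, and here the plan is to follow the two blueprints already in the literature: the obstruction characterization of shrubdepth (\cref{thm:shrub-obstr}) and the sparsification theorem for bounded treewidth (\cref{thm:spars-cw}). First I would remove the trivial case: if $\Cc$ is monadically independent then it transduces the class of all graphs, in particular the class of $1$-subdivisions of all walls, and we are done. So assume $\Cc$ is monadically dependent and has unbounded cliquewidth; the goal is a single transduction $\Tf$ with $\Tf(\Cc)$ containing a subdivision of the $n\times n$ wall for every $n$ (the subdivision lengths need not be uniformly bounded, which is harmless). The starting point is a Grid Theorem for cliquewidth: by the result of Courcelle and Oum~\cite{CourcelleO07} behind \cref{thm:cw-dependent}, unbounded cliquewidth makes arbitrarily large grids $\CMSO$-transducible from $\Cc$; alternatively, the vertex-minor Grid Theorem of Geelen, Kwon, McCarty, and Wollan~\cite{GeelenKMW23} yields arbitrarily large grid-like vertex-minor obstructions inside graphs of $\Cc$.

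Neither statement is yet what we need, because the conjecture demands an $\FO$-transduction producing a \emph{subdivided} wall, not a $\CMSO$-transduction (or an unbounded sequence of local complementations) producing a grid. The role of the subdivisions in the statement is precisely to bridge this gap: a sufficiently long subdivision of a wall is a ``local'' object, so one expects to be able to trade the modular counting and set quantification of $\CMSO$ --- equivalently, the unboundedly many local complementations hidden in a vertex-minor --- for subdivision length. Concretely, from a $\CMSO$-transduction interpreting a large grid in a colored graph $G^+$ with $G\in\Cc$, I would try to extract an $\FO$-transduction reading off a long subdivision of that grid, locating the subdivision branch vertices among vertices of $G^+$ that lie ``between'' grid vertices and invoking monadic dependence --- via the decomposition of a transduction into a local part and a long-distance flip part, in the spirit of~\cite{GajarskyHOLR20} --- to ensure that distinct subdivision paths do not short-circuit one another. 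This is the analogue of the quasi-bush constructions of \cref{thm:snd-qbushes,thm:ms-qbushes}, but in a regime with no bounded-diversity decomposition to anchor a Ramsey/Simon-factorization argument, and I expect it to be the main obstacle: manufacturing a sparse, $\FO$-recoverable obstruction out of a bare cliquewidth lower bound is exactly the sparsification phenomenon that makes \cref{thm:spars-cw} and the Sparsification Conjecture (\cref{conj:sparsification}) difficult.

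To make the crux manageable I would first establish the statement on smaller ideals and then bootstrap. For classes of bounded twin-width one has contraction sequences and the mixed-minor characterization (\cref{thm:tww-mixed}), and the edge-stable subcase is already settled by the sparsification theorem for sparse twin-width (\cref{thm:spars-tww}); a direct combinatorial analysis of a contraction sequence of large width, paralleling the proof of \cref{thm:spars-tww} but discarding the edge-stability hypothesis, ought to produce the subdivided walls and thereby resolve the conjecture for every $\Cc$ of bounded twin-width --- which is consistent, since walls are planar and hence have bounded twin-width (\cref{thm:minor-free-tww}). The residual case, namely $\Cc$ monadically dependent of unbounded twin-width \emph{and} unbounded cliquewidth, seems to call for the flip-breakability toolbox (\cref{thm:md-fb,thm:patterns-md}): one would hope that the half-graph-, clique-, and star-crossing patterns delivered by \cref{thm:patterns-md}, once the cliquewidth hypothesis and edge-stability are used together, refine into subdivided walls. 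A successful execution of this programme would simultaneously strengthen the Courcelle--Oum duality (\cref{thm:cw-dependent}) from $\CMSO$-transductions of grids to $\FO$-transductions of subdivided walls.
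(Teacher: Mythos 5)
This statement is an open conjecture in the paper (attributed to~\cite{GajarskyPT22}); the survey offers no proof of it, so there is nothing to compare your argument against. What you actually prove is only the backward implication, and that part is correct and standard: if $\Cc$ transduced a class containing a subdivision of every wall while having bounded cliquewidth, then by \cref{thm:cw-lcw-ideals} that class would have bounded cliquewidth, yet wall subdivisions form a weakly sparse family of unbounded treewidth (walls are minors of their subdivisions, and treewidth is minor-monotone), contradicting \cref{thm:tw-cw}. This direction is precisely the reason the conjecture is plausible and is not where its content lies.

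The forward implication is where the gap is, and your proposal does not close it. Starting from \cref{thm:cw-dependent} you get that unbounded cliquewidth yields all grids under a \emph{$\CMSO$}-transduction; the entire difficulty of the conjecture is to trade the set quantification and modular counting (equivalently, the unbounded sequences of local complementations behind the vertex-minor grid theorem) for an $\FO$-transduction producing \emph{subdivided} walls, and you state this trade as a hope rather than carry it out. The intermediate milestones you propose are themselves unproven and not obviously easier: a contraction-sequence analysis ``paralleling \cref{thm:spars-tww} but discarding the edge-stability hypothesis'' is not a routine modification, since edge-stability is used essentially in that proof (and in \cref{thm:spars-lcw,thm:spars-cw}) to prevent exactly the half-graph-like behaviour that obstructs sparsification; likewise, the assertion that the patterns of \cref{thm:patterns-md} ``refine into subdivided walls'' under an unbounded-cliquewidth hypothesis is an unsupported claim, and those patterns (star/clique/half-graph crossings, comparability grids) are dense objects with no evident route to a subcubic, wall-like $\FO$-definable obstruction. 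In short, your text is a reasonable research programme whose hard steps coincide with the open problem itself; as a proof it establishes only the easy direction.
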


Recall that a {\em{wall}} is the subcubic grid-like structure presented in \cref{fig:wall}.

It is instructive to compare \cref{conj:lcw-obst,conj:cw-obst} with \cref{con:lcw-model} and \cref{thm:cw-dependent}. In short, \cref{conj:lcw-obst,conj:cw-obst} are much stronger statements, because they concern the more restrictive notion of $\FO$ transductions, instead of $\MSO$ or $\CMSO$ transductions. Indeed, a positive resolution of \cref{conj:lcw-obst} would immediately imply \cref{con:lcw-model}, and a positive resolution of \cref{conj:cw-obst} would strengthen \cref{thm:cw-dependent} by replacing $\CMSO$ transductions with $\MSO$ transductions.

It seems that \cref{conj:lcw-obst,conj:cw-obst} are much closer in spirit to the search for a characterization of classes of bounded treewidth through induced subgraph obstructions. This question has recently been actively researched within the structural graph theory community, see for instance~\cite{BonamyBDEGHTW24,Korhonen23} and references therein. Though, \cref{conj:lcw-obst,conj:cw-obst} might be significantly easier to answer, as they concern the more relaxed $\FO$ transduction quasi-order, instead of the extremely rigid induced subgraph order.

\paragraph*{Flip-width and dense analogues of sparse notions.} Throughout this survey, we have seen multiple dense analogues of sparse notions: shrubdepth is a dense analogue of treedepth, cliquewidth is a dense analogue of treewidth, and monadic dependence is a dense analogue of nowhere denseness. However, for some sparse notions, their dense analogues are so far only partially understood, or not understood at all.

The most important example here are classes of bounded expansion, for which we only discussed their closure under transductions (structurally bounded expansion classes), but no analogue on the ``monadically dependent level'' was mentioned. Such an analogue was very recently proposed by Toru\'nczyk~\cite{Torunczyk23} in the form of classes of {\em{bounded flip-width}}.

The idea of Toru\'nczyk is to take inspiration from the classic {\em{Cops \& Robber Game}} that characterizes treewidth, and vary the rules to define a hierarchy of graph parameters that on one hand generalize cliquewidth, and on the other hand are bounded on classes of bounded expansion. The proposed game, which for the purpose of this survey we will call the {\em{distance-$d$ Flip-width Game}}, is a variation of the Flipper Game that we discussed in \cref{sec:mon-stable}. There are two players, {\em{Flipper}} and {\em{Runner}}. Besides the fixed distance parameter $d\in \N$, there is an additional parameter $k\in \N$ that signifies the strength of Flipper. (Roughly, $k$ corresponds to the number of cops in the Cops \& Robber Game.)
The game is played in rounds on a graph $G$. The arena after the $i$th round will be denoted by $G_i$, and $G_i$ will always be a $k$-flip of $G$; initially we set $G_0\coloneqq G$. Also, Runner always stands on some vertex of the graph. By $u_i$ we denote the Runner's position after the $i$th round, and we let Runner choose $u_0$ arbitrarily before the game begins. With this setup, the $i$th round of the game ($i\geq 1$) proceeds as follows:
\begin{itemize}[nosep]
 \item Flipper announces the next graph $G_i$ that has to be a $k$-flip\footnote{In~\cite{Torunczyk23}, Toru\'nczyk uses bipartite flips defined by two vertex subsets, instead of simple flips defined by one vertex subset that we adopted as the main definition here. This is an immaterial detail.} of $G$.
 \item Runner moves from $u_{i-1}$ to any vertex $u_i$ of her choice such that the distance between $u_i$ and $u_{i-1}$ in $G_{i-1}$ is at most $d$.
\end{itemize}
The game finishes with Flipper's victory once $u_i$ becomes isolated in the graph $G_i$. The goal of Runner is to avoid losing indefinitely. Thus, compared to the Flipper Game considered in \cref{sec:mon-stable}, there is no bound on the duration of the game, but Flipper's flips do not ``stack up'': in every round he needs to propose a new set of $k$ flips, while the flips applied in the previous round get forgotten.

\newcommand{\fw}{\mathsf{fw}}

With this definition of the game, we may define a graph parameter {\em{distance-$d$ flip-width}}, denoted $\fw_d(G)$, as the minimum $k\in \N$ such that Flipper may win the distance-$d$ Flip-width Game on $G$ when given strength $k$. Then we say that a graph class $\Cc$ has {\em{bounded flip-width}} if $\Cc$ has bounded distance-$d$ flip-width for every $d\in \N$; that is, $\fw_d(\Cc)$ is finite for every $d\in \N$.

As proved by Toru\'nczyk, this notion seems to perfectly fit into the place of the  analogue of bounded expansion on the monadically dependent level. Precisely, we have the following.

\begin{theorem}[\cite{Torunczyk23}]
 The following assertions hold:
 \begin{itemize}[nosep]
  \item Classes of bounded flip-width form an $\FO$-ideal. In particular, every class of bounded flip-width is monadically dependent.
  \item Every class of bounded twin-width also has bounded flip-width.
  \item Every class of bounded expansion has bounded flip-width. Moreover, every weakly sparse class of bounded flip-width in fact has bounded expansion.
 \end{itemize}
\end{theorem}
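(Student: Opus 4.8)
The plan is to handle the three assertions in turn, since they rely on rather different tools.

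\smallskip

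\emph{Bounded flip-width is an $\FO$ ideal.} I would check that $\fw_d$ is controlled under each of the three steps of a transduction $\Tf=(C,\phi)$. Colouring is harmless: the Flip-width Game never inspects colours (Runner's moves are governed by distances in the underlying graph, which for a coloured graph is its Gaifman graph), so $\fw_d$ does not change when unary predicates are added. Restriction is also easy: a winning Flipper strategy on $G$ with strength $k$ restricts to one on any induced subgraph $G[X]$, since deleting vertices only increases distances and hence only shrinks the set of Runner's available moves, while Flipper simply intersects his flips with $X$. The interpretation step is the crux. By Gaifman's Locality Theorem, on any fixed graph $G$ the formula $\phi(x,y)$ is equivalent to a Boolean combination of $r$-local formulas (for some $r=r(\phi)$) together with a constant depending on $G$; hence an edge of $\phi(G)$ either joins two vertices at distance at most $2r$ in $G$, or else its presence is determined solely by the $r$-local types of its two endpoints. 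There are finitely many such types, say $N=N(\phi)$, and for each of the at most $N^2$ pairs of types the ``long-range'' behaviour is constant, so after a fixed number $\Oh(N^2)$ of flips --- re-applied in every round --- one reduces to a graph $H'$ all of whose edges join $G$-close vertices, so that $\Ball^{H'}_d[w]\subseteq \Ball^{G}_{2rd}[w]$ for every $w$. It then remains to simulate a winning Flipper strategy for the distance-$(2rd)$ game on $G$ inside $H'$, using that Runner in $H'$ has strictly fewer moves and that flips transport along the common vertex set; with strength $\Oh(N^2+\fw_{2rd}(G))$ Flipper wins the distance-$d$ game on $\phi(G)$. Composing the three steps and invoking \cref{lem:trans-composition} gives the ideal property. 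The ``in particular'' follows because bounded flip-width is not the property of all graph classes (the class of $1$-subdivisions of complete graphs has unbounded flip-width, as established by the lower-bound argument below), and every $\FO$ ideal strictly below that of all classes is contained in the ideal of monadically dependent classes.

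\smallskip

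\emph{Bounded twin-width implies bounded flip-width.} Fix $G$ with a contraction sequence $\Pp_n,\ldots,\Pp_1$ of width $d$ and a distance parameter $e$. I would give Flipper, with strength $f(d,e)$, a strategy driven by the sequence: at each round, inspect the part $A$ of a suitably chosen partition $\Pp_i$ containing Runner, and use the at most $d$ impure partners of $A$ in the bounded-degree error graph $\err(G,\Pp_i)$ to apply $\Oh(d)$ flips that homogenise the relation of $A$ with the rest of the graph; pure partners need no action. This forces Runner into ever finer parts of the sequence, and since the error graphs have maximum degree at most $d$ the number of flips per round depends only on $d$ and $e$; after boundedly many rounds Runner is cornered into a part small enough to be isolated. (Alternatively, one may play Flipper along a vertex ordering whose adjacency matrix has bounded mixed-minor number, as provided by \cref{thm:tww-mixed}.) Either way $\fw_e(G)$ is bounded in terms of $e$ and $\tww(G)$.

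\smallskip

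\emph{Bounded expansion, and the weakly sparse converse.} For the forward direction I would use generalized coloring numbers: by \cref{thm:wcol-be} a class $\Cc$ of bounded expansion admits, for every $r$, orderings $\leq$ with $\wcol_r(\Cc,\leq)$ bounded by a constant $c=c(\Cc,r)$. Taking $r$ large compared to $d$, Flipper plays along such an ordering: with Runner at $u$, the set $\WReach_r^{G,\leq}[u]$ has size at most $c$, and since every short path from $u$ must pass through this set, Flipper can use $\Oh(c)$ flips --- on the backward weak-reachability classes of the at most $c$ vertices of $\WReach_r^{G,\leq}[u]$ --- to strictly decrease the radius, measured in the flipped graph, of the region reachable by Runner; as $c$ is a constant this is a winning strategy of bounded strength, so $\fw_d(\Cc)<\infty$ for every $d$. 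For the converse, let $\Cc$ be weakly sparse of unbounded expansion. Then the topological parameter $\widetilde\nabla_r(\Cc)$ is infinite for some $r$ (using that $\nabla_\bullet$ and $\widetilde\nabla_\bullet$ are bounded on exactly the same classes), so graphs from $\Cc$ contain as subgraphs $(\leq 2r)$-subdivisions of graphs of unbounded average degree --- precisely the subdivided dense patterns (exemplified by $1$-subdivisions of $K_n$) on which Runner, running along subdivision paths, survives arbitrarily long against Flipper of any fixed strength. Hence $\fw_{\Oh(r)}(\Cc)=\infty$ and $\Cc$ does not have bounded flip-width.

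\smallskip

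The step I expect to be the main obstacle is the interpretation case of the first assertion: converting the Gaifman-locality decomposition into an honest simulation of a Flipper strategy across a transduction, since one must track how Flipper's flips interact with the formula $\phi$ and argue that the long-range part is genuinely absorbed by a bounded number of flips in every round. The lower-bound half of the third assertion is the other delicate point, as it requires identifying exactly which subdivided patterns force large flip-width and matching the subdivision length to the distance parameter.
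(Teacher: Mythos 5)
First, note that the survey does not prove this theorem at all: it is quoted from~\cite{Torunczyk23}, so the only meaningful comparison is with the proofs in that paper. Your overall architecture does match the known arguments (Gaifman locality and local types for closure under transductions, contraction sequences for twin-width, weak coloring numbers for bounded expansion, and subdivisions of dense graphs for the weakly sparse converse), but two of your key steps are genuinely broken as written, and they are precisely the steps that carry the weight of the theorem.

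For the interpretation step of the first bullet, transporting Flipper's flips verbatim from the simulated game on $G$ to the ``localized'' graph $H'$ does not preserve legality of Runner's moves. A flip on a set $A$ complements adjacency inside $A$, so it can create edges of the flipped $H'$ between vertices that are far apart in the flipped $G$: take $u,v\in A$ adjacent in $G$ but non-adjacent in $H'=\phi(G^+)$; after the flip they are adjacent in the flipped $H'$ (a distance-$1$ move for Runner) but non-adjacent, and possibly very far apart, in the flipped $G$, so the move cannot be translated into the simulated radius-$2rd$ game. Re-applying the type-based flips each round does not repair this, because the discrepancy comes from the mismatch between adjacency in $G$ and in $\phi(G^+)$, not from the long-range part of $\phi$. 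Handling exactly this interaction between Flipper's flips and the interpreting formula is the main technical content of the transduction-closure theorem in~\cite{Torunczyk23}; your sketch assumes it away. In the twin-width bullet the proposed strategy is impossible: flips (complementations) preserve impurity, so Flipper cannot ``homogenise the relation of $A$'' with its at most $d$ impure partners by $\Oh(d)$ flips. The roles are reversed in the actual argument: it is the \emph{pure complete} pairs that require action (they are severed by bipartite flips, which is why the error graph's bounded degree matters only indirectly), while the impure partners cannot be fixed and must instead be tracked by zooming into a bounded-radius ball of the error graph as the partition refines. Finally, in the third bullet both halves are asserted rather than proved at the decisive moment: ``strictly decrease the radius of the region reachable by Runner'' is not the potential that makes the $\WReach$-based strategy terminate (note that the $\leq$-minimum vertex of a short path from $u$ may be $u$ itself, which Flipper cannot touch), and no argument is given that Runner actually survives on $\le 2r$-subdivisions of dense graphs against a strength-$k$ Flipper; that lower bound is itself a nontrivial counting argument in~\cite{Torunczyk23}, and your ``in particular'' claim about monadic dependence rests on it.
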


Moreover, in~\cite{Torunczyk23} it is shown that the distance-$\infty$ variant of the game (where the Runner may move within a connected component of $G_{i-1}$) exactly characterizes classes of bounded cliquewidth. Also, the variant of the game where the operation of applying a flip is replaced by the operation of {\em{isolating a vertex}} (i.e., removing all edges adjacent to it) characterizes classes of bounded expansion.

While the concept of flip-width seems to fit the large picture surprisingly well, it has certain limitations which show that there is more to be understood. Most importantly, classes of bounded flip-width are so far only defined through the existence of strategies in the Flip-width Games, which provides only a loose grasp on their structural properties. In particular, the following questions are open:
\begin{itemize}[nosep]
 \item It is unknown whether the $\FO$ model-checking problem can be solved in fixed-parameter time on classes of bounded flip-width, even under the assumption of being provided suitable Flipper's strategies through oracles.
 \item It is unknown whether edge-stable classes of bounded flip-width coincide with classes of structurally bounded expansion.
 \item It is unknown whether classes of bounded flip-width are $\chi$-bounded\footnote{A class $\Cc$ is {\em{$\chi$-bounded}} if there is a function $f\colon \N\to \N$ such that $\chi(G)\leq f(\omega(G))$ for each $G\in \Cc$, where $\chi(G)$ and $\omega(G)$ denote the chromatic number and the clique number of $G$, respectively.}.  This property is known to hold both for classes of bounded twin-width~\cite{BonnetG0TW21} and for structurally bounded expansion classes (this follows easily from~\cref{thm:low-shrubdepth-covers-sbe}).
\end{itemize}
In order to answer these and other related questions, it seems necessary to develop some form of a global, static decomposition for classes of bounded flip-width. No such decomposition is known at this point.

We note that in~\cite{Torunczyk23}, Toru\'nczyk asked a number of other interesting questions concerning flip-width, particularly about combinatorial obstructions. We refer the reader to~\cite{Torunczyk23} for details.

Besides classes of bounded expansion, the other sparse concept for which we do not have a robust dense analogue are minor-free classes. Here, the author is not aware of any work on {\em{structurally minor-free classes}}, nor of any propositions for dense analogues on the monadically dependent level (that is, fitting between classes of bounded cliquewidth and classes of bounded twin-width). In the context of structural graph theory, the notion of {\em{vertex-minors}} seems to provide a good dense analogue of the minor order, but unfortunately this notion does not seem to fit well our theory here. The reason is that forbidding a vertex-minor does not entail monadic dependence. A classic example here is the class of {\em{circle graphs}} (intersection graphs of chords of a circle), which are defined through forbidding vertex-minors~\cite{Bouchet94}, but are monadically independent~\cite{HlinenyPR19}. There is a notion of {\em{shallow vertex-minors}} introduced by Ne\v{s}et\v{r}il, Ossona de Mendez, and Siebertz~\cite{NesetrilOdMS22}, which can be used to give obstruction characterizations for structurally bounded expansion classes~\cite{NesetrilOdMS22} as well as, as shown very recently by Buffi\`ere, Kim, and Ossona de Mendez~\cite{BuffiereKOdM24}, monadically stable and  monadically dependent classes. However, so far it is unclear whether the notion of vertex-minors, without any constraints on the depth, can be combined with the theory of monadically dependent graph classes in any meaningful way.

\paragraph*{Fine understanding of the transduction order.} Finally, recall that due to the compositionality of $\FO$ transductions (\cref{lem:trans-composition}), the relation of transducibility $\tleq_\FO$ forms a quasi-order on graph classes. While the coarser quasi-order given by $\MSO$ transducibility is largely understood thanks to the work of Blumensath and Courcelle~\cite{BlumensathC10}, the $\FO$ transducibility quasi-order appears to be much more complex. A wealth of insight has been provided by the recent work of Braunfeld, Ne\v{s}et\v{r}il, Ossona de Mendez, and Siebertz~\cite{BraunfeldNOS22transductions,NesetrilMS22}, but there is still much to be explored and understood.

Particularly, we are currently lacking robust tools for proving negative results: that some graph class $\Dd$ cannot be transduced from another graph class $\Cc$. This applies even to very basic graph classes, studied through and through from the point of view of graph theory. A particular setting that we would like to propose here is that of graphs embeddable in a fixed surface.

As in \cref{sec:minors}, by a surface we mean a compact $2$-dimensional manifold $\Sigma$ without boundary, and we consider the standard notion of embedding: vertices of a graph are mapped to distinct points of the surface, and edges are mapped to internally-disjoint curves connecting respective endpoints. Recall that the Classification Theorem for Closed Surfaces states that every surface is homeomorphic to one of the following surfaces:
\begin{itemize}[nosep]
 \item the sphere;
 \item the surface $\Sigma_{k\times \mathbf{T}}$ obtained from the sphere by gluing in $k\geq 1$ handles; and
 \item the surface $\Sigma_{\ell\times \mathbf{P}}$ obtained from the sphere by gluing in $\ell\geq 1$ crosscaps.
\end{itemize}
We also write $\Sigma_{0\times \mathbf{T}}=\Sigma_{0\times \mathbf{P}}$ for the sphere.
These surfaces are pairwise non-homeomorphic, but
\begin{itemize}[nosep]
 \item if $0\leq k\leq k'$, then every graph embeddable in $\Sigma_{k\times \mathbf{T}}$ is also embeddable in $\Sigma_{k'\times \mathbf{T}}$;
 \item if $0\leq \ell\leq \ell'$, then every graph embeddable in $\Sigma_{\ell\times \mathbf{P}}$ is also embeddable in $\Sigma_{\ell'\times \mathbf{P}}$; and
 \item if $0\leq k$, then every graph embeddable in $\Sigma_{k\times \mathbf{T}}$ is also embeddable in $\Sigma_{(2k+1)\times \mathbf{P}}$.
\end{itemize}
More generally, we write $\Sigma \preceq \Gamma$ if every graph embeddable in $\Sigma$ is also embeddable in $\Gamma$. By $\Ee_\Sigma$ we denote the class of graphs embeddable in surface $\Sigma$.

It is unclear how the classes $\Ee_\Sigma$ for different surfaces $\Sigma$ relate to each other in the $\FO$ transduction quasi-order, besides the obvious inclusion of classes following from the partial order $\preceq$ on the surfaces. We conjecture that no more transducibility relations are present between those classes.

\begin{conjecture}\label{conj:surfaces}
 Let $\Sigma$ and $\Gamma$ be surfaces such that $\Sigma \npreceq \Gamma$. Then $\Ee_\Sigma \not\sqsubseteq_\FO \Ee_\Gamma$.
\end{conjecture}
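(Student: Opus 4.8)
The plan is to prove \Cref{conj:surfaces} by separating two orthogonal obstructions: the genus (number of handles) and the crosscap number (number of crosscaps), and to handle the non-orientable-vs-orientable direction separately from the ``too few handles / too few crosscaps'' direction. The starting point is that all classes $\Ee_\Sigma$ are minor-closed, hence of bounded expansion (in fact of bounded twin-width by \Cref{thm:minor-free-tww}), and therefore monadically stable; so transductions between them cannot create ``dense'' behaviour, but they can certainly scramble the topology, and that is precisely what must be ruled out. The key invariant we would want to extract is a \emph{transduction-invariant} quantity that dominates the genus and that is small on $\Ee_\Gamma$ but unbounded on $\Ee_\Sigma$ whenever $\Sigma\npreceq\Gamma$.

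\textbf{Step 1: reduce to the two ``slack'' inequalities and the orientability swap.} By the Classification Theorem, $\Sigma\npreceq\Gamma$ happens in exactly three ways: (a) both orientable, $\Sigma=\Sigma_{k\times\mathbf T}$, $\Gamma=\Sigma_{k'\times\mathbf T}$ with $k>k'$; (b) both non-orientable with $\ell>\ell'$; and (c) $\Sigma$ non-orientable, $\Gamma=\Sigma_{k'\times\mathbf T}$ orientable with crosscap number of $\Sigma$ exceeding $2k'+1$ (the only way a non-orientable surface fails to embed into an orientable one). In all three cases it suffices to produce, inside $\Ee_\Sigma$, a sequence of graphs that ``witnesses'' the extra topological feature so robustly that no fixed transduction into $\Ee_\Gamma$ can reproduce it. First I would handle (a) and (b) together, since they are the same phenomenon (one extra handle, resp.\ crosscap); case (c) then follows by noting that a non-orientable surface of large crosscap number contains, via the relation $\Sigma_{k\times\mathbf T}\preceq\Sigma_{(2k+1)\times\mathbf P}$ read backwards and the combinatorics of the genus additivity, arbitrarily ``orientable-genus-demanding'' subgraphs, so it reduces to (a).

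\textbf{Step 2: a transduction-robust Euler-genus witness.} The natural witnesses are large \emph{toroidal grids of higher genus}: for each $g$ take the graph $W_g$ obtained from a large triangulation of $\Sigma_{g\times\mathbf T}$ (say the standard $2g$-fold symmetric quadrangulation), chosen so that $W_g\in\Ee_\Sigma$ for all $g\le k$ but such that the family $\{W_g\}_{g\le k}$ ``saturates'' the genus $k$. The crux is to show that a graph transduced from $\Ee_\Gamma$ cannot realise the local-global topological structure of $W_k$. Here I would use Gaifman locality together with the Structure Theorem for minor-free graphs (\Cref{thm:structure-theorem}): since $\Ee_\Gamma$ is minor-closed, every $H\in\Ee_\Gamma$ has a tree decomposition into bags almost embeddable in $\Gamma$; and a fixed transduction $\Tf$ applied to such $H$ produces a graph in which every vertex's $r$-ball (for $r$ the Gaifman radius of $\Tf$) is interpreted in a bounded-radius ball of $H$, which after removing apices is embeddable in $\Gamma$ with bounded-pathwidth vortices. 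One then wants a ``local genus'' argument: if $G\in\Tf(\Ee_\Gamma)$ embeds in $\Sigma$ in a way that \emph{uses all $k$ handles nontrivially and uniformly at scale}, then pulling this structure back through $\Tf$ forces a piece of $H$ to be non-embeddable in $\Gamma$, a contradiction. Making ``uses all handles at a bounded scale'' precise is the technical heart: one wants a family $W_g$ whose Euler genus is not merely $g$ but is \emph{locally certified}, e.g.\ every sufficiently large ball already has genus $g$, so that Gaifman-locality of $\Tf$ cannot dodge it.

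\textbf{Main obstacle.} The hard part will be \emph{Step 2}: proving that Euler genus (or the orientability type) is an obstruction that survives transduction, i.e.\ that $\Ee_\Sigma\sqsubseteq_\FO\Ee_\Gamma$ forces $\Sigma\preceq\Gamma$. Unlike sparsity-type invariants (degeneracy, expansion, twin-width) which are monotone and behave well under the local/long-distance decomposition of transductions (cf.\ \Cref{thm:str-bnd-deg} and the methodology of \cite{GajarskyHOLR20}), genus is a genuinely \emph{topological} invariant and there is no known ``transduction-monotone'' surrogate for it; a transduction can freely re-route adjacencies and could conceivably trade a handle for extra colours or for long-range flips. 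I expect one will need (i) a locally-certified-genus construction of the witness graphs $W_g$ (plausibly from expander-like quadrangulations of $\Sigma_{g\times\mathbf T}$, whose large girth guarantees the required local genus), and (ii) a Gaifman-locality argument combined with the Graph Minors Structure Theorem to show that the almost-embedding of the $\Tf$-preimage in $\Gamma$ cannot support the pulled-back locally-high-genus structure. A secondary difficulty is the orientable-vs-non-orientable direction (case (c)): even once genus is handled, ruling out that, say, the projective plane's graphs are transducible from planar graphs requires a separate ``parity/orientability'' local invariant, for which I would again look to local certificates of non-orientability (odd handles / one-sided cycles at bounded scale). Throughout, monadic stability of all classes involved is used only to guarantee that the transductions in play are ``sparse-like'', so that the local structure theory applies cleanly.
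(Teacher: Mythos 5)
This statement is a conjecture, not a theorem: the paper offers no proof, and explicitly notes that even the simplest instance (whether toroidal graphs are transducible from planar graphs, i.e.\ $\Sigma$ a torus and $\Gamma$ a sphere) is open. So there is no paper argument to compare yours against; the only question is whether your proposal closes the problem, and it does not. Your Step 2 is exactly the open content, and as sketched it has two concrete flaws. First, the locality claim is wrong: an $\FO$ transduction is not a distance-local map. The interpretation formula $\varphi(x,y)$ is evaluated on an arbitrarily colored copy of $H$, and a formula as simple as ``$x$ and $y$ are both red'' creates adjacencies between vertices at arbitrary distance in $H$; hence an $r$-ball of the output graph need not be ``interpreted in a bounded-radius ball of $H$,'' and Gaifman locality of $\varphi$ does not give you the pull-back of local structure you rely on. The local-versus-long-distance decompositions you cite (e.g.\ the method behind \cref{thm:str-bnd-deg}) are theorems only in restricted settings such as bounded degree; establishing such a normalization for transductions between surface classes is part of the difficulty, not something you may assume. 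Second, your proposed witnesses undermine your own plan: high-girth (large edge-width) quadrangulations of a genus-$g$ surface are \emph{locally planar} --- every ball of radius bounded independently of the graph embeds in the plane --- so there is no family whose genus is ``certified'' inside bounded-radius balls. Genus is an irreducibly global invariant, which is precisely why no transduction-monotone surrogate for it is known.

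For calibration: the paper's own suggested stepping stone is much weaker than what you attempt --- it asks to rule out merely that every toroidal graph is a congestion-$k$ depth-$k$ minor of a planar graph, a purely combinatorial special case of a transduction --- and even that is stated as open. A successful attack on \cref{conj:surfaces} would need either a genuinely new transduction-invariant that sees Euler genus and orientability, or a structure theorem for transductions with sparse (minor-free) outputs strong enough to reduce to congested shallow minors and then a topological argument against those; your proposal correctly identifies these needs but supplies neither, so it remains a plan rather than a proof.
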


\cref{conj:surfaces} is not known to hold even when $\Sigma$ is a torus and $\Gamma$ is a sphere. In this simple setting, the question boils down to the following: Is the class of toroidal graphs transducible from the class of planar~graphs?

To address this question, one could first focus on a very specific form of a transduction, namely taking {\em{congested shallow minors}}. Here, we say that a graph $H$ is a {\em{congestion-$c$ depth-$d$ minor}} of $G$ if there is a {\em{congestion-$c$ depth-$d$ minor model}} of $H$ in $G$, which is a mapping $\eta$ satisfying the following:
\begin{itemize}[nosep]
 \item for every vertex $u\in V(H)$, $\eta(u)$ is a connected subgraph of $G$ of radius at most $d$;
 \item for every edge $uv\in E(G)$, either $\eta(u)$ and $\eta(v)$ share a vertex, or there is an edge of $G$ with one endpoint in $\eta(u)$ and second in $\eta(v)$; and
 \item every vertex $w\in V(G)$ belongs to at most $c$ subgraphs $\eta(u)$, for $u\in V(H)$.
\end{itemize}
It is not hard to prove that for every class $\Cc$ of bounded expansion and constants $c,d\in \N$, there is a transduction $\Tf_{\Cc,c,d}$ (with copying) such that $\Tf_{\Cc,c,d}(\Cc)$ contains all congestion-$c$ depth-$d$ minors of $\Cc$. It seems that general transductions between classes of sparse graphs cannot do much more than taking congested shallow minors, hence it makes sense to focus on this purely combinatorial operation first. Thus, based on joint discussions with Jakub Gajarsk\'y and Szymon Toru\'nczyk, we pose the following question as a potential stepping stone towards \cref{conj:surfaces}.

\begin{conjecture}
 There is no constant $k\in \N$ with the following property: every toroidal graph is a congestion-$k$ depth-$k$ minor of a planar graph.
\end{conjecture}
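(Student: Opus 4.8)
The statement is a conjecture, so the plan is to \emph{prove} it: to show that for every $k\in\N$ there is a toroidal graph which is not a congestion-$k$ depth-$k$ minor of any planar graph. The first move is to reduce to a canonical family of ``maximally toroidal'' test graphs. Fix $k$, and let $Q_N=C_N\square C_N$ be the toroidal grid of order $N$ (or, if a bounded-degree host is more convenient, the toroidal wall of order $N$). It suffices to prove that $Q_N$ is not a congestion-$k$ depth-$k$ minor of a planar graph once $N>f(k)$ for a suitable $f$. So suppose the contrary: there is a planar graph $H$, fixed with an embedding in the sphere $S^2$, and a model $\eta$ assigning to each $v\in V(Q_N)$ a connected subgraph $\eta(v)$ of $H$ of radius at most $k$, with every vertex of $H$ in at most $k$ of the sets $\eta(v)$, and with each edge of $Q_N$ witnessed by a shared vertex or a single edge of $H$.

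A preliminary normalization: using the depth bound together with weak sparsity of toroidal graphs (they exclude $K_{4,5}$ as a subgraph), one should first argue that $H$ may be taken of bounded degree, since at a high-degree vertex of $H$ only boundedly many incident edges can be ``used'' by the model without forcing a large biclique in $Q_N$, and the rest can be removed or split without destroying planarity. After this each $\eta(v)$ has boundedly many vertices, and any two overlapping branch sets have centres at $H$-distance at most $2k$ --- the local regularity the topological argument will rely on. Now the core construction: delete a middle row $R=R_{\lfloor N/2\rfloor}$ from $Q_N$. What remains is the planar cylinder $C_N\square P_{N-1}$ whose two boundary cycles $R'$ and $R''$ are exactly the neighbours of $R$, each joined to $R$ by a perfect matching of $N$ edges, but joined to \emph{each other} only ``the long way round'' through the other $N-2$ rows, in particular via the wrap matching $M$ of $N$ edges. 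The union of the branch sets and connecting edges of $R$ is a connected subgraph of $H$ carrying a closed walk; since $H\hookrightarrow S^2$, this walk traces a closed curve $\Gamma$ separating $S^2$ into two open disks. Planarity then forces the image of $C_N\square P_{N-1}$ to straddle $\Gamma$ --- the image of $R'$ hugs $\Gamma$ on one side, that of $R''$ on the other, and the successive rows $R_{\lfloor N/2\rfloor-2},\dots,R_0$ and $R_{\lfloor N/2\rfloor+2},\dots,R_{N-1}$ are pushed into a (possibly crumpled) concentric pattern --- while the outermost row $R_0$ and the innermost $R_{N-1}$ must still be linked by the $N$ edges of $M$, each of which has to ``jump'' across the entire stack.

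The heart of the proof is therefore a quantitative topological claim: \emph{a planar graph, with branch sets overlapping at most $k$-fold, can absorb a ``toroidal twist'' of magnitude only $g(k)$}. I would formalize the twist as (a variant of) the mod-$2$ intersection number between $\Gamma$ and the closed curves carried by the column cycles $C_j$ of $Q_N$ (whose last edge is precisely an edge of $M$): on the torus $C_j$ crosses $R$ exactly once, so each $\delta_j$ ``wants'' to meet $\Gamma$ an odd number of times, whereas in $S^2$ every closed curve meets $\Gamma$ an even number of times. Each column is thus forced to incur an \emph{extra} crossing with $\Gamma$, i.e.\ an extra branch-set overlap, and the $\le k$-fold congestion bound must then be shown to cap the total number of such defects --- the decisive step being an extraction: among the $N$ coordinates one finds $\approx N/k^{O(1)}$ on which the relevant branch sets are pairwise disjoint, and on those the sphere-versus-torus parity obstruction applies \emph{verbatim}, yielding a genuine planar drawing of a large toroidal wall, which is impossible. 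Comparing $\Omega(N/k^{O(1)})$ with the bounded defect count gives $N\le f(k)$, the desired contradiction. (Two alternative framings worth trying in parallel: reducing $Q_N$ first to a highly-representative toroidal embedding and invoking ``grid on a surface'' results so that only a toroidal wall needs to be handled; and exploiting the analogy with graphs admitting \emph{planar covers} --- toroidal grids of large face-width have none --- since a congestion-$k$ model is a weak relative of a $k$-fold cover.)

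The main obstacle is exactly the extraction/counting step of the previous paragraph. Defining ``curves'' and their crossing numbers when branch sets genuinely overlap, and converting the bounded-congestion hypothesis into a \emph{bounded} number of topological defects rather than a merely sublinear one, is a surface-topological Ramsey-type problem: one must show that a large, coherently grid-organized pattern of $k$-fold overlaps necessarily contains a large \emph{clean} sub-pattern on which planarity fails outright, and making this rigorous is where the real work lies. It is worth stressing that the usual single-graph invariants do \emph{not} shortcut this: genus is useless (toroidal graphs have genus $1$, yet $\mathcal M_k$ already contains graphs of unbounded genus such as line graphs of grids), and likewise crossing number and local treewidth are bounded on toroidal grids, so the argument must be bespoke to the global toroidal structure. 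Finally, I expect the depth-$k$ bound to be inessential to the phenomenon --- used only to keep branch sets local and to enable the bounded-degree reduction --- so that the statement should hold with ``depth $d$'' for any fixed $d$, with $f$ depending on both $k$ and $d$.
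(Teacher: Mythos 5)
First, a point of calibration: the statement you were asked to prove is posed in the paper as an open conjecture (a suggested stepping stone towards the conjecture that $\Ee_\Sigma \not\sqsubseteq_\FO \Ee_\Gamma$ for $\Sigma\npreceq\Gamma$); the paper offers no proof, so there is nothing to compare your route against. Judged on its own, your text is a plan rather than a proof, and the gap you yourself flag is genuine and, as written, fatal. The congestion bound is \emph{per vertex of the host}: it caps the number of branch sets through any one vertex at $k$, but the total number of overlapping pairs of branch sets can grow linearly with $|V(H)|$, hence is not bounded by any function of $k$ alone. So the decisive claim that a congestion-$k$ planar model ``can absorb a toroidal twist of magnitude only $g(k)$'' has no justification, and the extraction of $\approx N/k^{O(1)}$ columns on which the relevant branch sets are pairwise disjoint is exactly the missing theorem, not a routine Ramsey step. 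Until that is supplied, the argument does not yield $N\le f(k)$.

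There are also two technical misstatements earlier in the sketch that would need repair even before the counting step. The union of the branch sets and connecting edges of the deleted row carries a closed \emph{walk} in $H$, and the image of a closed walk in a plane graph need not be a simple closed curve; it may fail to separate the sphere, or separate it into many faces, so ``$\Gamma$ separates $S^2$ into two open disks'' is unwarranted without extracting a cycle or passing to a homological ($\mathbb{Z}/2$) formulation. Likewise, the mod-$2$ intersection number between $\Gamma$ and the images of the column cycles is not well defined when the curves share whole segments, which is unavoidable here: each column cycle passes through a vertex of the deleted row, and congestion-$k$ overlaps preclude the usual general-position perturbation. Your instinct that single-graph invariants (genus, crossing number, local treewidth) cannot shortcut the problem is sound, and the toroidal-grid/parity framing is a plausible line of attack, but as it stands the proposal identifies the difficulty rather than resolving it; the conjecture remains open.
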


\bibliographystyle{abbrv}
\bibliography{references}

\end{document}